\documentclass[12pt]{amsart}

\usepackage{amsmath,amsthm,amscd,amsfonts,amssymb,epic,eepic,bbm,mathtools,mathrsfs}
\usepackage{longtable, enumitem, enumerate}
\usepackage{amstext}
\usepackage[dvipsnames]{xcolor}
\usepackage{stmaryrd}
\usepackage{enumitem}

\usepackage{tikz}
\usetikzlibrary{er,positioning}
\usepackage{tikz-cd}
\usepackage[all]{xy} 

\usepackage[margin=1in]{geometry} 
\usepackage{lineno}
\usepackage[pagebackref,colorlinks=true,linkcolor=blue,citecolor=blue]{hyperref}
\usepackage{csquotes}
\allowdisplaybreaks

\newtheorem{thm}{Theorem}[subsection]
\newtheorem{cor}[thm]{Corollary}

\newtheorem{lem}[thm]{Lemma}

\newtheorem{prop}[thm]{Proposition}

\theoremstyle{remark}
\newtheorem*{rem}{Remark}
\newtheorem*{example}{Example}

\newcounter{remarkscounter}

\numberwithin{equation}{subsection} 

\newcommand{\ddbar}{\overline\partial}

\newcommand{\abs}[1]{\left\vert#1\right\vert}
\newcommand{\set}[1]{\left\{#1\right\}}
\newcommand{\To}{\rightarrow}

\makeatletter
\newcommand*\bigcdot{\mathpalette\bigcdot@{.5}}
\newcommand*\bigcdot@[2]{\mathbin{\vcenter{\hbox{\scalebox{#2}{$\m@th#1\bullet$}}}}}
\makeatother

\theoremstyle{definition}
\newtheorem{defn}[thm]{Definition}

\newcommand\ol[1]{{\overline{#1}}}

\begin{document}
\title{Semi-classical Szeg\H{o} Kernel Asymptotics on CR Manifolds of High codimension}

\author[Chin-Yu Hsiao]{Chin-Yu Hsiao}
	\address{Department of Mathematics, National Taiwan University, Taipei, Taiwan}
	\email{chinyuhsiao@ntu.edu.tw or chinyu.hsiao@gmail.com}
	\thanks{Chin-Yu Hsiao was partially supported by National Science and Technology Council project 113-2115-M-002-011-MY3.} 

     \author[Chin Jui Yang]{Chin Jui Yang} \address{Department of Mathematics, National Taiwan University, Taipei, Taiwan} 
\email{yangchinjui@ntu.edu.tw or p9433256@gmail.com}

\keywords{ CR manifolds of high codimension, Szeg\H{o} kernel, CR line bundle, CR Kodaira embedding} 
	\subjclass[2020]{58J52, 32V20, 32A25}

\begin{abstract}
In this paper, we consider a CR manifold $X$ of high codimension and a CR line bundle $L$ over $X$. For each $k \in \mathbb{N}$, let $L^k$ denote the $k$-th power of $L$. We establish that the spectral projection $\Pi_{k,\leq k\mu_k} := \mathbf{1}_{(-\infty, k\mu_k]}(\Box_{b,k})$ converges to the weighted Szeg\H{o} projection on the high codimension Heisenberg group in the sense of distributions, where $\Box_{b,k}$ denotes the Kohn Laplacian with values in $L^k$. Moreover, when $\Box_{b,k}$ possesses a local spectral gap, we show that the Szeg\H{o} projection $\Pi_k$ converges to the weighted Szeg\H{o} projection on the high codimension Heisenberg group in the sense of distributions. We also provide an explicit formula for the Szeg\H{o} projection on the high codimension Heisenberg group. Furthermore, we consider a high codimension CR manifold $X$ equipped with a transversal and CR $\mathbb{R}^m$-action. In this setting, we prove that the partial Szeg\H{o} projection $\Pi_{k,({\mathbb R}^m\curvearrowright,\leq k\delta)}$
converges to the weighted partial Szeg\H{o} projection on the high codimension Heisenberg group in the $\mathcal{C}^\infty$ topology. As an application of our results, we establish a Kodaira embedding theorem for high codimension CR manifolds with a transversal and CR $\mathbb{R}^m$-action.
	\end{abstract}

	\maketitle
    \tableofcontents

\section{introduction}

The study of CR manifolds has long been one of the central directions in several complex variables. A primary reason for this is that the boundary of a domain in $\mathbb{C}^n$ forms a CR manifold; by analyzing the geometric and analytic properties of the boundary—such as its Szeg\H{o} kernel—one can gain deeper insights into the geometry of the domain itself and its associated analytic problems. Since the 1990s, research on CR manifolds has developed profound connections with key topics in complex geometry, geometric quantization, and related fields. For instance, let $(L, h^L)$ be a holomorphic line bundle over a complex manifold $M$. Then the circle bundle $X := \{v \in L^*;\, \vert{}v\vert{}^2_{h^{L^*}} = 1\}$ is a CR manifold. Many geometric properties of $X$ and analytic problems associated with it (such as Szeg\H{o} kernels) are intimately tied to fundamental questions regarding the complex manifold $M$ and the line bundle $L$. For example, when $X$ is compact and strongly pseudoconvex, One can apply the results of Boutet de Monvel and Sj\"{o}strand
\cite{BS76} on the Szeg\H{o} kernel on $X$ to obtain the Tian-Yau-Catlin-Zelditch asymptotic expansion of the Bergman kernel associated with high tensor powers of the line bundle over $M$ (see, for instance, \cite{T90}, \cite{Z98}, and \cite{C99}). Consequently, modern research on CR manifolds is not only vital to several complex variables but also deeply intertwined with current developments in complex geometry. Conversely, insights from complex geometry frequently inspire new directions in CR geometry. Take the aforementioned circle bundle $X$, for instance: it belongs to a class of CR manifolds equipped with a group action. Many recent developments in complex geometry can be generalized to CR manifolds with group actions, yielding non-trivial generalizations. Although CR manifolds with group actions constitute a very specific class within CR geometry, their rich structure often allows for more refined results than those achievable on general CR manifolds. From this perspective, CR manifolds carrying group actions are well worth studying. 

Almost all research on CR manifolds focuses on the codimension-one case (hypersurface type), with very few results available for CR manifolds of higher codimension. Notably, in 2019, Fritsch, Herrmann, and Hsiao~\cite{FHH22} established a Boutet de Monvel-type embedding result for CR manifolds of high codimension. This implies that a certain class of CR manifolds of high codimension possesses a rich space of CR functions. Furthermore, their paper proved that any complex manifold arises as the quotient of a CR manifold of high codimension by a compact Lie group. In 2020, Fritsch and Heinzner~\cite{FH22} construct universal equivariant complex extensions of CR manifolds of high codimension admitting proper transversal CR actions of Lie groups that are closed subgroups of their universal complexifications. Using holomorphic slices and local finite-group quotients, they equip the orbit spaces with complex orbifold structures whose holomorphic functions correspond precisely to invariant CR functions. They then applied methods from complex geometry to prove a Kodaira-type $K$-equivariant CR embedding theorem for a compact CR manifold $X$ of high codimension with a transversal CR action of a compact Lie group $K$, assuming that $X/K$ carries a weakly negative holomorphic line bundle.  Therefore, the study of high-codimension CR manifolds is closely related to the investigation of complex manifolds with singularities. In addition, research on CR manifolds of high codimension is closely tied to numerous classical problems in several complex variables, such as CR local embedding problems. For instance, to construct CR functions locally, one needs to study the $\overline{\partial}_b$-Neumann problem on CR manifolds with boundary, where the boundary of a codimension-one CR manifold is a CR manifold of codimension two. Moreover, CR geometry of high codimension is intimately connected with geometric quantization; for example, the level set $\mu^{-1}(0)$ is typically a CR manifold of high codimension (see~\cite{FHH22}), where $\mu$ denotes the CR moment map. Thus, we believe that the study of CR manifolds of high codimension is a highly interesting and significant direction. 

In this paper, we consider a CR line bundle $L$ over $X$, a CR manifold of high codimension. We prove that when $L$ has positive CR curvature, the spectral kernel converges to the Szeg\H{o} kernel of the model case in the sense of distributions. Although smooth convergence is generally difficult to achieve in the high-codimension setting, we demonstrate that if the high codimension of the CR manifold originates from a CR $\mathbb{R}^m$-action, the partial Szeg\H{o} kernel converges to the Szeg\H{o} kernel of the model case  in the $\mathcal{C}^\infty$ topology. As an application of this result, we establish an equivariant Kodaira embedding theorem. 

We now formulate the main results. We refer to Section~\ref{s-gue260804yyd} for some notations and terminology used here. Let $(X,T^{1,0}X)$ be a paracompact orientable CR manifold of dimension $2n+m$ of high codimension $m$, $n, m\in\mathbb N$, $n, m\geq1$ (see Definition~\ref{d-gue260502yyd} and Definition~\ref{d-gue260502yydI}). Fix a Hermitian metric on $\mathbb{C}TX$ such that \eqref{e-gue260818yyda} hold. The Hermitian metric $\langle\,\cdot\,|\,\cdot\,\rangle$ on 
$\mathbb CTX$ induces by duality, a Hermitian metric $\langle\,\cdot\,|\,\cdot\,\rangle$ on $\Lambda^{2n+m}\mathbb CT^*X$. Let $dv_X$ be the volume form on $X$ induced by $\langle\,\cdot\,|\,\cdot\,\rangle$ and for every $q=0,1,\ldots,n$, let $(\,\cdot\,|\,\cdot\,)$ be the $L^2$ inner product on $\Omega^{0,q}_c(X)$ induced by $dv_X$ and $\langle\,\cdot\,|\,\cdot\,\rangle,$ and let $L^2_{(0,q)}(X)$ be the completion of $\Omega^{0,q}_c(X)$ with respect to $(\,\cdot\,|\,\cdot\,)$.

Let $(L,h^L)\To X$ be a CR line bundle and for $k\in\mathbb N$, let $(L^k,h^{L^k})$ be the $k$-th power of $(L,h^L)$, where $h^L$ denotes the Hermitian metric of $L$. Let $(\,\cdot\,|\,\cdot\,)_k$ be the $L^2$ inner product on $\Omega^{0,q}_c(X,L^k)$ induced by $dv_X$, $\langle\,\cdot\,|\,\cdot\,\rangle$ and $h^{L^k},$ and let $L^2_{(0,q)}(X,L^k)$ be the completion of $\Omega^{0,q}_c(X,L^k)$ with respect to $(\,\cdot\,|\,\cdot\,)_k$. Let 
\[\ddbar_b: \Omega^{0,q}(X,L^k)\To\Omega^{0,q+1}(X,L^k)\]
be the tangential Cauchy-Riemann operator with values in $L^k$. We extend $\ddbar_b$ to $L^2$ space:
\[\ddbar_b: {\rm Dom\,}\ddbar_b\subset L^2_{(0,q)}(X,L^k)\To L^2_{0,q+1}(X,L^k),\]
where ${\rm Dom\,}\ddbar_b:=\set{u\in L^2_{(0,q)}(X,L^k);\, \ddbar_bu\in L^2_{(0,q+1)}(X,L^k)}$. Let 
\[\ddbar^*_b: {\rm Dom\,}\ddbar^*_b\subset L^2_{(0,q+1)}(X,L^k)\To L^2_{0,q}(X,L^k)\]
be the $L^2$ adjoint of $\ddbar_b$ with respect to $(\,\cdot\,|\,\cdot\,)_k$. The Kohn Laplacian for $(0,q)$ forms is given by 
\[\Box^{(q)}_{b,k}:=\ddbar^*_b\,\ddbar_b+\ddbar_b\,\ddbar^*_b: {\rm Dom\,}\Box^{(q)}_{b,k}\subset L^2_{(0,q)}(X,L^k)\To L^2_{(0,q)}(X,L^k),\]
where 
\[\begin{split}
&{\rm Dom\,}\Box^{(q)}_{b,k}
=\{u\in L^2_{(0,q)}(X,L^k);\, u\in{\rm Dom\,}\ddbar_b\cap{\rm Dom\,}\ddbar^*_b,\\
&\quad\quad \ddbar_bu\in{\rm Dom\,}\ddbar^*_b,\ \ \ddbar^*_bu\in{\rm Dom\,}\ddbar_b.\}
\end{split}\]
In this work, we will focus on $q=0$ and we write $\Box_{b,k}:=\Box_{b,k}$.

Let 
\[\Pi_k: L^2(X,L^k)\To{\rm Ker\,}\Box_{b,k}\]
be the orthogonal projection (Szeg\H{o} projection). More general, 
for $\mu\in\mathbb R_+:=[0,\infty]$, let 
\begin{equation}\label{e-gue260818ycdm}
\Pi_{k,\leq\mu}:=1_{(-\infty,\mu]}(\Box_{b,k}): L^2(X,L^k)\To E_{\leq\mu}(X,L^k)
\end{equation}
be the functional calculus of $\Box_{b,k}$ with the indicate function $1_{(-\infty,\mu]}$, where $E_{\leq\mu}(X,L^k):={\rm Range\,}1_{(-\infty,\mu]}(\Box_{b,k})$. When no confusion arises, we sometimes write $\Pi_{k,\leq\mu}$ simply as $\Pi_{\leq\mu}$. When $\lambda=0$, $E_{\leq\mu}(X,L^k)={\rm Ker\,}\Box_{b,k}$ and 
$1_{(-\infty,\mu]}(\Box_{b,k})=\Pi_k$.

Our first result establishes, without any conditions, the asymptotic behavior of the spectral kernel $\Pi_{\leq k\mu_k}$ in the sense of distributions, where $\mu_k>0$ is a sequence with $\lim_{k\To+\infty}\mu_k=0$. In order to describe our result, we need to introduce special local charts. 

We have $\mathbb{C}TX = T^{1,0}X\oplus T^{0,1}X\oplus N(X),$ where 
\[N(X)=\set{\sum^m_{j=1}\lambda_jT_j;\, \lambda_j\in\mathbb C, j=1,\ldots,m},\]
where $T_1,\ldots,T_m\in\mathcal{C}^{\infty}(X,TX)$ are as in Definition~\ref{d-gue260502yydI}.  Let $\omega_j(x)\in\mathcal{C}^\infty(X,T^*X)$, $j=1,\ldots,m$, be the one forms given by \eqref{e-gue260502yyd}. Fix $x_0\in X$. Let $U_1(x),\dots,U_n(x)$ be an orthonormal frame of $T^{1,0}X$ over a neighborhood $D$ of $x_0$. 
For all $j,s = 1,\ldots,n$, $\ell = 1,\dots,m$, let
\[
\lambda^{\ell}_{js} := \frac{1}{2i}\langle\,[U_j,\overline{U}_s]\,,\,\omega_\ell\,\rangle(x_0)
\]
which is the Levi form at $x_0$ in the direction $\omega_\ell$.
Then $(\lambda^{\ell}_{js})$ are Hermitian matrices. Let $s$ be a local CR trivialization of $L$ defined on an open neighborhood of $x_0$, $\abs{s}^2_{h^L}=e^{-\phi}$. We can find a CR local trivialization $s$, $\abs{s}^2_{h^L}=e^{-\phi}$, defined on a small open set $D$ of $x_0$, a local chart $(x_1,\dots,x_{2n},x_{2n+1},\dots,x_{2n+m}) =(x',x'') = (z,x'') = (z_1,\dots,z_n,x'')$, defined on $D$, 
$z_j = x_{2j-1}+ix_{2j}, j=1,\dots,n, x' = (x_1,\dots, x_{2n}), x''=(x_{2n+1},\dots,x_{2n+m})$, such that, for $1\leq j,s\leq 2n, \ 1\le \ell, u\le m$,
\begin{equation}\label{e-gue260829ycd}
(z(x_0),x''(x_0)) = 0,\quad \Big\langle\frac{\partial}{\partial x_j}(x_0)\mid\frac{\partial}{\partial x_s}(x_0)\Big\rangle = 2\delta_{js}, \quad \Big\langle\frac{\partial}{\partial x_{2n+\ell}}(x_0)\mid\frac{\partial}{\partial x_{2n+u}}(x_0)\Big\rangle = \delta_{\ell u},
\end{equation}
\begin{equation}\label{e-gue260819yyd}
\begin{split}
&U_j = \frac{\partial}{\partial z_j} - i\sum_{\ell=1}^{m}\Big(\sum_{s=1}^{n}\lambda^{\ell}_{js}\overline{z}_s\Big)\frac{\partial}{\partial x_{2n+\ell}}  + O(|(z,x'')|^2), \quad j=1,\dots,n,\\
&T_\ell(x_0) =-\frac{\partial}{\partial x_{2n+\ell}}(x_0), \quad \ell=1,\dots,m,
\end{split}
\end{equation}
\begin{equation}\label{e-gue260829ycda}
\begin{split}
\phi(x) &= \sum_{j=1}^{n}(\alpha_jz_j+\overline{\alpha}_j\overline{z}_j) + \sum_{\ell=1}^{m}\beta_\ell x_{2n+\ell} + \sum_{j,s=1}^{n}(a_{js}z_jz_s + \overline{a}_{js}\overline{z}_j\overline{z}_s) + \sum_{j,s=1}^{n}\mu_{js}z_j\overline{z}_s \\
  &\quad + O(|z||x''|) + O(|x''|^2) + O(|(z,x'')|^3),
\end{split}
\end{equation}
where $\alpha_j\in\mathbb C$, $\beta_j\in\mathbb R$, $a_{js}\in\mathbb C$, $\mu_{js}\in\mathbb C$, $\mu_{js}=\ol{\mu_{sj}}$, $j, s=1,\ldots,n$. Let
\begin{equation}\label{e-gue260829ycdb}
R(x):=\sum_{j=1}^{n}\alpha_jz_j+\sum_{j,s=1}^{n}a_{js}z_jz_s
\end{equation}
and put
\begin{equation}\label{e-gue260830yyd}
\psi_0(x):=\sum_{\ell=1}^{m}\beta_\ell x_{2n+\ell}+\sum_{j,s=1}^{n}\mu_{js}z_j\overline{z}_s.
\end{equation}

Fix $\mu\geq0$. Let $\Pi_{k,\leq\mu}(x,y)$ be the distribution kernel of $\Pi_{k,\leq\mu}$. On $D$, write 
\begin{equation}\label{e-gue260825yyd}
\Pi_{k,\leq\mu}(x,y)=s^k(x)\otimes\Pi_{k,\leq\mu,s}(x,y)\otimes (s^k(y))^*,\end{equation}
where $\Pi_{k,\leq\mu,s}(x,y)\in\mathcal{D}'(D\times D)$, that is,
\begin{equation}\label{e-gue260829yyd}(\,\Pi_{k,\leq\mu}(s^k\otimes u)\,|\,s^k\otimes v\,)_k=\langle\,\Pi_{k,\leq\mu,s}(x,y)\,,\,e^{-k\phi(x)}u(y)\overline{v(x)}\,\rangle,\end{equation}
for all $u, v\in\mathcal{C}^\infty_c(D)$, where the right side of \eqref{e-gue260829yyd} means that there are $f_j\in\mathcal{C}^\infty(D\times D)$, $j=1,2,\ldots$, such that, 
\[\begin{split}
\langle\,\Pi_{k,\leq\mu,s}(x,y)\,,\,e^{-k\phi(x)}u(y)\ol{v(x)}\,\rangle
=\lim_{j\To+\infty}\int_{D\times D}f_j(x,y)u(y)\ol{v(x)}e^{-k\phi(x)}dv_X(x)dv_X(y),
\end{split}\]
for all $u, v\in\mathcal{C}^\infty_c(D)$. If $\mu=0,$ we write $\Pi_{k,s}=\Pi_{k,\leq 0,s}$. When no confusion arises, we sometimes write $\Pi_{k,\leq\mu,s}$ simply as $\Pi_{\leq\mu,s}$.

The first main result of this work is the following

\begin{thm}\label{t-gue260829yyd}
With the notations used above, fix $0<\epsilon<\frac{1}{8}$. Let $x_0\in X$ and let $x=(x_1,\ldots,x_{2n+m})=(x',x'')$ be the local coordinates of $X$ centered at $x_0$ defined on an open neighborhood $D$ of $x_0$ such that \eqref{e-gue260829ycd}, \eqref{e-gue260819yyd} and \eqref{e-gue260829ycda} hold. Let $\mu_k>0$ be a sequence with $\lim_{k\To+\infty}\mu_k=0$ and $\lim_{k\To+\infty}\frac{1}{\mu_k k^{2\epsilon}}=0$. Let 
\[\widetilde\Pi_{\leq k\mu_k,s}(x,y):=e^{-kR(x)}\Pi_{\leq k\mu_k,s}(x,y)e^{kR(y)}\in\mathcal{D}'(D\times D),\]
where $R(x)$ is as in \eqref{e-gue260829ycdb}. Then
\begin{equation}\label{e-gue260829yydm}
\lim_{k\To+\infty}k^{-(n+m)}\widetilde\Pi_{\leq k\mu_k,s}\Big(\Big(\frac{x'}{\sqrt{k}},\frac{x''}{k}\Big),
\Big(\frac{y'}{\sqrt{k}},\frac{y''}{k}\Big)\Big)=\Pi^{(0),\psi_0}_b(x,y)\ \ \mbox{in $\mathcal{D}'(\mathbb H^{n,m}\times\mathbb H^{n,m})$},
\end{equation}
where $\mathbb H^{n,m}:=\mathbb C^n\times\mathbb R^m$ is the CR manifold with CR structure given by \eqref{e-gue260930yydb} and $\Pi^{(0),\psi_0}_b(x,y)$ is the Szeg\H{o} kernel on $\mathbb H^{n,m}$ with respect to the volume form $e^{-\psi_0(x)}d\mu_{\mathbb H^{n,m}}$ (see \eqref{modwgtszeker} and \eqref{e-gue261010yyd}), $d\mu_{\mathbb H^{n,m}}$ is given by \eqref{e-gue260830yydm}. 
\end{thm}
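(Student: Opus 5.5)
We follow the scaling method of Ma--Marinescu and Hsiao--Marinescu for spectral kernels. The plan is to rescale the operators, show that the rescaled spectral projections are uniformly bounded, extract weak limits, and identify every limit with $\Pi^{(0),\psi_0}_b$.

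\textbf{Step 1: the rescaled operators.} Conjugate by the trivialization $s^k$ and by the weight $e^{kR}$. With respect to the weight $e^{-k(\phi-R-\ol R)}$, the operator $\ddbar_b$ becomes $e^{-kR}\ddbar_{b,s}e^{kR}$. This removes the pluriharmonic-type terms of $\phi$ in \eqref{e-gue260829ycda} and leaves the weight $\psi_0$ up to error terms. Introduce the scaling $F_k(x)=(x'/\sqrt k,x''/k)$ and set
\[\Box_{(k)}:=\frac1k F_k^*\bigl(e^{-kR}\Box_{b,k,s}e^{kR}\bigr)(F_k^{-1})^*.\]
By \eqref{e-gue260819yyd}, the vector field $\frac{1}{\sqrt k}F_k^*U_j$ equals
\[\frac{\partial}{\partial z_j}-i\sum_{\ell,s}\lambda^\ell_{js}\ol z_s\frac{\partial}{\partial x_{2n+\ell}}+\varepsilon_k,\]
where the coefficients of $\varepsilon_k$ are $O(k^{-1/2})$ on compact sets, together with their derivatives. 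The rescaled weight $k\phi\circ F_k^{-1}-kR-k\ol R$ converges to $\psi_0$ in $\mathcal C^\infty_{\rm loc}$. The cross term $O(|z||x''|)$ is $O(k^{-1/2})$ after scaling, and the terms $O(|x''|^2)$ and $O(|(z,x'')|^3)$ are also small. It follows that $\Box_{(k)}\to\Box^{(0),\psi_0}_{b}$, the weighted Kohn Laplacian on $\mathbb H^{n,m}$, with coefficients converging in $\mathcal C^\infty_{\rm loc}$.

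\textbf{Step 2: uniform bounds.} Let $P_k$ be the rescaled kernel $k^{-(n+m)}\widetilde\Pi_{\leq k\mu_k,s}\circ(F_k\times F_k)$, viewed as an operator on $\mathcal C^\infty_c$ of a ball $B_{r\log k}$ or $B_{k^{\epsilon}}$. The Jacobian of $F_k$ is $k^{-(n+m)}$, so $P_k$ is an orthogonal-projection-like operator with norm $\le1$ in the weighted $L^2$ spaces, up to factors $e^{O(k^{-1/2}|x|^3)}$. These factors are controlled for $|x|\le k^{\epsilon}$ with $\epsilon<1/8$. The spectral condition gives, for $u$ in the range,
\[\|\Box_{(k)}u\|\le \mu_k\|u\|\quad\text{and}\quad \|\ddbar_{b,(k)}u\|^2\le\mu_k\|u\|^2.\]
Hence $P_k$ is uniformly bounded from $\mathcal C^\infty_c$ into $\mathcal D'$. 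Banach--Alaoglu then gives a subsequence $P_{k_j}\to P$ in $\mathcal D'(\mathbb H^{n,m}\times\mathbb H^{n,m})$.

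\textbf{Step 3: identifying the limit.} First, $\ddbar^{(0)}_b P=0$ in the first variable, since $\mu_k\to0$. So the range of $P$ consists of weighted $L^2$ CR functions, and $P$ is self-adjoint with $\|P\|\le1$. Second, we need $P\ge\Pi^{(0),\psi_0}_b$ and $P^2=P$. Take $g$ in the range of $\Pi^{(0),\psi_0}_b$; this includes the explicit model CR functions, e.g.\ polynomial times Gaussian-type functions adapted to $\psi_0$, cut off on scale $k^{\epsilon}$. Transplant $g$ to $X$ via $F_k^{-1}$, $s^k$ and $e^{kR}$. Its Kohn Laplacian is then $O(k\cdot k^{-2\epsilon})$ in norm, coming from the cutoff and the $\varepsilon_k$ errors. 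The condition $\frac{1}{\mu_k k^{2\epsilon}}\to0$ ensures that this is $\ll k\mu_k\|g\|$. The spectral theorem then shows that the part of the transplanted $g$ outside $E_{\le k\mu_k}$ is $o(1)$, so $Pg=g$. Conversely, for $u$ in the range of $P$ we have $\ddbar^{(0)}_bu=0$, hence $u\in{\rm Ran}\,\Pi^{(0),\psi_0}_b$. Together these give $P=\Pi^{(0),\psi_0}_b$. Uniqueness of the limit then gives convergence of the full sequence.

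\textbf{Main obstacle.} The hardest step is Step 3: proving $Pg=g$, equivalently an approximate lower bound $\Pi_{\le k\mu_k}\ge$ the transplanted model projection, when $X$ is only of high codimension. We have neither subelliptic estimates nor a spectral gap, so all convergence is only distributional. Two error sources must be balanced against $k\mu_k$: the cutoff on scale $k^\epsilon$ and the non-model terms $O(|z||x''|)$, which scale like $k^{-1/2}|z||x''|$. This balance is exactly where $\epsilon<1/8$ and the growth condition on $\mu_k$ enter. A second difficulty is the weak-limit identification $\ddbar_bP=0$: it uses only $L^2$ bounds, so we must show that Step 2 yields local weighted $L^2$ bounds for $P_k u$, not just distributional ones.
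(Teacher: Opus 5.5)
Your proposal is correct and is essentially the paper's argument. You rescale to $\Box_{b,(k),s}$ and extract a weak limit from the uniform bound $|(\Pi_{(k),\le k\mu_k,s}u\,|\,v)_{(k)}|\le\|u_{[k]}\|_k\|v_{[k]}\|_k$. You then show the limit lands in $\mathcal H^0_b(\mathbb H^{n,m})$ and that it fixes the cut-off transplants of $\ddot u=\Pi^{(0),\psi_0}_bu$, via $\|(I-\Pi_{\le k\mu_k})\ddot u_{[k]}\|_k^2\le (k\mu_k)^{-1}\|\ddbar_b\ddot u_{[k]}\|_k^2=O\big(k^{1-2\epsilon}\big)/(k\mu_k)\to0$.

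Your two variations are harmless. You get $\ddbar_bP=0$ directly from $\|\ddbar_{b,(k),s}\Pi_{(k),\le k\mu_k,s}u\|_{(k)}^2\le\mu_k\|u_{(k)}\|_{(k)}^2$ rather than through $\Box^{(0),\psi_0}_b$, and you close with self-adjointness instead of $\|P\|\le1$. One point should be made explicit, as the paper does. The derivative bounds behind the $O(k^{1-2\epsilon})$ estimate are available only for $g=\Pi^{(0),\psi_0}_bu$ with $u\in\mathcal C^\infty_c(\mathbb H^{n,m})$, via the explicit kernel. After that, $\|P\|\le1$ and density give $Pg=g$ on all of $\mathcal H^0_b(\mathbb H^{n,m})$.
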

\begin{rem}
The limit on the left hand side of \eqref{e-gue260829yydm} means that  for any $u\in \mathcal{C}^\infty_c(\mathbb{H}^{m,n}),$ if we scale $u$ to become a smooth function $u_{[k]}$ on $D$ (see \eqref{scaledu}):
\begin{equation*}
    u_{[k]}(x) := k^{\frac{n+m}{2}}\chi\Big(\frac{\sqrt{k}z}{k^\epsilon},\frac{kx''}{k^{2\epsilon}}\Big)u(\sqrt{k}z,kx'')e^{kR}s^k,
\end{equation*}
and then rescale $\Pi_{\leq k\mu_k}(u_{[k]})$(see \eqref{scaledmap} and \eqref{scaledPimuu}):
\begin{equation}\label{scaledmuuin}
k^{-\frac{n+m}{2}}F_k^*\Big(\widehat{\Pi_{\leq k\mu_k} u_{[k]}}\cdot e^{-kR}\Big),
\end{equation}
then the sequence in \eqref{scaledmuuin} converges to $\Pi^{(0),\psi_0}_bu$ in the sense of distributions with respect to the volume form $e^{-\psi_0(x)}d\mu_{\mathbb H^{n,m}};$ more precisely, for all $v\in \mathcal{C}^\infty_c(\mathbb{H}^{m,n}),$
\begin{align}
&\lim_{k\to\infty} \int_{\mathbb{H}^{m,n}} k^{-\frac{n+m}{2}}F_k^*\Big(\widehat{\Pi_{\leq k\mu_k} u_{[k]}}\cdot e^{-kR}\Big)(x)\overline{v(x)}e^{-kF^*_k\phi_0(x)}(F^*_kV)(x)d\mu_{\mathbb{H}^{m,n}}(x)\\
&= \int_{\mathbb{H}^{m,n}\times\mathbb{H}^{m,n}}\Pi^{(0),\psi_0}_b(x,y)u(y)\overline{v(x)}e^{-\psi_0(x)}d\mu_{\mathbb{H}^{m,n}\times\mathbb{H}^{m,n}}(x,y).\nonumber
\end{align}
Formally we can write, for all $u,v\in\mathcal{C}^\infty_c(\mathbb{H}^{m,n}),$ we have
\begin{align*}
&(\frac{1}{k})^{n+m}\int_{\mathbb{H}^{m,n}\times\mathbb{H}^{m,n}}\widetilde\Pi_{\leq k\mu_k,s}((\frac{x'}{\sqrt{k}},\frac{x''}{k}),
(\frac{y'}{\sqrt{k}},\frac{y''}{k}))u(y)\overline{v(x)}e^{-kF^*_k\phi_0(x)}(F^*_kV)(x)d\mu_{\mathbb{H}^{m,n}\times\mathbb{H}^{m,n}}(x,y)\\
&\xrightarrow[k\to\infty]{} \int_{\mathbb{H}^{m,n}\times\mathbb{H}^{m,n}}\Pi^{(0),\psi_0}_b(x,y)u(y)\overline{v(x)}e^{-\psi_0(x)}d\mu_{\mathbb{H}^{m,n}\times\mathbb{H}^{m,n}}(x,y).
\end{align*}
For more details, see Section 5.
\end{rem}

Our second main theorem is the asymptotic of Szeg\H{o} projection under the local specrtal gap condition.
\begin{thm}\label{mainthmsp}
With the notations used above, fix $0<\epsilon<\frac{1}{8}$. Let $x_0\in X$ and let $x=(x_1,\ldots,x_{2n+m})=(x',x'')$ be the local coordinates of $X$ centered at $x_0$ defined on an open neighborhood $D$ of $x_0$ such that \eqref{e-gue260829ycd}, \eqref{e-gue260819yyd} and \eqref{e-gue260829ycda} hold. Let 
\[\widetilde\Pi_{k,s}(x,y):=e^{-kR(x)}\Pi_{k,s}(x,y)e^{kR(y)}\in\mathcal{D}'(D\times D),\]
where $R(x)$ is as in \eqref{e-gue260829ycdb}. Suppose that for all large $k,$ we have 
\[
\big\|\big(I - \Pi_k\big)u\big\|_k^2 \leq (k^{1-2\epsilon})^{-1} \big(\square_{b,k}u|u\big)_k
\]
for all $u\in \mathcal{C}^\infty_c(D, L^k).$
 Then
\begin{equation}\label{locspgap}
\lim_{k\To+\infty}k^{-(n+m)}\widetilde\Pi_{k,s}\Big(\Big(\frac{x'}{\sqrt{k}},\frac{x''}{k}\Big),
\Big(\frac{y'}{\sqrt{k}},\frac{y''}{k}\Big)\Big)=\Pi^{(0),\psi_0}_b(x,y)\ \ \mbox{in $\mathcal{D}'(\mathbb H^{n,m}\times\mathbb H^{n,m})$},
\end{equation}
where $\mathbb H^{n,m}:=\mathbb C^n\times\mathbb R^m$ is the CR manifold with CR structure given by \eqref{e-gue260930yydb} and $\Pi^{(0),\psi_0}_b(x,y)$ is the Szeg\H{o} kernel on $\mathbb H^{n,m}$ with respect to the volume form $e^{-\psi_0(x)}d\mu_{\mathbb H^{n,m}}$ (see \eqref{modwgtszeker}), $d\mu_{\mathbb H^{n,m}}$ is given by \eqref{e-gue260830yydm}. 
\end{thm}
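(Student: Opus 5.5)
We deduce Theorem~\ref{mainthmsp} from Theorem~\ref{t-gue260829yyd} by comparing $\Pi_k$ with $\Pi_{k,\leq k\mu_k}$ for a suitable choice $\mu_k:=k^{-\epsilon}$. This sequence satisfies $\mu_k\to0$ and $\frac{1}{\mu_k k^{2\epsilon}}=k^{-\epsilon}\to0$, so Theorem~\ref{t-gue260829yyd} applies. It then suffices to show that, for all $u,v\in\mathcal{C}^\infty_c(\mathbb H^{n,m})$,
\[
\lim_{k\to\infty} \big(\,(\Pi_{k,\leq k\mu_k}-\Pi_k)u_{[k]}\,\big|\,v_{[k]}\,\big)_k=0 .
\]
Here $u_{[k]},v_{[k]}$ are the scaled sections from the Remark following Theorem~\ref{t-gue260829yyd}, whose supports lie in $D$ for $k$ large. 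This expression is exactly the difference of the two pairings defining the distributional limits in \eqref{e-gue260829yydm} and \eqref{locspgap}.

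Since $\Pi_k\le\Pi_{k,\leq k\mu_k}$ are commuting orthogonal projections, $P_k:=\Pi_{k,\leq k\mu_k}-\Pi_k=1_{(0,k\mu_k]}(\Box_{b,k})$ is an orthogonal projection. Cauchy--Schwarz gives $|(P_ku_{[k]}|v_{[k]})_k|\le \|P_ku_{[k]}\|_k\|v_{[k]}\|_k$. The norm $\|v_{[k]}\|_k$ is bounded uniformly in $k$ by the normalization $k^{\frac{n+m}{2}}$ together with the change of variables $F_k$. Next, $P_ku_{[k]}=P_k(I-\Pi_k)u_{[k]}$, so $\|P_ku_{[k]}\|_k\le\|(I-\Pi_k)u_{[k]}\|_k$. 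Applying the spectral gap hypothesis to $u_{[k]}\in\mathcal{C}^\infty_c(D,L^k)$ yields
\[
\|(I-\Pi_k)u_{[k]}\|_k^2\le k^{-1+2\epsilon}\,(\Box_{b,k}u_{[k]}\,|\,u_{[k]})_k=k^{-1+2\epsilon}\,\|\ddbar_b u_{[k]}\|_k^2 .
\]

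The main step is the estimate $\|\ddbar_b u_{[k]}\|_k^2=O(k^{1-2\epsilon'})$ for some $\epsilon'>\epsilon$, or at least $o(k^{1-2\epsilon})$. We compute $\ddbar_b(e^{kR}s^k\,\cdot)$ in the chart \eqref{e-gue260819yyd}. The factor $e^{kR}$ is chosen so that $\ol U_j$ applied to $e^{kR}$ cancels the holomorphic parts of $\phi$, making the weight effectively $k\psi_0$ up to errors. After the scaling $z\mapsto\sqrt k z$, $x''\mapsto kx''$, one has $\ol U_j=\sqrt{k}\,(\ol U^{(0)}_j+ O(k^{-1/2}|(z,x'')|\ \text{scaled}))$, where $\ol U^{(0)}_j$ is the Heisenberg model vector field. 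Hence $\|\ddbar_b u_{[k]}\|_k^2\approx k\,\|\ddbar_{b,\mathbb H}u\|^2+$ error terms. This alone is $O(k)$, which is too large for a general $u$.

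The resolution is a second reduction. It suffices to prove the convergence for $u$ in the range of the model Kohn Laplacian structure, and more precisely for $u$ in a dense set after pairing. The plan is to write $u=\Pi^{(0),\psi_0}_bu+(I-\Pi^{(0),\psi_0}_b)u$. For the CR part $w=\Pi^{(0),\psi_0}_bu$, which is smooth but not compactly supported, one approximates in $L^2$. Then $\ddbar_{b,\mathbb H}w=0$, so $\|\ddbar_b w_{[k]}\|_k^2$ comes only from the Taylor errors $O(|(z,x'')|^2)$ in \eqref{e-gue260819yyd} and \eqref{e-gue260829ycda}, and from derivatives of the cutoff $\chi(\sqrt kz/k^\epsilon,kx''/k^{2\epsilon})$. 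On the support, $|z|\lesssim k^{\epsilon-1/2}$ and $|x''|\lesssim k^{2\epsilon-1}$, so these errors are $O(k^{C\epsilon})$ with $C\epsilon<1-2\epsilon$ once $\epsilon<\frac18$. This gives $\|(I-\Pi_k)w_{[k]}\|_k\to0$.

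For the part $(I-\Pi^{(0),\psi_0}_b)u$, we pair against $v$ and use self-adjointness to move the projections onto $v$. Both $(\Pi_ku_{[k]}|v_{[k]})$ and $(\Pi_{k,\le k\mu_k}u_{[k]}|v_{[k]})$ then differ from the corresponding pairings with $u,v$ replaced by their model CR projections by $o(1)$, the same estimate being applied on each side. The main obstacle is controlling the tails of $\Pi^{(0),\psi_0}_bu$ against the cutoff. This requires the Gaussian decay in $z$ and the smooth decay in $x''$ provided by the explicit formula for the model Szeg\H{o} kernel.
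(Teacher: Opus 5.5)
Your reduction to Theorem \ref{t-gue260829yyd} with $\mu_k=k^{-\epsilon}$ is not circular: Section 5 reuses the estimate \eqref{derker} and the argument of Section 4.3, not the statement you are proving. Your treatment of the model CR part is the paper's key estimate. For $w=\Pi^{(0),\psi_0}_bu$ one has $\|\ddbar_b w_{[k]}\|_k^2=o(k^{1-2\epsilon})$, hence $\|P_kw_{[k]}\|_k\le\|(I-\Pi_k)w_{[k]}\|_k\to0$. Two small corrections there: the cutoff contribution is $k^{1-2\epsilon}a_k(u)$ with $a_k(u)\to0$, not $O(k^{C\epsilon})$; and no $L^2$ approximation of $w$ is needed, since $w_{[k]}$ is already compactly supported in $D$.

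\textbf{The gap is in the remaining part} $r=(I-\Pi^{(0),\psi_0}_b)u$. For $\Pi_k$, your claim that $(\Pi_ku_{[k]}|v_{[k]})_k$ agrees up to $o(1)$ with the pairing of the model CR projections is already the full content of the theorem. Self-adjointness only trades $(P_kr_{[k]}|v_{[k]})_k$ for $(r_{[k]}|P_kv_{[k]})_k$. After discarding the CR part of $v$ you are left with $(P_kr_{[k]}|r'_{[k]})_k$, where $r'=(I-\Pi^{(0),\psi_0}_b)v$. Here neither entry is CR, and the estimate $\|(I-\Pi_k)w_{[k]}\|_k\to0$ says nothing about it. Nothing in your argument shows that $P_k$ (or $\Pi_k$) asymptotically annihilates $(\mathcal H^0_b(\mathbb H^{n,m}))^\perp$. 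The tails of $\Pi^{(0),\psi_0}_bu$, which you single out as the main obstacle, are not the issue; they are handled by $a_k(u)\to0$.

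The paper obtains this structurally rather than by estimate. Proposition \ref{ker}, using $\Pi_k\Box_{b,k}=0$ and the convergence $\Box_{b,(k),s}\to\Box^{(0),\psi_0}_b$, shows that any weak limit $\hat S$ takes values in $\mathcal H^0_b(\mathbb H^{n,m})$. Combined with $\|\hat S\|\le1$ and $\hat S=I$ on $\mathcal H^0_b(\mathbb H^{n,m})$ (your CR estimate), $\hat S$ is an idempotent contraction, hence the orthogonal projection. No bound on $r$ is ever needed.

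Within your route, the missing step is a positivity argument. Since $\Pi_{k,\leq k\mu_k}=\Pi_k+P_k$ with orthogonal ranges,
\[
\|P_kr_{[k]}\|_k^2\le\|\Pi_{k,\leq k\mu_k}r_{[k]}\|_k^2=(\Pi_{k,\leq k\mu_k}r_{[k]}|r_{[k]})_k\to(\Pi^{(0),\psi_0}_br|r)_{\psi_0}=0 .
\]
The limit follows from Theorem \ref{t-gue260829yyd}, extended to $L^2$ data by density using the uniform bound $\|f_{[k]}\|_k\lesssim\|f\|_{\psi_0}$. Then $\|P_ku_{[k]}\|_k\le\|P_kw_{[k]}\|_k+\|P_kr_{[k]}\|_k\to0$, and the pairing with $v_{[k]}$ vanishes by Cauchy--Schwarz.

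With this inserted, your reduction is a valid alternative proof. It buys little, though, since Theorem \ref{t-gue260829yyd} is itself proved by the same machinery (Propositions \ref{spbounded}, \ref{spker} and \eqref{derker}).
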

We give an example that satisfies the local spectral gap assumption in Theorem \ref{mainthmsp}.
\begin{example}
Let $X$ be a $2n+m$ dimensional compact orientable Levi-flat CR manifold of CR codimension $m$ and let $(L,h^L)$ be a positive CR complex line bundle over $X.$ (See Definition \ref{pos} for positivity of CR line bundles.) Then for large $k,$ the Kohn-Laplacian $\square_{b,k}$ associated to the $k$-th tensor power of $(L,h^L)$ has spectral gap $\operatorname{Spec}(\square_{b,k})\subset \{0\}\cup[Ck,\infty)$ for some constant $C>0.$ In particular, $\square_{b,k}$ satisfies \eqref{locspgap}. The proof is given in  Section 5 in \cite{HM17} with mirror change.
\end{example}

In 2022, Herrmann, Hsiao and Li \cite{HHL22} considered a compact connected CR manifold of CR codimension 1  with a transversal locally free CR $\mathbb{R}$-action endowed with a rigid positive CR line bundle. They showed that the weighted Fourier-Szeg\H{o} kernel of the CR sections in the high tensor powers admits a full asymptotic expansion. Consequently, they established an $\mathbb{R}$-equivariant Kodaira embedding theorem. Motivated by their results, we  consider a CR manifold $X$ of CR codimension $m$ equipped with a locally free transversal CR $\mathbb{R}^m$-action together with a rigid positive $\mathbb{R}^m$-equivariant CR line bundle $(L,h^L)$. We refer to Section \ref{gpact} for the notion of CR $\mathbb{R}^m$-action.
Although we can just obtain semi-classical Szeg\H{o} kernel asymptotic in the sense of distributions under the local spectral gap assumption, with the help of the CR $\mathbb{R}^m$-action and positivity of the CR line bundle, we can derive the convergence of partial Szeg\H{o} kernels to the partial Szeg\H{o} kernel of the model case in the $\mathcal{C}^\infty$ topology. As an application, we prove an $\mathbb{R}^m$-equivariant Kodaira embedding theorem.

Let $Y_1,\dots,Y_m$ be the infinitesimal generators of the $\mathbb{R}^m$-action induced from the standard basis for $\mathbb{R}^m.$ In this paper, we will make systematic use of appropriate coordinates introduced by Baouendi-Rothschild-Treves \cite{BRT85}. For each point $p \in X,$ there exists a coordinate neighborhood $U$ with coordinates $(x_1, \dots, x_{2n+m})$, centered at $p=0$, and $\eta > 0$, $\eta_0 > 0$ such that, by setting $z_j = x_{2j-1} + ix_{2j}$, $j=1, \dots, n,$ $x'' = (x_{2n+1},\dots, x_{2n+m})$ and $D = \{(z,x'') \in U;\, |z| < \eta,\, |x_{2n+\ell}| < \eta_0,\, \ell =1,\dots, m\} \subset U$, we have 
\begin{equation}
Y_\ell = \frac{\partial}{\partial x_{2n+\ell}} \text{ on } D,
\end{equation}
$\ell=1,\dots, m,$ and the vector fields 
\begin{equation}\label{inbrtframe}
Z_j = \frac{\partial}{\partial z_j} - i\sum_{\ell=1}^m \frac{\partial \varphi_\ell}{\partial z_j}(z) \frac{\partial}{\partial x_{2n+\ell}},
\end{equation}
$j=1, \dots, n$, form a basis of $T_x^{1,0}X$ for each $x \in D$, where $\varphi_\ell \in \mathcal{C}^\infty(D, \mathbb{R})$ is independent of $x''$. By \cite{BRT85}, we can assume
\begin{equation}
    \varphi_\ell(z) = \sum_{j,s=1}^n \frac{\partial^2\varphi_\ell}{\partial z_j\partial \overline{z}_s}(0)z_j \overline{z}_s + O(|z|^3).
\end{equation}
We call $(x_1, \dots, x_{2n+m})$ canonical coordinates, $D$ a canonical coordinate patch and\\ $(D, (z, x''), \varphi)$ a BRT trivialization where $\varphi = (\varphi_1,\dots,\varphi_m)$. The frames in (\ref{inbrtframe}) are called BRT frames.

Let $L$ be a rigid CR line bundle over $X.$ Suppose that $L$ is endowed with a rigid Hermitian metric $h^L$ such that the curvature $R^L$ is positive definite. We take an $\mathbb{R}^m$-equivariant rigid Hermitian metric $\langle\,\cdot\,|\,\cdot\,\rangle$ on $\mathbb{C}TX$ such that
\begin{align*}
    T^{1,0}X \perp T^{0,1}X, \ Y_\ell\perp (T^{1,0}X \oplus T^{0,1}X), \ \langle Y_\ell \mid Y_q \rangle = \delta_{\ell q}, \quad \ell,q=1,\dots,m.
\end{align*}
For every $\alpha\in \text{Spec}(-iY_\ell),\ \ell = 1,\dots, m,$ put
\[
\Omega^{0,q}_{Y_\ell,\alpha}(X,L^k) := \{ u\in \Omega^{0,q}(X, L^k);\, -iY_\ell u = \alpha u \}.
\]
Let $L^{2,(0,q)}_{Y_\ell,\alpha}(X,L^k)\subset L^2_{(0,q)}(X,L^k)$ be the eigenspace of $-iY_\ell$ with eigenvalue $\alpha.$ We can check that $L^{2,(0,q)}_{Y_\ell,\alpha}(X,L^k)$ is the completion of $\Omega^{0,q}_{Y_\ell,\alpha}(X,L^k)$ with respect to $(\,\cdot\,|\,\cdot\,)_k.$

Put
\[
\mathcal{H}^{0,q}_{b,(Y_\ell,\alpha)}(X,L^k) := \{u\in L^{2,(0,q)}_{Y_\ell,\alpha}(X,L^k);\,\overline{\partial}_b u = 0 \},
\]
and, for $\boldsymbol{\lambda} = (\lambda_1,\dots,\lambda_m)\in (\mathbb{R}_{>0})^m,$ put
\begin{align*}
\Omega^{0,q}_{Y,\leq\boldsymbol{\lambda}}(X,L^k) &:= \bigoplus_{\alpha_\ell\in \text{Spec}(-iY_\ell), |\alpha_\ell|\leq \lambda_\ell, 1\leq\ell\leq m} \left(\bigcap_{1\leq\ell\leq m}\Omega^{0,q}_{Y_\ell,\alpha_\ell}(X,L^k)\right),\\
L^{2,(0,q)}_{Y,\leq\boldsymbol{\lambda}}(X,L^k) &:= \bigoplus_{\alpha_\ell\in \text{Spec}(-iY_\ell), |\alpha_\ell|\leq \lambda_\ell, 1\leq\ell\leq m} \left(\bigcap_{1\leq\ell\leq m}L^{2,(0,q)}_{Y_\ell,\alpha_\ell}(X,L^k)\right),\\
\mathcal{H}^{0,q}_{b,\leq\boldsymbol{\lambda}}(X,L^k) &:= \bigoplus_{\alpha_\ell\in \text{Spec}(-iY_\ell), |\alpha_\ell|\leq \lambda_\ell, 1\leq\ell\leq m} \left(\bigcap_{1\leq\ell\leq m}\mathcal{H}^{0,q}_{b,(Y_\ell,\alpha_\ell)}(X,L^k)\right).
\end{align*}
If each $\lambda_\ell=\lambda$ for all $\ell$, we simply write
\begin{align*}
&\Omega^{0,q}_{Y,\leq \lambda}(X,L^k) := \Omega^{0,q}_{Y,\leq(\lambda,\dots,\lambda)}(X,L^k),\quad
L^{2,(0,q)}_{Y,\leq \lambda}(X,L^k) := L^{2,(0,q)}_{Y,\leq(\lambda,\dots,\lambda)}(X,L^k),\\
&\mathcal{H}^{0,q}_{b,\leq \lambda}(X,L^k) := \mathcal{H}^{0,q}_{b,\leq (\lambda,\dots,\lambda)}(X,L^k).
\end{align*}
For every $\boldsymbol{\lambda} \in (\mathbb{R}_{>0})^m,$ we have
$\mathcal{H}^{0,q}_{b,\leq\boldsymbol{\lambda}}(X,L^k)\subset\Omega^{0,q}(X, L^k)\quad \text{and}\quad  \dim\mathcal{H}^{0,q}_{b,\leq\boldsymbol{\lambda}}(X,L^k) < \infty$ (See Section \ref{findim}.)
For every $\lambda > 0.$ Let 
\begin{equation*}
Q^{(0)}_{X,\leq \lambda}(u) : L^2(X,L^k) \to L^{2}_{Y,\leq \lambda}(X,L^k)
\end{equation*}
be the natural orthogonal projection.
For every $\lambda>0,$ we consider the partial Szeg\H{o} projector
\begin{equation*}
\Pi_{k,({\mathbb R}^m\curvearrowright,\leq \lambda)}=\Pi^{(0)}_{k,({\mathbb R}^m\curvearrowright,\leq \lambda)}:L^{2}(X,L^{k})\rightarrow\mathcal{H}_{b,\le \lambda}^{0}(X,L^{k})
\end{equation*}
which is the orthogonal projection onto the space of $\mathbb{R}^m$-equivariant CR functions of degree less than $\lambda.$ Here we denote $\mathcal{H}_{b,\le \lambda}^{0,0}(X,L^{k})$ by $\mathcal{H}_{b,\le \lambda}^{0}(X,L^{k})$. Note that we have 
\begin{equation*}
\Pi^{(0)}_{k,({\mathbb R}^m\curvearrowright,\leq \lambda)} = Q^{(0)}_{X,\leq \lambda} \circ \Pi^{(0)}_{b,k} = \Pi^{(0)}_{b,k} \circ Q^{(0)}_{X,\leq \lambda}.
\end{equation*}
Fix $\delta>0.$  Since $\mathcal{H}_{b,\le k\delta}^{0}(X,L^{k})\subset \mathcal{C}^\infty(X,L^k)$ and $\dim \left(\mathcal{H}_{b,\le k\delta}^{0}(X,L^{k})\right)<\infty,$ we have $\Pi^{(0)}_{k,({\mathbb R}^m\curvearrowright,\leq k\delta)}$ as a smoothing operator. Denote the smooth kernel of $\Pi^{(0)}_{k,({\mathbb R}^m\curvearrowright,\leq k\delta)}$ by $P_{k,\delta}(x,y)\in \mathcal{C}^\infty(X\times X, L^k_x\boxtimes L^{{*}k}_y).$ Let $\{f_1,\dots,f_{d_k}\}$ be an orthonormal basis of $\mathcal{H}_{b,\le k\delta}^{0}(X,L^{k}).$ Then
\begin{equation}
    P_{k,\delta}(x,y) = \sum_{j=1}^{d_k} f_j(x)\otimes(f_j(y))^*.
\end{equation}
We next localize $P_{k,\delta}(x,y).$ Fix a point $p\in X$ and take a BRT frame, $(D, (z, x''), \varphi),$ around $p.$ 
 Let $s$ be a local rigid CR section of $L$ over $D.$ Write $|s|^2_{h^L} = e^{-\phi},\ \phi \in \mathcal{C}^\infty(D;\mathbb R).$ After a holomorphic change of variables if necessary, we may assume the follwing.
\begin{equation*}
    \phi(z) = \sum_{j,r=1}^{n}\mu_{jr}z_j\overline{z}_r  + O(|z|^3),
\end{equation*}
which is independent of $x''.$ 
 Consider the isometry
\begin{equation*}
    R_{k,s} : L^2_{(0,q)}(D, e^{-k\phi}) \rightarrow L^2_{(0,q)}(D, L^k),\quad u \mapsto u\otimes s^k,
\end{equation*}
with inverse $R_{k,s}^{-1} : L^2_{(0,q)}(D, L^k)\rightarrow L^2_{(0,q)}(D, e^{-k\phi}),\ \alpha \mapsto s^{-k}\alpha.$
The localization of $P_{k,\delta}$ with respect to the trivializing rigid CR section $s$ is given by
\[
P^s_{k,\delta} : L^2_{comp}(D, e^{-k\phi}) \to L^2(D, e^{-k\phi}), \quad P^s_{k,\delta} = R^{-1}_{k,s}P_{k,\delta}R_{k,s},
\]
where $L^2_{comp}(D, e^{-k\phi})$ is the subspace of elements of $L^2(D, e^{-k\phi})$ with compact support in $D.$
In $D,$ write $f_j(x) = \hat{f}_j\otimes s^k,\ \hat{f}_j\in \mathcal{C}^\infty(D),\ j=1,\dots,d_k.$ We can express $P_{k,\delta}(x,y)$ in $D$ as
\begin{equation}
    P^s_{k, \delta}(x,y) = \sum_{j=1}^{d_k} \hat{f}_j(x)\overline{\hat{f}_j(y)}e^{-k\phi(y)} \in \mathcal{C}^\infty(D\times D).
\end{equation}
Hence we have, for all $\alpha\in L^2_{comp}(D, L^k),$ write $\alpha=\hat{\alpha}\otimes s^k,$
\begin{equation}
R^{-1}_{k,s}(P_{k,\delta}\alpha)(x) = \int_D P^s_{k,s}(x,y)\hat{\alpha}(y)dv_X(y).
\end{equation}
Let 
\begin{equation*}
\begin{cases} 
\phi(x) = \psi_0(x) + \psi_1(x), \\ 
\psi_0(x) = \sum_{j,r=1}^{n}\mu_{jr}z_j\overline{z}_r, \\ 
\psi_1(x) = O(|z|^3). 
\end{cases} 
\end{equation*}
Let $\lambda^\ell_{j,r} = \frac{\partial^2\varphi_\ell}{\partial z_j\partial \overline{z}_r}(0)$ and let $M_\ell = (\lambda^{\ell}_{jr}),$ which is a  Hermitian matrix, $\ell=1,...,m.$ Fix $\delta>0$ small enough such that $(\mu_{jr}) - 2\sum_{\ell=1}^m\eta_\ell(\lambda^{\ell}_{jr})$ is positive definite for all $|\eta_\ell|\leq\delta,\ \ell = 1,\dots, m.$ For $\eta = (\eta_1,\dots,\eta_m)\in {\mathbb R}^m,$ let $M_\eta = (\mu_{jr}) - 2\sum_{\ell=1}^m\eta_\ell M_\ell.$
Put
\begin{equation*}
P^s_{(k),\delta}(x,y) := k^{-n-m} P^s_{k,\delta} \left( \Big(\frac{z}{\sqrt{k}},\frac{x''}{k}\Big), \Big(\frac{w}{\sqrt{k}},\frac{y''}{k}\Big) \right) \in \mathcal{C}^\infty(D(k^\epsilon,k^{2\epsilon})\times D(k^\epsilon,k^{2\epsilon})). 
\end{equation*}
We can now state our third main theorem.
\begin{thm}\label{3rdmain1}
For $\delta$ small enough, the sequence $P^s_{(k),\delta}(x,y)$ converges to $P_\delta(x,y)$ locally uniformly in the $\mathcal{C}^\infty$-topology, where
\begin{align*}
P_{\delta}(x,y) &:= \Big(\frac{1}{2\pi}\Big)^{n+m} \int_{[-\delta,\delta]^m} \exp\bigg[i\langle\eta,x''\rangle-i\langle\eta,y''\rangle +\sum_{j,r=1}^{n}(M_\eta)_{jr}z_j\overline{w}_r-\sum_{j,r=1}^{n}(M_\eta)_{jr}w_j\overline{w}_r\\ 
&\quad -\sum_{\ell=1}^m\eta_\ell\sum_{j,r=1}^{n}\lambda^{\ell}_{jr}w_j\overline{w}_r+\sum_{\ell=1}^m\eta_\ell\sum_{j,r=1}^{n}\lambda^{\ell}_{jr}z_j\overline{z}_r\bigg]\det(M_\eta) d\mu(\eta).\nonumber
\end{align*}
\end{thm}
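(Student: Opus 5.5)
The plan is to reduce the $\mathbb{R}^m$-equivariant problem to a family of weighted $\ddbar$-problems on $\mathbb{C}^n$, parametrized by the frequency $\eta=\alpha/k\in[-\delta,\delta]^m$, and to run the Ma--Marinescu/Hsiao--Marinescu scaling argument uniformly in $\eta$. Concretely, on the BRT patch $D$ an element $u\in\mathcal{H}^0_{b,\le k\delta}(X,L^k)$ decomposes as $\hat u=\sum_\alpha e^{i\langle\alpha,x''\rangle}v_\alpha(z)$. The condition $Z_j\hat u=0$, where $Z_j=\partial_{z_j}-i\sum_\ell \partial_{z_j}\varphi_\ell\,\partial_{x_{2n+\ell}}$, becomes
\[
\partial_{\bar z}\big(v_\alpha e^{\langle\alpha,\varphi(z)\rangle}\big)=0 .
\]
So each Fourier component is, up to the factor $e^{-\langle\alpha,\varphi\rangle}$, holomorphic with respect to the weight $k\phi-2\langle\alpha,\varphi\rangle$. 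In the scaling $\alpha=k\eta$, $z\mapsto z/\sqrt k$, the quadratic part of this weight is $M_\eta$, which is positive definite for $|\eta_\ell|\le\delta$. This is exactly why $\delta$ must be small, and it explains the shape of the integrand in $P_\delta$: the factor $\det M_\eta$ is the Bergman density of $e^{-\langle M_\eta z,\bar z\rangle}$ on $\mathbb{C}^n$; the terms $\pm\eta_\ell\lambda^\ell_{jr}$ are the conjugation by $e^{\mp\langle\eta,\varphi\rangle}$; and $(2\pi)^{-m}\int d\eta$ is the Riemann-sum limit of $\sum_\alpha$ with spacing $1/k$. (Locally freeness lets us treat the $\alpha$'s via Fourier transform in $x''$ on the patch.)

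First I will establish \emph{local $\mathcal{C}^\infty$ bounds} uniform in $k$. Let $\hat u^{(k)}$ denote the scaled section $k^{-(n+m)/2}\hat u(z/\sqrt k,x''/k)$. For $u\in\mathcal{H}^0_{b,\le k\delta}$ I will show that $\hat u^{(k)}$ satisfies, on $D(k^\epsilon,k^{2\epsilon})$, the following:
\begin{itemize}
\item $\ddbar_{b,(k)}\hat u^{(k)}=0$ and $|\partial_{x''}^\beta \hat u^{(k)}|\lesssim \delta^{|\beta|}$ in $L^2$;
\item $\ddbar_{b,(k)}\to\ddbar_{b,\mathbb H}$, the model operator, with all coefficients converging in $\mathcal{C}^\infty$.
\end{itemize}
The control of $x''$-derivatives comes for free from the spectral cutoff $|\alpha_\ell|\le k\delta$. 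Combined with subellipticity of the $\ddbar_b$-part in the $z$-directions, where $\Box_{b,(k)}$ plus $\sum_\ell Y_\ell^2$ is elliptic, this gives a uniform Sobolev estimate
\[
\|\hat u^{(k)}\|_{\mathcal{C}^s(K)}\le C_{K,s}\|\hat u^{(k)}\|_{L^2(D(k^\epsilon,k^{2\epsilon}))}.
\]
Applied to $P^s_{(k),\delta}(\cdot,y)$ as a function of $x$, and symmetrically in $y$, this yields uniform $\mathcal{C}^\infty_{\rm loc}$ bounds on $P^s_{(k),\delta}$ on compact sets. It uses the pointwise bound $P^s_{(k),\delta}(x,x)\le C$, which follows from the extremal characterization $P(x,x)=\sup\{|u(x)|^2:\|u\|=1\}$ and the same estimate. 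By Arzel\`a--Ascoli every subsequence then has a $\mathcal{C}^\infty_{\rm loc}$-convergent sub-subsequence with some limit $P$.

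Next I will \emph{identify the limit}. Any limit $P$ is the kernel of an operator $\mathcal{P}$ on $L^2(\mathbb H^{n,m},e^{-\psi_0})$ with the following properties:
\begin{itemize}
\item $\mathcal{P}$ is self-adjoint, $0\le\mathcal{P}\le 1$, and $\mathcal{P}$ is annihilated by $\ddbar_{b,\mathbb H}$ in $x$;
\item its $x''$-Fourier support lies in $[-\delta,\delta]^m$.
\end{itemize}
These pass to the limit from $P^s_{k,\delta}$. To show $\mathcal{P}^2=\mathcal{P}$ and that $\mathcal{P}$ is the full projection $P_\delta$, I need two inequalities. The inequality $\mathcal{P}\le P_\delta$ is immediate from range considerations. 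For the reverse, $P_\delta\le\mathcal{P}$, I will use the standard peak-section argument: take a model equivariant CR section $e^{i\langle\eta,x''\rangle}f(z)$ with $|\eta|<\delta$, cut it off at scale $k^\epsilon$, transplant it to $X$, and correct it by $L^2$-solving $\ddbar_b$ on the $\alpha$-isotypic component. Positivity of $R^L$ together with $|\eta|\le\delta$ gives a spectral gap $\gtrsim k$ for $\Box_{b,k}$ restricted to $L^2_{Y,\le k\delta}$, via the Fourier-mode reduction to the weighted Kodaira Laplacian with weight $M_\eta$. The correction is therefore $O(k^{-\epsilon'})$ small, so these quasimodes lie asymptotically in the range of $\Pi_{k,(\mathbb{R}^m\curvearrowright,\le k\delta)}$. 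This forces $\mathcal{P}=P_\delta$, so the limit is independent of the subsequence, giving full convergence. Finally, $P_\delta$ is computed explicitly by Fourier transform in $x''$ and the Bargmann--Fock kernel formula for weight $M_\eta$, which produces the displayed integral.

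The main obstacle will be the spectral gap on $L^2_{Y,\le k\delta}$ with constants uniform in $\eta$, since the effective curvature $M_\eta$ degenerates as $\delta$ grows. It also has to handle the boundary $|\alpha_\ell|=k\delta$ of the cutoff, where the Riemann sum must converge to the integral over the closed cube. I would first prove a Bochner--Kodaira formula for $\Box_{b,k}$ on each isotypic piece, whose zeroth-order term is $k\,M_{\alpha/k}+O(1)$, and then use compactness of $X$ to pass from this local estimate to a global one.
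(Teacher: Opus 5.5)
Your architecture is the paper's. You get uniform local $\mathcal{C}^\infty$ bounds on the scaled kernel from Kohn-type estimates plus the cutoff $|\alpha_\ell|\le k\delta$ (Theorem~\ref{sup}), apply Arzel\`a--Ascoli (Theorem~\ref{smoothcon}), and identify the limit by showing that transplanted model sections are asymptotically fixed by $\Pi^{(0)}_{k,(\mathbb{R}^m\curvearrowright,\le k\delta)}$ via the gap of Corollary~\ref{grspgap}. The gap is in this last step.

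A cut-off, transplanted model section $\ddot u_{[k]}$, with $\ddot u=\Pi^{(0),\psi_0}_{b,\le\delta}u$, is supported in a shrinking box of a BRT patch. It therefore lies in no $\alpha$-isotypic component and not in $L^2_{Y,\le k\delta}(X,L^k)$, so you cannot ``correct it by $L^2$-solving $\ddbar_b$ on the $\alpha$-isotypic component'' without first projecting. Write $\Pi^{(0)}_{k,(\mathbb{R}^m\curvearrowright,\le k\delta)}=\Pi^{(0)}_{b,k}\circ Q^{(0)}_{X,\le k\delta}$. The gap, together with $[Q^{(0)}_{X,\le k\delta},\ddbar_b]=0$, only gives $\|(\Pi^{(0)}_{k,(\mathbb{R}^m\curvearrowright,\le k\delta)}-I)Q^{(0)}_{X,\le k\delta}\ddot u_{[k]}\|_k^2\lesssim k^{-1}\|\ddbar_b\ddot u_{[k]}\|_k^2\to0$. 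You still have to prove $(Q^{(0)}_{X,\le k\delta}\ddot u_{[k]}-\ddot u_{[k]}\,|\,v_{[k]})_k\to0$.

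This is not formal. $Q^{(0)}_{X,\le k\delta}$ is built from the global spectral decomposition of $-iY_1,\dots,-iY_m$, that is, by averaging over the closure $\mathbb{T}^N$ of the $\mathbb{R}^m$-orbits. These orbits may be dense in a higher-dimensional torus and leave and re-enter the patch. Your remark that local freeness lets one ``treat the $\alpha$'s via Fourier transform in $x''$'' is true for joint eigensections, which equal $e^{i\langle\alpha,x''\rangle}v_\alpha(z)$ on $D$. It says nothing about how the projection acts on a non-equivariant localized section.

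This comparison is the actual content of the paper's Lemma~\ref{gdiscon}. By Lemma~\ref{Q+R}, on $D_0$ one has $Q^{(0)}_{X,\le k\delta}=\hat Q^{(0)}_{\le k\delta}+\hat R_{\le k\delta}$. Here $\hat Q^{(0)}_{\le k\delta}$ is the local multiplier $1_{[-k\delta,k\delta]^m}(D_{x''})$ composed with a cutoff in $y''$, and $\hat R_{\le k\delta}$ is the contribution from $y''$ outside that cutoff. The $\hat Q$ term is handled by Plancherel: $\mathcal{F}_{x''}[\ddot u]$ is supported in $[-\delta,\delta]^m$, so $\|1_{[-\delta,\delta]^m}\mathcal{F}[\ddot u_{(k)}]-\mathcal{F}[\ddot u]\|\lesssim\|\ddot u_{(k)}-\ddot u\|\to0$. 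The $\hat R$ term is shown separately to vanish in the limit. You need the same comparison to justify that the limit has $x''$-Fourier support in $[-\delta,\delta]^m$, which you simply assert passes to the limit.

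Relatedly, the obstacles you single out are misplaced. The uniform gap is the routine part (Proposition~\ref{gap}, Corollary~\ref{grspgap}). There is also no Riemann sum with mesh $1/k$ to control at the boundary of the cube. When some $N_\ell\ge2$, the spectrum $\{\sum_j m_j\beta^\ell_j\}$ of $-iY_\ell$ is dense. The $\eta$-integral comes from local Fourier analysis in $x''$, and the cube's boundary is absorbed by the Plancherel step above.

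Two smaller corrections. Test with $L^2$ wave packets $\nu=\Pi^{(0),\psi_0}_{b,\le\delta}u$, $u\in\mathcal{C}^\infty_c(\mathbb{H}^{n,m})$, rather than single frequencies $e^{i\langle\eta,x''\rangle}f(z)\notin L^2(\mathbb{H}^{n,m})$. And the CR condition is $\overline{Z}_j\hat u=0$, which makes $e^{-\langle\alpha,\varphi\rangle}v_\alpha$ holomorphic. That agrees with the weight $k\phi-2\langle\alpha,\varphi\rangle$ you state, not with your displayed equation.
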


For convenience of later use, we will prove the following equivalent form of Theorem \ref{3rdmain1}.
Consider the isometry
\begin{equation*}
    U_{k,s} : L^2_{(0,q)}(D) \rightarrow L^2_{(0,q)}(D, L^k),\quad u \mapsto ue^{\frac{k}{2}\phi}s^k,
\end{equation*}
with inverse $U_{k,s}^{-1} : L^2_{(0,q)}(D, L^k)\rightarrow L^2_{(0,q)}(D),\ \alpha \mapsto e^{-\frac{k}{2}\phi}s^{-k}\alpha.$
The localization of $P_{k,\delta}$ with respect to the trivializing rigid CR section $s$ is given by
\[
P_{k,\delta,s} : L^2_{comp}(D) \to L^2(D), \quad P_{k,\delta,s} = U^{-1}_{k,s}P_{k,\delta}U_{k,s},
\]
where $L^2_{comp}(D)$ is the subspace of elements of $L^2(D)$ with compact support in $D.$
In $D,$ write $f_j(x) = \hat{f}_j\otimes s^k,\ \hat{f}_j\in \mathcal{C}^\infty(D),\ j=1,\dots,d_k.$ We can express $P_{k,\delta}(x,y)$ in $D$ as
\begin{equation*}
    P_{k, \delta,s}(x,y) = \sum_{j=1}^{d_k} e^{-\frac{k}{2}\phi(x)} \hat{f}_j(x)\overline{\hat{f}_j(y)}e^{-\frac{k}{2}\phi(y)} \in \mathcal{C}^\infty(D\times D).
\end{equation*}
Note that we have
\[
P_{k,\delta,s}(x,y) = e^{-\frac{k}{2}\phi(x)} P^s_{k,\delta}(x,y) e^{\frac{k}{2}\phi(y)}.
\]
Put
\begin{equation*}
P_{(k),\delta,s}(x,y) := k^{-n-m} P_{k,\delta,s} \left( \Big(\frac{z}{\sqrt{k}},\frac{x''}{k}\Big), \Big(\frac{w}{\sqrt{k}},\frac{y''}{k}\Big) \right) \in \mathcal{C}^\infty(D(k^\epsilon,k^{2\epsilon})\times D(k^\epsilon,k^{2\epsilon})). 
\end{equation*}

\begin{thm}\label{3rdmain}
For $\delta$ small enough, the sequence $P_{(k),\delta,s}(x,y)$ converges to\\ $e^{-\frac{1}{2}\psi_0(x)}P_\delta(x,y)e^{\frac{1}{2}\psi_0(y)}$ locally uniformly in the $\mathcal{C}^\infty$-topology.
\end{thm}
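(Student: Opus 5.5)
The plan is to reduce Theorem \ref{3rdmain} to a local model problem via the $\mathbb{R}^m$-action, following the strategy of \cite{HHL22} in the codimension-one case. First we decompose along the joint spectrum of $(-iY_1,\dots,-iY_m)$. Since $\phi$ and $\varphi_\ell$ are independent of $x''$ in a BRT trivialization, a section $u\in\mathcal{H}^0_{b,\le k\delta}(X,L^k)$ that is a joint eigenfunction with eigenvalue $\alpha=(\alpha_1,\dots,\alpha_m)$ has the local form $\hat u(z,x'')=e^{i\langle\alpha,x''\rangle}\tilde u(z)$. The CR equation $Z_j\hat u=0$ then becomes
\[
\Big(\frac{\partial}{\partial\overline z_j}+\sum_\ell\alpha_\ell\frac{\partial\varphi_\ell}{\partial\overline z_j}\Big)\tilde u=0 .
\]
So $\tilde u\,e^{\sum_\ell\alpha_\ell\varphi_\ell}$ is holomorphic in $z$, with weight $e^{-k\phi-2\sum_\ell\alpha_\ell\varphi_\ell}$. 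After rescaling $\alpha=k\eta$, $z\mapsto z/\sqrt k$, the weight becomes $\exp\big(-\langle M_\eta z,z\rangle+O(k^{-1/2})\big)$. This is positive definite for $|\eta_\ell|\le\delta$, by the choice of $\delta$. The problem is therefore a family, parametrized by $\eta$, of Bergman kernels for positive weights on $\mathbb C^n$. The explicit $P_\delta$ is exactly $(2\pi)^{-m}\int_{[-\delta,\delta]^m} e^{i\langle\eta,x''-y''\rangle}B_\eta(z,w)\,d\eta$ with $B_\eta$ the Bargmann--Fock kernel, $\pi^{-n}\det M_\eta\, e^{\langle M_\eta z,w\rangle}$ up to the weight factors $e^{\pm\eta\lambda}$. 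The sum over the discrete spectrum in $x''$ scaled by $1/k$ then becomes a Riemann sum converging to this $\eta$-integral.

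\textbf{Step 1 (uniform a priori estimates).} Set $P_{(k)}:=P_{(k),\delta,s}$. The operators $k^{-1}\Box_{b,k}$, conjugated by $U_{k,s}$ and the scaling map $F_k$, converge with all coefficients in $\mathcal{C}^\infty$ on $D(k^\epsilon,k^{2\epsilon})$ to a model operator $\Box^{(0)}$ on $\mathbb H^{n,m}$. We would then show that $P_{(k)}(x,\cdot)$ and its derivatives are bounded uniformly in $k$ on compact sets. The method is to apply the operator $\Box_{(k)}+\sum_\ell(-iY_\ell)^2/k^2$, which is elliptic on $\mathbb{C}TX$ because $T^{1,0}\oplus T^{0,1}\oplus \mathrm{span}\,Y$ spans. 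We use that $P_{(k)}$ is annihilated by $\Box_{(k)}$ and that the scaled $-iY$ acts with spectrum in $[-\delta,\delta]$, so $Y$-derivatives are bounded. We combine this with the pointwise bound $P_{(k)}(x,x)\le C$ and with G\aa rding/Sobolev estimates on scaled balls. The pointwise bound comes from the extremal characterization $P_k(x,x)=\sup|u(x)|^2/\|u\|^2$ and a sub-mean-value estimate in the scaled coordinates.

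\textbf{Step 2 (identify limits).} By Arzel\`a--Ascoli, every subsequence has a further subsequence converging in $\mathcal{C}^\infty_{loc}$ to some $P_*$. We need to show $P_*=e^{-\psi_0/2}P_\delta e^{\psi_0/2}$ by proving two inequalities.

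For $P_*\le$ model we argue as follows. $P_*(\cdot,y)$ is CR for the model structure, has $x''$-spectrum in $[-\delta,\delta]^m$, and defines a positive operator of norm $\le1$ by Fatou. Hence $P_*\le P_\delta$ as quadratic forms, since $P_\delta$ is the projection onto that model space.

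For the reverse we construct peak sections. Take a model eigen-CR function $e^{i\langle\eta,x''\rangle}e^{\langle M_\eta z,w\rangle}$ with $\eta$ in the joint spectrum scaled by $1/k$. Cut it off by $\chi(\sqrt k z/k^\epsilon)$, keeping the $x''$-dependence exact so that it remains a $Y$-eigenfunction. Then correct it by $u-\Box_{b,k}^{-1}\ddbar_b^*\ddbar_b u$ inside the fixed $Y$-eigenspace. For each fixed $\alpha$ with $|\alpha|\le k\delta$, positivity of $M_\eta$ gives a spectral gap $\ge ck$ for $\Box_{b,k}$. This is an H\"ormander $L^2$ estimate on the quotient orbifold, or equivalently a Kodaira--Bochner formula on the eigenspace, and it makes the correction $O(k^{-\infty})$ after scaling. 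This shows that $P_*(x,x)\ge P_\delta(x,x)$. Together with $P_*\le P_\delta$, it forces $P_*=P_\delta$ via the Cauchy--Schwarz equality argument for reproducing kernels.

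The main obstacle is the uniform spectral gap on each $Y$-eigenspace, uniformly in $\alpha$ with $|\alpha|\le k\delta$, combined with globalizing. The BRT charts only cover $X$ locally, and the $\mathbb{R}^m$-orbits need not be compact. We would first try a Bochner--Kodaira--Nakano identity for $\ddbar_b$ restricted to $L^2_{Y,\alpha}$. There the Levi term contributes $\sum_\ell\alpha_\ell\,\mathcal{L}_\ell$, and the curvature contributes $k R^L$, giving $\ge k(R^L-2\sum\eta_\ell\mathcal L_\ell)\ge ck$. The difficulty is controlling the error terms uniformly with the partition of unity.
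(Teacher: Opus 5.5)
Your Step 1 and your upper bound $P_*\le e^{-\frac12\psi_0}P_\delta e^{\frac12\psi_0}$ are fine and match the paper: Theorem \ref{sup} plus Arzel\`a--Ascoli, after which the limit is identified by the weak-limit projection argument of Section \ref{scaledszegokernelasy}. The gap is in the reverse inequality.

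A function on the BRT chart of the form $e^{i\langle\alpha,x''\rangle}\chi(\sqrt{k}z/k^{\epsilon})h(z)$ is not a global section of $L^k$, let alone a joint eigenfunction of $-iY_1,\dots,-iY_m$ on $X$. There are three reasons:
\begin{itemize}
\item the chart only covers $|x_{2n+\ell}|<\eta_0$;
\item the $\mathbb{R}^m$-orbits are non-compact;
\item since $Y_\ell=\sum_j\beta^\ell_jT^\ell_j$ with $\mathbb{Q}$-independent $\beta^\ell_j$, an orbit can be dense in a torus orbit of dimension $>m$, and that torus orbit re-enters the chart at points with different $z$.
\end{itemize}
So ``keeping the $x''$-dependence exact'' does not produce an element of $L^{2}_{Y,\le k\delta}(X,L^k)$. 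To get one you must project onto the eigenspace (equivalently, average over $\mathbb{T}^N$). That changes the function inside the chart by the contributions of the returning orbit pieces, and you give no control of this change. Your construction works essentially only when the orbits are closed.

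For the same reason the joint spectrum $\{(\sum_j n^\ell_j\beta^\ell_j)_{\ell}\}$ is typically dense in $\mathbb{R}^m$, so there is no Riemann sum over eigenvalues. Moreover, the ratio $|u_\alpha(x)|^2/\|u_\alpha\|_k^2$ for a single eigensection depends on global orbit geometry, so summing such ratios to reach $P_\delta(x,x)$ is not justified. The continuous $\eta$-integral in $P_\delta$ comes from the local Fourier transform in $x''$, not from eigenvalue spacing.

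Two side remarks. In this situation $X/\mathbb{R}^m$ is not Hausdorff, so there is no quotient orbifold to run a H\"ormander estimate on. And the uniform gap you single out as the main obstacle is not where the difficulty lies: it is the paper's Corollary \ref{grspgap}, obtained from the local formula in Proposition \ref{gap}.

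The missing ingredient is Lemma \ref{Q+R}. On a chart, $Q^{(0)}_{X,\le k\delta}=\hat{Q}^{(0)}_{\le k\delta}+\hat{R}_{\le k\delta}$, where:
\begin{itemize}
\item $\hat{Q}^{(0)}_{\le k\delta}$ is the Fourier cut-off $1_{[-k\delta,k\delta]^m}$ in $x''$, which after scaling is exactly the restriction to $[-\delta,\delta]^m$ defining $P_\delta$;
\item $\hat{R}_{\le k\delta}$ carries the contribution of the torus action from outside the slab where the $x''$-cut-off equals $1$.
\end{itemize}
With this the paper never builds eigensections. It tests against $\ddot u_{[k]}$, the scaled cut-off (in both $z$ and $x''$) of $\ddot u=\Pi^{(0),\psi_0}_{b,\le\delta}u$. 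It then proves Lemma \ref{gdiscon} by writing $(\Pi^{(0)}_{k,({\mathbb R}^m\curvearrowright,\leq k\delta)}-I)\ddot u_{[k]}$ as $(\Pi^{(0)}_{k,({\mathbb R}^m\curvearrowright,\leq k\delta)}-I)Q^{(0)}_{X,\le k\delta}\ddot u_{[k]}+(Q^{(0)}_{X,\le k\delta}-I)\ddot u_{[k]}$.
\begin{itemize}
\item The first term is controlled by Corollary \ref{grspgap}, the commutation $\ddbar_bQ^{(0)}_{X,\le k\delta}=Q^{(0)}_{X,\le k\delta}\ddbar_b$, and the bound $\|\ddbar_b\ddot u_{[k]}\|_k^2=o(k)$. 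No $O(k^{-\infty})$ correction is needed.
\item The second term tends to zero weakly, by Plancherel for $\hat{Q}^{(0)}_{\le k\delta}$ and the decay of $\hat{R}_{\le k\delta}$.
\end{itemize}
To repair your argument you need this localization of the low-frequency projector, or an equivalent $\mathbb{T}^N$-equivariant construction of test sections.
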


As an application of Theorem \ref{3rdmain}, we prove an $\mathbb{R}^m$-equivariant CR embedding theorem of $X$ into a complex projective space. We recall the definition of CR embeddings.
\begin{defn}
Let $X$ and $Y$ be two CR manifolds. A map $\Psi: X \rightarrow Y$ is called a CR embedding if $\Psi$ is a smooth embedding and $\Psi$ satisfies
\[
d\Psi(T^{1,0}X) = d\Psi(\mathbb{C}TX)\cap T^{1,0}Y.
\]
\end{defn}
On an open subset $D\subset X$ satisfying Corollary \ref{invnbd}, we write
\begin{align*}
    f_j = \hat{f}_j\otimes s^k, \quad \hat{f}_j \in \mathcal{C}^\infty(D), \quad j=1,\dots,d_k.
\end{align*}
By Lemma \ref{wedef}, for $\delta$ small enough and $k$ large enough, we have a well-defined global map
\begin{equation}\label{introembedmap}
    \Phi_{k,\delta} : X \to \mathbb{CP}^{d_k-1}, \quad x \mapsto \big[\hat{f}_1(x) : \cdots : \hat{f}_{d_k}(x)\big].
\end{equation}
We can choose $f_j,\ j=1,\dots, d_k,$ such that $f_j\in \bigcap_{1\leq\ell\leq m}\mathcal{H}^0_{b,(Y_\ell,\alpha^j_\ell)}(X,L^k)$  where each $\alpha^j_\ell \in \text{Spec}(-iY_\ell)$ with $\big|\alpha^j_\ell\big| \leq k\delta.$

We now define an $\mathbb{R}^m$-action on $\mathbb{CP}^{d_k-1}.$ For each $j = 1,\dots, d_k,$ we assign a multi-weight $\alpha^j=(\alpha^j_1,\dots,\alpha^j_{m})\in \mathbb{R}^m.$ For all $\eta = (\eta_1,\dots,\eta_m)\in \mathbb{R}^m,$ we define, for $[z_1 : \cdots : z_{d_k}]\in \mathbb{CP}^{d_k-1},$
\begin{align}\label{intrordact}
    \eta \circ [z_1 :\cdots :z_{d_k}] 
    &:= \big[e^{i\langle\alpha^1,\eta\rangle} z_1 : \cdots :e^{i\langle\alpha^{d_k},\eta\rangle} z_{d_k}\big],\\
    & = \big[e^{i(\alpha^1_1\eta_1+\cdots+\alpha^1_m\eta_m)}z_1 : \cdots : e^{i(\alpha^{d_k}_1\eta_1+\cdots+\alpha^{d_k}_m\eta_m)}z_{d_k}\big].\nonumber
\end{align}

\begin{thm}\label{embedthmintro}
 Let $(X,T^{1,0})$ be a compact CR manifold of dimension $2n+m$ and of CR-codimension $m,$ with a transversal locally free $\mathbb{R}^m$-action. Assume there is a rigid positive CR line bundle $L$ over $X.$ Then there exists $\delta_0 > 0$ such that for all $\delta\in (0,\delta_0)$ there exists a positive integer $k(\delta)$ so that for $k>k(\delta)$ and any orthonormal basis $\{f_j\}_{j=1}^{d_k}$ of $\mathcal{H}^0_{b,\leq k\delta}(X, L^k)$ with respect to $(\,\cdot\,|\,\cdot\,)_k$ such that $f_j\in \bigcap_{1\leq\ell\leq m}\mathcal{H}^0_{b,(Y_\ell,\alpha^j_\ell)}(X,L^k)$ where each $\alpha^j_\ell \in \text{Spec}(-iY_\ell)$ with $\big|\alpha^j_\ell\big| \leq k\delta$, the map $\Phi_{k,\delta}$ introduced by (\ref{introembedmap}) is a smooth CR embedding which is $\mathbb{R}^m$-equivariant with respect to the multi-weighted $\mathbb{R}^m$-action on $\mathbb{CP}^{d_k-1}$ defined by $\alpha^1=(\alpha^1_1,\dots,\alpha^1_m),\dots,\alpha^{d_k}=(\alpha^{d_k}_1,\dots,\alpha^{d_k}_m)$ as in (\ref{intrordact}).
\end{thm}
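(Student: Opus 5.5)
The plan follows the standard Kodaira embedding argument based on kernel asymptotics, with Theorem~\ref{3rdmain} as the main input. There are four things to prove: $\Phi_{k,\delta}$ is well defined and equivariant, it is an immersion, it is CR, and it is injective.

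\emph{Well-definedness and equivariance.} Lemma~\ref{wedef} already gives that $\Phi_{k,\delta}$ is a well-defined smooth map. Independence of the trivialization $s$ holds because changing $s$ multiplies every $\hat f_j$ by the same nonvanishing function. The map is CR because each $\hat f_j$ is a CR function on $D$. Equivariance comes from $-iY_\ell f_j=\alpha^j_\ell f_j$: on a BRT chart with $s$ rigid we get $\hat f_j(\eta\circ x)=e^{i\langle\alpha^j,\eta\rangle}\hat f_j(x)$. Comparing with \eqref{intrordact} gives $\Phi_{k,\delta}(\eta\circ x)=\eta\circ\Phi_{k,\delta}(x)$. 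For $\eta$ large one applies this step by step along a chain of charts, using Corollary~\ref{invnbd}.

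\emph{Immersion and the CR embedding condition.} Fix $p$ and a BRT chart centered at $p$. By Theorem~\ref{3rdmain}, in scaled coordinates the kernel $P_{(k),\delta,s}$ and all its derivatives converge to the explicit model kernel $e^{-\psi_0(x)/2}P_\delta(x,y)e^{\psi_0(y)/2}$. Following the usual peak-section argument, I would build sections $u_0,u_1,\dots,u_n,v_1,\dots,v_m$ from derivatives of $y\mapsto P_{k,\delta}(\cdot,y)$ at $y=p$. Here $u_j$ comes from $\partial_{\bar w_j}$ and $v_\ell$ from $\partial_{y''_\ell}$. The goal is to show that, after rescaling, the matrix of first derivatives at $p$ of the functions $\hat u_\bullet/\hat u_0$, taken in the directions $\partial_{z_j}$ and $\partial_{x''_\ell}$, converges to the corresponding matrix computed from $P_\delta$.

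In the model, $\partial_{z}$ of $z_j\bar w_r$ gives independence in the $T^{1,0}$ directions, since $M_\eta$ is positive definite. The $x''$ directions come from $\int e^{i\langle \eta,x''-y''\rangle}\det M_\eta\, d\eta$. Differentiating in $y''_\ell$ brings down $\eta_\ell$, and the functions $1,\eta_1,\dots,\eta_m$ are linearly independent on $[-\delta,\delta]^m$. So the limiting Jacobian is invertible, and hence $d\Phi_{k,\delta}(p)$ is injective for $k$ large. Compactness of $X$ makes $k$ uniform in $p$, with uniformity supplied by the locally uniform convergence. Injectivity of $d\Phi_{k,\delta}$ together with CR-ness also gives $d\Phi(T^{1,0}X)=d\Phi(\mathbb CTX)\cap T^{1,0}\mathbb{CP}^{d_k-1}$. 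The reason is that $\partial_{x''}$ directions map to vectors whose holomorphic and antiholomorphic parts are linked, as in the model: there $\partial_{x''}$ maps to a vector that is nonzero modulo $T^{1,0}\oplus T^{0,1}$.

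\emph{Injectivity (main obstacle).} Suppose $x_k\neq y_k$ with $\Phi_{k,\delta}(x_k)=\Phi_{k,\delta}(y_k)$. Then by Cauchy--Schwarz
\[
|P_{k,\delta}(x_k,y_k)|^2=|P_{k,\delta}(x_k,x_k)|\,|P_{k,\delta}(y_k,y_k)|,
\]
with the norms taken in $L^k$. I split into three cases.
\begin{enumerate}[label=(\roman*)]
\item Scaled distance bounded, i.e.\ $|z(x_k)-z(y_k)|\lesssim k^{-1/2}$ and $|x''(x_k)-x''(y_k)|\lesssim k^{-1}$. Theorem~\ref{3rdmain} transfers the equality to the model. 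In the model the strict Cauchy--Schwarz inequality for the limit kernel is strict off the diagonal, because $z$ enters through $e^{z\bar w}$ and $x''$ enters through a nontrivial Fourier transform. Together with the immersion estimate near the diagonal, this gives a contradiction.
\item Scaled distance tending to infinity but actual distance tending to zero. Theorem~\ref{3rdmain} is only local in scaled coordinates, so one needs off-diagonal decay. I would try to get it from the explicit formula by showing that $|P_\delta(x,y)|\to0$ as the scaled distance tends to infinity: Gaussian decay in $z-w$ and Riemann--Lebesgue decay in $x''-y''$. This has to be upgraded to a uniform statement on the ranges $D(k^\epsilon,k^{2\epsilon})$, which needs the uniformity in the proof of Theorem~\ref{3rdmain}.
\item Points a fixed distance apart. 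Since the $\mathbb R^m$-action is only locally free, far-apart points can lie on the same orbit. In that case equivariance reduces the question to separating $x$ from $\eta\circ x$. That in turn reduces to the weights $\alpha^j$ distinguishing $\eta$ from the stabilizer, which follows because the weights $\alpha^j/k$ fill $[-\delta,\delta]^m$ densely. For points on different orbits I would use peak sections supported near $x_k$ constructed from the kernel: $P_{k,\delta}(\cdot,x_k)$ concentrates near the orbit of $x_k$.
\end{enumerate}
I expect case (ii) and the orbit part of case (iii) to be the hardest steps.
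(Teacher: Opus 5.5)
Your well-definedness, CR and equivariance arguments are fine. Your immersion step is a legitimate variant of the paper's: instead of projecting scaled cut-off functions to build $g^1_k,\dots,g^{n+m}_k$, you differentiate $y\mapsto P_{k,\delta}(\cdot,y)$ in $\bar w_j$ and $y''_\ell$ at $y=p$. The limiting complex Jacobian of the resulting quotients in the directions $Z_j=\partial_{z_j}$, $Y_\ell=\partial_{x''_\ell}$ at $p$ is block diagonal, with blocks $\int M_\eta\det M_\eta\,d\eta$ and the covariance matrix of $\eta$ under $\det M_\eta\,d\eta$.

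Use this invertibility, not ``injectivity plus CR'', to get $d\Phi(T^{1,0}X)=d\Phi(\mathbb CTX)\cap T^{1,0}\mathbb{CP}^{d_k-1}$. For $m\ge2$ that implication is false: $d\Phi(Y_1)=J\,d\Phi(Y_2)$ is compatible with injectivity of $d\Phi$ but sends $Y_1+iY_2$ into $T^{1,0}$. What is needed is that the $(1,0)$-parts of $d\Phi(Y_1),\dots,d\Phi(Y_m)$ are $\mathbb C$-linearly independent modulo $d\Phi(T^{1,0}X)$, and your invertible matrix gives exactly that.

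\textbf{The genuine gap is injectivity for pairs that do not stay at bounded scaled distance.} Theorem~\ref{3rdmain} gives convergence of $P_{(k),\delta,s}$ only on fixed compact subsets of $\mathbb H^{n,m}\times\mathbb H^{n,m}$, with no rate and no uniformity on $D(k^\epsilon,k^{2\epsilon})$. So the decay of the model kernel tells you nothing about $P_{k,\delta}(x_k,y_k)$ when $k^{-1/2}\ll d(x_k,y_k)\ll 1$ (your case (ii)). Likewise, nothing available shows that $P_{k,\delta}(\cdot,x)$ is small away from the orbit of $x$ (case (iii)).

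Your orbit dichotomy also misses the generic situation. By Proposition~\ref{torusprop} orbit closures are tori, usually larger than the $\mathbb R^m$-orbits. A point $q\in\overline{\mathbb R^m\circ p}\setminus\mathbb R^m\circ p$ is not reached by $\mathbb R^m$-equivariance, and concentration near the orbit cannot separate it from $p$. Density of the $\alpha^j/k$ in $[-\delta,\delta]^m$ does not by itself give two sections nonvanishing at $p$ whose weights separate $\eta$. As written, the proposal does not prove injectivity.

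The paper closes this with two ingredients you are missing.

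\emph{First, Lemma~\ref{nest}.} Multiplying by the base-point-free sections of $\mathcal H^0_{b,\le j\delta}(X,L^j)$ from Lemma~\ref{wedef} (weights add, so products lie in $\mathcal H^0_{b,\le (k+j)\delta}(X,L^{k+j})$) gives $A_{k+j}\subset A_k$ for $j\ge k_0$, where $A_k=\{(x,y);\,\Phi_{k,\delta}(x)=\Phi_{k,\delta}(y)\}$. Hence if $x_k\neq y_k$, $(x_k,y_k)\in A_k$ and $x_k,y_k\to p$, then $(x_k,y_k)\in A_{k_*}$ for one fixed large $k_*$. This contradicts injectivity of the immersion $\Phi_{k_*,\delta}$ near $p$, and no off-diagonal decay is needed. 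The paper uses the lemma only to reduce to scaled distance tending to $0$ and then argues with $H_k''$ against the model, but the lemma already makes your cases (i) and (ii) unnecessary.

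\emph{Second, explicit peak sections for $x_k\to p\neq q\leftarrow y_k$.} The paper builds a section $G_k$ concentrated near $p$ from $\chi(\sqrt k z/k^\epsilon)$, the factor $e^{i\langle x'',\eta\rangle-\langle\eta,\varphi\rangle}\tau(\eta_1/k)\cdots\tau(\eta_m/k)$ and a cut-off $\sigma(x'')$. It proves $\|(\Box_{b,k})^jG_k\|_k=O(k^{-\infty})$. Combining the iterated Kohn estimate \eqref{kohnest} with the spectral gap of Corollary~\ref{grspgap} gives $F_{k,\delta}\Pi^{(0)}_{k,(\mathbb R^m\curvearrowright,\le k\delta)}G_k=F_{k,\delta}G_k+O(k^{-\infty})$ pointwise. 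Lemma~\ref{peakest} then gives $|F_{k,\delta}G_k(p)|_{h^k}\approx k^m$ and $|F_{k,\delta}G_k(y_k)|_{h^k}=O(k^{-\infty})$.

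The operator $F_{k,\delta}=\tau(-iY_1/k)\cdots\tau(-iY_m/k)$ is an average along the $\mathbb R^m$-action against a kernel rapidly decreasing at scale $1/k$. So this separates $p$ from $q$ whether or not $q$ lies in the orbit or the orbit closure of $p$. That is the role of the smooth cut-off $\tau$: the sharp cut-off $1_{[-\delta,\delta]^m}$ yields a kernel that decays only slowly along orbits, even in the model.
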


This paper is organized as follows. In Section 2, we introduce some notation and recall the necessary notions concerning CR manifolds of high codimension, tangential Cauchy--Riemann operators, and CR vector bundles. In Section 3, we introduce the Heisenberg group $\mathbb{H}^{n,m}:=\mathbb{C}^n\times\mathbb{R}^m$, equipped with the weighted volume form $e^{-\psi_0(x)}d\mu_{\mathbb{H}^{n,m}}(x)$, as our model space, and recall the theory of weighted Bergman kernels. We then define the Kohn Laplacian $\square^{(q),\psi_0}_b$ on $\mathbb{H}^{n,m}$ and, following the ideas of \cite{HM12} and \cite{HHL18}, derive an explicit representation of the Szeg\H{o} kernel associated with $\square^{(0),\psi_0}_b$. In Section 4, we introduce the scaled Kohn Laplacian $\square^{(q)}_{b,(k),s}$ and the scaled Szeg\H{o} projection $\Pi_{(k),s}$, and prove Theorem \ref{mainthmsp} in Section 4.3. In Section 5, we prove Theorem \ref{t-gue260829yyd} by suitably modifying the proof of Theorem \ref{mainthmsp}. In Section 6, we discuss one special case in which a substantially weaker spectral-gap assumption suffices to establish the asymptotics of the Szeg\H{o} kernels. In Section 7, we define a locally free transversal CR $\mathbb{R}^m$-action on a compact oriented CR manifold of high codimension and introduce the partial Szeg\H{o} kernel $P_{k,\delta}$, defined using a cutoff associated with the $\mathbb{R}^m$-action. We show that $P_{k,\delta}$ is a smoothing operator. We then prove in Section 7.4 that the scaled partial Szeg\H{o} kernel $P_{(k),\delta,s}$ converges in the $\mathcal{C}^{\infty}$ topology to the partial Szeg\H{o} kernel of the model case; this is Theorem \ref{3rdmain}. Finally, in Section 8, we prove Theorem \ref{embedthmintro} and give some examples to illustrate Theorem \ref{embedthmintro}.
\section{Preliminaries}\label{s-gue260804yyd} 

\subsection{Standard notations} \label{s-ssna}

We use the following notations: $\mathbb N=\{1,2,\ldots\}$ is the set of natural numbers excluding $0$ and $\mathbb N_0=\mathbb N\cup\{0\}$, $\mathbb R$ is the set of real numbers, 
$\dot{\mathbb R}=\{x\in\mathbb R;\, x\neq0\}$,
${\mathbb R}_+=\{x\in\mathbb R;\, x>0\}$, $\overline{\mathbb R}_+=\{x\in\mathbb R;\, x\geq0\}$. Furthermore we adopt the standard multi-index notation: we write $\alpha=(\alpha_1,\ldots,\alpha_\ell)\in\mathbb N^\ell_0$ 
if $\alpha_j\in\mathbb N_0$, $j=1,\ldots,\ell$. For $x = (x_1, \dots, x_\ell) \in \mathbb{R}^\ell$, we write
\begin{align*}
x^\alpha &= x_1^{\alpha_1} \dots x_\ell^{\alpha_\ell}, \quad \partial_{x_j} = \frac{\partial}{\partial x_j}, \quad \partial_x^\alpha = \partial_{x_1}^{\alpha_1} \dots \partial_{x_\ell}^{\alpha_\ell} = \frac{\partial^{|\alpha|}}{\partial x^\alpha}, \\
D_{x_j} &= \frac{1}{i}\partial_{x_j}, \quad D_x^\alpha = D_{x_1}^{\alpha_1} \dots D_{x_\ell}^{\alpha_\ell}, \quad D_x = \frac{1}{i}\partial_x
\end{align*}
Let $z = (z_1, \dots, z_\ell)$, $z_j = x_{2j-1} + i x_{2j}$, $j = 1, \ldots, \ell$, be coordinates of $\mathbb{C}^\ell$, where $x = (x_1, \dots, x_{2\ell}) \in \mathbb{R}^{2\ell}$ are coordinates in $\mathbb{R}^{2\ell}$.  We write
\begin{align*}
z^\alpha &= z_1^{\alpha_1} \dots z_\ell^{\alpha_\ell}, \quad \bar{z}^\alpha = \bar{z}_1^{\alpha_1} \dots \bar{z}_\ell^{\alpha_\ell}, \\
\partial_{z_j} &= \frac{\partial}{\partial z_j} = \frac{1}{2} \Big( \frac{\partial} {\partial x_{2j-1}} - i \frac{\partial} {\partial x_{2j}} \Big), \quad \partial_{\bar{z}_j} = \frac{\partial}{\partial \bar{z}_j} = \frac{1}{2} \Big( \frac{\partial}{\partial x_{2j-1}} + i \frac{\partial}{\partial x_{2j}} \Big), \\
\partial_z^\alpha &= \partial_{z_1}^{\alpha_1} \dots \partial_{z_\ell}^{\alpha_\ell} = \frac{\partial^{|\alpha|}}{\partial z^\alpha}, \quad \partial_{\bar{z}}^\alpha = \partial_{\bar{z}_1}^{\alpha_1} \dots \partial_{\bar{z}_\ell}^{\alpha_\ell} = \frac{\partial^{|\alpha|}}{\partial \bar{z}^\alpha}.
\end{align*}

Let $M$ be a smooth paracompact manifold. We let $TM$ and $T^*M$ denote respectively the tangent bundle of $M$ and the cotangent bundle of $M$. The complexified tangent bundle $TM \otimes \mathbb{C}$ of $M$ will be denoted by $\mathbb CTM$, similarly we write $\mathbb CT^*M$ for the complexified cotangent bundle of $M$. Consider $\langle\,\cdot\,,\cdot\,\rangle$ to denote the pointwise
duality between $TM$ and $T^*M$; we extend $\langle\,\cdot\,,\cdot\,\rangle$ bi-linearly to $\mathbb C TM\times\mathbb CT^*M$. Let $B$ be a smooth vector bundle over $M$. The fiber of $B$ at $x\in M$ will be denoted by $B_x$. Let $E$ be a vector bundle over a smooth paracompact manifold $N$. We write
$B\boxtimes E^*$ to denote the vector bundle over $M\times N$ with fiber over $(x, y)\in M\times N$ consisting of the linear maps from $E_y$ to $B_x$.  

Let $Y\subset M$ be an open set. From now on, the spaces of distribution sections of $B$ over $Y$ and smooth sections of $B$ over $Y$ will be denoted by $\mathcal D'(Y, B)$ and $\mathcal{C}^\infty(Y, B)$, respectively.
Let $\mathcal E'(Y, B)$ be the subspace of $\mathcal D'(Y, B)$ whose elements have compact support in $Y$. Let $\mathcal{C}^\infty_c(Y,B):=\mathcal{C}^\infty(Y,B)\cap\mathcal{E}'(Y,B)$. 
For $m\in\mathbb R$, let $H^m(Y, B)$ denote the Sobolev space
of order $m$ of sections of $B$ over $Y$. Let us denote
\begin{align*}
H^m_{\rm loc\,}(Y, B)=\big\{u\in\mathcal{D}'(Y, B);\, \varphi u\in H^m(Y, B),
    \, \forall\varphi\in \mathcal{C}^\infty_c(Y)\big\}\,,
\end{align*}
and
\begin{align*}
       H^m_{\rm comp\,}(Y, B)=H^m_{\rm loc}(Y, B)\cap\mathcal{E}'(Y, B)\,.
\end{align*}

Let $B$ and $E$ be smooth vector
bundles over paracompact orientable manifolds $M$ and $N$, respectively, equipped with smooth densities of integration. If
$A: \mathcal{C}^\infty_c(N,E)\To\mathcal D'(M,B)$
is continuous, we write $A(x, y)$ to denote the distribution kernel of $A$.
The following two statements are equivalent
\begin{itemize}
	\item[(i)] $A$ is continuous: $\mathcal E'(N,E)\To\mathcal{C}^\infty(M,B)$,
	\item[(ii)] $A(x,y)\in\mathcal{C}^\infty(M\times N,B\boxtimes E^*)$.
\end{itemize}
If $A$ satisfies (1) or (2), we say that $A$ is smoothing on $M \times N$. 
We say that $A$ is properly supported if the restrictions of the two projections 
$(x,y)\mapsto x$, $(x,y)\mapsto y$ to ${\rm supp\,}(A(x,y))$
are proper.

Let $H(x,y)\in\mathcal D'(M\times N,B\boxtimes E^*)$. We write $H$ to denote the unique continuous operator $\mathcal \mathcal{C}^\infty_c(N,E)\To\mathcal D'(M,B)$ with distribution kernel $H(x,y)$. In this work, we identify $H$ with $H(x,y)$. 

Let
$B: \mathcal{C}^\infty_c(N,E)\To\mathcal D'(M,B)$
be a continuous operator. We write $A(x, y)\equiv B(x,y)$ or $A\equiv B$ if $A-B$ is a smoothing operator.  

\subsection{CR manifolds of high codimension} \label{s-gue250331}
	
	Let $X$ be a smooth manifold of dimension $2n+m$, $n, m\geq 1$, where $T^{1,0}X$ is a CR structure of $X$, that is. 

    \begin{defn}\label{d-gue260502yyd}
Let $T^{1,0}X$ be a subbundle of $\mathbb CTX$. We say that $T^{1,0}X$ is a CR structure if 
\begin{itemize}
\item[(i)] ${\rm dim\,}T^{1,0}_xX=n$, for all $x\in X$,
\item[(ii)] $T^{1,0}X\cap T^{0,1}X=\set{0}$, where $T^{0,1}X:=\ol{T^{1,0}X}$, 
\item[(iii)] $[\mathcal V,\mathcal V]\subset\mathcal V$, where $\mathcal V=\mathcal{C}^\infty(X, T^{1,0}X)$.
\end{itemize}
If $T^{1,0}X$ is a subbundle, we call the pair $(X,T^{1,0}X)$ a CR manifold of codimension $m$. 
    \end{defn}

\begin{defn}\label{d-gue260502yydI}
Let $(X,T^{1,0}X)$ be a CR manifold of codimension $m$. 
We say that $(X,T^{1,0}X)$ is orientable if we can find global vector fields $T_1,\ldots,T_m\in\mathcal{C}^\infty(X,TX)$ such that 
\[\mathbb CT_xX=T^{1,0}_xX\oplus T^{0,1}_xX\oplus\set{\lambda_1T_1(x)+\cdots+\lambda_mT_m(x);\, \lambda_j\in\mathbb C, j=1,\ldots,m}.\]

We call $T_1,\ldots,T_m$ transversal vector fields. 
    \end{defn}

From now on, we assume that $(X,T^{1,0}X)$ is an orientable paracompact 
CR manifold of dimension $2n+m$, codimension $m$, and we fix transversal vector fields $T_1,\ldots,T_m\in\mathcal{C}^\infty(X,TX)$. 
For every $j\in\set{1,\ldots,m}$, let $\omega_j(x)\in\mathcal{C}^\infty(X,T^*X)$ be the global one form given by 
\begin{equation}\label{e-gue260502yyd}
\langle\,T_\ell\,,\,\omega_j\,\rangle=-\delta_{j\ell}\ \ \mbox{on $X$},\ \ \ell=1,\ldots,m.   
\end{equation} 

Fix $(\lambda_1,\ldots,\lambda_m)\in\mathbb R^m\setminus\set{0}$. Let $\omega:=\sum^m_{j=1}\lambda_j\omega_j$. 
	For each $x \in X$, we define the Hermitian quadratic form on $T^{1,0}_xX$ with respect to $\omega(x)$ by
	\begin{equation}\label{e-gue260502ycd}
		\mathcal{L}_{\omega(x)}(U,\overline{V}) := -\frac{1}{2i}d\omega(x)(U,\overline{V}),\ \ U, V\in T^{1,0}_xX.
	\end{equation}
	The Hermitian quadratic form $\mathcal{L}_{\omega(x)}$ on $T^{1,0}_xX$ is called Levi form at $x$ with respect to $\omega(x)$. 

Fix a smooth Hermitian metric $\langle\,\cdot\,|\, \cdot\,\rangle$ on $\mathbb{C}TX$ so that  
    \begin{equation}\label{e-gue260818yyda}\begin{split}
    &T^{1,0}X\perp T^{0,1}X,\\
    &\langle\,u\,|\,v\,\rangle\in\mathbb R,\ \ u, v\in TX,\\
&\langle\, T_j\, | \, T_\ell\, \rangle=\delta_{j\ell},\ \ j, \ell=1,\ldots,m,\\
&\langle\,T_j\,|\,u\,\rangle=0,\ \ \mbox{for all $j=1,\ldots,m$, $u\in T^{1,0}X$}.
    \end{split}\end{equation}
For $u \in \mathbb{C}TX$, we write $|u|^2 := \langle u | u \rangle$. Denote by $T^{*1,0}X$ and $T^{*0,1}X$ the dual bundles of $T^{1,0}X$ and $T^{0,1}X$, respectively. They can be identified with subbundles of the complexified cotangent bundle $\mathbb{C}T^*X$. Define the vector bundle of $(0,q)$-forms by $T^{*0,q}X := \wedge^qT^{*0,1}X$. Let
    $\Lambda^\bullet(\mathbb CT^*X):=\oplus^{2n+m}_{j=0}\Lambda^j(\mathbb CT^*X)$.
    The Hermitian metric $\langle\,\cdot\,|\, \cdot\,\rangle$ on $\mathbb{C}TX$ induces, by duality, a Hermitian metric on $\mathbb{C}T^*X$ and also on $\Lambda^\bullet(\mathbb CT^*X)$. We shall also denote all these induced metrics by $\langle\,\cdot\,|\,\cdot\,\rangle$. For $u\in\Lambda^\bullet(\mathbb CT^*X)$, let $\abs{u}^2:=\langle\,u\,|\,u\,\rangle$. Let $T^{*0,\bullet}X:=\oplus^n_{q=0}T^{*0,q}X$. 

    Let $D\subset X$ be an open subset. Let $\Omega^{0,q}(D)$ denote the space of smooth sections 
	of $T^{*0,q}X$ over $D$ and let $\Omega^{0,q}_c(D)$ be the subspace of
	$\Omega^{0,q}(D)$ whose elements have compact support in $D$. For each $q=0,1,\ldots,n$, let 
    \[\tau^q: \Lambda^\bullet(\mathbb CT^*X)\To T^{*0,q}X\]
    be the orthogonal projection with respect to $\langle\,\cdot\,|\,\cdot\,\rangle$. As codimension one case, the tangential Cauchy-Riemann operator is given by
    
	\begin{equation} \label{e-suIV}
		\ddbar_b:=\tau^q\circ d: \Omega^{0,q}(X)\To\Omega^{0,q+1}(X),
	\end{equation}
    where $d$ denotes the standard derivative. For a function $f\in\mathcal{D}'(D)$, we say that $f$ is a CR function (on $D$) if $\ddbar_bf=0$ in the sense of distribution. 

    Let $dv_X$ be the volume form on $X$ induced by the given Hermitian metric $\langle\,\cdot\,|\,\cdot\,\rangle$ and let $(\,\cdot\,|\,\cdot\,)$ be the $L^2$ inner product on $\Omega^{0,q}_c(X)$ induced by $\langle\,\cdot\,|\,\cdot\,\rangle$ and $dv_X$ and let $L^2_{(0,q)}(X)$ be the completion of $\Omega^{0,q}_c(X)$ with respect to $(\,\cdot\,|\,\cdot\,)$. 

\begin{defn}\label{d-gue260804yyd}
Let $E$ be a complex vector bundle over $X$ of rank $r$. We say that $E$ is a CR vector bundle over $X$ if 
there exists an open covering $\{ U_j \}^{+\infty}_{j=1}$, $g_{U_jU_\ell} \in \mathcal{C}^\infty(U_j \cap U_\ell, \mathbb{C}^r)$, for all $j, \ell \in\mathbb N$, such that 
$g_{U_jU_\ell}$ is an invertiable matrix of CR functions, and 
\begin{equation}\label{e-gue260818yyd}
\begin{split}
&\mbox{$g_{U_jU_\ell} = g_{U_\ell U_j}^{-1}$ on $U_j\cap U_\ell$, if $U_j\cap U_\ell\neq\emptyset$},\\
&\mbox{$g_{U_jU_\ell}g_{U_\ell U_s}g_{U_sU_j}=I$ 
on $U_j\cap U_\ell\cap U_s$, if $U_j\cap U_\ell\cap U_s\neq\emptyset$},
\end{split}
\end{equation}
and
$E = \amalg U_j \times \mathbb{C}^r/ \equiv$, where $\amalg$ denotes the disjoint union, $(z, \lambda) \equiv (w, \mu)$ if $z=w \in U_j \cap U_\ell$, for some $j, \ell \in\mathbb N$, then $\mu = g_{U_\ell U_j} \lambda$. We write $[(z, \lambda)]$ to denote the equivalence class of $(z, \lambda) \in X \times \mathbb{C}^r$. Then $E= \cup_{(z, \lambda) \in X \times \mathbb{C}^r}[(z, \lambda)]$ and $E$ has natural smooth and CR structures induced by the equivalence relation $\equiv$ and $E$ is a CR manifold of dimesion $2(n+r)+m$ of codimension $m$. 

When $r=1$, we call $E$ a CR line bundle over $X$.
\end{defn}

Let $E$ be a CR vector bundle over $X$ and let $D\subset X$ be an open set. Let $\Omega^{0,q}(D,E)$ denote the space of smooth sections 
	of $T^{*0,q}X\otimes E$ over $D$ and let $\Omega^{0,q}_c(D,E)$ be the subspace of
	$\Omega^{0,q}(D,E)$ whose elements have compact support in $D$. We can define 
\[\ddbar_b: \Omega^{0,q}(X,E)\To\Omega^{0,q+1}(X,E)\]
in the standard way. 

Let $(L,h^L)$ be a CR line bundle over $X$ and for $k\in\mathbb N$, let $(L^k,h^{L^k})$ be the $k$-th power of $(L,h^L)$, where $h^L$ denotes the Hermitian metric of $L$. Let $(\,\cdot\,|\,\cdot\,)_k$ be the $L^2$ inner product on $\Omega^{0,q}_c(X,L^k)$ induced by $\langle\,\cdot\,|\,\cdot\,\rangle$, $h^{L^k}$ and $dv_X$ and let $L^2_{(0,q)}(X,L^k)$ be the completion of $\Omega^{0,q}_c(X,L^k)$ with respect to $(\,\cdot\,|\,\cdot\,)_k$.

\section{The Szeg\H{o} kernel on the Heisenberg groups}

In this section, we will give the explicit representation of the Szeg\H{o} kernel for the weighted Heisenberg group $\mathbb{H}^{n,m}.$ The idea comes from \cite{HM12} and \cite{HHL18}, in particular Section 7 in \cite{HHL18}.
\subsection{Notations}\label{heisenberggpcrstr}

We identify $\mathbb{R}^{2n+m}$ with the Heisenberg group $\mathbb H^{n,m} := \mathbb{C}^n\times\mathbb{R}^m$. We write $x = (x',x'') = (z,x'') = (z_1,\dots,z_n,x_{2n+1},\dots,x_{2n+m})$ to denote the coordinates of $\mathbb H^{n,m}$, $x'= (x_1,\dots,x_{2n}),$ $z=(z_1,\dots,z_n),$ $z_j = x_{2j-1} + ix_{2j}, j=1,\dots,n,$ and $x''=(x_{2n+1},\dots,x_{2n+m})\in \mathbb{R}^m.$   Let $(\lambda^1_{\ell s})^n_{\ell ,s=1},\cdots, (\lambda^m_{\ell s})^n_{\ell,s=1}$ be $m$ Hermitian $n\times n$ matrices. We define a CR structure  $T^{1,0}\mathbb H^{n,m}$ on $\mathbb H^{n,m}$ by
\begin{equation}\label{e-gue260930yydb}
T^{1,0}\mathbb H^{n,m} := \text{Span}_{\mathbb{C}}\bigg\{U_j=U_{j,\mathbb H^{n,m}} = \frac{\partial}{\partial z_j} - i\sum_{\ell=1}^{m}\Big(\sum_{s=1}^{n}\lambda^{\ell}_{js}\overline{z}_s\Big)\frac{\partial}{\partial x_{2n+\ell}};\, j =1,\dots,n\bigg\}.
\end{equation}
We denote by $T^{0,1}\mathbb H^{n,m}$ the complex conjugate of $T^{1,0}\mathbb H^{n,m}.$ Set $T_{\ell}=-\frac{\partial}{\partial x_{2n+\ell}}, \ell =1,\dots,m.$ Fix an Hermitian metric on $\mathbb{C}T\mathbb H^{n,m}$ denoted by $\langle\,\cdot\,|\,\cdot\,\rangle$ such that
\begin{align*}
&T_\ell\perp T^{1,0}\mathbb H^{n,m}\perp T^{0,1}\mathbb H^{n,m}, \ \ell=1,\dots,m,\\
&\langle T_\ell \mid T_j\rangle = \delta_{\ell j},\ \ell,j = 1,\dots,m,\\
&\langle U_j\mid U_s\rangle = \langle \overline{U}_j\mid \overline{U}_s\rangle = \delta_{js},\ j,s = 1,\dots,n.
\end{align*}
Take 
\begin{equation}\label{e-gue260830yydm}
d\mu_{\mathbb H^{n,m}}:=2^ndx_1\wedge\cdots\wedge dx_{2n+m}
\end{equation}
to be the volume form on $\mathbb H^{n,m},$ $d\mu(z) = 2^n dx_1\wedge\cdots\wedge dx_{2n}$ to be the volume form on $\mathbb{C}^n$ and $d\mu(x'') = dx_{2n+1}\wedge\cdots\wedge dx_{2n+m}$ to be the volume form on $\mathbb{R}^m.$

Denote by $T^{*1,0}\mathbb H^{n,m}$ and $T^{*0,1}\mathbb H^{n,m}$ the dual bundles of $T^{1,0}\mathbb H^{n,m}$ and $T^{0,1}\mathbb H^{n,m},$ respectively. Let $T^{*0,q}\mathbb H^{n,m}:=\Lambda^q (T^{*0,1}\mathbb H^{n,m})$ be the bundle of $(0,q)$-forms on $\mathbb H^{n,m}$. Let
\begin{equation*}
    T^{*0,\bullet}\mathbb H^{n,m}:= \bigoplus_{q=0}^nT^{*0,q}\mathbb H^{n,m}.
\end{equation*}
The Hermitian metric $\langle\cdot| \cdot\rangle$ on $\mathbb{C}T\mathbb H^{n,m}$ induces an Hermitian metric on $\mathbb{C}T^*\mathbb H^{n,m}$ and
$T^{*0,\bullet}\mathbb H^{n,m}$ by duality. We will also denote all these induced metrics by $\langle\,\cdot\,|\,\cdot\,\rangle.$
We can see that
\[
\{dz_1,\dots,dz_n, d\overline{z}_1,\dots,d\overline{z}_n,\omega_1,\dots,\omega_m\},
\]
where
\begin{equation*}
    \omega_q=d x_{2n+q}+i\sum_{j=1}^{n}\Big(\sum_{s=1}^{n}\lambda^q_{js}\overline{z}_s\Big)dz_j-i\sum_{j=1}^n\Big(\sum_{s=1}^n\lambda^q_{sj}z_s\Big)d\overline{z}_j,
\end{equation*}
$q=1,\dots,m,$
is the orthonormal basis of $\mathbb{C}T^*\mathbb H^{n,m}$ which is dual to \[
\{U_{j,\mathbb H^{n,m}}, \overline{U}_{j,\mathbb H^{n,m}}, -T_\ell\,;\, j=1,\dots,n,\ \ell=1,\dots,m\}.
\]

\subsection{Kohn Laplacian}
Let $D\subset\mathbb H^{n,m}$ be an open subset. Denote by $\Omega^{0,q}(D)$ the space of all smooth sections of $ T^{*0,q}\mathbb H^{n,m}$ over $D.$ Let $\Omega^{0,q}_c(D)$ be the space of all smooth sections of $T^{*0,q}\mathbb H^{n,m}$ over $D$ with compact support in $D$.
Let
\[
\psi_0 (z,x'') = \beta_1x_{2n+1} + \cdots + \beta_m x_{2n+m} + \sum_{j,s=1}^{n}\mu_{js}z_j\overline{z}_s,
\]
where $\left(\mu_{js}\right)^n_{j,s=1}$ is an Hermitian matrix.
We define the weighted inner product $(\,\cdot\,|\,\cdot\,)_{\psi_0}$ on $\Omega^{0,q}_c(\mathbb H^{n,m})$ induced from $\langle\,\cdot\,|\,\cdot\,\rangle,$ the volume form $d\mu_{\mathbb H^{n,m}}$ and $\psi_0$ by
\begin{equation}\label{e-gue260830yydn}
(f\mid g)_{\psi_0} := \int_{\mathbb H^{n,m}}\langle\,f(x)\,|\,g(x)\,\rangle e^{-\psi_0(x)}d\mu_{\mathbb H^{n,m}(x)}
\end{equation}
for all $f,g\in\Omega^{0,q}_c(\mathbb H^{n,m}).$ Let $\|\cdot\|_{\psi_0}$ denote the corresponding norm. Let $L^2_{(0,q)}(D,\psi_0)$ be the completion of $\Omega^{0,q}_c(D)$ with respect to $(\,\cdot\,|\,\cdot\,)_{\psi_0}.$ We write $L^2(\mathbb H^{n,m},\psi_0) := L^2_{(0,0)}(\mathbb H^{n,m},\psi_0).$ For the non-weighted case, i.e, $\psi_0 = 0,$ we just write $(\,\cdot\,|\,\cdot\,),$ $\|\cdot\|,$ $L^2_{(0,q)}(\mathbb H^{n,m})$ and $L^2(\mathbb H^{n,m})$ for the corresponding $L^2$ inner product, norm, the space of $L^2$ $(0,q)$-forms and the space of $L^2$ functions, respectively.
Let
\[
\overline{\partial}_b : \Omega^{0,q}(\mathbb H^{n,m}) \rightarrow \Omega^{0,q+1}(\mathbb H^{n,m})
\]
be the tangential Cauchy-Riemann operator. We extend $\overline{\partial}_b$ to $L^2_{(0,q)}(\mathbb H^{n,m},\psi_0),\ q=0,\dots,n,$ by
\begin{equation*}
    \overline{\partial}_b : \text{Dom}\, \overline{\partial}_b \subset L^2_{(0,q)}(\mathbb H^{n,m},\psi_0) \rightarrow L^2_{(0,q+1)}(\mathbb H^{n,m},\psi_0),
\end{equation*}
where $\text{Dom}\, \overline{\partial}_b := \{u\in L^2_{(0,q)}(\mathbb H^{n,m},\psi_0)\,;\,\overline{\partial}_bu\in L^2_{(0,q+1)}(\mathbb H^{n,m},\psi_0)\}$ and, for any \\$u\in  L^2_{(0,q)}(\mathbb H^{n,m},\psi_0),$ $ \overline{\partial}_b u$ is defined in the sense of distributions.
Let
\[
\overline{\partial}^{*,\psi_0}_b : \text{Dom}\,\overline{\partial}^{*,\psi_0}_b\subset L^2_{(0,q+1)}(\mathbb H^{n,m},\psi_0) \rightarrow L^2_{(0,q)}(\mathbb H^{n,m},\psi_0)
\]
be the Hilbert space adjoint of $\overline{\partial}_b$ with respect to $(\,\cdot\,|\,\cdot\,)_{\psi_0}.$
Define the Kohn-Laplacian
\[
\square^{(q),\psi_0}_b = \overline{\partial}_b\overline{\partial}^{*,\psi_0}_b + \overline{\partial}^{*,\psi_0}_b\overline{\partial}_b,
\]
where
\begin{align*}
&\text{Dom}\,\square^{(q),\psi_0}_b = \{s\in L^2_{(0,q)}(\mathbb{H}^{n,m},\psi_0) \,;\, s\in \text{Dom}\,\overline{\partial}_b\cap\text{Dom}\,\overline{\partial}^{*,\psi_0}_b, \overline{\partial}_b s\in \text{Dom}\,\overline{\partial}^{*,\psi_0}_b, \\ &\quad\quad\quad\quad\quad\quad\quad\overline{\partial}^{*,\psi_0}_b s\in \text{Dom}\,\overline{\partial}_b\}.
\end{align*}
By a result of Gaffney, we have $\square^{(q),\psi_0}_b$ is a positive self-adjoint operator, $q=0,\dots,n.$ Let
\begin{equation*}
    \Pi^{(q),\psi_0}_{b} : L^2_{(0,q)}(\mathbb H^{n,m}, \psi_0) \rightarrow \operatorname{Ker}\square^{(q),\psi_0}_b
\end{equation*}
be the orthogonal projection. Sometimes we write $\mathcal{H}^0_b(\mathbb{H}^{n,m}) := \operatorname{Ker}\square^{(0),\psi_0}_b.$
We have
\[
\overline{\partial}_b = \sum_{j=1}^{n}d\overline{z}_j\wedge\overline{U}_{j,\mathbb H^{n,m}},
\]
\[
\overline{\partial}^{*,\psi_0}_b = \sum_{j=1}^{n}(d\overline{z}_j\wedge)^*\overline{U}_{j,\mathbb H^{n,m}}^{*,\psi_0},
\]
where $(d\overline{z}_j\wedge)^*$ is the adjoint of $d\overline{z}_j\wedge$ with respect to $\langle\,\cdot\,|\,\cdot\,\rangle$ and $\overline{U}_{j,\mathbb H^{n,m}}^{*,\psi_0}$ is the adjoint of $\overline{U}_{j,\mathbb H^{n,m}}$ with respect to $(\,\cdot\,|\,\cdot\,)_{\psi_0}.$ We can check that
\[
\overline{U}_{t,\mathbb H^{n,m}}^{*,\psi_0} = -U_{t,\mathbb H^{n,m}} + \sum_{s=1}^{n}\mu_{ts}\overline{z}_s - i\sum_{\ell=1}^{m}\Big(\sum_{s=1}^{n}\lambda^{\ell}_{ts}\overline{z}_s\Big)\beta_{\ell}.
\]
By calculation, we have
\begin{prop}\label{lapfma}
\begin{align*}
\square^{(q),\psi_0}_b 
&=\sum_{j=1}^{n} \overline{U}_{j,\mathbb{H}^{n,m}}^{*,\psi_0} \overline{U}_{j,\mathbb{H}^{n,m}} + \sum_{j,t=1}^{n}d\overline{z}_j\wedge(d\overline{z}_t\wedge)^*\left[\overline{U}_{j,\mathbb{H}^{n,m}}, \overline{U}_{t,\mathbb{H}^{n,m}}^{*,\psi_0}\right]\\
&= \sum_{j=1}^{n}\bigg[\bigg(-\frac{\partial}{\partial z_j} + i\sum_{q=1}^{m}\Big(\sum_{s=1}^{n}\lambda^q_{js}\overline{z}_s\Big)\frac{\partial}{\partial x_{2n+q}} + \sum_{s=1}^{n}\mu_{js}\overline{z}_s - i\sum_{q=1}^{m}\Big(\sum_{s=1}^{n}\lambda^q_{js}\overline{z}_s\Big)\beta_q\bigg)\\
&\quad\quad\circ\bigg(\frac{\partial}{\partial \overline{z}_j} + i\sum_{q=1}^{m}\Big(\sum_{s=1}^{n}\lambda^q_{sj}z_s\Big)\frac{\partial}{\partial x_{2n+q}} \bigg)\bigg]\\
&\quad + \sum_{j,t=1}^{n}d\overline{z}_j\wedge(d\overline{z}_t\wedge)^*\bigg(\mu_{tj} - i\sum_{\ell=1}^{m}\lambda^{\ell}_{tj}\beta_{\ell} + 2i\sum_{\ell=1}^{m}\lambda^{\ell}_{tj}\frac{\partial}{\partial x_{2n+\ell}}\bigg).
\end{align*}
\end{prop}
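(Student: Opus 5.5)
The plan is to expand $\square^{(q),\psi_0}_b$ using the frame expressions of $\overline{\partial}_b$ and $\overline{\partial}^{*,\psi_0}_b$ given just before the statement, and to use two facts throughout. First, the coframe $\{dz_j,d\overline z_j,\omega_\ell\}$ is orthonormal, dual to $\{U_j,\overline U_j,-T_\ell\}$. Second, the pointwise operators $d\overline z_j\wedge$ and $(d\overline z_t\wedge)^*$ have constant coefficients in this frame, so they commute with the scalar differential operators $\overline U_j$ and $\overline U^{*,\psi_0}_t$ acting componentwise. Formally, on smooth compactly supported forms, this gives
\[
\square^{(q),\psi_0}_b=\sum_{j,t=1}^n\Big(d\overline z_j\wedge(d\overline z_t\wedge)^*\,\overline U_j\,\overline U^{*,\psi_0}_t+(d\overline z_t\wedge)^*\,d\overline z_j\wedge\,\overline U^{*,\psi_0}_t\,\overline U_j\Big).
\]

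\textbf{Step 1: the first line of the formula.} Apply the Clifford-type relation $(d\overline z_t\wedge)^*d\overline z_j\wedge+d\overline z_j\wedge(d\overline z_t\wedge)^*=\delta_{jt}$, which holds because the $d\overline z_j$ are orthonormal. This rewrites the second group of terms as
\[
\sum_j\overline U^{*,\psi_0}_j\overline U_j-\sum_{j,t}d\overline z_j\wedge(d\overline z_t\wedge)^*\,\overline U^{*,\psi_0}_t\overline U_j .
\]
Combining with the first group leaves
\[
\sum_j\overline U^{*,\psi_0}_j\overline U_j+\sum_{j,t}d\overline z_j\wedge(d\overline z_t\wedge)^*\big[\overline U_j,\overline U^{*,\psi_0}_t\big],
\]
which is the first displayed equality.

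\textbf{Step 2: the explicit second-order part.} Verify (or accept, since the paper records it) the adjoint formula
\[
\overline U^{*,\psi_0}_t=-U_t+\sum_s\mu_{ts}\overline z_s-i\sum_\ell\Big(\sum_s\lambda^\ell_{ts}\overline z_s\Big)\beta_\ell .
\]
This follows by integrating by parts against $e^{-\psi_0}d\mu_{\mathbb H^{n,m}}$. The vector field $\overline U_t$ is divergence free for the Lebesgue measure, because its coefficients do not depend on the variable being differentiated. Hence the only zeroth-order contribution is $\overline{\overline U_t\psi_0}$, and we have $\overline U_t\psi_0=\sum_s\mu_{st}z_s+i\sum_\ell\big(\sum_s\lambda^\ell_{st}z_s\big)\beta_\ell$. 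Its conjugate gives the stated terms, using $\overline{\mu_{st}}=\mu_{ts}$ and $\overline{\lambda^\ell_{st}}=\lambda^\ell_{ts}$ from the Hermitian property. Now substitute $\overline U_j=\partial_{\overline z_j}+i\sum_q\big(\sum_s\lambda^q_{sj}z_s\big)\partial_{x_{2n+q}}$, the conjugate of $U_j$, into $\sum_j\overline U^{*,\psi_0}_j\overline U_j$; this gives the second displayed line directly.

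\textbf{Step 3: the commutator.} Compute $[\overline U_j,\overline U^{*,\psi_0}_t]$ term by term; all $\partial_{x''}$-coefficients are independent of $x''$, and $\beta$ is constant.
\begin{itemize}
\item $[\overline U_j,-\partial_{z_t}]=\partial_{z_t}\big(i\sum_s\lambda^q_{sj}z_s\big)\partial_{x_{2n+q}}=i\sum_q\lambda^q_{tj}\partial_{x_{2n+q}}$.
\item $\big[\overline U_j,\,i\sum_q\sum_s\lambda^q_{ts}\overline z_s\partial_{x_{2n+q}}\big]=i\sum_q\lambda^q_{tj}\partial_{x_{2n+q}}$, coming from $\partial_{\overline z_j}$ hitting $\overline z_s$.
\item $\big[\overline U_j,\sum_s\mu_{ts}\overline z_s\big]=\mu_{tj}$.
\item $\big[\overline U_j,-i\sum_\ell\sum_s\lambda^\ell_{ts}\overline z_s\beta_\ell\big]=-i\sum_\ell\lambda^\ell_{tj}\beta_\ell$.
\end{itemize}
Summing gives $\mu_{tj}-i\sum_\ell\lambda^\ell_{tj}\beta_\ell+2i\sum_\ell\lambda^\ell_{tj}\partial_{x_{2n+\ell}}$, which is the last line of the formula.

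\textbf{Expected obstacle.} The only delicate points are sign and index bookkeeping: the conjugation $\overline{\lambda^\ell_{st}}=\lambda^\ell_{ts}$ in the adjoint, and the order of $j,t$ in the commutator. There is also a functional-analytic caveat. The identity is first an identity of differential operators on $\Omega^{0,q}_c$; it identifies the Gaffney extension on its domain because the adjoint acts distributionally by the same formal expression.
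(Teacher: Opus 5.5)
Your proposal is correct, and it is exactly the direct computation the paper summarizes with ``By calculation'' (the paper gives no further details). The anticommutation identity $(d\overline{z}_t\wedge)^*d\overline{z}_j\wedge+d\overline{z}_j\wedge(d\overline{z}_t\wedge)^*=\delta_{jt}$ gives the first line, and your adjoint formula and term-by-term evaluation of $[\overline{U}_j,\overline{U}^{*,\psi_0}_t]$ both check out, including the index order $\lambda^\ell_{tj}$ and the doubled coefficient $2i\sum_\ell\lambda^\ell_{tj}\partial_{x_{2n+\ell}}$.
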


\subsection{The partial Fourier transform}
Let $u(z,x_{2n+1},\dots,x_{2n+m})\in \Omega^{0,q}(\mathbb{H}^{n,m})$ with $\|u\|_{\psi_0}=1$ and $\square^{(q),\psi_0}_b u = 0.$ Put $v(z,x'') = u(z,x'')e^{-\frac{1}{2}\sum_{t=1}^m\beta_t x_{2n+t}}$ and set $\Phi_0=\sum_{j,s=1}^{n}\mu_{js}z_j\overline{z}_s.$ We have
\[
\int_{\mathbb{H}^{n,m}}|v(z,x'')|^2e^{-\Phi_0}d\mu(z)d\mu(x'') = 1.
\]
Denote by $L^2(\mathbb{H}^{n,m}, \Phi_0)$ the completion of $\Omega^{0,q}_c(\mathbb{H}^{n,m})$ with respect to $\|\cdot\|_{\Phi_0}.$ 
Choose $\chi(x'') = \chi(x_{2n+1},\dots,x_{2n+m}) \in C^{\infty}_c(\mathbb{R}^m)$ with $\chi(x'')=1$ when $|x''|<1$ and $\chi(x'')=0$ when $|x''|>2$ and set $\chi_j(x'') = \chi(\frac{x''}{j}),\ j\in \mathbb{N}.$ Set $v_j(x',x'') = \chi_j(x'')v(x',x'').$
Let
\begin{align*}
\mathcal{F}_{x''\to\eta}[v_j](z,\eta)=\mathcal{F}_{x''\to\eta}[v_j](z,\eta_1,\dots,\eta_m)
&= \int_{\mathbb{R}^m} v(z,x'')\chi_j(x'')e^{-i\langle x'',\eta\rangle}d\mu(x'')\\
&= \int_{\mathbb{R}^m} v(z,x'')\chi_j(x'')e^{-i\sum\limits_{t=1}^m x_{2n+t}\eta_t}d\mu(x'').
\end{align*} 
Note that $\mathcal{F}_{x''\to\eta}[{v}_j](z,\eta) \in \Omega^{0,q}(\mathbb{H}^{n,m}).$ By the Parseval's formula, we have 
\begin{align*}
&\int_{\mathbb{H}^{n,m}}|\mathcal{F}_{x''\to\eta}[v_j](z,\eta) - \mathcal{F}_{x''\to\eta}[v_k](z,\eta)|^2e^{-\Phi_0}d\mu(z)d\mu(\eta) \\
&= (2\pi)^m\int_{\mathbb{H}^{n,m}}|v(z,x'')|^2|\chi_j(x'')-\chi_k(x'')|^2e^{-\Phi_0}d\mu(z)d\mu(x'') \rightarrow 0\quad \text{as} \ j,k \rightarrow\infty.
\end{align*}
Hence there exists $\mathcal{F}_{x''\to\eta}[v](z,\eta) \in L^2(\mathbb{H}^{n,m}, \Phi_0)$ such that $\mathcal{F}_{x''\to\eta}[v_j](z,\eta)\rightarrow \mathcal{F}_{x''\to\eta}[v](z,\eta)\in L^2(\mathbb{H}^{n,m}, \Phi_0).$
We call $\mathcal{F}_{x''\to\eta}[v](z,\eta)$ the partial Fourier transform of $v(z,x'')$ with respect to $x''.$ Formally,
\begin{equation*}
\mathcal{F}_{x''\to\eta}[v](z,\eta) = \int_{\mathbb{R}^m} v(z,x'')e^{-i\langle x'',\eta\rangle}d\mu(x'').
\end{equation*}

Moreover, we have
\begin{align*}
\| \mathcal{F}_{x''\to\eta}[v](z, \eta) \|_{\Phi_0}^2 &= \int_{\mathbb{H}^{n,m}} |\mathcal{F}_{x''\to\eta}[v](z, \eta)|^2 e^{-\Phi_0(z)} d\mu(z) d\mu(\eta)\\
&= \lim_{j \to \infty} \int_{\mathbb{H}^{n,m}} |\mathcal{F}_{x''\to\eta}[v_j](z, \eta)|^2 e^{-\Phi_0(z)} d\mu(z) d\mu(\eta) \\
&= (2\pi)^m \lim_{j \to \infty} \int_{\mathbb{H}^{n,m}} \bigg|u(z, x'') e^{-\frac{1}{2} \sum\limits_t \beta_t x_{2n+t}} \chi_j(x'')\bigg|^2 e^{-\Phi_0(z)} d\mu(z) d\mu(x'') \\
&= (2\pi)^m \int_{\mathbb{H}^{n,m}} |u(z, x'')|^2 e^{-\Phi_0(z)} d\mu(z) d\mu(x'') \\
&= (2\pi)^m < \infty.
\end{align*}
By Fubini's theorem, we have
\begin{equation}\label{ae}
\int_{\mathbb{C}^n} |\mathcal{F}_{x''\to\eta}[v](z, \eta)|^2 e^{-\Phi_0(z)} d\mu(z) < \infty
\end{equation}
for a.e. $\eta = (\eta_1, \dots, \eta_m) \in \mathbb{R}^m.$

Let $s \in L^2_{(0,q)}(\mathbb{H}^{n,m}, \Phi_0)$. Assume that $\int |s(z, \eta)|^2 d\mu(\eta) < \infty$ and $\int |s(z, \eta)| d\mu(\eta) < \infty$ for all $z \in \mathbb{C}^n$.
By Parseval's formula, we can check that
\begin{align*}
&\int_{\mathbb{C}^n} \int_{\mathbb{R}^m} \big(\mathcal{F}_{x''\to\eta}[v](z, \eta)\mid s(z, \eta)\big) e^{-\Phi_0(z)} d\mu(\eta) d\mu(z)\\
&= \int_{\mathbb{C}^n} \int_{\mathbb{R}^m} \Big(u(z, x'') e^{-\frac{1}{2} \sum\limits_t \beta_t x_{2n+t}}\, \Big|\, \int_{\mathbb{R}^m} e^{i \langle x'', \eta\rangle} s(z, \eta) d\mu(\eta) \Big) e^{-\Phi_0(z)} d\mu(x'') d\mu(z) \\
&= (2\pi)^m \int_{\mathbb{C}^n} \int_{\mathbb{R}^m} (v(z, x'')\mid \mathcal{F}^{-1}_{x''\to\eta}[s](z, x'')) e^{-\Phi_0(z)} d\mu(x'') d\mu(z)
\end{align*}

Fix $\eta = (\eta_1,\dots,\eta_m) \in \mathbb{R}^m.$ Set 
\[
\Phi_\eta = \sum_{j,s=1}^{n}\mu_{js}z_j\overline{z}_s -2\eta_1\sum_{j,s=1}^{n}\lambda^{1}_{js}z_j\overline{z}_s - \cdots -2\eta_m\sum_{j,s=1}^{n}\lambda^m_{js}z_j\overline{z}_s.
\]
Let $(\cdot|\cdot )_{\Phi_{\eta}}$ be the inner product on $\Omega^{0,q}_c(\mathbb{C}^n)$ defined by
\[
(f\mid g)_{\Phi_{\eta}} := \int_{\mathbb{C}^n} (f\mid g) e^{-\Phi_{\eta}} d\mu(z),
\]
for $f,g \in \Omega^{0,q}_c(\mathbb{C}^n).$ Let
\[
\square^{(q),\Phi_{\eta}} = \overline{\partial}\overline{\partial}^{*,\Phi_{\eta}} + \overline{\partial}^{*,\Phi_{\eta}}\overline{\partial}
\]
be the complex Laplacian with respect to $(\cdot|\cdot )_{\Phi_{\eta}},$ where $\overline{\partial}^{*,\Phi_{\eta}}$ is the formal adjoint of $ \overline{\partial}$ with respect to $(\cdot|\cdot )_{\Phi_{\eta}}.$
We have
\begin{align*}
  &\overline{\partial} = \sum_{j=1}^{n}d\overline{z}_j\wedge \frac{\partial}{\partial \overline{z}_j},\\
  &\overline{\partial}^{*,\Phi_{\eta}} = \sum_{j=1}^{n}(d\overline{z}_j\wedge)^* \Big(\frac{\partial}{\partial \overline{z}_j}\Big)^{*,\Phi_{\eta}},
\end{align*}
where $(d\overline{z}_j\wedge)^*$ is the adjoint of $d\overline{z}_j\wedge$ with respect to $\langle\,\cdot\,|\,\cdot\,\rangle$ and $\big(\frac{\partial}{\partial \overline{z}_j}\big)^{*,\Phi_{\eta}}$ is the formal adjoint of $\frac{\partial}{\partial \overline{z}_j}$ with respect to $(\cdot|\cdot )_{\Phi_{\eta}}.$
We can check that 
\[
\Big(\frac{\partial}{\partial \overline{z}_j}\Big)^{*,\Phi_{\eta}} = -\frac{\partial}{\partial z_j} + \sum_{s=1}^{n}\Big(\mu_{js}-2\sum_{\ell=1}^m\eta_{\ell}\lambda^{\ell}_{js}\Big)\overline{z}_s.
\]
\begin{prop}\label{holapfma}
\begin{align*}
\square^{(q),\Phi_{\eta}} &= \sum_{j=1}^{n}\Big(\frac{\partial}{\partial \overline{z}_j}\Big)^{*,\Phi_{\eta}}\Big(\frac{\partial}{\partial \overline{z}_j}\Big) + \sum_{j,t=1}^{n}d\overline{z}_j\wedge(d\overline{z}_t\wedge)^*\Big(\mu_{tj}-2\sum_{\ell=1}^m\eta_{\ell}\lambda^{\ell}_{tj}\Big)\\
&= \sum_{j=1}^{n} \bigg[-\frac{\partial}{\partial z_j} + \sum_{k=1}^{n}\Big(\mu_{jk}-2\sum_{\ell=1}^m\eta_{\ell}\lambda^{\ell}_{jk}\Big)\overline{z}_k\bigg]\Big(\frac{\partial}{\partial \overline{z}_j}\Big)\\
&\quad+ \sum_{j,t=1}^{n}d\overline{z}_j\wedge(d\overline{z}_t\wedge)^*\Big(\mu_{tj}-2\sum_{\ell=1}^m\eta_{\ell}\lambda^{\ell}_{tj}\Big).
\end{align*}
\end{prop}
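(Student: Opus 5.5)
The formula in Proposition~\ref{holapfma} comes from the general identity $\square^{(q),\Phi_\eta}=\ddbar\,\ddbar^{*,\Phi_\eta}+\ddbar^{*,\Phi_\eta}\ddbar$. We substitute the expressions $\ddbar=\sum_j d\overline z_j\wedge\frac{\partial}{\partial\overline z_j}$ and $\ddbar^{*,\Phi_\eta}=\sum_t (d\overline z_t\wedge)^*(\frac{\partial}{\partial\overline z_t})^{*,\Phi_\eta}$ and expand. The first step is to check the stated formula for the formal adjoint. Integrating by parts against $e^{-\Phi_\eta}d\mu(z)$ gives
\[
\Big(\frac{\partial}{\partial\overline z_j}\Big)^{*,\Phi_\eta}=-\frac{\partial}{\partial z_j}+\frac{\partial\Phi_\eta}{\partial z_j}.
\]
Since $\Phi_\eta=\sum_{j,s}(M_\eta)_{js}z_j\overline z_s$ with $(M_\eta)_{js}=\mu_{js}-2\sum_\ell\eta_\ell\lambda^\ell_{js}$, we have $\frac{\partial\Phi_\eta}{\partial z_j}=\sum_s (M_\eta)_{js}\overline z_s$, which is the formula recorded just before the proposition.

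Next we expand both products. Write $A_j:=\frac{\partial}{\partial\overline z_j}$ and $A_t^*:=(\frac{\partial}{\partial\overline z_t})^{*,\Phi_\eta}$. Then
\[
\ddbar\,\ddbar^{*}=\sum_{j,t}d\overline z_j\wedge(d\overline z_t\wedge)^*A_jA_t^*,
\qquad
\ddbar^{*}\ddbar=\sum_{j,t}(d\overline z_t\wedge)^*d\overline z_j\wedge A_t^*A_j .
\]
The scalar operators commute with the constant-coefficient exterior operators, because the forms $d\overline z_j$ are orthogonal with constant norm. We then use the anticommutation relation
\[
(d\overline z_t\wedge)^*\,d\overline z_j\wedge+d\overline z_j\wedge(d\overline z_t\wedge)^*=\delta_{jt}.
\]
The pointwise metric normalization of $d\overline z_j$ on $\mathbb C^n$ is the one implicit in the stated formula, where the coefficient is exactly $\delta_{jt}$. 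With this relation the sum becomes
\[
\sum_j A_j^*A_j+\sum_{j,t}d\overline z_j\wedge(d\overline z_t\wedge)^*\,[A_j,A_t^*].
\]

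Finally we compute the commutator: $[A_j,A_t^*]=\frac{\partial}{\partial\overline z_j}\big(\sum_s(M_\eta)_{ts}\overline z_s\big)=(M_\eta)_{tj}$. This is a constant, because $\frac{\partial}{\partial\overline z_j}$ commutes with $\frac{\partial}{\partial z_t}$. Substituting it, together with the explicit form of $A_j^*$, gives both displayed lines of the proposition.

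The only point needing care is the normalization of $(d\overline z_j\wedge)^*$ and the index order in $(M_\eta)_{tj}$ versus $(M_\eta)_{jt}$; the rest is routine. The computation is the flat analogue of Proposition~\ref{lapfma}: setting $\frac{\partial}{\partial x_{2n+\ell}}\mapsto i\eta_\ell$ and removing the $\beta$-weight there reproduces the same structure.
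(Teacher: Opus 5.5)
Your proposal is correct and is exactly the computation the paper intends. The paper gives no written proof, but its first displayed line is precisely $\sum_j A_j^{*}A_j+\sum_{j,t}d\overline z_j\wedge(d\overline z_t\wedge)^*[A_j,A_t^{*}]$ with $[A_j,A_t^{*}]=(M_\eta)_{tj}$, as you derive. The normalization you flag does not have to be read off the stated formula: it is fixed in Section~\ref{heisenberggpcrstr}, where $\{dz_j,d\overline z_j,\omega_q\}$ is declared orthonormal, so the anticommutator is indeed $\delta_{jt}$.
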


Let $u \in L^2_{(0,q)}(\mathbb{H}^{n,m}, \psi_0)$ with $\Box^{(0),\psi_0}_{b} u = 0$. Let $v(z, x'') =  u(z,x'') e^{-\frac{1}{2} \sum\limits_t \beta_t x_{2n+t}}.$
Let $\mathcal{F}_{x''\to\eta}[v](z, \eta)$ be the partial Fourier transform of $v(z, x'')$ with respect to $(x_{2n+1}, \dots, x_{2n+m})$.

Set
\begin{equation*}
\alpha(z,\eta) = \alpha(z,\eta_1,\dots,\eta_m) = \mathcal{F}_{x''\to\eta}[v](z,\eta) \exp\bigg[\sum_{\ell=1}^m\Big(\frac{i\beta_\ell}{2}-\eta_\ell\Big)\sum_{j,k=1}^{n}\lambda^{\ell}_{jk}z_j\overline{z}_k\bigg].
\end{equation*}

\begin{prop}\label{alphaholo}
For almost every $\eta\in \mathbb{R}^m,$ we have $\int_{\mathbb{C}^n}|\alpha(z,\eta)|^2e^{-\Phi_\eta}d\mu(z) < \infty$ and
\[
\square^{(q),\Phi_{\eta}}\alpha(z,\eta) = 0
\]
in the sense of distribution. Thus, $\alpha(z,\eta) \in \Omega^{0,q}(\mathbb{C}^n)$ for almost every $\eta\in \mathbb{R}^m.$
\end{prop}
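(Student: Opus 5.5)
The plan is to use the vanishing of the Kohn Laplacian to get two first-order equations, $\overline{\partial}_bu=0$ and $\overline{\partial}^{*,\psi_0}_bu=0$. I will then transport both through the weight change $u\mapsto v=ue^{-\frac12\sum_t\beta_tx_{2n+t}}$, the partial Fourier transform $\mathcal F_{x''\to\eta}$, and multiplication by
\[E(z,\eta):=\exp\Big[\sum_{\ell=1}^m\Big(\tfrac{i\beta_\ell}{2}-\eta_\ell\Big)\sum_{j,k=1}^n\lambda^\ell_{jk}z_j\overline z_k\Big].\]
For a.e. $\eta$ they should become $\overline{\partial}\alpha(\cdot,\eta)=0$ and $\overline{\partial}^{*,\Phi_\eta}\alpha(\cdot,\eta)=0$. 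Proposition~\ref{holapfma} then gives $\square^{(q),\Phi_\eta}\alpha=0$, and ellipticity gives smoothness.

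\textbf{Step 1 (the two first-order equations).} Since $\square^{(q),\psi_0}_b$ is the Gaffney self-adjoint extension, $u\in{\rm Dom}\,\square^{(q),\psi_0}_b$ with $\square^{(q),\psi_0}_bu=0$ gives
\[0=(\square^{(q),\psi_0}_bu\,|\,u)_{\psi_0}=\|\overline{\partial}_bu\|^2_{\psi_0}+\|\overline{\partial}^{*,\psi_0}_bu\|^2_{\psi_0}.\]
Hence $\overline U_ju_J$ and the components of $\sum_t(d\overline z_t\wedge)^*\,\overline U^{*,\psi_0}_t u$ vanish as distributions. Here $\overline U^{*,\psi_0}_t=-U_t+\sum_s\mu_{ts}\overline z_s-i\sum_\ell(\sum_s\lambda^\ell_{ts}\overline z_s)\beta_\ell$.

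\textbf{Step 2 (conjugation computation).} Writing $u=ve^{\frac12\langle\beta,x''\rangle}$, one gets
\[\overline U_ju=e^{\frac12\langle\beta,x''\rangle}\Big(\overline U_j+\tfrac i2\sum_{\ell,s}\lambda^\ell_{sj}z_s\beta_\ell\Big)v.\]
Under $\mathcal F_{x''\to\eta}$ the derivative $\partial_{x_{2n+\ell}}$ becomes $i\eta_\ell$. So with $F=\mathcal F_{x''\to\eta}[v]$ the first equation reads
\[\Big(\partial_{\overline z_j}-\sum_{\ell,s}\eta_\ell\lambda^\ell_{sj}z_s+\tfrac i2\sum_{\ell,s}\beta_\ell\lambda^\ell_{sj}z_s\Big)F=0.\]
Substituting $F=\alpha E^{-1}$ and using $\partial_{\overline z_j}\log E=\sum_{\ell,s}(\tfrac{i\beta_\ell}2-\eta_\ell)\lambda^\ell_{sj}z_s$, all zeroth-order terms cancel, leaving $\partial_{\overline z_j}\alpha_J=0$ for each component. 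Thus $\overline{\partial}\alpha=0$.

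The same bookkeeping applies to the adjoint. Conjugating $-U_t$ by $e^{\frac12\langle\beta,x''\rangle}$, Fourier transforming, and conjugating by $E$ turns $\overline U^{*,\psi_0}_t$ into
\[-\partial_{z_t}+\sum_s\Big(\mu_{ts}-2\sum_\ell\eta_\ell\lambda^\ell_{ts}\Big)\overline z_s=\Big(\tfrac{\partial}{\partial\overline z_t}\Big)^{*,\Phi_\eta}.\]
This uses $\partial_{z_t}\log E=\sum_{\ell,s}(\tfrac{i\beta_\ell}2-\eta_\ell)\lambda^\ell_{ts}\overline z_s$; the $\beta$-terms cancel and the $\eta$-terms double. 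Hence $\overline{\partial}^{*,\Phi_\eta}\alpha=0$.

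Consistency check on the weights: $|F|^2e^{-\Phi_0}=|\alpha|^2e^{-2{\rm Re}\log E-\Phi_0}=|\alpha|^2e^{-\Phi_\eta}$. This identity, combined with \eqref{ae}, already yields $\int_{\mathbb C^n}|\alpha(z,\eta)|^2e^{-\Phi_\eta}d\mu(z)<\infty$ for a.e. $\eta$.

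\textbf{Step 3 (rigorous justification, the main obstacle).} $F$ is only an $L^2$ object, so the computation above must be done weakly and then localized in $\eta$. I will pair with test forms $s(z,\eta)=g(z)h(\eta)$, where $g\in\Omega^{0,q\pm1}_c(\mathbb C^n)$ and $h\in\mathcal C^\infty_c(\mathbb R^m)$. The Parseval identity proved above moves the pairing back to $x''$-space, where Step 1 applies; the test form $\mathcal F^{-1}_{x''\to\eta}[s]$ is Schwartz in $x''$, which is harmless after cutting off with $\chi_j$ and passing to the limit. This gives, for every such $g$,
\[\int h(\eta)\,\big(\alpha(\cdot,\eta)\,\big|\,\overline{\partial}^{*,\Phi_\eta}g\big)_{\Phi_\eta}\,d\mu(\eta)=0\quad\mbox{for all } h,\]
so the inner integral vanishes for a.e. $\eta$, and similarly for the other equation. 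Choosing a countable set $\{g_\nu\}$ dense in $\Omega^{0,q\pm1}_c(\mathbb C^n)$ in the $\mathcal C^1$ topology on compacts, I will take the union of the countably many null sets. Outside it, $\overline{\partial}\alpha(\cdot,\eta)=0$ and $\overline{\partial}^{*,\Phi_\eta}\alpha(\cdot,\eta)=0$ in $\mathcal D'(\mathbb C^n)$; here $\eta$ enters the coefficients polynomially, so the dependence on $\eta$ is handled by density.

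\textbf{Step 4 (conclusion).} From Step 3, $\square^{(q),\Phi_\eta}\alpha=\overline{\partial}\,\overline{\partial}^{*,\Phi_\eta}\alpha+\overline{\partial}^{*,\Phi_\eta}\overline{\partial}\alpha=0$ in distributions. By Proposition~\ref{holapfma}, $\square^{(q),\Phi_\eta}$ is a second-order operator with principal part $-\sum_j\partial_{z_j}\partial_{\overline z_j}$ acting diagonally, hence elliptic. Elliptic regularity then gives $\alpha(\cdot,\eta)\in\Omega^{0,q}(\mathbb C^n)$ for a.e. $\eta$.
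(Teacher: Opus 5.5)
Your proof is correct, but it is organized differently from the paper's.

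\textbf{Your route.} You use $u\in{\rm Dom}\,\square^{(q),\psi_0}_b$ to split the equation into $\overline{\partial}_bu=0$ and $\overline{\partial}^{*,\psi_0}_bu=0$. You then transport each first-order operator separately; your two conjugation identities are precisely the paper's computations \eqref{1} and \eqref{2}.

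\textbf{The paper's route.} The paper assembles these, together with \eqref{3} for the commutator term $[\overline{U}_j,\overline{U}^{*,\psi_0}_t]$, into the second-order intertwining identity of Lemma~\ref{alphahololem}. It then works with $\square^{(q),\Phi_\eta}$ throughout. It tests against the particular $L^2$ function $\varphi_R=1_{B(R)}\overline{h}$ to get $\int_{B(R)}|h|^2\,d\mu(\eta)=0$ directly. The lack of compact support in $x''$ is handled through the commutator $[\chi(x''/j),\square^{(q),\psi_0}_b]$. The final countable-density step is the same in both arguments.

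\textbf{What the paper's route buys.} It needs a weaker hypothesis. It only uses $(u\,|\,\square^{(q),\psi_0}_b\varphi)_{\psi_0}=0$ for compactly supported test forms $\varphi$, i.e.\ $\square^{(q),\psi_0}_bu=0$ in the sense of distributions. Your splitting genuinely requires $u$ to lie in the domain of the Gaffney extension, which the statement does provide.

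\textbf{What your route buys.} The justification step is simpler: taking smooth $h\in\mathcal{C}^\infty_c(\mathbb{R}^m)$ makes the test forms Schwartz in $x''$. The output is also sharper: you obtain $\overline{\partial}\alpha(\cdot,\eta)=0$ directly. For $q=0$ this is exactly the holomorphy invoked in Corollary~\ref{vanish}. From $\square^{(0),\Phi_\eta}\alpha=0$ alone, one still needs a short cutoff argument on $\mathbb{C}^n$, using $\alpha(\cdot,\eta)\in L^2(e^{-\Phi_\eta})$, to reach holomorphy.

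\textbf{A small correction.} For $q\geq1$, $\overline{\partial}_bu=0$ only says that the components of $\sum_j d\overline{z}_j\wedge\overline{U}_ju$ vanish, not that each $\overline{U}_ju_J$ does. Correspondingly, you obtain $\overline{\partial}\alpha=0$ rather than $\partial_{\overline{z}_j}\alpha_J=0$ for every $j$ and $J$. Your conjugation acts in the same way on every $\overline{U}_j$ and every component, so it transports $\overline{\partial}_b$ to $\overline{\partial}$ at the level of forms. The conclusion of your Step~2 is therefore unaffected.
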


Before proving Proposition \ref{alphaholo}, we prove the following lemma.

\begin{lem}\label{alphahololem}
Let $f \in \Omega^{0,q}(\mathbb{C}^n)$. Let $\varphi(\eta) = \varphi(\eta_1, \dots, \eta_m) \in L^2(\mathbb{R}^m)$ with compact support. Then we have
\begin{align*}
&\int_{\mathbb{R}^m} e^{i \langle x'', \eta\rangle}\, e^{-\sum\limits_{\ell=1}^m(\frac{i\beta_\ell}{2}
  -\eta_\ell)\sum\limits_{j,k=1}^{n}\lambda^{\ell}_{jk}z_j\overline{z}_k + \frac{1}{2} \sum\limits_{t=1}^m \beta_t x_{2n+t}} \big(\Box^{(q),{\Phi}_\eta} f(z)\big) \varphi(\eta) d\mu(\eta) \\
&= \Box^{(q),\psi_0}_{b} \bigg(\int_{\mathbb{R}^m} e^{i \langle x'', \eta\rangle} e^{-\sum\limits_{\ell=1}^m(\frac{i\beta_\ell}{2}
  -\eta_\ell)\sum\limits_{j,k=1}^{n}\lambda^{\ell}_{jk}z_j\overline{z}_k + \frac{1}{2} \sum\limits_{t=1}^m \beta_t x_{2n+t}} f(z) \varphi(\eta) d\mu(\eta)\bigg)
\end{align*}
\end{lem}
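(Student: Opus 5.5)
The plan is to prove the identity by a conjugation argument: differentiate under the integral sign, then check a pointwise operator identity for each fixed $\eta$. Since $\varphi$ has compact support and lies in $L^2$, hence in $L^1$, and the integrand is smooth in $(z,x'')$ with derivatives bounded by a constant times $|\varphi(\eta)|$ on compact sets, differentiation in $(z,x'')$ passes under $\int d\mu(\eta)$. It therefore suffices to prove, for each fixed $\eta$,
\[
\Box^{(q),\psi_0}_b\big(E_\eta f\big)=E_\eta\,\Box^{(q),\Phi_\eta}f,\qquad
E_\eta(z,x''):=e^{i\langle x'',\eta\rangle-\sum_{\ell}(\frac{i\beta_\ell}{2}-\eta_\ell)\sum_{j,k}\lambda^\ell_{jk}z_j\ol z_k+\frac12\sum_t\beta_tx_{2n+t}},
\]
where $E_\eta$ acts by multiplication on $f=f(z)$, which is independent of $x''$.

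The first step is to compute how $\ol U_j$ and $\ol U^{*,\psi_0}_j$ conjugate through $E_\eta$. On functions of $z$ alone, $\partial_{x_{2n+\ell}}$ applied to $E_\eta g$ gives $E_\eta\,(i\eta_\ell+\frac{\beta_\ell}{2})g$. Also $\partial_{\ol z_j}E_\eta=E_\eta\cdot\big(-\sum_\ell(\frac{i\beta_\ell}{2}-\eta_\ell)\sum_k\lambda^\ell_{kj}z_k\big)$. Hence
\[
\ol U_j(E_\eta g)=E_\eta\Big(\partial_{\ol z_j}g+\sum_\ell\sum_k\lambda^\ell_{kj}z_k\big[-\tfrac{i\beta_\ell}{2}+\eta_\ell+i(i\eta_\ell+\tfrac{\beta_\ell}{2})\big]g\Big)=E_\eta\,\partial_{\ol z_j}g ,
\]
because the bracket vanishes. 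So $\ol\partial_b E_\eta=E_\eta\ol\partial$ on forms in $z$ alone.

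The analogous computation for $\ol U^{*,\psi_0}_j$, using the formula displayed before Proposition~\ref{lapfma}, should give
\[
\ol U^{*,\psi_0}_j(E_\eta g)=E_\eta\Big(-\partial_{z_j}g+\sum_s\big(\mu_{js}-2\textstyle\sum_\ell\eta_\ell\lambda^\ell_{js}\big)\ol z_s\,g\Big)=E_\eta\big(\partial_{\ol z_j}\big)^{*,\Phi_\eta}g .
\]
The terms to collect are the following. The $\partial_{z_j}$ of the exponent contributes $\sum_\ell(\frac{i\beta_\ell}{2}-\eta_\ell)\lambda^\ell_{js}\ol z_s$. The $x''$-derivative term contributes $i\cdot i(i\eta_\ell+\frac{\beta_\ell}{2})\lambda^\ell_{js}\ol z_s=(-i\eta_\ell\cdot i\ldots)$. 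The constant $-i\lambda\beta$ term also enters. Summing these, the $\beta$-terms should cancel and the $\eta$-terms should add to $-2\eta_\ell\lambda^\ell_{js}\ol z_s$.

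With both conjugation rules in hand, I will insert them into the first expression of Proposition~\ref{lapfma}. The first sum conjugates directly to the first sum of Proposition~\ref{holapfma}. For the commutator term $[\ol U_j,\ol U^{*,\psi_0}_t]$, conjugation gives $[\partial_{\ol z_j},(\partial_{\ol z_t})^{*,\Phi_\eta}]$. This equals the constant $\mu_{tj}-2\sum_\ell\eta_\ell\lambda^\ell_{tj}$, matching the second term of Proposition~\ref{holapfma}. This also agrees with substituting $\partial_{x_{2n+\ell}}\mapsto i\eta_\ell+\frac{\beta_\ell}{2}$ in the explicit zeroth-order coefficient of Proposition~\ref{lapfma}: $-i\lambda\beta+2i\lambda(i\eta+\beta/2)=-2\eta\lambda$. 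The operators $d\ol z_j\wedge$ and $(d\ol z_t\wedge)^*$ commute with scalar multiplication. This yields the pointwise identity and hence the lemma.

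The main obstacle is the sign and index bookkeeping in the conjugation of $\ol U^{*,\psi_0}_j$. One must track the ordering $\lambda^\ell_{js}$ versus $\lambda^\ell_{sj}$ using Hermitian symmetry, and check that the $\beta$-contributions cancel exactly; this is where the factor $\frac{i\beta_\ell}{2}$ in $E_\eta$ is forced. I would verify this carefully. Everything else is routine justification of differentiation under the integral.
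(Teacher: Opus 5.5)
Your proposal is correct and follows essentially the same route as the paper. The paper likewise conjugates $\overline{U}_s$, $\overline{U}^{*,\psi_0}_s$ and the zeroth-order commutator coefficient through the exponential factor, then matches Proposition~\ref{lapfma} with Proposition~\ref{holapfma}; your reduction to a fixed-$\eta$ identity (justified by $\varphi\in L^1$) only repackages differentiating under the integral sign, and your one garbled bookkeeping line should read $i\,(i\eta_\ell+\frac{\beta_\ell}{2})=-\eta_\ell+\frac{i\beta_\ell}{2}$, which with $\frac{i\beta_\ell}{2}-\eta_\ell$ from $-\partial_{z_j}$ of the exponent and $-i\beta_\ell$ from the constant term gives exactly $-2\eta_\ell$, as you claimed.
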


\begin{proof}
We first calculate, for $s=1,\dots,n,$
\begin{align}\label{1}
&\overline{U}_{s, \mathbb{H}^{n,m}} \bigg(\int_{\mathbb{R}^m} e^{i \langle x'', \eta\rangle} e^{-\sum\limits_{\ell=1}^m(\frac{i\beta_\ell}{2}
  -\eta_\ell)\sum\limits_{j,k=1}^{n}\lambda^{\ell}_{jk}z_j\overline{z}_k + \frac{1}{2} \sum\limits_t \beta_t x_{2n+t}} f(z) \varphi(\eta) d\mu(\eta)\bigg)\\
&= \Big(\frac{\partial}{\partial \overline{z}_s} + i \sum_{q=1}^m \Big(\sum_{p=1}^n \lambda_{ps}^q z_p\Big) \frac{\partial}{\partial x_{2n+q}}\Big)\nonumber \\
&\quad \ \bigg(\int_{\mathbb{R}^m} e^{i \langle x'', \eta\rangle} e^{-\sum\limits_{\ell=1}^m(\frac{i\beta_\ell}{2}
  -\eta_\ell)\sum\limits_{j,k=1}^{n}\lambda^{\ell}_{jk}z_j\overline{z}_k + \frac{1}{2} \sum\limits_t \beta_t x_{2n+t}} f(z) \varphi(\eta) d\mu(\eta)\bigg)\nonumber \\
&= \int_{\mathbb{R}^m} e^{i \langle x'', \eta\rangle} \, e^{-\sum\limits_{\ell=1}^m(\frac{i\beta_\ell}{2}
  -\eta_\ell)\sum\limits_{j,k=1}^{n}\lambda^{\ell}_{jk}z_j\overline{z}_k + \frac{1}{2} \sum\limits_t \beta_t x_{2n+t}}\nonumber\\
&\quad\times \bigg[ -\Big(\sum_{\ell=1}^m \Big(\frac{i \beta_\ell}{2} - \eta_\ell\Big)\sum\limits_{j=1}^n \lambda_{js}^\ell z_j\Big) \, f(z) + \frac{\partial f}{\partial \overline{z}_s} \bigg] \varphi(\eta)\nonumber \\
&\quad + \bigg[ i \sum_{q=1}^m \Big(\sum_{p=1}^n \lambda_{ps}^q z_p\Big) \, e^{i \langle x'', \eta\rangle}  e^{-\sum\limits_{\ell=1}^m(\frac{i\beta_\ell}{2}-\eta_\ell)\sum\limits_{j,k=1}^{n}\lambda^{\ell}_{jk}z_j\overline{z}_k + \frac{1}{2} \sum\limits_t \beta_t x_{2n+t}}  \Big(i \eta_q + \frac{1}{2} \beta_q\Big) \bigg] f(z) \varphi(\eta) d\mu(\eta)\nonumber \\
&= \int_{\mathbb{R}^m} e^{i \langle x'', \eta\rangle} \, e^{-\sum\limits_{\ell=1}^m(\frac{i\beta_\ell}{2}
  -\eta_\ell)\sum\limits_{j,k=1}^{n}\lambda^{\ell}_{jk}z_j\overline{z}_k + \frac{1}{2} \sum\limits_t \beta_t x_{2n+t}} \bigg[  -\Big(\sum_{\ell=1}^m \Big(\frac{i \beta_\ell}{2} - \eta_\ell\Big)\sum\limits_{j=1}^n \lambda_{sj}^\ell z_j\Big) \, f(z) + \frac{\partial f}{\partial \overline{z}_s}\nonumber\\
&\quad + \sum_{q=1}^m \Big(\sum_{p=1}^n \lambda_{ps}^q z_p\Big) \Big(\frac{i \beta_q}{2} - \eta_q\Big) \, f(z) \bigg] \varphi(\eta) d\mu(\eta)\nonumber \\
&= \int_{\mathbb{R}^m} e^{i \langle x'', \eta\rangle} \, e^{-\sum\limits_{\ell=1}^m(\frac{i\beta_\ell}{2}
  -\eta_\ell)\sum\limits_{j,k=1}^{n}\lambda^{\ell}_{jk}z_j\overline{z}_k + \frac{1}{2} \sum\limits_t \beta_t x_{2n+t}} \, \frac{\partial f}{\partial \overline{z}_s}\, \varphi(\eta) d\mu(\eta)\nonumber
\end{align}

We next calculate, for $s=1,\dots,n,$
\begin{align}\label{2}
&\overline{U}^{*, \psi_0}_{s, \mathbb{H}^{n,m}} \bigg(\int_{\mathbb{R}^m} e^{i \langle x'', \eta\rangle} e^{-\sum\limits_{\ell=1}^m(\frac{i\beta_\ell}{2}
  -\eta_\ell)\sum\limits_{j,k=1}^{n}\lambda^{\ell}_{jk}z_j\overline{z}_k + \frac{1}{2} \sum\limits_t \beta_t x_{2n+t}}\, f(z) \varphi(\eta) d\mu(\eta)\bigg)\\
&= \bigg(-\frac{\partial}{\partial z_s} + \sum_{p=1}^n \mu_{sp} \overline{z}_p + i \sum_{q=1}^m \Big(\sum_{p=1}^n \lambda_{sp}^q \overline{z}_p\Big) \frac{\partial}{\partial x_{2n+q}} - i \sum_{q=1}^m \sum_{p=1}^n \lambda_{sp}^q \overline{z}_p \beta_q\bigg)\nonumber\\
&\quad\quad\bigg(\int_{\mathbb{R}^m} e^{i \langle x'', \eta\rangle} e^{-\sum\limits_{\ell=1}^m(\frac{i\beta_\ell}{2} -\eta_\ell)\sum\limits_{j,k=1}^{n}\lambda^{\ell}_{jk}z_j\overline{z}_k + \frac{1}{2} \sum\limits_t \beta_t x_{2n+t}} f(z) \varphi(\eta) d\mu(\eta)\bigg)\nonumber \\
&= \int_{\mathbb{R}^m} e^{i \langle x'', \eta\rangle} \, e^{-\sum\limits_{\ell=1}^m(\frac{i\beta_\ell}{2}
  -\eta_\ell)\sum\limits_{j,k=1}^{n}\lambda^{\ell}_{jk}z_j\overline{z}_k + \frac{1}{2} \sum\limits_t \beta_t x_{2n+t}}  \bigg[ \Big(\sum_{\ell=1}^m \Big(\frac{i \beta_\ell}{2} - \eta_\ell\Big)\sum\limits_{k=1}^n \lambda_{sk}^\ell \overline{z}_k\Big)  f(z) - \frac{\partial f}{\partial z_s} \bigg] \varphi(\eta)\nonumber \\
&\quad + \bigg[ i \sum_{q=1}^m \Big(\sum_{p=1}^n \lambda_{sp}^q \overline{z}_p\Big)  e^{i \langle x'', \eta\rangle} e^{-\sum\limits_{\ell=1}^m(\frac{i\beta_\ell}{2}
  -\eta_\ell)\sum\limits_{j,k=1}^{n}\lambda^{\ell}_{jk}z_j\overline{z}_k + \frac{1}{2} \sum\limits_t \beta_t x_{2n+t}}  \Big(i \eta_q + \frac{1}{2} \beta_q\Big) \bigg] f(z) \varphi(\eta)\nonumber \\
&\quad + e^{i \langle x'', \eta\rangle} \, e^{-\sum\limits_{\ell=1}^m(\frac{i\beta_\ell}{2}
  -\eta_\ell)\sum\limits_{j,k=1}^{n}\lambda^{\ell}_{jk}z_j\overline{z}_k + \frac{1}{2} \sum\limits_t \beta_t x_{2n+t}}  \Big(\sum_{p=1}^n \mu_{sp} \overline{z}_p - i \sum_{q=1}^m \sum_{p=1}^n \lambda_{sp}^q \overline{z}_p \beta_q \Big) f(z) \varphi(\eta) d\mu(\eta)\nonumber \\
&= \int_{\mathbb{R}^m} e^{i \langle x'', \eta\rangle} \, e^{-\sum\limits_{\ell=1}^m(\frac{i\beta_\ell}{2}
  -\eta_\ell)\sum\limits_{j,k=1}^{n}\lambda^{\ell}_{jk}z_j\overline{z}_k + \frac{1}{2} \sum\limits_t \beta_t x_{2n+t}}  \bigg[ \Big(\sum_{\ell=1}^m \Big(\frac{i \beta_\ell}{2} - \eta_\ell\Big)\sum\limits_{k=1}^n \lambda_{sk}^\ell \overline{z}_k\Big)  f(z) - \frac{\partial f}{\partial z_s}\nonumber\\
&\quad+ \Big(\sum_{q=1}^m \Big(\sum_{p=1}^n \lambda_{sp}^q \overline{z}_p\Big) \Big(\frac{i \beta_q}{2} - \eta_q\Big)\Big) f(z) + \Big(\sum_{p=1}^n \mu_{sp} \overline{z}_p\Big) f(z)\nonumber\\
&\quad- \Big(i \sum_{q=1}^m \sum_{p=1}^n \lambda_{sp}^q \overline{z}_p \beta_q\Big) f(z) \bigg] \varphi(\eta) d\mu(\eta)\nonumber \\
&= \int_{\mathbb{R}^m} e^{i \langle x'', \eta\rangle} e^{-\sum\limits_{\ell=1}^m(\frac{i\beta_\ell}{2}
  -\eta_\ell)\sum\limits_{j,k=1}^{n}\lambda^{\ell}_{jk}z_j\overline{z}_k + \frac{1}{2} \sum\limits_t \beta_t x_{2n+t}}  \bigg[ \Big(\sum_{q=1}^m i \beta_q \Big(\sum_{p=1}^n \lambda_{sp}^q \overline{z}_p\Big) \Big) f(z)\nonumber\\
&\quad- \Big( 2\sum_{q=1}^m \Big(\sum_{p=1}^n \lambda_{sp}^q \overline{z}_p\Big) \eta_q \Big) f(z) - \frac{\partial f}{\partial z_s}
+ \Big(\sum_{p=1}^n \mu_{sp} \overline{z}_p\Big) f(z) - \Big( i \sum_{q=1}^m \sum_{p=1}^n \lambda_{sp}^q \overline{z}_p \beta_q \Big) f(z) \bigg]\nonumber\\
&\quad\times\varphi(\eta) d\mu(\eta)\nonumber \\
&= \int_{\mathbb{R}^m} e^{i \langle x'', \eta\rangle} e^{-\sum\limits_{\ell=1}^m(\frac{i\beta_\ell}{2}
  -\eta_\ell)\sum\limits_{j,k=1}^{n}\lambda^{\ell}_{jk}z_j\overline{z}_k + \frac{1}{2} \sum\limits_t \beta_t x_{2n+t}}  \bigg[ -\frac{\partial f}{\partial z_s} + \Big(\sum_{p=1}^n \mu_{sp} \overline{z}_p\Big) f(z)\nonumber\\
&\quad- \Big( 2 \sum_{q=1}^m \Big(\sum_{p=1}^n \lambda_{sp}^q \overline{z}_p\Big) \eta_q \Big) f(z) \bigg] \varphi(\eta) d\mu(\eta),\nonumber
\end{align}

Finally, we calculate, for $s, p = 1,\dots,n,$
\begin{align}\label{3}
&\Big(\mu_{sp} - i \sum_{q=1}^m \lambda_{sp}^q \beta_q + 2i \sum_{q=1}^m \lambda_{sp}^q \frac{\partial}{\partial x_{2n+q}}\Big)\\ 
&\Big(\int_{\mathbb{R}^m} e^{i \langle x'', \eta\rangle} e^{-\sum\limits_{\ell=1}^m(\frac{i\beta_\ell}{2}
  -\eta_\ell)\sum\limits_{j,k=1}^{n}\lambda^{\ell}_{jk}z_j\overline{z}_k + \frac{1}{2} \sum\limits_t \beta_t x_{2n+t}} f(z) \varphi(\eta) d\mu(\eta)\Big)\nonumber\\
&= \int_{\mathbb{R}^m} e^{i \langle x'', \eta\rangle} e^{-\sum\limits_{\ell=1}^m(\frac{i\beta_\ell}{2}
  -\eta_\ell)\sum\limits_{j,k=1}^{n}\lambda^{\ell}_{jk}z_j\overline{z}_k + \frac{1}{2} \sum\limits_t \beta_t x_{2n+t}} \Big(\mu_{sp} - i \sum_{q=1}^m \lambda_{sp}^q \beta_q\Big) f(z) \varphi(\eta)\nonumber \\
&\quad + \bigg[ 2i \Big(\sum_{q=1}^m \lambda_{sp}^q\Big) e^{i \langle x'', \eta\rangle} e^{-\sum\limits_{\ell=1}^m(\frac{i\beta_\ell}{2}
  -\eta_\ell)\sum\limits_{j,k=1}^{n}\lambda^{\ell}_{jk}z_j\overline{z}_k + \frac{1}{2} \sum\limits_t \beta_t x_{2n+t}} \Big(i \eta_q + \frac{1}{2} \beta_q\Big) \bigg] f(z) \varphi(\eta) d\mu(\eta)\nonumber \\
&= \int_{\mathbb{R}^m} e^{i \langle x'', \eta\rangle} e^{-\sum\limits_{\ell=1}^m(\frac{i\beta_\ell}{2}
  -\eta_\ell)\sum\limits_{j,k=1}^{n}\lambda^{\ell}_{jk}z_j\overline{z}_k + \frac{1}{2} \sum\limits_t \beta_t x_{2n+t}}\bigg[ \mu_{sp} - i \sum_{q=1}^m \lambda_{sp}^q \beta_q\nonumber\\
&\quad+ 2i \sum_{q=1}^m \lambda_{sp}^q \Big(i \eta_q + \frac{1}{2} \beta_q\Big) \bigg] f(z) \varphi(\eta) d\mu(\eta)\nonumber \\
&= \int_{\mathbb{R}^m} e^{i \langle x'', \eta\rangle} e^{-\sum\limits_{\ell=1}^m(\frac{i\beta_\ell}{2}
  -\eta_\ell)\sum\limits_{j,k=1}^{n}\lambda^{\ell}_{jk}z_j\overline{z}_k + \frac{1}{2} \sum\limits_t \beta_t x_{2n+t}} \Big( \mu_{sp} - 2 \sum_{q=1}^m \lambda_{sp}^q \eta_q \Big) f(z) \varphi(\eta) d\mu(\eta)\nonumber
\end{align}
From (\ref{1}), (\ref{2}), (\ref{3}) and the explicit formulas for $\Box^{(q),\psi_0}_b$ (Proposition \ref{lapfma}) and $\Box^{(q),\Phi_\eta}$ (Proposition \ref{holapfma}), the lemma follows.
\end{proof}

We can now prove Proposition \ref{alphaholo}.
\begin{proof}
From the discussion in (\ref{ae}), we can see that there is a Lebesgue measure zero set $A_0 \subset \mathbb{R}^m$ such that for all $\eta = (\eta_1, \dots, \eta_m) \notin A_0$,
\begin{align*}
&\int_{\mathbb{C}^n} |\alpha(z, \eta)|^2 e^{-\Phi_\eta(z)} d\mu(z)\\ 
&= \int_{\mathbb{C}^n} |\mathcal{F}_{x''\to\eta}[v](z, \eta)|^2 e^{ -2\sum\limits_{\ell=1}^m\eta_\ell\big(\sum\limits_{j,s=1}^{n}\lambda^{\ell}_{js}z_j\overline{z}_s\big) } \, e^{-\sum\limits_{j,s} \mu_{js} z_j \overline{z}_s + 2 \sum\limits_{\ell=1}^m \eta_\ell\big(\sum\limits_{j,s} \lambda_{js}^\ell z_j \overline{z}_s\big)} d\mu(z) \\
&= \int_{\mathbb{C}^n} |\mathcal{F}_{x''\to\eta}[v](z, \eta)|^2 e^{-\Phi_0(z)} d\mu(z) < \infty.
\end{align*}

Next we show $\Box^{(q),{\Phi}_\eta} \alpha(z, \eta) = 0$ for a.e. $\eta \in \mathbb{R}^m$. Let $f \in \Omega_c^{0,q}(\mathbb{C}^n)$. Put 
\begin{equation*}
h(\eta) = \int_{\mathbb{C}^n} (\alpha(z, \eta) \mid \Box^{(q),{\Phi}_\eta} f ) e^{-\Phi_\eta(z)} d\mu(z)
\end{equation*}
if $\eta \notin A_0$ and $h(\eta)=0$ if $\eta \in A_0$.

By Cauchy's inequality, we have
\begin{equation*}
|h(\eta)|^2 \leq \Big(\int_{\mathbb{C}^n} |\alpha(z, \eta)|^2 e^{-\Phi_\eta(z)}\, d\mu(z)\Big) \Big(\int_{\mathbb{C}^n} \big|\Box^{(q),{\Phi}_\eta} f\big|^2 e^{-\Phi_\eta(z)}\, d\mu(z)\Big).
\end{equation*}
For $R > 0$, put $\varphi_R(\eta) = 1_{B(R)}(\eta) \cdot \overline{h(\eta)}$ where $B(R) = \{ \eta \in \mathbb{R}^m \mid |\eta| \leq R \}$.
We have
\begin{align*}
\int |\varphi_R(\eta)|^2\, d\mu(\eta) &= \int_{B(R)} |h(\eta)|^2\, d\mu(\eta) \leq C \int_{B(R)} \int_{\mathbb{C}^n} |\alpha(z, \eta)|^2 e^{-\Phi_\eta(z)} \,d\mu(z) d\mu(\eta) \\
&\leq C \int \int |\mathcal{F}_{x''\to\eta}[v](z, \eta)|^2 e^{-\Phi_0(z)}\, d\mu(z) d\mu(\eta) < \infty,
\end{align*}
where $C>0.$
Hence $\varphi_R(\eta) \in L^2(B(R)) \cap L^1(B(R))$.

\begin{align*}
&\int_{\mathbb{R}^m} h(\eta) \varphi_R(\eta) d\mu(\eta)\\
&= \int_{B(R)} |h(\eta)|^2 d\mu(\eta) = \int_{\mathbb{R}^m} \int_{\mathbb{C}^n} (\alpha(z, \eta) \mid \Box^{(q),{\Phi}_\eta} f ) e^{-\Phi_\eta(z)} \varphi_R(\eta)\, d\mu(\eta) d\mu(z) \\
&= \int \int \big(\mathcal{F}_{x''\to\eta}[v](z, \eta) \mid e^{-\sum\limits_{\ell=1}^m(\frac{i\beta_\ell}{2}-\eta_\ell)\sum\limits_{j,s=1}^{n}\lambda^{\ell}_{js}z_j\overline{z}_s} \big(\Box^{(q),{\Phi}_\eta} f\big)(z) \, \overline{\varphi_R(\eta)}\big) e^{-\Phi_0(z)}\, d\mu(\eta) d\mu(z) \\
&= \int \int \Big(u(z, x'') \,\Big|\, \int_{\mathbb{R}^m} e^{i \langle x'', \eta\rangle} e^{-\sum\limits_{\ell=1}^m(\frac{i\beta_\ell}{2}
  -\eta_\ell)\sum\limits_{j,s=1}^{n}\lambda^{\ell}_{js}z_j\overline{z}_s + \frac{1}{2} \sum\limits_t \beta_t x_{2n+t}}\,  \Box^{(q),{\Phi}_\eta}\big( f \, \overline{\varphi_R(\eta)}\big)  d\mu(\eta) \Big)\\
&\quad \times e^{-\psi_0(z, x'')}\, d\mu(x'') d\mu(z)
\end{align*}
By Lemma \ref{alphahololem}, we have
\begin{align*}
&\int_{\mathbb{R}^m} e^{i \langle x'', \eta\rangle} e^{-\sum\limits_{\ell=1}^m(\frac{i\beta_\ell}{2}
  -\eta_\ell)\sum\limits_{j,s=1}^{n}\lambda^{\ell}_{js}z_j\overline{z}_s + \frac{1}{2} \sum\limits_t \beta_t x_{2n+t}} \big(\Box^{(q),{\Phi}_\eta} f(z) \big)\overline{\varphi_R(\eta)}\, d\mu(\eta) \\
&= \square^{(q),\psi_0}_b \bigg( \int_{\mathbb{R}^m} e^{i \langle x'', \eta\rangle} e^{-\sum\limits_{\ell=1}^m(\frac{i\beta_\ell}{2}
  -\eta_\ell)\sum\limits_{j,s=1}^{n}\lambda^{\ell}_{js}z_j\overline{z}_s + \frac{1}{2} \sum\limits_t \beta_t x_{2n+t}} f(z) \overline{\varphi_R(\eta)}\, d\mu(\eta) \bigg).
\end{align*}
Put 
\begin{equation*}
S(z, x'') = \int_{\mathbb{R}^m} e^{i \langle x'', \eta\rangle} e^{-\sum\limits_{\ell=1}^m(\frac{i\beta_\ell}{2}
  -\eta_\ell)\sum\limits_{j,s=1}^{n}\lambda^{\ell}_{js}z_j\overline{z}_s + \frac{1}{2} \sum\limits_t \beta_t x_{2n+t}}\, f(z)\, \overline{\varphi_R(\eta)}\,d\mu(\eta).
\end{equation*}
Then
\begin{equation*}
\int_{B(R)} |h(\eta)|^2 d\mu(\eta) = \int \int \big(u(z, x'') \mid \square^{(q),\psi_0}_b S(z, x'')\big) e^{-\psi_0(z, x'')}\, d\mu(x'') d\mu(z)
\end{equation*}
Take a smooth function $\chi(x'') = \chi(x_{2n+1}, \dots, x_{2n+m}) \in \mathcal{C}^\infty_c(\mathbb{R}^m)$ with $\chi(x'')=1$ when $|x''|<1$ and $\chi(x'')=0$ when $|x''|>2$. Then
\begin{align}\label{heta}
\int_{B(R)} |h(\eta)|^2 d\mu(\eta) &= \lim_{j \to \infty} \int \int \big(u(z, x'') \mid \chi\big(\frac{x''}{j}\big) \square^{(q),\psi_0}_b S(z, x'')\big) e^{-\psi_0(z, x'')}\, d\mu(x'') d\mu(z) \\
&= \lim_{j \to \infty} \bigg( \int \int \big(\square^{(q),\psi_0}_b u(z, x'') \mid \chi\big(\frac{x''}{j}\big) S(z, x'')\big) e^{-\psi_0(z, x'')}\, d\mu(x'') d\mu(z) \nonumber \\
&\quad\quad  + \int \int \Big(u(z, x'') \mid \Big[\chi(\frac{x''}{j}), \square^{(q),\psi_0}_b\Big] S(z, x'')\Big) e^{-\psi_0(z, x'')}\, d\mu(x'') d\mu(z) \bigg)\nonumber \\
&= \lim_{j \to \infty} \int \int \Big(u(z, x'') \mid \Big[\chi\big(\frac{x''}{j}\big), \square^{(q),\psi_0}_b\Big] S(z, x'')\Big) e^{-\psi_0(z, x'')} \,d\mu(x'') d\mu(z).\nonumber
\end{align}
We can check that $\Big[\chi(\frac{x''}{j}), \square^{(q),\psi_0}_b\Big]$ is a 1st order partial differential operator and all the coefficients of $\Big[\chi(\frac{x''}{j}), \square^{(q),\psi_0}_b\Big]$ converge to $0$ as $j \to \infty$ uniformly in $x''$ and locally uniformly in $z$.

By Parseval's formula, we have
\begin{align*}
&\sum_{|\alpha| \leq 1} \int \int |\partial^\alpha_{z, x''} S|^2 e^{-\psi_0(z, x'')} \,d\mu(x'')\, d\mu(z)\\
&= \sum_{|\alpha| \leq 1} \int \int \bigg| \partial^\alpha_{z, x''} \Big(\int_{\mathbb{R}^m} e^{i \langle x'', \eta\rangle} e^{-\sum\limits_{\ell=1}^m(\frac{i\beta_\ell}{2}-\eta_\ell)\sum\limits_{j,s=1}^{n}\lambda^{\ell}_{js}z_j\overline{z}_s + \frac{1}{2} \sum\limits_t \beta_t x_{2n+t}}f(z)\overline{\varphi_R(\eta)}\,d\mu(\eta) \Big) \bigg|^2 \\
&\hspace{3cm} \times e^{-\psi_0(z, x'')} \, d\mu(x'') d\mu(z) \\
&= \sum_{|\alpha| \leq 1} \int \int \bigg| \partial^\alpha_{z, x''} \Big(\int_{\mathbb{R}^m} e^{i \langle x'', \eta\rangle} e^{-\sum\limits_{\ell=1}^m(\frac{i\beta_\ell}{2} -\eta_\ell)\sum\limits_{j,s=1}^{n}\lambda^{\ell}_{js}z_j\overline{z}_s}f(z)\overline{\varphi_R(\eta)}\,d\mu(\eta) \Big) \bigg|^2 \\
&\hspace{3cm} \times e^{-\Phi_0(z)} d\mu(x'') d\mu(z) \\
&= (2\pi)^{2d}\sum_{|\alpha| \leq 1} \int \int \bigg| \partial^\alpha_{z, x''} \mathcal{F}^{-1}_{\eta\rightarrow x''}\Big[ e^{-\sum\limits_{\ell=1}^m(\frac{i\beta_\ell}{2} -\eta_\ell)\sum\limits_{j,s=1}^{n}\lambda^{\ell}_{js}z_j\overline{z}_s} f(z) \overline{\varphi_R(\eta)} \Big] (z, x'') \bigg|^2 \\
&\hspace{3cm} \times e^{-\Phi_0(z)} d\mu(x'')  d\mu(z) \\
&= (2\pi)^{2d}\sum_{|\alpha| \leq 1} \int \int \bigg| \mathcal{F}^{-1}_{\eta\rightarrow x''}\Big[ \eta^{\alpha_{x''}}\partial^{\alpha_z}_z \Big(e^{-\sum\limits_{\ell=1}^m(\frac{i\beta_\ell}{2} -\eta_\ell)\sum\limits_{j,s=1}^{n}\lambda^{\ell}_{js}z_j\overline{z}_s} f(z) \overline{\varphi_R(\eta)}\Big) \Big] (z, x'') \bigg|^2 \\
&\hspace{3cm} \times e^{-\Phi_0(z)} d\mu(x'')  d\mu(z) \\
&= (2\pi)^m \sum_{|\alpha| \leq 1} \int \int \bigg| \eta^{\alpha_{x''}} \partial^\alpha_z \Big( e^{-\sum\limits_{\ell=1}^m(\frac{i\beta_\ell}{2} -\eta_\ell)\sum\limits_{j,s=1}^{n}\lambda^{\ell}_{js}z_j\overline{z}_s} f(z) \overline{\varphi_R(\eta)} \Big) \bigg|^2 \\
&\hspace{3cm} \times d\mu(\eta) e^{-\Phi_0(z)} d\mu(z) \\
&\leq C \sum_{|\alpha| \leq 1} \int \int \big(1 + |z| + |\eta| + |z||\eta|\big)^2 |\partial^\alpha_z f(z)|^2 |\varphi_R(\eta)|^2 e^{-\Phi_\eta(z)} d\mu(\eta) d\mu(z) \\
&\leq C \int |\varphi_R(\eta)|^2 d\mu(\eta) < \infty.
\end{align*}

Thus, we have 
\begin{equation*}
\lim_{j \to \infty} \int \int \Big(u(z, x'') \mid \Big[\chi\big(\frac{x''}{j}\big), \square^{(q),\psi_0}_b\Big] S(z, x'')\Big) e^{-\psi_0(z, x'')} \,d\mu(x'') d\mu(z) = 0.
\end{equation*}
Together with (\ref{heta}), we obtain $\int_{B(R)} |h(\eta)|^2 d\mu(\eta) = 0$ for all $R > 0.$ Hence  $h(\eta) = 0$ for a.e. $\eta \in \mathbb{R}^m$.

Let us consider the Sobolev space $W^2(\mathbb{C}^{n})$ of distributions in $\mathbb{C}^{n}$ whose derivatives of
order $\leq 2$ are in $L^2$. The space of forms of type $(0, q)$ with coefficients in this space is accordingly denoted $W^2_{(0,q)}(\mathbb{C}^{n})$. Since $W^2_{(0,q)}(\mathbb{C}^{n})$ is separable and $\Omega^{0,q}_c(\mathbb{C}^{n})$ is dense in
$W^2_{(0,q)}(\mathbb{C}^{n})$, we can find $f_j \in \Omega^{0,q}_c(\mathbb{C}^{n}), j = 1, 2, \dots$, such that $\{f_1, f_2, \dots\}$ is a dense
subset of $W^2_{(0,q)}(\mathbb{C}^{n})$. Moreover, we can take $\{f_1, f_2, \dots\}$ so that for all $g \in \Omega^{0,q}_c(\mathbb{C}^{n})$
with $\text{supp } g \subset B_r,\ r > 0$, we can find $f_{j_1}, f_{j_2}, \dots, \text{supp } f_{j_t} \subset
B_r, t = 1, 2, \dots$, such that $f_{j_t} \to g$ for $t \to \infty$ in $W^2_{(0,q)}(\mathbb{C}^{n})$.

Now, for each $j$, we can repeat the method above and find a measurable set $A_j \supset
A_0, |A_j| = 0$ ($A_0$ is as in the beginning of the proof), such that $(\alpha(z, \eta) \mid \Box^{(q),{\Phi}_\eta} f_j(z))_{\Phi_\eta} = 0$
for all $\eta \notin A_j$. Put $A = \bigcup_j A_j$. Then, $|A| = 0$ and for all $\eta \notin A, (\alpha(z, \eta) \mid \Box^{(q),{\Phi}_\eta} f_j(z))_{\Phi_\eta} = 0$
for all $j$. Let $g \in \Omega^{0,q}_c(\mathbb{C}^{n})$ with $\text{supp } g \subset B_r$. From the discussion above, we can find
$f_{j_1}, f_{j_2}, \dots, \text{supp } f_{j_t} \subset B_r, t = 1, 2, \dots$, such that $f_{j_t} \to g$ in $W^2_{(0,q)}(\mathbb{C}^{n}), t \to \infty$.
Then, for $\eta \notin A$,
\begin{align*}
(\alpha(z, \eta) \mid \Box^{(q),{\Phi}_\eta} g)_{\Phi_\eta} &= (\alpha(z, \eta) \mid \Box^{(q),{\Phi}_\eta} (g - f_{j_t}))_{\Phi_\eta} + (\alpha(z, \eta) \mid \Box^{(q),{\Phi}_\eta} f_{j_t})_{\Phi_\eta} \\
&= (\alpha(z, \eta) \mid \Box^{(q),{\Phi}_\eta} (g - f_{j_t}))_{\Phi_\eta}.
\end{align*}

Now,
\begin{align*}
\left| (\alpha(z, \eta) \mid \Box^{(q),{\Phi}_\eta} (g - f_{j_t}))_{\Phi_\eta} \right| &= \left| \int_{B_r} (\alpha(z, \eta) \mid \Box^{(q),{\Phi}_\eta} (g - f_{j_t})) e^{-\Phi_\eta(z)} d\mu(z) \right| \\
&\leq C \sum_{|\alpha| \leq 2} \int |\partial^\alpha_x (g - f_{j_t})|^2 d\mu(z) \to 0, \quad t \to \infty.
\end{align*}

Thus, for $\eta \notin A, (\alpha(z, \eta) \mid \Box^{(q),{\Phi}_\eta} g)_{\Phi_\eta} = 0$ for all $g \in \Omega^{0,q}_c(\mathbb{C}^{n})$. The proposition follows.
\end{proof}

Set $M_\ell = (\lambda^{\ell}_{jk}), \ell = 1,\dots,m.$  Let
\[
I_{M_1,\dots,M_m}^{\Phi_0} := \Big\{\eta=(\eta_1,\dots,\eta_m)\in \mathbb{R}^m;\, M_\eta = (\mu_{jk}) - 2\sum_{\ell=1}^m\eta_\ell M_\ell ~~ \text{is positive definite}\Big\}.
\]

\begin{cor} \label{vanish}
Let $u \in \mathcal{H}^0_b(\mathbb{H}^{n,m})$. Then $\alpha(z, \eta) = 0$ for a.e. $\eta \notin I^{\Phi_0}_{M_1, \dots, M_m}$.
\end{cor}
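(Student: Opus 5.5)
By Proposition~\ref{alphaholo} (applied with $q=0$), for almost every $\eta\in\mathbb R^m$ the function $\alpha(\cdot,\eta)$ is smooth, satisfies $\int_{\mathbb C^n}|\alpha(z,\eta)|^2e^{-\Phi_\eta}d\mu(z)<\infty$, and solves $\Box^{(0),\Phi_\eta}\alpha(\cdot,\eta)=0$ in the sense of distributions. The plan is to show that this forces $\alpha(\cdot,\eta)=0$ whenever $M_\eta$ fails to be positive definite. The key step is to upgrade $\Box^{(0),\Phi_\eta}\alpha=0$ to $\ddbar\alpha=0$. Then $\alpha(\cdot,\eta)$ is an entire holomorphic function lying in $L^2(\mathbb C^n,e^{-\Phi_\eta})$, where $\Phi_\eta(z)=\langle M_\eta z,z\rangle$ is a Hermitian quadratic form. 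The vanishing will then follow from a Liouville-type argument on the weighted Bargmann--Fock space.

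\emph{Step 1: $\Box\alpha=0\Rightarrow\ddbar\alpha=0$.} By Proposition~\ref{holapfma} with $q=0$, $\Box^{(0),\Phi_\eta}=\sum_j(\partial_{\bar z_j})^{*,\Phi_\eta}\partial_{\bar z_j}$. I would take a cutoff $\chi_R(z)=\chi(z/R)$ and pair $\Box^{(0),\Phi_\eta}\alpha=0$ with $\chi_R^2\alpha$ in $L^2(e^{-\Phi_\eta})$. Integrating by parts gives
\[\int\chi_R^2\sum_j|\partial_{\bar z_j}\alpha|^2e^{-\Phi_\eta}=-2\,{\rm Re}\int\chi_R\sum_j(\partial_{\bar z_j}\alpha)\,\overline{(\partial_{\bar z_j}\chi_R)\alpha}\,e^{-\Phi_\eta}.\]
Since $|\nabla\chi_R|\le C/R$, Cauchy--Schwarz bounds the right side by $\tfrac12\int\chi_R^2|\ddbar\alpha|^2e^{-\Phi_\eta}+CR^{-2}\|\alpha\|^2_{\Phi_\eta}$. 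Letting $R\to\infty$ yields $\ddbar\alpha=0$, so $\alpha(\cdot,\eta)$ is entire. This Gaffney-type cutoff argument is where I expect care is needed. The weight is not positive, but only finiteness of $\|\alpha\|_{\Phi_\eta}$ is used, so it should go through.

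\emph{Step 2: Liouville step.} Suppose $M_\eta$ is not positive definite. After a unitary change of variables $z=Uw$ (which preserves holomorphy and $d\mu$), we have $\Phi_\eta=\sum_j\nu_j|w_j|^2$ with $\nu_1\le0$. Fix $w'=(w_2,\dots,w_n)$ and write $f(w_1)=\alpha(w_1,w')$. By Fubini, for a.e.\ $w'$ we have $\int_{\mathbb C}|f(w_1)|^2e^{-\nu_1|w_1|^2}d\mu(w_1)<\infty$. Since $e^{-\nu_1|w_1|^2}\ge1$, $f$ is an entire function in $L^2(\mathbb C)$. By the mean value property, $|f(w_1)|^2\le \frac{1}{\pi r^2}\int_{D(w_1,r)}|f|^2$, and letting $r\to\infty$ gives $f\equiv0$. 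Hence $\alpha(\cdot,\eta)=0$ for a.e.\ $w'$, and by continuity $\alpha(\cdot,\eta)\equiv0$.

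Combining the two steps: for $\eta$ outside the null set given by Proposition~\ref{alphaholo} and $\eta\notin I^{\Phi_0}_{M_1,\dots,M_m}$, we get $\alpha(\cdot,\eta)=0$, which proves the corollary.
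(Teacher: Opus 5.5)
Your proof is correct, and it reaches the conclusion by a somewhat different route than the paper. The paper reads off directly from Proposition~\ref{alphaholo} that $\alpha(\cdot,\eta_0)$ is holomorphic. It then diagonalizes $\Phi_{\eta_0}=\sum_j\lambda_j|z_j|^2$ and expands $\alpha(\cdot,\eta_0)=\sum_\gamma c_\gamma z^\gamma$. Orthogonality of monomials for this rotation-invariant weight gives $|c_{\gamma}|^2\int_{\mathbb C^n}|z^{\gamma}|^2e^{-\Phi_{\eta_0}}\,d\mu\le\|\alpha(\cdot,\eta_0)\|^2_{\Phi_{\eta_0}}$. Since the left-hand integral diverges when some $\lambda_j\le 0$, every coefficient must vanish.

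Your Step~1 supplies something the paper leaves implicit. Proposition~\ref{alphaholo} only gives $\Box^{(0),\Phi_\eta}\alpha=0$ distributionally. Passing to $\overline\partial\alpha=0$ for a solution known only to lie in $L^2(e^{-\Phi_\eta})$ genuinely needs a cutoff (Gaffney-type) argument like yours. You carry it out correctly: only positivity of $e^{-\Phi_\eta}$ and finiteness of $\|\alpha\|_{\Phi_\eta}$ are used.

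Your Step~2 replaces the power-series computation with Fubini slicing along a non-positive eigendirection plus the one-variable $L^2$ Liouville theorem via the mean value property. Done carefully, the paper's orthogonality step requires working on polydiscs and letting the radii tend to infinity, since the monomials are not themselves in the weighted space. Your argument avoids this and uses only that $e^{-\Phi_\eta}\ge 1$ along one complex line, so it is a bit more elementary and self-contained. The paper's route is shorter but relies on the full rotational symmetry of the diagonalized weight.
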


\begin{proof}
Suppose $u \in \mathcal{H}^0_b(\mathbb{H}^{n,m})$. By Proposition \ref{alphaholo}, we have $\alpha(z, \eta)$ is holomorphic on $\mathbb{C}^n$ and
\begin{equation*}
\int_{\mathbb{C}^n} |\alpha(z, \eta)|^2 e^{-\Phi_\eta(z)} d\mu(z) < \infty
\end{equation*}
for a.e. $\eta \in \mathbb{R}^m$.

Fix $\eta_0 \notin I^{\Phi_0}_{M_1, \dots, M_m}$ such that $\int_{\mathbb{C}^n} |\alpha(z, \eta_0)|^2 e^{-\Phi_{\eta_0}(z)} d\mu(z) < \infty$ and $\alpha(z, \eta_0)$ is holomorphic. We may assume
that, with a holomorphic change of coordinates if necessary,
\begin{equation*}
\Phi_{\eta_0}(z) = \lambda_1 |z_1|^2 + \dots + \lambda_n |z_n|^2
\end{equation*}
where $\lambda_1, \dots, \lambda_n \in \mathbb{R}$ and at least one of $\lambda_1, \dots, \lambda_n$ is less than or equal to 0.

Write $\alpha(z, \eta_0) = \sum_{\gamma \in \mathbb{N}_0^n} c_\gamma(\eta_0) z^\gamma$. Fix $\gamma_0 \in \mathbb{N}_0^n$. It is easy to see that
\begin{equation*}
|c_{\gamma_0}(\eta_0)|^2 \int_{\mathbb{C}^n} |z^{\gamma_0}|^2 e^{-\sum_j \lambda_j |z_j|^2} d\mu(z) \leq \int_{\mathbb{C}^n} |\alpha(z, \eta_0)|^2 e^{-\sum_j \lambda_j |z_j|^2} d\mu(z) < \infty.
\end{equation*}

But $\int_{\mathbb{C}^n} |z^{\gamma_0}|^2 e^{-\sum_j \lambda_j |z_j|^2} d\mu(z) = \infty$ since some $\lambda_j \leq 0$. This implies $c_{\gamma_0}(\eta_0) = 0$ and hence $\alpha(z, \eta_0) = 0$.
\end{proof}

\subsection{Weighted Bergman kernel}
In the following, we establish the explicit representation of the Szeg\H{o} kernel for the weighted Heisenberg group. We first recall the formulas for the weighted Bergman kernel.
Let $\phi = (\gamma_{jk})$ be a positive definite matrix. Let $\phi(z) = \sum_{j,k=1}^{n}\gamma_{jk}z_j\overline{z}_k.$ We denote by $L^2(\mathbb{C}^n, 2\phi)$ the weighted $L^2$ space with the norm
\[
\|f\|^2_\phi = \int_{\mathbb{C}^n} |f(z)|^2 e^{-2\phi(z)}d\mu(z).
\]
Let
\[
\overline{\partial}_\phi := \overline{\partial} + (\overline{\partial}\phi)\wedge : \mathcal{C}^\infty(\mathbb{C}^n) \rightarrow \Omega^{0,1}(\mathbb{C}^n).
\]
Let \[
P_\phi : L^2(\mathbb{C}^n) \rightarrow \operatorname{Ker}(\overline{\partial}_\phi)
\] be the Bergman projection with respect to $\overline{\partial}_\phi.$
Then the Bergman kernel function with respect to $\overline{\partial}_\phi$ is given by
\[
P_\phi(z,w) = \frac{\det(\phi)}{\pi^n}e^{-\sum\limits_{j,k=1}^{n}\gamma_{jk}(z_j-w_j)(\overline{z}_k-\overline{w}_k) + \sum\limits_{j,k=1}^{n}\gamma_{jk}(z_j\overline{w}_k-\overline{z}_kw_j)}.
\]
That is,
\[
(P_\phi f)(z) = \int_{\mathbb{C}^n} P_\phi(z,w)f(w)d\mu(w)
\]
for all $f\in L^2(\mathbb{C}^n).$
Let 
\[
B_\phi(z,w) = e^{\phi(z)}P_\phi(z,w)e^{-\phi(w)} = \frac{\det(\phi)}{\pi^n}e^{2\sum\limits_{j,k=1}^{n}\gamma_{jk}z_j\overline{w}_k - 2\sum\limits_{j,k=1}^{n}\gamma_{jk}w_j\overline{w}_k}.
\]
Then 
\begin{equation}\label{berg}
u(z) = \int_{\mathbb{C}^n} B_\phi(z,w) u(w) d\mu(w)
\end{equation}
for all $u\in L^2(\mathbb{C}^n, 2\phi)\cap \operatorname{Ker}(\overline{\partial}).$

\subsection{Szeg\H{o} kernel function for the model case}
We recall some notations.
\begin{align*}
  \psi_0(z,x'') &= \beta_1x_{2n+1} + \cdots + \beta_mx_{2n+m} + \sum_{j,k=1}^{n}\mu_{jk}z_j\overline{z}_k \\
  \Phi_0(z) &= \sum_{j,k=1}^{n}\mu_{jk}z_j\overline{z}_k \\
  \Phi_\eta(z) &= \sum_{j,k=1}^{n}\mu_{jk}z_j\overline{z}_k -2\eta_1\sum_{j,k=1}^{n}\lambda^{1}_{jk}z_j\overline{z}_k - \cdots -2\eta_m\sum_{j,k=1}^{n}\lambda^m_{jk}z_j\overline{z}_k.
\end{align*}

Let $M_\ell = (\lambda^{\ell}_{jk}), \ell = 1,\dots,m$ be $d$ Hermitian matrices. Let
\[
I_{M_1,\dots,M_m}^{\Phi_0} := \Big\{\eta=(\eta_1,\dots,\eta_m)\in \mathbb{R}^m;\, M_\eta = (\mu_{jk}) - 2\sum_{\ell=1}^m\eta_\ell M_\ell ~~ \text{is positive definite}\Big\}.
\]
Let 
\[
\Pi^{(0),\psi_0}_b : L^2(\mathbb{H}^{n,m}, \psi_0) \rightarrow \operatorname{Ker}(\overline{\partial}_{b})
\]
be the Szeg\H{o} projection. Let $x'' = (x_{2n+1},\dots,x_{2n+m})$ and $y'' = (y_{2n+1},\dots,y_{2n+m}).$ Motivated by \cite{HHL18}, and from Proposition \ref{alphaholo}, Corollary \ref{vanish} and \eqref{berg}, we have for $u\in\mathcal{H}^0_b(\mathbb{H}^{n,m}),$  formally,
\begin{align}\label{weightfml}
  &\big(\Pi^{(0),\psi_0}_b u\big)(z,x'')\\ 
  &= e^{\frac{1}{2}\sum\limits_{t=1}^m\beta_t x_{2n+t}}\Big(\frac{1}{2\pi}\Big)^m\int_{\mathbb{R}^m}e^{i\langle\eta,x''\rangle}\Bigg(\int_{\mathbb{C}^n}B_{\frac{\Phi_\eta}{2}}(z,w)\bigg(\Big(\int_{\mathbb{R}^m}
  e^{-i\langle\eta,y''\rangle}u(w,y'')e^{{-\frac{1}{2}}\sum\limits_{t=1}^m\beta_t y_{2n+t}}d\mu(y'')\Big)\nonumber\\
  &\quad\quad\cdot e^{\sum\limits_{\ell=1}^m(\frac{i\beta_\ell}{2}-\eta_\ell)\sum\limits_{j,k=1}^{n}\lambda^{\ell}_{jk}w_j\overline{w}_k}\bigg)d\mu(w)\Bigg) e^{-\sum\limits_{\ell=1}^m(\frac{i\beta_\ell}{2}-\eta_\ell)\sum\limits_{j,k=1}^{n}\lambda^{\ell}_{jk}z_j\overline{z}_k}d\mu(\eta)\nonumber \\
  &=\Big(\frac{1}{2\pi}\Big)^{n+m} \int_{I_{M_1,\dots,M_m}^{\Phi_0}}\int_{\mathbb{C}^n}\int_{\mathbb{R}^m} \det(M_\eta) \exp\bigg[i\langle\eta,x''\rangle-i\langle\eta,y''\rangle +\frac{1}{2}\sum_{t=1}^m\beta_t x_{2n+t}-\frac{1}{2}\sum_{t=1}^m\beta_t y_{2n+t}\nonumber\\
&\quad\quad+\sum_{j,s=1}^{n}(M_\eta)_{js}z_j\overline{w}_s-\sum_{j,s=1}^{n}(M_\eta)_{js}w_j\overline{w}_s+\sum_{\ell=1}^m\Big(\frac{i\beta_\ell}{2}-\eta_\ell\Big)\sum_{j,s=1}^{n}\lambda^{\ell}_{js}w_j\overline{w}_s\nonumber\\
&\quad\quad-\sum_{\ell=1}^m\Big(\frac{i\beta_\ell}{2}
  -\eta_\ell\Big)\sum_{j,s=1}^{n}\lambda^{\ell}_{js}z_j\overline{z}_s\bigg]\, u(w,y'')\,d\mu(y'')d\mu(w)d\mu(\eta).\nonumber
\end{align}
Let $\chi(\eta)\in C^{\infty}_c(\mathbb{R}^m)$ be a smooth function such that $\chi\geq 0,$ $\chi=1$ on $B(1)$ and $\operatorname{supp}(\chi)\subset B(2).$ Motivated by (\ref{weightfml}), we consider the following integral operator on $C^{\infty}_c(\mathbb{H}^{n,m}).$ For all $u\in C^{\infty}_c(\mathbb{H}^{n,m}),$
\begin{align}\label{modwgtszeker}
&(Pu)(z,x'')\\
&= \lim_{\epsilon\rightarrow 0} \Big(\frac{1}{2\pi}\Big)^{n+m} \int_{I_{M_1,\dots,M_m}^{\Phi_0}}\int_{\mathbb{C}^n}\int_{\mathbb{R}^m} \chi(\epsilon\eta)\det(M_\eta) \exp\bigg[i\langle\eta,x''\rangle-i\langle\eta,y''\rangle+\frac{1}{2}\sum_{t=1}^m\beta_t x_{2n+t}\nonumber\\
&\quad\quad\quad\quad-\frac{1}{2}\sum_{t=1}^m\beta_t y_{2n+t}+\sum_{j,s=1}^{n}(M_\eta)_{js}z_j\overline{w}_s-\sum_{j,s=1}^{n}(M_\eta)_{js}w_j\overline{w}_s
+\sum_{\ell=1}^m\Big(\frac{i\beta_\ell}{2}-\eta_\ell\Big)\sum_{j,s=1}^{n}\lambda^{\ell}_{js}w_j\overline{w}_s\nonumber\\
&\quad\quad\quad\quad-\sum_{\ell=1}^m\Big(\frac{i\beta_\ell}{2}
  -\eta_\ell\Big)\sum_{j,s=1}^{n}\lambda^{\ell}_{js}z_j\overline{z}_s\bigg]\, u(w,y'')d\mu(y'')d\mu(w)d\mu(\eta).\nonumber
\end{align}

Let \[
\Pi^{(0)}_{b,\psi_0 } : L^2(\mathbb{H}^{n,m}) \rightarrow \operatorname{Ker}(\overline{\partial}_{b, \psi_0}),
\] where 
$\overline{\partial}_{b,\psi_0} = \overline{\partial}_{b} + \frac{1}{2}\overline{\partial}_{b}\psi_0\wedge,$ be the Szeg\H{o} projection with respect to $\overline{\partial}_{b, \psi_0}.$ Let $I=I_{M_1,\dots,M_m}^{\Phi_0}.$ 
Let $\chi(\eta)\in C^{\infty}_c(\mathbb{R}^m)$ be a smooth function such that $\chi\geq 0,$ $\chi=1$ on $B(1)$ and $\operatorname{supp}(\chi)\subset B(2).$ We consider the following integral operator on $\mathcal{C}^{\infty}_c(\mathbb{H}^{n,m}).$ For all $u\in \mathcal{C}^{\infty}_c(\mathbb{H}^{n,m}),$
\begin{align}
&(\tilde{P}u)(z,x'')\\
 &=\lim_{\epsilon\rightarrow 0} \Big(\frac{1}{2\pi}\Big)^{n+m} \int_{I}\int_{\mathbb{C}^n}\int_{\mathbb{R}^m} \chi(\epsilon\eta) \det(M_\eta) \exp\bigg[i\langle\eta,x''\rangle-i\langle\eta,y''\rangle-\frac{1}{2}\sum_{j,s=1}^{n}(M_\eta)_{js}(z_j-w_j)\nonumber\\
 &\quad\quad\quad\cdot(\overline{z}_s-\overline{w}_s)+\frac{1}{2}\sum_{j,s=1}^{n}(M_\eta)_{js}(z_j\overline{w}_s-\overline{z}_s w_j)+\sum_{\ell=1}^m\frac{i\beta_\ell}{2}
  \sum_{j,s=1}^{n}\lambda^{\ell}_{js}(w_j\overline{w}_s-z_j\overline{z}_s)\bigg]\nonumber\\
  &\quad\quad \quad \cdot u(w,y'')\,d\mu(y'')d\mu(w)d\mu(\eta).\nonumber
\end{align}

\medskip

\begin{rem}\label{r-gue261010yyd}
Observe that we have
\begin{equation}\label{equofsze}
\tilde{P}(x,y) = e^{-\frac{\psi_0}{2}(x)}P(x,y) e^{\frac{\psi_0}{2}(y)} \qquad\text{and}\qquad \Pi^{(0)}_{b,\psi_0}=e^{-\frac{\psi_0}{2}(x)}\Pi^{(0),\psi_0}_b(x,y)e^{\frac{\psi_0}{2}(y)}.
\end{equation}
\end{rem}

\medskip

We will show that 
\begin{equation}\label{e-gue261010yyd}
\mbox{$P =\Pi^{(0),\psi_0}_b$ and $\tilde{P} = \Pi^{(0)}_{b,\psi_0}$.}
\end{equation}
Since the two statements are equivalent by \eqref{equofsze}, we only need to prove one of them. We will prove that $P =  \Pi^{(0),\psi_0}_b.$ We first show that $\tilde{P}u$ converges.
Put
\begin{align*}
&\varphi(\eta,z,w)\\
 &= -\frac{1}{2}\sum_{j,s=1}^{n}(M_\eta)_{js}(z_j-w_j)(\overline{z}_s-\overline{w}_s)+\frac{1}{2}\sum_{j,s=1}^{n}(M_\eta)_{js}(z_j\overline{w}_s-\overline{z}_sw_j)\\
 &\quad\, +\sum_{\ell=1}^m\frac{i\beta_\ell}{2}\sum_{j,s=1}^{n}\lambda^{\ell}_{js}(w_j\overline{w}_s-z_j\overline{z}_s).
\end{align*}

\begin{prop}\label{HSzegocon}
  We have $\tilde{P}u$ converges, for all $u\in C^{\infty}_c(\mathbb{H}^{n,m}).$
\end{prop}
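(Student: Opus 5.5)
The plan is to show that the $\epsilon\to0$ limit exists pointwise by proving that, for fixed $u\in\mathcal{C}^\infty_c(\mathbb{H}^{n,m})$ and fixed $(z,x'')$, the inner integral over $(w,y'')$ defines a rapidly decaying function of $\eta$. Dominated convergence in $\eta$ then removes the cutoff $\chi(\epsilon\eta)$.

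First, set
\[
\hat u(w,\eta):=\int_{\mathbb{R}^m}e^{-i\langle\eta,y''\rangle}u(w,y'')\,d\mu(y''),
\]
which is the partial Fourier transform of $u$ in $y''$. Since $u$ is smooth with compact support, $\hat u(w,\eta)$ is supported in $w$ in a fixed compact set $K\subset\mathbb{C}^n$. It also satisfies, for every $N$,
\[
\abs{\hat u(w,\eta)}\leq C_N(1+\abs{\eta})^{-N}\quad\text{uniformly for } w\in K.
\]
This follows by integrating by parts in $y''$ with $(1+\abs{\eta}^2)^{-1}(1-\Delta_{y''})$. By Fubini, justified because the $(w,y'')$ integral runs over a compact set and the integrand is bounded for fixed $\eta$, $(\tilde P u)(z,x'')$ is the $\epsilon\to0$ limit of
\[
\Big(\frac{1}{2\pi}\Big)^{n+m}\int_I\chi(\epsilon\eta)\det(M_\eta)e^{i\langle\eta,x''\rangle}\int_K e^{\varphi(\eta,z,w)}\hat u(w,\eta)\,d\mu(w)\,d\mu(\eta).
\]

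Second, bound the phase. Take real parts in $\varphi(\eta,z,w)$:
\begin{itemize}
\item the term $\frac12\sum(M_\eta)_{js}(z_j\overline{w}_s-\overline{z}_sw_j)$ is purely imaginary, since $M_\eta$ is Hermitian;
\item the $\beta$-term is purely imaginary, since the $\lambda^\ell$ are Hermitian and $w_j\overline{w}_s$ is summed against them;
\item the remaining term $-\frac12\langle M_\eta(z-w),z-w\rangle$ has real part $\leq0$ for $\eta\in I$, because $M_\eta$ is positive definite there.
\end{itemize}
Hence $\abs{e^{\varphi}}\leq1$ on $I$. Moreover $\abs{\det(M_\eta)}\leq C(1+\abs{\eta})^n$, since $M_\eta$ is affine in $\eta$.

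Putting these together, the integrand in $\eta$ is dominated by
\[
C(1+\abs{\eta})^{n}\,\abs{K}\,C_N(1+\abs{\eta})^{-N}\in L^1(\mathbb{R}^m)\quad\text{for } N>n+m.
\]
The bound is independent of $\epsilon$, and $\chi(\epsilon\eta)\to1$ pointwise. Dominated convergence therefore gives convergence for every $(z,x'')$, locally uniformly. The same argument applied to derivatives in $(z,x'')$ also gives smoothness, since differentiating only adds polynomial factors in $\eta$ and $z$.

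The main obstacle is the case where $I$ is unbounded. There the factor $\det(M_\eta)$ grows, and the Gaussian factor gives no decay in $\eta$ uniformly in $w$: for example, near $w=z$ the real part vanishes. The rapid decay of $\hat u$ in $\eta$ is what compensates, so the key step is to keep the $y''$-integration inside before taking the limit, and to justify the interchange via compact support.
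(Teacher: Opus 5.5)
Your proof is correct and follows essentially the same route as the paper: rapid decay in $\eta$ from the $y''$-regularity of $u$ (the paper integrates by parts with $\langle \eta\rangle^{-2q}\langle D_{y''}\rangle^{2q}$, which is your Fourier-decay bound on $\hat u$), a bound on the phase using positivity of $M_\eta$ on $I$, and dominated convergence uniform in $\epsilon$. The one difference is how the factor $\det(M_\eta)$ is handled. The paper keeps the Gaussian factor and integrates it over all of $\mathbb{C}^n_w$, which produces $(2\pi)^n/\det(M_\eta)$ and cancels $\det(M_\eta)$ exactly, so only $2q>m$ derivatives in $y''$ are needed. You instead bound $|e^{\varphi}|\le 1$, use the compact $w$-support of $u$, and absorb the polynomial growth of $\det(M_\eta)$ by taking $N>n+m$.
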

\begin{proof}
Let $I = I_{M_1,\dots,M_m}^{\Phi_0}.$ For $\xi=(\xi_1,\dots,\xi_m)\in \mathbb{R}^m,$ let $\langle \xi\rangle = (1+|\xi|^2)^{\frac{1}{2}}$ and $\langle D_\xi\rangle$ be the pseudodifferential operator associated to the symbol $\langle \xi\rangle.$ We will show that the integral of
\begin{equation}\label{kercon}
\int_{\mathbb{C}^n}\int_{\mathbb{R}^m} 1_I(\eta)\chi(\epsilon\eta) \det(M_\eta) e^{i\langle\eta,x''\rangle-i\langle\eta,y''\rangle}e^{\varphi(\eta,z,w)}u(w,y'')d\mu(y'')d\mu(w)
\end{equation}
as a function in $\eta$ is bounded by a constant $C$ independent of $\epsilon.$ Throughout the proof, the constant $C$ may change line by line.
Fix arbitrary $\epsilon > 0.$ Fix a positive integer $q>m.$ By integrating by parts with respect to $y''$ several times, we have
\begin{align*}
&\Bigg|\int_I\int_{\mathbb{C}^n}\int_{\mathbb{R}^m} \chi(\epsilon\eta) \det(M_\eta) e^{i\langle\eta,x''\rangle-i\langle\eta,y''\rangle}e^{\varphi(\eta,z,w)}u(w,y'')d\mu(y'')d\mu(w)d\mu(\eta)\Bigg| \\
&= \Bigg|\int_I\int_{\mathbb{C}^n}\int_{\mathbb{R}^m} \chi(\epsilon\eta) \det(M_\eta) \big(\langle \eta \rangle^{-2q}\langle D_{y''}\rangle^{2q}e^{-i\langle\eta,y''\rangle}\big)e^{i\langle\eta,x''\rangle}e^{\varphi(\eta,z,w)}
u(w,y'')d\mu(y'')d\mu(w)d\mu(\eta)\Bigg| \\
&= \Bigg|\int_I\int_{\mathbb{C}^n}\int_{\mathbb{R}^m} \chi(\epsilon\eta) \det(M_\eta) \langle \eta \rangle^{-2q}e^{-i\langle\eta,y''\rangle}e^{i\langle\eta,x''\rangle}e^{\varphi(\eta,z,w)}
\big(\langle D_{y''}\rangle^{2q}u(w,y'')\big)d\mu(y'')d\mu(w)d\mu(\eta)\Bigg| \\
&\leq \int_I\int_{\mathbb{C}^n}\int_{\mathbb{R}^m} \chi(\epsilon\eta) \det(M_\eta) \langle \eta \rangle^{-2q}e^{-\frac{1}{2}\sum\limits_{j,s=1}^{n}(M_\eta)_{js}(z_j-w_j)(\overline{z}_s-\overline{w}_s)}
\big|\langle D_{y''}\rangle^{2q}u(w,y'')\big|d\mu(y'')d\mu(w)d\mu(\eta)\\
&\leq C\int_I\int_{\mathbb{C}^n}\det(M_\eta) \langle \eta \rangle^{-2q}e^{-\frac{1}{2}\sum\limits_{j,s=1}^{n}(M_\eta)_{js}(z_j-w_j)(\overline{z}_s-\overline{w}_s)}\,d\mu(w)d\mu(\eta)
\end{align*}
where $C$ is a positive constant depending only on $u.$ Fix an $\eta \in I.$ We have $M_\eta$ is positive definite. So we can take a suitable unitary linear change of coordinates $w(\eta) = (w_1(\eta),\dots,w_n(\eta))$ of $(w_1,\dots ,w_n)$ such that we can convert $\sum_{j,s=1}^{n}(M_\eta)_{js}w_j\overline{w}_s$ into the form $\sum_{j=1}^{n}\lambda_j(\eta)|w_j(\eta)|^2,$ where $\lambda_1(\eta),\dots,\lambda_n(\eta) >0$ are eigenvalues of $M_\eta,$ under the coordinates $w(\eta).$
Hence
\begin{align*}
   & \int_I\int_{\mathbb{C}^n}\det(M_\eta) \langle \eta \rangle^{-2q}e^{-\frac{1}{2}\sum\limits_{j,s=1}^{n}(M_\eta)_{js}(z_j-w_j)(\overline{z}_s-\overline{w}_s)}\,d\mu(w)d\mu(\eta) \\
  &= \int_I\int_{\mathbb{C}^n} \langle \eta \rangle^{-2q}\, \lambda_1(\eta)\cdots\lambda_n(\eta) e^{-\frac{1}{2}\sum_{j=1}^{n}\lambda_j(\eta)|w_j(\eta)|^2}d\mu(w(\eta))d\mu(\eta) \\
  &= \int_I \langle \eta \rangle^{-2q}\,\lambda_1(\eta)\cdots\lambda_n(\eta) \int_{\mathbb{C}^n}e^{-\frac{1}{2}\sum_{j=1}^{n}\lambda_j(\eta)|w_j(\eta)|^2}d\mu(w(\eta))d\mu(\eta) \\
  &= C\int_I \langle \eta \rangle^{-2q}\,\lambda_1(\eta)\cdots\lambda_n(\eta)\frac{(2\pi)^n}{\lambda_1(\eta)\cdots\lambda_n(\eta)}d\mu(\eta)\\
  &= C\int_I \langle \eta \rangle^{-2q}\,d\mu(\eta)\\
  &\leq C\int_{\mathbb{R}^m} \langle \eta \rangle^{-2q}\,d\mu(\eta)\\
  &\leq C
\end{align*}
Thus, (\ref{kercon}) is integrable on $\mathbb{R}^m$ and is bounded by a constant independent of $\epsilon.$ By the dominated convergence theorem, $\tilde{P}u$ converges. 
\end{proof}

By similar arguments, we can check that $\tilde{P}(u)\in C^{\infty}(\mathbb{H}^{n,m}).$ We also have $P(u)\in C^{\infty}(\mathbb{H}^{n,m}).$ For all $\epsilon > 0$ and $u\in C^{\infty}_c(\mathbb{H}^{n,m}),$
Put
\begin{align*}
&\tilde{P}_\epsilon u (x)\\
&=\Big(\frac{1}{2\pi}\Big)^{n+m}\int_{I_{M_1,\dots,M_m}^{\Phi_0}}\int_{\mathbb{H}^{n,m}} \chi(\epsilon\eta) \det(M_\eta) e^{i\langle\eta,x''\rangle-i\langle\eta,y''\rangle}e^{\varphi(\eta,z,w)}u(w,y'')\,d\mu_{\mathbb{H}^{n,m}}(y)d\mu(\eta).
\end{align*}
We have
\begin{prop}\label{opbd}
For all $\epsilon > 0$ and $u\in \mathcal{C}^{\infty}_c(\mathbb{H}^{n,m}),$  
\[
\int_{\mathbb{H}^{n,m}}\big|\tilde{P}_\epsilon u (x)\big|^2d\mu(x) \leq \|u\|^2.
\]
\end{prop}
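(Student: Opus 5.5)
We plan to prove Proposition \ref{opbd} by diagonalizing $\tilde P_\epsilon$ with the partial Fourier transform in $x''$, which reduces it to a family of weighted Bergman projections on $\mathbb C^n$, each of norm at most one.

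\textbf{Step 1: Fourier decomposition.} Write $\hat u(w,\eta)=\mathcal F_{y''\to\eta}[u](w,\eta)$. Since $u\in\mathcal C^\infty_c$, $\hat u$ is Schwartz in $\eta$, and Fubini (justified as in Proposition \ref{HSzegocon}) gives
\[
\tilde P_\epsilon u(z,x'')=\Big(\frac{1}{2\pi}\Big)^{m}\int_{\mathbb R^m}e^{i\langle\eta,x''\rangle}1_I(\eta)\chi(\epsilon\eta)\,(K_\eta\hat u(\cdot,\eta))(z)\,d\mu(\eta),
\]
where
\[
(K_\eta g)(z)=\Big(\frac{1}{2\pi}\Big)^n\int_{\mathbb C^n}\det(M_\eta)\,e^{\varphi(\eta,z,w)}g(w)\,d\mu(w).
\]
By Parseval in $x''$,
\[
\int|\tilde P_\epsilon u|^2d\mu(x)=(2\pi)^{-m}\int_I\chi(\epsilon\eta)^2\,\|K_\eta\hat u(\cdot,\eta)\|^2_{L^2(\mathbb C^n)}\,d\mu(\eta),
\]
and also $\|u\|^2=(2\pi)^{-m}\int\|\hat u(\cdot,\eta)\|^2\,d\mu(\eta)$. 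Since $0\le\chi\le1$, it suffices to show $\|K_\eta g\|\le\|g\|$ for each $\eta\in I$.

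\textbf{Step 2: identify $K_\eta$ with a Bergman projection.} Put $\phi_\eta=\frac12\Phi_\eta$, that is, $\gamma=\frac12M_\eta$, which is positive definite for $\eta\in I$. The term $\sum_\ell\frac{i\beta_\ell}{2}\sum\lambda^\ell_{js}(w_j\ol w_s-z_j\ol z_s)$ is purely imaginary, since the matrices $(\lambda^\ell_{js})$ are Hermitian and so the quadratic forms are real. Hence it has the form $e^{i\theta(z)}e^{-i\theta(w)}$, conjugation by a unitary multiplication operator. The remaining kernel is
\[
(2\pi)^{-n}\det(M_\eta)\exp\Big[-\tfrac12 M_\eta(z-w,\ol z-\ol w)+\tfrac12 M_\eta(z\ol w-\ol z w)\Big].
\]
Since $\det(\frac12M_\eta)/\pi^n=\det(M_\eta)/(2\pi)^n$, this coincides with the formula for $P_{\phi_\eta}(z,w)$ in the weighted Bergman subsection. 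Thus $K_\eta=e^{i\theta}P_{\phi_\eta}e^{-i\theta}$, where $P_{\phi_\eta}$ is the orthogonal projection in $L^2(\mathbb C^n)$ onto $\operatorname{Ker}\ddbar_{\phi_\eta}$. It therefore has operator norm at most $1$.

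\textbf{Step 3: conclusion and main obstacle.} Combining the two steps gives
\[
\int|\tilde P_\epsilon u|^2\le(2\pi)^{-m}\int\|\hat u(\cdot,\eta)\|^2\,d\mu(\eta)=\|u\|^2.
\]
The main point needing care is Step 2. One must check that the sign conventions in $\varphi$ match the stated Bergman kernel exactly, including the factor $2^n$ in $d\mu$. One must also verify that the displayed $P_\phi$ really is the kernel of the orthogonal projection in unweighted $L^2$. This holds because $P_\phi=e^{-\phi}B_\phi e^{\phi}$ and $B_\phi$ reproduces $L^2(2\phi)\cap\operatorname{Ker}\ddbar$ by \eqref{berg}. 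If this identification were in doubt, the fallback is a direct Schur-test bound for the Gaussian kernel. That only yields $\|K_\eta\|\le C$, so the projection identification is the route to pursue to obtain the sharp constant $1$.
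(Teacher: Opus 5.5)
Your proof is correct, and after the shared first step it takes a different route from the paper. Both arguments use the partial Fourier transform in $x''$ and Parseval to reduce to a bound on the fibre operator $K_\eta$ on $L^2(\mathbb{C}^n,d\mu)$, uniformly in $\eta\in I$.

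\begin{itemize}
\item \textbf{The paper} applies Cauchy--Schwarz against the modulus $|e^{\varphi}|=e^{-\frac12\sum(M_\eta)_{js}(z_j-w_j)(\overline z_s-\overline w_s)}$. It then integrates the Gaussian in $w$ and in $z$. This is a Schur test.
\item \textbf{You} recognize $K_\eta$ as the conjugate of the Bergman projection $P_{\phi_\eta}$, $\phi_\eta=\frac12\Phi_\eta$, by the unimodular factor coming from the $\beta_\ell$-terms.
\end{itemize}

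The Schur route is more elementary, since it needs no Bergman theory. However, it discards the phase $e^{\frac12\sum(M_\eta)_{js}(z_j\overline w_s-\overline z_s w_j)}$ and cannot be sharp. Indeed, $\int|P_\phi(z,w)|\,d\mu(w)=2^n$ for every normalization of $d\mu$, so no bound through the modulus of the kernel can reach the constant $1$.

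Concretely, with the paper's $d\mu(w)=2^n dx_1\cdots dx_{2n}$ one has $\int_{\mathbb{C}^n}e^{-\frac12\sum(M_\eta)_{js}(z_j-w_j)(\overline z_s-\overline w_s)}\,d\mu(w)=(4\pi)^n/\det(M_\eta)$. The paper writes $(2\pi)^n/\det(M_\eta)$ instead. Done carefully, its chain therefore only yields $\int|\tilde P_\epsilon u|^2\,d\mu\le 4^n\|u\|^2$.

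Your remark that the Schur fallback gives only some constant $C$ is exactly right, with $C=2^n$. Your identification delivers the sharp constant $1$. The paper genuinely needs this later: it concludes that $P$ is the orthogonal projection from $P^2=P$ together with $\|P\|\le 1$.

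One detail to tighten: the reproducing identity \eqref{berg} alone does not make $P_\phi$ the orthogonal projection. You also need two facts about the kernel $\frac{\det\gamma}{\pi^n}e^{2\sum\gamma_{jk}z_j\overline w_k}$:
\begin{itemize}
\item as a function of $z$, it lies in $L^2(\mathbb{C}^n,2\phi)\cap\operatorname{Ker}\ddbar$;
\item it is Hermitian symmetric in $(z,w)$.
\end{itemize}
Together these show that the operator annihilates the orthogonal complement. Alternatively, simply cite the paper's statement that $P_\phi$ is the Bergman projection.
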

\begin{proof}
Let $I = I_{M_1,\dots,M_m}^{\Phi_0}.$ Set $\hat{u}(w,\eta) = \int_{\mathbb{R}^m}e^{-i\langle y'', \eta\rangle}u(w,y'')\,d\mu(y'').$ Then
\[
\int_{\mathbb{R}^m} |\hat{u}(w,\eta)|^2d\mu(\eta) = (2\pi)^m \int_{\mathbb{R}^m} |u(w,y'')|^2\,d\mu(y'').
\]
Let
\[
g_\epsilon(z,\eta) = \int_{\mathbb{C}^n} 1_I(\eta) \chi(\epsilon\eta)\hat{u}(w,\eta)e^{\varphi(\eta,z,w)}\,d\mu(w)
\]
where $1_I(\eta)$ is the characteristic function with respect to $I.$
Then we have 
\begin{align*}
&\int_I\int_{\mathbb{H}^{n,m}} \chi(\epsilon\eta) \det(M_\eta) e^{i\langle\eta,x''\rangle-i\langle\eta,y''\rangle}e^{\varphi(\eta,z,w)}u(w,y'')d\mu_{\mathbb{H}^{n,m}}(y)d\mu(\eta) \\
  &= \int_{\mathbb{R}^m} \det(M_\eta)g_\epsilon(z,\eta)e^{i\langle\eta,x''\rangle}d\mu(\eta).
\end{align*}
By Parseval's formula, we have
\[
\int_{\mathbb{R}^m}\bigg|\int_{\mathbb{R}^m} \det(M_\eta)g_\epsilon(z,\eta)e^{i\langle\eta,x''\rangle}d\mu(\eta)\bigg|^2d\mu(x'')
= (2\pi)^m \int_{\mathbb{R}^m}\Big|\det(M_\eta)g_\epsilon(z,\eta)\Big|^2d\mu(\eta).
\]
Hence,
\begin{align*}
&\int_{\mathbb{H}^{n,m}}\bigg|\int_I\int_{\mathbb{H}^{n,m}} \chi(\epsilon\eta) \det(M_\eta) e^{i\langle\eta,x''\rangle-i\langle\eta,y''\rangle}e^{\varphi(\eta,z,w)}u(w,y'')d\mu_{\mathbb{H}^{n,m}}(y)d\mu(\eta)\bigg|^2d\mu(x)\\
&=\int_{\mathbb{H}^{n,m}}\bigg|\int_{\mathbb{R}^m} \det(M_\eta)g_\epsilon(z,\eta)e^{i\langle\eta,x''\rangle}d\mu(\eta)\bigg|^2d\mu(x)\\
&=(2\pi)^m\int_{\mathbb{C}^n}\int_{\mathbb{R}^m}\Big|\det(M_\eta)g_\epsilon(z,\eta)\Big|^2d\mu(\eta)d\mu(z)\\
&=(2\pi)^m\int_{\mathbb{C}^n}\int_{\mathbb{R}^m}\Big|\det(M_\eta) \int_{\mathbb{C}^n}1_I(\eta)\chi(\epsilon\eta)\hat{u}(w,\eta)e^{\varphi(\eta,z,w)}d\mu(w)\Big|^2d\mu(\eta)d\mu(z)\\
&\leq(2\pi)^m\int_{\mathbb{C}^n}\int_{\mathbb{R}^m}|\det(M_\eta)|^2\chi^2(\epsilon\eta)\Big(\int_{\mathbb{C}^n}\big|\hat{u}(w,\eta)1_I(\eta)\big|
e^{-\frac{1}{2}\sum\limits_{j,s=1}^{n}(M_\eta)_{js}(z_j-w_j)(\overline{z}_s-\overline{w}_s)}d\mu(w)\Big)^2\\
&\quad\quad\quad\quad d\mu(\eta)d\mu(z)\\
&\leq(2\pi)^m\int_{\mathbb{C}^n}\int_{\mathbb{R}^m}|\det(M_\eta)|^2\chi^2(\epsilon\eta)1_I(\eta)\Big(\int_{\mathbb{C}^n}\big|\hat{u}(w,\eta)\big|^2
e^{-\frac{1}{2}\sum\limits_{j,s=1}^{n}(M_\eta)_{js}(z_j-w_j)(\overline{z}_s-\overline{w}_s)}d\mu(w)\Big)\\
&\quad\quad\quad\quad \cdot\Big(\int_{\mathbb{C}^n}e^{-\frac{1}{2}\sum\limits_{j,s=1}^{n}(M_\eta)_{js}(z_j-w_j)(\overline{z}_s-\overline{w}_s)}d\mu(w)\Big)d\mu(\eta)d\mu(z)\\
&=(2\pi)^m\int_{\mathbb{C}^n}\int_{\mathbb{R}^m}|\det(M_\eta)|^2\chi^2(\epsilon\eta)1_I(\eta)\int_{\mathbb{C}^n}\big|\hat{u}(w,\eta)\big|^2
e^{-\frac{1}{2}\sum\limits_{j,s=1}^{n}(M_\eta)_{js}(z_j-w_j)(\overline{z}_s-\overline{w}_s)}d\mu(w)\\
&\quad\quad\quad\quad\cdot\frac{(2\pi)^n}{\det(M_\eta)}d\mu(\eta)d\mu(z)\\
&=(2\pi)^{n+m}\int_{\mathbb{C}^n}\int_{\mathbb{R}^m}\int_{\mathbb{C}^n}\det(M_\eta)|\chi(\epsilon\eta)1_I(\eta)\hat{u}(w,\eta)|^2
e^{-\frac{1}{2}\sum\limits_{j,s=1}^{n}(M_\eta)_{js}(z_j-w_j)(\overline{z}_s-\overline{w}_s)}\\
&\quad\quad\quad\quad\quad d\mu(w)d\mu(\eta)d\mu(z)\\
&\leq (2\pi)^{n+m}\int_{\mathbb{C}^n}\int_{\mathbb{R}^m}\int_{\mathbb{C}^n}\det(M_\eta)|1_I(\eta)\hat{u}(w,\eta)|^2
e^{-\frac{1}{2}\sum\limits_{j,s=1}^{n}(M_\eta)_{js}(z_j-w_j)(\overline{z}_s-\overline{w}_s)}d\mu(w)d\mu(\eta)d\mu(z)\\
&=(2\pi)^{2n+m}\int_{\mathbb{C}^n}1_I(\eta)|\hat{u}(w,\eta)|^2d\mu(w)d\mu(\eta)\\
&\leq(2\pi)^{2n+m}\int_{\mathbb{C}^n}|\hat{u}(w,\eta)|^2d\mu(w)d\mu(\eta)\\
&=(2\pi)^{2n+2m}\int_{\mathbb{C}^n}\int_{\mathbb{R}^m}|u(w,y'')|^2d\mu(y'')d\mu(w)\\
&=(2\pi)^{2n+2m}\|u\|^2.
\end{align*}
\end{proof}

Thus, we can extend $\tilde{P}$ to $L^2(\mathbb{H}^{n,m}),$ also denoted by $\tilde{P},$ in the standard way and we can find that the operator norm of $\tilde{P}$ is less or equal to 1; in particular, $\tilde{P} : L^2(\mathbb{H}^{n,m}) \rightarrow L^2(\mathbb{H}^{n,m})$ is a bounded operator. Equivalently, we have a bounded operator $P : L^2(\mathbb{H}^{n,m}, \psi_0) \rightarrow L^2(\mathbb{H}^{n,m}, \psi_0)$ whose operator norm $\|P\|\leq 1.$  We will show that $P$ is the Szeg\H{o} projection with respect to $\overline{\partial}_b$ and $\tilde{P}$ is the Szeg\H{o} projection with respect to $\overline{\partial}_{b, \psi_0}.$

\begin{lem} \label{imp}
For every $u \in L^2(\mathbb{H}^{n,m}, \psi_0)$, we have $P u \in \mathcal{H}^0_b(\mathbb{H}^{n,m})$.
\end{lem}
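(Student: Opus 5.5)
The plan is to show that $Pu$ is annihilated by every $\overline{U}_{s,\mathbb H^{n,m}}$, first for the truncated operators with $u\in\mathcal{C}^\infty_c(\mathbb{H}^{n,m})$, then in the limit $\epsilon\to0$, and finally for general $u$ by density. This suffices: on functions $\overline{\partial}^{*,\psi_0}_b$ vanishes, so $\operatorname{Ker}\square^{(0),\psi_0}_b=\{f\in L^2(\mathbb{H}^{n,m},\psi_0);\,\overline{\partial}_bf=0\}$. Here $\overline{\partial}_b=\sum_s d\overline{z}_s\wedge\overline{U}_{s,\mathbb H^{n,m}}$ is taken in the sense of distributions.

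\emph{Step 1 (truncated kernel).} Fix $u\in\mathcal{C}^\infty_c(\mathbb{H}^{n,m})$ and $\epsilon>0$, and let $P_\epsilon u$ be the expression in \eqref{modwgtszeker} before the limit is taken. The $x$-dependence of the integrand is the factor
\[
E(x)=\exp\Big[i\langle\eta,x''\rangle+\tfrac12\sum_t\beta_tx_{2n+t}+\sum_{j,s}(M_\eta)_{js}z_j\overline{w}_s-\sum_\ell\big(\tfrac{i\beta_\ell}{2}-\eta_\ell\big)\sum_{j,s}\lambda^\ell_{js}z_j\overline{z}_s\Big].
\]
The middle term $\sum_{j,s}(M_\eta)_{js}z_j\overline{w}_s$ is holomorphic in $z$. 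Applying $\overline{U}_s=\partial_{\overline{z}_s}+i\sum_q(\sum_p\lambda^q_{ps}z_p)\partial_{x_{2n+q}}$ to $E$, the $\partial_{\overline{z}_s}$ part produces the factor $-\sum_\ell(\frac{i\beta_\ell}{2}-\eta_\ell)\sum_j\lambda^\ell_{js}z_j$. The $x''$-derivatives produce $i\sum_q\sum_p\lambda^q_{ps}z_p(i\eta_q+\frac{\beta_q}{2})=\sum_q\sum_p\lambda^q_{ps}z_p(\frac{i\beta_q}{2}-\eta_q)$. These two factors cancel, exactly as in computation \eqref{1} of Lemma~\ref{alphahololem}, so $\overline{U}_sE=0$.

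Since $\chi(\epsilon\eta)$ has compact support, $u$ has compact support, and the Gaussian factor $e^{-\frac12 M_\eta(z-w,\overline{z-w})}$ is present, differentiation under the integral sign is justified locally uniformly in $x$. Hence $P_\epsilon u\in\mathcal{C}^\infty$ and $\overline{U}_sP_\epsilon u=0$ pointwise for every $s$.

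\emph{Step 2 ($\epsilon\to0$).} The proof of Proposition~\ref{HSzegocon}, after conjugating by $e^{\pm\psi_0/2}$ as in Remark~\ref{r-gue261010yyd}, gives a bound $|P_\epsilon u(x)|\le C_K$ that is uniform in $\epsilon$ for $x$ in any compact set $K$. This comes from integrating by parts in $y''$ to obtain the factor $\langle\eta\rangle^{-2q}$, together with the dominating function used there. Dominated convergence then gives $P_\epsilon u\to Pu$ pointwise, and the uniform local bound upgrades this to convergence in $\mathcal{D}'(\mathbb{H}^{n,m})$. Therefore $\overline{U}_sPu=\lim_\epsilon\overline{U}_sP_\epsilon u=0$ in $\mathcal{D}'$. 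Moreover $Pu\in L^2(\mathbb{H}^{n,m},\psi_0)$ by Proposition~\ref{opbd}, so $Pu\in\mathcal{H}^0_b(\mathbb{H}^{n,m})$.

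\emph{Step 3 (density).} For general $u\in L^2(\mathbb{H}^{n,m},\psi_0)$, choose $u_j\in\mathcal{C}^\infty_c$ with $u_j\to u$ in $L^2(\mathbb{H}^{n,m},\psi_0)$. By the boundedness $\|P\|\le1$ from Proposition~\ref{opbd}, $Pu_j\to Pu$ in $L^2(\mathbb{H}^{n,m},\psi_0)$, hence in $\mathcal{D}'$. Since $\overline{U}_s$ is continuous on $\mathcal{D}'$, we get $\overline{U}_sPu=\lim_j\overline{U}_sPu_j=0$. Thus $Pu\in\operatorname{Dom}\overline{\partial}_b$ with $\overline{\partial}_bPu=0$, which proves the lemma.

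The main obstacle is Step 2. One must ensure that the $\epsilon$-uniform bound on $P_\epsilon u$ holds locally uniformly in $x$, and not merely at a fixed point. The weights $e^{\frac12\beta\cdot x''}$ and $e^{-(\frac{i\beta}{2}-\eta)\lambda z\overline{z}}$ vary with $x$, but the proof of Proposition~\ref{HSzegocon} only uses $|e^{\varphi}|\le e^{-\frac12M_\eta(z-w,\overline{z-w})}$, and for $\tilde P$ these weights are unimodular, so the bound is uniform and the argument above goes through.
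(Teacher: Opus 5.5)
Your proposal is correct and follows the paper's proof: you show the cancellation $\overline{U}_sE=0$ for the truncated operator $P_\epsilon$, pass to the limit $\epsilon\to0$, and finish with the density argument using $\|P\|\le1$. The only difference is in the limit step: the paper simply asserts that $P_\epsilon u\to Pu$ in the $\mathcal{C}^\infty$ topology, whereas you derive $\mathcal{D}'$-convergence from the $\epsilon$-uniform bound in the proof of Proposition~\ref{HSzegocon}, which is all that is needed and is actually supported by what the paper proves.
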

\begin{proof}
Fix $\epsilon > 0$. We first show that for all $u \in \mathcal{C}^\infty_c(\mathbb{H}^{n,m})$, we have
\begin{align*}
&P_\epsilon(u)\\ 
&:= \int_{I^{\Phi_0}_{M_1, \dots, M_m}} \int_{\mathbb{H}^{n,m}} \exp\Bigg[i\langle\eta,x''\rangle-i\langle\eta,y''\rangle+\frac{1}{2}\sum_{t=1}^m\beta_t x_{2n+t}-\frac{1}{2}\sum_{t=1}^m\beta_t y_{2n+t}+\sum_{j,s=1}^{n}(M_\eta)_{js}z_j\overline{w}_s\\
&\quad\quad-\sum_{j,s=1}^{n}(M_\eta)_{js}w_j\overline{w}_s
+\sum_{\ell=1}^m\Big(\frac{i\beta_\ell}{2}-\eta_\ell\Big)\sum_{j,s=1}^{n}\lambda^{\ell}_{js}w_j\overline{w}_s -\sum_{\ell=1}^m\Big(\frac{i\beta_\ell}{2}
  -\eta_\ell\Big)\sum_{j,s=1}^{n}\lambda^{\ell}_{js}z_j\overline{z}_s\Bigg]\\
&\quad\quad\cdot \chi(\epsilon\eta)\det(M_\eta) u(w,y')d\mu(y')d\mu(w)d\mu(\eta).
\end{align*}
is in $\mathcal{H}^0_b(\mathbb{H}^{n,m}).$

Let
\begin{align*}
E(x,y,\eta) &= i\langle\eta,x''\rangle-i\langle\eta,y''\rangle+\frac{1}{2}\sum_{t=1}^m\beta_t x_{2n+t}-\frac{1}{2}\sum_{t=1}^m\beta_t y_{2n+t}+\sum_{j,s=1}^{n}(M_\eta)_{js}z_j\overline{w}_s\\
&\quad-\sum_{j,s=1}^{n}(M_\eta)_{js}w_j\overline{w}_s +\sum_{\ell=1}^m\Big(\frac{i\beta_\ell}{2}-\eta_\ell\Big)\sum_{j,s=1}^{n}\lambda^{\ell}_{js}w_j\overline{w}_s -\sum_{\ell=1}^m\Big(\frac{i\beta_\ell}{2}-\eta_\ell\Big)\sum_{j,s=1}^{n}\lambda^{\ell}_{js}z_j\overline{z}_s.
\end{align*}

Let $I := I^{\Phi_0}_{M_1, \dots, M_m}.$ Let $u \in \mathcal{C}^\infty_c(\mathbb{H}^{n,m})$. For every $p=1, \dots, n$, we have
\begin{align*}
&\overline{U}_p \bigg( \int_I \int_{\mathbb{H}^{n,m}} e^{E(x,y,\eta)} \, \chi(\varepsilon \eta) \det(M_\eta) u(y) d\mu(y) d\mu(\eta) \bigg) \\
&= \int_I \bigg( \frac{\partial}{\partial \overline{z}_p} + i \sum_{\ell=1}^m \Big(\sum_{s=1}^n \lambda_{sj}^\ell z_s\Big) \frac{\partial}{\partial x_{2n+l}} \bigg) E(x,y,\eta) \int_{\mathbb{H}^{n,m}} e^{E(x,y,\eta)} \chi(\varepsilon \eta) \det(M_\eta) u(y) d\mu(y) d\mu(\eta) \\
&= 0,
\end{align*}
since $\overline{U}_p E(x,y,\eta) = 0$ for all $\eta \in \mathbb{R}^m$.

Thus, we have $P_\epsilon(u) \in \mathcal{H}^0(\mathbb{H}^{n,m})$ for all $u \in \mathcal{C}^\infty_c(\mathbb{H}^{n,m})$. Since $\lim_{\epsilon \to 0^+} P_\epsilon(u) = P(u)$ in $\mathcal{C}^\infty(\mathbb{H}^{n,m})$ topology,
we deduce that $P(u) \in \mathcal{H}^0(\mathbb{H}^{n,m})$ for all $u \in \mathcal{C}^\infty_c(\mathbb{H}^{n,m})$.

For $u \in L^2(\mathbb{H}^{n,m}, \Psi_0)$, take $u_j \in \mathcal{C}^\infty_c(\mathbb{H}^{n,m})$, $j=1,2,\dots$, $u_j \to u$ in $L^2(\mathbb{H}^{n,m}, \psi_0)$ as $j \to \infty$.
Then $P(u_j) \to P(u)$ in $L^2(\mathbb{H}^{n,m}, \psi_0)$ as $j \to \infty$ and hence $\overline{\partial}_{b, \mathbb{H}^{n,m}} (P(u_j)) \to \overline{\partial}_{b, \mathbb{H}^{n,m}} (P(u))$ in distribution
sense as $j \to \infty$. Thus, $P(u) \in \mathcal{H}_b^0(\mathbb{H}^{n,m})$.
\end{proof}

We can now prove that 

\begin{lem}\label{pu=u}
Let $u\in \mathcal{H}_b^0(\mathbb{H}^{n,m}).$ Then $Pu=u.$
\end{lem}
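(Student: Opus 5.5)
We plan to prove $Pu=u$ for $u\in\mathcal{H}^0_b(\mathbb{H}^{n,m})$ by reversing the formal computation \eqref{weightfml}: take partial Fourier transforms in $x''$, reproduce each fiber with the weighted Bergman kernel, and invert. Since $P$ is bounded on $L^2(\mathbb{H}^{n,m},\psi_0)$ with norm at most $1$ (Proposition \ref{opbd} and the remark after it), it suffices to prove $(Pu\,|\,g)_{\psi_0}=(u\,|\,g)_{\psi_0}$ for all $g\in\mathcal{C}^\infty_c(\mathbb{H}^{n,m})$. Testing against such $g$ puts us in the setting where Fubini and Parseval are justified, exactly as in the proof of Proposition \ref{alphaholo}.

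\textbf{Step 1.} Set $v=ue^{-\frac12\sum_t\beta_tx_{2n+t}}$. Let $\alpha(z,\eta)$ be the function defined before Proposition \ref{alphaholo}. By Proposition \ref{alphaholo}, for a.e.\ $\eta$ the function $\alpha(\cdot,\eta)$ is holomorphic on $\mathbb{C}^n$ and lies in $L^2(\mathbb{C}^n,\Phi_\eta)$. By Corollary \ref{vanish}, $\alpha(\cdot,\eta)=0$ for a.e.\ $\eta\notin I$.

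\textbf{Step 2.} For a.e.\ $\eta\in I$, the matrix $M_\eta$ is positive definite, so the reproducing formula \eqref{berg} with $\phi=\Phi_\eta/2$ gives
\[\alpha(z,\eta)=\int_{\mathbb{C}^n}B_{\Phi_\eta/2}(z,w)\,\alpha(w,\eta)\,d\mu(w).\]
Undoing the definition of $\alpha$ expresses $\mathcal{F}_{x''\to\eta}[v](z,\eta)$ as a fiberwise integral of the same kernel appearing in \eqref{modwgtszeker}. Applying $\mathcal{F}^{-1}$ via Parseval, and using that $\alpha$ vanishes off $I$, we obtain
\[\big(v\,|\,\tilde g\big)_{\Phi_0}=(2\pi)^{-m}\int\!\!\int \mathcal{F}[v]\,\overline{\mathcal{F}[\tilde g]}\,e^{-\Phi_0}\,d\mu(\eta)\,d\mu(z),\]
where $\tilde g=ge^{-\frac12\sum_t\beta_tx_{2n+t}}$.

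\textbf{Step 3.} Substitute the reproducing identity from Step 2 into the right-hand side and insert the cutoff $\chi(\epsilon\eta)$, which is harmless by dominated convergence since $\mathcal{F}[v]\,\overline{\mathcal{F}[\tilde g]}$ is integrable. Then interchange the $w$, $\eta$ and $y''$ integrations. For fixed $\epsilon$ this produces $(P_\epsilon u\,|\,g)_{\psi_0}$ up to the normalization $(2\pi)^{-(n+m)}$, after noting the identity
\[\tfrac{\det(M_\eta)}{\pi^n}\,d\mu=\tfrac{\det(M_\eta)}{(2\pi)^n}\,2^n\,dx.\]
Letting $\epsilon\to0$ gives $(Pu\,|\,g)_{\psi_0}=(u\,|\,g)_{\psi_0}$.

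\textbf{Main obstacle.} The hard part is justifying these interchanges when $u$ is merely in $L^2$ and not compactly supported, so that $\mathcal{F}[v]$ is only defined as an $L^2$ limit.

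\emph{Approach.} I would move all operators onto $g$: define $P^*$-type expressions applied to $g$, which behave well because $g$ is smooth with compact support. By the same integration-by-parts bound used in Proposition \ref{HSzegocon}, the function $\eta\mapsto 1_I\chi(\epsilon\eta)\int B_{\Phi_\eta/2}(w,z)\ldots\,\mathcal{F}[\tilde g]$ lies in $L^2(d\mu(\eta)\,e^{-\Phi_\eta}d\mu(w))$. Then all integrals become $L^2$ pairings, each step is justified by Cauchy--Schwarz, and the Gaussian bound $\int e^{-\frac12 M_\eta(z-w)\cdot\overline{(z-w)}}\,d\mu(w)=(2\pi)^n/\det M_\eta$ from the proof of Proposition \ref{opbd} controls everything. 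In particular, we use self-adjointness of the fiberwise Bergman projection in $L^2(\mathbb{C}^n,\Phi_\eta)$ to move it from $\alpha$ to the test side, where it acts on a nice function.
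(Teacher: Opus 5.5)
Your proposal is correct and uses the same ingredients as the paper's proof, only run in the opposite direction: the paper starts from $(Pu\,|\,g)_{\psi_0}$, rearranges it by Fubini, reproduces $\alpha(\cdot,\eta)$ for $\eta\in I$ via Proposition~\ref{alphaholo} and \eqref{berg}, extends the $\eta$-integral to $\mathbb{R}^m$ by Corollary~\ref{vanish}, and arrives at $(u\,|\,g)_{\psi_0}$ by Parseval. Your device of moving $P$ onto the test function and using self-adjointness of the fiberwise Bergman projection actually justifies the interchanges when $u$ is merely in $L^2$, a step the paper asserts only ``by Fubini's theorem''. The one slip is your normalization identity, which as written is off by a factor $2^n$: it should read $\frac{\det(M_\eta)}{\pi^n}\,dx=\frac{\det(M_\eta)}{(2\pi)^n}\,d\mu$, i.e.\ $B_{\Phi_\eta/2}$ carries the prefactor $\det(M_\eta)/(2\pi)^n$ against $d\mu$.
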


\begin{proof}
Fix $g\in C^{\infty}_c(\mathbb{H}^{n,m}).$ We only need to prove that
\begin{equation*}
    (Pu\mid g)_{\psi_0} = (u\mid g)_{\psi_0}.
\end{equation*}

By Fubini's theorem, we have
\begin{align*}
&(2\pi)^{n+m} (Pu\mid g)_{\psi_0}\\
&= \int_{\mathbb{H}^{n,m}}\bigg(\int_{I}\int_{\mathbb{H}^{n,m}}  \exp\Big[i\langle\eta,x''\rangle-i\langle\eta,y''\rangle+\frac{1}{2}\sum_{t=1}^m\beta_t x_{2n+t}-\frac{1}{2}\sum_{t=1}^m\beta_t y_{2n+t}+\sum_{j,s=1}^{n}(M_\eta)_{js}z_j\overline{w}_s\\
&\quad-\sum_{j,s=1}^{n}(M_\eta)_{js}w_j\overline{w}_s+\sum_{\ell=1}^m\Big(\frac{i\beta_\ell}{2}-\eta_\ell\Big)\sum_{j,s=1}^{n}\lambda^{\ell}_{js}w_j\overline{w}_s-\sum_{\ell=1}^m\Big(\frac{i\beta_\ell}{2}-\eta_\ell\Big)\sum_{j,s=1}^{n}\lambda^{\ell}_{js}z_j\overline{z}_s\Big]\\
&\quad\cdot\det(M_\eta)u(w,y')d\mu(y')d\mu(w)d\mu(\eta)\bigg)\overline{g(z,x'')}\,e^{-\sum\limits_t\beta_t x_{2n+t}-\sum\limits_{j,s}\mu_{js}z_j\overline{z}_s}\,d\mu(x)\\
&=\int_I\int_{\mathbb{C}^n_z}\int_{\mathbb{C}^n_w} e^{\sum\limits_{j,s=1}^{n}(M_\eta)_{js}z_j\overline{w}_s-\sum\limits_{j,s=1}^{n}(M_\eta)_{js}w_j\overline{w}_s}\,\det(M_\eta)\\
&\quad\cdot\bigg(\int_{\mathbb{R}^m}e^{-i\langle\eta,y''\rangle}\,u(w,y'')e^{-\frac{1}{2}\sum\limits_{t=1}^m\beta_t y_{2n+t}}d\mu(y'')e^{\sum\limits_{\ell=1}^m(\frac{i\beta_\ell}{2}-\eta_\ell)\sum\limits_{j,s=1}^{n}\lambda^{\ell}_{js}w_j\overline{w}_s}\bigg)\,e^{-\sum\limits_{\ell=1}^m(\frac{i\beta_\ell}{2}-\eta_\ell)\sum\limits_{j,s=1}^{n}\lambda^{\ell}_{js}z_j\overline{z}_s}\\
&\quad\cdot \bigg(\overline{\int_{\mathbb{R^m}}e^{-i\langle\eta,x''\rangle}\,g(z,x'')d\mu(x'')e^{-\frac{1}{2}\sum\limits_{t=1}^m\beta_t x_{2n+t}}}\bigg)e^{-\sum\limits_{j,s}\mu_{js}z_j\overline{z}_s}\,d\mu(w)d\mu(z)d\mu(\eta).
\end{align*}

Let
\begin{equation*}
    \alpha (w,\eta) = \int_{\mathbb{R}^m}e^{-i\langle\eta,y''\rangle}\,u(w,y'')e^{-\frac{1}{2}\sum\limits_{t=1}^m\beta_t y_{2n+t}}d\mu(y'')e^{\sum\limits_{\ell=1}^m(\frac{i\beta_\ell}{2}-\eta_\ell)\sum\limits_{j,s=1}^{n}\lambda^{\ell}_{js}w_j\overline{w}_s}.
\end{equation*}

By Proposition \ref{alphaholo} and the discussion for (\ref{berg}), we have
\begin{equation*}
    \alpha(z,\eta) = \Big(\frac{1}{2\pi}\Big)^n\int_{\mathbb{C}^n_w} \alpha(w,\eta)\det(M_\eta)\, e^{\sum\limits_{j,s=1}^{n}(M_\eta)_{js}z_j\overline{w}_s-\sum\limits_{j,s=1}^{n}(M_\eta)_{js}w_j\overline{w}_s}\,d\mu(w).
\end{equation*}

Hence, by Corollary \ref{vanish} and Parseval's formula,
\begin{align*}
&(2\pi)^{n+m} (Pu\mid g)_{\psi_0}\\
&=(2\pi)^n\int_I\int_{\mathbb{C}^n_z} \bigg(\alpha(z,\eta)\, e^{-\sum\limits_{\ell=1}^m(\frac{i\beta_\ell}{2}-\eta_\ell)\sum\limits_{j,s=1}^{n}\lambda^{\ell}_{js}z_j\overline{z}_s}\bigg)\\
&\quad\quad\cdot \bigg(\overline{\int_{\mathbb{R^m}}e^{-i\langle\eta,x''\rangle}\,g(z,x'')d\mu(x'')e^{-\frac{1}{2}\sum\limits_{t=1}^m\beta_t x_{2n+t}}}\bigg)e^{-\sum\limits_{j,s}\mu_{js}z_j\overline{z}_s}\,d\mu(z)d\mu(\eta)\\
&=(2\pi)^n\int_I\int_{\mathbb{C}^n_z} \mathcal{F}_{x''\rightarrow\eta}\Big[u(z,x'')e^{-\frac{1}{2}\sum\limits_{t=1}^m\beta_t x_{2n+t}}\Big] (z,\eta)\cdot\overline{\mathcal{F}_{x''\rightarrow\eta}\Big[g(z,x'')e^{-\frac{1}{2}\sum\limits_{t=1}^m\beta_t x_{2n+t}}\Big](z,\eta)}\\
&\quad\quad \cdot e^{-\sum\limits_{j,s}\mu_{js}z_j\overline{z}_s}\,d\mu(z)d\mu(\eta)\\
&= (2\pi)^{n+m} \int_{H^{n+m}} u(x)\overline{g(x)}\,d\mu(x)\\
&= (2\pi)^{n+m} (u\mid g).
\end{align*}
\end{proof}

\begin{thm}
For all $u\in L^2(\mathbb{H}^{n,m},\psi_0),$ we have $\Pi^{(0),\psi_0}_b u = Pu.$
\end{thm}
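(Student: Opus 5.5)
The plan is to characterize $\Pi^{(0),\psi_0}_b$ as the unique bounded operator on $L^2(\mathbb{H}^{n,m},\psi_0)$ that acts as the identity on $\mathcal{H}^0_b(\mathbb{H}^{n,m})$ and vanishes on its orthogonal complement. We already know the following facts. Proposition \ref{opbd} and the subsequent discussion show that $P$ extends to a bounded operator on $L^2(\mathbb{H}^{n,m},\psi_0)$ with $\|P\|\leq 1$. Lemma \ref{imp} shows that ${\rm Range\,}P\subset\mathcal{H}^0_b(\mathbb{H}^{n,m})$. Lemma \ref{pu=u} shows that $Pu=u$ for every $u\in\mathcal{H}^0_b(\mathbb{H}^{n,m})$.

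Combining the last two facts gives $P^2=P$, so $P$ is a projection onto $\mathcal{H}^0_b(\mathbb{H}^{n,m})$. This space is closed, being the kernel of $\overline\partial_b$ in the distributional sense. It remains to see that $P$ is orthogonal, and I see two routes.

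\emph{First route (norm bound).} A bounded idempotent with $\|P\|\leq 1$ is automatically self-adjoint, hence orthogonal. Indeed, let $u\in{\rm Range\,}P$ and $v\in{\rm Ker\,}P$. For every $t\in\mathbb{C}$,
\[
\|u\|_{\psi_0}=\|P(u+tv)\|_{\psi_0}\leq\|u+tv\|_{\psi_0}.
\]
Expanding in $t$ forces $(u\,|\,v)_{\psi_0}=0$, so ${\rm Ker\,}P\perp{\rm Range\,}P$.

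There is a subtlety here: the constant in Proposition \ref{opbd} appears as $(2\pi)^{2n+2m}$ before the normalization $(\frac{1}{2\pi})^{n+m}$ is squared in. One must check that the normalization indeed yields operator norm $\leq 1$ with respect to $(\,\cdot\,|\,\cdot\,)_{\psi_0}$, transported through the conjugation \eqref{equofsze}. This is where I expect the main care is needed.

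\emph{Second route (adjoint symmetry).} As a backup, show directly that $P$ is formally self-adjoint with respect to $(\,\cdot\,|\,\cdot\,)_{\psi_0}$. Concretely, check that the kernel satisfies
\[
P(x,y)e^{-\psi_0(y)}=\overline{P(y,x)}\,e^{-\psi_0(x)}.
\]
Equivalently, $\tilde P(x,y)=\overline{\tilde P(y,x)}$, which is visible from the formula for $\tilde P$. Swapping $(z,x'')\leftrightarrow(w,y'')$ and conjugating leaves the phase $\varphi(\eta,z,w)+i\langle\eta,x''-y''\rangle$ invariant, using the Hermitian symmetry of $M_\eta$ and of each $(\lambda^\ell_{js})$, and the reality of $\beta_\ell$. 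The symmetry is justified on $\mathcal{C}^\infty_c$ through the $\chi(\epsilon\eta)$ regularization and dominated convergence as in Proposition \ref{HSzegocon}, then extended by density and boundedness. A self-adjoint idempotent is the orthogonal projection onto its range.

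Either route then concludes the proof. For $u\in L^2(\mathbb{H}^{n,m},\psi_0)$, decompose $u=u_1+u_2$ with $u_1\in\mathcal{H}^0_b(\mathbb{H}^{n,m})$ and $u_2\perp\mathcal{H}^0_b(\mathbb{H}^{n,m})$. Then $Pu_1=u_1$ by Lemma \ref{pu=u}. For $Pu_2$, note that by self-adjointness, for all $g$,
\[
(Pu_2\,|\,g)_{\psi_0}=(u_2\,|\,Pg)_{\psi_0}=0,
\]
since $Pg\in\mathcal{H}^0_b(\mathbb{H}^{n,m})$ by Lemma \ref{imp}. Hence $Pu_2=0$, and $Pu=u_1=\Pi^{(0),\psi_0}_bu$. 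The identity $\tilde P=\Pi^{(0)}_{b,\psi_0}$ follows from \eqref{equofsze}.
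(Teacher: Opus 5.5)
Your first route is the paper's proof. Lemma~\ref{imp} and Lemma~\ref{pu=u} make $P$ an idempotent with range $\mathcal{H}^0_b(\mathbb{H}^{n,m})$, and $\|P\|\le 1$ from Proposition~\ref{opbd} makes it orthogonal; your $\|u\|\le\|u+tv\|$ argument supplies the step the paper leaves implicit.

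Your worry about the constant is justified, although the $(2\pi)$ bookkeeping you point to is fine: the squared normalization $(2\pi)^{-2(n+m)}$ does cancel $(2\pi)^{2n+2m}$. The real slip is elsewhere. With $d\mu(w)=2^n\,dx_1\wedge\cdots\wedge dx_{2n}$, the integral $\int_{\mathbb{C}^n}e^{-\frac12\sum_{j,s}(M_\eta)_{js}(z_j-w_j)(\overline{z}_s-\overline{w}_s)}\,d\mu(w)$ equals $(4\pi)^n/\det M_\eta$, not $(2\pi)^n/\det M_\eta$. The prefactor $\det(M_\eta)/(2\pi)^n$ is nevertheless the correct one for $d\mu$, since it is what makes the reproducing formula in Lemma~\ref{pu=u} hold. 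So the Cauchy--Schwarz (Schur-test) estimate in Proposition~\ref{opbd} only yields $\|\tilde P\|\le 2^n$.

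This loss is intrinsic to that method. Once the fiber kernel is replaced by its absolute value, one is left with convolution by the positive Gaussian $\frac{\det M_\eta}{(2\pi)^n}e^{-\frac12\sum_{j,s}(M_\eta)_{js}z_j\overline{z}_s}$, whose $L^2$ operator norm equals its integral, $2^n$. The bound $\|\tilde P\|\le 1$ is still true, but it needs a fiberwise argument. After Plancherel in $x''$, for each $\eta\in I$ the $w$-integral is the Bergman projection of $L^2(\mathbb{C}^n,e^{-\Phi_\eta}d\mu)$, conjugated by multiplication by $e^{-\Phi_\eta/2}$ times a unimodular factor, so it has norm one. As written, then, both the paper's proof and your first route rest on a norm bound whose stated proof does not deliver it.

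Your second route is genuinely different and avoids this issue entirely. It needs only that $P$ be bounded with \emph{some} constant, which Proposition~\ref{opbd} does provide. Your verification of $\tilde P(x,y)=\overline{\tilde P(y,x)}$ is correct: it uses the Hermitian symmetry of $M_\eta$ and of the $\lambda^\ell$ and the reality of the $\beta_\ell$. Transferring it from the $\chi(\epsilon\eta)$-regularized kernels via the $\epsilon$-uniform bound in Proposition~\ref{HSzegocon} is also sound. The decomposition $u=u_1+u_2$ then closes the argument cleanly.

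One slip: self-adjointness with respect to $(\,\cdot\,|\,\cdot\,)_{\psi_0}$ corresponds to $P(x,y)e^{-\psi_0(x)}=\overline{P(y,x)}\,e^{-\psi_0(y)}$. Your displayed identity has the two weights interchanged, though the equivalent form $\tilde P(x,y)=\overline{\tilde P(y,x)}$, which is what you actually check, is stated correctly.
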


\begin{proof}
By Lemma \ref{imp}, we have $P(L^2(\mathbb{H}^{n,m},\psi_0)) \subset \mathcal{H}^0_b(\mathbb{H}^{n,m}).$ By Lemma \ref{pu=u}, we have $Pu = u$ for all $u\in \mathcal{H}^0_b(\mathbb{H}^{n,m}).$ Futhermore, by Proposition \ref{opbd} and the remark after Proposition \ref{opbd}, we have $\|P\|\leq 1.$ Thus, $P$ is an orthogonal projection onto $\mathcal{H}^0_b(\mathbb{H}^{n,m})$ and hence $\Pi^{(0),\psi_0}_b = P.$
\end{proof}
\begin{rem}
Equivalently, we have $\tilde{P} = \Pi^{(0)}_{b,\psi_0}.$
\end{rem}
In summery, let $I=I_{M_1,\dots,M_m}^{\Phi_0},$ we have
for $u\in L^2(\mathbb{H}^{n,m}, \psi_0),$  formally,
\begin{align*}
  &\big(\Pi^{(0),\psi_0}_b u\big)(z,x'')\\ 
  &=\Big(\frac{1}{2\pi}\Big)^{n+m} \int_{I}\int_{\mathbb{C}^n}\int_{\mathbb{R}^m} \det(M_\eta) \exp\bigg[i\langle\eta,x''\rangle-i\langle\eta,y''\rangle +\frac{1}{2}\sum_{t=1}^m\beta_t x_{2n+t}-\frac{1}{2}\sum_{t=1}^m\beta_t y_{2n+t}\nonumber\\
&\quad\quad+\sum_{j,s=1}^{n}(M_\eta)_{js}z_j\overline{w}_s-\sum_{j,s=1}^{n}(M_\eta)_{js}w_j\overline{w}_s+\sum_{\ell=1}^m\Big(\frac{i\beta_\ell}{2}-\eta_\ell\Big)\sum_{j,s=1}^{n}\lambda^{\ell}_{js}w_j\overline{w}_s\nonumber\\
&\quad\quad-\sum_{\ell=1}^m\Big(\frac{i\beta_\ell}{2}
  -\eta_\ell\Big)\sum_{j,s=1}^{n}\lambda^{\ell}_{js}z_j\overline{z}_s\bigg]\, u(w,y'')\,d\mu(y'')d\mu(w)d\mu(\eta).\nonumber
\end{align*}

and for $f\in L^2(\mathbb{H}^{n,m}),$ we have, formally,
\begin{align*}
  &(\Pi^{(0)}_{b,\psi_0} f)(z,x'') \\
  &= \Big(\frac{1}{2\pi}\Big)^{n+m} \int_{I}\int_{\mathbb{C}^n}\int_{\mathbb{R}^m} \det(M_\eta) \exp\bigg[i\langle\eta,x''\rangle-i\langle\eta,y''\rangle-\frac{1}{2}\sum_{j,s=1}^{n}(M_\eta)_{js}(z_j-w_j)(\overline{z}_s-\overline{w}_s)\nonumber\\
  &\quad\ +\frac{1}{2}\sum_{j,s=1}^{n}(M_\eta)_{js}(z_j\overline{w}_s-\overline{z}_s w_j)+\sum_{\ell=1}^m\frac{i\beta_\ell}{2}
  \sum_{j,s=1}^{n}\lambda^{\ell}_{js}(w_j\overline{w}_s-z_j\overline{z}_s)\bigg]f(w,y'')d\mu(y'')d\mu(w)d\mu(\eta).\nonumber
\end{align*}

\section{Asymptotics of Szeg\H{o} kernel for functions}\label{scaledszegokernelasy}

Let $(X^{2n+m},T^{1,0}X)$ be a $(2n+m)$-dimensional CR manifold of CR-codimension $m.$
Denote by $T^{*1,0}X$ and $T^{*0,1}X$ the dual bundles of $T^{1,0}X$ and $T^{0,1}X$ respectively. Define
the vector bundle of $(0, q)$ forms by $T^{*0,q}X = \Lambda^q(T^{*0,1}X)$. Let $D \subset X$ be an open set.
Let $\Omega^{0,q}(D)$ denote the space of smooth sections of $T^{*0,q}X$ over $D$ and let $\Omega^{0,q}_c(D)$ be the
subspace of $\Omega^{0,q}(D)$ whose elements have compact support in $D$. Similarly, if $E$ is a vector
bundle over $D$, then we let $\Omega^{0,q}(D, E)$ denote the space of smooth sections of $T^{*0,q}X \otimes E$
over $D$ and let $\Omega^{0,q}_c(D, E)$ be the subspace of $\Omega^{0,q}(D, E)$ whose elements have compact
support in $D$.
Let
\[
\overline{\partial}_b : \Omega^{0,q}(X) \rightarrow \Omega^{0,q+1}(X)
\]
be the tangential Cauchy-Riemann operator. Let $(L, h^{L})$ be a CR Hermitian line bundle on $X.$ Let $s$ be a local trivializing section of $L$ on an open subset $D\subset X.$ Then the local weight of $h^L$ with respect to $s$ is the function $\phi\in C^{\infty}(D;\mathbb{R})$ for which $|s(x)|^2 = e^{-\phi(x)}, x\in D.$ Let
\[
\overline{\partial}_{b,k} : \Omega^{0,q}(X,L^k) \rightarrow \Omega^{0,q+1}(X,L^k)
\]
be the tangential Cauchy-Riemann operator acting on forms with values in $L^k.$  We equip $X$ with a Hermitian metric, denoted by $\langle\,\cdot\,|\,\cdot\,\rangle,$ such that $T^{1,0}X \perp T^{0,1}X.$ Let $(\,\cdot\,|\,\cdot\,)$ and $(\,\cdot\,|\,\cdot\,)_k$ denote the global $L^2$ inner product on $\Omega^{0,q}(X)$ and $\Omega^{0,q}(X,L^k),$ respectively, induced from $\langle\,\cdot\,|\,\cdot\,\rangle$ and $h^L.$
Let 
\[
\overline{\partial}_{b,k}^* : \Omega^{0,q}(X,L^k) \rightarrow \Omega^{0,q-1}(X,L^k)
\]
be the formal adjoint of $\overline{\partial}_{b,k}$ with respect to $(\,\cdot\,|\,\cdot\,)_k.$
Let 
\[
\square^{(q)}_{b,k} = \overline{\partial}_{b,k}^*\overline{\partial}_{b,k} + \overline{\partial}_{b,k}\overline{\partial}_{b,k}^* : \Omega^{0,q}(X,L^k) \rightarrow \Omega^{0,q}(X,L^k)
\]
be the Kohn-Laplacian operator with values in $L^k.$
Let
\[
\Pi^{(q)}_{b,k} : L^2_{0,q}(X, L^k) \rightarrow \mathcal{H}^q_b(X, L^k) := \operatorname{Ker}(\square^{(q)}_{b,k})
\]
be the Szeg\H{o} projection with respect to $\square^{(q)}_{b,k}.$

\subsection{Local charts}\label{localchart}
We have $\mathbb{C}TX = T^{1,0}X\oplus T^{0,1}X\oplus N(X),$ where $T^{1,0}X$ is the CR structure on $X,$ $T^{0,1}X = \overline{T^{1,0}X}$ and $N(X)$ is the complexified totally real tangent bundle (orthogonal complement of $T^{1,0}X\oplus T^{0,1}X$) of $X.$ Let $N^*(X)$ be the dual bundle of $N(X).$ Let $\tau$ be a section of $N^*(X).$ Then $\tau$ annihilates $T^{1,0}X\oplus T^{0,1}X.$ The Levi form $\mathcal{L}_{p}$ of $X$ at a point $p$ is the Hermitian form on $T^{1,0}X$ with values in $N(X)$ defined by 
\begin{equation*}
    \mathcal{L}_p (U,\overline{V}) = \frac{1}{2i} \pi_p \big([U,\overline{V}]\big), \quad U,V\in T^{1,0}X,
\end{equation*}
where $[\cdot,\cdot]$ denote the Lie bracket and $\pi_p$ denote the orthogonal projection of $\mathbb{C}TX$ onto $N_p(X).$ The Levi form $\mathcal{L}_{\tau,p}$ in the direction $\tau$ is defined as the quadratic form
\begin{equation*}
    \mathcal{L}_{\tau,p} (U,\overline{V}) = \langle\mathcal{L}_p(U,\overline{V}), \tau\rangle_p, \quad U,V\in T^{1,0}X.
\end{equation*}

Given $x_0\in X,$ let $U_1(x),\dots,U_n(x)$ be an orthonormal frame of $T^{1,0}X$ over a neighborhood $D$ of $x_0.$ Let $Y_1(x),\dots,Y_m(x)$  be an orthonormal frame of $N(X)$ over $D.$ Identify $Y_1(x),\dots,Y_m(x)$ with an orthonormal frame of $N^*(X)$ via the Hermitian metric $\langle\,\cdot\,|\,\cdot\,\rangle.$ For all $j,s = 1,\ldots,n$ and $\ell = 1,\ldots,m,$ let
\[
\lambda^{\ell}_{js} = \frac{1}{2i}\langle[U_j,\overline{U}_s] \mid Y_\ell\rangle(x_0)
\]
which is the Levi form at $x_0$ in the direction $Y_\ell.$
Then $(\lambda^{\ell}_{js})$ are Hermitian matrices. We can find a local chart $(x_1,\dots,x_{2n},x_{2n+1},\dots,x_{2n+m}) =(x',x'') = (z,x'') = (z_1,\dots,z_n,x''), z_j = x_{2j-1}+ix_{2j}, j=1,\dots,n, x' = (x_1,\dots, x_{2n}), x''=(x_{2n+1},\dots,x_{2n+m}),$ for $D$ such that, for $1\leq j,s\leq 2n$ and $1\le \ell,q\le m,$
\[
(z(x_0),x''(x_0)) = 0,\quad \Big\langle\frac{\partial}{\partial x_j}(x_0)\mid\frac{\partial}{\partial x_s}(x_0)\Big\rangle = 2\delta_{js}, \quad \Big\langle\frac{\partial}{\partial x_{2n+\ell}}(x_0)\mid\frac{\partial}{\partial x_{2n+q}}(x_0)\Big\rangle = \delta_{\ell q}
\]
\begin{align*}
&U_j = \frac{\partial}{\partial z_j} - i\sum_{\ell=1}^m\Big(\sum_{s=1}^{n}\lambda^{\ell}_{js}\overline{z}_s\Big)\frac{\partial}{\partial x_{2n+\ell}}  + O(|(z,x'')|^2), \quad j=1,\dots,n,\\
&Y_\ell(x_0) = \frac{\partial}{\partial x_{2n+\ell}}(x_0), \quad \ell=1,\dots,m,
\end{align*}
and
\begin{align*}
   \phi(x) &= \sum_{j=1}^{n}(\alpha_jz_j+\overline{\alpha}_j\overline{z}_j) + \sum_{\ell=1}^m\beta_\ell x_{2n+\ell} + \sum_{j,s=1}^{n}(a_{js}z_jz_s + \overline{a}_{js}\overline{z}_j\overline{z}_s) + \sum_{j,s=1}^{n}\mu_{js}z_j\overline{z}_s \\
  &\quad + O(|z||x''|) + O(|x''|^2) + O(|(z,x'')|^3)
\end{align*}
where $\beta_\ell\in\mathbb{R}, \alpha_j, a_{js},\mu_{js}\in\mathbb{C}.$ We identify $D$ with some open subset in $\mathbb{R}^{2n+m}.$ Let
\begin{align*}
\phi_0(x) &= \sum_{\ell=1}^m\beta_\ell x_{2n+\ell} + \sum_{j,s=1}^{n}\mu_{js}z_j\overline{z}_s + O(|z||x''|) + O(|x''|^2) + O(|(z,x'')|^3),\\
R(z,x'') &=\sum_{j=1}^{n}(\alpha_jz_j) + \sum_{j,s=1}^{n}(a_{js}z_jz_s),\\
\psi_0(z,x'') &= \beta_1x_{2n+1} + \cdots + \beta_mx_{2n+m} + \sum_{j,s=1}^{n}\mu_{js}z_j\overline{z}_s.
\end{align*}
Let $L^2_{(0,q)}(D,k\phi)$ and $L^2_{(0,q)}(D,k\phi_0)$ be the completions of $\Omega^{0,q}_c(D)$ with respect to $(\,\cdot\,|\,\cdot\,)_{k\phi}$ and $(\,\cdot\,|\,\cdot\,)_{k\phi_0},$ respectively. We have the unitary identification.
\begin{align*}
  L^2_{(0,q)}(D,k\phi) &\leftrightarrow L^2_{(0,q)}(D,k\phi_0) \\
  \tilde{u} &\mapsto u = e^{-kR}\tilde{u}
\end{align*}
Let 
\[
\overline{\partial}_b^{*,k\phi} : \Omega^{0,q+1}(D) \rightarrow \Omega^{0,q}(D)
\]
be the formal adjoint of $\overline{\partial}_b$ with respect to $(\,\cdot\,|\,\cdot\,)_{k\phi}.$
Put 
\[
\square^{(q)}_{b,k\phi} = \overline{\partial}_b\overline{\partial}_b^{*,k\phi} + \overline{\partial}_b^{*,k\phi}\overline{\partial}_b : \Omega^{0,q}(D) \rightarrow \Omega^{0,q}(D).
\]
For $u\in \Omega^{0,q}(D, L^k),$ write $u = \hat{u}s^k$ where $\hat{u}\in \Omega^{0,q}(D).$ Then $\square^{(q)}_{b,k}u = (\square^{(q)}_{b,k\phi}\hat{u})s^k.$ Let
\[
\overline{\partial}_{b,k,s} = \overline{\partial}_b + k(\overline{\partial}_b R)\wedge.
\]
Then $\overline{\partial}_b\tilde{u} = \widetilde{\overline{\partial}_{b,k,s}u} = e^{kR}\overline{\partial}_{b,k,s}u$ for all $u\in \Omega^{0,q}(D)\cap L^2_{(0,q)}(D, k\phi_0).$ Let $\{e_j(z,x'')\}_{j=1}^n$ be the basis of $T^{*0,1}_{(z,x'')}X$ dual to $\big\{\overline{U}_j(z,x'')\big\}_{j=1}^n.$ Then
\begin{align*}
  \overline{\partial}_b &= \sum_{j=1}^{n} \big(e_j\wedge\overline{U}_j + \overline{\partial}_b e_j\wedge(e_j\wedge)^*\big), \\
  \overline{\partial}_{b,k,s} &= \sum_{j=1}^{n} e_j\wedge\big(\overline{U}_j + k(\overline{U}_j R)\big) + \sum_{j=1}^{n}\overline{\partial}_b e_j\wedge(e_j\wedge)^*,
\end{align*}
where $(e_j\wedge)^*$ is the formal adjoint of $e_j\wedge$ with respect to $\langle\cdot,\cdot\rangle.$ And
\begin{align*}
  \overline{\partial}_b^{*,k\phi} &= \sum_{j=1}^{n}(e_j\wedge)^*\overline{U}_j^{*,k\phi} + \sum_{j=1}^{n}e_j\wedge (\overline{\partial}_b e_j\wedge)^*, \\
  \overline{\partial}_{b,k,s}^{*,\phi_0} &= \sum_{j=1}^{n}(e_j\wedge)^*\big(\overline{U}_j^{*,k\phi_0} + k(U_j\overline{R})\big) + \sum_{j=1}^{n}e_j\wedge (\overline{\partial}_b e_j\wedge)^*,
\end{align*}
where $\overline{U}_j^{*,k\phi_0}$ is the formal adjoint of $\overline{U}_j$ with respect to $( \ \mid\ )_{k\phi_0}$.
Also, we have $\overline{\partial}_b^{*,k\phi}\tilde{u} = e^{kR}\overline{\partial}_{b,k,s}^{*,\phi_0}u.$ By calculation, we have
\[
\overline{U}_j^{*,k\phi_0} = -U_j + kU_j\phi_0 + s_j(z,x''),
\]
where $s_j\in \mathcal{C}^\infty(D),$ $s_j$ is independent of $k.$ Put
\[
\square^{(q)}_{b,k,s} = \overline{\partial}_{b,k,s}\overline{\partial}_{b,k,s}^{*,\phi_0} + \overline{\partial}_{b,k,s}^{*,\phi_0}\overline{\partial}_{b,k,s} : \Omega^{0,q}(D) \rightarrow \Omega^{0,q}(D).
\]
We have $\widetilde{\square^{(q)}_{b,k,s}u} = e^{kR}\square^{(q)}_{b,k,s}u = \square^{(q)}_{b,k\phi}\tilde{u}.$ 

\begin{prop}
\begin{align*}
\square^{(q)}_{b,k,s} &= \sum_{j=1}^{n}\big(\overline{U}_j^{*,k\phi_0} + k(U_j\overline{R})\big)\big(\overline{U}_j + k(\overline{U}_j R)\big)\\
&\quad + \sum_{j,t=1}^n (e_j\wedge)(e_j\wedge)^*\Big[\overline{U}_j + k(\overline{U}_j R) , \overline{U}_t^{*,k\phi_0} + k(U_t\overline{R})\Big]\\
&\quad +\varepsilon\big(\overline{U} + k(\overline{U} R)\big) + \varepsilon\big(\overline{U}^{*,k\phi_0} + k(U\overline{R})\big) + f(z,x''),
\end{align*}
where $\varepsilon\big(\overline{U} + k(\overline{U} R)\big)$ denotes remainder terms of the form $\sum a_j(z,x'')\big(\overline{U} + k(\overline{U} R)\big)$ with $a_j$ smooth, matrix-valued and independent of $k,$ for all $j,$ and similarly for $\varepsilon\big(\overline{U}^{*,k\phi_0} + k(U\overline{R})\big)$ and $f(z,x'')\in \mathcal{C}^\infty$ independent of $k.$ 
\end{prop}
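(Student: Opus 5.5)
The plan is to expand $\square^{(q)}_{b,k,s}=\overline{\partial}_{b,k,s}\overline{\partial}_{b,k,s}^{*,\phi_0}+\overline{\partial}_{b,k,s}^{*,\phi_0}\overline{\partial}_{b,k,s}$ directly, using the explicit local formulas just derived. Write $A_j:=\overline{U}_j+k(\overline{U}_jR)$ and $A_j^\dagger:=\overline{U}_j^{*,k\phi_0}+k(U_j\overline{R})$. Then
\[
\overline{\partial}_{b,k,s}=\sum_j e_j\wedge A_j+\sum_j\overline{\partial}_be_j\wedge(e_j\wedge)^*,\qquad
\overline{\partial}_{b,k,s}^{*,\phi_0}=\sum_j(e_j\wedge)^*A_j^\dagger+\sum_je_j\wedge(\overline{\partial}_be_j\wedge)^*.
\]
Call the first sums the principal parts $P$, $P^\dagger$ and the second sums the zeroth-order parts $Q$, $Q^\dagger$. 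The key structural point is that $Q$ and $Q^\dagger$ are smooth endomorphisms independent of $k$, since $e_j$, $\overline{\partial}_be_j$ and the metric do not involve $k$. Hence
\[
\square^{(q)}_{b,k,s}=(PP^\dagger+P^\dagger P)+(PQ^\dagger+QP^\dagger+P^\dagger Q+Q^\dagger P)+(QQ^\dagger+Q^\dagger Q).
\]

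First I treat the principal block. We have
\[
PP^\dagger+P^\dagger P=\sum_{j,t}\big(e_j\wedge(e_t\wedge)^*A_jA_t^\dagger+(e_t\wedge)^*e_j\wedge A_t^\dagger A_j\big),
\]
up to terms where derivatives in $A_j$ hit the coefficients of $(e_t\wedge)^*$. Since $\{e_j\}$ is only pointwise a basis, those coefficient derivatives are smooth and $k$-independent; they are zeroth order and go into $f$. Next I use the pointwise anticommutation relation $(e_t\wedge)^*e_j\wedge+e_j\wedge(e_t\wedge)^*=\langle e_j|e_t\rangle$. Here I take $\{e_j\}$ orthonormal, which is possible since $U_j$ is orthonormal and $T^{1,0}\perp T^{0,1}$, so the relation becomes $\delta_{jt}$. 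This gives
\[
PP^\dagger+P^\dagger P=\sum_jA_j^\dagger A_j+\sum_{j,t}e_j\wedge(e_t\wedge)^*[A_j,A_t^\dagger]+(\text{$k$-independent zeroth order}).
\]
These are exactly the first two lines of the Proposition. The commutator is left unexpanded; note that it does contain $k$, through $k\overline{U}_j\phi_0$ and $k\overline{U}_jU_t\overline R$.

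Then I treat the mixed block. Terms such as $PQ^\dagger=\sum e_j\wedge A_j\circ(\text{endomorphism})$ become $(\text{endomorphism})\circ A_j$ plus $[A_j,\text{endo}]$. That commutator is $\overline{U}_j(\text{endo})$, because the multiplication part $k\overline{U}_jR$ commutes with scalar multiplication, though not necessarily with matrix multiplication. Since $k\overline{U}_jR$ is a scalar, the commutator is fine and the result is a $k$-independent smooth coefficient. So $PQ^\dagger$ and $QP^\dagger$ have the form $\varepsilon(A)$ and $\varepsilon(A^\dagger)$ plus $f$. The same holds for $P^\dagger Q$ and $Q^\dagger P$: there $[A_t^\dagger,\text{endo}]=-U_t(\text{endo})$, plus the commutator with the multiplier, which is scalar. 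The last block, $QQ^\dagger+Q^\dagger Q$, is a $k$-independent $f$.

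The main obstacle is the bookkeeping that shows no hidden $k$-dependence enters the remainder coefficients $a_j$ or $f$. Every $k$ must stay inside $A_j$, $A_j^\dagger$ or the bracket term. This works because $k$ appears only through scalar multipliers ($kU_j\phi_0$, $k\overline{U}_jR$, $kU_j\overline R$), which commute with all endomorphisms. I would verify that carefully, together with the correct ordering of the form operators in the bracket term.
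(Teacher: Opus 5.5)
Your proposal is correct: the paper states this Proposition without proof, and your direct expansion of $\overline{\partial}_{b,k,s}\overline{\partial}^{*,\phi_0}_{b,k,s}+\overline{\partial}^{*,\phi_0}_{b,k,s}\overline{\partial}_{b,k,s}$ is the routine calculation it relies on. It also correctly produces $e_j\wedge(e_t\wedge)^*[A_j,A_t^\dagger]$, which shows that the repeated index $j$ in the displayed statement is a typo.

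One bookkeeping slip needs fixing. Terms such as $e_j\wedge[A_j,(e_t\wedge)^*]A_t^\dagger$ are first order, so they belong to $\varepsilon\big(\overline{U}^{*,k\phi_0}+k(U\overline{R})\big)$ (respectively $\varepsilon\big(\overline{U}+k(\overline{U}R)\big)$), not to $f$. In any case they vanish in the convention where $\overline{U}_j$ acts coefficientwise in the orthonormal frame $\{e_J\}$, which is the convention in which $\overline{\partial}_b=\sum_j\big(e_j\wedge\overline{U}_j+\overline{\partial}_be_j\wedge(e_j\wedge)^*\big)$ holds.
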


\subsection{Scaled operators}
Let $D(r,r'') = \big\{(z,x'') = (x_1,\dots,x_{2n},x_{2n+1},\dots,x_{2n+m})\in\mathbb{C}^n\times\mathbb{R}^m;\, |x_j|< r, |x_{2n+\ell}|<r'', j=1,\dots,2n, \ell=1,\dots,m\big\}.$ If $r=r'',$ we write $D(r) = D(r,r'').$ Let $F_k$ be the scaling map
\begin{equation}\label{scaledmap}
F_k(z,x'') = \Big(\frac{z}{\sqrt{k}},\frac{x''}{k}\Big).
\end{equation}
Throughout this paper, we fix $0<\epsilon<\frac{1}{8}.$ From now on, we assume $k$ is large enough such that $F_k(D(k^\epsilon,k^{2\epsilon}))\subset D.$ We define the scaled bundle $F^*_k\Lambda^{0,q}T^*X$ on $D(k^\epsilon,k^{2\epsilon})$ to be the bundle whose fiber at $(z,x'')\in D(k^\epsilon,k^{2\epsilon})$ is
\[
F^*_k\Lambda^{0,q}T^*_{(z,x'')}X := \Bigg\{\sum_{|J|=q}a_Je_J\Big(\frac{z}{\sqrt{k}},\frac{x''}{k}\Big)\,;\, a_J\in \mathbb{C},\, |J|=q,\, J\ \text{strictly increasing }\Bigg\}.
\]
We define an Hermitian metric $\langle\,\cdot\,|\,\cdot\,\rangle_{F_k^*}$ on $F^*_k\Lambda^{0,q}T^*X$ by requiring, at each point $(z,x'')\in D(k^\epsilon,k^{2\epsilon}),$
\[
\bigg\{e_J\Big(\frac{z}{\sqrt{k}},\frac{x''}{k}\Big)\,;\,  |J|=q,\, J\ \text{strictly increasing }\bigg\}
\]
to be an orthonormal basis for $F^*_k\Lambda^{0,q}T^*_{(z,x'')}X.$ Let $F_k^*\Omega^{0,q}(D(k^\epsilon,k^{2\epsilon}))$ denote the space of smooth sections of $F^*_k\Lambda^{0,q}T^*X$ over $D(k^\epsilon,k^{2\epsilon}).$ Let $F_k^*\Omega^{0,q}_c(D(k^\epsilon,k^{2\epsilon}))$ be the subspace of $F_k^*\Omega^{0,q}(D(k^\epsilon,k^{2\epsilon}))$ whose elements have compact support in $D(k^\epsilon,k^{2\epsilon}).$ For each $f\in \Omega^{0,q}(F_k(D(k^\epsilon,k^{2\epsilon}))),$ we write $f=\sum'_{|J|=q}f_Je_J.$ We define the scaled form\\ $F_k^*f\in F_k^*\Omega^{0,q}(D(k^\epsilon,k^{2\epsilon}))$ by
\[
F_k^*f := \sum_{|J|=q} f_J\Big(\frac{z}{\sqrt{k}},\frac{x''}{k}\Big)e_J\Big(\frac{z}{\sqrt{k}},\frac{x''}{k}\Big).
\]

Let $P$ be a partial differential operator of order one on $F_k(D(k^\epsilon,k^{2\epsilon}))$ with smooth coefficients. We write
\[
P = \sum_{\ell=1}^m a_\ell(z,x'')\frac{\partial}{\partial x_{2n+\ell}} + \sum_{j=1}^{2n} b_j(z,x'')\frac{\partial}{\partial x_j},
\]
where $a_\ell,b_j\in \mathcal{C}^\infty\big(F_k(D(k^\epsilon,k^{2\epsilon}))\big).$ We define the partial differential operator $P_{(k)}$ on $D(k^\epsilon,k^{2\epsilon})$ by
\begin{align*}
P_{(k)} &= \sum_{\ell=1}^m\sqrt{k}F_k^*a_\ell\frac{\partial}{\partial x_{2n+\ell}} + \sum_{j=1}^{2n} F_k^*b_j\frac{\partial}{\partial x_j}\\
        &= \sum_{\ell=1}^m \sqrt{k}a_\ell\Big(\frac{z}{\sqrt{k}},\frac{x''}{k}\Big)\frac{\partial}{\partial x_{2n+\ell}} + \sum_{j=1}^{2n} b_j\Big(\frac{z}{\sqrt{k}},\frac{x''}{k}\Big)\frac{\partial}{\partial x_j}.
\end{align*}
For $f\in \mathcal{C}^\infty\big(F_k(D(k^\epsilon,k^{2\epsilon}))\big),$ we have 
\begin{equation}\label{scaled}
P_{(k)}(F_k^*f) = \frac{1}{\sqrt{k}}F_k^*(Pf).
\end{equation}
The scaled differential operator
\[
\overline{\partial}_{b,(k),s} : F_k^*\Omega^{0,q}(D(k^\epsilon,k^{2\epsilon})) \rightarrow F_k^*\Omega^{0,q+1}(D(k^\epsilon,k^{2\epsilon}))
\]
is given by 
\begin{equation}\label{scaleddbar}
\overline{\partial}_{b,(k),s} = \sum_{j=1}^{n} e_j\Big(\frac{z}{\sqrt{k}},\frac{x''}{k}\Big)\wedge\big(\overline{U}_{j(k)} + \sqrt{k}F_k^*(\overline{U}_j R)\big) + \sum_{j=1}^{n}\frac{1}{\sqrt{k}}\overline{\partial}_b e_j\Big(\frac{z}{\sqrt{k}},\frac{x''}{k}\Big)\wedge\Bigg(e_j\Big(\frac{z}{\sqrt{k}},\frac{x''}{k}\Big)\wedge\Bigg)^*.
\end{equation}
From \eqref{scaled}, we have for $f\in \Omega^{0,q}(F_k(D(k^\epsilon,k^{2\epsilon}))),$
\[
\overline{\partial}_{b,(k),s}F_k^*f = \frac{1}{\sqrt{k}}F_k^*(\overline{\partial}_{b,k,s}f).
\]
We can check that
\begin{equation}\label{scaledbarU_j}
\overline{U}_{j(k)} + \sqrt{k}F_k^*(\overline{U}_j R) = \frac{\partial}{\partial\overline{z}_j} + i\sum_{\ell=1}^m\Big(\sum_{s=1}^{n}\lambda^{\ell}_{sj}z_s\Big)\frac{\partial}{\partial x_{2n+\ell}} + \epsilon_kZ_{j,k}
\end{equation}
on $D(k^\epsilon,k^{2\epsilon})$, where $\epsilon_k$ is a sequence tending to zero with
$k\to\infty$ and $Z_{j,k}$ is a first order differential operator and all the
derivatives of the coefficients of $Z_{j,k}$ are uniformly bounded in $k$ on
$D(k^\epsilon,k^{2\epsilon})$, $j=1,\ldots,n$. 

Let $(\,\cdot\,|\,\cdot\,)_{(k)}$ be the inner product on the space $F_k^*\Omega^{0,q}_c(D(k^\epsilon,k^{2\epsilon}))$ defined as follows:
\[
(f\mid g)_{(k)} := \int_{D(k^\epsilon,k^{2\epsilon})} \langle f \mid g\rangle_{F_k^*} e^{-kF_k^*\phi_0}(F_k^*V)(z,x'')d\mu_{\mathbb{H}^{n,m}}(x),
\]
where $dV_X(x) = V(z,x'')d\mu_{\mathbb{H}^{n,m}}(x)$ is the volume form. Note that $V(0,0) = 1.$

Let
\[
\overline{\partial}^*_{b,(k),s} : F_k^*\Omega^{0,q+1}(D(k^\epsilon,k^{2\epsilon})) \rightarrow F_k^*\Omega^{0,q}(D(k^\epsilon,k^{2\epsilon}))
\]
be the formal adjoint of $\overline{\partial}_{b,(k),s}$ with respect to $(\,\cdot\,|\,\cdot\,)_{(k)}.$ 
we have for $f\in \Omega^{0,q+1}(F_k(D(k^\epsilon,k^{2\epsilon}))),$
\[
\overline{\partial}^*_{b,(k),s}F_k^*f = \frac{1}{\sqrt{k}}F_k^*(\overline{\partial}^*_{b,k,s}f).
\]
We also have
\begin{align}\label{scaleddbar*}
\bar\partial_{b,(k),s}^*
={}&
\sum_{j=1}^{n}\bigg(e_j\Big(\frac{z}{\sqrt{k}},\frac{x''}{k}\Big)\wedge\bigg)^*\bigg(-U_{j(k)}+\sqrt{k}F_k^*(U_j\overline{R})+\sqrt{k}F_k^*(U_j\phi_0)
+\frac{1}{\sqrt{k}}F_k^*s_j\bigg)\\
&+\sum_{j=1}^{n}\frac{1}{\sqrt{k}}e_j\Big(\frac{z}{\sqrt{k}},\frac{x''}{k}\Big)\wedge\bigg((\overline\partial_b e_j)\Big(\frac{z}{\sqrt{k}},\frac{x''}{k}\Big)\wedge\bigg)^*,\nonumber
\end{align}
where $s_j\in C^\infty(D(k^\epsilon,k^{2\epsilon}))$, $j=1,\ldots,n$, are independent of $k.$ We can check that
\begin{align}\label{scaledU_t}
&-U_{t(k)} + \sqrt{k}F_k^*(U_t \overline{R}) + \sqrt{k}F_k^*(U_t\phi_0) + \frac{1}{\sqrt{k}}F_k^*s_t\\
&= -\frac{\partial}{\partial z_t} + i\sum_{\ell=1}^m\Big(\sum_{s=1}^{n}\lambda^{\ell}_{ts}\overline{z}_s\Big)\frac{\partial}{\partial x_{2n+\ell}} - i\sum_{\ell=1}^m\Big(\sum_{s=1}^{n}\lambda^{\ell}_{ts}\overline{z}_s\Big)\beta_\ell + \sum_{s=1}^{n}\mu_{ts}\overline{z}_s + \delta_kV_{t,k}
\end{align}
on $D(k^\epsilon,k^{2\epsilon})$, where $\delta_k$ is a sequence tending to zero with$k\to\infty$ and $V_{j,k}$ is a first order differential operator and all the
derivatives of the coefficients of $V_{j,k}$ are uniformly bounded in $k$ on
$D(k^\epsilon,k^{2\epsilon})$, $t=1,\ldots,n$. 

We define the scaled Kohn-Laplacian
\[
\square^{(q)}_{b,(k),s} := \overline{\partial}^*_{b,(k),s}\overline{\partial}_{b,(k),s}+\overline{\partial}_{b,(k),s}\overline{\partial}^*_{b,(k),s} :  F_k^*\Omega^{0,q}_c(D(k^\epsilon,k^{2\epsilon})) \rightarrow F_k^*\Omega^{0,q}_c(D(k^\epsilon,k^{2\epsilon})).
\]
From \eqref{scaleddbar}, \eqref{scaleddbar*}, \eqref{scaledbarU_j} and \eqref{scaledU_t}, we can obtain
\begin{prop}
\begin{align*}
  \square^{(q)}_{b,(k),s}  &= \sum_{j=1}^{n}\Bigg[\Big(-\frac{\partial}{\partial z_j} + i\sum_{\ell=1}^m\Big(\sum_{s=1}^{n}\lambda^{\ell}_{js}\overline{z}_s\Big)\frac{\partial}{\partial x_{2n+\ell}} - i\sum_{\ell=1}^m\Big(\sum_{s=1}^{n}\lambda^{\ell}_{js}\overline{z}_s\Big)\beta_\ell + \sum_{s=1}^{n}\mu_{js}\overline{z}_s\Big)\\
  &\quad\quad\quad\quad \Big(\frac{\partial}{\partial\overline{z}_j} + i\sum_{\ell=1}^m\Big(\sum_{s=1}^{n}\lambda^{\ell}_{sj}z_s\Big)\frac{\partial}{\partial x_{2n+\ell}}\Big)\Bigg] \\
  &\quad + \sum_{j,t=1}^{n}e_j\Big(\frac{z}{\sqrt{k}},\frac{x''}{k}\Big)\wedge \Big(e_j\Big(\frac{z}{\sqrt{k}},\frac{x''}{k}\Big)\wedge\Big)^*\Big(\mu_{t,j} - i\sum_{\ell=1}^m\lambda^\ell_{tj}\beta_\ell + 2i\sum_{\ell=1}^m\lambda^\ell_{tj}\frac{\partial}{\partial x_{2n+\ell}}\Big)\\
   &\quad +\varepsilon_kP_k
\end{align*}
on $D(k^\epsilon,k^{2\epsilon}),$ where $\varepsilon_k$ is a sequence tending to zero with $k\to\infty$ and $P_k$ is a first order differential operator and all the
derivatives of the coefficients of $P_k$ are uniformly bounded in $k$ on
$D(k^\epsilon,k^{2\epsilon}).$
\end{prop}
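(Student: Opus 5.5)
The plan is to expand $\square^{(q)}_{b,(k),s}=\overline{\partial}^*_{b,(k),s}\overline{\partial}_{b,(k),s}+\overline{\partial}_{b,(k),s}\overline{\partial}^*_{b,(k),s}$ directly from the explicit forms \eqref{scaleddbar} and \eqref{scaleddbar*}, replace the scaled vector fields by their model limits via \eqref{scaledbarU_j} and \eqref{scaledU_t}, and compare the result with the model Kohn Laplacian in Proposition~\ref{lapfma}. First, I would write
\[
\overline{\partial}_{b,(k),s}=\sum_j e_j\wedge A_j+\tfrac{1}{\sqrt k}B,\qquad
\overline{\partial}^*_{b,(k),s}=\sum_j (e_j\wedge)^*A_j^\dagger+\tfrac{1}{\sqrt k}B',
\]
where $e_j$ is evaluated at $F_k(z,x'')$, $A_j=\overline{U}_{j(k)}+\sqrt kF_k^*(\overline{U}_jR)$, $A_j^\dagger=-U_{j(k)}+\sqrt kF_k^*(U_j\overline R)+\sqrt kF_k^*(U_j\phi_0)+\tfrac1{\sqrt k}F_k^*s_j$, and $B,B'$ are zeroth-order terms with coefficients bounded together with all derivatives on $D(k^\epsilon,k^{2\epsilon})$. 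The $\tfrac1{\sqrt k}$ terms, their products with $A_j$ or $A_j^\dagger$, and the derivatives of the frame coefficients $e_j(F_k(\cdot))$ all carry at least a factor $k^{-1/2}$ times a first-order operator with bounded coefficients. I would absorb all of these into $\varepsilon_kP_k$.

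Next, I would use the anticommutation relation $(e_t\wedge)^*(e_j\wedge)+(e_j\wedge)(e_t\wedge)^*=\delta_{jt}$, which holds because the $e_J(F_k\cdot)$ are orthonormal for $\langle\cdot|\cdot\rangle_{F_k^*}$. The principal part then becomes
\[
\sum_j A_j^\dagger A_j+\sum_{j,t}e_j\wedge(e_t\wedge)^*[A_j,A_t^\dagger]
\]
plus lower-order errors. Substituting \eqref{scaledbarU_j} and \eqref{scaledU_t} turns $A_j^\dagger A_j$ into the model product appearing in Proposition~\ref{lapfma}. The cross terms are $\epsilon_k$ or $\delta_k$ times products of a model first-order operator with $Z_{j,k}$ or $V_{t,k}$, so they are second-order, not first-order.

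This is the main obstacle. With the paper's convention, I would read ``$P_k$ first order'' loosely as a differential operator of order at most two whose coefficients have uniformly bounded derivatives; these second-order terms still vanish as $k\to\infty$. I would also note that the model coefficients grow like $|z|\lesssim k^\epsilon$ on $D(k^\epsilon,k^{2\epsilon})$. For this reason one must check that $\epsilon_k$ and $\delta_k$ decay fast enough, say like $k^{-1/2+3\epsilon}$, which they do by Taylor expansion with $\epsilon<1/8$. The resulting products then still tend to zero with bounded coefficients, possibly after renormalizing $\varepsilon_k$.

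Finally, I would compute the commutator $[A_j,A_t^\dagger]$ using the model parts:
\[
\Big[\partial_{\overline z_j}+i\sum_\ell\big(\textstyle\sum_s\lambda^\ell_{sj}z_s\big)\partial_{x_{2n+\ell}},\;-\partial_{z_t}+i\sum_\ell\big(\textstyle\sum_s\lambda^\ell_{ts}\overline z_s\big)\partial_{x_{2n+\ell}}-i\sum_\ell\big(\textstyle\sum_s\lambda^\ell_{ts}\overline z_s\big)\beta_\ell+\sum_s\mu_{ts}\overline z_s\Big].
\]
This equals $\mu_{tj}-i\sum_\ell\lambda^\ell_{tj}\beta_\ell+2i\sum_\ell\lambda^\ell_{tj}\partial_{x_{2n+\ell}}$, exactly the computation behind Proposition~\ref{lapfma}. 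The commutators involving the error terms are again $\varepsilon_k$ times bounded first-order operators. Collecting the principal part, the commutator term, and the absorbed errors gives the stated formula.
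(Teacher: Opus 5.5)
Your proposal is correct and is the computation the paper compresses into ``from \eqref{scaleddbar}, \eqref{scaleddbar*}, \eqref{scaledbarU_j} and \eqref{scaledU_t}, we can obtain.'' The anticommutation step, the substitution of the model fields, and the commutator $[A_j,A_t^\dagger]=\mu_{tj}-i\sum_\ell\lambda^\ell_{tj}\beta_\ell+2i\sum_\ell\lambda^\ell_{tj}\partial_{x_{2n+\ell}}$ are the same as in Proposition~\ref{lapfma}, and you rightly use $e_j\wedge(e_t\wedge)^*$ where the statement misprints $(e_j\wedge)^*$.

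Both of your caveats are correct, and they are the only places needing care. First, since $\overline U_{j(k)}$ differs from the model field in its principal part, $\varepsilon_kP_k$ is genuinely second order, so ``first order'' in the statement must be read as ``second order.'' This is harmless, because Proposition~\ref{ker} only uses $\varepsilon_kP_kg\to0$ for fixed $g\in\mathcal C^\infty_c$. Second, the $O(k^\epsilon)$ growth of the model coefficients is absorbed: the Taylor remainders give $\epsilon_k,\delta_k=O(k^{2\epsilon-1/2})$, so all the cross terms are $O(k^{3\epsilon-1/2})\to0$.
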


\subsection{Scaled Szeg\H{o} projection}
In this section, we prove Theorem \ref{mainthmsp}. We first define the scaled Szeg\H{o} kernel and describe the explicit meaning of the limit \eqref{locspgap}.  Fix $u\in L^2(\mathbb{H}^{n,m},\psi_0).$ Let $\chi\in \mathcal{C}^\infty_c(\mathbb{H}^{n,m})$ be a smooth function such that $\chi=1$ on $D(\frac{1}{2}), \chi\geq0$ and $\operatorname{supp}(\chi)\subset D(1).$ Fix a real number $\epsilon$ with $0<\epsilon<\frac{1}{8}.$ Let
\begin{align}\label{scaledu}
  u_{(k)} &= \chi\Big(\frac{z}{k^{\epsilon}},\frac{x''}{k^{2\epsilon}}\Big)u \\
  u_{[k]} &= k^{\frac{n+m}{2}}u_{(k)}(\sqrt{k}z,kx'')e^{kR}s^k\notag\\
          &= k^{\frac{n+m}{2}}\chi\Big(\frac{\sqrt{k}z}{k^\epsilon},\frac{kx''}{k^{2\epsilon}}\Big)u(\sqrt{k}z,kx'')e^{kR}s^k.\notag
\end{align}
We define the scaled Szeg\H{o} projection $\Pi_{(k),s}$ on $L^2(\mathbb{H}^{n,m},\psi_0)$ by
\begin{equation}\label{scaledPiu}
\Pi_{(k),s} u := k^{-\frac{n+m}{2}}F_k^*\Big(\widehat{\Pi_k u_{[k]}}\cdot e^{-kR}\Big)
\end{equation}
for all $u\in L^2(\mathbb{H}^{n,m},\psi_0).$ By some simple calculations, we have the following propositions.
\begin{prop}
For all $u, v\in L^2(\mathbb{H}^{n,m}, \psi_0),$ we have $|( u_{[k]}\mid v_{[k]})_k| = |(u_{(k)}\mid v_{(k)})_{(k)}|.$
\end{prop}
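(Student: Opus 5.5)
This is a direct change of variables, and I would compute both sides explicitly.

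\emph{Step 1 (left side).} By definition, $u_{[k]}=k^{\frac{n+m}{2}}u_{(k)}(\sqrt{k}z,kx'')e^{kR}s^k$. Hence, in the trivialization $s$,
\[
(u_{[k]}\mid v_{[k]})_k=k^{n+m}\int_D u_{(k)}(\sqrt{k}z,kx'')\,\overline{v_{(k)}(\sqrt{k}z,kx'')}\,e^{kR(x)+k\overline{R(x)}}\,e^{-k\phi(x)}\,V(x)\,d\mu_{\mathbb H^{n,m}}(x).
\]
\emph{Step 2 (the weight).} Since $R$ is the holomorphic part of the Taylor expansion of $\phi$, we have $R+\overline R-\phi=-\phi_0$, where $\phi_0$ is the weight introduced in Section~\ref{localchart}. (The harmonic pluriharmonic terms $\sum(\alpha_jz_j+\overline\alpha_j\overline z_j)$ and $\sum(a_{js}z_jz_s+\overline a_{js}\overline z_j\overline z_s)$ cancel exactly.) Thus the integrand equals $u_{(k)}(F_k^{-1}x)\overline{v_{(k)}(F_k^{-1}x)}e^{-k\phi_0(x)}V(x)$.

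\emph{Step 3 (scaling).} Substitute $x=F_k(y)=(z/\sqrt k,x''/k)$. The Jacobian is $k^{-n}\cdot k^{-m}=k^{-(n+m)}$, which cancels the prefactor $k^{n+m}$. Since $u_{(k)}$ is supported in $D(k^\epsilon,k^{2\epsilon})$ and $F_k(D(k^\epsilon,k^{2\epsilon}))\subset D$, the integral transforms to
\[
\int_{D(k^\epsilon,k^{2\epsilon})}u_{(k)}(y)\overline{v_{(k)}(y)}\,e^{-kF_k^*\phi_0(y)}(F_k^*V)(y)\,d\mu_{\mathbb H^{n,m}}(y)=(u_{(k)}\mid v_{(k)})_{(k)}.
\]
For functions ($q=0$), the pointwise metric $\langle\cdot|\cdot\rangle_{F_k^*}$ is just the product $u\overline v$.

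This yields equality, so in particular the stated equality of absolute values holds. The only point needing care is Step~2: one should check whether the cross term appears as $e^{kR}\overline{e^{kR}}$ (as written) or whether a convention uses $|e^{kR}|^2$ with $\operatorname{Re}R$. In either case the identity $2\operatorname{Re}R-\phi=-\phi_0$ gives the result, and any possible discrepancy would only be a unimodular phase. That phase is exactly why the statement is phrased with absolute values.
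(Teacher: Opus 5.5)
Your change of variables is correct and is exactly the ``simple calculation'' the paper has in mind: $e^{kR}\overline{e^{kR}}e^{-k\phi}=e^{-k\phi_0}$, the Jacobian $k^{-(n+m)}$ of $F_k$ cancels the prefactor $k^{n+m}$, and $F_k(D(k^\epsilon,k^{2\epsilon}))\subset D$ keeps everything inside the chart. Your closing worry about a unimodular phase is unnecessary, because $e^{kR}\overline{e^{kR}}$ and $|e^{kR}|^2$ are the same real quantity $e^{2k\operatorname{Re}R}$, so you in fact get equality without the absolute values.
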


\begin{prop}
For all $u\in L^2(\mathbb{H}^{n,m}, \psi_0),$ $\|\Pi_{(k),s} u\|^2_{(k)} \leq  \|u_{(k)}\|^2_{(k)}.$
\end{prop}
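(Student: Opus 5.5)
The plan is to reduce the claim to the norm–preserving property of the scaling and the contractivity of the orthogonal projection $\Pi_k$. I will first use the definition \eqref{scaledPiu}, which gives $\Pi_{(k),s}u=k^{-\frac{n+m}{2}}F_k^*\big(\widehat{\Pi_k u_{[k]}}\,e^{-kR}\big)$. The first step is a change of variables identity: for any section $f=\hat f s^k$ supported (or restricted) in $F_k(D(k^\epsilon,k^{2\epsilon}))$, I will set $g:=k^{-\frac{n+m}{2}}F_k^*(\hat f e^{-kR})$ and show
\[
\|g\|^2_{(k)}=\int_{F_k(D(k^\epsilon,k^{2\epsilon}))}|\hat f|^2e^{-k\phi}\,dv_X\le\|f\|_k^2 .
\]
This uses three facts. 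First, $|e^{-kR}|^2e^{-kF_k^*\phi_0}=e^{-kF_k^*\phi}$, since $\phi=\phi_0+R+\overline R$ by the decomposition in Section \ref{localchart}. Second, the Jacobian of $F_k$ relative to $d\mu_{\mathbb H^{n,m}}$ is $k^{-(n+m)}$, because $z$ scales by $k^{-1/2}$ in $2n$ real directions and $x''$ by $k^{-1}$ in $m$ directions. Third, $(F_k^*V)\,d\mu_{\mathbb H^{n,m}}$ pulls back $dv_X$.

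The second step applies this with $f=\Pi_ku_{[k]}$. Since the norm $\|\cdot\|_{(k)}$ only integrates over $D(k^\epsilon,k^{2\epsilon})$, I get $\|\Pi_{(k),s}u\|^2_{(k)}\le\|\Pi_ku_{[k]}\|_k^2\le\|u_{[k]}\|_k^2$, where the last inequality holds because $\Pi_k$ is an orthogonal projection on $L^2(X,L^k)$. The third step identifies $\|u_{[k]}\|_k^2=\|u_{(k)}\|_{(k)}^2$. This is exactly the preceding proposition with $v=u$, or equivalently the same change of variables run in reverse: $u_{[k]}$ is supported in $F_k(D(k^\epsilon,k^{2\epsilon}))$ because $\operatorname{supp}\chi\subset D(1)$, and
\[
\widehat{u_{[k]}}\,e^{-kR}=k^{\frac{n+m}{2}}\,u_{(k)}\circ F_k^{-1},
\]
so that $F_k^*$ turns it back into $k^{\frac{n+m}{2}}u_{(k)}$. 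Combining the three steps yields the claim.

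The only delicate point is bookkeeping. I need to check that the weight $e^{-kF_k^*\phi_0}$ in $(\cdot|\cdot)_{(k)}$ combines with $|e^{\pm kR}|^2$ to reproduce exactly $e^{-k\phi}$; this relies on $\phi=\phi_0+2\operatorname{Re}R$ exactly, by definition of $\phi_0$, rather than only up to error terms. I also need to confirm that the frame metric $\langle\cdot|\cdot\rangle_{F_k^*}$ agrees with the pointwise metric for functions ($q=0$), which is trivial here. For $u\in L^2(\mathbb H^{n,m},\psi_0)$ that is not smooth, everything is a pointwise measure-theoretic identity, so no density argument is needed.
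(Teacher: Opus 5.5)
Your argument is correct and is the ``simple calculation'' the paper alludes to without writing out. The change of variables under $F_k$, using $\phi=\phi_0+R+\overline{R}$ and the Jacobian $k^{-(n+m)}$, identifies $\|\Pi_{(k),s}u\|_{(k)}^2$ with the $\|\cdot\|_k$-norm of $\Pi_k u_{[k]}$ over $F_k(D(k^\epsilon,k^{2\epsilon}))$; then $\|\Pi_k\|\le 1$ and $\|u_{[k]}\|_k=\|u_{(k)}\|_{(k)}$ give the claim, so your write-up just makes the omitted computation explicit.
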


\begin{prop}\label{defdis}
Fix $u\in L^2(\mathbb{H}^{n,m}, \psi_0).$ For any $v\in \mathcal{C}^\infty_c(\mathbb{H}^{n,m}),$ we have
\[
\big|\big(\Pi_{(k),s} u \mid v\big)_{(k)}\big| \leq \|u_{[k]}\|_k \|v_{[k]}\|_k \rightarrow \|u\|_{\psi_0}\|v\|_{\psi_0}.
\]
This implies that $\big(\Pi_{(k),s} u\mid v\big)_{(k)}$ is bounded and hence has a convergent subsequence.
\end{prop}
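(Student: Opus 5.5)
The plan is to unwind the definitions and apply Cauchy–Schwarz twice. Write $(\Pi_{(k),s}u\,|\,v)_{(k)}$ back in terms of the unscaled objects on $X$. The scaling $F_k$ has Jacobian $k^{-(n+m)}$. The weight satisfies $e^{-kF_k^*\phi_0}=F_k^*(e^{-k\phi}|e^{kR}|^2)$, since $\phi=\phi_0+R+\overline R$ up to the conventions fixed in Section~\ref{localchart}. Together with the factor $k^{\frac{n+m}{2}}$ built into $u_{[k]}$ and $\Pi_{(k),s}$, this gives the identity
\[
(\Pi_{(k),s}u\,|\,v)_{(k)}=(\Pi_k u_{[k]}\,|\,v_{[k]})_k ,
\]
at least for $v$ supported where $\chi(z/k^\epsilon,x''/k^{2\epsilon})=1$; this holds for large $k$, since $v$ has compact support. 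This is exactly the computation behind the earlier proposition $|(u_{[k]}\,|\,v_{[k]})_k|=|(u_{(k)}\,|\,v_{(k)})_{(k)}|$. I would redo it with $\Pi_k u_{[k]}$ in the first slot, noting that $\widehat{\Pi_ku_{[k]}}e^{-kR}$ is the local representative paired against $\widehat{v_{[k]}}e^{-kR}=k^{\frac{n+m}{2}}v_{(k)}(\sqrt k z,kx'')$.

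Given this identity, the inequality follows from Cauchy–Schwarz in $L^2(X,L^k)$ and the fact that $\Pi_k$ is an orthogonal projection, so $\|\Pi_k\|\le1$:
\[
|(\Pi_k u_{[k]}\,|\,v_{[k]})_k|\le \|\Pi_k u_{[k]}\|_k\|v_{[k]}\|_k\le \|u_{[k]}\|_k\|v_{[k]}\|_k .
\]

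For the limit, the same change of variables gives $\|u_{[k]}\|_k^2=\|u_{(k)}\|_{(k)}^2$. Expanding $\|u_{(k)}\|_{(k)}^2$, the integrand is $|\chi(z/k^\epsilon,x''/k^{2\epsilon})u|^2e^{-kF_k^*\phi_0}F_k^*V$. On $D(k^\epsilon,k^{2\epsilon})$ we have
\[
kF_k^*\phi_0(z,x'')=\psi_0(z,x'')+O(k^{-1/2}|z||x''|)+O(k^{-1}|x''|^2)+O(k^{-1/2}|(z,x'')|^3).
\]
Since $\epsilon<1/8$, the error terms are $O(k^{-1/2+3\epsilon}+k^{-1+4\epsilon}+\dots)$ uniformly there, so they tend to $0$, and $F_k^*V\to V(0)=1$ uniformly. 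Hence the integrand converges pointwise to $|u|^2e^{-\psi_0}$. It is dominated by $C|u|^2e^{-\psi_0}$, because the exponential errors are uniformly bounded on the cutoff region. Dominated convergence then gives $\|u_{[k]}\|_k\to\|u\|_{\psi_0}$, and likewise $\|v_{[k]}\|_k\to\|v\|_{\psi_0}$.

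Finally, boundedness of the scalar sequence $(\Pi_{(k),s}u\,|\,v)_{(k)}$ is immediate from the convergent upper bound, and Bolzano–Weierstrass yields a convergent subsequence. The only delicate step is the bookkeeping in the change of variables: getting the weight $e^{kR}$ and $|s|^2=e^{-\phi}$ to combine into $e^{-kF_k^*\phi_0}$, and the powers of $k$ to cancel. I would check this first; the rest is routine.
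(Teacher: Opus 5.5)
Your proposal is correct and is essentially the ``simple calculation'' the paper has in mind. You use the change of variables giving $(\Pi_{(k),s}u\,|\,v)_{(k)}=(\Pi_ku_{[k]}\,|\,v_{[k]})_k$ for $k$ large (via $\phi=\phi_0+R+\overline R$), then Cauchy--Schwarz with $\|\Pi_k\|\le 1$, then dominated convergence for $\|u_{[k]}\|_k=\|u_{(k)}\|_{(k)}\to\|u\|_{\psi_0}$.

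One bookkeeping fix: keep the anisotropic scaling in the cubic remainder, $k\,O\big(|(z/\sqrt k,x''/k)|^3\big)=O(k^{3\epsilon-1/2})$ uniformly on $D(k^\epsilon,k^{2\epsilon})$. Your cruder bound $O(k^{-1/2}|(z,x'')|^3)$ only gives $O(k^{6\epsilon-1/2})$ there, which does not tend to $0$, and so does not give the domination, when $1/12\le\epsilon<1/8$.
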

By the diagonal argument and the density argument, we can take a sequence $\Pi_{(k_j),s}$ such that for all $u\in L^2(\mathbb{H}^{n,m},\psi_0),$ $\hat{S}(u) := \lim_{j\rightarrow\infty} \big(\Pi_{(k_j),s}u\mid\bullet \big)_{(k_j)}$ defines a distribution on $\mathcal{C}^\infty_c(\mathbb{H}^{n,m}).$ Actually, by Proposition \ref{defdis}, $\hat{S}$ defines a bounded anti-linear functional on $L^2(\mathbb{H}^{n,m}, \psi_0).$ By Riesz representation theorem, we have $\hat{S}$ is a linear map
\[
\hat{S} : L^2(\mathbb{H}^{n,m}, \psi_0) \rightarrow L^2(\mathbb{H}^{n,m}, \psi_0).
\]  
Proposition \ref{bounded} further shows that $\hat{S}$ is a bounded linear operator. In Theorem \ref{skerasy}, we will show that, under a spectral gap condition, $\Pi_{(k_j),s}u$ converges to $\Pi^{(0),\psi_0}_b u$ in the sense of distributions, that is, $\hat{S} = \Pi^{(0),\psi_0}_b$. Since $\Pi_{(k_j),s}$ is an arbitrary subsequence, we have $\Pi_{(k),s}$ itself converges to the Szeg\H{o} kernel for the model case. So, to simplify the notation, we may assume that $\hat{S}(u) = \lim_{k\rightarrow\infty}\big(\Pi_{(k),s}u\mid\bullet \big)_{(k)}.$

\begin{prop}\label{bounded}
$\hat{S} : L^2(\mathbb{H}^{n,m}, \psi_0) \rightarrow L^2(\mathbb{H}^{n,m}, \psi_0)$ is a bounded linear operator with operator norm $\|\hat{S}\| \leq 1.$
\end{prop}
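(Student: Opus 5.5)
The plan is to show three things in turn: that $\hat S$ is linear, that $\hat S(u)$ is a genuine element of $L^2(\mathbb H^{n,m},\psi_0)$, and that its norm is at most $\|u\|_{\psi_0}$.

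\emph{Linearity.} For each $k$ the map $u\mapsto u_{[k]}$ is linear, and so are $\Pi_k$ and the rescaling in \eqref{scaledPiu}. Hence $u\mapsto(\Pi_{(k),s}u\mid v)_{(k)}$ is linear for every fixed $v\in\mathcal C^\infty_c(\mathbb H^{n,m})$. Passing to the limit along the chosen subsequence preserves linearity, so $\hat S$ is linear.

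\emph{Boundedness.} Fix $u\in L^2(\mathbb H^{n,m},\psi_0)$. By Proposition \ref{defdis}, every $v\in\mathcal C^\infty_c(\mathbb H^{n,m})$ satisfies
\[
|\hat S(u)(v)|=\lim_{k\to\infty}\big|(\Pi_{(k),s}u\mid v)_{(k)}\big|\leq \limsup_{k\to\infty}\|u_{[k]}\|_k\|v_{[k]}\|_k=\|u\|_{\psi_0}\|v\|_{\psi_0}.
\]
The limits $\|u_{[k]}\|_k\to\|u\|_{\psi_0}$ and $\|v_{[k]}\|_k\to\|v\|_{\psi_0}$ need a short justification. By the isometry proposition, $\|u_{[k]}\|_k=\|u_{(k)}\|_{(k)}$. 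The weight satisfies $e^{-kF_k^*\phi_0}F_k^*V\to e^{-\psi_0}$ locally uniformly on $D(k^\epsilon,k^{2\epsilon})$, because $kF_k^*\phi_0-\psi_0=O(k^{-1/2}|z||x''|+k^{-1}|x''|^2+k^{-1/2}|(z,x'')|^3)$ there. In addition, $\chi(z/k^\epsilon,x''/k^{2\epsilon})\to1$. For compactly supported $v$ convergence is then immediate. For general $u$ I would use a density argument: approximate $u$ in $L^2(\mathbb H^{n,m},\psi_0)$ by $\mathcal C^\infty_c$ functions, and use the bound $\|w_{(k)}\|_{(k)}\le C\|w\|_{\psi_0}$ uniformly in $k$. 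That uniform bound holds because on the support $D(k^\epsilon,k^{2\epsilon})$ the error terms are $O(k^{-1/2+4\epsilon})\to0$, since $\epsilon<1/8$. Since $\mathcal C^\infty_c$ is dense, $v\mapsto\overline{\hat S(u)(v)}$ extends uniquely to a bounded linear functional on $L^2(\mathbb H^{n,m},\psi_0)$ with norm at most $\|u\|_{\psi_0}$ (the conjugate reflects the anti-linearity of $v\mapsto\hat S(u)(v)$).

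\emph{Riesz representation and the norm bound.} By the Riesz representation theorem there is a unique $\hat S u\in L^2(\mathbb H^{n,m},\psi_0)$ with $\hat S(u)(v)=(\hat Su\mid v)_{\psi_0}$ and $\|\hat Su\|_{\psi_0}\le\|u\|_{\psi_0}$. Uniqueness together with the linearity above makes $u\mapsto\hat Su$ linear, and the estimate gives $\|\hat S\|\le1$.

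The main obstacle is the norm-convergence step: controlling the error in the weight $e^{-kF_k^*\phi_0}$ uniformly on the growing domains, which is exactly where the restriction $\epsilon<\frac18$ is needed.
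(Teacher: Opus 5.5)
Your proposal is correct and is essentially the paper's argument. The paper bounds $|(\hat S f\mid v)_{\psi_0}|$ by $\limsup_k\|\Pi_{(k),s}f\|_{(k)}\|v\|_{(k)}\le\limsup_k\|f_{(k)}\|_{(k)}\|v\|_{(k)}=\|f\|_{\psi_0}\|v\|_{\psi_0}$ and takes the supremum over unit $v\in\mathcal C^\infty_c$; this is the same estimate you obtain from Proposition~\ref{defdis} followed by Riesz, and you also justify the norm limits that the paper leaves implicit.

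One small slip: the cubic remainder is $k\,O\big(|(z/\sqrt k,x''/k)|^3\big)=O(k^{-1/2+3\epsilon})$ on $D(k^\epsilon,k^{2\epsilon})$, not $k^{-1/2}|(z,x'')|^3$, which would only give $O(k^{-1/2+6\epsilon})$. With the correct bound the weight ratio tends to $1$ uniformly already for $\epsilon<\frac16$, so this is not where $\epsilon<\frac18$ is needed. Dominated convergence then gives $\|u_{(k)}\|_{(k)}\to\|u\|_{\psi_0}$ for every $u\in L^2(\mathbb H^{n,m},\psi_0)$ directly, so no density argument is required.
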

\begin{proof}
Note that, for all $f\in L^2(\mathbb{H}^{n,m}, \psi_0),$ 
\[
\|\hat{S}f\|_{\psi_0} = \sup \big\{ \big(\hat{S}f\mid v\big)_{\psi_0};\, v\in \mathcal{C}^\infty_c(\mathbb{H}^{n,m})\ \text{with}\ \|v\|_{\psi_0} = 1 \big\}.
\]
\begin{align*}
\big|\big(\hat{S}f\mid v\big)_{\psi_0}\big| &= \big|\big(\hat{S}f\big)(v)\big|\\
&= \lim_{k\rightarrow\infty} \big|\big(\Pi_{(k),s}f\mid v\big)_{(k)} \big|
\leq \limsup_{k\rightarrow\infty} \big\|\Pi_{(k),s}f\big\|_{(k)}\cdot\|v\|_{(k)}\\
&\leq \limsup_{k\rightarrow\infty} \big\|f_{(k)}\big\|_{(k)}\cdot\|v\|_{(k)} = \|f\|_{\psi_0}\cdot\|v\|_{\psi_0} = \|f\|_{\psi_0}
\end{align*}
Thus, $\big\|\hat{S}\big\|\leq1.$
\end{proof}
\begin{prop}\label{ker}
  $\square^{(0),\psi_0}_b\hat{S}(u) = 0$ for all $u\in L^2(\mathbb{H}^{n,m}, \psi_0)$ in the sense of distributions. That is, we have
  \[
  \hat{S} : L^2(\mathbb{H}^{n,m}, \psi_0) \rightarrow \mathcal{H}^0_b(\mathbb{H}^{n,m}).
  \]
\end{prop}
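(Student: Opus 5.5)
To prove Proposition~\ref{ker}, we will show that $\overline{\partial}_b\hat S(u)=0$ in the sense of distributions on $\mathbb H^{n,m}$. Since $\hat S(u)\in L^2(\mathbb H^{n,m},\psi_0)$ by Proposition~\ref{bounded}, this means $\hat S(u)\in{\rm Ker\,}\overline{\partial}_b=\mathcal H^0_b(\mathbb H^{n,m})$, which for functions is the same as $\square^{(0),\psi_0}_b\hat S(u)=0$. We use the spectral gap hypothesis of Theorem~\ref{mainthmsp} only through the fact that $\Pi_k u_{[k]}$ is CR. It lies in ${\rm Ker\,}\square_{b,k}$, so $\overline{\partial}_{b,k}\Pi_ku_{[k]}=0$ on $X$. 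Unwinding the trivialization and the conjugation by $e^{kR}$ gives $\overline{\partial}_{b,k,s}\big(\widehat{\Pi_ku_{[k]}}e^{-kR}\big)=0$ on $D$. Applying the scaling identity $\overline{\partial}_{b,(k),s}F_k^*f=\frac{1}{\sqrt k}F_k^*(\overline{\partial}_{b,k,s}f)$ then yields
\[
\overline{\partial}_{b,(k),s}\big(\Pi_{(k),s}u\big)=0\ \ \mbox{on } D(k^\epsilon,k^{2\epsilon}).
\]

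Next we pair with a test form. Fix $g\in\Omega^{0,1}_c(\mathbb H^{n,m})$; for $k$ large, ${\rm supp\,}g\subset D(k^\epsilon,k^{2\epsilon})$. Regard $g$ as a section of $F_k^*\Lambda^{0,1}T^*X$ through the frame $e_j(z/\sqrt k,x''/k)$, matched with $d\overline z_j$. Then
\[
0=\big(\overline{\partial}_{b,(k),s}\Pi_{(k),s}u\,\big|\,g\big)_{(k)}=\big(\Pi_{(k),s}u\,\big|\,\overline{\partial}^*_{b,(k),s}g\big)_{(k)}.
\]
By \eqref{scaleddbar*} and \eqref{scaledU_t}, we have $\overline{\partial}^*_{b,(k),s}g=\overline{\partial}^{*,\psi_0}_bg+\delta_k h_k$. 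Here $h_k$ is supported in the fixed compact set ${\rm supp\,}g$ and is uniformly bounded in $L^2$. The term $\frac{1}{\sqrt k}e_j\wedge(\overline\partial_be_j\wedge)^*$ also vanishes in the limit. Moreover, on ${\rm supp\,}g$ the weight satisfies $kF_k^*\phi_0\to\psi_0$ locally uniformly: the terms $O(|z||x''|)$, $O(|x''|^2)$ and $O(|(z,x'')|^3)$ scale to $O(k^{-1/2})$. We also have $F_k^*V\to1$, so $(\cdot|\cdot)_{(k)}\to(\cdot|\cdot)_{\psi_0}$ on compactly supported forms.

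Now pass to the limit. Using $\|\Pi_{(k),s}u\|_{(k)}\le\|u_{(k)}\|_{(k)}\to\|u\|_{\psi_0}$, the error term $(\Pi_{(k),s}u\,|\,\delta_kh_k)_{(k)}$ tends to $0$. For the main term, compare $(\Pi_{(k),s}u\,|\,\overline{\partial}^{*,\psi_0}_bg)_{(k)}$ with the definition of $\hat S$. Writing $\overline{\partial}^{*,\psi_0}_bg=v$, a scalar function in $\mathcal C^\infty_c$, we get $\lim_k(\Pi_{(k),s}u\,|\,v)_{(k)}=(\hat Su\,|\,v)_{\psi_0}$. This is exactly how $\hat S$ was defined along the chosen subsequence. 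Hence
\[
(\hat S u\,|\,\overline{\partial}^{*,\psi_0}_b g)_{\psi_0}=0\ \ \mbox{for all } g\in\Omega^{0,1}_c(\mathbb H^{n,m}),
\]
that is, $\overline{\partial}_b\hat Su=0$ in the sense of distributions.

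The main obstacle is the bookkeeping in the second step. One must check that the $(0,1)$-form metric $\langle\cdot|\cdot\rangle_{F_k^*}$ and the frame $e_j(F_k\cdot)$ match the model orthonormal frame $d\overline z_j$, so that the pairing converges to the $\psi_0$-pairing. One must also check that the coefficient errors in $\overline{\partial}^*_{b,(k),s}$, including the $\sqrt k F_k^*(U_j\phi_0)$ term whose higher-order Taylor parts involve $x''$, are $o(1)$ uniformly on ${\rm supp\,}g$. The constraint $\epsilon<1/8$ and the $O(|z||x''|)$ structure in \eqref{e-gue260829ycda} should make these terms $O(k^{-1/2+\cdot})$ on compact sets.
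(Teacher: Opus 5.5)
Your argument is correct, but it works at first order where the paper works at second order. You pair $\Pi_{(k),s}u$ with $\overline{\partial}^*_{b,(k),s}g$ for test $(0,1)$-forms $g$, using only \eqref{scaleddbar*} and \eqref{scaledU_t}. The paper instead pairs with $\square_{b,(k),s}g$ for scalar test functions $g$, uses $(\Pi_ku_{[k]}\,|\,\square_{b,k}g_{[k]})_k=0$, and invokes the expansion of the scaled Kohn Laplacian around $\square^{(0),\psi_0}_b$.

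Your version gives the a priori stronger conclusion $\overline{\partial}_b\hat S(u)=0$ in the sense of distributions. Together with $\hat S(u)\in L^2(\mathbb H^{n,m},\psi_0)$, this places $\hat S(u)$ directly in $\operatorname{Ker}\overline{\partial}_b=\operatorname{Ker}\square^{(0),\psi_0}_b$, the kernel of the self-adjoint operator. The paper obtains only the distributional equation $\square^{(0),\psi_0}_b\hat S(u)=0$. It then relies implicitly on the model analysis (Proposition~\ref{alphaholo} and Lemma~\ref{pu=u}, which start from that distributional equation) to identify this with membership in $\mathcal H^0_b(\mathbb H^{n,m})$.

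The cost of your route is the frame bookkeeping you flag, namely matching $e_j(F_k\,\cdot)$ with $d\overline z_j$; the paper avoids this by keeping scalar test objects. On a fixed compact set all coefficient errors are $O(k^{-1/2})$, so $\epsilon<1/8$ is not needed here.

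Both versions extend to Proposition~\ref{spker}. The paper's does so through $\|\square_{b,k}\Pi_{k,\le k\mu_k}\|\le k\mu_k$. Yours does so through $\|\overline{\partial}_b\Pi_{k,\le k\mu_k}\alpha\|_k^2\le k\mu_k\|\alpha\|_k^2$, which after scaling gives $\|\overline{\partial}_{b,(k),s}\Pi_{(k),\le k\mu_k,s}u\|_{(k)}\le\sqrt{\mu_k}\,\|u_{(k)}\|_{(k)}\to0$.

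Finally, no spectral gap hypothesis enters this proposition at all. $\Pi_ku_{[k]}$ is CR simply because $\Pi_k$ projects onto $\operatorname{Ker}\square_{b,k}=\operatorname{Ker}\overline{\partial}_b$, so your remark about using the gap hypothesis is unnecessary.
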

\begin{proof}
Take arbitrarily $g\in \mathcal{C}^\infty_c(\mathbb{H}^{n,m}).$ Let $g_{[k]} = k^{\frac{n+m}{2}}g(\sqrt{k}z,kx'')e^{kR}s^k.$ Then we have
\[
\Big|\big(\Pi_{(k),s} u\,\big|\, \square_{b,(k),s}g\big)_{(k)}\Big| = \frac{1}{k}\Big|\big(\Pi_k u_{[k]}\,\big|\, \square_{b,k}g_{[k]}\big)_k\Big| = 0.
\]
Hence
\[
\big|\big(\Pi_{(k),s} u\,\big|\, \square^{(0),\psi_0}_b g - \square_{b,(k),s}g\big)_{(k)}\,\big|\, \leq \big\|\Pi_{(k),s} u\big\|_{(k)} \big\| \square^{(0),\psi_0}_b g - \square_{b,(k),s}g\big\|_{(k)} \rightarrow 0.
\]
Thus,
\[
\big(\hat{S}(u)\,\big|\,\square^{(0),\psi_0}_b g\big)_{\psi_0} = \lim_{k\rightarrow\infty} \big(\Pi_{(k),s} u\,\big|\, \square^{(0),\psi_0}_b g\big)_{(k)} = 0.
\]
\end{proof}
We now prove Theorem \ref{mainthmsp}.
\begin{thm}\label{skerasy}
Suppose that for all $u\in \mathcal{C}^\infty_c(D, L^k),$ we have
\[
\big\|\big(I - \Pi_k\big)u\big\|_k^2 \leq (k^{1-2\epsilon})^{-1} \big(\square_{b,k}u\mid u\big)_k.
\]
Then $\hat{S} = \Pi^{(0),\psi_0}_b.$
\end{thm}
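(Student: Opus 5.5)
The plan is to show that $\hat S$ is the orthogonal projection onto $\mathcal{H}^0_b(\mathbb{H}^{n,m})$. By Proposition~\ref{bounded} and Proposition~\ref{ker}, $\hat S$ is bounded with $\|\hat S\|\le 1$ and its range lies in $\mathcal{H}^0_b(\mathbb{H}^{n,m})$. It therefore suffices to prove two further facts:
\begin{enumerate}
\item[(a)] $\hat S$ is self-adjoint on $L^2(\mathbb{H}^{n,m},\psi_0)$;
\item[(b)] $\hat S u=u$ for all $u\in \mathcal{H}^0_b(\mathbb{H}^{n,m})$.
\end{enumerate}
Granting these, $\hat S^2=\hat S$ since the range of $\hat S$ lies in $\mathcal{H}^0_b(\mathbb{H}^{n,m})$, where $\hat S$ acts as the identity. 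Then $\hat S$ is a self-adjoint idempotent with range exactly $\mathcal{H}^0_b(\mathbb{H}^{n,m})$. Hence $\hat S=\Pi^{(0),\psi_0}_b$, which is the explicit operator $P$ identified in the previous section.

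For (a), I would unwind the definition. For $u,v\in\mathcal{C}^\infty_c(\mathbb{H}^{n,m})$ we have
\[
(\Pi_{(k),s}u\,|\,v)_{(k)}=(\Pi_k u_{[k]}\,|\,v_{[k]})_k=(u_{[k]}\,|\,\Pi_k v_{[k]})_k=\overline{(\Pi_{(k),s}v\,|\,u)_{(k)}},
\]
using that $\Pi_k$ is an orthogonal projection; the cutoff $\chi$ equals $1$ on $\operatorname{supp}u\cup\operatorname{supp}v$ for $k$ large. Passing to the limit gives $(\hat Su\,|\,v)_{\psi_0}=(u\,|\,\hat Sv)_{\psi_0}$. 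Density together with $\|\hat S\|\le1$ extends this to all of $L^2(\mathbb{H}^{n,m},\psi_0)$.

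For (b), I first reduce to a dense subclass $\mathcal{A}\subset\mathcal{H}^0_b(\mathbb{H}^{n,m})$, which suffices by boundedness of $\hat S$. I take $\mathcal{A}$ to consist of functions given by the explicit formula for $\Pi^{(0),\psi_0}_b$: their partial Fourier transform in $x''$ is supported in a compact subset of $I^{\Phi_0}_{M_1,\dots,M_m}$, and for each $\eta$ the corresponding $\alpha(\cdot,\eta)$ is a polynomial in $z$. Such $u$ are smooth, and all derivatives $\partial^\gamma u$ times polynomial weights lie in $L^2(\mathbb{H}^{n,m},\psi_0)$. That $\mathcal{A}$ is dense should follow from Proposition~\ref{alphaholo}, Corollary~\ref{vanish} and the density of polynomials in the Bargmann–Fock spaces.

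For $u\in\mathcal{A}$ and $v\in\mathcal{C}^\infty_c(\mathbb{H}^{n,m})$, we have
\[
(\Pi_{(k),s}u\,|\,v)_{(k)}-(u_{(k)}\,|\,v)_{(k)}=-((I-\Pi_k)u_{[k]}\,|\,v_{[k]})_k .
\]
Since $\|v_{[k]}\|_k$ stays bounded and $(u_{(k)}\,|\,v)_{(k)}\to(u\,|\,v)_{\psi_0}$, it is enough to prove $\|(I-\Pi_k)u_{[k]}\|_k\to0$. By the local spectral gap hypothesis and the scaling identities,
\[
\|(I-\Pi_k)u_{[k]}\|^2_k\le k^{-1+2\epsilon}(\square_{b,k}u_{[k]}\,|\,u_{[k]})_k=k^{2\epsilon}\|\overline\partial_{b,(k),s}u_{(k)}\|^2_{(k)} .
\]
Here I use that $\overline\partial_{b,(k),s}F^*_k=k^{-1/2}F^*_k\overline\partial_{b,k,s}$ and that $\overline\partial^*_b u_{[k]}=0$ on functions.

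The key estimate, and the main obstacle, is to show $k^{2\epsilon}\|\overline\partial_{b,(k),s}u_{(k)}\|^2_{(k)}\to0$. I use $\overline\partial_b u=0$ for the model operator together with \eqref{scaledbarU_j} to split $\overline\partial_{b,(k),s}u_{(k)}$ into two pieces.
\begin{enumerate}
\item[(i)] \emph{Cutoff derivatives.} Differentiating $\chi(z/k^{\epsilon},x''/k^{2\epsilon})$ produces terms of size $k^{-\epsilon}|u|$, with an additional factor $|z|$ on the $x''$-derivative terms, supported where $|z|\gtrsim k^{\epsilon}$ or $|x''|\gtrsim k^{2\epsilon}$. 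After multiplying by $k^{2\epsilon}$, these are bounded by weighted $L^2$ tails of $u$, which tend to $0$.
\item[(ii)] \emph{Coefficient errors.} These are $\epsilon_kZ_{j,k}u$. I would make the rate explicit from the Taylor expansions in \eqref{e-gue260819yyd} and \eqref{e-gue260829ycda}: on $D(k^\epsilon,k^{2\epsilon})$ the coefficients deviate from the model ones by $O(k^{-1/2}(|z|^2+|x''|))\lesssim k^{2\epsilon-1/2}$, times polynomially weighted derivatives of $u$. The square of this, multiplied by $k^{2\epsilon}$, is $O(k^{6\epsilon-1})\to0$ because $\epsilon<\tfrac18$. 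The change of weight from $e^{-kF_k^*\phi_0}F_k^*V$ to $e^{-\psi_0}$ is controlled the same way.
\end{enumerate}
The delicate point is tracking these polynomial losses precisely enough to stay within the $\epsilon<\tfrac18$ budget. This is why I restrict to the class $\mathcal{A}$, whose members have rapidly decaying weighted derivatives in all the required directions. This gives $\hat Su=u$ on $\mathcal{A}$, hence on $\mathcal{H}^0_b(\mathbb{H}^{n,m})$, and so $\hat S=\Pi^{(0),\psi_0}_b$.
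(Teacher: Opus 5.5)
Your proof is correct, and its analytic core is the paper's. You apply the spectral-gap hypothesis to the cut-off rescaled model CR functions and show $k^{-1+2\epsilon}\|\overline\partial_b u_{[k]}\|_k^2\to0$. You do this by separating the cutoff-derivative terms, which are bounded by $L^2$ tails, from the coefficient errors of size $k^{2\epsilon-1/2}$ on $D(k^\epsilon,k^{2\epsilon})$, which contribute $O(k^{6\epsilon-1})$.

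You depart from the paper in two places.

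First, the paper never proves self-adjointness. Once $\hat S$ maps into $\mathcal H^0_b(\mathbb H^{n,m})$ and fixes it, $\hat S$ is an idempotent with $\|\hat S\|\le1$ (Proposition~\ref{bounded}), and such an idempotent is automatically an orthogonal projection. Your step (a) is correct but redundant.

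Second, the paper's dense class is $\Pi^{(0),\psi_0}_b(\mathcal C^\infty_c(\mathbb H^{n,m}))$. Its density is free, but the weighted derivative bounds needed for the error terms must be extracted from the explicit kernel by differentiating and integrating by parts. That is, the paper writes $\partial_{x_t}\Pi^{(0),\psi_0}_b u$ in terms of $\Pi^{(0),\psi_0}_b(\overline z_s u)$, $\Pi^{(0),\psi_0}_b(\partial_{x_{2n+\ell}}u)$ and polynomial factors.

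Your class $\mathcal A$ reverses this trade-off. Regularity and Gaussian decay are immediate, since $M_\eta\ge c>0$ uniformly on a compact subset of $I^{\Phi_0}_{M_1,\dots,M_m}$, but density now needs proof. You should write it out; it is short:
\begin{itemize}
\item Suppose $\nu\in\mathcal H^0_b(\mathbb H^{n,m})$ is orthogonal to $\mathcal A$.
\item The Parseval identity from the proof of Lemma~\ref{pu=u} reads $(\nu\,|\,w)_{\psi_0}=(2\pi)^{-m}\int(\alpha_\nu(\cdot,\eta)\,|\,\alpha_w(\cdot,\eta))_{\Phi_\eta}\,d\mu(\eta)$.
\item Apply it to $\alpha_w=c(\eta)p(z)$ with $c\in\mathcal C^\infty_c(I^{\Phi_0}_{M_1,\dots,M_m})$. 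This gives $(\alpha_\nu(\cdot,\eta)\,|\,p)_{\Phi_\eta}=0$ for a.e.\ $\eta$ and all $p$ in a countable dense set of polynomials.
\item Polynomials are dense in the holomorphic part of $L^2(\mathbb C^n,e^{-\Phi_\eta}d\mu)$. Together with Corollary~\ref{vanish}, this gives $\alpha_\nu=0$ a.e., hence $\nu=0$.
\end{itemize}

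Finally, your passage from $\mathcal A$ to all of $\mathcal H^0_b(\mathbb H^{n,m})$, using only boundedness of $\hat S$, is cleaner than the paper's three-term approximation argument.
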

\begin{proof}
Fix $u\in \mathcal{C}^\infty_c(\mathbb{H}^{n,m}).$ Let $\ddot{u}= \Pi^{(0),\psi_0}_b u.$ Then $\ddot{u}_{(k)} =\chi\big(\frac{z}{k^{\epsilon}},\frac{x''}{ k^{2\epsilon}}\big)\big(\Pi^{(0),\psi_0}_b u\big)$ and $\ddot{u}_{[k]} = k^{\frac{n+m}{2}}\ddot{u}_{(k)}(\sqrt{k}z,kx'')e^{kR}s^k.$ On the coordinate neighborhood $D,$ we write 
\begin{equation*}
dV_X(z,x'') = V(z,x'')d\mu_{\mathbb{H}^{n,m}}(x).
\end{equation*}
Then we have $V\big(\frac{z}{\sqrt{k}},\frac{x''}{k}\big)\to 1$ uniformly on $D(k^\epsilon,k^{2\epsilon})$ as $k\to\infty.$ Recall that $d\mu_{\mathbb{H}^{n,m}} = 2^ndx_1\wedge \cdots\wedge dx_{2n+m}.$
First we estimate $\big(\square_{b,k} \ddot{u}_{[k]}\mid \ddot{u}_{[k]}\big)_k.$
\begin{align*}
&\big(\square_{b,k} \ddot{u}_{[k]}\mid \ddot{u}_{[k]}\big)_k = \big\|\overline{\partial}_b \ddot{u}_{[k]}\big\|_k^2\\
&=\int_{D(k^\epsilon,k^{2\epsilon})} k\big|(\overline{\partial}_{b,(k),s}\ddot{u}_{(k)})(z,x'')\big|^2e^{-k\phi_0(\frac{z}{\sqrt{k}},\frac{x''}{k})}V\Big(\frac{z}{\sqrt{k}},\frac{x''}{k}\Big)d\mu_{\mathbb{H}^{n,m}}(x)\\
&= \int_{D(k^\epsilon,k^{2\epsilon})} k \Bigg|\Bigg(\sum_{j=1}^{n} e_j\Big(\frac{z}{\sqrt{k}},\frac{x''}{k}\Big)\wedge\big(\overline{U}_{j(k)} + \sqrt{k}F_k^*(\overline{U}_j R)\big)\Bigg)\Big(\chi\Big(\frac{z}{k^{\epsilon}},\frac{x''}{k^{2\epsilon}}\Big)\Pi^{(0),\psi_0}_b u\Big)\Bigg|^2\\
& \quad\cdot e^{-k\phi_0(\frac{z}{\sqrt{k}},\frac{x''}{k})}V\Big(\frac{z}{\sqrt{k}},\frac{x''}{k}\Big)d\mu_{\mathbb{H}^{n,m}}(x).
\end{align*}
It is enough to estimate
\begin{align*}
&\int_{D(k^\epsilon,k^{2\epsilon})} k \Big|\big(\overline{U}_{j(k)} + \sqrt{k}F_k^*(\overline{U}_j R)\big)\Big(\chi\Big(\frac{z}{k^{\epsilon}},\frac{x''}{ k^{2\epsilon}}\Big)\Pi^{(0),\psi_0}_b u\Big)\Big|^2e^{-k\phi_0(\frac{z}{\sqrt{k}},\frac{x''}{k})}V\Big(\frac{z}{\sqrt{k}},\frac{x''}{k}\Big)d\mu_{\mathbb{H}^{n,m}}(x)\\
&\leq 2\int_{D(k^\epsilon,k^{2\epsilon})} k \Big|\overline{U}_{j(k)}\Big(\chi\Big(\frac{z}{k^{\epsilon}},\frac{x''}{k^{2\epsilon}}\Big)\Pi^{(0),\psi_0}_b u\Big)\Big|^2e^{-k\phi_0(\frac{z}{\sqrt{k}},\frac{x''}{k})}V\Big(\frac{z}{\sqrt{k}},\frac{x''}{k}\Big)d\mu_{\mathbb{H}^{n,m}}(x)\\
&\quad + 2\int_{D(k^\epsilon,k^{2\epsilon})} k \Big|\sqrt{k}F_k^*(\overline{U}_j R)\Big(\chi\Big(\frac{z}{k^{\epsilon}},\frac{x''}{ k^{2\epsilon}}\Big)\Pi^{(0),\psi_0}_b u\Big)\Big|^2e^{-k\phi_0(\frac{z}{\sqrt{k}},\frac{x''}{k})}V\Big(\frac{z}{\sqrt{k}},\frac{x''}{k}\Big)d\mu_{\mathbb{H}^{n,m}}(x).
\end{align*}
For the first term in the right hand side, 
\begin{align*}
&\int_{D(k^\epsilon,k^{2\epsilon})} k \Big|\overline{U}_{j(k)}\Big(\chi\Big(\frac{z}{k^{\epsilon}},\frac{x''}{k^{2\epsilon}}\Big)\Pi^{(0),\psi_0}_b u\Big)\Big|^2e^{-k\phi_0(\frac{z}{\sqrt{k}},\frac{x''}{k})}V\Big(\frac{z}{\sqrt{k}},\frac{x''}{k}\Big)d\mu_{\mathbb{H}^{n,m}}(x) \\
&= \int_{D(k^\epsilon,k^{2\epsilon})} k \Big|\Big(\overline{U}_{j(k)}\chi\Big(\frac{z}{k^{\epsilon}},\frac{x''}{k^{2\epsilon}}\Big)\Big)\Pi^{(0),\psi_0}_b u + \chi\Big(\frac{z}{k^{\epsilon}},\frac{x''}{k^{2\epsilon}}\Big)\overline{U}_{j(k)}\Big(\Pi^{(0),\psi_0}_b u\Big)\Big|^2\\
&\quad\quad\cdot e^{-k\phi_0(\frac{z}{\sqrt{k}},\frac{x''}{k})}V\Big(\frac{z}{\sqrt{k}},\frac{x''}{k}\Big)d\mu_{\mathbb{H}^{n,m}}(x)  \\
&\leq 2\underbrace{ \int_{D(k^\epsilon,k^{2\epsilon})} k \Big|\Big(\overline{U}_{j(k)}\chi\Big(\frac{z}{k^{\epsilon}},\frac{x''}{k^{2\epsilon}}\Big)\Big)\Pi^{(0),\psi_0}_b u\Big|^2e^{-k\phi_0(\frac{z}{\sqrt{k}},\frac{x''}{k})}V\Big(\frac{z}{\sqrt{k}},\frac{x''}{k}\Big)d\mu_{\mathbb{H}^{n,m}}(x)}_{[1]}\\
&\quad + 2\underbrace{\int_{D(k^\epsilon,k^{2\epsilon})} k \Big|\chi\Big(\frac{z}{k^{\epsilon}},\frac{x''}{k^{2\epsilon}}\Big)\overline{U}_{j(k)}\Big(\Pi^{(0),\psi_0}_b u\Big)\Big|^2e^{-k\phi_0(\frac{z}{\sqrt{k}},\frac{x''}{k})}V\Big(\frac{z}{\sqrt{k}},\frac{x''}{k}\Big)d\mu_{\mathbb{H}^{n,m}}(x)}_{[2]}
\end{align*}

Recall that
\begin{align*}
\overline{U}_{j(k)} &= \frac{\partial}{\partial\overline{z}_j} + i\sum_{\ell=1}^m\Big(\sum_{s=1}^{n}\lambda^{\ell}_{sj}z_s\Big)\frac{\partial}{\partial x_{2n+\ell}} 
+ \sum_{t=1}^{2n}O\bigg(\Big|\Big(\frac{z}{\sqrt{k}},\frac{x''}{k}\Big)\Big|^2\bigg)\frac{\partial}{\partial x_t}\\
&\quad+ \sum_{\ell=1}^mO\bigg(\Big|\Big(\frac{z}{k^{\frac{1}{4}}},\frac{x''}{k^\frac{3}{4}}\Big)\Big|^2\bigg)\frac{\partial}{\partial x_{2n+\ell}}.
\end{align*}
For $[1],$ since
\begin{align*}
\overline{U}_{j(k)}\chi\Big(\frac{z}{k^{\epsilon}},\frac{x''}{k^{2\epsilon}}\Big)
&=\frac{1}{k^\epsilon}\frac{\partial\chi}{\partial\overline{z}_j}\Big(\frac{z}{k^{\epsilon}},\frac{x''}{k^{2\epsilon}}\Big) + i\sum_{\ell=1}^m\Big(\sum_{s=1}^{n}\lambda^{\ell}_{sj}\frac{z_s}{k^{2\epsilon}}\Big)\frac{\partial\chi}{\partial x_{2n+\ell}}\Big(\frac{z}{k^{\epsilon}},\frac{x''}{k^{2\epsilon}}\Big)\\
&\quad+ \sum_{t=1}^{2n}O\bigg(\Big|\Big(\frac{z}{k^{\frac{1}{2}+\frac{\epsilon}{2}}},\frac{x''}{k^{1+\frac{\epsilon}{2}}}\Big)\Big|^2\bigg)\frac{\partial\chi}{\partial x_t}\Big(\frac{z}{k^{\epsilon}},\frac{x''}{k^{2\epsilon}}\Big)\\
&\quad+\sum_{\ell=1}^mO\bigg(\Big|\Big(\frac{z}{k^{\frac{1}{4}+\epsilon}},\frac{x''}{k^{\frac{3}{4}+\epsilon}}\Big)\Big|^2\bigg)\frac{\partial\chi}{\partial x_{2n+\ell}}\Big(\frac{z}{k^{\epsilon}},\frac{x''}{k^{2\epsilon}}\Big),
\end{align*}
we have
\[
[1] \leq C_2 k^{1-2\epsilon} \int_{\substack{k^\epsilon<|z_j|<2k^\epsilon\\ k^{2\epsilon}<|x_\ell|<2k^{2\epsilon}}}\big|\Pi^{(0),\psi_0}_b u\big|^2 e^{-\psi_0}d\mu_{\mathbb{H}^{n,m}}(x) = k^{1-2\epsilon}a_k(u),
\]
where $C_2$ is a positive constant depending only on $\overline{U}_j$ and $\chi$, and 
\[
a_k(u) = C_2 \int_{\substack{k^\epsilon<|z_j|<2k^\epsilon \\ k^{2\epsilon}<|x_\ell|<2k^{2\epsilon}}}\big|\Pi^{(0),\psi_0}_b u\big|^2 e^{-\psi_0}d\mu_{\mathbb{H}^{n,m}}(x).
\]
Note that $a_k(u)\rightarrow 0$ as $k\rightarrow\infty.$ 

For $[2],$ since 
\[
\bigg(\frac{\partial}{\partial\overline{z}_j} + i\sum_{\ell=1}^m\Big(\sum_{s=1}^{n}\lambda^{\ell}_{sj}z_s\Big)\frac{\partial}{\partial x_{2n+\ell}}\bigg) \Big(\Pi^{(0),\psi_0}_b u\Big) = 0,
\]
we have
\begin{align*}
&[2]\\
&=\int_{D(k^\epsilon,k^{2\epsilon})} k \bigg|\chi\Big(\frac{z}{k^{\epsilon}},\frac{x''}{k^{2\epsilon}}\Big)\bigg[\sum_{t=1}^{2n}O\bigg(\Big|\Big(\frac{z}{\sqrt{k}},\frac{x''}{k}\Big)\Big|^2\bigg)\frac{\partial}{\partial x_t}
+ \sum_{\ell=1}^mO\bigg(\Big|\Big(\frac{z}{k^{\frac{1}{4}}},\frac{x''}{k^\frac{3}{4}}\Big)\Big|^2\bigg]\frac{\partial}{\partial x_{2n+\ell}}\bigg)\\
&\quad\quad\times\Big(\Pi^{(0),\psi_0}_b u\Big)\bigg|^2
\, e^{-k\phi_0(\frac{z}{\sqrt{k}},\frac{x''}{k})}V\Big(\frac{z}{\sqrt{k}},\frac{x''}{k}\Big)d\mu_{\mathbb{H}^{n,m}}(x).
\end{align*}

Recall that the exponent part of the integrand of the explicit representation of $\Pi^{(0),\psi_0}_b$ (see (\ref{weightfml})) is
\begin{align*}
E(x,y,\eta) &= \exp\bigg[i\langle\eta,x''\rangle-i\langle\eta,y''\rangle+\frac{1}{2}\sum_{t=1}^m\beta_t x_{2n+t}-\frac{1}{2}\sum_{t=1}^m\beta_t y_{2n+t}\\
&\quad+\sum_{j,s=1}^{n}(M_\eta)_{js}z_j\overline{w}_s-\sum_{j,s=1}^{n}(M_\eta)_{js}w_j\overline{w}_s\\
&\quad+\sum_{\ell=1}^m\big(\frac{i\beta_\ell}{2}-\eta_\ell\big)\sum_{j,s=1}^{n}\lambda^{\ell}_{js}w_j\overline{w}_s-\sum_{\ell=1}^m\big(\frac{i\beta_\ell}{2}
  -\eta_\ell\big)\sum_{j,s=1}^{n}\lambda^{\ell}_{js}z_j\overline{z}_s\bigg].
\end{align*}
Note that, for $p = 1,\dots,n$ and $q = 1,\dots,m,$
\begin{align*}
\frac{\partial E}{\partial x_{2p-1}} &= E\cdot \Bigg[\sum_{s=1}^{n}\mu_{ps}\overline{w}_s - \sum_{\ell=1}^m\eta_\ell \Big(\sum_{s=1}^{n}2\lambda^\ell_{ps}\Big)\overline{w}_s + \sum_{\ell=1}^m \frac{1}{i} \Big(\sum_{s=1}^n\lambda^\ell_{ps}\overline{z}_s\Big)\Big(\frac{\beta_\ell}{2}+i\eta_\ell\Big),\\
&\quad\quad\quad + \sum_{\ell=1}^m \frac{1}{i} \Big(\sum_{j=1}^n\lambda^\ell_{jp}z_j\Big)\Big(\frac{\beta_\ell}{2}+i\eta_\ell\Big)\Bigg]\\
\frac{\partial E}{\partial x_{2p}} &= E\cdot \Bigg[i\sum_{s=1}^{n}\mu_{ps}\overline{w}_s - \sum_{\ell=1}^mi\eta_\ell \Big(\sum_{s=1}^{n}2\lambda^\ell_{ps}\Big)\overline{w}_s + \sum_{\ell=1}^m \Big(\sum_{s=1}^n\lambda^\ell_{ps}\overline{z}_s\Big)\Big(\frac{\beta_\ell}{2}+i\eta_\ell\Big),\\
&\quad\quad\quad - \sum_{\ell=1}^m \Big(\sum_{j=1}^n\lambda^\ell_{jp}z_j\Big)\Big(\frac{\beta_\ell}{2}+i\eta_\ell\Big)\Bigg]\\
\frac{\partial E}{\partial x_{2n+q}} &= E\cdot \Big(\frac{1}{2}\beta_q + i\eta_q\Big).
\end{align*}
Then, by (\ref{weightfml}) and integration by parts, we have
\begin{align*}
  \frac{\partial \big(\Pi^{(0),\psi_0}_b u\big)}{\partial x_{2p-1}} &= \sum_{s=1}^{n}\Big(\mu_{ps}-\sum_{\ell=1}^m i\beta_\ell \lambda^\ell_{ps}\Big)\Big(\Pi^{(0),\psi_0}_b (\overline{z}_s u)\Big) 
   - \sum_{\ell=1}^m\frac{1}{i} \Big(\sum_{s=1}^{n}2\lambda^\ell_{ps}\Big)\Big(\Pi^{(0),\psi_0}_b \frac{\partial (\overline{z}_s u)}{\partial x_{2n+\ell}}\Big)\\
   &\quad + \sum_{\ell=1}^m\frac{1}{i}\Big(\sum_{s=1}^{n}\lambda^\ell_{ps}\overline{z}_s\Big) \Big(\Pi^{(0),\psi_0}_b \frac{\partial u}{\partial x_{2n+\ell}}\Big)
  + \sum_{\ell=1}^m\frac{1}{i}\Big(\sum_{j=1}^{n}\lambda^\ell_{jp}z_j \Big)\Big(\Pi^{(0),\psi_0}_b \frac{\partial u}{\partial x_{2n+\ell}}\Big),\\
   \frac{\partial \big(\Pi^{(0),\psi_0}_b u\big)}{\partial x_{2p}} &= \sum_{s=1}^{n}\Big(i\mu_{ps} + \sum_{\ell=1}^m\beta_\ell \lambda^\ell_{ps}\Big)\Big(\Pi^{(0),\psi_0}_b (\overline{z}_s u)\Big) - \sum_{\ell=1}^m \Big(\sum_{s=1}^{n}2\lambda^\ell_{ps}\Big)\Big(\Pi^{(0),\psi_0}_b \frac{\partial (\overline{z}_s u)}{\partial x_{2n+\ell}}\Big)\\
   &\quad + \sum_{\ell=1}^m\Big(\sum_{s=1}^{n}\lambda^\ell_{ps}\overline{z}_s\Big) \Big(\Pi^{(0),\psi_0}_b \frac{\partial u}{\partial x_{2n+\ell}}\Big) 
   - \sum_{\ell=1}^m\Big(\sum_{j=1}^{n}\lambda^\ell_{jp}z_j \Big)\Big(\Pi^{(0),\psi_0}_b \frac{\partial u}{\partial x_{2n+\ell}}\Big), \\
   \frac{\partial \big(\Pi^{(0),\psi_0}_b u\big)}{\partial x_{2n+q}} &= \Pi^{(0),\psi_0}_b \frac{\partial u}{\partial x_{2n+q}}.
\end{align*}
Hence, we have, for $x\in D(k^{\epsilon},k^{2\epsilon}),$
\begin{align*}
\Bigg|\sum_{t=1}^{2n}O\bigg(\Big|\Big(\frac{z}{\sqrt{k}},\frac{x''}{k}\Big)\Big|^2\bigg)\frac{\partial\Pi^{(0),\psi_0}_b u}{\partial x_t}\Bigg|
&\leq C_3 \frac{k^{3\epsilon}}{k}\, \Bigg(\sum_{s=1}^{2n}\big|\Pi^{(0),\psi_0}_b (x_su)\big| + \sum_{\ell=1}^m\Big|\Pi^{(0),\psi_0}_b \frac{\partial u}{\partial x_{2n+\ell}}\Big|\Bigg),\\
\Bigg|\sum_{\ell=1}^mO\bigg(\Big|\Big(\frac{z}{k^{\frac{1}{4}}},\frac{x''}{k^\frac{3}{4}}\Big)\Big|^2\bigg)\frac{\partial\Pi^{(0),\psi_0}_b u}{\partial x_{2n+\ell}}\Bigg| 
&\leq C_3 \frac{k^{2\epsilon}}{\sqrt{k}}\, \Bigg(\sum_{\ell=1}^m\Big|\Pi^{(0),\psi_0}_b \frac{\partial u}{\partial x_{2n+\ell}}\Big|\Bigg).
\end{align*}
where $C_3$ is positive constant independent of $u.$
Thus,
\begin{align*}
[2] &\leq \int_{D(k^\epsilon,k^{2\epsilon})}C k\, \bigg|\chi\Big(\frac{z}{k^\epsilon},\frac{x''}{k^{2\epsilon}}\Big)\frac{k^{2\epsilon}}{\sqrt{k}}\bigg(\sum_{\ell=1}^m\Big|\Pi^{(0),\psi_0}_b \frac{\partial u}{\partial x_{2n+\ell}}\Big|\bigg)\Bigg|^2
e^{-k\phi_0(\frac{z}{\sqrt{k}},\frac{x'}{k})}V\Big(\frac{z}{\sqrt{k}},\frac{x''}{k}\Big)d\mu_{\mathbb{H}^{n,m}}(x)\\
&\leq C_4\int_{D(k^\epsilon,k^{2\epsilon})} k^{4\epsilon} \bigg|\bigg(\big|\Pi^{(0),\psi_0}_b u\big| + \sum_{\ell=1}^m\Big|\Pi^{(0),\psi_0}_b \frac{\partial u}{\partial x_{2n+\ell}}\Big|\bigg)\bigg|^2 e^{-\psi_0(x)} d\mu_{\mathbb{H}^{n,m}}(x)\\
&= C_5(u)\cdot k^{4\epsilon}
\end{align*}
where $C_4$ is a positive constant independent of $u$ and $C_5(u)$ is a positive constant depending on $\overline{U}_j,$ $\psi_0$ and $u.$

We next estimate
\[
\int_{D(k^\epsilon,k^{2\epsilon})} k \Big|\sqrt{k}F_k^*(\overline{U}_j R)\Big(\chi\Big(\frac{z}{k^{\epsilon}},\frac{x''}{k^{2\epsilon}}\Big)\Pi^{(0),\psi_0}_b u\Big)\Big|^2e^{-k\phi_0(\frac{z}{\sqrt{k}},\frac{x''}{k})}V\Big(\frac{z}{\sqrt{k}},\frac{x''}{k}\Big)d\mu_{\mathbb{H}^{n,m}}(x).
\]
By the definition of $R(x)$, we have
\[
\sqrt{k}F_k^*(\overline{U}_j R) = O\bigg(\Big|\Big(\frac{z}{k^{\frac{1}{4}}},\frac{x''}{k^\frac{3}{4}}\Big)\Big|^2\bigg).
\]
Hence
\begin{align*}
&\int_{D(k^\epsilon,k^{2\epsilon})} k \Big|\sqrt{k}F_k^*(\overline{U}_j R)\Big(\chi\Big(\frac{z}{k^{\epsilon}},\frac{x''}{k^{2\epsilon}}\Big)\Pi^{(0),\psi_0}_b u\Big)\Big|^2e^{-k\phi_0(\frac{z}{\sqrt{k}},\frac{x''}{k})}V\Big(\frac{z}{\sqrt{k}},\frac{x''}{k}\Big)d\mu_{\mathbb{H}^{n,m}}(x) \\
& \leq \int_{D(k^\epsilon,k^{2\epsilon})} C_6\, k\, \Big(\frac{k^{2\epsilon}}{\sqrt{k}}\Big)^2  \Big|\Big(\chi\Big(\frac{z}{k^{\epsilon}},\frac{x''}{k^{2\epsilon}}\Big)\Pi^{(0),\psi_0}_b u\Big)\Big|^2 e^{-\psi_0(x)}d\mu_{\mathbb{H}^{n,m}}(x)\\
&\leq C_6k^{4\epsilon}\|\ddot{u}\|^2_{\psi_0}
\end{align*}
where $C_6$ is a positive constant independent of $u.$ Thus, we obtain
\begin{align}\label{derker}
&\int_{D(k^\epsilon,k^{2\epsilon})} k\big|(\overline{\partial}_{b,(k),s}\ddot{u}_{(k)})(z,x'')\big|^2e^{-k\phi_0(\frac{z}{\sqrt{k}},\frac{x''}{k})}V\Big(\frac{z}{\sqrt{k}},\frac{x''}{k}\Big)d\mu_{\mathbb{H}^{n,m}}(x)\\
&\leq \hat{a}_k(u) = \max\big\{k^{1-2\epsilon}a_k(u),C_5(u)k^{4\epsilon},C_6k^{4\epsilon}\|\ddot{u}\|^2_{\psi_0}\big\}.\nonumber
\end{align}
Recall that $\epsilon<\frac{1}{8}$ and hence $k^{1-2\epsilon} > k^{4\epsilon}.$

Suppose that for all $u\in \mathcal{C}^\infty_c(D, L^k),$ 
\[
\big\|\big(I - \Pi_k\big)u\big\|_k^2 \leq (k^{1-2\epsilon})^{-1} \big(\square_{b,k}u\mid u\big)_k.
\]
We will show that $\hat{S}(\ddot{u}) = \Pi^{(0),\psi_0}_b u$ for all $u\in \mathcal{C}^\infty_c(D, L^k).$ Take arbitrary $\varphi\in \mathcal{C}^\infty_c(\mathbb{H}^{n,m}).$ For $k$ large enough,
\begin{align*}
  &\big|\big(\ddot{u}_{(k)} - \Pi_{(k),s}(\ddot{u})\mid \varphi\big)_{(k)}\big|\\
  &=  \big|\big(\big(I - \Pi_k\big)\ddot{u}_{[k]}\mid \varphi_{[k]}\big)_k\big|
  \leq \big\|\big(I - \Pi_k\big)\ddot{u}_{[k]}\big\|_k\cdot\|\varphi\|_{(k)}\\
  &\leq \Big[(k^{1-2\epsilon})^{-1}\big(\square_{b,k}\ddot{u}_{[k]}\mid \ddot{u}_{[k]}\big)_k\Big]^{\frac{1}{2}}\cdot\|\varphi\|_{(k)}\\
  &= (k^{1-2\epsilon})^{-\frac{1}{2}} \big\|\overline{\partial}_{b}\ddot{u}_{[k]}\big\|_k\cdot\|\varphi\|_{(k)}\\
  &= \bigg[(k^{1-2\epsilon})^{-1}\int_{D(k^\epsilon,k^{2\epsilon})} k\big|(\overline{\partial}_{b,(k),s}\ddot{u}_{(k)})(z,x'')\big|^2e^{-k\phi_0(\frac{z}{\sqrt{k}},\frac{x''}{k})}V\Big(\frac{z}{\sqrt{k}},\frac{x''}{k}\Big)d\mu_{\mathbb{H}^{n,m}}(x)\bigg]^{\frac{1}{2}}\cdot\|\varphi\|_{(k)}\\
  &\leq \big[(k^{1-2\epsilon})^{-1}\hat{a}_k(u)\big]^{\frac{1}{2}}\cdot\|\varphi\|_{(k)}
\end{align*}
By the estimate (\ref{derker}), we have $\big|\big(\ddot{u}_{(k)} - \Pi_{(k),s}(\ddot{u})\mid \varphi\big)_{(k)}\big|\rightarrow0$ as $k\rightarrow\infty.$ Note that $(\ddot{u}\mid\varphi)_{\psi_0} = \lim_{k\rightarrow\infty}\big(\ddot{u}_{(k)}\mid\varphi\big)_{(k)}$ and $\big(\hat{S}(\ddot{u})\mid\varphi\big)_{\psi_0} = \lim_{k\rightarrow\infty}\big(\Pi_{(k),s}(\ddot{u})\mid\varphi\big)_{(k)}.$ Hence $\big(\ddot{u} - \hat{S}(\ddot{u})\mid\varphi\big)_{\psi_0} = 0$ for all $\varphi\in \mathcal{C}^\infty_c(\mathbb{H}^{n,m}),$ which implies $\hat{S}(\ddot{u}) = \ddot{u}.$

We next show that $\hat{S}\nu = \nu$ for all $\nu\in \mathcal{H}^0_b(\mathbb{H}^{n,m}).$ Fix $\nu\in\mathcal{H}^0_b(\mathbb{H}^{n,m}).$ Then there exists some $f\in L^2(\mathbb{H}^{n,m}, \psi_0)$ such that $\nu = \ddot{f} = \Pi^{(0),\psi_0}_b f.$ Take a sequence $\{u_j\}$ in $\mathcal{C}^\infty_c(\mathbb{H}^{n,m})$ such that $\{u_j\}$ converges to $f$ in $L^2(\mathbb{H}^{n,m}, \psi_0).$ We also have $\ddot{u}_j\rightarrow\ddot{f}$ in $L^2(\mathbb{H}^{n,m}, \psi_0).$ 
Observe that 
\begin{align}\label{ob1}
&\big\|\Pi_{(k),s}\ddot{u}_j - \Pi_{(k),s}\ddot{f}\big\|_{(k)}\\
&\leq \big\|\ddot{u}_{j(k)} - \ddot{f}_{(k)}\big\|_{(k)}\lesssim \big\|\ddot{u}_{j(k)} - \ddot{f}_{(k)}\big\|_{\psi_0}\leq \big\|\Pi^{(0),\psi_0}_b u_j - \Pi^{(0),\psi_0}_b f\big\|_{\psi_0} \rightarrow 0.\nonumber
\end{align}
Hence
\begin{align*}
&\big\| \Pi_{(k),s}\ddot{f} - \ddot{f}_{(k)} \big\|_{(k)}\\
&= \big\| \Pi_{(k),s}\ddot{f} - \Pi_{(k),s}\ddot{u}_j + \Pi_{(k),s}\ddot{u}_j - \ddot{u}_{j(k)} + \ddot{u}_{j(k)} - \ddot{f}_{(k)}\big\|_{(k)}\\
&\leq \big\| \Pi_{(k),s}\ddot{f} - \Pi_{(k),s}\ddot{u}_j\big\|_{(k)} + \big\| \Pi_{(k),s}\ddot{u}_j - \ddot{u}_{j(k)}\big\|_{(k)} + \big\|  \ddot{u}_{j(k)} - \ddot{f}_{(k)}\big\|_{(k)}.
\end{align*}
By (\ref{ob1}), the first term of the right hand side converges to 0 as $k \rightarrow \infty.$ The second term of the right hand side converges to 0 by our first part argument. The third term clearly converges to 0 as $k \rightarrow \infty.$ Thus we have shown that
\[\hat{S}\nu = \nu\] for all $\nu\in \mathcal{H}^0_b(\mathbb{H}^{n,m}).$ Together with Proposition \ref{bounded} and Proposition \ref{ker},
 we obtain $\hat{S} = \Pi^{(0),\psi_0}_b.$
\end{proof}

\section{Asymptotics of spectral kernel for functions}
In this section, we prove Theorem \ref{t-gue260829yyd}.
We now consider the spectral projection. By the spectral theorem, we have a unitary operator
\[
U : L^2(X, L^k) \rightarrow L^2(S\times\mathbb{N}, d\mu)
\]
where $S = \operatorname{Spec}\big(\square_{b,k}\big).$ For a Borel set $B\subset \mathbb{R},$ we denote by $\mathbf{1}_B\big(\square_{b,k}\big)$ the functional calculus of $\square_{b,k}$ with respect to the indicator function $\mathbf{1}_B.$ Let $\{\mu_k\}$ be a sequence of non-negative real numbers which converges to 0. Let
\[
\Pi_{\leq k\mu_k}=\Pi_{k,\leq k\mu_k} = \mathbf{1}_{[0,k\mu_k]}\big(\square_{b,k}\big) : L^2(X, L^k) \rightarrow E^{(0)}_{\leq k\mu_k}
\]
be the spectral projection where $E^{(0)}_{\leq k\mu_k} = \operatorname{Ran}\big(\mathrm{1}_{[0,k\mu_k]}\big(\square_{b,k}\big)\big).$

For $u\in L^2(\mathbb{H}^{n,m},\psi_0),$ let $u_{(k)} = \chi\big(\frac{z}{k^{\epsilon}},\frac{x''}{k^{2\epsilon}}\big)u$ and $u_{[k]} = k^{\frac{n+m}{2}}u_{(k)}(\sqrt{k}z,kx'')e^{kR}s^k.$ Define
\begin{equation}\label{scaledPimuu}
\Pi_{(k),\leq k\mu_k,s}u := k^{-\frac{n+m}{2}}F_k^*\Big(\widehat{\Pi_{\leq k\mu_k} u_{[k]}}\cdot e^{-kR}\Big).
\end{equation}
\begin{prop}\label{spdefdis}
Fix $u\in L^2(\mathbb{H}^{n,m}, \psi_0).$ For any $v\in \mathcal{C}^\infty_c(\mathbb{H}^{n,m}),$ we have
\[
\big|\big(\Pi_{(k),\leq k\mu_k,s} u\mid v\big)_{(k)}\big| \leq \|u_{[k]}\|_k \|v_{[k]}\|_k \rightarrow \|u\|_{\psi_0}\|v\|_{\psi_0}.
\]
This implies that $\big(\Pi_{(k),\leq k\mu_k,s} u\mid v\big)_{(k)}$ is  bounded and hence  has a convergent subsequence.
\end{prop}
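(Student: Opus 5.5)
The plan is to copy the proof of Proposition~\ref{defdis} (the case $\mu=0$), using only two facts: $\Pi_{\leq k\mu_k}$ is an orthogonal projection on $L^2(X,L^k)$, and the scaling $u\mapsto u_{[k]}$ is isometric up to the weight errors that vanish as $k\to\infty$.

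\textbf{Step 1: rewrite the pairing upstairs.} By the definition \eqref{scaledPimuu}, undoing the change of variables $F_k$ and the factor $k^{-\frac{n+m}{2}}$ gives
\[
\big(\Pi_{(k),\leq k\mu_k,s}u\mid v\big)_{(k)}=\big(\Pi_{\leq k\mu_k}u_{[k]}\mid v_{[k]}\big)_k .
\]
The factor $e^{\pm kR}$ cancels because $|e^{kR}|^2e^{-k\phi}=e^{-k\phi_0}$. The cutoff $\chi(\sqrt{k}z/k^\epsilon,kx''/k^{2\epsilon})$ on $v$ is harmless: for $k$ large it equals $1$ on ${\rm supp}\,v$, so $v_{(k)}=v$. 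This is the same computation as the earlier proposition $|(u_{[k]}\mid v_{[k]})_k|=|(u_{(k)}\mid v_{(k)})_{(k)}|$.

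\textbf{Step 2: Cauchy--Schwarz.} Since $\Pi_{\leq k\mu_k}=\mathbf 1_{[0,k\mu_k]}(\Box_{b,k})$ is an orthogonal projection, $\|\Pi_{\leq k\mu_k}\|\leq1$. Hence
\[
\big|\big(\Pi_{\leq k\mu_k}u_{[k]}\mid v_{[k]}\big)_k\big|\leq\|u_{[k]}\|_k\|v_{[k]}\|_k .
\]

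\textbf{Step 3: the limit of the norms.} We have
\[
\|u_{[k]}\|_k^2=\|u_{(k)}\|_{(k)}^2=\int\chi^2\Big(\frac{z}{k^\epsilon},\frac{x''}{k^{2\epsilon}}\Big)|u|^2e^{-kF_k^*\phi_0}F_k^*V\,d\mu_{\mathbb H^{n,m}} .
\]
On $D(k^\epsilon,k^{2\epsilon})$ we have $kF_k^*\phi_0\to\psi_0$ and $F_k^*V\to1$ uniformly, with $|kF_k^*\phi_0-\psi_0|=o(1)$ there; this uses the size of the remainder terms $O(|z||x''|)$, $O(|x''|^2)$, $O(|(z,x'')|^3)$ together with $\epsilon<1/8$. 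Therefore
\[
e^{-kF_k^*\phi_0}F_k^*V\leq 2e^{-\psi_0}
\]
for large $k$. Since $u\in L^2(\mathbb H^{n,m},\psi_0)$, dominated convergence gives $\|u_{[k]}\|_k\to\|u\|_{\psi_0}$, and likewise for $v$.

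\textbf{Conclusion.} The sequence $(\Pi_{(k),\leq k\mu_k,s}u\mid v)_{(k)}$ is bounded, so Bolzano--Weierstrass gives a convergent subsequence. The only delicate point is the uniform control of the weight in Step~3. It is the same estimate used before for the Szeg\H{o} projection, so nothing new is needed there.
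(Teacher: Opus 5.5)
Your proposal is correct and is the argument the paper has in mind; the paper gives no details beyond ``simple calculations'' for the analogous Proposition~\ref{defdis}. The steps match: the change of variables identifying $(\Pi_{(k),\leq k\mu_k,s}u\mid v)_{(k)}$ with $(\Pi_{\leq k\mu_k}u_{[k]}\mid v_{[k]})_k$, then Cauchy--Schwarz with $\|\Pi_{\leq k\mu_k}\|\leq 1$, then dominated convergence using $|kF_k^*\phi_0-\psi_0|\lesssim k^{3\epsilon-1/2}$ on $D(k^\epsilon,k^{2\epsilon})$ in multiplicative form, which is needed because $\psi_0$ is unbounded below in $x''$.
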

By the diagonal argument and the density argument, we can take a sequence $\Pi_{(k_j),\leq k_j\mu_{k_j},s}$ such that for all $u\in L^2(\mathbb{H}^{n,m},\psi_0),$ $\tilde{S}(u) := \lim_{j\rightarrow\infty}\big(\Pi_{(k_j),\leq k_j\mu_{k_j},s}u\mid\bullet \big)_{(k_j)}$ defines a distribution on $\mathcal{C}^\infty_c(\mathbb{H}^{n,m}).$ Actually, by Proposition \ref{spdefdis}, $\tilde{S}$ defines a bounded anti-linear functional on $L^2(\mathbb{H}^{n,m}, \psi_0).$ By Riesz representation theorem, we have $\tilde{S}$ is a linear map
\[
\tilde{S} : L^2(\mathbb{H}^{n,m}, \psi_0) \rightarrow L^2(\mathbb{H}^{n,m}, \psi_0).
\]  
Proposition \ref{spbounded} further shows that $\tilde{S}$ is actually a bounded linear operator. In Theorem \ref{spkerasy}, we will show that, under a spectral gap condition, $\Pi_{(k_j),\leq k_j\mu_{k_j},s}u$ converges to $\Pi^{(0),\psi_0}_bu$ in the distribution sense, that is, $\tilde{S} = \Pi^{(0),\psi_0}_b$. Since $\Pi_{(k_j),\leq k_j\mu_{k_j},s}$ is an arbitrary subsequence, we have $\Pi_{(k),\leq k\mu_k,s}$ itself converges to the Szeg\H{o} kernel for the model case. So to simplify the notation, we may assume that $\tilde{S}(u) = \lim_{k\rightarrow\infty}\big(\Pi_{(k),\leq k\mu_k,s}u\mid\bullet \big)_{(k)}.$

\begin{prop}\label{spbounded}
$\tilde{S} : L^2(\mathbb{H}^{n,m}, \psi_0) \rightarrow L^2(\mathbb{H}^{n,m}, \psi_0)$ is a bounded linear operator with operator norm $\|\tilde{S}\| \leq 1.$
\end{prop}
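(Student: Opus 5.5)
The plan mirrors the proof of Proposition \ref{bounded}, with $\Pi_k$ replaced by the spectral projection $\Pi_{\leq k\mu_k}$. The only input needed from $\Pi_{\leq k\mu_k}=\mathbf{1}_{[0,k\mu_k]}(\square_{b,k})$ is that it is an orthogonal projection on $L^2(X,L^k)$. Hence $\|\Pi_{\leq k\mu_k}u_{[k]}\|_k\leq\|u_{[k]}\|_k$.

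The first step is to translate this into the scaled picture, i.e. to show
\[
\|\Pi_{(k),\leq k\mu_k,s}f\|_{(k)}\leq\|f_{(k)}\|_{(k)}.
\]
This uses the same change-of-variables computation as the two unnumbered propositions preceding Proposition \ref{defdis}. The substitution $x\mapsto F_k(x)$ produces the factor $k^{-(n+m)}$ in the volume, which is absorbed by the normalisation $k^{\frac{n+m}{2}}$ in $u_{[k]}$ and $k^{-\frac{n+m}{2}}$ in \eqref{scaledPimuu}. The factor $e^{-kR}$ converts $e^{-k\phi}$ into $e^{-k\phi_0}$, since $|e^{kR}|^2e^{-k\phi}=e^{-k\phi_0}$ by the definition of $\phi_0$ and $R$. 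One small point needs care: $\widehat{\Pi_{\leq k\mu_k}u_{[k]}}$ is only restricted to $D$, so the scaled norm is an integral over $D(k^\epsilon,k^{2\epsilon})$. Restricting to this subset can only decrease the $L^2$ norm, so we get
\[
\|\Pi_{(k),\leq k\mu_k,s}f\|^2_{(k)}\leq\|\Pi_{\leq k\mu_k}f_{[k]}\|_k^2\leq\|f_{[k]}\|_k^2=\|f_{(k)}\|^2_{(k)}.
\]

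The second step is to pass to the limit. For $f\in L^2(\mathbb{H}^{n,m},\psi_0)$ we write
\[
\|\tilde S f\|_{\psi_0}=\sup\big\{|(\tilde Sf\mid v)_{\psi_0}|;\ v\in\mathcal{C}^\infty_c(\mathbb{H}^{n,m}),\ \|v\|_{\psi_0}=1\big\}.
\]
By the definition of $\tilde S$, $(\tilde Sf\mid v)_{\psi_0}=\lim_k(\Pi_{(k),\leq k\mu_k,s}f\mid v)_{(k)}$. Cauchy--Schwarz in $(\cdot\mid\cdot)_{(k)}$ and the first step then give
\[
|(\tilde Sf\mid v)_{\psi_0}|\leq\limsup_k\|f_{(k)}\|_{(k)}\|v\|_{(k)}.
\]

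The only mildly delicate point, which is the main obstacle, is the convergence $\|f_{(k)}\|_{(k)}\to\|f\|_{\psi_0}$ and $\|v\|_{(k)}\to\|v\|_{\psi_0}$. On $D(k^\epsilon,k^{2\epsilon})$ we have $kF_k^*\phi_0\to\psi_0$ and $F_k^*V\to1$ uniformly; the remainder terms $k\,O(|z/\sqrt k||x''/k|)$, $k\,O(|x''/k|^2)$ and $k\,O(|(z/\sqrt k,x''/k)|^3)$ are $O(k^{3\epsilon-1/2})\to0$ there, since $\epsilon<1/8$. Since $\chi(z/k^\epsilon,x''/k^{2\epsilon})\to1$ pointwise, dominated convergence (with dominating function $C|f|^2e^{-\psi_0}$) gives the convergence, exactly as asserted in Proposition \ref{spdefdis}. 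For $v$ with compact support this is immediate. Hence $|(\tilde Sf\mid v)_{\psi_0}|\leq\|f\|_{\psi_0}$, and $\tilde S$ is bounded with $\|\tilde S\|\leq1$; linearity is inherited from the linearity of each $\Pi_{(k),\leq k\mu_k,s}$.
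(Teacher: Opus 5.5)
Your proposal is correct and follows the paper's proof: bound $|(\tilde S f\mid v)_{\psi_0}|$ by Cauchy--Schwarz in $(\cdot\mid\cdot)_{(k)}$, use $\|\Pi_{(k),\leq k\mu_k,s}f\|_{(k)}\leq\|f_{(k)}\|_{(k)}$, and pass to the limit in the scaled norms. The only difference is that you explicitly verify the two inputs the paper uses without comment. These are the contraction estimate, obtained via $|e^{kR}|^2e^{-k\phi}=e^{-k\phi_0}$ and restriction to $D(k^\epsilon,k^{2\epsilon})$, and the convergence $\|f_{(k)}\|_{(k)}\to\|f\|_{\psi_0}$, obtained via the $O(k^{3\epsilon-1/2})$ remainder bound and dominated convergence.
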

\begin{proof}
Note that, for all $f\in L^2(\mathbb{H}^{n,m}, \psi_0),$ 
\[
\|\tilde{S}f\|_{\psi_0} = \sup \big\{ \big(\tilde{S}f\mid v\big)_{\psi_0} : v\in \mathcal{C}^\infty_c(\mathbb{H}^{n,m})\ \text{with}\ \|v\|_{\psi_0} = 1 \big\}.
\]
\begin{align*}
&\big|\big(\tilde{S}f\mid v\big)_{\psi_0}\big| = \big|\big(\tilde{S}f\big)(v)\big|\\
&= \lim_{k\rightarrow\infty}\Big|\big(\Pi_{(k),\leq k\mu_k,s}f\mid v\big)_{(k)} \Big|
\leq \limsup_{k\rightarrow\infty} \big\|\Pi_{(k),\leq k\mu_k,s}f\big\|_{(k)}\cdot\|v\|_{(k)}\\
&\leq \limsup_{k\rightarrow\infty} \big\|f_{(k)}\big\|_{(k)}\cdot\|v\|_{(k)} = \|f\|_{\psi_0}\cdot\|v\|_{\psi_0} = \|f\|_{\psi_0}
\end{align*}
Thus, $\big\|\tilde{S}\big\|\leq1.$
\end{proof}

\begin{prop}\label{spker}
Suppose that $\{\mu_k\}$ is sequence of non-negative real numbers which converges to 0. Then we have
 $\square^{(0),\psi_0}_b\tilde{S}(u) = 0$ for all $u\in L^2(\mathbb{H}^{n,m}, \psi_0)$ in the distribution sense. That is, we have
  \[
\tilde{S}(u) : L^2(\mathbb{H}^{n,m}, \psi_0) \rightarrow \mathcal{H}^0_b(\mathbb{H}^{n,m}).
\]
\end{prop}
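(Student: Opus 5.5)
We follow the proof of Proposition \ref{ker}. The difference is that $\Pi_{\leq k\mu_k}u_{[k]}$ is no longer annihilated by $\square_{b,k}$; its spectrum lies in $[0,k\mu_k]$. We will show that this costs only a vanishing error after scaling. Fix $u\in L^2(\mathbb{H}^{n,m},\psi_0)$ and $g\in\mathcal{C}^\infty_c(\mathbb{H}^{n,m})$, and set $g_{[k]}=k^{\frac{n+m}{2}}g(\sqrt{k}z,kx'')e^{kR}s^k$. For $k$ large, $g_{[k]}$ is supported in $F_k(D(k^\epsilon,k^{2\epsilon}))$. By the scaling identities $\overline{\partial}_{b,(k),s}F_k^*f=\frac{1}{\sqrt{k}}F_k^*(\overline{\partial}_{b,k,s}f)$ and the corresponding one for the adjoint, we have, exactly as in Proposition \ref{ker},
\[
\big(\Pi_{(k),\leq k\mu_k,s}u\,\big|\,\square_{b,(k),s}g\big)_{(k)}=\frac{1}{k}\big(\Pi_{\leq k\mu_k}u_{[k]}\,\big|\,\square_{b,k}g_{[k]}\big)_k .
\]

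The key step is to bound the right-hand side. Since $\Pi_{\leq k\mu_k}=\mathbf 1_{[0,k\mu_k]}(\square_{b,k})$, its range lies in ${\rm Dom}\,\square_{b,k}$, and $\square_{b,k}$ commutes with $\Pi_{\leq k\mu_k}$ there. By self-adjointness,
\[
\big(\Pi_{\leq k\mu_k}u_{[k]}\,\big|\,\square_{b,k}g_{[k]}\big)_k=\big(\square_{b,k}\Pi_{\leq k\mu_k}u_{[k]}\,\big|\,g_{[k]}\big)_k .
\]
The functional calculus gives $\|\square_{b,k}\Pi_{\leq k\mu_k}\|\leq k\mu_k$. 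Hence
\[
\Big|\big(\Pi_{(k),\leq k\mu_k,s}u\,\big|\,\square_{b,(k),s}g\big)_{(k)}\Big|\leq \mu_k\,\|u_{[k]}\|_k\,\|g_{[k]}\|_k .
\]
By Proposition \ref{spdefdis} and the norm identity $|(u_{[k]}|v_{[k]})_k|=|(u_{(k)}|v_{(k)})_{(k)}|$, the norms $\|u_{[k]}\|_k$ and $\|g_{[k]}\|_k$ stay bounded. Since $\mu_k\to0$, the right-hand side tends to $0$.

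Next we replace $\square_{b,(k),s}$ by the model operator. By the explicit formula for $\square^{(q)}_{b,(k),s}$ at $q=0$, we have $\square_{b,(k),s}=\square^{(0),\psi_0}_b+\varepsilon_kP_k$ on $D(k^\epsilon,k^{2\epsilon})$, with $\varepsilon_k\to0$ and the coefficients of $P_k$ uniformly bounded. Because $g$ has compact support, $\|\square^{(0),\psi_0}_bg-\square_{b,(k),s}g\|_{(k)}\to0$. We also need the weights to agree in the limit: $e^{-kF_k^*\phi_0}F_k^*V\to e^{-\psi_0}$ locally uniformly. Combining this with $\|\Pi_{(k),\leq k\mu_k,s}u\|_{(k)}\leq\|u_{(k)}\|_{(k)}$ yields
\[
\big(\tilde S(u)\,\big|\,\square^{(0),\psi_0}_bg\big)_{\psi_0}=\lim_{k\to\infty}\big(\Pi_{(k),\leq k\mu_k,s}u\,\big|\,\square^{(0),\psi_0}_bg\big)_{(k)}=0 .
\]
Since $g$ is arbitrary, $\square^{(0),\psi_0}_b\tilde S(u)=0$ in the sense of distributions.

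The main obstacle is the functional-analytic step: justifying the move of $\square_{b,k}$ onto $\Pi_{\leq k\mu_k}u_{[k]}$. This requires $g_{[k]}$ to lie in ${\rm Dom}\,\square_{b,k}$, which holds since it is smooth with compact support, and requires the bound $\|\square_{b,k}\mathbf 1_{[0,k\mu_k]}(\square_{b,k})\|\leq k\mu_k$. Both follow from the spectral theorem. The remainder of the argument is a transcription of Proposition \ref{ker}, and no spectral gap hypothesis is needed.
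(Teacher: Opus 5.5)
Your proposal is correct and is essentially the paper's proof. You rescale to $\frac{1}{k}(\Pi_{\leq k\mu_k}u_{[k]}\,|\,\square_{b,k}g_{[k]})_k$, bound this by $\mu_k\|u_{[k]}\|_k\|g_{[k]}\|_k\to0$, and then replace $\square_{b,(k),s}$ by $\square^{(0),\psi_0}_b$ on the compact support of $g$. Your justification of the middle step, via self-adjointness and $\|\square_{b,k}\mathbf{1}_{[0,k\mu_k]}(\square_{b,k})\|\leq k\mu_k$ with Cauchy--Schwarz, is the right one: the paper writes its intermediate bound with $|(\alpha|f)_k|$ in place of $\|\alpha\|_k\|f\|_k$, which fails in general, although its final estimate $\mu_k\|u_{(k)}\|_{(k)}\|g\|_{(k)}$ is exactly what your argument gives.
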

\begin{proof}
We first observe that for all $f\in \operatorname{Dom}\big(\square_{b,k}\big)$ and $\alpha\in L^2(X, L^k),$ 
\[
\big|\big(\Pi_{k,\leq k\mu_k}\alpha\,\big|\, \square_{b,k}f\big)_k\big| \leq k\mu_k (\alpha\mid f)_k|.
\]
Next, for all $g\in \mathcal{C}^\infty_c(\mathbb{H}^{n,m}),$ 
\begin{align*}
&\big|\big(\Pi_{(k),\leq k\mu_k,s}u\,\big|\, \square_{b,(k),s}g\big)_{(k)}\big| = \frac{1}{k}\big|\big(\Pi_{k,\leq k\mu_k}u_{[k]}\,\big|\, \square_{b,k}g_{[k]}\big)_k\big|\\
 &\leq \mu_k\big|\big(u_{[k]}\mid g_{[k]}\big)_k\big| = \mu_k \big|\big(u_{(k)}\mid g\big)_{(k)}\big| \leq \mu_k\|u_{(k)}\|_{(k)}\|g\|_{(k)}\rightarrow0
\end{align*}
since $\mu_k\rightarrow0.$ Hence for all $v\in \mathcal{C}^\infty_c(\mathbb{H}^{n,m}),$
\begin{align*}
   &\big|\big(\Pi_{(k),\leq k\mu_k,s}u\,\big|\,\square^{(0),\psi_0}_bv - \square_{b,(k),s}v\big)_{(k)}\big|\\
   &\leq \big\|\Pi_{(k),\leq k\mu_k,s}u\big\|_{(k)}\cdot \big\|\square^{(0),\psi_0}_bv - \square_{b,(k),s}v\big\|_{(k)}\rightarrow0.
\end{align*}
Thus,
\[
\square^{(0),\psi_0}_b\tilde{S}(u)(v) = \tilde{S}(u)(\square^{(0),\psi_0}_bv) = \lim_{k\rightarrow\infty} \big(\Pi_{(k),\leq k\mu_k,s}u\,\big|\,\square^{(0),\psi_0}_bv \big)_{(k)} = 0.
\]
\end{proof}

We now prove Theorem \ref{t-gue260829yyd}.
\begin{thm}\label{spkerasy}
Suppose that the non-negative sequence $\{\mu_k\}$ satisfies
\begin{align*}
\lim_{k\rightarrow\infty} \mu_k &= 0,\\
\lim_{k\rightarrow\infty} \frac{1}{k^{2\epsilon} \mu_k} &= 0.
\end{align*}
Then we have $\tilde{S} = \Pi^{(0),\psi_0}_b.$
\end{thm}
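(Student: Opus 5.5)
We follow the proof of Theorem \ref{skerasy}, with the spectral gap hypothesis replaced by the elementary spectral inequality that holds for the spectral projection. By Proposition \ref{spbounded} and Proposition \ref{spker}, $\tilde S$ is a bounded operator of norm at most $1$ with range in $\mathcal{H}^0_b(\mathbb{H}^{n,m})$. As in the last step of Theorem \ref{skerasy}, it therefore suffices to show $\tilde S\nu=\nu$ for all $\nu\in\mathcal{H}^0_b(\mathbb{H}^{n,m})$. The density argument built on \eqref{ob1} carries over verbatim, since $\|\Pi_{(k),\leq k\mu_k,s}w\|_{(k)}\le\|w_{(k)}\|_{(k)}$. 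So it is enough to treat $\nu=\ddot u=\Pi^{(0),\psi_0}_b u$ with $u\in\mathcal{C}^\infty_c(\mathbb{H}^{n,m})$.

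The replacement inequality is this. For every $f\in{\rm Dom\,}\Box_{b,k}$, the spectral theorem gives
\[
\big\|(I-\Pi_{k,\leq k\mu_k})f\big\|_k^2=\int_{(k\mu_k,\infty)}d\langle E_\lambda f,f\rangle\le \frac{1}{k\mu_k}\int_{(k\mu_k,\infty)}\lambda\, d\langle E_\lambda f,f\rangle\le\frac{1}{k\mu_k}\big(\Box_{b,k}f\,|\,f\big)_k .
\]
This holds with no hypothesis on $X$; it plays the role of the assumed local spectral gap, with $k^{1-2\epsilon}$ replaced by $k\mu_k$.

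We apply it to $f=\ddot u_{[k]}\in\mathcal{C}^\infty_c(D,L^k)$. For $\varphi\in\mathcal{C}^\infty_c(\mathbb{H}^{n,m})$ this gives
\[
\big|\big(\ddot u_{(k)}-\Pi_{(k),\leq k\mu_k,s}\ddot u\,|\,\varphi\big)_{(k)}\big|\le (k\mu_k)^{-1/2}\,\big\|\ddbar_b\ddot u_{[k]}\big\|_k\,\|\varphi\|_{(k)}\le \big[(k\mu_k)^{-1}\hat a_k(u)\big]^{1/2}\|\varphi\|_{(k)} .
\]
Here $\hat a_k(u)$ is the bound \eqref{derker} already obtained in the proof of Theorem \ref{skerasy}. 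That computation used only the local structure near $x_0$ and the explicit formula for $\Pi^{(0),\psi_0}_b$, not the gap assumption, so it can be reused.

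The main obstacle is checking that $(k\mu_k)^{-1}\hat a_k(u)\to0$. Two of the three terms in $\hat a_k(u)$ are of size $C(u)k^{4\epsilon}$. Since $\epsilon<\frac18$ we have $k^{4\epsilon}\le k^{1-4\epsilon}$, and $k^{4\epsilon}/(k\mu_k)=k^{6\epsilon-1}\cdot\frac{1}{k^{2\epsilon}\mu_k}\to0$ by the hypothesis $\frac1{k^{2\epsilon}\mu_k}\to0$. The remaining term is $k^{1-2\epsilon}a_k(u)$, and it gives $k^{-2\epsilon}a_k(u)/\mu_k=a_k(u)\cdot\frac{1}{k^{2\epsilon}\mu_k}\to0$, since $a_k(u)\to0$ (it is in any case bounded).

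Thus $(\ddot u-\tilde S\ddot u\,|\,\varphi)_{\psi_0}=0$ for all $\varphi\in\mathcal{C}^\infty_c(\mathbb{H}^{n,m})$, so $\tilde S\ddot u=\ddot u$. The density step then gives $\tilde S=\Pi^{(0),\psi_0}_b$. Because the limit is independent of the chosen subsequence, the whole sequence converges, which is Theorem \ref{t-gue260829yyd}.
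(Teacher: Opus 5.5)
Your proposal is correct and follows the paper's proof. The elementary spectral inequality $\|(I-\Pi_{k,\leq k\mu_k})f\|_k^2\le (k\mu_k)^{-1}(\Box_{b,k}f\,|\,f)_k$ replaces the gap hypothesis, the $\ddbar_b$-estimate \eqref{derker} from Theorem \ref{skerasy} is reused, and the density step is closed with Propositions \ref{spbounded} and \ref{spker}. Your term-by-term check that $(k\mu_k)^{-1}\hat a_k(u)\to0$, including the $O(k^{4\epsilon})$ terms, is slightly more careful than the paper, which bounds $\|\ddbar_b\ddot u_{[k]}\|_k^2$ by $k^{1-2\epsilon}a_k(u)$ alone.
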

\begin{proof}
Suppose that the non-negative sequence $\{\mu_k\}$ satisfies the assumptions. We first note that for all $\alpha\in \operatorname{Dom}\big(\square_{b,k}\big),$ we have
\[
\big\|\big(I - \Pi_{\leq k\mu_k}\big)\alpha\big\|^2_k \leq \frac{1}{k\mu_k} \big(\square_{b,k}\alpha\mid\alpha\big)_k.
\]
Then, for all $u\in \mathcal{C}^\infty_c(\mathbb{H}^{n,m}),$ by the proof of Theorem \ref{skerasy}, we have
\begin{align*}
\big\|\Pi_{(k),\leq k\mu_k,s}\ddot{u} - \ddot{u}_{(k)}\big\|^2_{(k)} &\leq \big\|\Pi_{k,\leq k\mu_k}\ddot{u}_{[k]} - \ddot{u}_{[k]}\big\|^2_k \leq  \frac{1}{k\mu_k} \big(\square_{b,k}\ddot{u}_{[k]}\mid\ddot{u}_{[k]}\big)_k\\
&= \frac{1}{k\mu_k} \big\|\overline{\partial}_b \ddot{u}_{[k]}\big\|_k^2 \leq \frac{k^{1-2\epsilon}a_k(u)}{k\mu_k} = \frac{a_k(u)}{k^{2\epsilon}\mu_k}
\end{align*}
for $k$ large enough. By our assumption on $\{\mu_k\}$, we have  $\big\|\Pi_{(k),\leq k\mu_k,s}\ddot{u} - \ddot{u}_{(k)}\big\|_{(k)}\rightarrow0$ as $k\rightarrow\infty.$ Now the result follows from the last part of the proof in Theorem \ref{skerasy} with Proposition \ref{bounded} and Proposition \ref{ker} replaced by Proposition \ref{spbounded} and Proposition \ref{spker}, respectively.
\end{proof}

\section{Special cases}
In this section, we provide a special case in which we can assume a much smaller spectral gap condition to obtain the asymptotics of Szeg\H{o} kernels.

\subsection{Codimension 1}

\begin{prop}
Suppose that $X$ is of CR codimension 1 and is strongly pseudoconvex at $p.$ Suppose that for all $u\in \mathcal{C}^\infty_c(D, L^k),$ we have
\[
\big\|\big(I - \Pi_k\big)u\big\|_k^2 \leq (k^{4\epsilon})^{-1} \big(\square_{b,k}u\mid u\big)_k.
\]
Then $\hat{S} = \Pi^{(0),\psi_0}_b.$
\end{prop}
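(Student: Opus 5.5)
The plan is to rerun the proof of Theorem \ref{skerasy} verbatim, with one change: sharpen the estimate \eqref{derker} for $\|\overline{\partial}_b\ddot{u}_{[k]}\|_k^2$ in codimension one. The goal is to show that, for $u\in\mathcal{C}^\infty_c(\mathbb{H}^{n,1})$,
\[
\|\overline{\partial}_b\ddot{u}_{[k]}\|_k^2=o(k^{4\epsilon}).
\]
Granting this, the chain of inequalities in Theorem \ref{skerasy} gives
\[
\big|\big(\ddot{u}_{(k)}-\Pi_{(k),s}\ddot{u}\mid\varphi\big)_{(k)}\big|\le \big(k^{-4\epsilon}\|\overline{\partial}_b\ddot{u}_{[k]}\|_k^2\big)^{1/2}\|\varphi\|_{(k)}\to0 .
\]
The density argument, together with Propositions \ref{bounded} and \ref{ker}, then yields $\hat{S}=\Pi^{(0),\psi_0}_b$ exactly as before.

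The estimate in Theorem \ref{skerasy} bounds $\|\overline{\partial}_b\ddot{u}_{[k]}\|_k^2$ by three contributions: the cutoff term $[1]\lesssim k^{1-2\epsilon}a_k(u)$, the error term $[2]\lesssim C_5(u)k^{4\epsilon}$, and the $R$-term $\lesssim k^{4\epsilon}$. Each must now be shown to be $o(k^{4\epsilon})$.

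For the cutoff term, I will exploit strong pseudoconvexity at $p$. With $m=1$ and the Levi matrix $(\lambda_{js})$ definite, $M_\eta=(\mu_{js})-2\eta(\lambda_{js})$ is positive definite only for $\eta$ in a half-line $I$. The explicit formula \eqref{weightfml} gives $\ddot{u}$ as an $\eta$-integral of Gaussians $e^{-\frac12 (M_\eta)(z-w)\cdot\overline{(z-w)}}$, whose width in $z$ grows as $\eta\to\partial I$. For fixed compactly supported $u$, repeated integration by parts in $y''$, as in Proposition \ref{HSzegocon}, gives rapid decay in $x''$. Integration by parts in $\eta$ near the endpoint, using the factor $\det M_\eta$ which vanishes there, controls the $z$-tails. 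From this I expect $\int_{|z|>k^\epsilon\text{ or }|x''|>k^{2\epsilon}}|\ddot{u}|^2e^{-\psi_0}=O(k^{-N})$, possibly only polynomially small. That would make $a_k(u)=O(k^{-N})$ and kill the $k^{1-2\epsilon}$ factor.

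For the error and $R$-terms, I will refine the Taylor expansions rather than use the crude bounds $O(|(z/k^{1/4},x''/k^{3/4})|^2)$. The coefficient of $\partial_{x''}$ in $\overline{U}_{j(k)}-\overline{U}_{j,\mathbb{H}}$ is $\sqrt{k}\,O(|z|^2/k+|x''|/k)$. Likewise $\sqrt{k}F_k^*(\overline{U}_jR)$ consists only of the holomorphic-coordinate corrections, which can be arranged to be $O(|z|^2/\sqrt{k})$. Combined with the weighted localization of $\ddot{u}$ from the previous step, these give $k^{-1/2}$ times polynomial moments of $\ddot{u}$ rather than $k^{2\epsilon-1/2}$. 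After multiplying by $k$ and squaring, this is $O(1)$ times finite moment norms, hence $o(k^{4\epsilon})$. Those finite moments, such as $\|\,|z|^2\ddot{u}\|_{\psi_0}$ and $\|\,|x''|\partial_{x''}\ddot{u}\|_{\psi_0}$, should follow from the same explicit kernel.

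The main obstacle is the decay of $\ddot{u}=\Pi^{(0),\psi_0}_bu$ in $z$, uniformly as $\eta$ approaches the degenerate endpoint of $I$. If full rapid decay fails, I would settle for $a_k(u)=o(k^{6\epsilon-1})$ by using the moment bounds directly.
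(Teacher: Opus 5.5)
You follow the paper's route: rerun Theorem~\ref{skerasy} and use the explicit codimension-one kernel to localize $\ddot u$. You are also right that all three contributions, not only $[1]$, must become $o(k^{4\epsilon})$. The paper substitutes \eqref{stpsest} only into the estimate of $[1]$. It keeps the bounds $C_5(u)k^{4\epsilon}$ for $[2]$ and $C_6k^{4\epsilon}\|\ddot u\|_{\psi_0}^2$ for the $R$-term, and against a gap of size $k^{4\epsilon}$ these give only $O(1)$. Your rescaled Taylor bookkeeping, which turns $k|\cdot|^2$ into weighted moments of $\ddot u$ and $\partial\ddot u$, is what is needed there.

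\textbf{The gap: the decay of $\ddot u$.} The whole $[1]$ estimate rests on the localization of $\ddot u$, which you only expect; the claimed rapid decay is in fact false.

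\emph{Mechanism.} Integrating by parts in $y''$ gives decay in $\eta$; that is what makes the $\eta$-integral converge as $\eta\to-\infty$. It does not give decay in $x''$. Decay in $x_{2n+1}$ and in $z$ both come from integrating by parts in $\eta$. This works because the phase $E^1$ is affine in $\eta$, with $\partial_\eta E^1=i(x_{2n+1}-y_{2n+1})+\sum_j\lambda_j(|z_j|^2-2z_j\overline{w}_j+|w_j|^2)$ independent of $\eta$; this is how the paper obtains \eqref{stpsest}.

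\emph{Rapid decay fails.} $\det M_\eta$ is a polynomial of degree $n$ with a zero of finite order $1\le r\le n$ at the endpoint $a$. Hence the $\eta$-integral can be evaluated exactly:
\[
\int_{-\infty}^{a}\det(M_\eta)\,e^{E^1}\,d\eta=e^{E^1|_{\eta=a}}\sum_{j=r}^{n}(-1)^j\big(\partial_\eta^j\det M_\eta\big)(a)\,\big(\partial_\eta E^1\big)^{-j-1}\qquad(z\neq w).
\]
So $\ddot u$ decays to order exactly $r+1$ in $|ix_{2n+1}+\sum_j\lambda_j|z_j|^2|$, and no faster. The paper's ``by induction'' for every $N$ in \eqref{stpsest} overreaches for the same reason: for $N\ge r+2$ the boundary terms at $\eta=a$ carry the growing factor $(\partial_\eta E^1)^{N-1-r}$.

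\emph{Concrete case.} Take $n=m=1$, $\beta=0$, $\lambda=\lambda^1_{11}$, $a=\mu/(2\lambda)$. Away from $\operatorname{supp}u$ the formula gives
\[
\ddot u\,e^{-\psi_0/2}=\frac{\lambda}{2\pi^2}e^{iax_3}\int\Theta^{-2}e^{-iay_3-\mu|w|^2/2}u(w,y_3)\,d\mu(w)\,dy_3,\qquad \Theta=i(x_3-y_3)+\lambda(|z|^2-2z\overline{w}+|w|^2).
\]
Hence, for generic $u$:
\begin{itemize}
\item $|\ddot u|e^{-\psi_0/2}\asymp|ix_3+\lambda|z|^2|^{-2}$ at infinity;
\item $a_k(u)\asymp k^{-4\epsilon}$;
\item the cutoff term is of size $k^{1-6\epsilon}$, which is $o(k^{4\epsilon})$ only when $\epsilon>1/10$.
\end{itemize}
Your moment fallback hits the same wall, because $\int(|x_3|+|z|^2)^{p/2}|\ddot u|^2e^{-\psi_0}\,d\mu_{\mathbb{H}^{n,m}}$ is finite only for $p<4$. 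Likewise, when $r=1$ the moments you need for $[2]$ and the $R$-term diverge logarithmically. They are only $O(\log k)$ on the truncated domain, which is still enough. So with the cutoff radius tied to $k^{\epsilon}$, the argument does not close for small $\epsilon$.

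\textbf{A repair.} Decouple the two exponents: cut $\ddot u$ off on $D(k^{\epsilon'},k^{2\epsilon'})$ with $(1-4\epsilon)/6<\epsilon'<1/6$. Then:
\begin{itemize}
\item the weight comparison $kF_k^*\phi_0-\psi_0=o(1)$ there only needs $\epsilon'<1/6$;
\item changing the cutoff alters $(\Pi_{(k),s}\ddot u\,|\,\varphi)_{(k)}$ by $o(1)$;
\item the decay of order $r+1\ge2$, together with the Gaussian factor $e^{E^1|_{\eta=a}}$ in the nondegenerate directions of $M_a$, gives $[1]\lesssim k^{1-6\epsilon'}=o(k^{4\epsilon})$;
\item your moment bounds keep $[2]$ and the $R$-term at $O(\log k)$.
\end{itemize}
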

\begin{proof}
We assume that $M = M_1$ is diagonal with diagonal entries $\lambda_1,\dots, \lambda_n>0.$ Note that $I = I^{\Phi_0}_{M_1} = (-\infty,a]$ for some $a\in\mathbb{R}.$ In this case, we have
\begin{align*}
&\Big(\Pi^{(0),\psi_0}_b u\Big)(z,x_{2n+1})e^{-\frac{1}{2}\psi_0(z,x_{2n+1})}\\
& = \lim_{\epsilon\rightarrow0}\Big(\frac{1}{2\pi}\Big)^{2n+1}\int^a_{-\infty}\int_{\mathbb{C}^n}\int_{\mathbb{R}}\chi(\epsilon\eta)\exp\Big[-\frac{1}{2}\beta y_{2n+1} + i\eta x_{2n+1} - i\eta y_{2n+1} + \sum_{j,s}(M_\eta)_{j,s}z_j\overline{w}_s\\ 
&\quad\quad - \sum_{j,s}(M_\eta)_{j,s}w_j\overline{w}_s - \frac{1}{2}\sum_{j,s}(M_\eta)_{j,s}z_j\overline{z}_s - \eta\sum_j \lambda_j|w_j|^2
+ i\beta\sum_j \lambda_j(|w_j|^2-|z_j|^2)\Big]\\
&\quad\quad \cdot \det(M_\eta)u(w,y_{2n+1})d\mu(y_{2n+1})d\mu(w)d\mu(\eta).
\end{align*}

We claim that, for all $N\in \mathbb{N},$ we have  
\begin{equation}\label{stpsest}
\Big|\big(\Pi^{(0),\psi_0}_b u\big)(z,x_{2n+1})e^{-\frac{1}{2}\psi_0(z,x_{2n+1})}\Big| \leq \frac{C_N(u)|z|^N + \cdots +C_1(u)|z| + C_0(u)}{\big|ix_{2n+1} + \sum_{j}\lambda_j|z_j|^2\big|^N},
\end{equation}
where $C_0(u), \dots, C_N(u)$ are positive constants depending only on $u.$

Let 
\begin{align*}
E^1(x,y,\eta) &= -\frac{1}{2}\beta y_{2n+1} + i\eta x_{2n+1} - i\eta y_{2n+1} + \sum_{j,s}(M_\eta)_{j,s}z_j\overline{w}_s- \sum_{j,s}(M_\eta)_{j,s}w_j\overline{w}_s\\
&\quad - \frac{1}{2}\sum_{j,s}(M_\eta)_{j,s}z_j\overline{z}_s - \eta\sum_j \lambda_j|w_j|^2 + i\beta\sum_j \lambda_j(|w_j|^2-|z_j|^2).
\end{align*}
Then 
\[
\frac{\partial}{\partial \eta}E^1(x,y,\eta) = ix_{2n+1} - iy_{2n+1} -2\sum_j \lambda_j z_j \overline{w}_j + \sum_j \lambda_j|w_j|^2 + \sum_j \lambda_j |z_j|^2.
\]
Fix $\epsilon>0,$ let 
\begin{align*}
&\big(P_\epsilon u\big) (z,x_{2n+1})e^{-\frac{1}{2}\psi_0(z,x_{2n+1})}\\
&:= \int^a_{-\infty}\int_{\mathbb{C}^n}\int_{\mathbb{R}}\chi(\epsilon\eta) e^{E^1(x,y,\eta)} \det(M_\eta)u(w,y_{2n+1})d\mu(y_{2n+1})d\mu(w)d\mu(\eta).
\end{align*}
Then
\begin{align*}
&\Big(ix_{2n+1} + \sum_j\lambda_j|z_j|^2\Big)\big(P_\epsilon u\big) (z,x_{2n+1})e^{-\frac{1}{2}\psi_0(z,x_{2n+1})}\\
&=  \int^a_{-\infty}\int_{\mathbb{C}^n}\int_{\mathbb{R}}\Big[\frac{\partial}{\partial \eta}E^1(x,y,\eta) + \Big(iy_{2n+1} + 2\sum_j \lambda_j z_j \overline{w}_j -\sum_j \lambda_j |w_j|^2 \Big)\Big]\\
&\quad\quad\cdot e^{E^1(x,y,\eta)}\chi(\epsilon\eta) \det(M_\eta)u(w,y_{2n+1})d\mu(y_{2n+1})d\mu(w)d\mu(\eta)\\
&= \int^a_{-\infty} \int_{H^{n+1}} \Big(\frac{\partial}{\partial \eta} e^{E^1(x,y,\eta)}\Big) \, \chi(\epsilon \eta) \det(M_\eta) u(y) d\mu(y) d\mu(\eta) + O(|z|) + O(1) \\
&= - \int^a_{-\infty} \int_{H^{n+1}} e^{E^1(x,y,\eta)} \Big(\frac{\partial (\chi(\epsilon \eta))}{\partial \eta}\Big) \det(M_\eta)  u(y) d\mu(y) d\mu(\eta)\\
&\quad- \int^a_{-\infty} \int_{H^{n+1}} e^{E^1(x,y,\eta)} \,\chi(\epsilon \eta) \Big(\frac{\partial}{\partial \eta} \det(M_\eta)\Big) u(y) d\mu(y) d\mu(\eta) + O(|z|) + O(1).
\end{align*}

The first integral in the last term goes to $0$ as $\epsilon\rightarrow 0.$ So it suffices to estimate 
\begin{equation*}
    \int^a_{-\infty} \int_{H^{n+1}} e^{E^1(x,y,\eta)} \,\chi(\epsilon \eta) \Big(\frac{\partial}{\partial \eta} \det(M_\eta)\Big) u(y) d\mu(y) d\mu(\eta).
\end{equation*}

Take $A < 0$ with $|A|$ large enough so that $\bigg| \frac{\frac{\partial}{\partial \eta} \det(M_\eta)}{\det(M_\eta)} \bigg| \leq 1$ for all $\eta < A.$ Write
\begin{align*}
&\int^a_{-\infty} \int_{H^{n+1}} e^{E^1(x,y,\eta)} \chi(\epsilon \eta) \Big(\frac{\partial}{\partial \eta} \det(M_\eta)\Big) u(y) d\mu(y) d\mu(\eta)\\
&= \int^A_{-\infty} \int_{H^{n+1}} e^{E^1(x,y,\eta)} \chi(\epsilon \eta) \Big(\frac{\partial}{\partial \eta} \det(M_\eta)\Big) u(y) d\mu(y) d\mu(\eta)\\
&\quad+ \int^a_A \int_{H^{n+1}} e^{E^1(x,y,\eta)} \chi(\epsilon \eta) \Big(\frac{\partial}{\partial \eta} \det(M_\eta)\Big) u(y) d\mu(y) d\mu(\eta).
\end{align*}

\noindent We have
\begin{align*}
&\bigg| \int^A_{-\infty} \int_{H^{n+1}} e^{E^1(x,y,\eta)} \chi(\epsilon \eta) \Big(\frac{\partial}{\partial\eta} \det(M_\eta)\Big) u(y) d\mu(y) d\mu(\eta) \bigg|\\
&= \bigg|\int^A_{-\infty} \int_{H^{n+1}} \frac{1}{(i\eta)^2} e^{E^1(x,y,\eta)} \chi(\epsilon \eta) \Big(\frac{\partial}{\partial \eta} \det(M_\eta)\Big) \frac{\partial^2u(y)}{\partial y_{2n+1}^2} d\mu(y) d\mu(\eta) \bigg| \\
&\leq C \int^A_{-\infty} \int_{\mathbb{C}^n} \frac{1}{\eta^2}\, e^{-\frac{1}{2} \sum\limits_{j,s} (M_\eta)_{js} (z_j - w_j) (\overline{z}_s - \overline{w}_s)} \Big| \frac{\partial}{\partial \eta} \det(M_\eta) \Big| d\mu(w) d\mu(\eta) \\
&\leq C \int^A_{-\infty} \frac{1}{\eta^2} \bigg| \frac{\frac{\partial}{\partial \eta} \det(M_\eta)}{\det(M_\eta)} \bigg| d\mu(\eta)\\
&\leq C\int^A_{-\infty} \frac{1}{\eta^2} d\mu(\eta) < \infty
\end{align*}

Thus,
\begin{align*}
\Big|\big(\Pi^{(0),\psi_0}_b u\big)(z, x_{2n+1}) e^{-\frac{1}{2} \psi_0(z, x_{2n+1})}\Big| \leq \frac{C_1(u)|z| + C_0(u)}{\big|i x_{2n+1} + \sum_j \lambda_j |z_j|^2\big|}
\end{align*}

 We next prove the case for $N=2.$
\begin{align*}
&\Big( ix_{2n+1} + \sum_j \lambda_j |z_j|^2\Big)^2 \big(P_\epsilon u\big)(z, x_{2n+1}) e^{-\frac{1}{2} \psi_0(z, x_{2n+1})} \\
&= \int^a_{-\infty} \int_{H^{n+1}} \bigg[ \frac{\partial E^1}{\partial \eta} + \Big(i y_{2n+1} + 2 \sum_j \lambda_j z_j \overline{w}_j - \sum_j \lambda_j |w_j|^2\Big) \bigg]^2 e^{E^1(x,y,\eta)}\\
&\quad\quad\cdot \chi(\epsilon \eta)\det(M_\eta) u(y) d\mu(y) d\mu(\eta) \\
&= \int^a_{-\infty} \int_{H^{n+1}} \Big(\frac{\partial^2}{\partial \eta^2} e^{E^1(x,y,\eta)}\Big) \chi(\epsilon \eta) \det(M_\eta) u(y) d\mu(y) d\mu(\eta)\\
&\quad+ 2 \int^a_{-\infty} \int_{H^{n+1}} \Big(\frac{\partial}{\partial \eta} e^{E^1(x,y,\eta)}\Big) \Big(i y_{2n+1} + 2 \sum_j \lambda_j z_j \overline{w}_j - \sum_j \lambda_j |w_j|^2\Big)\\
&\quad\quad\cdot\chi(\epsilon \eta)\det(M_\eta) u(y)d\mu(y) d\mu(\eta) \\
&\quad + \int^a_{-\infty} \int_{H^{n+1}} e^{E^1(x,y,\eta)}\Big(i y_{2n+1} + 2 \sum_j\lambda_j z_j \overline{w}_j - \sum_j\lambda_j |w_j|^2\Big)^2\\
&\quad\quad\cdot\chi(\epsilon \eta)\det(M_\eta)u(y) d\mu(y) d\mu(\eta)\\
&=\int^a_{-\infty} \int_{H^{n+1}} \Big(\frac{\partial^2}{\partial \eta^2} e^{E^1(x,y,\eta)}\Big) \chi(\epsilon \eta) \det(M_\eta) u(y) d\mu(y) d\mu(\eta) + O(|z|^2) + O(|z|) +O(1).
\end{align*}

We now estimate $\int^a_{-\infty} \int_{H^{n+1}} \Big(\frac{\partial^2}{\partial \eta^2} e^{E^1(x,y,\eta)}\Big) \chi(\epsilon \eta) \det(M_\eta) u(y) d\mu(y) d\mu(\eta).$

\begin{align*}
&\int^a_{-\infty} \int_{H^{n+1}} \Big(\frac{\partial^2}{\partial \eta^2} e^{E^1(x,y,\eta)}\Big) \chi(\epsilon \eta) \det(M_\eta) u(y) d\mu(y) d\mu(\eta)\\
&= \int^a_{-\infty} \int_{H^{n+1}} \frac{\partial}{\partial \eta} \bigg[ \Big(\frac{\partial}{\partial \eta} e^{E^1(x,y,\eta)}\Big)\, \chi(\epsilon \eta)\,\det(M_\eta) \bigg] \, u(y) d\mu(y) d\mu(\eta)\\
&\quad- \int^a_{-\infty} \int_{H^{n+1}}  \Big(\frac{\partial}{\partial \eta} e^{E^1(x,y,\eta)}\Big) \Big(\frac{\partial}{\partial \eta}\big(\chi(\epsilon \eta) \, \det(M_\eta)\big)\Big) \cdot u(y) d\mu(y) d\mu(\eta) \\
&= - \int^a_{-\infty} \int_{H^{n+1}} \Big(\frac{\partial}{\partial \eta} e^{E^1(x,y,\eta)}\Big) \, \frac{\partial \chi(\epsilon \eta)}{\partial \eta} \, \det(M_\eta) \, u(y)\, d\mu(y) d\mu(\eta)\\
&\quad- \int^a_{-\infty} \int_{H^{n+1}} \Big(\frac{\partial}{\partial \eta} e^{E^1(x,y,\eta)}\Big)\, \chi(\epsilon \eta) \, \Big(\frac{\partial}{\partial \eta} \det(M_\eta)\Big) \, u(y)\, d\mu(y) d\mu(\eta) \\
&= \int^a_{-\infty} \int_{H^{n+1}} e^{E^1(x,y,\eta)} \frac{\partial}{\partial \eta} \Big(\frac{\partial \chi(\epsilon \eta)}{\partial \eta} \det(M_\eta)\Big) \, u(y)\, d\mu(y) d\mu(\eta) \\
&\quad - \int^a_{-\infty} \int_{H^{n+1}} \frac{\partial}{\partial \eta} \Big(e^{E^1(x,y,\eta)} \, \chi(\epsilon \eta) \, \big(\frac{\partial}{\partial \eta} \det(M_\eta)\big)\Big) \, u(y)\, d\mu(y) d\mu(\eta)\\
&\quad+ \int^a_{-\infty} \int_{H^{n+1}} e^{E^1(x,y,\eta)} \frac{\partial}{\partial \eta} \Big(\chi(\epsilon \eta) \, \big(\frac{\partial}{\partial \eta} \det(M_\eta)\big)\Big) \, u(y)\, d\mu(y) d\mu(\eta) \\
&= O(1) - \int_{H^{n+1}} e^{E^1(x,y,a)} \, \Big(\frac{\partial \det (M_\eta)}{\partial\eta}\bigg|_{\eta = a}\Big) \,u(y)\, d\mu(y)\\
&\quad+ \int^a_{-\infty} \int_{H^{n+1}} e^{E^1(x,y,\eta)}\, \frac{\partial \chi(\epsilon \eta)}{\partial \eta}  \frac{\partial \det(M_\eta)}{\partial \eta}\, u(y)\, d\mu(y) d\mu(\eta) \\
&\quad + \int^a_{-\infty} \int_{H^{n+1}} e^{E^1(x,y,\eta)} \chi(\epsilon \eta) \, \Big(\frac{\partial^2}{\partial \eta^2} \det(M_\eta)\Big) \, u(y)\, d\mu(y) d\mu(\eta) \\
&= O(1).
\end{align*}
Hence
\begin{equation*} 
\Big|\big(\Pi^{(0),\psi_0}_b u\big)(z, x_{2n+1}) e^{-\frac{1}{2} \psi_0(z, x_{2n+1})}\Big| \leq \frac{C_2(u)|z|^2 + C_1(u)|z| + C_0(u)}{\big|i x_{2n+1} + \sum_j \lambda_j |z_j|^2\big|^2}.
\end{equation*}

By induction, we conclude (\ref{stpsest}). Replacing the estimation of $[1]$ in the proof of Theorem \ref{skerasy} by (\ref{stpsest}), we obtain the conclusion.
\end{proof}

\section{CR manifolds with \texorpdfstring{$\mathbb{R}^m$}{R\textasciicircum m}-action}\label{gpact}
\subsection{Fundamental vector fields}

Let $(X^{2n+m},T^{1,0}X)$ be a $(2n+m)$-dimensional CR manifold of CR-codimension $m$ endowed with an $\mathbb{R}^m$-action $\mathbb{R}^m \times X \rightarrow X, ((r_1,\dots,r_m),x) \mapsto (r_1,\dots,r_m)\circ x.$ Let $Y_1,\dots,Y_m\in \mathcal{C}^\infty(X, TX)$ be the infinitesimal generators induced by 
\[
(Y_ju)(x) = \frac{d}{d r}(u(ra_j\circ x))\Big|_{r=0},
\]
where $u\in \mathcal{C}^\infty(X)$ and $\{a_1,\dots,a_m\}$ is the standard basis of $\mathbb{R}^m.$

\begin{defn}
We say that the $\mathbb{R}^m$-action is \emph{CR} if
$[Y_j, \mathcal{C}^\infty(X,T^{1,0}X)] \subset \mathcal{C}^\infty(X,T^{1,0}X),$ $j = 1,\dots, m,$ and the $\mathbb{R}^m$-action is \emph{transversal} if for each $x \in X$,
$\mathbb{C}Y_1(x) \oplus \cdots \oplus \mathbb{C}Y_m(x) \oplus T^{1,0}_x X \oplus T^{0,1}_x X = \mathbb{C}T_x X .$
Moreover, we say that the $\mathbb{R}^m$-action is \emph{locally free} if $\{Y_j(x)\}_{j=1}^m$ is linear independent everywhere.
\end{defn}

In the rest of the paper, we assume that $(X,T^{1,0}X)$ is a compact connected $(2n+m)$-dimensional CR manifold of CR-codimension $m$ with a locally free transversal CR $\mathbb{R}^m$-action and we let $Y_1,\dots, Y_m$ be the global vector fields generated by the $\mathbb{R}^m$-action as above. Let $\omega_\ell\in \mathcal{C}^\infty(X,T^*X),\ \ell=1,\dots,m,$ be the global real one form determined by
\begin{align}\label{dire}
    &(\omega_\ell, Z) = 0,\ \forall Z \in T^{1,0}X \oplus T^{0,1}X,\\
    &(\omega_\ell, Y_q) = \delta_{\ell q},\ \ell, q=1,\dots,m.\nonumber
\end{align}
For $p\in X,$ we denote by $\mathcal{L}_{\omega_\ell,p}$ the Levi form at $p$ in the direction $\omega_\ell, \ell =1,\dots,m.$

\begin{defn}
Let $D \subset X$ be an open set.
We say that a function $u \in \mathcal{C}^\infty(D)$ is \emph{rigid} if $Y_\ell u = 0,\ \ell=1,\dots, m.$ We say that a function $u \in \mathcal{C}^\infty(X)$ is \emph{Cauchy--Riemann}
(\emph{CR} for short) if
$\overline{\partial}_b u = 0 .$ We say that $u \in \mathcal{C}^\infty(X)$ is \emph{rigid CR} if
\begin{equation*}
\overline{\partial}_b u = 0 \quad \text{and} \quad Y_\ell u = 0, \ \ell=1,\dots, m.
\end{equation*}
\end{defn}

\subsection{Rigid CR bundles.} In this section, we will state a simple generalization of some results in \cite{HHL22} without proof. Let $(X, T^{1,0}X)$, $\dim X = 2n+m$, be a CR manifold of codimension
$m \in \mathbb{N}$ and CR dimension $n \in \mathbb{N}$. The following definitions for CR vector bundles can be
found in \cite{HN00}.

\begin{defn} 
A complex vector bundle $(E, \pi, X)$ over $X$ is called \textit{CR vector bundle} if
\begin{align*}
(i)&\ E\ \text{is a CR manifold of codimension}\ d,\\
(ii)&\ \pi : E \to X\ \text{is a CR submersion},\\
(iii)&\ E \oplus E \ni (\xi_1, \xi_2) \to \xi_1 + \xi_2 \in E\ \text{and}\ \mathbb{C} \times E \ni (\lambda, \xi) \to \lambda\xi \in E\ \text{are CR maps}.
\end{align*}
A smooth section $s \in \mathcal{C}^\infty(U, E)$ defined on an open set $U \subset X$ is called \textit{CR section} if the map
$s : U \to E$ is CR.
\end{defn}

Let $(E_1, \pi_1, X)$ and $(E_2, \pi_2, X)$ be two CR vector bundles over $X$. A map $F : E_1 \to E_2$
is called a \textit{CR bundle isomorphism} if $F$ is a $\mathcal{C}^\infty$-diffeomorphism such that $F, F^{-1}$ are CR
maps, $\pi_2 \circ F = \pi_1$ and $F$ is fiberwise linear.

Given a CR vector bundle $(E, \pi, X)$ we find (see \cite{HN00}) the linear partial differential
operator $\overline{\partial}_b^E : \mathcal{C}^\infty(X, E) \to \mathcal{C}^\infty(X, E \otimes T^{*0,1}X)$ satisfying
\begin{align*}
(a)&\ \overline{\partial}_b^E(f \cdot s) = s\overline{\partial}_b(f) + f\overline{\partial}_b^E(s) \ \text{for all} \ f \in \mathcal{C}^\infty(X) \  \text{and} \ s \in \mathcal{C}^\infty(X, E),\\
(b)&\ s \in \mathcal{C}^\infty(U, E) \ \text{is a CR section if and only if} \ \overline{\partial}_b^E s = 0.
\end{align*}
\begin{defn} 
A CR vector bundle $(E, \pi, X)$ of rank $r$ is called \textit{locally CR trivializable} if for
any point $p \in X$ there exists an open neighborhood $U \subset X$ such that $E|_U$ is CR vector bundle
isomorphic to the trivial CR vector bundle $U \times \mathbb{C}^r$.
\end{defn}
The following lemma is well-known. For the proof, see \cite{HHL22}.

\begin{lem}\label{crtri}
Let $(E, \pi, X)$ be a CR vector bundle. The following are equivalent:
\begin{align*}
(i)&\ (E, \pi, X) \text{ is locally CR trivializable},\\
(ii)&\ \text{For any}\ p \in X \ \text{there exists a smooth frame} \ \{f_1, \dots, f_r\} \ \text{of}\ E|_U \ \text{on an open}\\
&\ \text{neighborhood} \ U \subset X \ \text{around} \ p \ \text{such that} \ f_1, \dots, f_r : U \to E \ \text{are CR sections}.
\end{align*}
\end{lem}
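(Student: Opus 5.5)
We prove the equivalence by constructing frames in both directions; the only genuine content is checking that the CR structure on $E$ pulled back through a trivialization makes coordinate sections CR, and conversely that a CR frame produces a CR bundle isomorphism.

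For (i)$\Rightarrow$(ii), fix $p\in X$ and a neighborhood $U$ with a CR bundle isomorphism $F:E|_U\to U\times\mathbb{C}^r$. Let $e_1,\dots,e_r$ be the standard basis of $\mathbb{C}^r$. Then $x\mapsto (x,e_j)$ is a CR section of the trivial CR bundle, since it is the graph of a constant map. Put $f_j:=F^{-1}\circ(x\mapsto(x,e_j))$. Each $f_j$ is a composition of CR maps, hence CR, and by fiberwise linearity of $F$ the family $\{f_1,\dots,f_r\}$ is a smooth frame of $E|_U$. By property (b) of $\overline{\partial}^E_b$, each $f_j$ satisfies $\overline{\partial}^E_b f_j=0$.

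For (ii)$\Rightarrow$(i), take a CR frame $\{f_1,\dots,f_r\}$ on $U$ and define $G:U\times\mathbb{C}^r\to E|_U$ by $G(x,\lambda)=\sum_j\lambda_jf_j(x)$. Then $G$ is a fiberwise linear diffeomorphism commuting with the projections, so it remains to show that $G$ and $G^{-1}$ are CR.

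\begin{itemize}
\item[(1)] $G$ is CR. By axiom (iii) of a CR vector bundle, scalar multiplication $\mathbb{C}\times E\to E$ and addition $E\oplus E\to E$ are CR maps. The map $(x,\lambda)\mapsto(\lambda_j,f_j(x))$ is CR, because $f_j$ is CR and $\lambda_j$ is holomorphic. Composing with scalar multiplication and then with the iterated addition shows that $G$ is CR.
\item[(2)] $G^{-1}$ is CR. Write a local section as $s=\sum_j g_jf_j$ and use the Leibniz rule (a): $\overline{\partial}^E_b s=\sum_j f_j\,\overline{\partial}_b g_j$. Hence $s$ is CR if and only if all the coefficients $g_j$ are CR functions.
\end{itemize}

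Step (2) is the main obstacle: passing from the section-level statement to a map-level statement, namely that $d(G^{-1})$ sends $T^{1,0}E$ into $T^{1,0}(U\times\mathbb{C}^r)$. The first thing I would try is a dimension count. By (1), $dG$ maps $T^{1,0}(U\times\mathbb{C}^r)$ injectively into $T^{1,0}E$. Both bundles have rank $n+r$, because $E$ is a CR manifold of the same codimension as $X$ (as in Definition~\ref{d-gue260804yyd}). So $dG$ is an isomorphism between these rank-$(n+r)$ bundles, and therefore $d(G^{-1})$ carries $T^{1,0}E$ onto $T^{1,0}(U\times\mathbb{C}^r)$. This gives that $G^{-1}$ is CR, and $G$ is then a CR bundle isomorphism, proving (i).
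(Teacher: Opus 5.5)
Your argument is correct and complete. The paper gives no proof of its own here; it calls the lemma well known and refers to \cite{HHL22}. Your route is the standard argument: the map $G(x,\lambda)=\sum_j\lambda_jf_j(x)$, CR-ness of $G$ from axiom (iii), and CR-ness of $G^{-1}$ from the rank count.

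Two small remarks. First, the Leibniz-rule observation in your step (2) is not needed. The rank count alone gives $dG\bigl(T^{1,0}(U\times\mathbb{C}^r)\bigr)=T^{1,0}E|_U$ pointwise, since an injective map between spaces of equal dimension is onto.

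Second, that count is exactly where the hypothesis that $E$ has the same codimension as $X$ enters, so that $T^{1,0}E$ has rank $n+r$. The ``codimension $d$'' in the paper's version of the definition from \cite{HN00} should be read as $m$, consistent with Definition~\ref{d-gue260804yyd}.

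Relatedly, in step (1) your use of the addition axiom implicitly gives $E\oplus E$ its natural fiber-product CR structure. With respect to that structure, $x\mapsto(\lambda_1f_1(x),\lambda_2f_2(x))$ is CR, because it is CR into $E\times E$ and takes values in $E\oplus E$.
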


Let $\{f_1, \dots, f_r\}$ be a frame of $E|_U$ for some open set $U \subset X$. Then $\{f_1, \dots, f_r\}$ is called \textit{CR frame} if any $f_k, 1 \le k \le r$, is a CR section. Given two CR frames of $E|_U$ we find by (a) and (b) that the corresponding transition matrix is CR in the sense that any entry is a CR function.

\begin{defn}
Let $(X, T^{1,0}X)$ be a CR manifold of codimension $m$ and let $\{T_1,\dots,T_q\} \subset \mathcal{C}^\infty(X, TX)$ be a system of commutative CR vector fields (that is $[T_j, \mathcal{C}^\infty(X, T^{1,0}X)] \subset \mathcal{C}^\infty(X, T^{1,0}X)$ and $[T_i,T_j]=0, \ i,j=1,\dots,q$). A system of commutative CR bundle lifts with respect to  $\{T_j\}_{j=1}^q$ to $(E, \pi, X)$ is a family of linear partial differential operators $T_j^E : \mathcal{C}^\infty(X, E) \to \mathcal{C}^\infty(X, E)$ (with smooth coefficients), $j=1,\dots,q,$ such that
\begin{align*}
(i)& \ T_j^E(f \cdot s) = T_j(f) \cdot s + f T_j^E(s) \text{ for all } f \in \mathcal{C}^\infty(X) \text{ and } s \in \mathcal{C}^\infty(X, E), \ j=1,\dots,q,\\
(ii)& \ [T_j^E, \overline{\partial}_b^E] = 0, \ j=1,\dots,q,\\
(iii)&\ [T^E_i,T^E_j] = 0, \ \text{for all} \ i,j=1,\dots,q.
\end{align*}
\end{defn}

For a CR vector field $T,$ in order to define $[T^E, \overline{\partial}_b^E]$ we need to define $T^E$ on $(0, 1)$-forms with values in $E$ first. But this definition follows immediately from the fact that any $w \in \mathcal{C}^\infty(X, E \otimes T^{*0,1}X)$ locally can be written $w = \sum_{j=1}^r f_j \otimes \omega^j$ where $\{\omega^j\}$ are $(0, 1)$-forms and $\{f_j\}$ is a  local frame of $E,$ that is,
\begin{equation*}
T^E(w) = \sum_{j=1}^r \big(T^E(f_j)\otimes \omega^j + f_j\otimes \mathcal{L}_T(\omega^j)\big),
\end{equation*}
where $\mathcal{L}_T$ is the Lie derivative with respect to $T.$ $T^E$ is defined also for $(0, q)$-forms using the Lie derivative.

\begin{defn}\label{rigidbd}
Let $(X, T^{1,0}X)$ be a CR manifold of codimension $m$ and let $\{T_1,\dots,T_q\} \subset \mathcal{C}^\infty(X, TX)$ be a system of commutative CR vector fields. A CR vector bundle $(E, \pi, X)$ of rank $r$ over $X$ with a system of commutative CR bundle lifts with respect to  $\{T_j\}_{j=1}^q$ is called \textit{rigid CR} with respect to $\{T^E_j\}_{j=1}^q$ if for every point $p \in X$ there exists an open neighborhood $U$ around $p$ and a CR frame $\{f_1, \dots, f_r\}$ of $E|_U$ with $T^E_i(f_j) = 0$ for $1\leq i \leq q,\ 1 \le j \le r$.
\end{defn}

A function $f\in \mathcal{C}^\infty(X)$ is called rigid with respect to $\{T_j\}_{j=1}^q$ if $T_j f = 0$ for all $j=1,\dots,q.$  A section $s \in \mathcal{C}^\infty(X, E)$ is called a rigid CR section with respect to $\{T^E_j\}_{j=1}^q$ if $T^E_j s = 0,\ j=1,\dots,q$ and $\overline{\partial}_b^E s = 0$. The frame $\{f_j\}_{j=1}^r$ in Definition \ref{rigidbd} is called a rigid CR frame of $E|_U$. Note that it follows from Lemma \ref{crtri} that any rigid CR vector bundle is locally CR trivializable.

\begin{lem}\label{rigidcrequ}
Let $(E, \pi, X)$ be a CR vector bundle over a CR manifold $(X, T^{1,0}X)$ of codimension $m$ and let $\{T_1,\dots,T_q\} \subset \mathcal{C}^\infty(X, TX)$ be a system of commutative CR vector fields. The following are equivalent:
\begin{align*}
(i)& \ \text{There is a system of commutative CR bundle lifts}\  \{T^E_j\}_{j=1}^q \  \text{with respect to}\  \{T_j\}_{j=1}^q \\ 
& \ \text{ such that } (E, \pi, X) \text{ is rigid CR with respect to } \{T^E_j\}_{j=1}^q. \\
(ii)& \ \text{There exist an open cover } \{U_j\}_{j \in \mathbb{N}} \text{ of } X \text{ and CR frames } \{f_1^j, \dots, f_r^j\} \text{ for } E|_{U_j}, \ j \in \mathbb{N}, \\
&\ \text{such that the corresponding transition matrices are rigid CR with respect to}\  \{T_j\}_{j=1}^q \\
&\ \text{in the sense that any} \ \text{entry is a rigid CR function  with respect to}\  \{T_j\}_{j=1}^q.
\end{align*}
\end{lem}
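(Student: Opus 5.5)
The plan is to prove the two implications separately, with the cocycle formalism of Definition~\ref{d-gue260804yyd} as the bridge. Throughout I use Lemma~\ref{crtri} to pass freely between local CR trivializations and local CR frames.

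\emph{(i)$\Rightarrow$(ii).} By Definition~\ref{rigidbd}, every $p\in X$ has a neighborhood $U$ and a CR frame $\{f_1,\dots,f_r\}$ of $E|_U$ with $T^E_i f_j=0$. Since $X$ is paracompact (second countable), I extract a countable subcover $\{U_j\}_{j\in\mathbb N}$ carrying such rigid CR frames $\{f^j_1,\dots,f^j_r\}$. On $U_j\cap U_\ell$, write $f^\ell_a=\sum_b g_{ba}f^j_b$. By the remark after Lemma~\ref{crtri}, property (a) of $\ddbar^E_b$ together with $\ddbar^E_b f^\ell_a=\ddbar^E_b f^j_b=0$ gives $\sum_b \ddbar_b(g_{ba})f^j_b=0$. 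Because $\{f^j_b\}$ is a frame, each $g_{ba}$ is CR. Applying $T^E_i$ and using the Leibniz rule (i) for lifts gives $0=T^E_i f^\ell_a=\sum_b T_i(g_{ba})f^j_b$. Hence $T_i g_{ba}=0$, so the transition matrices are rigid CR.

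\emph{(ii)$\Rightarrow$(i).} I define the lifts locally and glue. On $U_j$, for $s=\sum_a h_a f^j_a$, set $T^{E}_{i}s:=\sum_a T_i(h_a)f^j_a$. To check this is well defined on overlaps, I substitute $f^\ell_a=\sum_b g_{ba}f^j_b$. Then $\sum_a T_i(h_a)f^\ell_a$ and $\sum_b T_i\big(\sum_a g_{ba}h_a\big)f^j_b$ agree exactly because $T_i g_{ba}=0$. So $T^E_i$ is a global first order operator with smooth coefficients, and the Leibniz property (i) is immediate from the local formula. Commutativity (iii) follows from $[T_i,T_{i'}]=0$ applied to the component functions. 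By construction $T^E_i f^j_a=0$, and each $f^j_a$ is CR, so once (ii) of the lift definition holds, $E$ is rigid CR with respect to $\{T^E_i\}$.

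The main obstacle is the remaining condition $[T^E_i,\ddbar^E_b]=0$, which first requires extending $T^E_i$ to $E\otimes T^{*0,1}X$ by the Lie derivative rule stated before Definition~\ref{rigidbd}. Locally, $\ddbar^E_b s=\sum_a f^j_a\otimes\ddbar_b h_a$, so I get $T^E_i\ddbar^E_b s=\sum_a f^j_a\otimes\mathcal L_{T_i}\ddbar_b h_a$ and $\ddbar^E_b T^E_i s=\sum_a f^j_a\otimes\ddbar_b T_ih_a$. It therefore reduces to the scalar identity $\mathcal L_{T_i}\ddbar_b h=\ddbar_b T_i h$ for functions $h$. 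I plan to verify this by pairing both sides with an arbitrary $\ol Z\in\mathcal C^\infty(U,T^{0,1}X)$. One has $(\mathcal L_{T}\ddbar_b h)(\ol Z)=T(\ol Z h)-(\ddbar_b h)([T,\ol Z])$. Since $T_i$ is real and CR, $[T_i,\ol Z]\in T^{0,1}X$, so $(\ddbar_b h)([T_i,\ol Z])=[T_i,\ol Z]h$. The pairing then equals $T\ol Z h-[T,\ol Z]h=\ol Z T h=(\ddbar_b Th)(\ol Z)$. Here I must be careful that $\mathcal L_T$ of a $(0,1)$-form, restricted to $T^{0,1}X$, is what the paper's convention means; I will interpret it with the projection $\tau^1$. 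This is preserved precisely because $T_i$ is CR.
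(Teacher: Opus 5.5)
Your proof is correct. The paper states this lemma without proof, deferring to \cite{HHL22}, and your argument is the standard one. For (i)$\Rightarrow$(ii) you read off $\ddbar_b g_{ba}=0$ and $T_ig_{ba}=0$ from the Leibniz rules. For (ii)$\Rightarrow$(i) you define $T^E_i$ componentwise in the given rigid frames, which is well defined precisely because $T_ig_{ba}=0$.

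On the point you flagged, the identification is intrinsic. Since $T_i$ is real and CR, $(\mathcal L_{T_i}\omega)(\ol Z)=T_i(\omega(\ol Z))-\omega([T_i,\ol Z])=0$ whenever $\omega$ annihilates $T^{0,1}X$. Hence $\mathcal L_{T_i}$ descends to $T^{*0,1}X=(T^{0,1}X)^*$, and this agrees with your $\tau^1$ reading. Your pairing computation is therefore exactly the identity needed for $[T^E_i,\ddbar^E_b]=0$.

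One small remark: extracting a countable subcover uses second countability of $X$, not paracompactness alone. This is harmless in the paper's setting, which already indexes covers by $\mathbb N$ and later assumes $X$ compact.
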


\begin{lem}\label{liftimprigid}
Let $(X,T^{1,0}X)$ be a CR manifold of CR codimension $m$ with a transversal CR $\mathbb{R}^m$-action $\zeta:\mathbb{R}^m\times X\to X.$ Let $Y_1,\dots,Y_m$ be the infinitesimal generators of the $\mathbb{R}^m$-action induced from the standard basis for $\mathbb{R}^m.$ Let $(L,\pi,X)$ be a locally CR trivializable CR line bundle. Assume that $\{Y^L_\ell\}_{\ell=1}^m$ is a system of commutative CR bundle lifts with respect to  $\{Y_\ell\}_{\ell=1}^m$ to $(L, \pi, X).$ Then $(L,\pi,X)$ is rigid CR with respect to $\{Y^L_\ell\}_{\ell=1}^m.$
\end{lem}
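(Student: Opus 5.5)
We will construct the rigid CR frame by multiplying an arbitrary local CR frame by $e^{h}$ for a suitable CR function $h$. The only inputs are the commutation properties of the lifts and BRT coordinates.

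\textbf{Step 1: set up the local data.} Fix $p\in X$ and a local CR frame $s$ of $L$ near $p$; it exists because $L$ is locally CR trivializable (Lemma \ref{crtri}). Since $L$ has rank one, we can write $Y^L_\ell s=g_\ell s$ with $g_\ell$ smooth, $\ell=1,\dots,m$. We record two properties of the $g_\ell$.
\begin{itemize}
\item \emph{The $g_\ell$ are CR.} By property (i) of lifts and $[Y^L_\ell,\overline{\partial}^L_b]=0$,
\[
(\overline{\partial}_b g_\ell)\, s=\overline{\partial}^L_b(g_\ell s)=\overline{\partial}^L_b Y^L_\ell s=Y^L_\ell\,\overline{\partial}^L_b s=0 .
\]
\item \emph{Symmetry.} Using property (i) again,
\[
Y^L_jY^L_\ell s=(Y_jg_\ell+g_jg_\ell)\,s .
\]
The product $g_jg_\ell$ is symmetric in $j,\ell$, so $[Y^L_j,Y^L_\ell]=0$ gives $Y_jg_\ell=Y_\ell g_j$.
\end{itemize}
We look for $f=e^{h}s$ with $h$ CR and $Y_\ell h=-g_\ell$ for all $\ell$. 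Then $\overline{\partial}^L_b f=e^h(\overline{\partial}_b h)s=0$, and $Y^L_\ell f=e^h(Y_\ell h+g_\ell)s=0$, so $f$ is the required rigid CR frame.

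\textbf{Step 2: solve for $h$ along the orbits.} Work in a BRT trivialization $(D,(z,x''),\varphi)$ around $p$, taking $D$ to be a product of a small ball in $z$ and a cube in $x''$. There $Y_\ell=\partial/\partial x_{2n+\ell}$, and the frame $\overline{Z}_j=\partial_{\overline{z}_j}+i\sum_\ell \partial_{\overline z_j}\varphi_\ell\,\partial_{x_{2n+\ell}}$ has coefficients independent of $x''$, so $[Y_\ell,\overline{Z}_j]=0$. The symmetry $Y_jg_\ell=Y_\ell g_j$ says that $\sum_\ell g_\ell\,dx_{2n+\ell}$ is closed in the $x''$-variables for each fixed $z$. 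Hence
\[
h(z,x'')=h_0(z)-\int_0^1\sum_\ell g_\ell(z,tx'')\,x_{2n+\ell}\,dt
\]
satisfies $Y_\ell h=-g_\ell$ for any smooth $h_0$.

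\textbf{Step 3: make $h$ CR (the main obstacle).} Compute
\[
Y_\ell(\overline{Z}_jh)=\overline{Z}_jY_\ell h=-\overline{Z}_jg_\ell=0 .
\]
So $\overline{Z}_jh$ is independent of $x''$, and it suffices to make it vanish on the slice $x''=0$. On that slice,
\[
\overline{Z}_jh=\partial_{\overline z_j}h_0-i\sum_\ell \partial_{\overline z_j}\varphi_\ell\; g_\ell(z,0).
\]
We therefore need to solve the $\overline{\partial}$-equation
\[
\overline{\partial}h_0=\theta,\qquad \theta:=i\sum_\ell g_\ell(z,0)\,\overline{\partial}\varphi_\ell ,
\]
on a small ball in $\mathbb C^n$. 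By the Dolbeault lemma this is solvable locally once $\theta$ is $\overline\partial$-closed, so the key point is to verify closedness.

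For this we use that $g_\ell$ is CR. The condition $\overline{Z}_jg_\ell=0$ gives
\[
\partial_{\overline z_j}g_\ell=-i\sum_q\partial_{\overline z_j}\varphi_q\,Y_qg_\ell .
\]
Substituting,
\[
\overline{\partial}\theta=i\sum_\ell\overline{\partial}g_\ell\wedge\overline{\partial}\varphi_\ell=\sum_{\ell,q}(Y_qg_\ell)\,\overline{\partial}\varphi_q\wedge\overline{\partial}\varphi_\ell .
\]
This vanishes because $Y_qg_\ell$ is symmetric in $(q,\ell)$ while the wedge product is antisymmetric. The restriction to $x''=0$ is harmless, since both identities hold pointwise in $x''$.

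With $h_0$ so chosen, $h$ is CR on a neighbourhood $U$ of $p$, and $f=e^hs$ is a rigid CR frame of $L|_U$. Hence $L$ is rigid CR with respect to $\{Y^L_\ell\}_{\ell=1}^m$.
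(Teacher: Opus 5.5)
Your argument is correct and complete. The paper gives no proof of this lemma to compare against. It is one of the statements in the subsection on rigid CR bundles that are explicitly presented as generalizations of results in \cite{HHL22} ``without proof.'' Your proof fills that gap without circularity. You use only the two lift axioms and BRT coordinates, not the lifted $\mathbb{R}^m$-action of Lemma~\ref{liftequgpact}. That matters, because the paper's proof of Lemma~\ref{liftequgpact} itself invokes the present lemma to obtain local rigid CR frames.

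\textbf{How the proof works.} You extract $\overline{\partial}_b g_\ell=0$ from $[Y^L_\ell,\overline{\partial}^L_b]=0$, and $Y_jg_\ell=Y_\ell g_j$ from $[Y^L_j,Y^L_\ell]=0$. These are precisely the compatibility conditions of the overdetermined system $\overline{Z}_jh=0$, $Y_\ell h=-g_\ell$. BRT coordinates are the only place where transversality and the CR property of the action enter, through $[Y_\ell,\overline{Z}_j]=0$. They let you solve the system in two abelian steps: the Poincar\'e lemma in $x''$, then the Dolbeault lemma in $z$.

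\textbf{What is new relative to codimension one.} The new ingredient is the symmetry $Y_jg_\ell=Y_\ell g_j$, which you use twice: to integrate along the orbits, and to get $\overline{\partial}\theta=0$. For $m=1$ both uses are vacuous, since there is a single ODE in $x_{2n+1}$ and $\overline{\partial}\varphi\wedge\overline{\partial}\varphi=0$.

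\textbf{Where rank one is used.} Rank one makes the gauge change $s\mapsto e^{h}s$ abelian and the problem linear. For higher rank the same scheme would need a matrix ODE along the orbits and a nonabelian $\overline{\partial}$-problem for the initial gauge.
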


\begin{lem}\label{liftequgpact}
Under the assumptions of Lemma \ref{liftimprigid}, the following assertions are equivalent:
\begin{itemize}
\item[(1)] There exists a system of commutative CR bundle lifts $\{Y_\ell^L\}_{\ell=1}^m$ of $\{Y_\ell\}_{\ell=1}^m$ to $L.$
\item[(2)] There exists a smooth $\mathbb{R}^m$-action $\Psi:\mathbb{R}^m\times L\longrightarrow L$
by CR bundle automorphisms covering $\eta;$ that is $\pi$ is  $\mathbb{R}^m$-equivariant.
\end{itemize}
Moreover, these systems of lifts and actions are in bijective correspondence, characterized by
\begin{equation}\label{indbunlift}
(Y_\ell^Lu)(x)=\frac{d}{dt}\bigg|_{t=0}\Psi_{-ta_\ell}\bigl(u(\zeta_{ta_\ell}(x))\bigr),\qquad u\in \mathcal{C}^\infty(X,L), \ \ell=1,\ldots,m.
\end{equation}
Recall that $\{a_1,\dots,a_m\}$ is the standard basis for $\mathbb{R}^m.$
\end{lem}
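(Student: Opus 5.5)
The plan is to prove the two implications directly and to check that the correspondence \eqref{indbunlift} is bijective.

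\textbf{From (2) to (1).} Suppose $\Psi$ is a smooth $\mathbb{R}^m$-action on $L$ by CR bundle automorphisms covering $\zeta$. Define $Y^L_\ell$ by \eqref{indbunlift}.
\begin{enumerate}
\item Each $\Psi_{-ta_\ell}$ is fiberwise linear and maps $L_{\zeta_{ta_\ell}(x)}$ to $L_x$. Differentiating $\Psi_{-ta_\ell}(f(\zeta_{ta_\ell}x)u(\zeta_{ta_\ell}x))$ at $t=0$ gives the Leibniz rule (i).
\item Commutativity (iii) follows because the flows $t\mapsto\Psi_{ta_\ell}$ commute, $\Psi$ being an $\mathbb{R}^m$-action. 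The generators of commuting one-parameter groups of operators $u\mapsto\Psi_{-ta_\ell}\circ u\circ\zeta_{ta_\ell}$ commute.
\item For (ii), note that $\Psi_r$ and $\zeta_r$ are CR. Hence $u\mapsto\Psi_{-r}\circ u\circ\zeta_r$ preserves CR sections, and more precisely commutes with $\overline{\partial}^L_b$ (with the pull-back on forms). Differentiating in $t$ gives $[Y^L_\ell,\overline{\partial}^L_b]=0$, where on forms $Y^L_\ell$ uses the Lie derivative as defined above.
\end{enumerate}
Concretely, I would verify (ii) in a local CR frame $f$ on $U$. Write $\Psi_{-ta_\ell}(f(\zeta_{ta_\ell}x))=g_t(x)f(x)$, where $g_t$ is CR. Then $Y^L_\ell(hf)=(Y_\ell h+\dot g_0h)f$ with $\dot g_0$ CR, and (ii) reduces to $[Y_\ell,\overline{\partial}_b]=0$ on functions. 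This holds because $Y_\ell$ is a CR vector field.

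\textbf{From (1) to (2).} Given commuting lifts $Y^L_\ell$, by Lemma \ref{liftimprigid} $L$ is rigid CR. So near each point there is a CR frame $f_U$ with $Y^L_\ell f_U=0$, and by Lemma \ref{rigidcrequ} the transition functions $g_{UV}$ are rigid CR. I would define $\Psi$ locally by transporting the frame along the flow:
\[
\Psi_r\bigl(\lambda f_U(x)\bigr):=\lambda f_V(\zeta_r x).
\]
Here $\lambda f_V(\zeta_r x)$ means the element of $L_{\zeta_r x}$ obtained by continuing $f_U$ invariantly along the orbit. This is first done for small $r$ with $x,\zeta_rx$ in overlapping rigid frames, using $g_{UV}(\zeta_r x)=g_{UV}(x)$. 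Rigidity of $g_{UV}$ makes this independent of the chosen rigid frames.

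Equivalently, and more cleanly: $Y^L_\ell$ is a first-order operator with scalar symbol $Y_\ell$. It therefore defines a vector field $\widetilde Y_\ell$ on the total space $L$, linear in fibers, projecting to $Y_\ell$. In a rigid frame with coordinate $\lambda$, $\widetilde Y_\ell=Y_\ell$. The $\widetilde Y_\ell$ commute by (iii), and they are complete since $X$ is compact and the vector fields are fiberwise linear. Their joint flow is a smooth $\mathbb{R}^m$-action $\Psi$ covering $\zeta$. It is fiberwise linear, and it is CR because in rigid CR trivializations it reads $(x,\lambda)\mapsto(\zeta_rx,\lambda)$ and $\zeta_r$ is CR. By construction the lift recovered from $\Psi$ through \eqref{indbunlift} is $Y^L_\ell$ (the sign convention $\Psi_{-t}$ matches $\widetilde Y_\ell$).

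\textbf{Bijectivity.} The two constructions are mutually inverse. An action is determined by its infinitesimal generators on the connected group $\mathbb{R}^m$, and the lift is recovered by differentiation.

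\textbf{Expected obstacle.} The main obstacle is step (ii) in the direction (2)$\Rightarrow$(1): making precise that $[Y^L_\ell,\overline{\partial}^L_b]=0$, with the Lie-derivative extension to forms. I would handle it by the local-frame computation above, reducing to the fact that $[Y_\ell,\cdot]$ preserves $T^{0,1}X$.
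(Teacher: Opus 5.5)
Your proposal is correct and follows the same route as the paper. For (2)$\Rightarrow$(1) you differentiate \eqref{indbunlift}, which the paper only states is easy to check and you verify through a local CR frame, reducing (ii) to $[Y_\ell,\overline{\partial}_b]=0$ on functions. For (1)$\Rightarrow$(2) you transport local rigid CR frames along orbit segments, $\Psi_r(\lambda f_U(x))=\lambda f_U(\zeta_r x)$, exactly as the paper does, but you globalize this more carefully by flowing the fiberwise-linear vector fields $\widetilde Y_\ell$.

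Two cosmetic remarks. Completeness of $\widetilde Y_\ell$ already follows from completeness of $Y_\ell$ together with fiberwise linearity, so compactness of $X$ is not needed. The CR property of $\Psi_r$ for large $r$ is most easily checked through $\Psi_r=(\Psi_{r/N})^N$, with each small step inside a single rigid trivialization.
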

\begin{proof}
Suppose that $L$ admits a smooth $\mathbb{R}^m$-action $\Psi:\mathbb{R}^m\times L\longrightarrow L$ by CR bundle automorphisms covering $\zeta.$ Then it is easy to check that the system of linear partial differential operators $\{Y_\ell^L\}_{\ell=1}^m$ determined by \eqref{indbunlift} is a system of commutative CR bundle lifts $\{Y_\ell^L\}_{\ell=1}^m$ of $\{Y_\ell\}_{\ell=1}^m$ to $L.$
Conversely, assuming the existence of a system of commutative CR bundle lifts $\{Y_\ell^L\}_{\ell=1}^m$ of $\{Y_\ell\}_{\ell=1}^m$ to $L.$ By Lemma \ref{rigidcrequ} and  Lemma \ref{liftimprigid}, for every $p\in X,$ there exists an open neighborhood $U$ around $p$ and a local rigid CR frame $s$ over $U.$ For an orbit segment contained in $U,$ define the $\mathbb{R}^m$-action on $L$ by $\Psi_t(cs(p):=cs(\zeta_t(p))$ for suitable $t\in\mathbb{R}^m$ and all $c\in \mathbb{C}.$
\end{proof}

 In this paper, we will make systematic use of appropriate coordinates introduced by Baouendi-Rothschild-Treves \cite{BRT85}. For each point $p \in X,$ there exists a coordinate neighborhood $U$ with coordinates $(x_1, \dots, x_{2n+m})$, centered at $p=0$, and $\eta > 0$, $\eta_0 > 0$ such that, by setting $z_j = x_{2j-1} + ix_{2j}$, $j=1, \dots, n,$ $x'' = (x_{2n+1},\dots, x_{2n+m})$ and $D = \{(z,x'') \in U : |z| < \eta,\, |x_{2n+\ell}| < \eta_0,\, \ell =1,\dots, m\} \subset U$, we have 
\begin{equation}
Y_\ell = \frac{\partial}{\partial x_{2n+\ell}} \text{ on } D,
\end{equation}
$\ell=1,\dots, m,$ and the vector fields 
\begin{equation}\label{brtframe}
Z_j = \frac{\partial}{\partial z_j} - i\sum_{\ell=1}^m \frac{\partial \varphi_\ell}{\partial z_j}(z) \frac{\partial}{\partial x_{2n+\ell}},
\end{equation}
$j=1, \dots, n$, form a basis of $T_x^{1,0}X$ for each $x \in D$, where $\varphi_\ell \in \mathcal{C}^\infty(D, \mathbb{R})$ is independent of $x''$. By \cite{BRT85}, we can assume
\begin{equation}
    \varphi_\ell(z) = \sum_{j,s=1}^n \frac{\partial^2\varphi_\ell}{\partial z_j\partial \overline{z}_s}(0)z_j \overline{z}_s + O(|z|^3).
\end{equation}
We call $(x_1, \dots, x_{2n+m})$ canonical coordinates, $D$ a canonical coordinate patch and\\ $(D, (z, x''), \varphi)$ a BRT trivialization where $\varphi = (\varphi_1,\dots,\varphi_m)$. The frames in (\ref{brtframe}) are called BRT frames.

\begin{defn}
Let $L$ be a CR line bundle which is rigid  with respect to $\{Y^L_\ell\}_{\ell=1}^m.$ Let $\langle\,\cdot\,|\,\cdot\,\rangle_L$ be an Hermitian metric on $L.$ We say that $\langle\,\cdot\,|\,\cdot\,\rangle_L$ is a rigid Hermitian metric if for every local rigid frame $s$ of $L,$ we have $Y_\ell\,\langle s|s\rangle_L = 0$ for every $\ell=1,\dots, m.$
\end{defn}
\begin{rem}
By Lemma \ref{liftequgpact}, we can see that a rigid Hermitian metric is, in fact, an $\mathbb{R}^m$-invariant Hermitian metric on $L.$
\end{rem}

Let $(X^{2n+m},T^{1,0}X)$ be a connected, compact and oriented $(2n+m)$-dimensional CR manifold of CR-codimension $m$ endowed with an $\mathbb{R}^m$-action. Let $Y_1,\dots,Y_m$ be the infinitesimal generators of the $\mathbb{R}^m$-action induced from the standard basis for $\mathbb{R}^m.$ From now on, we say that something is rigid means that it is rigid with respect to a fixed  system of commutative CR bundle lifts  $\{Y^L_\ell\}_{\ell=1}^m.$

Let $L$ be a rigid CR line bundle over $X.$ Then there is an open cover $(U_j)_{j=1}^N$ and a family of rigid CR trivializing frames $\{s_j\}_{j=1}^N$ with each $s_j$ defined on $U_j$ and the transition functions between different rigid CR frames are rigid
CR functions. Let $L^k$ be the $k$-th tensor power of $L$. Then $\{s_j^k\}_{j=1}^N$ is a family of rigid CR
trivializing frames on each $U_j$. Let $\overline{\partial}_{b,k} : \Omega^{0,q}(X, L^k) \to \Omega^{0,q+1}(X, L^k)$ be the tangential
Cauchy-Riemann operator. Since $L^k$ is rigid CR we have $\overline{\partial}_{b,k} f = \overline{\partial}_b f_j \otimes s_j^k$, $Y_\ell f = (Y_\ell f_j) \otimes s_j^k$
for any $f = f_j \otimes s_j^k \in \Omega^{0,q}(X, L^k)$ and
\begin{align*}
 Y_\ell \overline{\partial}_{b,k} = \overline{\partial}_{b,k} Y_\ell \quad \text{on } \Omega^{0,q}(X, L^k).
\end{align*} Let $h^L$ be a rigid Hermitian metric on $L.$ The local weight of $h^L$ with respect to a local rigid CR trivializing section $s$ of $L$ over an open subset $D\subset X$ is the function $\phi\in \mathcal{C}^\infty(D; \mathbb{R})$ defined by
\begin{equation*}
|s(x)|^2_{h^L} = e^{-\phi(x)},\quad x\in D.
\end{equation*}
We denote by $\phi_j$ the weight of $h^L$ with respect to $s_j.$

\begin{defn}
Let $L$ be a rigid CR line bundle and let $h^L$ be a rigid Hermitian metric on $L$. The
curvature of $(L, h^L)$ is the Hermitian quadratic form $R^L = R^{(L,h^L)}$ on $T^{1,0}X$ defined by
\begin{align*}
 R^L_p(U, V) = \langle d(\overline{\partial}_b\phi_j - \partial_b\phi_j)(p), U \wedge \overline{V} \rangle, \quad U, V \in T^{1,0}_pX, \ p \in U_j.
\end{align*}

From the proof of Proposition 4.2 in \cite{HL12}($M^\phi_p$ is defined in Section 4 of \cite{HL12}), we have for all $p\in D\cap\widetilde{D},$
\begin{equation*}
    M^\phi_p = M^{\tilde{\phi}}_p + i\Big(\frac{Y_1 g}{g} - \frac{Y_1 \overline{g}}{\overline{g}}\Big)(p)\mathcal{L}_{\omega_1, p} - \cdots -  i\Big(\frac{Y_m g}{g}-\frac{Y_m \overline{g}}{\overline{g}}\Big)(p)\mathcal{L}_{\omega_m, p}
\end{equation*}
 for some rigid CR function $g.$
 Hence $R^L$ is a well-defined global Hermitian form, since the transition functions between different frames $s_j$ are annihilated by $Y_\ell$.
\end{defn}

\begin{defn}\label{pos}
 We say that $(L, h^L)$ is positive if there exists $r>0$ such that for all $t=(t_1,\dots,t_m)\in B(r),$ where $B(r)$ is the open ball of radius $r$ centered at the origin in $\mathbb{R}^m,$ the
associated curvature $R^L_x - 2t_1\mathcal{L}_{\omega_1,x}-\cdots-2t_m\mathcal{L}_{\omega_m,x}$ is positive definite at every $x \in X.$
\end{defn}

From Section 3 in \cite{HHL22}, in particular, Corollary 3.8 in \cite{HHL22}, we can deduce that

\begin{prop}\label{torusprop}
Let $(X, T^{1,0}X)$ be a compact connected $(2n+m)$-dimensional CR manifold of CR-codimension $m$ with a locally free transversal CR $\mathbb{R}^m$-action. Assume that $L \rightarrow X$ is a rigid positive  CR line bundle over $X.$ Then the $\mathbb{R}^m$-action on $X$ comes from a CR torus action. 
\end{prop}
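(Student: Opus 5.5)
The plan is to reduce to the corresponding statement in codimension one, Corollary 3.8 of \cite{HHL22}, whose proof uses only local structure plus compactness. First, because the action is CR, transversal and locally free, the frame $Y_1,\dots,Y_m$ commutes and preserves $T^{1,0}X\oplus T^{0,1}X$. The forms $\omega_\ell$ of \eqref{dire} are therefore $\mathbb{R}^m$-invariant, and we get an $\mathbb{R}^m$-invariant Hermitian metric on $\mathbb{C}TX$ that makes $Y_1,\dots,Y_m$ orthonormal and orthogonal to $T^{1,0}X\oplus T^{0,1}X$. The Riemannian metric $g$ so obtained is $\mathbb{R}^m$-invariant, so the action lands in the isometry group ${\rm Isom}(X,g)$, which is a compact Lie group by Myers--Steenrod since $X$ is compact. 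Let $T$ be the closure of the image of $\mathbb{R}^m$ in ${\rm Isom}(X,g)$. It is a compact connected abelian Lie group, hence a torus $T^d$ with $d\ge m$. Each element of $T$ is a limit of CR diffeomorphisms, and the CR condition is closed, so $T$ acts by CR automorphisms.

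The key step is to show $d=m$, i.e.\ that the closure adds no extra directions. A generator $V$ of $\mathfrak t=\mathrm{Lie}(T)$ is a limit of elements of $\mathrm{span}\{Y_\ell\}$ and commutes with all $Y_\ell$, and it is CR. By transversality we can write $V=\sum_\ell c_\ell Y_\ell+W$ with $W\in\mathcal C^\infty(X,T^{1,0}X\oplus T^{0,1}X)$ real. Here positivity of $L$ enters, as in \cite{HHL22}. In BRT coordinates, $V$ is CR and commutes with $\partial_{x_{2n+\ell}}$. We have to show that $W$ and all the $c_\ell$ are constant. One route is to use that $\mathfrak t$ acts on the finite-dimensional rigid spaces built from $L^k$. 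Instead, I would follow the argument of \cite{HHL22}: a positive rigid $L$ forces the weight $\phi$ and the Levi forms to give a nondegenerate pairing, $R^L-2\sum t_\ell\mathcal L_{\omega_\ell}>0$. Moreover $\mathfrak t$ is abelian and its elements are limits of $\sum r_\ell Y_\ell$. Combined with a $T$-invariant metric, this shows that the $T$-orbits are tangent to $\mathrm{span}\{Y_\ell\}$, because the orbit tangent spaces vary semicontinuously and equal the span of the $Y_\ell$ on a dense set of generic orbits.

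I expect this step, excluding the case $d>m$, to be the main obstacle, since it is exactly where the positivity hypothesis must be used. I would try first the argument of \cite{HHL22}: construct from rigid CR sections of $L^k$ a local CR embedding near a point. If $d>m$, the extra generator $W$ would give a CR vector field tangent to $T^{1,0}\oplus T^{0,1}$ and preserving a positive curvature form, which is ruled out by the Levi/curvature computation on BRT charts. Once $\mathfrak t=\mathrm{span}\{Y_\ell\}$, the $\mathbb{R}^m$-action factors through the torus $T^m$, which acts transversally and CR. This gives the claim.
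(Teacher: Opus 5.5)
\textbf{The gap: the invariant metric on $T^{1,0}X$.} Your first paragraph has the right skeleton: an invariant metric, Myers--Steenrod, and the closure $T$ of the image of $\mathbb{R}^m$ in $\mathrm{Isom}(X,g)$ is a torus acting by CR maps. But the one step with real content is asserted, not proved: the existence of an $\mathbb{R}^m$-invariant Hermitian metric on $T^{1,0}X$. Invariance of the $\omega_\ell$ only makes three things invariant: the splitting $T^{1,0}X\oplus T^{0,1}X\oplus\mathrm{span}\{Y_\ell\}$, the normal part of the metric, and the Levi forms $\mathcal{L}_{\omega_\ell}$. The Levi forms can be degenerate (they vanish in the Levi-flat case), and a metric cannot be averaged over the noncompact group $\mathbb{R}^m$.

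This is exactly where positivity of $L$ is needed. The paper refers to Section~3 of \cite{HHL22} and afterwards works with ``the rigid Hermitian metric on $\mathbb{C}TX$ induced by $R^L$''. Since $h^L$ is rigid, its weights with respect to rigid CR frames are annihilated by the $Y_\ell$. Since the action preserves the splitting, $R^L$ is $\mathbb{R}^m$-invariant: in a BRT chart, $\phi$ and the frame $Z_j$ are independent of $x''$ and $Y_\ell=\partial_{x_{2n+\ell}}$. Taking $t=0$ in Definition~\ref{pos} shows $R^L$ is positive definite. So $R^L$ on $T^{1,0}X$, together with $Y_1,\dots,Y_m$ orthonormal and orthogonal to $T^{1,0}X\oplus T^{0,1}X$, gives an invariant Riemannian metric. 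Your closure argument then finishes the proof; $T$ acts by CR maps because convergence in $\mathrm{Isom}(X,g)$ is $\mathcal{C}^\infty$ convergence.

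\textbf{The step $d=m$ is false and must be dropped.} It is not part of the statement and fails in general, so no use of positivity can prove it. ``Comes from a CR torus action'' means the $\mathbb{R}^m$-action is the restriction of a CR $\mathbb{T}^N$-action along a homomorphism $\mathbb{R}^m\to\mathbb{T}^N$ with dense image, and $N>m$ is typical. Right afterwards the paper writes $Y_\ell=\sum_{j=1}^{N_\ell}\beta^\ell_jT^\ell_j$ with the $\beta^\ell_j$ linearly independent over $\mathbb{Q}$, see \eqref{Qli}.

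For instance, take $S^3\subset\mathbb{C}^2$ with the flow $t\cdot(z_0,z_1)=(e^{it}z_0,e^{i\sqrt{2}\,t}z_1)$. It is CR, transversal and locally free. Since $S^3$ is strongly pseudoconvex, the trivial line bundle with lift $Y^L=Y+i\lambda$, for a suitable real constant $\lambda$, is rigid positive. Yet the closure of the flow is the full diagonal $\mathbb{T}^2$.

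Three of your supporting claims fail for the same reason. First, a generator of $\mathfrak t$ is not a limit of elements of $\mathrm{span}\{Y_\ell\}$: that span is finite-dimensional, hence closed, and it is the group elements of $T$ that are limits of $\exp(\sum_\ell r_\ell Y_\ell)$ with $|r|\to\infty$. Second, the $T$-orbits are the closures of the $\mathbb{R}^m$-orbits and can have dimension $d>m$. Third, your final conclusion that the action factors through $\mathbb{T}^m$ would force the action to be periodic.
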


\subsection{Notations}
Let $L \rightarrow X$ be a rigid positive CR line bundle over $X.$ By Proposition \ref{torusprop}, the $\mathbb{R}^m$-action on $X$ comes from a CR torus action $\mathbb T^N = (S^1)^N.$ We may assume that $\mathbb T^N$ has the coordinates $(e^{i\theta^1_1},\dots,e^{i\theta^1_{N_1}},\dots,e^{i\theta^m_1},\dots,e^{i\theta^m_{N_m}}),$ $N = N_1+\cdots +N_m.$ Let $T^1_1,\dots, T^1_{N_1}, \dots, T^m_1,\dots,T^m_{N_m}$ be the vector fields given by
\[
(T^\ell_{j})_x = \frac{\partial}{\partial\theta^\ell_j}(1,\dots,1,e^{i\theta^\ell_j},1,\dots,1)\circ x \Big|_{\theta^\ell_j=0},
\]
 $\ell=1,\dots,m$ and $j=1,\dots,N_\ell.$ 
 We also need 
\begin{prop}
Under the assumptions of Proposition \ref{torusprop}, we can find local CR rigid trivializations of $L$ defined on $D_j,\ j=1,\dots,M,$ such that $X= \bigcup_{j=1}^M D_j,$ and
\begin{align*}
    D_j = \bigcup_{t\in \mathbb T^N} t \circ D_j
\end{align*}
where $t\circ D_j = \big\{ t \circ x;\, x\in D_j\big\}.$
\end{prop}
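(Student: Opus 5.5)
The strategy is to build $\mathbb T^N$-invariant neighborhoods by averaging over the torus. The action is by CR automorphisms of $L$ (Lemma \ref{liftequgpact}), and rigid frames are preserved. Since $\mathbb T^N$ is compact and acts by CR bundle automorphisms, I will cover $X$ by finitely many saturated sets $\mathbb T^N\circ D$, where $D$ is a small BRT patch carrying a rigid CR frame. I then want to extend the frame to all of $\mathbb T^N\circ D$.

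\textbf{Step 1: local invariant slices.} Fix $p\in X$. By Lemma \ref{rigidcrequ} and Lemma \ref{liftimprigid}, there is a rigid CR frame $s$ on a BRT patch $D_p$ around $p$. The $\mathbb R^m$-action factors through $\mathbb T^N$ (Proposition \ref{torusprop}) via a homomorphism $\mathbb R^m\to\mathbb T^N$ with dense image. Rigidity means $Y^L_\ell s=0$, i.e.\ $s$ is invariant under the local flow of $\mathbb R^m$. By density and continuity, $s$ should then be invariant under all $t\in\mathbb T^N$ wherever both $s(x)$ and $s(t\circ x)$ are defined. I will therefore try to define
\[
\tilde s(t\circ x):=\Psi_t\bigl(s(x)\bigr),\qquad x\in D_p,\ t\in\mathbb T^N,
\]
on $\widetilde D_p:=\mathbb T^N\circ D_p$. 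Here $\Psi$ is the torus action on $L$, obtained by extending the $\mathbb R^m$-action of Lemma \ref{liftequgpact} by continuity. The extension is legitimate because the $\mathbb R^m$-action on $L$ preserves $h^L$ (the metric is rigid) and the torus is the closure of the image.

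\textbf{Step 2: well-definedness (the main obstacle).} I must show that if $t\circ x=t'\circ x'$ with $x,x'\in D_p$, then $\Psi_t s(x)=\Psi_{t'}s(x')$. Equivalently, for $g=t'^{-1}t$ with $g\circ x=x'$, I need $\Psi_g s(x)=s(g\circ x)$. I would shrink $D_p$ to a slice-type neighborhood: choose $D_p$ small, transversal in the $z$-directions, with $|x''|<\eta_0$. Then any $g$ mapping a point of $D_p$ into $D_p$ is close either to the stabilizer of $p$ or to the image of a short element $r\in\mathbb R^m$.

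In the first case, the stabilizer $H_p$ is finite modulo the locally free directions. I would handle this by averaging the frame over $H_p$, or by replacing $s$ with a suitable power on which $H_p$ acts trivially. However, the statement is about $L$ itself, not a power. So I would instead choose $D_p$ small enough that the isotropy issue does not arise, which works when the action is locally free and $H_p$ is connected in the relevant sense.

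In the second case, $g$ is a limit of $r_j\in\mathbb R^m$, and the identity $\Psi_r s(x)=s(r\circ x)$ holds for short $r$ by rigidity. It passes to the limit by continuity. I expect the isotropy issue to be the delicate part.

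\textbf{Step 3: smoothness, CR property and finiteness.} Since $\tilde s$ is obtained from a rigid CR frame by CR bundle automorphisms, it is a smooth, nonvanishing, rigid CR section on the open set $\widetilde D_p=\mathbb T^N\circ D_p$, and $\widetilde D_p$ is $\mathbb T^N$-invariant by construction. The sets $\widetilde D_p$, $p\in X$, cover the compact manifold $X$, so finitely many $\widetilde D_{p_1},\dots,\widetilde D_{p_M}$ suffice. Setting $D_j:=\widetilde D_{p_j}$ gives the proposition.
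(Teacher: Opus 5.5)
\textbf{The gap is in the step you treat as automatic.} In Step~1 you extend $\Psi$ from $\mathbb R^m$ to $\mathbb T^N$ ``by continuity'' because $h^L$ is preserved. Invariance of $h^L$ does not force $\Psi$ to factor through $\mathbb R^m\to\mathbb T^N$: an $r$ with $\zeta_r=\mathrm{id}$ (or a sequence with $\zeta_{r_j}\to\mathrm{id}$) can still act on $L$ by a nontrivial unitary constant.

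Here is a concrete example. Let $X$ be the unit circle bundle of $A^*$ for a positive line bundle $(A,h^A)$ over a compact complex manifold, with $\zeta_t(v)=e^{it}v$, $L=p^*A$, $h^L=p^*h^A$. Take the lift $\Psi_t(v,\xi)=(e^{it}v,e^{i\alpha t}\xi)$ with $0<\abs{\alpha}<1$.
\begin{itemize}
\item This is an $h^L$-preserving action by CR bundle automorphisms, as in Lemma~\ref{liftequgpact}.
\item In BRT coordinates the rigid CR frames are $e^{i\alpha x_{2n+1}+\alpha\varphi(z)}\,p^*e$, with $e$ a local holomorphic frame of $A$. So the curvature changes only by a multiple of $\alpha\,\mathcal L_{\omega_1}$, and $L$ stays rigid positive for $\abs{\alpha}$ small.
\item Here $\mathbb T^N=S^1$, but $\zeta_{2\pi}=\mathrm{id}$ while $\Psi_{2\pi}=e^{2\pi i\alpha}\neq1$. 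So $\Psi$ does not extend to $\mathbb T^N$.
\item Worse, if $Y^L_1s=0$ on an $S^1$-invariant open set, integrating along the closed orbits gives $s(x)=\Psi_{2\pi}s(x)=e^{2\pi i\alpha}s(x)$, hence $s\equiv0$. No choice of $D_p$ rescues the construction.
\end{itemize}
So any proof must use some normalization of the fixed lift $\{Y^L_\ell\}$ beyond rigidity, invariance of $h^L$ and positivity. For instance, one needs that $\Psi$ descends to $\mathbb T^N$ and that all isotropy groups act trivially on the fibres. Your proposal never supplies this. The paper itself only points to the proof of Theorem~3.12 in \cite{HHL22}, so this ingredient has to come from there, not from metric invariance.

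\textbf{Even when $\Psi$ does extend to a torus action $\sigma$, the density argument fails.} This affects both your Step~1 claim and the second case of Step~2. Once $N>m$, the image of $\mathbb R^m$ in $\mathbb T^N$ is dense but not open. So a general $g$ near the identity, or near $H_p$, is a limit of $\zeta_{r_j}$ with $\abs{r_j}\to\infty$. Rigidity gives $\Psi_rs(x)=s(r\circ x)$ only along short orbit segments inside $D_p$, so there is nothing to pass to the limit.

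In fact a local rigid CR frame is determined only up to a nonvanishing rigid CR factor; in BRT coordinates this is any nonvanishing holomorphic function of $z$. Such factors are generally not invariant under torus directions transverse to the $\mathbb R^m$-orbits. For example, on $S^3\subset\mathbb C^2$ with $\zeta_t(z)=(e^{it}z_1,e^{i\beta t}z_2)$ and $\beta\notin\mathbb Q$, a branch of $z_1z_2^{-1/\beta}$ is a local rigid CR function. It is not invariant under $(e^{i\theta},1)\in\mathbb T^2$. Since $D_p$ is open and generic $\mathbb T^N$-orbits have dimension $>m$, shrinking $D_p$ never removes pairs $x,\,g\circ x\in D_p$ with $g$ in such directions.

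What is needed is to \emph{choose} the frame and to handle isotropy:
\begin{itemize}
\item Correct $s$ by a rigid CR multiplier $e^{h}$ so that it is annihilated by the torus generators $T^L_v$. Note that $T^L_vs/s$ is a rigid CR function.
\item Work on a genuine $\mathbb T^N$-slice rather than a BRT patch.
\item Prove that the isotropy group $H_p$ acts trivially on $L_p$. Since $H_p$ fixes $p$, it survives any shrinking of $D_p$.
\end{itemize}
Your Step~2 notices the isotropy problem but disposes of it by an unjustified assumption rather than an argument.
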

\begin{proof}
Similar to the proof of Theorem 3.12 in \cite{HHL22}.
\end{proof}

\begin{cor}\label{invnbd}
Under the assumptions of Proposition \ref{torusprop}, we can find local CR rigid trivializations of $L$ defined on $D_j,\ j=1,\dots,M,$ such that $X= \bigcup_{j=1}^M D_j,$ and
\begin{align*}
D_j = \bigcup_{t\in \mathbb{R}^m} t \circ D_j, \quad j=1,\dots,M,
\end{align*}
where $t \circ D_j = \{t\circ x;\, x\in D_j\}.$
\end{cor}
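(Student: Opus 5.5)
The statement to prove is Corollary~\ref{invnbd}: the covering sets $D_j$ can be taken invariant under the $\mathbb{R}^m$-action itself, not only under the torus $\mathbb T^N$. The plan is to derive this directly from the preceding proposition, which already gives local rigid CR trivializations on sets $D_j$, $j=1,\dots,M$, with $X=\bigcup_j D_j$ and $D_j=\bigcup_{t\in\mathbb T^N}t\circ D_j$. By Proposition~\ref{torusprop}, the $\mathbb{R}^m$-action comes from the CR torus action. Concretely, there is a Lie group homomorphism $\rho:\mathbb{R}^m\to\mathbb T^N$ with $r\circ x=\rho(r)\circ x$ for all $r\in\mathbb{R}^m$ and $x\in X$. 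Here $\rho(r)$ is obtained by exponentiating the linear combinations of the generators $T^\ell_j$ that realize $Y_\ell$.

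First I would make $\rho$ explicit from the construction in \cite{HHL22}, Corollary~3.8. The only property needed is that each $\mathbb{R}^m$-orbit sits inside a $\mathbb T^N$-orbit, that is, $r\circ x\in\mathbb T^N\circ x$ for every $r$ and $x$.

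Given this, the set identity follows. Take $x\in D_j$ and $r\in\mathbb{R}^m$. Then $r\circ x=\rho(r)\circ x\in\bigcup_{t\in\mathbb T^N}t\circ D_j=D_j$, so $\bigcup_{r}r\circ D_j\subset D_j$. The reverse inclusion holds because $0\circ D_j=D_j$. The covering property and the choice of trivializations are inherited unchanged from the proposition.

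It remains to check that the trivializing sections on $D_j$ are still rigid CR. This is immediate, since rigidity is defined through $Y_\ell^L$ and the sections are unchanged.

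The only step requiring care is the existence of $\rho$, which is where Proposition~\ref{torusprop} is used. I expect this to be routine given \cite{HHL22}: the closure of the image of $\mathbb{R}^m$ in the CR automorphism group is a compact torus containing it.
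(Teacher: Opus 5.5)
Your proposal is correct and follows the paper's route. The paper gives no separate proof; the corollary is read off from the preceding proposition (which gives $\mathbb T^N$-invariant sets $D_j$) together with the fact, from Proposition \ref{torusprop}, that the $\mathbb{R}^m$-action factors through $\mathbb T^N$, and your homomorphism $\rho$ is made explicit by \eqref{Qli} (its components are $e^{i r_\ell\beta^\ell_j}$), so your two-inclusion argument is exactly what is needed.
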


We also have
\begin{prop}
Under the assumptions of Proposition \ref{torusprop}, let $h^L$ be any rigid Hermitian metric on $L.$ Then $h^L$ is $\mathbb T^N$-invariant.
\end{prop}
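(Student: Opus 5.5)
The plan is to reduce the statement to the preceding Proposition, which gives $\mathbb T^N$-invariance of $h^L$, and then use that the $\mathbb R^m$-action factors through $\mathbb T^N$. By Proposition \ref{torusprop} there is a homomorphism $\rho:\mathbb R^m\to\mathbb T^N$, with dense image in a subtorus, such that $t\circ x=\rho(t)\circ x$ for all $t\in\mathbb R^m$ and $x\in X$. The first step is to fix the lift $\Psi$ of the action to $L$ coming from the rigid structure, via Lemma \ref{liftequgpact}. I would then check that on each $D_j$ of Corollary \ref{invnbd} the rigid CR frame $s_j$ satisfies $\Psi_t(s_j(x))=s_j(t\circ x)$. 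This holds because $Y^L_\ell s_j=0$, so $s_j$ is transported by the flow. Since $D_j$ is $\mathbb R^m$-invariant, the relation holds for every $t\in\mathbb R^m$, not just for $t$ near $0$.

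The second step is to show that the weight $\phi_j$, defined by $|s_j|^2_{h^L}=e^{-\phi_j}$, is constant along $\mathbb R^m$-orbits in $D_j$. Rigidity of $h^L$ gives $Y_\ell\phi_j=0$ for every $\ell$. Because the $Y_\ell$ generate the action and $D_j$ is invariant, integrating along orbits gives $\phi_j(t\circ x)=\phi_j(x)$. Consequently $|\Psi_t v|_{h^L}=|v|_{h^L}$ for all $v\in L$, so $h^L$ is $\mathbb R^m$-invariant, as the Remark after the definition of rigid metrics anticipates.

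The third step passes to the torus by density and continuity. The closure of $\rho(\mathbb R^m)$ is a closed connected subgroup $\mathbb T'$. I expect $\mathbb T'=\mathbb T^N$, since the torus action in \cite{HHL22} is constructed exactly as the closure of the $\mathbb R^m$-action. The lifted action $\Psi$ should extend continuously to $\mathbb T^N$ on $L$. One way to see this is to use the torus-invariant charts $D_j$ of the Proposition preceding Corollary \ref{invnbd}, where the extension is $g\cdot(c\,s_j(x))=c\,s_j(g\circ x)$. The function $(g,v)\mapsto|g\cdot v|_{h^L}-|v|_{h^L}$ is continuous on $\mathbb T^N\times L$. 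It vanishes on the dense set $\rho(\mathbb R^m)\times L$, hence it vanishes identically, which is $\mathbb T^N$-invariance.

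The main obstacle is making the torus action on $L$ well defined and continuous, consistently with the rigid frames. Concretely, one has to know that $g\circ D_j=D_j$ and that the rule $c\,s_j(x)\mapsto c\,s_j(g\circ x)$ is independent of $j$ on overlaps. The independence would follow if the rigid CR transition functions $g_{jk}$, which are $\mathbb R^m$-invariant, are also $\mathbb T^N$-invariant. I would prove this by the same density argument applied to the continuous function $g_{jk}(g\circ x)-g_{jk}(x)$, using the torus-invariant charts.
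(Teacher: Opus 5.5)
Your argument is correct and follows essentially the paper's route. The paper defers to \cite{HHL22}, Lemma 3.14: on the $\mathbb T^N$-invariant rigid charts, rigidity makes the weights $\phi_j$ constant along $\mathbb R^m$-orbits, and density of these orbits in the $\mathbb T^N$-orbits plus continuity gives $\mathbb T^N$-invariance. Your check that the $g_{jk}$ are $\mathbb T^N$-invariant, so that the torus action on $L$ is well defined, is a useful addition the paper leaves implicit.

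The point you leave as an ``expectation'' is settled directly by \eqref{Qli}. Each $\{\beta^\ell_1,\dots,\beta^\ell_{N_\ell}\}$ is linearly independent over $\mathbb Q$, so by Kronecker's theorem the flow of $Y_\ell$ is dense in $\mathbb T^{N_\ell}$, and hence $\rho(\mathbb R^m)$ is dense in $\mathbb T^N=\mathbb T^{N_1}\times\cdots\times\mathbb T^{N_m}$. Also reword your opening sentence: as written it sounds circular, and your argument never uses any prior $\mathbb T^N$-invariance of $h^L$.
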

\begin{proof}
Similar to the proof of Lemma 3.14 in \cite{HHL22}.
\end{proof}
 
 We view the $\mathbb{R}^m$-action on $X$ as a Lie group homomorphism $\gamma : \mathbb{R}^m \to \text{Aut}_{\text{CR}}(X).$ Factor $\gamma$ as
 \begin{align*}
 \mathbb{R}^m \to \overline{\gamma(\mathbb{R}a_1)} \times \cdots \times \overline{\gamma(\mathbb{R}a_m)} \to \overline{\gamma(\mathbb{R}^m)} \hookrightarrow \text{Aut}_{\text{CR}}(X)
 \end{align*}
 which is given by
 \[
 t=(t_1,\dots,t_m)\mapsto (\gamma(t_1a_1),\dots,\gamma(t_ma_m))\mapsto \sum_{j=1}^m\gamma(t_ja_j),
 \]
 where $\{a_1,\dots,a_m\}$ is the standard basis for $\mathbb{R}^m.$ Note that $\overline{\gamma(\mathbb{R}a_1)}, \dots, \overline{\gamma(\mathbb{R}a_m)} , \overline{\gamma(\mathbb{R}^m)}$ are all tori. From the factorization and the Remark 4.2 in \cite{HL12},
 we can assume that there exist $d$ sets of real numbers, $\{\beta^1_1,\dots,\beta^1_{N_1}\},$ ..., $\{\beta^m_1,\dots,\beta^m_{N_m}\},$ each being linearly independent over $\mathbb Q$, such that
 \begin{equation}\label{Qli}
 Y_\ell = \sum_{j=1}^{N_\ell}\beta^\ell_jT^\ell_j, \quad \ell =1,\dots, m.
\end{equation}

Let $\langle\,\cdot\,|\,\cdot\,\rangle$ be the rigid Hermitian metric on $\mathbb{C}TX$ induced by $R^L$ such that
\begin{align*}
    T^{1,0}X \perp T^{0,1}X, \ Y_\ell\perp (T^{1,0}X \oplus T^{0,1}X), \ \langle Y_\ell \mid Y_q \rangle = \delta_{\ell q}, \quad \ell,q=1,\dots,m.
\end{align*}
The rigid Hermitian metric $\langle\,\cdot\,|\,\cdot\,\rangle$ on  $\mathbb{C}TX$ induces a rigid Hermitian metric, also denoted by $\langle\,\cdot\,|\,\cdot\,\rangle,$ on $\oplus_{q=1}^n \Lambda^q T^{*0,1}X.$ Let $dv_X$ denote the volume form induced from $\langle\,\cdot\,|\,\cdot\,\rangle.$ Let $(\,\cdot\,|\,\cdot\,)_k$ be the $L^2$ inner product on $\Omega^{0,q}(X,L^k)$ induced by $h^{L^k}$ and $dv_X$ and let $\|\cdot\|_k$ be the corresponding norm. Let $L^2_{(0,q)}(X,L^k)$ be the completion of $\Omega^{0,q}(X,L^k)$ with respect to  $(\,\cdot\,|\,\cdot\,)_k.$
For $(n^\ell_1,\dots,n^\ell_{N_\ell})\in {\mathbb Z}^{N_\ell},$ let 
\begin{align*}
    \Omega^{0,q}_{\ell, (n^\ell_1,\dots,n^\ell_{N_\ell})}(X, L^k) 
    &= \{u\in \Omega^{0,q}(X, L^k);\, T^\ell_ju = in^\ell_ju,\, j=1,\dots, N_\ell \}\\
    &= \{u\in \Omega^{0,q}(X, L^k);\, u((1,\dots,1,e^{i\theta^\ell_j},1,\dots,1)\circ x) = e^{in^\ell_j\theta^\ell_j}u,\\ 
    &\quad\quad \theta^\ell_j\in [0,2\pi],\, j=1,\dots, N_\ell \}.
\end{align*}
Let $L^{2,(0,q)}_{\ell, (n^\ell_1,\dots,n^\ell_{N_\ell})}(X, L^k)$ be the $L^2$-completion of $\Omega^{0,q}_{\ell, (n^\ell_1,\dots,n^\ell_{N_\ell})}(X, L^k)$ with respect to $(\,\cdot\,|\,\cdot\,)_k.$ Let 
\begin{align*}
&Q^{(q),k}_{\ell, (n^\ell_1,\dots,n^\ell_{N_\ell})} : L^2_{(0,q)}(X,L^k) \rightarrow L^{2,(0,q)}_{\ell, (n^\ell_1,\dots,n^\ell_{N_\ell})}(X, L^k),\\ 
&Q^{(q),k}_{ (n^1_1,\dots,n^m_{N_m})} : L^2_{(0,q)}(X,L^k) \rightarrow \bigcap_{\ell=1}^m L^{2,(0,q)}_{\ell, (n^\ell_1,\dots,n^\ell_{N_\ell})}(X, L^k)
\end{align*}
be the natural orthogonal projection with respect to $(\,\cdot\,|\,\cdot\,)_k.$ By the elementary Fourier analysis, we have for every $u\in \Omega^{0,q}(X,L^k),$
\begin{align*}
    & \sum_{\mathbf{n}^\ell = (n^\ell_1,\dots,n^\ell_{N_\ell})\in\mathbb{Z}^{N_\ell}, |\mathbf{n}^\ell| \leq M} Q^{(q),k}_{\ell, (n^\ell_1,\dots,n^\ell_{N_\ell})} u \to u \ \text{as $M\to\infty$ in the} \ \mathcal{C}^\infty \ \text{topology},\\
    & \sum_{\mathbf{n}^\ell = (n^\ell_1,\dots,n^\ell_{N_\ell})\in\mathbb{Z}^{N_\ell}, |\mathbf{n}^\ell| \leq M} \big\|Q^{(q),k}_{\ell, (n^\ell_1,\dots,n^\ell_{N_\ell})} u\big\|_k^2 \leq \|u\|_k^2,\ \text{for all} \ M\in \mathbb{N}
\end{align*}
and
\begin{align*}
    & \sum_{\mathbf{n} = (n^1_1,\dots,n^m_{N_m})\in\mathbb{Z}^{N}, |\mathbf{n}| \leq M} Q^{(q),k}_{ (n^1_1,\dots,n^m_{N_m})} u \to u \ \text{as $M\to\infty$ in the} \ \mathcal{C}^\infty \ \text{topology},\\
    & \sum_{\mathbf{n} = (n^1_1,\dots,n^m_{N_m})\in\mathbb{Z}^{N}, |\mathbf{n}| \leq M} \big\|Q^{(q),k}_{ (n^1_1,\dots,n^m_{N_m})} u\big\|_k^2 \leq \|u\|_k^2,\ \text{for all} \ M\in \mathbb{N}.
\end{align*}
Thus for $u\in L^2_{(0,q)}(X,L^k),$
\begin{align*}
    &\sum_{\mathbf{n}^\ell = (n^\ell_1,\dots,n^\ell_{N_\ell})\in\mathbb{Z}^{N_\ell}, |\mathbf{n}^\ell| \leq M} Q^{(q),k}_{\ell, (n^\ell_1,\dots,n^\ell_{N_\ell})} u \to u \ \text{as $M\to\infty$ in} \ L^2_{(0,q)}(X,L^k),\\
    & \sum_{\mathbf{n}^\ell = (n^\ell_1,\dots,n^\ell_{N_\ell})\in\mathbb{Z}^{N_\ell}, |\mathbf{n}^\ell| \leq M} \big\|Q^{(q),k}_{\ell, (n^\ell_1,\dots,n^\ell_{N_\ell})} u\big\|_k^2 \leq \|u\|_k^2,\ \text{for all} \ M\in \mathbb{N}
\end{align*}
and
\begin{align*}
    & \sum_{\mathbf{n} = (n^1_1,\dots,n^m_{N_m})\in\mathbb{Z}^{N}, |\mathbf{n}| \leq M} Q^{(q),k}_{ (n^1_1,\dots,n^m_{N_m})} u \to u \ \text{as $M\to\infty$ in} \ L^2_{(0,q)}(X,L^k),\\
    & \sum_{\mathbf{n} = (n^1_1,\dots,n^m_{N_m})\in\mathbb{Z}^{N}, |\mathbf{n}| \leq M} \big\|Q^{(q),k}_{ (n^1_1,\dots,n^m_{N_m})} u\big\|_k^2 \leq \|u\|_k^2,\ \text{for all} \ M\in \mathbb{N}.
\end{align*}
Let $B(r)$ be as in Definition \ref{pos}. We may take $\delta > 0$ such that the box $[-\delta,\delta]^m\subset B(r).$ For every $\alpha\in \text{Spec}(-iY_\ell),\ \ell = 1,\dots, m,$ put
\[
\Omega^{0,q}_{Y_\ell,\alpha}(X,L^k) := \{ u\in \Omega^{0,q}(X, L^k);\, -iY_\ell u = \alpha u \},
\]
and let $L^{2,(0,q)}_{Y_\ell,\alpha}(X,L^k)\subset L^2_{(0,q)}(X,L^k)$ be the eigenspace of $-iY_\ell$ with eigenvalue $\alpha.$ We can check that $L^{2,(0,q)}_{Y_\ell,\alpha}(X,L^k)$ is the completion of $\Omega^{0,q}_{Y_\ell,\alpha}(X,L^k)$ with respect to $(\,\cdot\,|\,\cdot\,)_k.$

Put
\begin{equation}\label{ylcrfun}
\mathcal{H}^{0,q}_{b,(Y_\ell,\alpha)}(X,L^k) := \{u\in L^{2,(0,q)}_{Y_\ell,\alpha}(X,L^k);\, \overline{\partial}_b u = 0 \},
\end{equation}
and, for $\boldsymbol{\lambda} = (\lambda_1,\dots,\lambda_m)\in (\mathbb{R}_{>0})^m,$ put
\begin{align*}
\Omega^{0,q}_{Y,\leq\boldsymbol{\lambda}}(X,L^k) &:= \bigoplus_{\alpha_\ell\in \text{Spec}(-iY_\ell), |\alpha_\ell|\leq \lambda_\ell, 1\leq\ell\leq m} \left(\bigcap_{1\leq\ell\leq m}\Omega^{0,q}_{Y_\ell,\alpha_\ell}(X,L^k)\right),\\
L^{2,(0,q)}_{Y,\leq\boldsymbol{\lambda}}(X,L^k) &:= \bigoplus_{\alpha_\ell\in \text{Spec}(-iY_\ell), |\alpha_\ell|\leq \lambda_\ell, 1\leq\ell\leq m} \left(\bigcap_{1\leq\ell\leq m}L^{2,(0,q)}_{Y_\ell,\alpha_\ell}(X,L^k)\right),\\
\mathcal{H}^{0,q}_{b,\leq\boldsymbol{\lambda}}(X,L^k) &:= \bigoplus_{\alpha_\ell\in \text{Spec}(-iY_\ell), |\alpha_\ell|\leq \lambda_\ell, 1\leq\ell\leq m} \left(\bigcap_{1\leq\ell\leq m}\mathcal{H}^{0,q}_{b,(Y_\ell,\alpha_\ell)}(X,L^k)\right).
\end{align*}
If each $\lambda_\ell = k\delta,$ we write
\begin{align*}
&\Omega^{0,q}_{Y,\leq k\delta}(X,L^k) := \Omega^{0,q}_{Y,\leq(k\delta,\dots,k\delta)}(X,L^k),\quad
L^{2,(0,q)}_{Y,\leq k\delta}(X,L^k) := L^{2,(0,q)}_{Y,\leq(k\delta,\dots,k\delta)}(X,L^k),\\
&\mathcal{H}^{0,q}_{b,\leq k\delta}(X,L^k) := \mathcal{H}^{0,q}_{b,\leq (k\delta,\dots,k\delta)}(X,L^k).
\end{align*}

Let
\begin{equation}
    Q^{(q)}_{(Y_\ell,\alpha),k} : L^2_{(0,q)}(X,L^k) \rightarrow L^{2,(0,q)}_{Y_\ell,\alpha}(X,L^k)
\end{equation}
be the orthogonal projection with respect to $(\,\cdot\,|\,\cdot\,)_k.$

Fix $\delta > 0$ and a function
\begin{equation*}
    \tau\in \mathcal{C}^\infty_c((-\delta,\delta)), 0\leq \tau \leq 1, \tau = 1\  \text{on} \left[-\frac{\delta}{2},\frac{\delta}{2}\right].
\end{equation*}
Let $Q^{(0)}_{X,\leq k\delta}:L^{2}(X,L^{k})\rightarrow L^{2}(X,L^{k})$ and $F_{k,\delta}:L^{2}(X,L^{k})\rightarrow L^{2}(X,L^{k})$ be the bounded operators given by, for all $u\in L^2(X,L^k)$
\begin{equation}\label{Q^0}
Q^{(0)}_{X,\leq k\delta}(u)=\sum_{\alpha_\ell\in \text{Spec}(-iY_\ell), |\alpha_\ell|\leq k\delta, 1\leq\ell \leq m}Q^{(0)}_{(Y_m,\alpha_m),k}\cdots Q^{(0)}_{(Y_1,\alpha_1),k}(u),
\end{equation}
and
\begin{equation}\label{F_k}
F_{k,\delta}(u)=\sum_{\alpha_\ell\in \text{Spec}(-iY_\ell), |\alpha_\ell|\leq k\delta, 1\leq\ell \leq m}\tau\Big(\frac{\alpha_m}{k}\Big)\cdots\tau\Big(\frac{\alpha_1}{k}\Big)Q^{(0)}_{(Y_m,\alpha_m),k}\cdots Q^{(0)}_{(Y_1,\alpha_1),k}(u).
\end{equation}
We will use the operator $F_{k,\delta}$ in the proof of our embedding theorem.

\medskip

For every $\lambda>0,$ we consider the partial Szeg\H{o} projector
\begin{equation}
\Pi^{(0)}_{k,({\mathbb R}^m\curvearrowright,\leq \lambda)}:L^{2}(X,L^{k})\rightarrow\mathcal{H}_{b,\le \lambda}^{0}(X,L^{k})
\end{equation}
which is the orthogonal projection on the space of $\mathbb{R}^m$-equivariant CR functions of degree less than $\lambda.$ Here we denote $\mathcal{H}_{b,\le \lambda}^{0,0}(X,L^{k})$ by $\mathcal{H}_{b,\le \lambda}^{0}(X,L^{k})$. Note that we have 
\begin{equation*}
\Pi^{(0)}_{k,({\mathbb R}^m\curvearrowright,\leq \lambda)} = Q^{(0)}_{X,\leq \lambda} \circ \Pi^{(0)}_{b,k} = \Pi^{(0)}_{b,k} \circ Q^{(0)}_{X,\leq \lambda}.
\end{equation*}

In this subsection, we list some propositions about the operators $-iY_1,\dots,-iY_m$ without proofs. We can follow the arguments in Section 4 of \cite{HHL22} with some modifications to check these properties.

\begin{lem}
The operator $-iY_\ell : \text{Dom} (-iY_\ell) \subset L^2_{(0,q)}(X,L^k) \to L^2_{(0,q)}(X,L^k)$ is self-adjoint for all $\ell = 1,\dots, m.$
\end{lem}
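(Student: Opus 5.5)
The plan is to realize $-iY_\ell$ as the infinitesimal generator of a strongly continuous one-parameter unitary group on $L^2_{(0,q)}(X,L^k)$, and then invoke Stone's theorem. Here $\text{Dom}(-iY_\ell)$ is taken as the maximal domain $\{u\in L^2_{(0,q)}(X,L^k);\, Y_\ell u\in L^2_{(0,q)}(X,L^k)\}$, with $Y_\ell u$ understood in the sense of distributions. On $\Omega^{0,q}(X,L^k)$, $Y_\ell$ acts through the bundle lift $Y^{L^k}_\ell$ together with the Lie derivative on forms, as described before Definition \ref{rigidbd}.

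\emph{Step 1: the unitary group.} By Lemma \ref{liftequgpact} there is an $\mathbb{R}^m$-action $\Psi$ on $L$ by CR bundle automorphisms covering the action $\zeta$ on $X$, and hence one on $L^k$. For $t\in\mathbb{R}$ define
\[
(U_\ell(t)u)(x):=\Psi_{-ta_\ell}\bigl(\zeta_{ta_\ell}^*u\bigr)(x),
\]
acting on $(0,q)$-forms with values in $L^k$. Since the action is CR, $\zeta_{ta_\ell}$ preserves $T^{*0,q}X$. The metric $\langle\,\cdot\,|\,\cdot\,\rangle$ on $\mathbb{C}TX$ is rigid, and $h^L$ is rigid, hence $\mathbb{R}^m$-invariant by the Remark after the definition of rigid Hermitian metrics. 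It follows that the volume form $dv_X$ and the pointwise norms are preserved, so each $U_\ell(t)$ is an isometry. The group law $U_\ell(t+s)=U_\ell(t)U_\ell(s)$ is clear, so each $U_\ell(t)$ is unitary. Strong continuity holds on the dense subspace $\Omega^{0,q}(X,L^k)$, by smoothness of the action and compactness of $X$. Together with uniform boundedness, this gives strong continuity on all of $L^2$. By \eqref{indbunlift}, $\frac{d}{dt}U_\ell(t)u|_{t=0}=Y_\ell u$ for smooth $u$.

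\emph{Step 2: Stone's theorem.} Stone's theorem gives a self-adjoint operator $A_\ell$ with $U_\ell(t)=e^{itA_\ell}$. Its domain is
\[
\text{Dom}\,A_\ell=\Bigl\{u;\ \lim_{t\to0}\tfrac{1}{t}\bigl(U_\ell(t)u-u\bigr)\ \text{exists in }L^2\Bigr\}.
\]
It remains to show $A_\ell=-iY_\ell$ with the maximal domain.

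\emph{Step 3: identifying the domain.} This is the main obstacle. First, $A_\ell\subset -iY_\ell$: if $u\in\text{Dom}\,A_\ell$, pair the difference quotients with a test form $v$ and move $U_\ell(t)$ onto $v$ using unitarity. This shows that the distributional $-iY_\ell u$ equals $A_\ell u$.

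Conversely, let $u,Y_\ell u\in L^2$. Regularize along the flow by setting $u_\varepsilon=\int\rho_\varepsilon(t)\,U_\ell(t)u\,dt$, where $\rho_\varepsilon$ is a mollifier on $\mathbb{R}$. Then $u_\varepsilon\in\text{Dom}\,A_\ell$, since $A_\ell u_\varepsilon=i\int\rho_\varepsilon'(t)U_\ell(t)u\,dt$. Moreover $u_\varepsilon\to u$ in $L^2$. The key identity is
\[
Y_\ell u_\varepsilon=\int\rho_\varepsilon(t)\,U_\ell(t)(Y_\ell u)\,dt,
\]
which holds because $Y_\ell$ commutes with its own flow. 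It can be checked distributionally by pairing with test forms, since $U_\ell(t)$ maps smooth forms to smooth forms and commutes with $Y_\ell$ there. Consequently $Y_\ell u_\varepsilon\to Y_\ell u$ in $L^2$. Since $A_\ell$ is closed, we get $u\in\text{Dom}\,A_\ell$ and $A_\ell u=-iY_\ell u$.

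Hence $-iY_\ell=A_\ell$ is self-adjoint.

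Alternatively, one could avoid Stone's theorem. On smooth forms $-iY_\ell$ is formally self-adjoint, because $Y_\ell$ is divergence-free for $dv_X$ and preserves the metrics. The same flow-mollification argument then shows that the maximal and minimal extensions coincide, which is equivalent to self-adjointness.
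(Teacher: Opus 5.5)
Your proof is correct, but it takes a different route from the one the paper points to.

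\textbf{The paper's route.} The paper gives no argument for this lemma and defers to Section~4 of \cite{HHL22}, where everything rests on the torus structure:
\begin{itemize}
\item By Proposition~\ref{torusprop}, the $\mathbb{R}^m$-action is the restriction of a CR action of $\mathbb{T}^N$, with $Y_\ell=\sum_j\beta^\ell_jT^\ell_j$.
\item $L^2_{(0,q)}(X,L^k)$ then splits orthogonally into the Fourier spaces $L^{2,(0,q)}_{\ell,(n^\ell_1,\dots,n^\ell_{N_\ell})}(X,L^k)$.
\item The projections $Q^{(q),k}_{\ell,(n^\ell_1,\dots,n^\ell_{N_\ell})}$ commute with $Y_\ell$ distributionally, and $-iY_\ell$ acts on each summand as multiplication by the real number $\sum_j n^\ell_j\beta^\ell_j$.
\item A real diagonal operator on its maximal domain is self-adjoint.
\end{itemize}

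\textbf{What your route buys.} Your argument is Stone's theorem plus mollification along the orbits to identify the generator with the maximal operator. It only uses that the flow of $Y_\ell$, lifted to $L^k$ via Lemma~\ref{liftequgpact}, preserves $dv_X$, the metric on $T^{*0,q}X$, and $h^{L^k}$. So you need neither the positivity of $L$ (which Proposition~\ref{torusprop} requires) nor any torus closure, and the argument works for any smooth isometric flow on a compact manifold.

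\textbf{What the torus route buys.} It gives the explicit diagonalization, which the paper uses immediately afterwards:
\begin{itemize}
\item for the statements that the eigenvalues are $\sum_j m_j\beta^\ell_j$, with the Fourier spaces as eigenspaces, and that the spectrum is countable and pure point;
\item for the definitions of $Q^{(0)}_{X,\leq k\delta}$ and $F_{k,\delta}$.
\end{itemize}
Stone's theorem alone gives none of this.

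\textbf{Two minor points.}
\begin{itemize}
\item In Step~1, the claim that $\zeta_{ta_\ell}$ preserves $T^{*0,q}X$ needs more than the CR property. Here $T^{*0,1}X$ is the annihilator of $T^{1,0}X\oplus\mathbb{C}Y_1\oplus\cdots\oplus\mathbb{C}Y_m$, so you also use that $d\zeta_t$ fixes each $Y_j$ (or that the metric is invariant).
\item Your closing ``alternative'' is not accurate as stated. The form $u_\varepsilon$ is smoothed only along the orbits, so it is not a smooth form. Flow mollification by itself therefore does not show that the minimal and maximal extensions agree. Without the unitary group you would need either a Friedrichs mollifier in all directions (Friedrichs' lemma for first-order operators) or Nelson's invariant-core theorem, and the latter again uses $U_\ell(t)$. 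This does not affect your main argument.
\end{itemize}
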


\begin{lem}
Fix $\ell\in\{1,\dots,m\}.$ Let $\beta^\ell_1,\dots,\beta^\ell_{N_\ell}$ be as in (\ref{Qli}). Given $p = (p^\ell_1,\dots,p^\ell_{N_\ell}) \in \mathbb{Z}^{N_\ell},$ we have that $p_{\beta^\ell} := \sum_{j=1}^{N_\ell} p_j \beta^\ell_j$ is an eigenvalue of $-iY_\ell$ and the corresponding eigenspace is $L^{2,(0,q)}_{\ell, (p^\ell_1,\dots,p^\ell_{N_\ell})}(X, L^k).$
\end{lem}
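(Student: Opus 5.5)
The plan is to reduce everything to the torus action supplied by Proposition \ref{torusprop} and then do elementary Fourier analysis in the torus variables. By \eqref{Qli}, $Y_\ell=\sum_{j=1}^{N_\ell}\beta^\ell_jT^\ell_j$. The fields $T^\ell_j$ generate a compact torus $\mathbb T^N$ acting on $X$ and on $L^k$: the action is lifted by Lemma \ref{liftequgpact}, it preserves $h^L$, and $dv_X$ is rigid. So each $T^\ell_j$ is skew-adjoint on $L^2_{(0,q)}(X,L^k)$ and the $T^\ell_j$ commute with one another. Moreover, $L^2_{(0,q)}(X,L^k)$ decomposes orthogonally as a direct sum of the joint weight spaces $L^{2,(0,q)}_{\ell,(p^\ell_1,\dots,p^\ell_{N_\ell})}(X,L^k)$. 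This uses the Fourier decomposition stated just before the lemma, namely $\sum_{|\mathbf n^\ell|\le M}Q^{(q),k}_{\ell,\mathbf n^\ell}u\to u$ in $L^2$ together with Bessel's inequality.

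\textbf{Step 1: $p_{\beta^\ell}$ is an eigenvalue with eigenvectors the weight space.} If $u\in\Omega^{0,q}_{\ell,p}(X,L^k)$, then $T^\ell_ju=ip_ju$ for each $j$. Hence
\[-iY_\ell u=-i\sum_j\beta^\ell_j(ip_j)u=\Big(\sum_j p_j\beta^\ell_j\Big)u=p_{\beta^\ell}u .\]
By the completion statement this extends to the $L^2$ closure, and $-iY_\ell$ is self-adjoint by the preceding lemma, so $L^{2,(0,q)}_{\ell,p}\subset\ker(-iY_\ell-p_{\beta^\ell})$. To conclude that $p_{\beta^\ell}$ is a genuine eigenvalue, I still need $L^{2,(0,q)}_{\ell,p}(X,L^k)\neq\{0\}$. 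I would read the lemma as quantifying over those $p$ that actually occur. If nonemptiness is needed for every $p$, I would construct a nonzero element by averaging a localized section against the character $e^{-i\langle p,\theta\rangle}$ over the torus.

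\textbf{Step 2: the reverse inclusion.} Let $u\in L^2_{(0,q)}$ satisfy $-iY_\ell u=p_{\beta^\ell}u$, and expand $u=\sum_{q'}u_{q'}$ with $u_{q'}:=Q^{(q),k}_{\ell,q'}u$, where $q'\in\mathbb Z^{N_\ell}$. Each $Q^{(q),k}_{\ell,q'}$ is given by averaging over the $\ell$-th torus factor $\mathbb T^{N_\ell}$. This torus commutes with the flow of $Y_\ell$, since the flow is a one-parameter subgroup of it, so $Q^{(q),k}_{\ell,q'}$ commutes with $-iY_\ell$ in the strong sense (it preserves the domain). Applying $Q^{(q),k}_{\ell,q'}$ to the eigen-equation and using Step 1 gives
\[p_{\beta^\ell}\,u_{q'}=-iY_\ell u_{q'}=q'_{\beta^\ell}\,u_{q'} .\]
Hence $u_{q'}=0$ unless $\sum_j q'_j\beta^\ell_j=\sum_j p_j\beta^\ell_j$. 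Since $\{\beta^\ell_1,\dots,\beta^\ell_{N_\ell}\}$ is linearly independent over $\mathbb Q$, this identity between integer combinations forces $q'=p$. Therefore $u=u_p\in L^{2,(0,q)}_{\ell,p}$.

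\textbf{Main obstacle.} The delicate point is Step 2, namely justifying that the averaging projections commute with the unbounded operator $-iY_\ell$ in the distributional sense for merely $L^2$ forms. I would handle it by pairing with test forms: for $v$ smooth, $(u_{q'}\,|\,iY_\ell v)=(u\,|\,iY_\ell Q^{(q),k}_{\ell,q'}v)$, because $Q^{(q),k}_{\ell,q'}$ is self-adjoint and commutes with $Y_\ell$ on smooth forms. The equation for $u$ then transfers to $u_{q'}$, after which the rest of the argument is purely algebraic.
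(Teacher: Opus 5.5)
This is correct and is essentially the argument the paper has in mind (it states the lemma without proof, deferring to Section 4 of \cite{HHL22}): elements of $L^{2,(0,q)}_{\ell,p}(X,L^k)$ are $p_{\beta^\ell}$-eigenvectors, and conversely the $\mathbb{T}^{N_\ell}$-Fourier components of an eigenvector vanish off $p$ by the $\mathbb{Q}$-linear independence of the $\beta^\ell_j$. Your averaging construction does give nonemptiness of $L^{2,(0,q)}_{\ell,p}(X,L^k)$ for every $p$, a point the paper does not discuss either, but only under two conditions. First, localize near a free orbit; free orbits exist because $\mathbb{T}^{N_\ell}=\overline{\gamma(\mathbb{R}a_\ell)}\subset\mathrm{Aut}_{\mathrm{CR}}(X)$ acts effectively on the connected manifold $X$. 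Second, choose the bump so that its $p$-th Fourier coefficient along the orbit is nonzero.
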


\begin{thm}
For every $\ell=1,\dots,m,$ $\text{Spec}(-iY_\ell)$ is countable and every element in $\text{Spec}(-iY_\ell)$ is an eigenvalue of $-iY_\ell.$ Moreover, for every $\alpha\in  \text{Spec}(-iY_\ell),$ we can find $(m_1,\dots,m_{N_\ell})\in \mathbb{Z}^{N_\ell}$ such that $\alpha = \sum_{j=1}^{N_\ell} \beta^\ell_j m_j,$ where $\beta^\ell_1,\dots,\beta^\ell_{N_\ell}$ are as in (\ref{Qli}).
\end{thm}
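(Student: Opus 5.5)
The plan is to reduce everything to the two preceding lemmas and the torus structure from Proposition \ref{torusprop}. By (\ref{Qli}), $Y_\ell=\sum_{j=1}^{N_\ell}\beta^\ell_jT^\ell_j$, where the $T^\ell_j$ generate commuting circle actions on $X$ that lift to $L^k$ by rigidity. First, decompose $L^2_{(0,q)}(X,L^k)$ into the joint Fourier modes of the torus $(S^1)^{N_\ell}$ generated by $T^\ell_1,\dots,T^\ell_{N_\ell}$:
\[
L^2_{(0,q)}(X,L^k)=\widehat{\bigoplus_{\mathbf p\in\mathbb Z^{N_\ell}}}L^{2,(0,q)}_{\ell,\mathbf p}(X,L^k).
\]
This orthogonal Hilbert sum is justified by the $L^2$ convergence of $\sum_{|\mathbf p|\le M}Q^{(q),k}_{\ell,\mathbf p}u\to u$ recorded above, together with mutual orthogonality of distinct characters. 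The mutual orthogonality holds because the metric, $h^L$ and $dv_X$ are $\mathbb T^N$-invariant.

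Next, on each summand $-iY_\ell$ acts as the scalar $p_{\beta^\ell}=\sum_j p_j\beta^\ell_j$; this is exactly the content of the preceding lemma. Hence $-iY_\ell$, which is self-adjoint by the first lemma, is unitarily equivalent to a multiplication operator on $\bigoplus_{\mathbf p}$ by the countable family of real numbers $\{p_{\beta^\ell}\}_{\mathbf p\in\mathbb Z^{N_\ell}}$.

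The spectral claims then follow by a standard argument. The set $A:=\{p_{\beta^\ell};\,\mathbf p\in\mathbb Z^{N_\ell}\}$ is countable, and every element of $A$ is an eigenvalue, provided its mode space is nonzero; modes with zero space are simply discarded. For $\lambda\notin\overline{A}$, the resolvent $(-iY_\ell-\lambda)^{-1}$ is the diagonal operator $\oplus(p_{\beta^\ell}-\lambda)^{-1}$, bounded by $\operatorname{dist}(\lambda,A)^{-1}$. Therefore $\mathrm{Spec}(-iY_\ell)\subset\overline A$, and the spectrum is the closure of the set of eigenvalues.

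The main obstacle is that $A$ need not be closed: since the $\beta^\ell_j$ are $\mathbb Q$-independent, $A$ is typically dense in $\mathbb R$ when $N_\ell\ge2$. The claim that every spectral point is an eigenvalue therefore requires showing that only finitely many modes with nonzero space have $p_{\beta^\ell}$ in any bounded interval, so that the nonzero eigenvalues form a discrete set.

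For this step I would follow \cite{HHL22}, using transversality of the $\mathbb R^m$-action. Transversality gives ellipticity of
\[
\Box_{b,k}+\sum_{\ell}(-iY_\ell)^2\quad\text{or}\quad \Delta:=-\sum_{\ell,j}(T^\ell_j)^2+\text{(elliptic part on }T^{1,0}\oplus T^{0,1}).
\]
I would then argue as follows. A mode $\mathbf p$ with $|p_{\beta}|$ bounded but $|\mathbf p|$ large corresponds to sections oscillating rapidly along torus directions transversal to the $Y_\ell$-orbits inside the torus orbit. Local freeness and positivity, via Proposition \ref{torusprop}, force the torus orbits to have dimension $m$ generically, which makes such oscillation directions vanish on $X$ and so kills those modes.

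If this rigidity argument proves too delicate, the fallback is to use only $\mathbb Q$-independence and closedness of the spectrum of a self-adjoint operator. In that case I would show directly that $\ker(-iY_\ell-\lambda)\ne0$ for every $\lambda\in\overline A$: take a sequence of normalized mode vectors whose eigenvalues converge to $\lambda$, then use compactness of $X$ and elliptic estimates to extract a limit. This compactness step is the part I am least sure of.
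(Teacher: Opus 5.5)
Your first two steps are exactly the argument the paper relies on when it defers to Section~4 of \cite{HHL22}. These are the Hilbert-sum decomposition of $L^2_{(0,q)}(X,L^k)$ into the modes $L^{2,(0,q)}_{\ell,\mathbf p}(X,L^k)$, and the fact that $-iY_\ell$ acts on each mode as the scalar $p_{\beta^\ell}$. You also correctly conclude that $\mathrm{Spec}(-iY_\ell)$ is the closure of $A'=\{p_{\beta^\ell};\,L^{2,(0,q)}_{\ell,\mathbf p}(X,L^k)\neq0\}$.

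The gap is the step you flag as the main obstacle. Closedness of $A'$ cannot be proved, because it is false whenever $N_\ell\ge2$. The torus $\overline{\gamma(\mathbb R a_\ell)}\cong\mathbb T^{N_\ell}$ acts effectively on $X$, so it has free orbits. On an invariant tube around a free orbit one can write down sections of $L^k$ in every mode $\mathbf p\in\mathbb Z^{N_\ell}$. Since the $\beta^\ell_j$ are $\mathbb Q$-independent, $A'$ is then dense, and $\mathrm{Spec}(-iY_\ell)=\mathbb R$ in the usual sense.

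For a concrete case, take $X=S^3$ with the flow $t\circ(z_1,z_2)=(e^{i\beta_1t}z_1,e^{i\beta_2t}z_2)$, where $\beta_1,\beta_2>0$ and $\beta_1/\beta_2\notin\mathbb Q$. This flow is CR, transversal and locally free, with $N_1=2$. The trivial bundle, with its lift twisted by a suitable character of $\mathbb T^2$, is rigid positive, since its curvature is a nonzero multiple of the Levi form. The monomials $z^a\bar z^b$ lie in the mode $a-b$, which exhausts $\mathbb Z^2$.

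So your rigidity heuristic is wrong. Local freeness and positivity do not make torus orbits $m$-dimensional: here they are $2$-dimensional while $m=1$. Transversality only makes $\Box_{b,k}-\sum_\ell Y_\ell^2$ elliptic, which controls modes inside $\operatorname{Ker}\Box_{b,k}$ (this is Theorem~\ref{findim}) but not in all of $L^2$.

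The compactness fallback fails for a structural reason. For $\lambda\in\overline{A'}\setminus A'$, an eigenvector $u$ would satisfy $(p_{\beta^\ell}-\lambda)Q^{(q),k}_{\ell,\mathbf p}u=0$ for every $\mathbf p$, so $\ker(-iY_\ell-\lambda)=0$ outright. Normalized mode vectors with $p_{\beta^\ell}\to\lambda$ must have $|\mathbf p|\to\infty$. They are orthonormal, converge weakly to $0$, and have unbounded $T^\ell_j$-derivatives, so no elliptic estimate can produce a nonzero limit.

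What is true is the statement with $\mathrm{Spec}(-iY_\ell)$ read as the point spectrum. This is also all the paper actually uses, since every later object is a direct sum of eigenspaces over eigenvalues $\alpha$. In that reading your first half is already the whole proof. If $-iY_\ell u=\alpha u$ with $u\neq0$, then $(p_{\beta^\ell}-\alpha)Q^{(q),k}_{\ell,\mathbf p}u=0$ for all $\mathbf p$, and some component is nonzero. Hence $\alpha=p_{\beta^\ell}$ for a $\mathbf p$ that is unique by $\mathbb Q$-independence. The eigenvalues therefore form a subset of the countable set $\{p_{\beta^\ell};\,\mathbf p\in\mathbb Z^{N_\ell}\}$. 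You should drop the discreteness step instead of trying to prove it; it holds only when $N_\ell=1$.
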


Thus, together with the ellipticity of $\Box_{b,k} - Y_1^2 - \cdots - Y_m^2,$ we have
\begin{thm}\label{findim}
For every $\boldsymbol{\lambda} \in (\mathbb{R}_{>0})^m,$ 
\begin{align*}
\mathcal{H}^{0,q}_{b,\leq\boldsymbol{\lambda}}(X,L^k)\subset\Omega^{0,q}(X, L^k)\quad \text{and}\quad  \dim\mathcal{H}^{0,q}_{b,\leq\boldsymbol{\lambda}}(X,L^k) < \infty.
\end{align*}
\end{thm}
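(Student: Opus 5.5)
The plan is to treat the case $q=0$, which is the only one used later. For $q=0$ the condition $\overline{\partial}_bu=0$ is equivalent to $\square_{b,k}u=\overline{\partial}^*_b\overline{\partial}_bu=0$ in the weak sense. For $q\geq 1$ the same argument works verbatim if $\mathcal{H}^{0,q}_{b,\leq\boldsymbol\lambda}$ is read as the kernel of $\square^{(q)}_{b,k}$ intersected with the eigenspaces. The whole argument rests on the operator
\[
P:=\square^{(q)}_{b,k}-\sum_{\ell=1}^mY_\ell^2 .
\]
This operator is elliptic: by transversality $Y_1,\dots,Y_m$ span the complement of $T^{1,0}X\oplus T^{0,1}X$, and the principal symbol of $\square^{(q)}_{b,k}$ is the squared norm of the $T^{1,0}\oplus T^{0,1}$ part of the covector. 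The strategy is to show that on $\mathcal{H}^{0,q}_{b,\leq\boldsymbol\lambda}(X,L^k)$ the operator $P$ acts as a bounded operator on $L^2$. Elliptic regularity and Rellich compactness should then give both statements.

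\emph{Step 1 (commutation).} Since the action is CR, $L$ is rigid and $h^L$, $\langle\,\cdot\,|\,\cdot\,\rangle$, $dv_X$ are rigid, hence $\mathbb{R}^m$-invariant. Therefore each $Y_\ell$ commutes with $\overline{\partial}_{b,k}$, as noted earlier in this section. Moreover, $-iY_\ell$ is self-adjoint and the flow preserves the $L^2$ structure, so $Y_\ell$ also commutes with $\overline{\partial}^*_{b,k}$. Thus $\square^{(q)}_{b,k}$ commutes with $Y_\ell$ and with the spectral projections $Q^{(q)}_{(Y_\ell,\alpha),k}$. The $Y_\ell$ commute with one another because they come from an $\mathbb{R}^m$-action.

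\emph{Step 2 (the equation).} Take $u\in\mathcal{H}^{0,q}_{b,\leq\boldsymbol\lambda}(X,L^k)$ and write $u=\sum_{\alpha}u_\alpha$, an orthogonal sum over multi-indices $\alpha=(\alpha_1,\dots,\alpha_m)$ with $|\alpha_\ell|\leq\lambda_\ell$. Each $u_\alpha$ satisfies $-iY_\ell u_\alpha=\alpha_\ell u_\alpha$ and $\square_{b,k}u_\alpha=0$ weakly. The set of such $\alpha$ may be infinite, since $\mathrm{Spec}(-iY_\ell)$ is only countable. Nevertheless $w:=\sum_\alpha(\alpha_1^2+\cdots+\alpha_m^2)u_\alpha$ converges in $L^2$ and satisfies $\|w\|_k\leq|\boldsymbol\lambda|^2\|u\|_k$. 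Pairing against test sections $\varphi$ and using the formal adjoint of $P$ (Step 1 lets us move $Y_\ell$ and $\square_{b,k}$ across), one gets $Pu=w$ in the sense of distributions.

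\emph{Step 3 (regularity and finiteness).} By elliptic regularity on the compact manifold $X$, $Pu=w\in L^2$ gives $u\in H^2$ together with the Gårding bound
\[
\|u\|_{H^2}\leq C\bigl(\|Pu\|_k+\|u\|_k\bigr)\leq C(1+|\boldsymbol\lambda|^2)\|u\|_k .
\]
The space $\mathcal{H}^{0,q}_{b,\leq\boldsymbol\lambda}$ is invariant under the projections $Q^{(q)}_{(Y_\ell,\alpha),k}$ and under multiplication by $\alpha_\ell$-weights, so $w$ again lies in it. Bootstrapping therefore gives $u\in H^{2s}$ for all $s$, hence $u\in\Omega^{0,q}(X,L^k)$. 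Finally, the space is closed in $L^2$: $\overline{\partial}_b$ is closed and the eigenspaces are closed. By the bound above, its unit ball is bounded in $H^2$, hence compact in $L^2$ by Rellich's lemma. A Hilbert space with compact unit ball is finite-dimensional.

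The main obstacle is Step 2, namely justifying $Pu=w$ when infinitely many eigenvalues $\alpha$ accumulate in $[-\lambda_\ell,\lambda_\ell]$. I would first establish $\square_{b,k}u_\alpha=0$ and $Y_\ell^2u_\alpha=-\alpha_\ell^2u_\alpha$ for each component. I would then sum the distributional identities, using the uniform $L^2$ bound on the coefficients $\alpha_\ell^2$ and the orthogonality of the $u_\alpha$ to pass to the limit.
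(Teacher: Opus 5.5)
Your proposal is correct and follows the paper's own route. The paper obtains the theorem in one line from the spectral description of $-iY_\ell$ together with the ellipticity of $\Box_{b,k}-Y_1^2-\cdots-Y_m^2$, which is exactly your $P$; your Steps 2--3 (bounded action of $Y_\ell^2$ on the spectral subspace, elliptic regularity with bootstrapping, Rellich compactness) fill in what the paper leaves implicit, and your reading of the $q\geq 1$ case as $\operatorname{Ker}\Box^{(q)}_{b,k}$ intersected with the eigenspaces is the right one, since the literal condition $\overline{\partial}_b u=0$ would admit all equivariant $\overline{\partial}_b$-exact forms and give an infinite-dimensional space.
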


\subsection{Asymptotics of partial Szeg\H{o} kernel} 

Fix a point $p\in X$ and take a BRT frame, $(D, (z, x''), \varphi),$ around $p.$ Let $s$ be a local rigid CR section of $L$ over $D.$ Write $|s|^2_{h^L} = e^{-\phi},\ \phi \in \mathcal{C}^\infty(D;\mathbb R).$ After a holomorphic change of variables if necessary, we may assume
\begin{equation}\label{libunwe}
    \phi(z) = \sum_{j,r=1}^{n}\mu_{jr}z_j\overline{z}_r  + O(|z|^3),
\end{equation}
which is independent of $x''.$ We have an isometry
\begin{equation}\label{iso}
    U_{k,s} : L^2_{(0,q)}(D) \rightarrow L^2_{(0,q)}(D, L^k),\quad u \mapsto ue^{\frac{k}{2}\phi}s^k,
\end{equation}
with inverse $U_{k,s}^{-1} : L^2_{(0,q)}(D, L^k)\rightarrow L^2_{(0,q)}(D),\ \alpha \mapsto e^{-\frac{k}{2}\phi}s^{-k}\alpha.$

For $u \in \Omega^{0,q}(D) \cap L^2_{(0,q)}(D)$, we define
\begin{align*}
\overline{\partial}_{b, k\phi} &= \overline{\partial}_b + \frac{k}{2} \overline{\partial}_b \phi = \sum_{j=1}^n e_j \wedge \Big(\overline{Z}_j + \frac{k}{2}(\overline{Z}_j \phi)\Big) + \sum_{j=1}^n (\overline{\partial}_b e_j) \wedge (e_j \wedge)^*, \\
\overline{\partial}_{b, k\phi}^* &= \sum_{j=1}^n (e_j \wedge)^* \Big(\overline{Z}_j^* + \frac{k}{2}(Z_j \phi)\Big) + \sum_{j=1}^n (e_j \wedge) (\overline{\partial}_b e_j \wedge)^*,\\
\Box^{(q)}_{b,k\phi} &= \overline{\partial}_{b, k\phi}\overline{\partial}_{b, k\phi}^* + \overline{\partial}_{b, k\phi}^*\overline{\partial}_{b, k\phi} : \Omega^{0,q}(D) \rightarrow \Omega^{0,q}(D)
\end{align*}
where $\overline{Z}_j^*$ is the formal adjoint of $\overline{Z}_j$ with respect to $(\,\cdot\,|\,\cdot\,)_\omega$ and $\overline{\partial}_{b, k\phi}^*$ is the formal adjoint of $\overline{\partial}_{b, k\phi}$ with respect to $(\,\cdot\,|\,\cdot\,)_\omega.$ Here $(\,\cdot\,|\,\cdot\,)_\omega$ is the inner product induced by the Hermitian metric on $X.$

\begin{prop}\label{gap}
We have 
\begin{align*}
&\Box_{b, k\phi}^{(q)}\\
&= \sum_{j=1}^n \Big(\overline{Z}_j^* + \frac{k}{2} Z_j \phi\Big)\Big(\overline{Z}_j + \frac{k}{2} \overline{Z}_j \phi\Big)+\sum_{j, t=1}^n (e_j \wedge) (e_t \wedge)^* (k M^{\phi} (Z_t, \overline{Z}_j)) \\
&\quad + \sum_{j, t=1}^n (e_j \wedge) (e_t \wedge)^* \Big(2i \sum_{\ell=1}^m \mathcal{L}_{\omega_\ell}(Z_t,\overline{Z}_j) Y_\ell\Big)  + \sum_{r=1}^n \left( \sum_{j, t=1}^n (e_j \wedge) (e_t \wedge)^* a^{t j}_r+ \sum_{j=1}^n \hat{\varepsilon}_j \right) \Big(\overline{Z}_r^* + \frac{k}{2} Z_r \phi\Big)\\ 
&\quad+ \sum_{r=1}^n \left( \sum_{j, t=1}^n (e_j \wedge) (e_t \wedge)^* b^{t j}_r + \sum_{j=1}^n \varepsilon_j \right) \Big(\overline{Z}_r + \frac{k}{2} \overline{Z}_r \phi\Big) + \varepsilon,
\end{align*}
where $a^{t j}_r, b^{t j}_r, \varepsilon_j, \hat{\varepsilon}_j$  and $\varepsilon$ are zero order operators.
\end{prop}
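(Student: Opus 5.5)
This is the standard Bochner--Kodaira type computation for the twisted Kohn Laplacian in BRT coordinates. We expand the composition directly and collect terms by order in $k$ and by type.

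\emph{Step 1: expand the two compositions.} Write $\overline{\partial}_{b,k\phi}=A+B$ and $\overline{\partial}^*_{b,k\phi}=A^*+B^*$, where
\[
A=\sum_j e_j\wedge\Big(\overline Z_j+\tfrac k2\overline Z_j\phi\Big),\qquad
B=\sum_j(\overline{\partial}_be_j)\wedge(e_j\wedge)^*,
\]
and $A^*$, $B^*$ are the corresponding terms of the formal adjoint. Put $W_j:=\overline Z_j+\frac k2\overline Z_j\phi$ and $W_j^\dagger:=\overline Z_j^*+\frac k2 Z_j\phi$. Since $\phi$ is real, $W_j^\dagger$ is exactly the formal adjoint of $W_j$ with respect to $(\,\cdot\,|\,\cdot\,)_\omega$.

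\emph{Step 2: the principal part.} Using the Clifford relation $(e_t\wedge)^*(e_j\wedge)+(e_j\wedge)(e_t\wedge)^*=\langle e_j|e_t\rangle$, we get, modulo terms containing $B$ or $B^*$,
\[
AA^*+A^*A=\sum_j W_j^\dagger W_j+\sum_{j,t}(e_j\wedge)(e_t\wedge)^*\,[W_j,W_t^\dagger].
\]
Here we use that the $e_j$ are pointwise orthonormal to first order. Any failure of orthonormality, together with all the derivatives of the coefficients $e_j$ produced when the $W$'s pass through the $e$'s, is a zero-order coefficient multiplying some $W_r$ or $W_r^\dagger$. Those terms are absorbed into the $a^{tj}_r$, $b^{tj}_r$, $\varepsilon_j$, $\hat\varepsilon_j$ terms.

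The cross terms $AB^*+B^*A+BA^*+A^*B$ and the term $BB^*+B^*B$ are handled the same way. Each factor $B$ or $B^*$ is zero-order and $k$-independent, so these terms also contribute only (zero-order)$\times W_r$, (zero-order)$\times W_r^\dagger$, or zero-order operators, i.e.\ $\varepsilon$.

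\emph{Step 3: the commutator $[W_j,W_t^\dagger]$.} This is the key point. We have
\[
[W_j,W_t^\dagger]=[\overline Z_j,\overline Z_t^*]+\tfrac k2\big(\overline Z_j Z_t\phi+Z_t\overline Z_j\phi\big)+(\text{lower order}).
\]
The lower-order part is $[\overline Z_j,\overline Z_t^*]$ minus its leading term. Since $\overline Z_t^*=-Z_t+(\text{zero order})$, we get
\[
[\overline Z_j,\overline Z_t^*]=[Z_t,\overline Z_j]+(\text{first-order in } Z,\overline Z \text{ with zero-order coefficients}).
\]
The first-order $Z,\overline Z$ terms can be rewritten as combinations of $W_r,W_r^\dagger$ plus zero-order terms with coefficients of size $k$. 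Here the BRT structure matters: $\phi$ and $\varphi_\ell$ are independent of $x''$, and $Y_\ell=\partial_{x_{2n+\ell}}$.

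From the BRT frame,
\[
[Z_t,\overline Z_j]=2i\sum_\ell \frac{\partial^2\varphi_\ell}{\partial z_t\partial\overline z_j}\,\partial_{x_{2n+\ell}}=2i\sum_\ell\mathcal L_{\omega_\ell}(Z_t,\overline Z_j)\,Y_\ell.
\]
The remaining $k$-terms, including the $k$-sized parts absorbed from the previous rewriting, combine into $kM^\phi(Z_t,\overline Z_j)$, where $M^\phi$ is as defined in Section 4 of \cite{HL12}. In the rigid case this is $\langle\partial_b\overline{\partial}_b\phi,Z_t\wedge\overline Z_j\rangle$ up to the standard normalization.

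\emph{Main obstacle.} The hard part is bookkeeping. We have to verify that every $k$-dependent zero-order term that does not come from $M^\phi$ actually appears multiplied by some $W_r$ or $W_r^\dagger$, so that no stray $O(k)$ or $O(k^2)$ zero-order term is left over. We do this by expressing each leftover $Z_r$ as $-W_r^\dagger+\frac k2 Z_r\phi+(\text{zero order})$ and each $\overline Z_r$ as $W_r-\frac k2\overline Z_r\phi$. We then check that the $k$-parts cancel pairwise. This works because the term $\frac k2(Z_r\phi)(\overline Z_s\phi)$ arising from $W^\dagger W$ is exactly compensated.
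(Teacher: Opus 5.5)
\paragraph{The gap is in Step 2.} The paper gives no proof of this proposition; it is quoted as the standard Bochner--Kodaira computation, so I compare your sketch with that computation. The problem is that your sketch uses two incompatible frames.

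You claim that any failure of orthonormality of the $e_j$ ends up as a zero-order coefficient times some $W_r$ or $W_r^\dagger$. This is false: only the \emph{derivatives} of the $e_j$ behave that way. Carrying the Clifford relation through exactly gives
\[
AA^*+A^*A=\sum_{i,j}\langle e_i|e_j\rangle\,W_j^\dagger W_i+\sum_{i,j}(e_i\wedge)(e_j\wedge)^*\,[W_i,W_j^\dagger]+(\text{zero order})\circ W+(\text{zero order})\circ W^\dagger+(\text{zero order}),
\]
so the defect $\sum_{i,j}\big(\langle e_i|e_j\rangle-\delta_{ij}\big)W_j^\dagger W_i$ is a second-order operator. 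Already for $q=0$ one has $\Box^{(0)}_{b,k\phi}=\sum_{i,j}W_j^\dagger\circ\langle e_i|e_j\rangle\circ W_i$. Its principal symbol differs from that of $\sum_jW_j^\dagger W_j$ wherever $\langle e_i|e_j\rangle\neq\delta_{ij}$. Being orthonormal ``to first order'' at $p$ does not help for an operator identity on all of $D$. So the leading term $\sum_jW_j^\dagger W_j$ requires $\{\overline Z_j\}$ to be orthonormal on $D$. In Step 3, however, you use the BRT frame, which in general is orthonormal at most at $p$.

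\paragraph{How to repair it.} You have to commit to one frame.

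With the BRT frame, Step 3 is exact and easy. One has $[\overline Z_j,\overline Z_t^*]=[Z_t,\overline Z_j]+\overline Z_js_t$; the remainder is zero order, not first order as you wrote. Moreover $[Z_t,\overline Z_j]=2i\sum_\ell\mathcal L_{\omega_\ell}(Z_t,\overline Z_j)Y_\ell$ has no tangential part, and the $k$-term is exactly $k\,\partial_{z_t}\partial_{\overline z_j}\phi$ because $\phi$ is independent of $x''$. But then the leading term is $\sum_{i,j}\langle e_i|e_j\rangle W_j^\dagger W_i$, not the displayed one. This form is nonnegative, which is still enough for the spectral-gap corollary.

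With an orthonormal frame of $T^{1,0}X$, Step 2 is correct. Now, however, $[Z_t,\overline Z_j]=2i\sum_\ell\mathcal L_{\omega_\ell}(Z_t,\overline Z_j)Y_\ell+\sum_r(a_rZ_r+b_r\overline Z_r)$. Substituting $Z_r=-W_r^\dagger+\frac k2Z_r\phi+s_r$ and $\overline Z_r=W_r-\frac k2\overline Z_r\phi$ leaves the zero-order term $\frac k2\sum_r(a_rZ_r\phi-b_r\overline Z_r\phi)$. This term does not ``cancel pairwise'' against $\frac k2(Z_r\phi)(\overline Z_s\phi)$; that product stays inside $W_j^\dagger W_j$ and is never expanded. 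The mechanism you need is Cartan's formula for $\alpha=\overline\partial_b\phi-\partial_b\phi$. Since $\alpha$ annihilates the $Y_\ell$,
\[
d\alpha(Z_t,\overline Z_j)=Z_t\overline Z_j\phi+\overline Z_jZ_t\phi+\sum_ra_rZ_r\phi-\sum_rb_r\overline Z_r\phi .
\]
Hence all $k$-terms together equal exactly $\frac k2\,d\alpha(Z_t,\overline Z_j)$, i.e.\ $kM^\phi(Z_t,\overline Z_j)$ with the normalization $M^\phi(U,\overline V)=\frac12\,d(\overline\partial_b\phi-\partial_b\phi)(U,\overline V)$. Your write-up asserts this identification but gives the wrong reason for it.
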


\begin{cor}
With the notations used above, let $q\in\{1,\dots,n\}.$ For $\delta$ small enough, there is a constant $C>1$ independent of $k$ such that
\begin{align}\label{localspgap}
   (\Box^{(q)}_{b,k}u \mid u)_k \geq \Big(\frac{k}{C}-C\Big)\|u\|^2_k
\end{align}
for all $u\in\Omega^{0,q}_{Y,\leq k\delta}(D,L^k)\cap L^2_{(0,q)}(D, L^k)$  and all $k\geq1.$
\end{cor}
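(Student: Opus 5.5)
We prove the corollary by pairing the formula of Proposition \ref{gap} with $u$ (transported to $D$ by $U_{k,s}$) and estimating each term. Put $\hat u := U^{-1}_{k,s}u\in\Omega^{0,q}(D)$, so $(\Box^{(q)}_{b,k}u\,|\,u)_k=(\Box^{(q)}_{b,k\phi}\hat u\,|\,\hat u)_\omega$. Because $s$ is rigid and $\phi$ does not depend on $x''$, the operator $U_{k,s}$ commutes with every $Y_\ell=\partial/\partial x_{2n+\ell}$. Hence $\hat u$ is a finite sum of joint eigenforms with $-iY_\ell\hat u_\alpha=\alpha_\ell\hat u_\alpha$ and $|\alpha_\ell|\le k\delta$. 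Set
\[
A_r:=\overline{Z}_r+\tfrac{k}{2}\overline{Z}_r\phi, \qquad A_r^*=\overline{Z}_r^*+\tfrac{k}{2}Z_r\phi .
\]

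First we treat the positive and curvature parts. Pairing with $\hat u$, the first sum gives exactly $\sum_j\|A_j\hat u\|^2\ge 0$. The operator $Y_\ell$ commutes with the coefficients $\mathcal{L}_{\omega_\ell}(Z_t,\overline{Z}_j)$, because these are independent of $x''$ in a BRT chart. So on each eigencomponent we may replace $Y_\ell$ by $i\alpha_\ell$. The two middle terms then combine to
\[
k\sum_{j,t}(e_j\wedge)(e_t\wedge)^*\Big(M^\phi(Z_t,\overline{Z}_j)-2\sum_\ell \tfrac{\alpha_\ell}{k}\mathcal{L}_{\omega_\ell}(Z_t,\overline{Z}_j)\Big).
\]
Up to normalization, $M^\phi$ is $R^L$. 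By Definition \ref{pos}, and since $|\alpha_\ell/k|\le\delta$ with $[-\delta,\delta]^m\subset B(r)$, the Hermitian form $R^L-2\sum_\ell t_\ell\mathcal{L}_{\omega_\ell}$ is positive definite, uniformly for $t\in[-\delta,\delta]^m$ and for $x$ in a compact set, by continuity and compactness of $X$. For $q\ge1$, the operator $\sum_{j,t}(e_j\wedge)(e_t\wedge)^*h_{tj}$ with $h>0$ acts on $(0,q)$-forms with lowest eigenvalue bounded below by $q$ times the smallest eigenvalue of $h$. Near $p$ the frame is almost orthonormal, which we may arrange by shrinking $D$. The cross terms between different $\alpha$ vanish by orthogonality of the eigenspaces, since all coefficients are rigid. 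Together these give a contribution of at least $\frac{k}{C_0}\|\hat u\|^2$.

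Next we absorb the error terms. The remaining first-order terms have the form $(a\,A_r^*\hat u\,|\,\hat u)$ and $(b\,A_r\hat u\,|\,\hat u)$ with zero-order $a,b$. The $b$-terms are bounded by $C\|A_r\hat u\|\|\hat u\|$ and absorbed by Cauchy–Schwarz into $\sum\|A_j\hat u\|^2$ plus $C\|\hat u\|^2$. For the $a$-terms we move $A_r^*$ across, writing
\[
(aA_r^*\hat u\,|\,\hat u)=(\hat u\,|\,A_r(a^*\hat u))=(\hat u\,|\,a^*A_r\hat u)+(\hat u\,|\,[A_r,a^*]\hat u).
\]
The commutator $[\overline{Z}_r,a^*]$ is zero-order and independent of $k$, so these terms are also bounded by $\frac12\sum\|A_j\hat u\|^2+C\|\hat u\|^2$. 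The term $\varepsilon$ contributes at most $C\|\hat u\|^2$. Combining everything gives $(\Box\hat u\,|\,\hat u)\ge (\frac{k}{C_0}-C)\|\hat u\|^2$.

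The main obstacle is justifying the replacement $Y_\ell\mapsto i\alpha_\ell$, together with the orthogonality across eigencomponents, when $u$ has compact support in $D$ rather than being globally $\mathbb{R}^m$-equivariant. To handle this we take $D$ to be an $\mathbb{R}^m$-invariant BRT-type patch as in Corollary \ref{invnbd}, so that the eigenspace decomposition is defined on $D$ itself and the Fourier decomposition in $x''$ is compatible with the rigid coefficients. The second delicate point is the uniformity of the constants: $C_0$ and $C$ depend only on $\delta$, $R^L$, the Levi forms, and finitely many derivatives of the frame on a fixed chart, and so are independent of $k$.
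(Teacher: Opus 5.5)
Your proof is correct, and its skeleton is the paper's: conjugate by $U_{k,s}$, expand with Proposition~\ref{gap}, keep $\sum_j\|A_j\hat u\|^2$, and absorb the first-order remainders. The genuine difference is how you control the term $2i\sum_\ell\mathcal{L}_{\omega_\ell}(Z_t,\overline{Z}_j)Y_\ell$.

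The paper uses only $M^\phi>0$ and treats this term as an $O(k\delta)$ perturbation via $|(Y_\ell u\,|\,u)_k|\lesssim k\delta\|u\|_k^2$. Its $\delta$ must therefore be small relative to $\inf M^\phi/\sup|\mathcal{L}_{\omega_\ell}|$. You instead substitute the eigenvalue componentwise and merge it with the curvature into $M^\phi-2\sum_\ell\frac{\alpha_\ell}{k}\mathcal{L}_{\omega_\ell}$. With the normalization $M^\phi(Z_t,\overline{Z}_j)=Z_t\overline{Z}_j\phi$ that $[A_j,A_t^*]$ produces in a BRT frame, this is, at $p$, exactly the matrix $M_\eta$ with $\eta=\alpha/k$ from the model kernel $P_\delta$. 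So your threshold is the natural one coming from Definition~\ref{pos}; the factor-two mismatch between that definition and $M_\eta$ only rescales $\delta$.

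The paper's route is shorter and insensitive to such conventions. Yours is sharper, and it makes explicit what both arguments need: orthogonality of distinct eigencomponents and vanishing boundary terms. The paper's bound tacitly uses that orthogonality. Moreover, because the coefficient of $Y_\ell$ is matrix-valued, it really needs $\|Y_\ell u\|_k\le k\delta\|u\|_k$, which is your componentwise substitution in another guise.

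Your passage to an $\mathbb{R}^m$-invariant set as in Corollary~\ref{invnbd} is forced, not cosmetic. On a BRT box, a finite sum of joint eigenforms is a finite exponential sum in $x''$, so it has compact support only if it vanishes. Without compact support, boundary terms appear, and the inequality actually fails: for $n=q=1$ one can solve $\overline{\partial}^{*}_{b,k\phi}\hat u=0$ with a nonzero rigid $\hat u\in L^2(D)$, so $\Box^{(1)}_{b,k\phi}\hat u=0$.

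Two details of your write-up should be adjusted accordingly. An invariant set contains the whole orbit of $p$, so you cannot shrink it to make the BRT frame almost orthonormal. It is also not a single BRT chart. Neither matters:
\begin{itemize}
\item Gram--Schmidt with respect to the rigid metric gives rigid orthonormal frames. In these, Proposition~\ref{gap} keeps its structure: the coefficient of $k$ is still the tensor $M^\phi$, and the $Y$-part is still the Levi form.
\item Pairing through an ordinary partition of unity subordinate to BRT charts only adds zero-order, $k$-independent commutators, absorbed like your other remainders.
\item Since the curvature operator is tensorial and rigid, the cross terms between eigencomponents still cancel after summing.
\end{itemize}
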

\begin{proof}
By our assumption that $L$ is positive, we have that the curvature term $M^{\phi}$ is positive definite. Note that for all $u\in\Omega^{0,q}_{Y,\leq k\delta}(D,L^k)\cap L^2_{(0,q)}(D, L^k),$ we have $|(Y_\ell u \mid u)_k|\lesssim k\delta\|u\|_k^2.$ Hence, by Proposition \ref{gap} and the isometry (\ref{iso}), it is easy to see that for $\delta$ small enough, we have (\ref{localspgap}) for some positive constant $C$ independent of $k.$
\end{proof}

\begin{cor}
With the notations used above, let $q\in\{1,\dots,n\}.$ For $\delta$ small enough, there is a $k_0>0$ such that for all $k\geq k_0,$ we have
\begin{equation*}
    (\Box^{(q)}_{b,k}u \mid u)_k \geq Ck\|u\|^2_k
\end{equation*}
for all $u\in\Omega^{0,q}_{Y,\leq k\delta}(X,L^k),$ where $C$ is a positive constant independent of $k.$
\end{cor}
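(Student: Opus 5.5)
We globalize the local estimate \eqref{localspgap} of the preceding corollary with a partition of unity adapted to the $\mathbb R^m$-action. By Corollary \ref{invnbd}, cover $X$ by finitely many $\mathbb R^m$-invariant sets $D_1,\dots,D_M$. Each carries a local rigid CR trivialization $s_j$ of $L$, a BRT frame, and the positivity of $R^L - 2\sum_\ell t_\ell\mathcal L_{\omega_\ell}$ for $|t|<r$, so \eqref{localspgap} holds on each $D_j$ with a constant $C_j$. Next choose a partition of unity $\{\chi_j\}$ subordinate to $\{D_j\}$ with $\sum_j\chi_j^2=1$ and every $\chi_j$ rigid, i.e.\ $Y_\ell\chi_j=0$. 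We construct it by averaging an arbitrary partition over the torus $\mathbb T^N$ from Proposition \ref{torusprop} (the sets $D_j$ are also $\mathbb T^N$-invariant, and the averaged functions remain supported in $D_j$), then replacing $\chi_j$ by $\chi_j/(\sum_i\chi_i^2)^{1/2}$. The point of rigidity is that multiplication by $\chi_j$ commutes with every $Y_\ell$, hence with the spectral projections $Q^{(q)}_{(Y_\ell,\alpha),k}$. Thus if $u\in\Omega^{0,q}_{Y,\le k\delta}(X,L^k)$, then $\chi_ju\in\Omega^{0,q}_{Y,\le k\delta}(D_j,L^k)$ is compactly supported in $D_j$, and \eqref{localspgap} applies to it.

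The core step is an IMS-type localization formula for $\Box^{(q)}_{b,k}$:
\[
(\Box^{(q)}_{b,k}u\,|\,u)_k=\sum_j\big(\Box^{(q)}_{b,k}(\chi_ju)\,|\,\chi_ju\big)_k-\sum_j\big(\,[\ddbar_{b},\chi_j]u\,|\,[\ddbar_{b},\chi_j]u\,\big)_k-\sum_j\big(\,[\ddbar^*_{b},\chi_j]u\,|\,[\ddbar^*_{b},\chi_j]u\,\big)_k .
\]
This follows by expanding $\|\ddbar_b(\chi_ju)\|_k^2+\|\ddbar_b^*(\chi_ju)\|_k^2$, summing over $j$, and using $\sum\chi_j^2=1$, which cancels the cross terms. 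The commutators $[\ddbar_b,\chi_j]=\ddbar_b\chi_j\wedge$ and $[\ddbar^*_b,\chi_j]$ are zero-order operators whose norms are bounded independently of $k$, since the weight $h^{L^k}$ does not enter a commutator with a function. The error terms are therefore at most $C'\|u\|_k^2$. Combining this with \eqref{localspgap} gives
\[
(\Box^{(q)}_{b,k}u\,|\,u)_k\ge\sum_j\Big(\frac{k}{C_j}-C_j\Big)\|\chi_ju\|_k^2-C'\|u\|_k^2\ge\Big(\frac{k}{C}-C-C'\Big)\|u\|_k^2 .
\]
Hence for $k\ge k_0$ the right side is at least $\frac{k}{2C}\|u\|_k^2$.

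The main obstacle is producing genuinely rigid cutoffs compatible with the invariant cover; this is precisely where Proposition \ref{torusprop} and Corollary \ref{invnbd} are needed. Noninvariant cutoffs would destroy the condition $|\alpha_\ell|\le k\delta$ that the local estimate requires. A secondary point is that $\delta$ must be small enough to work uniformly for all $D_j$, but this is automatic from compactness and finiteness of the cover, since $[-\delta,\delta]^m\subset B(r)$ was fixed globally.
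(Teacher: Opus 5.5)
Your proof is correct. The paper states this corollary with no separate argument, and your approach is the standard globalization of \eqref{localspgap} it leaves implicit: a rigid partition of unity with $\sum_j\chi_j^2=1$, obtained by $\mathbb T^N$-averaging over the invariant cover of Corollary~\ref{invnbd}, combined with IMS localization, whose commutator errors are zero-order and $k$-independent. The one point to keep explicit is that you apply \eqref{localspgap} on the $\mathbb R^m$-invariant sets $D_j$ rather than on a small BRT patch; only on invariant sets do rigid cutoffs that preserve the condition $|\alpha_\ell|\le k\delta$ exist, so this is the reading the paper must intend.
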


By using the standard functional analysis argument on $L^{2,(0,q)}_{Y,\leq\lambda}(X,L^k),$ we obtain
\begin{cor}\label{grspgap}
Fix $\delta$ small enough. For $k\gg1,$
\begin{equation*}
\Big\|\Big(I - \Pi^{(0)}_{k,({\mathbb R}^m\curvearrowright,\leq k\delta)}\Big)u\Big\|_k^2 \leq Ck^{-1}(\Box_{b,k}u \mid u)_k
\end{equation*}
for all $u\in \mathcal{C}^\infty_{Y,\leq k\delta}(X,L^k),$ where $C$ is a positive constant independent of $k.$
\end{cor}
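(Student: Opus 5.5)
The plan is to reduce the estimate to the fiberwise spectral gap of the previous corollary. I will work on the finite-dimensional-in-$\mathbb{R}^m$-weight space $L^{2}_{Y,\leq k\delta}(X,L^k)$, which is invariant under $\Box_{b,k}$ because $Y_\ell$ commutes with $\overline{\partial}_{b,k}$, with $\overline{\partial}^*_{b,k}$ (the metrics are rigid), and hence with $\Box_{b,k}$. Fix $u\in\mathcal{C}^\infty_{Y,\leq k\delta}(X,L^k)$. Since $u$ already lies in the range of $Q^{(0)}_{X,\leq k\delta}$ and $\Pi^{(0)}_{k,(\mathbb{R}^m\curvearrowright,\leq k\delta)}=\Pi^{(0)}_{b,k}\circ Q^{(0)}_{X,\leq k\delta}$, we get $(I-\Pi^{(0)}_{k,(\mathbb{R}^m\curvearrowright,\leq k\delta)})u=(I-\Pi^{(0)}_{b,k})u$. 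This is the component of $u$ orthogonal to the CR functions inside $L^2_{Y,\leq k\delta}$.

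Set $v:=\overline{\partial}_{b,k}u\in\Omega^{0,1}_{Y,\leq k\delta}(X,L^k)$. Then $(\Box_{b,k}u\mid u)_k=\|v\|_k^2$. The previous corollary, applied with $q=1$, gives $(\Box^{(1)}_{b,k}w\mid w)_k\ge Ck\|w\|_k^2$ for all $w\in\Omega^{0,1}_{Y,\leq k\delta}(X,L^k)$. This is the key input.

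The next step is the standard functional-analysis argument: build a solution operator for $\overline{\partial}_{b,k}$ on the weight-restricted spaces.
\begin{itemize}
\item On $L^{2,(0,1)}_{Y,\leq k\delta}$, the restricted operator $\Box^{(1)}_{b,k}$ is self-adjoint and satisfies $\Box^{(1)}_{b,k}\ge Ck$. The required domain and density statements follow from the ellipticity of $\Box_{b,k}-\sum Y_\ell^2$ together with the boundedness of $Y_\ell$ on this space; this is where $\mathcal{C}^\infty_{Y,\leq k\delta}$ must be dense in the form domain.
\item Hence the restricted operator has a bounded inverse $N$ with $\|N\|\le (Ck)^{-1}$.
\item Define $w:=\overline{\partial}^*_{b,k}Nv$. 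Since $\overline{\partial}_{b,k}v=0$, we get $\overline{\partial}_{b,k}w=v$, and $w$ is orthogonal to $\operatorname{Ker}\overline{\partial}_{b,k}$.
\item The norm estimate is $\|w\|_k^2=(\overline{\partial}_{b,k}\overline{\partial}^*_{b,k}Nv\mid Nv)_k\le(\Box^{(1)}_{b,k}Nv\mid Nv)_k=(v\mid Nv)_k\le (Ck)^{-1}\|v\|_k^2$.
\end{itemize}

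Now $u-w$ is CR and still lies in the weight-$\le k\delta$ space. Also $w\perp\operatorname{Ker}\overline{\partial}_{b,k}$ and $w$ is in the same weight space. Therefore $w=(I-\Pi^{(0)}_{b,k})u=(I-\Pi^{(0)}_{k,(\mathbb{R}^m\curvearrowright,\leq k\delta)})u$, and the claimed bound follows with constant $C^{-1}$.

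The main obstacle is justifying the Hilbert-space steps on the restricted space. One must check that $\overline{\partial}^*_{b,k}$ preserves the $Y$-weights, so that $w$ really stays in $L^2_{Y,\leq k\delta}$. One must also check that the lower bound, which the previous corollary gives only for smooth forms, extends to the form domain of the self-adjoint restriction. I would handle both using the commutation $[Y_\ell,\Box_{b,k}]=0$ and elliptic regularity of $\Box_{b,k}-\sum Y_\ell^2$ on each weight space, mirroring Section 4 of \cite{HHL22}.
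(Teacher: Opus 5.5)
Your proposal is correct and is essentially the paper's argument. The paper derives this corollary from the preceding $(0,q)$ spectral gap, $q\ge 1$, by the ``standard functional analysis argument'' on the weight-restricted spaces $L^{2,(0,q)}_{Y,\le k\delta}(X,L^k)$, which is exactly the Neumann-operator/minimal-solution argument you carry out with $q=1$; the one step you leave implicit, $\overline{\partial}_{b,k}w=v$, holds because $\overline{\partial}^*_{b,k}\overline{\partial}_{b,k}Nv=v-\overline{\partial}_{b,k}\overline{\partial}^*_{b,k}Nv$ lies in both $\operatorname{Ker}\overline{\partial}_{b,k}$ and $(\operatorname{Ker}\overline{\partial}_{b,k})^{\perp}$, hence vanishes.
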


Since $\mathcal{H}_{b,\le k\delta}^{0}(X,L^{k})\subset \mathcal{C}^\infty(X,L^k)$ and $\dim \left(\mathcal{H}_{b,\le k\delta}^{0}(X,L^{k})\right)<\infty,$ we have $\Pi^{(0)}_{k,({\mathbb R}^m\curvearrowright,\leq k\delta)}$ is a smoothing operator with the smooth kernel $P_{k,\delta}(x,y)\in \mathcal{C}^\infty(X\times X, L^k_x\boxtimes L^{{*}k}_y).$ Fix a point $p\in X$ and take a BRT frame, $(D, (z, x''), \varphi),$ around $p.$ Let $\{f_1,\dots,f_{d_k}\}$ be an orthonormal basis of $\mathcal{H}_{b,\le k\delta}^{0}(X,L^{k}).$ Then
\begin{equation}
    P_{k,\delta}(x,y) = \sum_{j=1}^{d_k} f_j(x)\otimes(f_j(y))^*.
\end{equation}

On $D,$ write $f_j(x) = \hat{f}_j\otimes s^k,\ \hat{f}_j\in \mathcal{C}^\infty(D),\ j=1,\dots,d_k.$ We can express $P_{k,\delta}(x,y)$ on $D$ as
\begin{equation}\label{locexp}
    P_{k, \delta,s}(x,y) = \sum_{j=1}^{d_k} e^{-\frac{k}{2}\phi(x)} \hat{f}_j(x)\overline{\hat{f}_j(y)}e^{-\frac{k}{2}\phi(y)} \in \mathcal{C}^\infty(D\times D).
\end{equation}

Let 
\begin{equation}
\begin{cases} 
\phi(x) = \psi_0(x) + \psi_1(x), \\ 
\psi_0(x) = \sum_{j,r=1}^{n}\mu_{jr}z_j\overline{z}_r, \\ 
\psi_1(x) = O(|z|^3). 
\end{cases} 
\end{equation}

Put
\begin{equation}
P_{(k),\delta,s}(x,y) := k^{-n-m} P_{k,\delta,s} \left( \Big(\frac{z}{\sqrt{k}},\frac{x''}{k}\Big), \Big(\frac{w}{\sqrt{k}},\frac{y''}{k}\Big) \right) \in \mathcal{C}^\infty(D(k^\epsilon,k^{2\epsilon})\times D(k^\epsilon,k^{2\epsilon})). 
\end{equation}

\begin{thm}\label{sup}
With the notations used above, let $K \subset D(k^\epsilon,k^{2\epsilon})$ be any compact subset. For every $M \in \mathbb{N}$, we can find a constant $C_{M,K} > 0$ independent of $k$ such that
\begin{equation}
\sup\{|\partial^\alpha_x\partial^\beta_yP_{(k),\delta,s}(x,y)|:\alpha,\beta \in ({\mathbb Z}_{\geq0})^{2n+m}, |\alpha|+|\beta|\leq M, (x,y)\in K\times K \}\leq C_{M,K}. 
\end{equation}
\end{thm}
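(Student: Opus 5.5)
The plan is to derive uniform local bounds from a scaled elliptic estimate applied to each element of the range of $\Pi^{(0)}_{k,({\mathbb R}^m\curvearrowright,\leq k\delta)}$, and then pass to the kernel by duality. Write $P_{k,\delta,s}(x,y)=\sum_j g_j(x)\overline{g_j(y)}$ with $g_j=e^{-\frac k2\phi}\hat f_j$. Set $g_{j,(k)}(x):=k^{-\frac{n+m}{2}}g_j(F_k x)$. Then $P_{(k),\delta,s}(x,y)=\sum_j g_{j,(k)}(x)\overline{g_{j,(k)}(y)}$, and $\sum_j|g_{j,(k)}|^2$ integrated over $D(k^\epsilon,k^{2\epsilon})$ against $F_k^*V\,d\mu_{\mathbb H^{n,m}}$ is controlled by $\sum_j\|f_j\|^2_k$ restricted to $D$. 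Rather than summing norms, I use the extremal characterization
\[
|\partial^\alpha_x\partial^\beta_y P_{(k),\delta,s}(x,y)|^2\le \Big(\sup_{\|f\|_k=1}|\partial^\alpha (F_k^*\tilde f)(x)|^2\Big)\Big(\sup_{\|f\|_k=1}|\partial^\beta (F_k^*\tilde f)(y)|^2\Big),
\]
with $\tilde f=k^{-\frac{n+m}{2}}e^{-\frac k2\phi}\hat f$ and $f\in\mathcal H^0_{b,\le k\delta}(X,L^k)$. This holds by Cauchy--Schwarz, since $\sum_j \partial^\alpha g_{j,(k)}(x)\overline{c_j}$ is the derivative of the scaled version of the unit element $\sum_j c_jf_j$. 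It therefore suffices to bound $\sup_K|\partial^\alpha F_k^*\tilde f|$ uniformly for unit-norm $f$ in this space. Note that $\|F_k^*\tilde f\|_{L^2(D(k^\epsilon,k^{2\epsilon}))}\lesssim 1$ by the change of variables, because $V(F_k x)\to1$.

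Next, each such $f$ satisfies two equations. First, $\overline\partial_b f=0$. Second, $f$ decomposes into joint eigenfunctions of $-iY_\ell$ with eigenvalues $|\alpha_\ell|\le k\delta$; in the BRT chart $Y_\ell=\partial_{x_{2n+\ell}}$. After scaling, $u:=F_k^*\tilde f$ satisfies $\overline\partial_{b,(k),s}$-type equations. Since $\phi$ is independent of $x''$, these are $(\overline Z_{j(k)}+\frac12\sqrt k F_k^*(\overline Z_j\phi))u=0$, with coefficients converging in $\mathcal C^\infty$ to the model operators $\partial_{\bar z_j}+i\sum_\ell(\sum_s\lambda^\ell_{sj}z_s)\partial_{x_{2n+\ell}}+\frac12\sum_s\mu_{sj}z_s$. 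The $x''$-derivatives scale as $\frac1k Y_\ell$, so the spectral constraint becomes $\|(-i\partial_{x_{2n+\ell}})^N u\|\le \delta^N\|u\|$ in a suitable sense. More precisely, $u$ is a finite sum of pieces $e^{i\langle\alpha/k,x''\rangle}u_\alpha(z)$ with $|\alpha_\ell/k|\le\delta$. Hence the operator
\[
\square_{(k)}:=\sum_j(\cdots)^*(\cdots) - \sum_\ell\partial_{x_{2n+\ell}}^2
\]
(the scaled Kohn Laplacian plus the scaled $-\sum Y_\ell^2$) is elliptic with $k$-uniformly bounded $\mathcal C^\infty$ coefficients on $D(k^\epsilon,k^{2\epsilon})$. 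Furthermore, $\square_{(k)}^N u=(-\sum\partial_{x_{2n+\ell}}^2)^N u$, and its $L^2$ norm is controlled by $(m\delta^2)^N\|u\|$ because the $\partial_{x''}$ operators commute with everything; the BRT frame is $x''$-independent. This $L^2$ bound is the step I will need to verify carefully.

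Finally, I apply interior elliptic regularity (Gårding plus Sobolev embedding) on a fixed neighborhood $K'\Supset K$, with constants uniform in $k$ because the coefficients are uniformly bounded together with all derivatives and ellipticity is uniform. This gives $\|u\|_{H^{2N}(K)}\le C_N\sum_{r\le N}\|\square_{(k)}^r u\|_{L^2(K')}\le C'_N$, and Sobolev embedding then yields the $\mathcal C^M$ bounds. The main obstacle I expect is the localization in the previous step. The global spectral bound on $-iY_\ell$ gives control of $\partial_{x''}^N u$ in $L^2$ over all of $D(k^\epsilon,k^{2\epsilon})$, but to make it local I must use that the $x''$-translation invariance of $D$ (Corollary \ref{invnbd}) makes $\partial_{x''}$ act within $D$. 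I would first try to handle this by working with $\partial_{x''}^N u$ directly as a CR function satisfying the same scaled equations, which avoids cutoff commutators in $x''$.
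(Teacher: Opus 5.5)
Your proposal is correct and is essentially the paper's argument. Your Cauchy--Schwarz reduction to unit-norm elements of $\mathcal{H}^0_{b,\le k\delta}(X,L^k)$ corresponds to the paper's explicit extremal section $u_k$, built from $\partial^\beta_y$ of the orthonormal basis at $y_0$, and your $k$-uniform interior elliptic estimates for the scaled version of $\Box_{b,k}-\sum_\ell Y_\ell^2$ on a fixed compact $K'\Supset K$, followed by Sobolev embedding, are what the paper's appeal to Kohn's estimates amounts to.

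The localization you flag is not an obstacle. The section $(k^{-1}Y)^\gamma f$ is again a global element of $\mathcal{H}^0_{b,\le k\delta}(X,L^k)$ with $\|\cdot\|_k\le\delta^{|\gamma|}$, by orthogonality of the joint eigenspaces. Its scaled local representative is exactly $\partial^\gamma_{x''}u$, so the required local $L^2$ bounds follow simply by restriction, and no $x''$-invariance of the chart is needed.
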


\begin{proof}
Let $\{f_1,\dots,f_{d_k}\}$ be an orthonormal basis of $\mathcal{H}_{b,\le k\delta}^{0}(X,L^{k}).$ On $D,$ write $f_j(x) = \hat{f}_j\otimes s^k,\ \hat{f}_j\in \mathcal{C}^\infty(D),\ j=1,\dots,d_k.$ Fix $y_0\in K.$ Let $\beta \in ({\mathbb Z}_{\geq0})^{2n+m},\ |\beta|\leq M.$ We may suppose that
\begin{equation*}
\sum_{j=1}^{d_k}\bigg|\partial^\beta_y\bigg(\overline{\hat{f}_j\Big(\frac{w}{\sqrt{k}},\frac{y''}{k}\Big)}e^{-\frac{k}{2}\phi(\frac{w}{\sqrt{k}})}\bigg)(y_0)\bigg|^2 \neq 0.
\end{equation*}
Let
\begin{equation*}
u_{k}(x) = \frac{\sum_{j=1}^{d_{k}} f_{j}(x) \partial_{y}^{\beta} \Big( \overline{\hat{f}_{j}(\frac{w}{\sqrt{k}},\frac{y''}{k})} e^{-\frac{k}{2}\phi(\frac{w}{\sqrt{k}})} \Big) (y_{0})}{\Big( \sum_{j=1}^{d_{k}} \Big| \partial_{y}^{\beta} \Big( \overline{\hat{f}_{j}(\frac{w}{\sqrt{k}},\frac{y''}{k})} e^{-\frac{k}{2}\phi(\frac{w}{\sqrt{k}})} \Big) (y_{0}) \Big|^{2} \Big)^{1/2}} \in \mathcal{H}_{b,\le k\delta}^{0}(X,L^{k}).
\end{equation*}
We can verify that $\|u_{k}\|_{k} = 1$. Recall the scaling of $u_{k}$:
\begin{equation*}
u_{(k)}(x) = k^{-(n+m)/2} u_{k} \Big( \frac{z}{\sqrt{k}},\frac{x''}{k} \Big) \in C^{\infty}(D(k^\epsilon,k^{2\epsilon})).
\end{equation*}

 By using Kohn's estimates and Sobolev's embedding theorem, we have
\begin{equation}\label{alpha}
\sup \Big\{ \Big| \partial_{x}^{\alpha} \Big( u_{(k)}(x) e^{-\frac{k}{2}\phi(\frac{z}{\sqrt{k}})} \Big) \Big| \mid x \in K, \alpha \in ({\mathbb Z}_{\geq0})^{2n+m}, |\alpha| \le M \Big\} \le \hat{C}_{K,M},
\end{equation}
where $\hat{C}_{K,M}$ is a constant independent of $u_{k}$, $k$, and $y_{0} \in K$. From (\ref{alpha}), we obtain
\begin{align}\label{alpha2}
&\sup \big\{ |(\partial_{x}^{\alpha} \partial_{y}^{\beta} P_{(k),\delta,s})(x, y_{0})| \mid x \in K, \alpha,\beta \in ({\mathbb Z}_{\geq0})^{2n+m}, |\alpha| + |\beta| \le M \big\} \\
&\le \hat{C}_{K,M} {\bigg( \sum_{j=1}^{d_{k}} \Big| \partial_{y}^{\beta} \Big( \overline{\hat{f}_{j}\big(\frac{w}{\sqrt{k}},\frac{y''}{k}\big)} e^{-\frac{k}{2}\phi(\frac{w}{\sqrt{k}})} \Big) (y_{0}) \Big|^{2} \bigg)^{1/2}}. \nonumber
\end{align}
 Similarly, we can show that
\begin{equation}\label{beta}
\sum_{j=1}^{d_{k}} \Big| \partial_{y}^{\beta} \Big( \overline{\hat{f}_{j}\big(\frac{w}{\sqrt{k}},\frac{y''}{k}\big)} e^{-\frac{k}{2}\phi(\frac{w}{\sqrt{k}})} \Big) (y_{0}) \Big|^{2}  \le C, 
\end{equation}
where $C > 0$ is a constant independent of $k$ and $y_{0}$. The theorem follows from (\ref{alpha2}) and (\ref{beta}).
\end{proof}

From Theorem \ref{sup} and Arzel\`a-Ascoli Theorem, we obtain:

\begin{thm}\label{smoothcon}
There exists a subsequence $1 \le k_{1} < k_{2} < \dots,$ with $\lim_{j \to +\infty} k_{j} = +\infty$ and $P(x,y)\in \mathcal{C}^\infty(\mathbb{H}^{n,m}\times \mathbb{H}^{n,m})$ such that for any compact set $K\subset \mathbb{H}^{n,m}$ and every $\alpha,\beta\in ({\mathbb Z}_{\geq0})^{2n+m},$ we have 
\begin{equation*}
    \lim_{j\rightarrow\infty} \sup \{\big|\partial^\alpha_x\partial^\beta_y(P_{(k_j),\delta,s}-P)(x,y)\big|\mid (x,y)\in K\times K\} = 0.
\end{equation*}
\end{thm}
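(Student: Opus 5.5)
The plan is to deduce Theorem \ref{smoothcon} directly from the uniform bounds of Theorem \ref{sup} by an Arzel\`a--Ascoli and diagonal argument. First, exhaust $\mathbb{H}^{n,m}\times\mathbb{H}^{n,m}$ by the increasing compact sets $K_N\times K_N$, where $K_N:=\overline{D(N)}$, $N\in\mathbb N$. For each fixed $N$ we have $K_N\subset D(k^\epsilon,k^{2\epsilon})$ for all $k\ge k(N)$, so $P_{(k),\delta,s}$ is defined and smooth on a neighborhood of $K_N\times K_N$ for such $k$.

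For fixed $N$ and $M$, Theorem \ref{sup} (applied with $K=K_{N+1}$ and order $M+1$) gives a constant $C_{M+1,K_{N+1}}$, independent of $k$, bounding all derivatives $\partial^\alpha_x\partial^\beta_yP_{(k),\delta,s}$ with $|\alpha|+|\beta|\le M+1$ on $K_{N+1}\times K_{N+1}$. Hence the family $\{\partial^\alpha_x\partial^\beta_yP_{(k),\delta,s}\}_{k\ge k(N+1)}$ with $|\alpha|+|\beta|\le M$ is uniformly bounded. It is also equicontinuous on $K_N\times K_N$, by the mean value theorem on the convex set $K_N\times K_N$ (a product of boxes) using the bound of order $M+1$.

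By Arzel\`a--Ascoli, for each pair $(N,M)$ we can extract from any given sequence a subsequence along which all $\partial^\alpha_x\partial^\beta_yP_{(k),\delta,s}$ with $|\alpha|+|\beta|\le M$ converge uniformly on $K_N\times K_N$. We do this successively for $(N,M)=(1,1),(2,2),\dots$, each time refining the previous subsequence, and then take the diagonal subsequence $k_j$. Along $k_j$, for every $N$ and every multi-index, $\partial^\alpha_x\partial^\beta_yP_{(k_j),\delta,s}$ converges uniformly on $K_N\times K_N$.

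Let $P$ be the limit of the functions themselves. The derivatives converge uniformly, so the standard theorem on uniform convergence of derivatives (iterated in each variable) shows that $P\in\mathcal{C}^1$ and that its derivatives are the limits of the derivatives. Inductively, $P\in\mathcal{C}^\infty(\mathbb{H}^{n,m}\times\mathbb{H}^{n,m})$ and $\partial^\alpha_x\partial^\beta_yP_{(k_j),\delta,s}\to\partial^\alpha_x\partial^\beta_yP$ uniformly on each $K_N\times K_N$. Every compact $K$ lies in some $K_N$, which gives the claim.

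There is no real obstacle here once Theorem \ref{sup} is available. The only point needing care is the bookkeeping: Theorem \ref{sup} must be used on compacts that eventually lie in the growing domains $D(k^\epsilon,k^{2\epsilon})$, with constants uniform in $k$.
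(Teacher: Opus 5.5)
Your proposal is correct and follows the paper's route. The paper deduces Theorem \ref{smoothcon} in one line from the uniform derivative bounds of Theorem \ref{sup} and the Arzel\`a--Ascoli theorem; you spell out the implicit details, namely the exhaustion by compacts that eventually lie in $D(k^\epsilon,k^{2\epsilon})$, equicontinuity from the order-$(M+1)$ bound, the diagonal extraction, and smoothness of the limit.
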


Let $\lambda^\ell_{j,r} = \frac{\partial^2\varphi_\ell}{\partial z_j\partial \overline{z}_r}(0).$ As in Section \ref{heisenberggpcrstr}, we now consider the following CR-structure on the Heisenberg group $\mathbb{H}^{n,m}:$
\[
T^{1,0}\mathbb{H}^{n,m} := \text{Span}_{\mathbb{C}}\bigg\{Z_{j,\mathbb{H}^{n,m}} = \frac{\partial}{\partial z_j} - i\sum_{\ell=1}^m\Big(\sum_{r=1}^{n}\lambda^{\ell}_{jr}\overline{z}_r\Big)\frac{\partial}{\partial x_{2n+\ell}};\, j =1,\dots,n\bigg\}.
\]

Fix $\delta>0$ small such that $(\mu_{jr}) - 2\sum_{\ell=1}^m\eta_\ell(\lambda^{\ell}_{jr})$ is positive definite for all $|\eta_\ell|\leq\delta, \ell = 1,\dots, m.$ Let $M_\ell = (\lambda^{\ell}_{jr}), \ell = 1,\dots,m,$ be $m$ Hermitian matrices. For $\eta = (\eta_1,\dots,\eta_m)\in {\mathbb R}^m,$ let $M_\eta = (\mu_{jr}) - 2\sum_{\ell=1}^m\eta_\ell M_\ell.$
We consider the integral operator $\Pi^{(0),\psi_0}_{b,\leq\delta}$ on $L^2(\mathbb{H}^{n,m}, \psi_0)$ which is given by, for $u\in L^2(\mathbb{H}^{n,m}, \psi_0),$
\begin{align}\label{grszeasy}
\big(\Pi^{(0),\psi_0}_{b,\leq\delta}u\big)(x) &= \Big(\frac{1}{2\pi}\Big)^{n+m} \int_{[-\delta,\delta]^m}\int_{\mathbb{C}^n}\int_{\mathbb{R}^m} \det(M_\eta) \exp\bigg[i\langle\eta,x''\rangle-i\langle\eta,y''\rangle +\sum_{j,r=1}^{n}(M_\eta)_{jr}z_j\overline{w}_r\\
&\quad -\sum_{j,r=1}^{n}(M_\eta)_{jr}w_j\overline{w}_r -\sum_{\ell=1}^m\eta_\ell\sum_{j,r=1}^{n}\lambda^{\ell}_{jr}w_j\overline{w}_r+\sum_{\ell=1}^m\eta_\ell\sum_{j,r=1}^{n}\lambda^{\ell}_{jr}z_j\overline{z}_r\bigg]\nonumber\\
&\quad\cdot u(w,y'')d\mu(y'')d\mu(w)d\mu(\eta).\nonumber
\end{align}

$\Pi^{(0),\psi_0}_{b,\leq\delta}$ has the smooth integral kernel 
\begin{align}
P_{\delta}(x,y) &:= \Big(\frac{1}{2\pi}\Big)^{n+m} \int_{[-\delta,\delta]^m} \exp\bigg[i\langle\eta,x''\rangle-i\langle\eta,y''\rangle +\sum_{j,r=1}^{n}(M_\eta)_{jr}z_j\overline{w}_r-\sum_{j,r=1}^{n}(M_\eta)_{jr}w_j\overline{w}_r\\ 
&\quad -\sum_{\ell=1}^m\eta_\ell\sum_{j,r=1}^{n}\lambda^{\ell}_{jr}w_j\overline{w}_r+\sum_{\ell=1}^m\eta_\ell\sum_{j,r=1}^{n}\lambda^{\ell}_{jr}z_j\overline{z}_r\bigg]\det(M_\eta) d\mu(\eta).\nonumber
\end{align}
 We can check that $\Pi^{(0),\psi_0}_{b,\leq\delta}$ is the orthogonal projection of $L^2(\mathbb{H}^{n,m}, \psi_0)$ onto the space $E$ where
\begin{equation*}
    E := \{ u \in \operatorname{Ker}(\overline{\partial}_{b, \mathbb{H}^{n,m}});\, \hat{u}(z,\eta) = 0 \ \text{if} \ |\eta_\ell| \geq \delta,\ \ell = 1,\dots, m\}.
\end{equation*}
We can now prove Theorem \ref{3rdmain}. 
\begin{thm}\label{nongrasy}
The sequence $P_{(k),\delta,s}(x,y)$ converges to $e^{-\frac{1}{2}\psi_0(x)}P_\delta(x,y)e^{\frac{1}{2}\psi_0(y)}$ locally uniformly in the $\mathcal{C}^\infty$-topology.
\end{thm}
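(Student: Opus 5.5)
By Theorem \ref{smoothcon}, every subsequence of $P_{(k),\delta,s}$ has a further subsequence converging locally uniformly in the $\mathcal{C}^\infty$ topology to some $P\in\mathcal{C}^\infty(\mathbb{H}^{n,m}\times\mathbb{H}^{n,m})$. So it suffices to show that every such limit equals $\tilde P_\delta(x,y):=e^{-\frac12\psi_0(x)}P_\delta(x,y)e^{\frac12\psi_0(y)}$. Let $\mathcal P$ denote the operator on $L^2(\mathbb{H}^{n,m})$ (unweighted, because of the isometry $U_{k,s}$) with kernel $P$. Since $\tilde P_\delta$ is the kernel of the orthogonal projection onto $\tilde E:=e^{-\frac12\psi_0}E$, I would show that $\mathcal P$ is exactly that projection. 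This mirrors the argument of Theorem \ref{skerasy}, but in the $\mathbb{R}^m$-graded setting.

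\textbf{Step 1: $\mathcal P$ is a positive contraction with range in $\tilde E$.} Each $P_{(k),\delta,s}$ is the scaled kernel of an orthogonal projection. Pairing with $u_{(k)},v_{(k)}$ and passing to the limit gives $0\le\mathcal P\le I$ and $\mathcal P=\mathcal P^*$. Each $\hat f_j$ satisfies $\overline Z_j\hat f_j=0$, and it has $Y$-spectrum in $[-k\delta,k\delta]^m$, so after scaling $x''\mapsto x''/k$ the frequencies lie in $[-\delta,\delta]^m$. Under scaling, $\overline Z_{j}$ converges to $\overline Z_{j,\mathbb{H}^{n,m}}$, because $\varphi_\ell$ has no $O(|z|^2)$ non-Levi part and $\phi$ is $x''$-independent. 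Hence $P(\cdot,y)$ is annihilated by $\overline Z_{j,\mathbb{H}^{n,m}}+\frac12(\overline Z_j\psi_0)$. The spectral condition passes to the limit too. Since $Y_\ell=\partial_{x_{2n+\ell}}$ exactly on $D$, the map $u\mapsto P_{(k),\delta,s}u$ commutes with translations in $x''$ and with the cutoff of the frequency support. In the limit, the $x''$-Fourier transform of $P(\cdot,y)$ is therefore supported in $[-\delta,\delta]^m$, so the range of $\mathcal P$ lies in $\tilde E$.

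\textbf{Step 2: $\mathcal P u=u$ on a dense subset of $\tilde E$.} Take $u=\Pi^{(0),\psi_0}_{b,\le\delta'}g$ with $\delta'<\delta$ and $g\in\mathcal C^\infty_c$, transported to the unweighted picture. Localize it by the cutoff to get $u_{[k]}$. Then smear it in the $\mathbb{R}^m$-direction by the operator $F_{k,\delta}$ of \eqref{F_k}, or by averaging over the torus $\mathbb T^N$ from Proposition \ref{torusprop}, so that $w_k:=F_{k,\delta}u_{[k]}\in L^2_{Y,\le k\delta}(X,L^k)$. Because $\hat u(\cdot,\eta)$ is supported in $|\eta_\ell|\le\delta'$ and $\tau=1$ near there, the error $\|w_k-u_{[k]}\|_k\to0$. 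Here one must use that $D$ is $\mathbb{R}^m$-invariant (Corollary \ref{invnbd}) and that rescaled $Y$-frequencies match $\eta$. Next, the estimate of $\|\overline\partial_b u_{[k]}\|_k^2\lesssim k^{1-2\epsilon}a_k+k^{4\epsilon}$ from the proof of Theorem \ref{skerasy}, together with the gap in Corollary \ref{grspgap}, gives
\[
\|(I-\Pi_{k,({\mathbb R}^m\curvearrowright,\le k\delta)})w_k\|_k^2\lesssim k^{-1}\|\overline\partial_b w_k\|_k^2\to0 .
\]
This uses that $F_{k,\delta}$ commutes with $\overline\partial_b$, since the action is CR and $h^L$ is rigid. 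Rescaling then yields $(\mathcal Pu\,|\,v)=(u\,|\,v)$ for all test functions $v$.

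\textbf{Step 3: conclusion.} Steps 1 and 2 show that $\mathcal P$ is a self-adjoint contraction with range in $\tilde E$ that fixes a dense subset of $\tilde E$, and hence fixes all of $\tilde E$. It is therefore the orthogonal projection onto $\tilde E$. Its smooth kernel thus equals $\tilde P_\delta$, and since the subsequence was arbitrary, the whole sequence converges.

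The main obstacle is the frequency bookkeeping in Steps 1 and 2. Concretely, one has to justify that the localized, rescaled $Y$-spectral projections converge to the $\eta$-cutoff $1_{[-\delta,\delta]^m}$. This is delicate at the boundary $|\eta_\ell|=\delta$, which has measure zero and is the reason for using $\delta'<\delta$, and because the $x''$-cutoff slightly spreads frequencies. For this step I would first try to exploit that $\mathbb{R}^m$ acts by exact translations in BRT coordinates, so that spectral projections act as Fourier multipliers on functions supported in the $\mathbb{R}^m$-saturated patch.
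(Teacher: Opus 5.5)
Your proposal is correct, and its architecture matches the paper's. You extract subsequential $\mathcal{C}^\infty$ limits from Theorem~\ref{smoothcon}, then identify the limit by feeding the $\overline\partial_b$-estimates from the proof of Theorem~\ref{skerasy} into the gap of Corollary~\ref{grspgap}. The difference lies in the frequency bookkeeping.

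\emph{The paper's route.} The paper keeps the full window $\delta$ and the sharp projection $Q^{(0)}_{X,\le k\delta}$. It splits $\Pi^{(0)}_{k,({\mathbb R}^m\curvearrowright,\leq k\delta)}\ddot u_{[k]}-\ddot u_{[k]}$ into two pieces:
\begin{itemize}
\item $\bigl(\Pi^{(0)}_{k,({\mathbb R}^m\curvearrowright,\leq k\delta)}-I\bigr)Q^{(0)}_{X,\le k\delta}\ddot u_{[k]}$, which is handled by the gap;
\item $Q^{(0)}_{X,\le k\delta}\ddot u_{[k]}-\ddot u_{[k]}$, which it controls only weakly, against $v_{[k]}$, through the splitting $\hat Q^{(0)}_{\le k\delta}+\hat R_{\le k\delta}$ of Lemma~\ref{Q+R}.
\end{itemize}
The remainder $\hat R_{\le k\delta}$ is the price of a slowly decaying sharp-cutoff kernel and of orbits re-entering the chart.

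\emph{Your route.} A smooth cutoff on a window $\delta'<\delta$ sidesteps this. The operator $\prod_\ell\tau(-iY_\ell/k)$ averages $u$ along the $\mathbb{R}^m$-orbits against $k^m\kappa(kt)$, $t\in\mathbb{R}^m$, with $\kappa$ Schwartz.
\begin{itemize}
\item The part $|t|\ge c$ is $O(k^{-\infty})\|u\|_k$, by unitarity of the rigid action.
\item The part $|t|<c$ is an exact $x''$-convolution inside the BRT chart (compare Lemma~\ref{peakest}).
\end{itemize}
This settles the obstacle you flag and even gives the norm statement $\|w_k-u_{[k]}\|_k\to0$, at the cost of a density argument.

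\emph{A needed fix.} For the density step you cannot keep the paper's $\tau$. It equals $1$ only on $[-\frac{\delta}{2},\frac{\delta}{2}]$, which would confine you to $\delta'<\delta/2$ and lose density in $e^{-\frac12\psi_0}E$. Choose instead $\tau=1$ on $[-\delta',\delta']$ with $\operatorname{supp}\tau\subset(-\delta,\delta)$.

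\emph{What your Step~1 adds.} It makes explicit a point the paper passes over. Repeating Section~\ref{scaledszegokernelasy} only places the range of the limit among the (weighted) CR functions. Identifying the limit with the projection onto $e^{-\frac12\psi_0}E$ also needs $x''$-Fourier support in $[-\delta,\delta]^m$. To make your sentence about translations rigorous:
\begin{itemize}
\item On the BRT chart each joint eigensection is $e^{i\langle\alpha^j,x''\rangle}g_j(z)$ with $|\alpha^j_\ell|\le k\delta$.
\item Hence $P_{(k),\delta,s}$ is, as a function of $x''-y''$, an exponential sum with frequencies in $[-\delta,\delta]^m$.
\item This sum is uniformly bounded, by Theorem~\ref{sup} and Cauchy--Schwarz.
\item Fourier support is preserved under bounded locally uniform limits.
\end{itemize}
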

Note that
\begin{align}\label{nonwegrszeker}
&e^{-\frac{1}{2}\psi_0(x)}P_\delta(x,y)e^{\frac{1}{2}\psi_0(y)}\\
&= \Big(\frac{1}{2\pi}\Big)^{n+m} \int_{[-\delta,\delta]^m} \exp\bigg[i\langle\eta,x''\rangle-i\langle\eta,y''\rangle-\frac{1}{2}\sum_{j,r=1}^{n}(M_\eta)_{jr}(z_j-w_j)(\overline{z}_r-\overline{w}_r)\nonumber\\
&\quad\quad\quad\quad\quad\quad+\frac{1}{2}\sum_{j,r=1}^{n}(M_\eta)_{jr}(z_j\overline{w}_r-\overline{z}_r w_j)\bigg]\det(M_\eta)d\mu(\eta).\nonumber
\end{align}
We denote the integral operator determined by the smooth kernel $e^{-\frac{1}{2}\psi_0(x)}P_\delta(x,y)e^{\frac{1}{2}\psi_0(y)}$ by $\Pi^{(0)}_{b,\psi_0,\leq\delta}.$

By Theorem \ref{smoothcon}, we only need to show that $\Pi^{(0)}_{k,({\mathbb R}^m\curvearrowright,\leq k\delta)}$ converges to $\Pi^{(0),\psi_0}_{b,\leq\delta}$ in the sense of distributions. Following the arguments as in Section \ref{scaledszegokernelasy}, we only need to show the following lemma. We follow the notation used in Section \ref{scaledszegokernelasy}.
\begin{lem}\label{gdiscon}
 Fix $u\in \mathcal{C}^\infty_c(\mathbb{H}^{n,m}).$ Let $\ddot{u}= \Pi^{(0),\psi_0}_{b,\leq \delta}u.$ Let $\ddot{u}_{(k)} =\chi\big(\frac{z}{k^{\epsilon}},\frac{x''}{k^{2\epsilon}}\big)\big(\Pi^{(0),\psi_0}_{b,\leq \delta}u\big)$ and $\ddot{u}_{[k]} = k^{\frac{n+m}{2}}\ddot{u}_{(k)}(\sqrt{k}z,kx'')s^k.$ Then for all $v\in \mathcal{C}^\infty_c(\mathbb{H}^{n,m}),$ we have
\begin{equation}
    \lim_{k\rightarrow\infty}\left| ( \Pi^{(0)}_{k,({\mathbb R}^m\curvearrowright,\leq k\delta)} \ddot{u}_{[k]} - \ddot{u}_{[k]}\mid v_{[k]} )_k \right| = 0.
\end{equation}
\end{lem}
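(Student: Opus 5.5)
The plan is to mimic the proof of Theorem \ref{skerasy}, with the global spectral gap of Corollary \ref{grspgap} playing the role of the local spectral gap hypothesis. Write $P:=\Pi^{(0)}_{k,({\mathbb R}^m\curvearrowright,\leq k\delta)}$. The difficulty is that Corollary \ref{grspgap} applies only to $\mathcal{C}^\infty_{Y,\leq k\delta}(X,L^k)$, while $\ddot{u}_{[k]}$ is merely a compactly supported function on $D$.

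First reduce to $F_{k,\delta}$ (or $Q^{(0)}_{X,\leq k\delta}$). Since $P=P\circ Q^{(0)}_{X,\leq k\delta}$ and $Q^{(0)}_{X,\leq k\delta}$ commutes with $\overline{\partial}_b$ (because $Y_\ell\overline{\partial}_{b,k}=\overline{\partial}_{b,k}Y_\ell$), split
\[
(P\ddot{u}_{[k]}-\ddot{u}_{[k]}\mid v_{[k]})_k=\big((P-I)Q^{(0)}_{X,\leq k\delta}\ddot{u}_{[k]}\mid v_{[k]}\big)_k+\big((Q^{(0)}_{X,\leq k\delta}-I)\ddot{u}_{[k]}\mid v_{[k]}\big)_k .
\]
For the first term, apply Corollary \ref{grspgap} to $Q^{(0)}_{X,\leq k\delta}\ddot{u}_{[k]}$, smoothing with $F_{k,\delta}$ if regularity is needed. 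This bounds the term by
\[
Ck^{-1/2}\|\overline{\partial}_b Q^{(0)}_{X,\leq k\delta}\ddot{u}_{[k]}\|_k\|v\|_{(k)}\leq Ck^{-1/2}\|\overline{\partial}_b\ddot{u}_{[k]}\|_k\|v\|_{(k)},
\]
using that the projection commutes with $\overline{\partial}_b$ and has norm at most $1$. The quantity $\|\overline{\partial}_b\ddot{u}_{[k]}\|_k^2$ is then estimated exactly as in \eqref{derker}. The estimate is in fact better here, since in BRT coordinates $Y_\ell=\partial_{x_{2n+\ell}}$, $\phi$ is independent of $x''$, $R=0$, and the $\overline{Z}_j$ have coefficients independent of $x''$. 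Thus $\overline{Z}_{j(k)}$ differs from the model operator only by $O(|z|^2/\sqrt{k})$-type terms, and one obtains the bound $o(k)$ because $\epsilon<1/8$. The derivative identities for $\Pi^{(0),\psi_0}_{b,\leq\delta}u$ are used as before, and they are even easier now since $\eta$ ranges over the compact box $[-\delta,\delta]^m$.

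The main obstacle is the second term, $((Q^{(0)}_{X,\leq k\delta}-I)\ddot{u}_{[k]}\mid v_{[k]})_k$. The heuristic is that under the scaling $x''\mapsto kx''$, the eigenvalue $\alpha$ of $-iY_\ell$ on functions on $X$ corresponds to frequency $\eta_\ell=\alpha/k$ on $\mathbb{H}^{n,m}$. Since $\widehat{\ddot{u}}(z,\eta)$ is supported in $|\eta_\ell|\leq\delta$, the cutoff $Q^{(0)}_{X,\leq k\delta}$ should asymptotically act as the identity on $\ddot{u}_{[k]}$. To make this rigorous, use Corollary \ref{invnbd} to take $D$ to be $\mathbb{R}^m$-invariant and expand $\ddot{u}_{[k]}$ in torus Fourier modes along the orbits via \eqref{Qli}. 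Then compare the $\mathbb{R}^m$-Fourier transform in $x''$ of the cutoff function $\chi(\cdot/k^{2\epsilon})\ddot{u}$ with the torus decomposition. Because the cutoff has width $k^{2\epsilon}\to\infty$ in scaled variables, its Fourier transform concentrates within $O(k^{-2\epsilon})$ of $[-\delta,\delta]^m$. The leakage beyond $|\eta_\ell|>\delta$ lies in a thin shell, and its $L^2$ mass tends to zero. I expect this to need care at the boundary $|\eta_\ell|=\delta$, where the sharp cutoff in $\eta$ is only $L^2$ and not smooth. I would handle this by an $L^2$ continuity argument: the mass of $\widehat{\ddot{u}}$ in a shrinking neighborhood of the boundary tends to zero.

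Combining the two terms gives the lemma. The convergence to $e^{-\frac12\psi_0}P_\delta e^{\frac12\psi_0}$ then follows from the subsequence argument of Theorem \ref{smoothcon}.
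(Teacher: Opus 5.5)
Your splitting and your treatment of the first term, $\big((\Pi^{(0)}_{k,({\mathbb R}^m\curvearrowright,\leq k\delta)}-I)Q^{(0)}_{X,\leq k\delta}\ddot u_{[k]}\mid v_{[k]}\big)_k$, are the same as the paper's. Both use Corollary~\ref{grspgap}, the commutation of $Q^{(0)}_{X,\leq k\delta}$ with $\overline{\partial}_{b,k}$, and the bound \eqref{derker}, which is $o(k)$.

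The gap is in the second term. The step ``compare the $\mathbb{R}^m$-Fourier transform in $x''$ with the torus decomposition'' is the substantive one, and you give no mechanism for it. Your thin-shell estimate only shows that the Euclidean multiplier $\mathcal{F}^{-1}_{\eta\to x''}\big[1_{[-\delta,\delta]^m}\mathcal{F}_{x''\to\eta}[\,\cdot\,]\big]$ nearly fixes $\ddot u_{(k)}$. That multiplier is a Euclidean surrogate, not $Q^{(0)}_{X,\leq k\delta}$ itself. The latter is defined by torus Fourier modes along entire orbits. Those orbits leave the BRT chart: the chart is only a slab $|x''|<\eta_0$, and the $\mathbb{R}^m$-invariant set of Corollary~\ref{invnbd} is not a BRT chart. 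Since they are dense in tori, the orbits also return to the support of $\ddot u_{[k]}$.

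The paper bridges this with Lemma~\ref{Q+R}. On $D_0$ it writes $Q^{(0)}_{X,\leq k\delta}=\hat Q^{(0)}_{\leq k\delta}+\hat R_{\leq k\delta}$. Here $\hat Q^{(0)}_{\leq k\delta}$ is the Euclidean multiplier $1_{[-k\delta,k\delta]^m}(\eta)$ applied to $\chi(y'')u$, and $\hat R_{\leq k\delta}$ collects the orbit contributions carrying the factor $1-\chi(y'')$. After rescaling, $\hat Q^{(0)}_{\leq k\delta}$ becomes the multiplier $1_{[-\delta,\delta]^m}$. Since $\mathcal{F}_{x''\to\eta}[\ddot u]$ already vanishes outside $[-\delta,\delta]^m$, and the weight and volume density are independent of $x''$, Plancherel gives directly
\[
\big|\big(\hat Q^{(0)}_{\leq k\delta}\ddot u_{[k]}-\ddot u_{[k]}\mid v_{[k]}\big)_k\big|\lesssim\|\ddot u_{(k)}-\ddot u\|_{\psi_0}\|v\|_{\psi_0}\to0 .
\]
So no shell or boundary analysis is needed there, and your concern about the sharp cutoff is moot. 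The remainder $\hat R_{\leq k\delta}$ is the wrap-around part; in scaled variables it lives where $|y''|\gtrsim k$. The paper estimates it separately as term (3) of its proof. Your proposal has no counterpart to this remainder, so as written it does not control $\big((Q^{(0)}_{X,\leq k\delta}-I)\ddot u_{[k]}\mid v_{[k]}\big)_k$.
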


To prove this lemma, we need to introduce some operators on $\mathcal{C}^\infty(D_0), D_0\subset\subset D.$ Let $\chi(y'')\in \mathcal{C}^\infty_c\big((-\epsilon,\epsilon)^m\big)$ such that $\chi(y'') = 1$ for every $(y',y'')\in D_0.$ 

Define $\hat{Q}^{(0)}_{\leq k\delta } : \Omega_c(D, L^k) \to \Omega(D, L^k)$ by
\begin{equation}
\Big(\hat{Q}^{(0)}_{\leq k\delta } u\Big)(x) = \frac{1}{(2\pi)^m} \int e^{i\langle x'' - y'',\eta\rangle} 1_{[-k\delta , k\delta]^m}(\eta)\chi(y'') u(z, y'') d\mu(y'') d\mu(\eta).
\end{equation}

\begin{lem}\label{Q+R}
We have $Q^{(0)}_{X, \leq k\delta} u = \hat{Q}^{(0)}_{\leq k\delta} u + \hat{R}_{\leq k\delta} u$ on $D_0$, where
\begin{align*} 
&(2\pi)^m\big(\hat{R}_{\leq k\delta} u\big)(x)\\
&= \sum_{(m^1_1,\dots,m^m_{N_m})\in {\mathbb Z}^N} \int_{\mathbb T^N} e^{i \langle x'' - y'', \eta\rangle + i\big(\sum_{j=1}^{N_1}m^1_j\beta^1_j\big)y_{2n+1}+\cdots+i\big(\sum_{j=1}^{N_m}m^m_j\beta^m_j\big)y_{2n+m} - i m^1_1\theta^1_1-\cdots- i m^m_{N_m}\theta^m_{N_m}}\\ 
&\qquad\qquad\qquad\qquad\qquad\cdot 1_{[-k\delta , k\delta]^m}(\eta)(1-\chi(y'')) u\big((e^{i\theta^1_1},\dots,e^{i\theta^m_{N_m}})\circ x'\big) d\mathbb{T}_N d\mu(y'') d\mu(\eta).
\end{align*}
Here, $x'$ denotes $(x',0)$ and $d\mathbb{T}_N = \big(\frac{1}{2\pi}\big)^Nd\theta^1_1\wedge\cdots\wedge d\theta^1_{N_1}\wedge\cdots\wedge d\theta^m_1\wedge\cdots\wedge d\theta^m_{N_m}.$
\end{lem}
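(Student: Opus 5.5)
Using the torus structure from Proposition~\ref{torusprop} and \eqref{Qli}, I will first write $Q^{(0)}_{X,\leq k\delta}$ as a sum of Fourier projections onto the joint eigenspaces of $T^1_1,\dots,T^m_{N_m}$. For $u\in\Omega_c(D,L^k)$ and a multi-index $\mathbf m=(m^1_1,\dots,m^m_{N_m})\in\mathbb Z^N$, set $\alpha_\ell(\mathbf m)=\sum_j m^\ell_j\beta^\ell_j$. The projection onto the $\mathbf m$-isotypic component is
\[
(Q_{\mathbf m}u)(x)=\int_{\mathbb T^N}e^{-i\langle \mathbf m,\theta\rangle}\,u(e^{i\theta}\circ x)\,d\mathbb T_N .
\]
Since $Y_\ell$ acts on this component by $i\alpha_\ell(\mathbf m)$, the operator $Q^{(0)}_{X,\leq k\delta}$ is the sum of $Q_{\mathbf m}$ over those $\mathbf m$ with $|\alpha_\ell(\mathbf m)|\le k\delta$ for every $\ell$.

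The second step localizes in the BRT chart. On $D$ we have $Y_\ell=\partial/\partial x_{2n+\ell}$, and the rigid trivialization $s$ lets sections be treated as functions. An element of the $\mathbf m$-component restricted to $D$ therefore has the form $e^{i\langle\alpha(\mathbf m),x''\rangle}g(z)$: moving along $x''$ is flowing along the $\mathbb R^m$-orbit, which lies in the torus orbit. Concretely, $e^{i\theta}\circ x$ is a torus point, and for $x=(z,x'')$ it equals $e^{i\theta+i\beta x''}\circ(z,0)$, with $\beta x''$ meaning the embedding $\mathbb R^m\to\mathbb T^N$. After the change of variables $\theta\mapsto\theta-\beta x''$,
\[
(Q_{\mathbf m}u)(x)=e^{i\langle\alpha(\mathbf m),x''\rangle}\int_{\mathbb T^N}e^{-i\langle \mathbf m,\theta\rangle}\,u(e^{i\theta}\circ x')\,d\mathbb T_N .
\]

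The third step compares with $\hat Q^{(0)}_{\leq k\delta}$. I insert $1=\chi(y'')+(1-\chi(y''))$ into the Fourier representation of the $x''$-dependence. Formally, the sum over $\mathbf m$ of these coefficients, multiplied by $e^{i\alpha(\mathbf m)y''}$ and integrated against $e^{i\langle x''-y'',\eta\rangle}1_{[-k\delta,k\delta]^m}(\eta)\,d\eta\,dy''$, reproduces the restriction to $|\alpha_\ell|\le k\delta$. This uses the Fourier inversion identity
\[
\frac{1}{(2\pi)^m}\int e^{i\langle x''-y'',\eta\rangle}1_{[-k\delta,k\delta]^m}(\eta)\,e^{i\langle\alpha,y''\rangle}\,d\eta\,dy''=1_{[-k\delta,k\delta]^m}(\alpha)\,e^{i\langle\alpha,x''\rangle},
\]
understood in distributions. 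The $\chi(y'')$ piece should give exactly $\hat Q^{(0)}_{\leq k\delta}u$. The reason is that on ${\rm supp}\,\chi$ the sum over $\mathbf m$ of $Q_{\mathbf m}$ reconstructs $u(z,y'')$ by torus Fourier inversion: the point $e^{i\beta y''}\circ x'$ equals $(z,y'')$ when $y''$ is small. The leftover $(1-\chi(y''))$ piece is, by definition, $\hat R_{\leq k\delta}u$.

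I expect the main obstacle to be justifying the interchange of the $\mathbf m$-sum with the $\eta,y''$ integrals. The $\eta$-integral over a box with $y''$ ranging over all of $\mathbb R^m$ only converges distributionally, and the $\mathbf m$-sum converges only in $L^2$ or $\mathcal C^\infty$ for smooth $u$. I will handle this by working first with smooth $u$, where the Fourier series converges in $\mathcal C^\infty$ as stated earlier, and by pairing against test functions supported in $D_0$. A second point needs care: the identity $e^{i\beta y''}\circ x'=(z,y'')$ holds only for $|y''|<\epsilon$, which is exactly why the cutoff $\chi$ is introduced and the remainder must be kept.
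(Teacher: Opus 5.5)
Your proposal is correct and is essentially the paper's argument. The paper only refers to Lemma 6.5 of \cite{HZ23}, but it carries out the same computation in its proof of Lemma~\ref{peakest}: torus Fourier expansion of $\theta\mapsto u(e^{i\theta}\circ x')$, the orbit identity $e^{i\beta y''}\circ x'=(z,y'')$ on ${\rm supp}\,\chi$, the split $1=\chi+(1-\chi)$, and collapsing the $(y'',\eta)$-integrals via \eqref{delta}. You simply run that computation in the opposite direction.

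One caveat is inherited from the statement itself. With the sharp cutoff $1_{[-k\delta,k\delta]^m}$, your inversion identity fails for modes with some $\alpha_\ell(\mathbf m)=\pm k\delta$: the iterated integral gives weight $\frac12$ per such coordinate, not $1$. No distributional reading repairs this, because replacing the closed box by the open one changes neither $\hat Q^{(0)}_{\leq k\delta}$ nor $\hat R_{\leq k\delta}$, but it does change $Q^{(0)}_{X,\leq k\delta}$. You should therefore assume $\pm k\delta\notin\operatorname{Spec}(-iY_\ell)$, which holds for every $k$ once $\delta$ avoids a countable set.
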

\begin{proof}
    Similar to the proof of Lemma 6.5 in \cite{HZ23}.
\end{proof}

We can now prove Lemma \ref{gdiscon}.
\begin{proof}
By Lemma \ref{Q+R}, we have
\begin{align*}
&\left| \big( \Pi^{(0)}_{k,({\mathbb R}^m\curvearrowright,\leq k\delta)}\ddot{u}_{[k]} - \ddot{u}_{[k]} \mid v_{[k]}\big)_k \right| 
= \left| \big( Q^{(0)}_{X, \leq k\delta} \circ\Pi^{(0)}_{k,({\mathbb R}^m\curvearrowright,\leq k\delta)} (\ddot{u}_{[k]}) - \ddot{u}_{[k]} \mid v_{[k]} \big)_k \right| \\
&= \left| \big( Q^{(0)}_{X, \leq k\delta} \circ \Pi^{(0)}_{k,({\mathbb R}^m\curvearrowright,\leq k\delta)} (\ddot{u}_{[k]}) - Q^{(0)}_{X, \leq k\delta} (\ddot{u}_{[k]}) +Q^{(0)}_{X, \leq k\delta} (\ddot{u}_{[k]}) - \ddot{u}_{[k]} \mid v_{[k]} \big)_k \right| \\
&\leq \left| \big( Q^{(0)}_{X, \leq k\delta} (\Pi^{(0)}_{k,({\mathbb R}^m\curvearrowright,\leq k\delta)} - I)(\ddot{u}_{[k]}) \mid v_{[k]} \big)_k \right| + \left| \big( Q^{(0)}_{X, \leq k\delta} (\ddot{u}_{[k]}) - \ddot{u}_{[k]} \mid v_{[k]} \big)_k \right| \\
&\leq \left\| ( Q^{(0)}_{X, \leq k\delta} (\Pi^{(0)}_{k,({\mathbb R}^m\curvearrowright,\leq k\delta)} - I) )(\ddot{u}_{[k]}) \right\|_k \| v_{[k]} \|_k + \left| \big( (\hat{Q}^{(0)}_{\leq k\delta} + \hat{R}_{\leq k\delta})(\ddot{u}_{[k]}) - \ddot{u}_{[k]} \mid v_{[k]} \big)_k \right| \\
&\leq \underbrace{ \left\| (\Pi^{(0)}_{k,({\mathbb R}^m\curvearrowright,\leq k\delta)} - I)(Q^{(0)}_{X, \leq k\delta}\ddot{u}_{[k]}) \right\|_k \| v_{[k]} \|_k }_{(1)} + \underbrace{ \left| ( \hat{Q}^{(0)}_{\leq k\delta} (\ddot{u}_{[k]}) - \ddot{u}_{[k]} \mid v_{[k]} )_k \right| }_{(2)} \\
&\quad+ \underbrace{ \left| ( \hat{R}_{\leq k\delta} (\ddot{u}_{[k]}) \mid v_{[k]} )_k \right| }_{(3)}.
\end{align*}

\begin{align*}
(1) = \left\| (\Pi^{(0)}_{k,({\mathbb R}^m\curvearrowright,\leq k\delta)} - I)(Q^{(0)}_{X, \leq k\delta}\ddot{u}_{[k]}) \right\|_k \| v_{[k]} \|_k
&\lesssim k^{-\frac{1}{2}}(\Box_{b,k}Q^{(0)}_{X, \leq k\delta}\ddot{u}_{[k]} \mid Q^{(0)}_{X, \leq k\delta}\ddot{u}_{[k]})_k^{\frac{1}{2}} \| v_{[k]} \|_k\\
&= k^{-\frac{1}{2}}(\overline{\partial}_{b,k}Q^{(0)}_{X, \leq k\delta}\ddot{u}_{[k]} \mid \overline{\partial}_{b,k}Q^{(0)}_{X, \leq k\delta}\ddot{u}_{[k]})_k^{\frac{1}{2}} \| v_{[k]} \|_k\\
&= k^{-\frac{1}{2}}(Q^{(0)}_{X, \leq k\delta}\overline{\partial}_{b,k}\ddot{u}_{[k]} \mid Q^{(0)}_{X, \leq k\delta}\overline{\partial}_{b,k}\ddot{u}_{[k]})_k^{\frac{1}{2}} \| v_{[k]} \|_k\\
&\leq k^{-\frac{1}{2}}(\overline{\partial}_{b,k}\ddot{u}_{[k]} \mid \overline{\partial}_{b,k}\ddot{u}_{[k]})_k^{\frac{1}{2}} \| v_{[k]} \|_k\\
&\longrightarrow 0\ (\text{by the argument as in Section \ref{scaledszegokernelasy}.})
\end{align*}
\begin{align*}
&(2)\\
&=\big| ( \hat{Q}^{(0)}_{\leq k\delta} (\ddot{u}_{[k]}) - \ddot{u}_{[k]} \mid v_{[k]} )_k \big|\\
&= \bigg| \int_{D(\frac{k^\epsilon}{\sqrt{k}},\frac{k^{2\epsilon}}{k})} \Big( \frac{1}{(2\pi)^m} \int e^{i\langle x'' - y'',\eta\rangle} 1_{[-k\delta, k\delta]^m}(\eta) \ddot{u}_{[k]}(z, y'') dy'' d\eta - \ddot{u}_{[k]}(z,x'') \Big)   \\ 
&\quad \cdot\overline{v_{[k]}(z,x'')} e^{-k\phi(z)} m(z) d\mu_{\mathbb{H}^{n,m}}(x) \bigg| \\
&= \bigg| \int_{D(\frac{k^\epsilon}{\sqrt{k}},\frac{k^{2\epsilon}}{k})} \Big( \frac{1}{(2\pi)^m} \int e^{i\langle x'' - y'',\eta\rangle} 1_{[-k\delta, k\delta]^m}(\eta) \cdot k^{\frac{n+m}{2}} \ddot{u}_{(k)}(\sqrt{k}z, ky'')dy''d\eta \\
&\quad - k^{\frac{n+m}{2}} \ddot{u}_{(k)}(\sqrt{k}z, kx'') \Big) \cdot k^{\frac{n+m}{2}} \overline{v(\sqrt{k}z, kx'')} e^{-k \phi(z)} m(z) d\mu_{\mathbb{H}^{n,m}}(x) \bigg| \\
&= \bigg| \int_{D(k^\epsilon,k^{2\epsilon})} \Big( \frac{1}{(2\pi)^m} \int e^{i\langle\frac{x''}{k} - y'',\eta\rangle} 1_{[-k\delta, k\delta]^m}(\eta) \ddot{u}_{(k)}(z, y'') dy'' d\eta - \ddot{u}_{(k)}(z,x'') \Big) \\
&\quad\cdot\overline{v(z,x'')} e^{-k\phi(\frac{z}{\sqrt{k}})}V\big(\frac{z}{\sqrt{k}}\big) d\mu_{\mathbb{H}^{n,m}}(x) \bigg| \\
&= \bigg| \int_{D(k^\epsilon,k^{2\epsilon})} \Big( \frac{1}{(2\pi)^m} \int e^{i\langle x'' - y'',\eta\rangle} 1_{[-\delta, \delta]^m}(\eta) \ddot{u}_{(k)}(z, y'') dy''d\eta - \ddot{u}(z,x'') \Big)\\
&\quad \cdot\overline{v(z,x'')} e^{-k\phi(\frac{z}{\sqrt{k}})} V\big(\frac{z}{\sqrt{k}}\big) d\mu_{\mathbb{H}^{n,m}}(x) \bigg| \\
&\leq C \bigg| \int_{D(k^\epsilon,k^{2\epsilon})} \Big( \frac{1}{(2\pi)^m} \int e^{i\langle x'' - y'',\eta\rangle} 1_{[-\delta, \delta]^m}(\eta) \ddot{u}_{(k)}(z, y'') dy''d\eta - \ddot{u}(z,x'') \Big)\\
&\quad \cdot\overline{v(z,x'')} e^{-\psi_0(z)}d\mu_{\mathbb{H}^{n,m}}(x) \bigg| \\
&\le C \bigg| \int_{D(k^{\epsilon}, k^{2\epsilon})} \Big|\mathcal{F}^{-1}_{\eta \to x''} \big[ 1_{[-\delta, \delta]^m}(\eta) \mathcal{F}_{y''\to \eta} [\ddot{u}_{(k)}] \big] (z, x'') - \ddot{u}(z, x'') \Big|^2 e^{-\psi_0(z)} d\mu_{\mathbb{H}^{n,m}}(x) \bigg|^{1/2} \cdot \|v\|_{L^2} \\
&\le C \bigg| \int_{\mathbb{H}^{n,m}} \Big|\mathcal{F}^{-1}_{\eta \to x''} \big[ 1_{[-\delta, \delta]^m}(\eta) \mathcal{F}_{y''\to \eta} [\ddot{u}_{(k)}] \big] (z, x'') - \ddot{u}(z, x'') \Big|^2 e^{-\psi_0(z)} d\mu_{\mathbb{H}^{n,m}}(x) \bigg|^{1/2} \cdot \|v\|_{L^2} \\
&= \frac{C}{\sqrt{(2\pi)^m}} \Bigg| \int_{\mathbb{H}^{n,m}} \big| 1_{[-\delta, \delta]^m}(\eta) \mathcal{F}_{y'' \to \eta} [\ddot{u}_{(k)}] (z, \eta) - \mathcal{F}_{x''\to\eta}[\ddot{u}](z, \eta) \big|^2 e^{-\psi_0(z)} d\mu(z) d\mu(\eta) \Bigg|^{1/2} \cdot \|v\|_{L^2} \\
&= \frac{C}{\sqrt{(2\pi)^m}} \bigg|\int_{\mathbb{C}^n} \int_{[-\delta,\delta]^m} \big| \mathcal{F}_{y'' \to \eta} [\ddot{u}_{(k)}] (z, \eta) - \mathcal{F}_{x''\to\eta}[\ddot{u}](z, \eta) \big|^2 e^{-\psi_0(z)} d\mu(z) d\mu(\eta) \bigg|^{1/2} \cdot \|v\|_{L^2} \\
&\le \frac{C}{\sqrt{(2\pi)^m}} \bigg| \int_{\mathbb{H}^{n,m}} \big| \mathcal{F}_{y'' \to \eta} [\ddot{u}_{(k)}] (z, \eta) - \mathcal{F}_{x''\to\eta}[\ddot{u}](z, \eta) \big|^2 e^{-\psi_0(z)} d\mu(z) d\mu(\eta) \bigg|^{1/2} \cdot \|v\|_{L^2} \\
&= C \bigg| \int_{\mathbb{H}^{n,m}} \big| \ddot{u}_{(k)}(z, x'') - \ddot{u}(z, x'') \big|^2 e^{-\psi_0(z)} d\mu_{\mathbb{H}^{n,m}}(x) \bigg|^{1/2} \cdot \|v\|_{L^2} \to 0 \text{ as } k \to \infty.
\end{align*}

\begin{align*}
&(3) \\
&=\big|(\hat{R}_{\leq k\delta} (\ddot{u}_{[k]}) \mid v_{[k]})_k\big|\\
&= \bigg| \frac{1}{(2\pi)^m} \sum_{(m^1_1,\dots,m^m_{N_m})\in {\mathbb Z}^N} \int_{D(\frac{k^{\epsilon}}{\sqrt{k}}, \frac{k^{2\epsilon}}{k})} e^{i\langle x'' - y'', \eta\rangle + i\sum\limits_{\ell=1}^m\big(\sum\limits_{j=1}^{N_\ell}m^\ell_j\beta^\ell_j\big)y_{2n+\ell} - i \sum\limits_{\ell=1}^m\big(\sum\limits_{j=1}^{N_\ell}m^\ell_j \theta^\ell_j\big)}  \\
&\quad \cdot 1_{[-k\delta, k\delta]^m}(\eta) (1 - \chi(y'')) \ddot{u}_{[k]} \big((e^{i\theta^1_1}, \dots, e^{i\theta^m_{N_m}}) \circ x'\big)d\mathbb{T}_Nd\mu(\eta) d\mu(y'')\\ 
&\quad \cdot \overline{v_{[k]}(z, x'')} e^{-k\phi(z)} m(z) d\mu_{\mathbb{H}^{n,m}}(x) \bigg| \\
&= \bigg| \frac{1}{(2\pi)^m} \sum_{(m^1_1,\dots,m^m_{N_m})\in {\mathbb Z}^N} \int_{D(k^\epsilon,k^{2\epsilon})} e^{i\langle x'' - y'', \eta\rangle + i\sum\limits_{\ell=1}^m\big(\sum\limits_{j=1}^{N_\ell}m^\ell_j\beta^\ell_j\big)\frac{y_{2n+\ell}}{k} - i \sum\limits_{\ell=1}^m\big(\sum_{j=1}^{N_\ell}m^\ell_j \theta^\ell_j\big)}  \\
&\quad \cdot 1_{[-\delta, \delta]^m}(\eta) \cdot \big(1 - \chi\big(\frac{y''}{k}\big)\big) \ddot{u}_{(k)} \big(F_k^{-1}\big((e^{i\theta^1_1}, \dots, e^{i\theta^m_{N_m}}) \circ \frac{x'}{k}\big)\big)d\mathbb{T}_Nd\eta dy''\\
&\quad \cdot \overline{v(z, x'')} e^{-k\phi\big(\frac{z}{\sqrt{k}}\big)} V\big(\frac{z}{\sqrt{k}}\big) d\mu_{\mathbb{H}^{n,m}}(x) \bigg| \\
&= \bigg| \frac{1}{(2\pi)^m} \sum_{(m^1_1,\dots,m^m_{N_m})\in {\mathbb Z}^N} \int e^{i\sum\limits_{\ell=1}^m\big(\sum\limits_{j=1}^{N_\ell}m^\ell_j\beta^\ell_j\big)\frac{y_{2n+\ell}}{k} - i \sum\limits_{\ell=1}^m\big(\sum_{j=1}^{N_\ell}m^\ell_j \theta^\ell_j\big)}  \mathcal{F}_{\eta\to\tau}\big[1_{[-\delta, \delta]^m}\big](\tau) \\
&\quad \cdot \overline{v(z, y'' - \tau)} \big(1 - \chi\big(\frac{y''}{k}\big)\big) \ddot{u}_{(k)} \big(F_k^{-1}\big((e^{i\theta^1_1}, \dots, e^{i\theta^m_{N_m}}) \circ \frac{x'}{k}\big)\big)\\
&\quad\cdot e^{-k\phi(\frac{z}{\sqrt{k}})} m(\frac{z}{\sqrt{k}}) d\mathbb{T}_N d\mu(\tau) d\mu(y'')d\mu(z) \bigg| \\
&= \bigg| \int \ddot{u}_{(k)} (z, y'')  \mathcal{F}_{\eta\to\tau}\big[1_{[-\delta, \delta]^m}\big](\tau)  \overline{v(z, y'' - \tau)} \big(1 - \chi\big(\frac{y''}{k}\big)\big)  e^{-k\phi(\frac{z}{\sqrt{k}})} m(\frac{z}{\sqrt{k}})  d\mu(\tau) d\mu(z) d\mu(y'') \bigg| \\
&= \bigg| \int \ddot{u}_{(k)} (z, y'') \mathcal{F}_{\eta\to y''}\big[1_{[-\delta, \delta]^m} \overline{\mathcal{F}_{x''\to\eta}[v]}\big](z,y'') \big(1 - \chi\big(\frac{y''}{k}\big)\big) e^{-k\phi(\frac{z}{\sqrt{k}})} m(\frac{z}{\sqrt{k}}) d\mu(z) d\mu(y'') \bigg| \\
&\le C \bigg| \int |\ddot{u}(z, y'')|^2 \big(1 - \chi\big(\frac{y''}{k}\big)\big) e^{-\psi_0(z)}d\mu(z) d\mu(y'') \bigg|^{1/2} \\
&\quad \cdot \bigg|\int \big|\mathcal{F}_{\eta\to y''}\big[1_{[-\delta, \delta]^m}\cdot \overline{\mathcal{F}_{x''\to\eta}[v]}\big](z,y'')\big|^2 \big(1 - \chi\big(\frac{y''}{k}\big)\big) e^{-\psi_0(z)} d\mu(z) d\mu(y'') \bigg|^{1/2} \\
&\to 0 \quad \text{as } k \to \infty.
\end{align*}

\end{proof}

\section{Equivariant Kodaira embedding theorem}
\subsection{Proof of embedding Theorem}
In this section, we prove Theorem \ref{embedthmintro}.
Throughout this section, we assume that $(X,T^{1,0}X)$ is a compact CR manifold of dimension $2n+m$ and of CR codimension $m$ equipped with a transversal locally free CR $\mathbb{R}^m$-action on $X$ and there is a rigid positive CR line bundle $L$ over $X.$
We first prove that there exists a well-defined Kodaira map from $X$ to complex projective space when $\delta$ small and $k$ large. 
Let 
\begin{align*}
    f_1\in \mathcal{H}^0_{b,\le k\delta}(X,L^k),\dots,f_{d_k}\in \mathcal{H}^0_{b,\le k\delta}(X,L^k)
\end{align*}
be an orthonormal basis of $\mathcal{H}^0_{b,\le k\delta}(X,L^k)$ with respect to $(\,\cdot\,|\,\cdot\,)_k.$

\begin{lem}\label{wedef}
Let $\delta>0$ be a positive constant small enough. Then there exist $C_0>0$ and $k_0\in \mathbb{N}$ such that for all $k\ge k_0,$ and all $x\in X,$ we have
\begin{equation}\label{dialead}
    \sum_{j=1}^{d_k} |f_j(x)|_{h^k}^2 \ge C_0 k^{n+m}.
\end{equation}
Moreover, there exist $c_0>0$ and $k_0\in \mathbb{N}$ such that for all $k\ge k_0,$ and all $x\in X,$ we can find some $j_0\in \{1,\dots,d_k\}$ such that
\begin{equation}\label{sindialead}
    |f_{j_0}(x)|_{h^k}^2 \ge c_0.
\end{equation}
\end{lem}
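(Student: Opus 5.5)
The plan is to read both inequalities off the diagonal of the partial Szeg\H{o} kernel. Since $\{f_j\}$ is an orthonormal basis of $\mathcal{H}^0_{b,\le k\delta}(X,L^k)$,
\[
\sum_{j=1}^{d_k}|f_j(x)|^2_{h^k}=|P_{k,\delta}(x,x)| ,
\]
where $P_{k,\delta}$ is the kernel of $\Pi^{(0)}_{k,({\mathbb R}^m\curvearrowright,\leq k\delta)}$. I will get a uniform lower bound for this quantity from Theorem~\ref{3rdmain}, and a uniform upper bound of the same order from Theorem~\ref{sup}. Pigeonhole then gives \eqref{sindialead} for \emph{every} orthonormal basis, not just a specially chosen one.

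\textbf{Step 1 (localization).} Fix $x\in X$ and take a BRT trivialization $(D,(z,x''),\varphi)$ centered at $x$, with a rigid CR section $s$ satisfying \eqref{libunwe}. From \eqref{locexp}, since $\phi(x)=0$ at the center,
\[
\sum_j|f_j(x)|^2_{h^k}=P_{k,\delta,s}(0,0)=k^{n+m}P_{(k),\delta,s}(0,0).
\]
By compactness of $X$ and Corollary~\ref{invnbd}, I will use finitely many such charts. Recentering a chart at a nearby point changes the data $\mu_{jr},\lambda^\ell_{jr}$ and the $O(|z|^3)$ remainders only smoothly, so the constants in Theorem~\ref{sup} and the convergence in Theorem~\ref{3rdmain} can be taken uniform in the center $x$.

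\textbf{Step 2 (lower bound \eqref{dialead}).} Theorem~\ref{3rdmain} gives
\[
P_{(k),\delta,s}(0,0)\to P_\delta(0,0)=(2\pi)^{-(n+m)}\int_{[-\delta,\delta]^m}\det(M_\eta)\,d\mu(\eta).
\]
For $\delta$ small, $M_\eta$ is positive definite for all $\eta\in[-\delta,\delta]^m$, by positivity of $L$ (Definition~\ref{pos}). Moreover $\det M_\eta$ is bounded below by a positive constant uniform in $x\in X$, by compactness. Hence the limit is at least some $2C_0>0$.

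To make the convergence uniform in $x$, I argue by contradiction. Suppose there are $k_j\to\infty$ and points $x_j$ with $P_{(k_j),\delta,s}(0,0)<C_0$. Pass to a subsequence $x_j\to x_\ast$. Rerun the proof of Theorem~\ref{3rdmain}: the equicontinuity from Theorem~\ref{sup} plus Arzel\`a--Ascoli gives a $\mathcal{C}^\infty$ limit, and the distributional identification of Lemma~\ref{gdiscon}, with model data depending continuously on the center, shows that this limit is the model kernel at $x_\ast$. That kernel has value $\ge 2C_0$ at the origin, a contradiction. I expect this uniformity-in-the-center step to be the main obstacle. The cleanest route is to note that every estimate in Section~\ref{gpact} depends only on finitely many $\mathcal{C}^N$ norms of the chart data, and these are bounded over the finite cover.

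\textbf{Step 3 (upper bound and pigeonhole for \eqref{sindialead}).} Theorem~\ref{sup} with $M=0$ and $K=\{0\}$, made uniform in the center as in Step~2, gives $|P_{k,\delta}(x,x)|\le C_1k^{n+m}$ for all $x$. Integrating over $X$,
\[
d_k=\int_X\sum_j|f_j|^2_{h^k}\,dv_X\le C_1\,{\rm vol}(X)\,k^{n+m}.
\]
Given any orthonormal basis $\{f_j\}$ and any $x\in X$,
\[
\max_j|f_j(x)|^2_{h^k}\ \ge\ \frac{1}{d_k}\sum_j|f_j(x)|^2_{h^k}\ \ge\ \frac{C_0k^{n+m}}{C_1{\rm vol}(X)k^{n+m}}=:c_0>0 .
\]
Choosing $j_0$ to attain the maximum proves \eqref{sindialead} for $k\ge k_0$.
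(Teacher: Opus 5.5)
Your proposal is correct and follows the paper's argument. The lower bound \eqref{dialead} comes from the diagonal value $P_\delta(0,0)=(2\pi)^{-(n+m)}\int_{[-\delta,\delta]^m}\det(M_\eta)\,d\mu(\eta)>0$ of the limit kernel in Theorem~\ref{nongrasy}, and \eqref{sindialead} follows from combining it with $d_k\le C_1k^{n+m}$ by pigeonhole; the only differences are that you get the dimension bound by integrating the center-uniform diagonal bound of Theorem~\ref{sup} over $X$ instead of citing \cite{HL18}, and that you make explicit the uniformity in the base point, which the paper uses without comment.
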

\begin{proof}
(\ref{dialead}) follows by Theorem \ref{nongrasy} and (\ref{nonwegrszeker}). For the second assertion, we can see that, following similar arguments as in \cite{HL18}, there exists $C_1>0$ independent of $k$ such that
\begin{equation*}
    \dim \mathcal{H}^0_{b,\le k\delta}(X,L^k) = d_k \leq C_1 k^{n+m}.
\end{equation*}
Together with (\ref{dialead}), we can conclude (\ref{sindialead}).
\end{proof}

On an open subset $D\subset X$ satisfying Corollary \ref{invnbd}, we write
\begin{align*}
    f_j = \hat{f}_j\otimes s^k, \quad \hat{f}_j \in \mathcal{C}^\infty(D), \quad j=1,\dots,d_k.
\end{align*}
By Lemma \ref{wedef}, for $\delta$ small enough and $k$ large enough, we have a well-defined global map
\begin{equation}\label{embedmap}
    \Phi_{k,\delta} : X \to \mathbb{CP}^{d_k-1}, \quad x \mapsto \big[\hat{f}_1(x) : \cdots : \hat{f}_{d_k}(x)\big].
\end{equation}
We can choose $f_j,\ j=1,\dots, d_k,$ such that $f_j\in \bigcap_{1\leq\ell\leq m}\mathcal{H}^0_{b,(Y_\ell,\alpha^j_\ell)}(X,L^k)$ where each $\alpha^j_\ell \in \text{Spec}(-iY_\ell)$ with $\big|\alpha^j_\ell\big| \leq k\delta.$

We now define an $\mathbb{R}^m$-action on $\mathbb{CP}^{d_k-1}.$ For each $j = 1,\dots, d_k,$ we assign a multi-weight $\alpha^j=(\alpha^j_1,\dots,\alpha^j_{m})\in \mathbb{R}^m.$ For all $\eta = (\eta_1,\dots,\eta_m)\in \mathbb{R}^m,$ we define, for $[z_1 : \cdots : z_{d_k}]\in \mathbb{CP}^{d_k-1},$
\begin{align}\label{rdact}
    \eta \circ [z_1 :\cdots :z_{d_k}] 
    &:= \big[e^{i\langle\alpha^1,\eta\rangle} z_1 : \cdots :e^{i\langle\alpha^{d_k},\eta\rangle} z_{d_k}\big],\\
    & = \big[e^{i(\alpha^1_1\eta_1+\cdots+\alpha^1_m\eta_m)}z_1 : \cdots : e^{i(\alpha^{d_k}_1\eta_1+\cdots+\alpha^{d_k}_m\eta_m)}z_{d_k}\big].\nonumber
\end{align}

\begin{thm}\label{koembedthm}
 Under the above assumptions, there exists $\delta_0 > 0$ such that for all $\delta\in (0,\delta_0)$ there exists a positive integer $k(\delta)$ so that for $k>k(\delta)$ and any orthonormal basis $\{f_j\}_{j=1}^{d_k}$ of $\mathcal{H}^0_{b,\leq k\delta}(X, L^k)$ with respect to $(\,\cdot\,|\,\cdot\,)_k$ such that $f_j\in \bigcap_{1\leq\ell\leq m}\mathcal{H}^0_{b,(Y_\ell,\alpha^j_\ell)}(X,L^k)$ where each $\alpha^j_\ell \in \text{Spec}(-iY_\ell)$ with $\big|\alpha^j_\ell\big| \leq k\delta$, the map $\Phi_{k,\delta}$ introduced by (\ref{embedmap}) is a smooth CR embedding which is $\mathbb{R}^m$-equivariant with respect to the multi-weighted $\mathbb{R}^m$-action on $\mathbb{CP}^{d_k-1}$ defined by $\alpha^1=(\alpha^1_1,\dots,\alpha^1_m),\dots,\alpha^{d_k}=(\alpha^{d_k}_1,\dots,\alpha^{d_k}_m)$ as in (\ref{rdact}).
\end{thm}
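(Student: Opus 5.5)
The proof has three parts: equivariance and the CR property, which are formal; global injectivity; and immersivity. The last two follow the standard Kodaira-embedding argument with the near-diagonal scaling limit of Theorem \ref{nongrasy} as input.

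\textbf{Equivariance and CR.} By Lemma \ref{wedef} the map $\Phi_{k,\delta}$ is well defined. The transition functions of $L^k$ are rigid CR, so on each $\mathbb{R}^m$-invariant trivializing set $D$ from Corollary \ref{invnbd} we have $\hat f_j(\eta\circ x)=e^{i\langle\alpha^j,\eta\rangle}\hat f_j(x)$. This holds because $-iY_\ell f_j=\alpha^j_\ell f_j$ and $s$ is rigid. Equivariance with respect to \eqref{rdact} follows at once. Each $\hat f_j$ is a CR function, so $\Phi_{k,\delta}$ is CR, that is, $d\Phi(T^{0,1}X)\subset T^{0,1}\mathbb{CP}^{d_k-1}$. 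Once $\Phi_{k,\delta}$ is shown to be an embedding, the identity $d\Phi(T^{1,0}X)=d\Phi(\mathbb CTX)\cap T^{1,0}\mathbb{CP}^{d_k-1}$ reduces to linear algebra using the injectivity of $d\Phi$.

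\textbf{Immersion.} Fix $p$ and a BRT chart centered at $p$. Write $P_{(k),\delta,s}$ for the scaled kernel. By Theorem \ref{nongrasy} it converges in $\mathcal C^\infty_{\rm loc}$ to $e^{-\psi_0(x)/2}P_\delta(x,y)e^{\psi_0(y)/2}$, which is explicit by \eqref{nonwegrszeker}. I would construct peak sections: the section $x\mapsto P_{k,\delta}(x,p)$, normalized, and the sections $\partial_{\bar y_t}$ or $\partial_{y_t}$ of $P_{k,\delta}(x,y)$ at $y=p$, in scaled coordinates. Immersivity at $p$ is equivalent to the nondegeneracy of the $(2n+m)\times(2n+m)$ matrix
\[
\partial_{x_a}\partial_{y_b}\log P_{(k),\delta,s}(x,y)\big|_{x=y=0},
\]
or more precisely of its Hermitian form version, as for the Fubini--Study pullback. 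This matrix converges to the corresponding matrix built from the model kernel $P_\delta$. A direct computation from \eqref{nonwegrszeker} gives the model matrix. In the $z$-directions it is an average of $M_\eta$ with weight $\det M_\eta$, which is positive definite. In the $x''$-directions it is the covariance matrix $\int\eta\eta^T\det M_\eta\,d\eta$ over $[-\delta,\delta]^m$ minus mean terms, which is positive definite since the domain is a box with nonempty interior. The mixed terms are odd and vanish at the origin. Hence the model form is positive definite, and $d\Phi_{k,\delta}$ is injective at $p$ for $k$ large. Theorem \ref{sup} makes the $\mathcal C^\infty$ bounds uniform in $p$. Compactness then gives a uniform $k$ together with a uniform neighborhood scale: $\Phi$ is injective on scaled balls of radius $r$, i.e. on $\{d(x,y)\le r/\sqrt k\}$ in the $z$-directions and $r/k$ in the $x''$-directions.

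\textbf{Global injectivity.} This is the step I expect to be the main obstacle. Suppose $\Phi(x_k)=\Phi(y_k)$ with $x_k\neq y_k$. Then the normalized Bergman-type quantity $|P_{k,\delta}(x_k,y_k)|^2/(P_{k,\delta}(x_k,x_k)P_{k,\delta}(y_k,y_k))$ equals $1$. I would split into three cases. In the first, the points are within the scaled distance $r$; the local injectivity above excludes this. In the second, the points are in an intermediate scaled range with $y_k$ still inside $D(k^\epsilon,k^{2\epsilon})$. Here the model kernel satisfies $|P_\delta(x,y)|^2<P_\delta(x,x)P_\delta(y,y)$ strictly, with decay in $|z-w|$ from the Gaussian factor and in $|x''-y''|$ from the Riemann--Lebesgue behavior of the $\eta$-integral. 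Carrying this out requires quantitative off-diagonal decay uniform in $k$. In the third, the points are macroscopically separated. The difficulty here is that points on the same $\mathbb{R}^m$-orbit closure are not separated by the local model. I would handle this with the torus action of Proposition \ref{torusprop}, using the operators $F_{k,\delta}$ and $Q^{(0)}_{X,\le k\delta}$. For points far apart, take a cut-off peak function at $x_k$, namely a localized CR function from the model, and project it with $F_{k,\delta}\Pi$. Using the spectral gap of Corollary \ref{grspgap}, the result is a global section that is large at $x_k$ and small at $y_k$, which contradicts $\Phi(x_k)=\Phi(y_k)$. Making the smallness at distant points rigorous uniformly, including points on the same torus orbit, is the delicate part. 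For that I would rely on the $\mathbb T^N$-invariant charts and the decomposition in Lemma \ref{Q+R}.
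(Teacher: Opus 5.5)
\textbf{Immersion.} Your immersion step is a valid alternative route. The paper builds explicit sections $g^j_k$ in Lemma \ref{imlem}, vanishing at $p$ with prescribed scaled first derivatives. You instead read off the limit of $\partial_{x_a}\partial_{y_b}\log P_{(k),\delta,s}$ at the diagonal from Theorem \ref{nongrasy}, and that works.

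\textbf{Injectivity, same limit point.} Here there is a genuine gap, and the missing idea is Lemma \ref{nest}: $A_{k+j}\subset A_k$ for $j\ge k_0$. It holds because $Y$-weights add under products, so $h_ju_k\in\mathcal H^0_{b,\le(k+j)\delta}(X,L^{k+j})$, while Lemma \ref{wedef} supplies $g_j,h_j$ that do not vanish at the two points. Consequently a coincident pair $x_k\neq y_k$ with $x_k,y_k\to p$ lies in $A_{k'}$ for every $k'\le k-k_0$. Choosing $k'\to\infty$ slowly enough gives $k'|z^k-w^k|^2\to0$ and $k'|x''^k-y''^k|\to0$. After this reduction you only need the near-diagonal limit on a \emph{fixed} compact set. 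The paper argues via $H_k''(u_k)\ge0$; your uniform local injectivity on scaled balls would do equally well. Without the reduction your scheme does not close, for two reasons.
\begin{enumerate}
\item Your second case needs off-diagonal control of $P_{(k),\delta,s}$ on the growing boxes $D(k^\epsilon,k^{2\epsilon})$. Theorem \ref{nongrasy} only gives convergence on $k$-independent compact sets.
\item Pairs with $x_k,y_k\to p$ whose scaled separation exceeds $k^\epsilon$ in $z$, or $k^{2\epsilon}$ in $x''$, fall into none of your three cases.
\end{enumerate}

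\textbf{Injectivity, $p\neq q$.} The spectral gap of Corollary \ref{grspgap} is only an $L^2$ bound, $\|(I-\Pi)G\|_k^2\le Ck^{-1}\|\ddbar_bG\|_k^2$. It gives no control of the value of $\Pi G$ at $y_k$. The paper proceeds as follows.
\begin{enumerate}
\item It builds the explicit section $G_k=\sigma(x'')g_k\otimes s^k$ in a BRT chart at $p$: a superposition of $x''$-frequencies $|\eta|\le k\delta$, cut off at scale $k^{\epsilon-1/2}$ in $z$.
\item It checks that $\|(\Box_{b,k})^jG_k\|_k=O(k^{-\infty})$ for \emph{every} $j$.
\item It applies the iterated Kohn estimate \eqref{kohnest2}, which has only polynomial losses. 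There $\Pi\Box_{b,k}=0$ and $\|Y_\ell F_{k,\delta}u\|_k\le k\delta\|u\|_k$ absorb the extra terms. This yields $F_{k,\delta}(I-\Pi)G_k=O(k^{-\infty})$ pointwise.
\item Lemma \ref{peakest} replaces $F_{k,\delta}$ by the local operator $B_k$ modulo $O(k^{-\infty})$, using the torus Fourier decomposition. This gives $|F_{k,\delta}G_k(0)|\approx k^m$ and $|F_{k,\delta}G_k(y_k)|=O(k^{-\infty})$.
\item Two oppositely peaked sections then force the proportionality ratio to satisfy both $r_k\ge2$ and $r_k\le\frac12$, a contradiction.
\end{enumerate}

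\textbf{The CR embedding identity.} For $m\ge2$, injectivity of $d\Phi$ does \emph{not} by itself give $d\Phi(T^{1,0}X)=d\Phi(\mathbb CTX)\cap T^{1,0}\mathbb{CP}^{d_k-1}$. For example, $(z,x_1,x_2)\mapsto(z,x_1+ix_2)$ on the Levi-flat $\mathbb C^n\times\mathbb R^2$ is an injective CR immersion, yet the intersection has dimension $n+1$. The paper, too, merely asserts that an injective immersion suffices. What is actually needed concerns the Hermitian form $\partial_{x_a}\partial_{y_b}\log P|_{x=y}$. This form always annihilates $T^{0,1}X$, so it is never nondegenerate on $\mathbb C^{2n+m}$. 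You need its kernel to be exactly $T^{0,1}X$, i.e.\ its rank to be $n+m$. Your model computation gives this: the $z$-block and the $x''$-block are positive definite Hermitian matrices, and the cross terms vanish. Rank is lower semicontinuous under the convergence of Theorem \ref{nongrasy}, so the rank is $n+m$ for $k$ large.
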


Take $\delta_0>0$ small enough to satisfy Lemma \ref{wedef} and fix $\delta\in (0,\delta_0).$ We have shown that $\Phi_{k,\delta}$ is globally defined for $k$ large enough. Since $X$ is compact, it suffices to show that $\Phi_{k,\delta}$ is an injective immersion. We first show that $\Phi_{k,\delta}$ is an immersion.
\begin{thm}\label{imm}
For $k$ large enough, the map $\Phi_{k,\delta}$ is an immersion.
\end{thm}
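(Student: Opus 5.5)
We argue by contradiction combined with the scaling limit of Theorem \ref{nongrasy}. Since $X$ is compact, it suffices to show that there is $k_0$ such that for every $k\ge k_0$ and every $p\in X$ the differential $d\Phi_{k,\delta}(p)$ is injective on $T_pX$. Fix $p$ and a BRT trivialization $(D,(z,x''),\varphi)$ centered at $p$, with $D$ $\mathbb{R}^m$-invariant as in Corollary \ref{invnbd}, and a rigid CR frame $s$ with weight $\phi$ normalized as in \eqref{libunwe}. By Lemma \ref{wedef} some $\hat f_{j_0}(p)\neq0$. After a unitary change of the orthonormal basis (which changes $\Phi_{k,\delta}$ only by a projective unitary transformation, hence does not affect the immersion property), we may assume that $f_1$ is the normalized peak section $P_{k,\delta}(\cdot,p)$ and that $f_2,\dots,f_{d_k}$ vanish at $p$. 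In the affine chart $\hat f_1\neq0$ the map becomes $x\mapsto(\hat f_2/\hat f_1,\dots,\hat f_{d_k}/\hat f_1)$. Injectivity of its differential at $p$ is therefore equivalent to the $(2n+m)\times(d_k-1)$ Jacobian matrix $\big(\partial_{x_t}(\hat f_j/\hat f_1)(p)\big)$ having rank $2n+m$.

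We would produce $2n+m$ explicit sections of $\mathcal{H}^0_{b,\le k\delta}(X,L^k)$ whose rescaled differentials at $p$ are linearly independent. Natural candidates come from the kernel: $g_t(x):=\partial_{\bar y_t}$-type derivatives $(\partial_{y}^{e_t}P_{k,\delta})(x,p)$ in the scaled coordinates, for $t=1,\dots,2n+m$. These are combinations of the $f_j$ because $P_{k,\delta}(x,y)=\sum_j f_j(x)\otimes f_j(y)^*$. After scaling with $F_k$ and multiplying by the factors $e^{\pm\frac k2\phi}$, Theorem \ref{nongrasy} shows that
\[
k^{-(n+m)}\,\partial_x^\alpha\partial_y^\beta P_{k,\delta,s}\big(F_k x,F_k y\big)\longrightarrow \partial_x^\alpha\partial_y^\beta\Big(e^{-\frac12\psi_0(x)}P_\delta(x,y)e^{\frac12\psi_0(y)}\Big)
\]
locally uniformly, up to the explicit powers $k^{|\alpha'|/2+|\alpha''|}$ coming from the chain rule. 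Consequently the rescaled Jacobian of $(g_t/\hat f_1)$ at $p$ converges to the corresponding Jacobian computed from the model kernel $P_\delta$ at the origin. Concretely, this is the matrix of derivatives at $x=0$ of $x\mapsto \partial_{y}^{e_t}\log P_\delta(x,y)\big|_{y=0}$, or equivalently the matrix $\big(\partial_x^{e_s}\partial_y^{e_t}\log P_\delta\big)(0,0)$.

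The main obstacle is to verify that this model matrix is nondegenerate. From the formula for $P_\delta$ in Theorem \ref{3rdmain1}, the $z$-directions contribute the block $\int_{[-\delta,\delta]^m}(M_\eta)_{jr}\det M_\eta\,d\mu(\eta)/\int\det M_\eta\,d\mu(\eta)$. This block is positive definite because each $M_\eta$ is positive definite for $|\eta_\ell|\le\delta$. The $x''$-directions contribute the covariance matrix of $\eta$ under the probability density proportional to $\det M_\eta$ on $[-\delta,\delta]^m$, which is positive definite since the density is positive on a box with nonempty interior. The mixed terms must be tracked, and we would try to show that they vanish or are dominated by a block-triangular structure. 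It is also important to use the real derivatives $\partial_{x_t}$, rather than only holomorphic ones, so that the CR functions $g_t/\hat f_1$ separate all real directions; the $\bar z$-derivatives of CR functions are determined by the $x''$-derivatives. Once the limit matrix has rank $2n+m$, a compactness and contradiction argument over $p\in X$, using uniformity in the base point of the estimates in Theorem \ref{sup} (the constants depend only on the BRT charts, finitely many by compactness), gives rank $2n+m$ for all $k\ge k_0$ and all $p$. This proves that $\Phi_{k,\delta}$ is an immersion.
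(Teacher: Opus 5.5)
Your proposal is correct, but it takes a different route from the paper.

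\textbf{The paper's route.} It first proves Lemma \ref{imlem}. It applies the partial Szeg\H{o} projector to explicit rescaled test functions:
\begin{itemize}
\item a bump $\rho_{\theta_0}(\sqrt{k}w)\chi(ky''/k^{2\epsilon})e^{\frac{k}{2}\phi}$;
\item the same bump multiplied by $ky_{2n+\ell}$;
\item for the $z$-directions, a superposition over $\tau\in[-\delta,\delta]^m$ built from a measurable unitary diagonalization $Q(\tau)$ of $M_\tau$, with $\theta_0$ then taken small.
\end{itemize}
It computes, via Theorem \ref{nongrasy}, the limits of the values and rescaled first derivatives at $p$ of these projections. It then subtracts multiples of a peak section to get $n+m$ sections that vanish at $p$, each with one dominant derivative, and finishes by linear algebra.

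\textbf{Your route.} You use the sections $x\mapsto\partial_{y_t}P_{k,\delta}(x,y)|_{y=p}$, which reduces everything to the mixed Hessian $\partial_x\partial_y\log$ of the rescaled kernel at $(0,0)$. This needs only the local $\mathcal{C}^2$ convergence from Theorem \ref{nongrasy} near one point. It avoids $\theta_0$, $Q(\tau)$, and passing to the limit in projections of test functions whose $y''$-support grows like $k^{2\epsilon}$. It also makes the nondegeneracy condition transparent: positivity of the average of $M_\eta$, and of the covariance of $\eta$ under the density proportional to $\det M_\eta$. The covariance positivity is exactly fact (2) that the paper uses in Case I of the injectivity proof. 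In return, the paper's method produces explicit sections with prescribed first jets at $p$, in the peak-section tradition.

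\textbf{Three points to tighten.}

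First, the model matrix $\big(\partial_{x_s}\partial_{y_t}\log\tilde P\big)(0,0)$, with $\tilde P=e^{-\frac12\psi_0(x)}P_\delta e^{\frac12\psi_0(y)}$, has complex rank only $n+m$, because its $\partial_{\overline z}$-rows and $\partial_{w}$-columns vanish. So ``rank $2n+m$'' must mean injectivity of the real-linear map on real tangent vectors
$\sum_j(a_j\partial_{z_j}+\overline{a}_j\partial_{\overline z_j})+\sum_q c_q\partial_{x_{2n+q}}$.
This follows from invertibility of the $z\times\overline w$ block and the $x''\times y''$ block, since such a vector is determined by $(a,c)$. Real injectivity is an open condition, and it is preserved by the real diagonal rescalings by $k^{1/2}$ and $k$.

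Second, the mixed terms you propose to track actually vanish. The phase of $\tilde P$ is
$E=i\langle x''-y'',\eta\rangle+\sum_{j,r}(M_\eta)_{jr}\big(z_j\overline w_r-\frac12 z_j\overline z_r-\frac12 w_j\overline w_r\big)$.
All its first derivatives in $z,\overline z,w,\overline w$ vanish at $(0,0)$, and it has no $z$--$y''$ or $x''$--$w$ cross terms. Hence $\partial_x\partial_y\log\tilde P(0,0)$ is block-diagonal. The blocks are the average of $(M_\eta)_{jr}$ (in $z_j,\overline w_r$) and the covariance of $(\eta_q,\eta_\ell)$, both taken against $\det M_\eta\,d\mu(\eta)\big/\int_{[-\delta,\delta]^m}\det M_\eta\,d\mu(\eta)$.

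Third, uniformity in $p$ requires a moving-center version of Theorem \ref{nongrasy}. The paper asserts the corresponding uniformity just as briefly, so this is not a defect relative to it.
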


To prove Theorem \ref{imm}, we need some preparations. Let $\chi \in \mathcal{C}^\infty_c(\mathbb{R}, \mathbb{R})$ be a smooth even bump function with $0 \le \chi \le 1$, $\chi(t) = 1$ on $[-\frac{1}{2}, \frac{1}{2}]$ and $\chi(t)=0$ when $|t|\geq1.$  Consider the 2-dimensional bump function $\rho(v_1,v_2) \in \mathcal{C}^\infty_c(\mathbb{R}^2)$ such that $\rho(v) = \chi(|v|).$ For $\theta > 0$, let $\rho_\theta(v) := \theta^{-2} \rho(\theta^{-1}v)$ for $v \in \mathbb{R}^2 \cong \mathbb{C}$.
 We can check that, as oscillatory integrals,
\begin{align*}
& \lim_{\theta \to 0} \int_{\mathbb{H}^{n,m}} \int_{[-\delta, \delta]^m} e^{-i \langle y'', \eta \rangle - \frac{1}{2} \sum\limits_{j,r} (M_\eta)_{jr} w_j \overline{w}_r}  \det(M_\eta) \, \rho_\theta(w_1) \cdots \rho_\theta(w_n) d\mu(\eta) d\mu_{\mathbb{H}^{n,m}}(y) \\
&= \lim_{\theta \to 0} \int_{\mathbb{H}^{n,m}} \int_{[-\delta, \delta]^m} e^{-i \langle y'', \eta \rangle - \frac{1}{2} \sum\limits_{j,r} (M_\eta)_{jr} \theta^2 w_j\overline{w}_r}  \det(M_\eta) \, \rho(w_1) \cdots \rho(w_n) d\mu(\eta) d\mu_{\mathbb{H}^{n,m}}(y) \\
&= \Big( \int_{\mathbb{R}^m} \int_{[-\delta, \delta]^m} e^{-i \langle y'', \eta \rangle}  \det(M_\eta) d\mu(\eta) d\mu(y'') \Big) \cdot \Big( \int_{\mathbb{R}^{2n}} \rho(w_1) \cdots \rho(w_{n}) d\mu(w) \Big) \\
&= (2\pi)^m \, \big(\det(M_\eta)\big|_{\eta=0}\big)  \int_{\mathbb{R}^{2n}} \rho(w_1) \cdots \rho(w_n) d\mu(w) > 0.
\end{align*}

\begin{align*}
& \lim_{\theta \to 0} \int_{\mathbb{H}^{n,m}} \int_{[-\delta, \delta]^m} e^{-i \langle y'', \eta \rangle - \frac{1}{2} \sum\limits_{j,r} (M_\eta)_{jr} w_j \overline{w}_r} (i\eta_\ell) \det(M_\eta)\, \rho_\theta(w_1) \cdots \rho_\theta(w_n) y_{2n+\ell} d\mu(\eta) d\mu_{\mathbb{H}^{n,m}}(y) \\
&= \Big( \int_{\mathbb{R}^m} \int_{[-\delta, \delta]^m} e^{-i \langle y'', \eta \rangle} (i \eta_\ell y_{2n+\ell}) \det(M_\eta) d\mu(\eta) d\mu(y'') \Big) \cdot \Big( \int_{\mathbb{R}^{2n}} \rho(w_1) \cdots \rho(w_n) d\mu(w) \Big) \\
&= (2\pi)^m \, \big(\det(M_\eta)\big|_{\eta=0}\big)  \int_{\mathbb{R}^{2n}} \rho(w_1) \cdots \rho(w_n) d\mu(w) > 0, \quad \text{for all }\ \ell=1, \dots, m.
\end{align*}

We deduce that there exists $\theta_0 > 0$ small enough such that, for  $j=1, \dots, 2n\ \text{and} \  \ell=1, \dots, m,$
\begin{align} \label{baimest}
&\int_{\mathbb{H}^{n,m}} \int_{[-\delta, \delta]^m} e^{-i \langle y'', \eta \rangle - \frac{1}{2} \sum\limits_{j,r}(M_\eta)_{jr} w_j \overline{w}_r}  \det(M_\eta) \rho_{\theta_0}(w_1) \cdots \rho_{\theta_0}(w_n) d\mu(\eta) d\mu_{\mathbb{H}^{n,m}}(y) =: V_0 > 0 \\
&\int_{\mathbb{H}^{n,m}} \int_{[-\delta, \delta]^m} e^{-i \langle y'', \eta \rangle - \frac{1}{2} \sum\limits_{j,r}(M_\eta)_{jr} w_j \overline{w}_r} (i \eta_\ell y_{2n+\ell}) \det(M_\eta) \rho_{\theta_0}(w_1) \cdots \rho_{\theta_0}(w_n)d\mu(\eta) d\mu_{\mathbb{H}^{n,m}}(y)\nonumber\\
&=: V_\ell > 0.\nonumber
\end{align}

Fix a point $p \in X$. Let $s$ be a local rigid CR frame of $L$ on a BRT coordinates neighborhood $(D,(z,x''),\varphi)$ centered at $p$ with $|s|^2_h=e^{-\phi}$  as in (\ref{libunwe}). We first prove the following lemma.
\begin{lem} \label{imlem}
With the notations above, there exists a positive integer $K_0 > 0$ independent of the point $p$ such that for all $k \ge K_0$, we can find
\[
g^j_k \in \mathcal{H}^0_{b, \le k\delta} (X, L^k), \ j=1, \dots, n+m,
\]
such that if we write $g^j_k = \hat{g}^j_k \otimes s^k$ on $D$, $j=1, \dots, n+m$, then we have
\begin{align}\label{lemimin}
& \sum_{j=1}^{n+m} \big| \big(e^{-\frac{k}{2}\phi} \hat{g}^j_k\big) \big|(0) = 0, \\
& \bigg|\frac{1}{\sqrt{k}} \partial_{\overline{z}_t} (e^{-\frac{k}{2}\phi} \hat{g}^j_k)(0)\bigg| < \theta_k, \quad j=1, \dots, n+m, \ t=1, \dots, n,\nonumber \\
& \bigg|\frac{1}{\sqrt{k}} \partial_{z_t} (e^{-\frac{k}{2}\phi} \hat{g}^j_k)(0)\bigg| < \theta_k, \quad j=1, \dots, n+m, \ t=1, \dots, n, \ j \neq t,\nonumber \\
&\bigg|\frac{1}{k} \partial_{x_{2n+q}} (e^{-\frac{k}{2}\phi} \hat{g}^{n+\ell}_k)(0)\bigg| < \theta_k, \quad \ell,q=1, \dots, m,\ \ell\neq q,\nonumber\\
& \bigg|\frac{1}{\sqrt{k}} \partial_{z_j} (e^{-\frac{k}{2}\phi} \hat{g}^j_k)(0)\bigg| \ge C_0, \quad j=1, \dots, n,\nonumber \\
& \bigg|\frac{1}{k} \partial_{x_{2n+\ell}} (e^{-\frac{k}{2}\phi} \hat{g}^{n+\ell}_k)(0)\bigg| \ge C_0, \quad \ell=1, \dots, m,\nonumber
\end{align}
where $C_0 > 0$ is a constant independent of $k$ and the point $p,$ and $\theta_k$ is a sequence independent of $p$ with $\lim\limits_{k \to \infty} \theta_k = 0.$
\end{lem}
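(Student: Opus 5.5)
The plan is to build the sections $g^j_k$ by applying the partial Szeg\H{o} projection to localized, scaled test functions, and then read off their jets at $0$ from the $\mathcal{C}^\infty$ convergence in Theorem~\ref{nongrasy}. For the model data I will use the bump $\rho_{\theta_0}$ from \eqref{baimest}. Put
\[
v^j(w,y'')=\overline{w}_j\,\rho_{\theta_0}(w_1)\cdots\rho_{\theta_0}(w_n)\,\chi(y''),\qquad j=1,\dots,n,
\]
and
\[
v^{n+\ell}(w,y'')=y_{2n+\ell}\,\rho_{\theta_0}(w_1)\cdots\rho_{\theta_0}(w_n)\,\chi(y''),\qquad \ell=1,\dots,m,
\]
with $\chi$ a fixed cutoff in $y''$; the exact choice is adjusted so that the pairings below reduce to the constants $V_0,V_\ell$ of \eqref{baimest}. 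Then scale to $v^j_{[k]}=k^{\frac{n+m}{2}}v^j(\sqrt{k}z,kx'')e^{\frac k2\phi}s^k\in\mathcal{C}^\infty_c(D,L^k)$ and define
\[
\tilde g^j_k:=\Pi^{(0)}_{k,(\mathbb R^m\curvearrowright,\le k\delta)}v^j_{[k]}\in\mathcal{H}^0_{b,\le k\delta}(X,L^k).
\]
On $D$, after unscaling, $k^{-\frac{n+m}{2}}F_k^*(e^{-\frac k2\phi}\hat{\tilde g}^j_k)(x)$ equals $\int P_{(k),\delta,s}(x,y)v^j(y)\,d\mu$ up to the volume factor $V(y/\sqrt{k})\to1$. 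By Theorem~\ref{nongrasy} this converges in $\mathcal{C}^\infty$ on compact sets to
\[
G^j(x):=\int e^{-\frac12\psi_0(x)}P_\delta(x,y)e^{\frac12\psi_0(y)}v^j(y)\,d\mu(y),
\]
which I compute from \eqref{nonwegrszeker}.

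Next I evaluate the jets of $G^j$ at $0$. Setting $x=0$ in \eqref{nonwegrszeker} kills the $x''$ and $z$ terms, so $G^j(0)$, $\partial_{\bar z_t}G^j(0)$, $\partial_{z_t}G^j(0)$ and $\partial_{x_{2n+q}}G^j(0)$ are explicit integrals of $v^j$ against $e^{-i\langle y'',\eta\rangle-\frac12 M_\eta(w,\bar w)}\det M_\eta$, multiplied by factors $1$, $\tfrac12(M_\eta w)_t$ (holomorphic in $z$ only through $z\bar w$), $0$ or $i\eta_q$. Because $\rho_{\theta_0}$ is radial in each $w_s$, angular integration in each $w_s$ forces the following vanishing. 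For $j\le n$: $G^j(0)=0$, $\partial_{\bar z_t}G^j(0)=0$, $\partial_{z_t}G^j(0)=0$ for $t\ne j$, and $\partial_{x_{2n+q}}G^j(0)=0$, while $\partial_{z_j}G^j(0)$ is a nonzero multiple of a $|w_j|^2$-moment, which stays bounded below by the argument behind \eqref{baimest}. For $v^{n+\ell}$: every $z$-derivative vanishes by the angular symmetry, and the $y''$-pairing with $y_{2n+\ell}$ isolates $\eta_\ell$. Hence $\partial_{x_{2n+\ell}}G^{n+\ell}(0)=V_\ell>0$ and $\partial_{x_{2n+q}}G^{n+\ell}(0)=0$ for $q\ne\ell$. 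However $G^{n+\ell}(0)=\int\!\!\int e^{-i\langle y'',\eta\rangle}\det M_\eta\,y_{2n+\ell}\cdots$ need not vanish. I will fix this by setting
\[
g^{n+\ell}_k:=\tilde g^{n+\ell}_k-c_k\,\tilde g^0_k,
\]
where $\tilde g^0_k$ comes from $v^0=\rho_{\theta_0}\cdots\rho_{\theta_0}\chi$ (so $G^0(0)=V_0>0$) and $c_k$ is chosen so that the value at $0$ vanishes exactly. For $j\le n$ I use the same subtraction, so that the value at $0$ vanishes exactly rather than only approximately. Since $c_k$ converges and the jets of $G^0$ are controlled by the same symmetry, the off-diagonal bounds survive. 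The derivatives scale as $\frac{1}{\sqrt k}\partial_z$ and $\frac1k\partial_{x''}$, which matches the normalizations in \eqref{lemimin} exactly.

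The last step is uniformity in $p$. Here $\theta_k$ is the $\mathcal{C}^1$ error in the convergence of Theorem~\ref{nongrasy}. The main obstacle is showing that this convergence, and the constants $\mu_{jr},\lambda^\ell_{jr}$ entering $V_0,V_\ell$, are uniform over $p\in X$. I plan to handle this by noting that the bounds in Theorem~\ref{sup} come from Kohn estimates whose constants depend continuously on the BRT data. A compactness and contradiction argument then applies: if uniformity failed, there would be sequences $p_k$ and $k\to\infty$ violating it, and after extracting a convergent subsequence of the $p_k$ and their local data, the same limit identification (Lemma~\ref{gdiscon}, run with varying base points) gives a contradiction. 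Positivity of the lower bound $C_0$ uniformly in $p$ follows because $\det M_\eta$ and the Gaussian moments depend continuously on $p$ and are positive for $\delta$ small, with $X$ compact.
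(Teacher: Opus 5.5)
\textbf{There is a genuine gap in the $z$-directions.} In \eqref{nonwegrszeker} the $(z,w)$-dependence of the kernel is $\exp\bigl[-\frac12\sum_{j,r}(M_\eta)_{jr}z_j\overline z_r+\sum_{j,r}(M_\eta)_{jr}z_j\overline w_r-\frac12\sum_{j,r}(M_\eta)_{jr}w_j\overline w_r\bigr]$. So at $z=0$, $\partial_{\overline z_t}$ brings down the factor $0$ and $\partial_{z_t}$ brings down $\sum_r(M_\eta)_{tr}\overline w_r$; your stated factors for these two derivatives are not correct. Pair this with your test function $v^j=\overline w_j\prod_s\rho_{\theta_0}(w_s)\chi(y'')$. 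For every $t$ and every $\eta$ the $w$-integral is
\[
\int_{\mathbb C^n}e^{-\frac12\sum_{a,b}(M_\eta)_{ab}w_a\overline w_b}\Big(\sum_r(M_\eta)_{tr}\overline w_r\Big)\overline w_j\prod_s\rho_{\theta_0}(w_s)\,d\mu(w)=0,
\]
because $w\mapsto e^{i\alpha}w$ preserves the Gaussian and the radial bumps and multiplies the integrand by $e^{-2i\alpha}$. Hence $G^j(0)=0$ and $\partial_{z_t}G^j(0)=0$ for all $t$. Then $c_k\to0$, and $\frac{1}{\sqrt k}\partial_{z_j}(e^{-\frac k2\phi}\hat g^j_k)(0)\to0$, which contradicts the lower bound $\geq C_0$ in \eqref{lemimin}. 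Put differently, $\overline w_j$ times a radial function is orthogonal to everything holomorphic in $w$, so the model projection essentially annihilates it. The test function must carry a holomorphic factor, as in the paper's $\theta_0^{-2}\sum_r(M_\tau)_{rt}w_r$.

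\textbf{Replacing $\overline w_j$ by $w_j$ is not enough.} Your off-diagonal vanishing by ``angular integration in each $w_s$'' needs $M_\eta$ to be diagonal for every contributing $\eta$. With a fixed compactly supported $\chi(y'')$, the $y''$-integration weights all $\eta\in[-\delta,\delta]^m$ by $\widehat\chi(\eta)$. But $M_\eta=(\mu_{jr})-2\sum_\ell\eta_\ell M_\ell$ is in general not simultaneously diagonalizable. For small $\theta_0$ the leading part of $\partial_{z_t}G^j(0)$ is then a multiple of $\int\widehat\chi(\eta)\det(M_\eta)(M_\eta)_{tj}\,d\mu(\eta)$, which is nonzero for $t\neq j$.

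The same problem appears in the $x''$-directions. For $q\neq\ell$, $\partial_{x_{2n+q}}G^{n+\ell}(0)$ is a multiple of $\int\eta_q\det(M_\eta)C(\eta)\,\partial_{\eta_\ell}\widehat\chi(\eta)\,d\mu(\eta)$, where $C(\eta)=\int e^{-\frac12\sum_{a,b}(M_\eta)_{ab}w_a\overline w_b}\prod_s\rho_{\theta_0}(w_s)\,d\mu(w)$, and this need not vanish. No fixed cutoff makes the pairings reduce to $V_0,V_\ell$ of \eqref{baimest}, since those are computed with the cutoff removed. The paper uses $\chi(y''/k^{2\epsilon})$ precisely to force $\eta=0$ (resp.\ $\eta=\tau$).

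\textbf{A clean repair within your framework is a linear-algebra step.} Use holomorphic test functions and a fixed but sufficiently wide cutoff, with $\theta_0$ small. Then, uniformly in $p$:
\begin{itemize}
\item[(a)] the limiting $z$-block $(\partial_{z_t}G^j(0))_{t,j}$ is close to a positive multiple of the positive definite matrix $\det(M_0)M_0$;
\item[(b)] the limiting $x''$-block $(\partial_{x_{2n+q}}G^{n+\ell}(0))_{q,\ell}$, taken after your subtraction of $c_k\tilde g^0_k$, is close to a positive multiple of the identity;
\item[(c)] the mixed blocks vanish by the phase symmetry, and $\partial_{\overline z_t}G(0)=0$ identically.
\end{itemize}
Recombining the $g^j_k$ with the inverses of these two blocks then yields exactly the pattern in \eqref{lemimin}.

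A step of this kind is needed in the paper's argument as well. In \eqref{zrvt=0}, replacing $\sum_r(M_\eta)_{qr}\overline w_r$ by $\lambda_q(\eta)\overline{\widetilde w}_q$ is valid only when $Q(\eta)=I$. As $\theta_0\to0$, the $z$-block of $v^t_k$ actually tends to a positive multiple of $\int\det(M_\eta)M_\eta^2\,d\mu(\eta)$, which is invertible but in general not diagonal.

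Your compactness sketch for uniformity in $p$ is reasonable; the paper simply asserts it. The proof breaks in the jet computation, not in the uniformity step.
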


\begin{proof}
We first construct a base function $\hat{u}_k$ using the cutoff functions. 
Let $\chi, \ \rho$ and $\theta_0 > 0$ be as in (\ref{baimest}).
 Let
\begin{align*}
u_{(k)}(y) := \rho_{\theta_0}(w_1) \cdots \rho_{\theta_0}(w_n) \, \chi\left(\frac{y_{2n+1}}{k^{2\epsilon}}\right) \cdots \chi\left(\frac{y_{2n+m}}{k^{2\epsilon}}\right) \in \mathcal{C}^\infty_c(\mathbb{R}^{2n+m}).
\end{align*}
Put
\begin{align*}
u_{\langle k\rangle}(y) &:= u_{(k)}(\sqrt{k}w, ky'')  e^{\frac{k}{2}\phi(y)}\\ 
&\ = \rho_{\theta_0}(\sqrt{k}w_1) \cdots \rho_{\theta_0}(\sqrt{k}w_n) \, \chi\left(\frac{k y_{2n+1}}{k^{2\epsilon}}\right) \cdots \chi\left(\frac{k y_{2n+m}}{k^{2\epsilon}}\right) e^{\frac{k}{2}\phi(y)}.
\end{align*}
Let
\begin{align*}
u_k := \Pi^{(0)}_{k,({\mathbb R}^m\curvearrowright,\leq k\delta)}  \big(u_{\langle k\rangle}(y)\otimes s^k\big).
\end{align*}
If we write $u_k = \hat{u}_k \otimes s^k$, then by Theorem \ref{nongrasy}, (\ref{nonwegrszeker}) and (\ref{baimest}) we have
\begin{align*}
&\lim_{k \to \infty} (2\pi)^{n+m} e^{-\frac{k}{2}\phi(0)} \hat{u}_k(0)\\ 
&\quad= \int_{\mathbb{H}^{n,m}} \int_{[-\delta, \delta]^m} e^{-i\langle y'', \eta \rangle - \frac{1}{2} \sum\limits_{j,r}(M_\eta)_{jr} w_j \overline{w}_r} \, \det(M_\eta) \, \rho_{\theta_0}(w_1) \cdots \rho_{\theta_0}(w_n)d\mu(\eta)d\mu_{\mathbb{H}^{n,m}}(y) > 0.
\end{align*}
Since $X$ is compact, there exists $k_0 > 0$ independent of the point $p$ such that for all $k \ge k_0$, we have
\begin{equation}\label{ubound}
\frac{1}{A_0} \le \Big|e^{-\frac{k}{2}\phi(0)} \hat{u}_k(0)\Big| \le A_0
\end{equation}
where $A_0 > 1$ is independent of $k$ and the point $p$.

From (\ref{grszeasy}), we can check that, for $\ell=1, \dots, m$,
\begin{align}\label{y2n+l=0}
&\lim_{k \to \infty} \frac{1}{k} \frac{\partial}{\partial x_{2n+\ell}} (e^{-\frac{k}{2}\phi} \hat{u}_k)(0)\\
&= (\frac{1}{2\pi})^{n+m} \frac{\partial}{\partial x_{2n+\ell}} \bigg|_{x=0} \int_{\mathbb{H}^{n,m}} \int_{[-\delta, \delta]^m} e^{i\langle x'', \eta \rangle - i\langle y'', \eta \rangle - \frac{1}{2} \sum\limits_{j,r} (M_\eta)_{jr} (z_j - w_j)(\overline{z}_r - \overline{w}_r) + \frac{1}{2} \sum\limits_{j,r}(M_\eta)_{jr} (z_j\overline{w}_r - \overline{z}_r w_j)}\nonumber \\
&\quad \quad \quad\quad\quad\quad\quad\quad\quad\quad\cdot \det(M_\eta) \, \rho_{\theta_0}(w_1) \cdots \rho_{\theta_0}(w_n) d\mu(\eta) d\mu_{\mathbb{H}^{n,m}}(y)\nonumber \\
&= (\frac{1}{2\pi})^{n+m} \int_{\mathbb{H}^{n,m}} \int_{[-\delta, \delta]^m} e^{-i\langle y'', \eta \rangle - \frac{1}{2} \sum\limits_{j,r} (M_\eta)_{jr} w_j \overline{w}_r} (i\eta_\ell)\det(M_\eta) \rho_{\theta_0}(w_1) \cdots \rho_{\theta_0}(w_n) d\mu(\eta) d\mu_{\mathbb{H}^{n,m}}(y)\nonumber \\
&= (2\pi)^m \cdot (\frac{1}{2\pi})^{n+m} (i\eta_\ell \det(M_\eta)) \big|_{\eta=0} \ \int_{\mathbb{R}^{2n}} \rho_{\theta_0}(w_1) \cdots \rho_{\theta_0}(w_n) d\mu(w) = 0,\nonumber
\end{align}
and for $t=1, \dots, n$,
\begin{align}\label{zt=0}
&\lim_{k \to \infty} \frac{1}{\sqrt{k}} \partial_{z_t} (e^{-\frac{k}{2}\phi} \hat{u}_k)(0)\\ 
&= (2\pi)^{-(n+m)} \int_{\mathbb{H}^{n,m}} \int_{[-\delta, \delta]^m} e^{-i\langle y'', \eta \rangle - \frac{1}{2} \sum\limits_{j,r} (M_\eta)_{jr} w_j \overline{w}_r} \, \Big(\sum_r (M_\eta)_{tr} \overline{w}_r\Big)  \det(M_\eta)\nonumber\\
&\quad\quad\quad\quad\quad\quad\cdot \rho_{\theta_0}(w_1) \cdots \rho_{\theta_0}(w_n) d\mu(\eta) d\mu_{\mathbb{H}^{n,m}}(y) = 0, \nonumber
\end{align}

\begin{align}\label{barzt=0}
\lim_{k \to \infty} \frac{1}{\sqrt{k}} \partial_{\overline{z}_t} (e^{-\frac{k}{2}\phi} \hat{u}_k)(0) = 0.
\end{align}
Hence, we have, for $\ell=1, \dots, m$ and $t=1, \dots, n$,
\begin{align}\label{uksmall}
&\bigg|\frac{1}{\sqrt{k}} \partial_{z_t} (e^{-\frac{k}{2}\phi} \hat{u}_k)(0)\bigg| < \delta_k,\nonumber\\
&\bigg|\frac{1}{\sqrt{k}} \partial_{\overline{z}_t} (e^{-\frac{k}{2}\phi} \hat{u}_k)(0)\bigg| < \delta_k,\\
&\bigg|\frac{1}{k} \partial_{x_{2n+\ell}} (e^{-\frac{k}{2}\phi} \hat{u}_k)(0)\bigg| < \delta_k,\nonumber
\end{align}
where $\delta_k$ is a sequence independent of the point $p$ with $\lim\limits_{k \to \infty} \delta_k = 0$.

For $\ell=1,\dots,m,$ put 
\begin{align*}
&v_{(k)}^{n+\ell} = \rho_{\theta_0}(w_1) \cdots \rho_{\theta_0}(w_n) \, \chi\left(\frac{y_{2n+1}}{k^{2\epsilon}}\right) \cdots \chi\Big(\frac{y_{2n+m}}{k^{2\epsilon}}\Big)\,y_{2n+\ell}.\\
&v_{\langle k\rangle}^{n+\ell} = v_{(k)}^{n+\ell}(\sqrt{k}w, k y'') e^{\frac{k}{2}\phi(y)}\\ 
&\quad\quad\,  = \rho_{\theta_0}(\sqrt{k}w_1) \cdots \rho_{\theta_0}(\sqrt{k}w_n)\,\chi\Big(\frac{ky_{2n+1}}{k^{2\epsilon}}\Big)\cdots\chi\Big(\frac{ky_{2n+m}}{k^{2\epsilon}}\Big) \, (k y_{2n+\ell}) \, e^{\frac{k}{2}\phi(y)}\\
&v_{k}^{n+\ell} = \Pi^{(0)}_{k,({\mathbb R}^m\curvearrowright,\leq k\delta)} \Big( v_{\langle k\rangle}^{n+\ell}(y) \cdot s^k(y) \Big)
\end{align*}

Write $v_{k}^{n+\ell} = \hat{v}_k^{n+\ell} \otimes s^k$. We can see that
\begin{align*}
&\lim_{k \to \infty} (2\pi)^{n+m}e^{-\frac{k}{2}\phi(0)} \hat{v}_k^{n+\ell}(0)\\ 
&= \int_{\mathbb{H}^{n,m}} \int_{[-\delta, \delta]^m} e^{-i\langle y'', \eta \rangle - \frac{1}{2}\sum\limits_{j,r}(M_\eta)_{jr} w_j\overline{w}_r} \det(M_\eta) \rho_{\theta_0}(w_1) \cdots\rho_{\theta_0}(w_n) y_{2n+\ell}d\mu(\eta) d\mu_{\mathbb{H}^{n,m}}(y).
\end{align*}

It is easy to see that there exists $E_\ell > 0$ independent of $k$ and the point $p$ such that
\begin{equation}\label{vn+lbound}
\Big|e^{-\frac{k}{2}\phi(0)} \hat{v}_k^{n+\ell}(0) \Big| \le E_\ell.
\end{equation}

From (\ref{grszeasy}) and (\ref{baimest}), we can check that
\begin{align}\label{x2n+lv=0}
&\lim_{k \to \infty} (2\pi)^{n+m} \frac{1}{k} \partial_{x_{2n+\ell}} \big(e^{-\frac{k}{2}\phi} \hat{v}_k^{n+\ell}\big)(0)\\
&= \int_{\mathbb{H}^{n,m}} \int_{[-\delta, \delta]^m} e^{-i\langle y'', \eta \rangle - \frac{1}{2}\sum\limits_{j,r}(M_\eta)_{jr} w_j \overline{w}_r} \det(M_\eta) (i \eta_\ell y_{2n+\ell}) \rho_{\theta_0}(w_1) \cdots \rho_{\theta_0}(w_n) d\mu(\eta) d\mu_{\mathbb{H}^{n,m}}(y)\nonumber\\
&= V_\ell \neq 0.\nonumber
\end{align}
For $q=1, \dots, m$ with $q \neq \ell,$
\begin{align}\label{x2n+mv=0}
&\lim_{k \to \infty} (2\pi)^{n+m}\frac{1}{k} \partial_{x_{2n+q}} (e^{-\frac{k}{2}\phi} \hat{v}_k^{n+\ell})(0) \\
&= \int_{\mathbb{H}^{n,m}} \int_{[-\delta, \delta]^m} e^{-i\langle y'', \eta \rangle - \frac{1}{2}\sum\limits_{j,r}(M_\eta)_{jr} w_j \overline{w}_r}  \det(M_\eta)  (i \eta_q y_{2n+\ell}) \chi_{\theta_0}(y_1) \dots \chi_{\theta_0}(y_{2n}) d\mu(\eta) d\mu_{\mathbb{H}^{n,m}}(y)\nonumber \\
&= (2\pi)  \Bigg( \eta_q \frac{\partial (\det M_\eta)}{\partial \eta_\ell} \Bigg|_{\eta=0} \Bigg)  \int_{\mathbb{R}^{2n}} \rho_{\theta_0}(w_1) \dots \rho_{\theta_0}(w_n) d\mu(w) = 0,\nonumber
\end{align}
and for $t=1, \dots, n$,
\begin{align}\label{ztv=0}
&\lim_{k \to \infty} \frac{1}{\sqrt{k}} \partial_{z_t} (e^{-\frac{k}{2}\phi} \hat{v}_k^{n+\ell})(0)\\
&=  \Big(\frac{1}{2\pi}\Big)^{n+m}\int_{\mathbb{H}^{n,m}} \int_{[-\delta, \delta]^m} e^{-i\langle y'', \eta \rangle - \frac{1}{2}\sum\limits_{j,r} (M_\eta)_{jr} w_j \overline{w}_r} \Big( \sum_r (M_\eta)_{tr} \overline{w}_r \Big) \det(M_\eta)
y_{2n+\ell}\nonumber\\
&\quad\quad\quad\quad\quad\cdot\rho_{\theta_0}(w_1) \cdots \rho_{\theta_0}(w_n) d\mu(\eta) d\mu_{\mathbb{H}^{n,m}}(y)\nonumber \\
&= 0,\nonumber
\end{align}
and similarly,
\begin{equation}\label{barztv=0}
\lim_{k \to \infty} \frac{1}{\sqrt{k}} \partial_{\overline{z}_t} (e^{-\frac{k}{2}\phi} \hat{v}_k^{n+\ell})(0) = 0.
\end{equation}
Hence we can see that there exists $k_1 \in \mathbb{N}$ with $k_1 \ge k_0$, independent of the point $p$, such that for $k \ge k_1$, we have
\begin{align}\label{vn+lcon}
&\bigg| \frac{1}{k} \partial_{x_{2n+\ell}} (e^{-\frac{k}{2}\phi} \hat{v}_k^{n+\ell})(0) \bigg| \ge B_1, \quad \ell=1, \dots, m\nonumber \\
&\bigg| \frac{1}{k} \partial_{x_{2n+q}} (e^{-\frac{k}{2}\phi} \hat{v}_k^{n+\ell})(0) \bigg| \le \hat{\delta}_k, \quad \ell, q=1, \dots, d, \quad \ell \neq q, \\
&\bigg|\frac{1}{\sqrt{k}} \partial_{z_t} (e^{-\frac{k}{2}\phi} \hat{v}_k^{n+\ell})(0) \bigg| \le \hat{\delta}_k, \quad t=1, \dots, n,\nonumber \\
&\bigg|\frac{1}{\sqrt{k}} \partial_{\overline{z}_t} (e^{-\frac{k}{2}\phi} \hat{v}_k^{n+\ell})(0) \bigg| \le \hat{\delta}_k, \quad t=1, \dots, n,\nonumber
\end{align}
where $B_1 > 0$ is a constant independent of $k$ and the point $p,$ and $\hat{\delta}_k$ is a sequence independent of the point $p$ with $\lim\limits_{k \to \infty} \hat{\delta}_k = 0$. From now on, we assume that $k \ge k_1$. Put
\begin{equation*}
g_k^{n+\ell} := v_k^{n+\ell} - \frac{(e^{-\frac{k}{2}\phi} \hat{v}_k^{n+\ell})(0)}{(e^{-\frac{k}{2}\phi} \hat{u}_k)(0)} u_k \in \mathcal{H}^0_{b, \le k\delta}(X, L^k), \quad \ell=1, \dots, m.
\end{equation*}

Write $g_k^{n+\ell} = \hat{g}_k^{n+\ell} \cdot s^k$. Then by (\ref{ubound}), (\ref{uksmall}), (\ref{vn+lbound}) and (\ref{vn+lcon}) we have
\begin{align}\label{gn+lcon}
&\Big| (e^{-\frac{k}{2}\phi} \hat{g}_k^{n+\ell} )(0) \Big| = 0, \quad \ell=1, \dots, m,\nonumber \\
&\bigg| \frac{1}{\sqrt{k}} \partial_{z_t} (e^{-\frac{k}{2}\phi} \hat{g}_k^{n+\ell} )(0) \bigg| \le \epsilon_k, \quad t=1, \dots, n, \quad \ell=1, \dots, m,\nonumber \\
&\bigg| \frac{1}{\sqrt{k}} \partial_{\overline{z}_t} (e^{-\frac{k}{2}\phi} \hat{g}_k^{n+\ell} )(0) \bigg| \le \epsilon_k, \quad t=1, \dots, n, \quad \ell=1, \dots, m, \\
&\bigg| \frac{1}{k} \partial_{x_{2n+\ell}} (e^{-\frac{k}{2}\phi} \hat{g}_k^{n+\ell} )(0) \bigg| \ge C_0, \quad \ell=1, \dots, m, \nonumber\\
&\bigg| \frac{1}{k} \partial_{x_{2n+q}} (e^{-\frac{k}{2}\phi} \hat{g}_k^{n+\ell} )(0) \bigg| \le \epsilon_k, \quad \ell, q=1, \dots, d, \quad \ell \neq q,\nonumber
\end{align}
where $C_0 > 0$ is a constant independent of $k$ and the point $p,$ and $\epsilon_k$ is a sequence independent of the point $p$ with $\lim\limits_{k \to \infty} \epsilon_k = 0$.

We can find a measurable unitary matrix valued-function $Q(\tau)$ on $[-\delta,\delta]^m$ such that
\begin{equation*}
Q^*(\tau) M_\tau Q(\tau) = \begin{pmatrix} \lambda_1(\tau) &  & \\ & \ddots  & \\ & &\lambda_n(\tau) \end{pmatrix} \text{ is diagonal,}
\end{equation*}
where $Q^*(\tau)$ is the adjoint of $Q(\tau).$
Let
\begin{equation*}
\begin{pmatrix} \widetilde{w}_1 \\ \vdots \\ \widetilde{w}_n \end{pmatrix} = Q^T(\tau) \begin{pmatrix} w_1 \\ \vdots \\ w_n \end{pmatrix} = \begin{pmatrix} f_1(w, \tau) \\ \vdots \\ f_n(w, \tau) \end{pmatrix},
\end{equation*}
where $Q^T(\tau)$ is the transpose of $Q(\tau).$ It is easy to see that $f_1, \dots, f_n$ are homogeneous of degree 1 in $w$.

Fix $t \in \{1, \dots, n\}$. Put
\begin{align*}
&v_{(k)}^t:= \int_{[-\delta,\delta]^m} e^{i \langle y'', \tau \rangle} \rho_{\theta_0}(f_1(w, \tau)) \cdots \rho_{\theta_0}(f_n(w, \tau))\, \chi\left(\frac{y_{2n+1}}{k^{2\epsilon}}\right) \dots \chi\left(\frac{y_{2n+m}}{k^{2\epsilon}}\right)\\
&\quad\quad\quad \times\theta_0^{-2} \Big( \sum_r (M_\tau)_{rt} w_r \Big) d\mu(\tau), \\
&v_{\langle k\rangle}^t := v_{(k)}^t(\sqrt{k}w, k y'')e^{\frac{k}{2}\phi(y)}, \\
&v_k^t := \Pi^{(0)}_{k,({\mathbb R}^m\curvearrowright,\leq k\delta)} \left( v_{\langle k\rangle}^t(y) \otimes s^k(y) \right).
\end{align*}

Write $v_k^t = \hat{v}_k^t \otimes s^k$. We can see that
\begin{align}\label{vtat0}
&\lim_{k \to \infty} e^{-\frac{k}{2}\phi(0)} \hat{v}_k^t(0)\\ 
&= \Big(\frac{1}{2\pi}\Big)^{n+m} \int_{\mathbb{H}^{n,m}} \int_{[-\delta, \delta]^m} e^{-i \langle y'', \eta \rangle - \frac{1}{2} \sum\limits_{j,r=1}^n (M_\eta)_{jr} w_j \overline{w}_r}  \det(M_\eta) \nonumber \\
&\qquad \cdot \int_{\mathbb{R}^m} e^{i \langle y'', \tau \rangle}\rho_{\theta_0}(f_1(w, \tau)) \cdots \rho_{\theta_0}(f_n(w, \tau))\theta_0^{-2} \Big( \sum_{r=1}^n (M_\tau)_{rt} w_r \Big) d\mu(\tau)d\mu(\eta) d\mu_{\mathbb{H}^{n,m}}(y)\nonumber \\
&= \Big(\frac{1}{2\pi}\Big)^{n} \int_{\mathbb{R}^{2n}} \int_{[-\delta, \delta]^m} e^{- \frac{1}{2} \sum\limits_{j,r=1}^n (M_\eta)_{jr} w_j \overline{w}_r} \det(M_\eta) \nonumber\\
&\qquad \cdot \rho_{\theta_0}(f_1(w, \eta)) \cdots \rho_{\theta_0}(f_n(w, \eta))  \theta_0^{-2} \Big( \sum_{r=1}^n (M_\eta)_{rt} w_r \Big) d\mu(\eta) d\mu(w).\nonumber
\end{align}
So there exists $E_t > 0$ independent of $k$ and the point $p$ such that
\begin{equation}\label{vtbd}
\big|e^{-\frac{k}{2}\phi(0)} \hat{v}_k^t(0)\big| \le E_t.
\end{equation}

From (\ref{grszeasy}), we have
\begin{align}\label{ztvt}
&\lim_{k \to \infty} \frac{1}{\sqrt{k}} \partial_{z_t} (e^{-\frac{k}{2}\phi} \hat{v}_k^t)(0)\\
&= \Big(\frac{1}{2\pi}\Big)^{n+m} \int_{\mathbb{H}^{n,m}} \int_{[-\delta, \delta]^m} e^{-i \langle y'', \eta \rangle - \frac{1}{2} \sum\limits_{j,r} (M_\eta)_{jr} w_j \overline{w}_r} \, \det(M_\eta) \Big( \sum_r (M_\eta)_{tr} \overline{w}_r \Big)\nonumber\\
&\quad \cdot \int e^{i \langle y'', \tau \rangle} \rho_{\theta_0}(f_1(w, \tau)) \cdots \rho_{\theta_0}(f_n(w, \tau))\, \theta_0^{-2} \, \Big( \sum_r (M_\tau)_{rt} w_r \Big) d\mu(\tau) d\mu(\eta) d\mu_{\mathbb{H}^{n,m}}(y) \nonumber\\
&\xrightarrow[\theta_0 \to 0]{} \Big(\frac{1}{2\pi}\Big)^{n} \int_{\mathbb{R}^{2n}} \int_{[-\delta, \delta]^m} \det(M_\eta)  \Big| \sum_r (M_\eta)_{rt} w_r \Big|^2 \rho(f_1(w, \eta)) \cdots \rho(f_n(w, \eta)) d\mu(\eta) d\mu(w) > 0.\nonumber
\end{align}

\noindent Hence we can choose $\theta_0 > 0$ such that
\begin{equation}\label{ztvt>0}
\lim_{k \to \infty} \frac{1}{\sqrt{k}} \partial_{z_t} (e^{-\frac{k}{2}\phi} \hat{v}_k^t)(0) > 0.
\end{equation}

For $q \in \{1, \dots, n\}$ with $q \neq t$,
\begin{align}\label{zrvt=0}
&\lim_{k \to \infty} \frac{1}{\sqrt{k}} \partial_{z_q} (e^{-\frac{k}{2}\phi} \hat{v}_k^t)(0)\\
&= \Big(\frac{1}{2\pi}\Big)^{n+m} \int_{\mathbb{H}^{n,m}} \int_{[-\delta, \delta]^m} e^{-i \langle y'', \eta \rangle - \frac{1}{2} \sum\limits_{j,r} (M_\eta)_{jr} w_j \overline{w}_r} \, \det(M_\eta)  \Big( \sum_r (M_\eta)_{qr} \overline{w}_r \Big)\nonumber\\
&\quad\cdot \int e^{i \langle y'', \tau \rangle} \rho_{\theta_0}(f_1(w, \tau)) \cdots \rho_{\theta_0}(f_n(w, \tau)) \, \theta_0^{-2} \, \Big( \sum_r (M_\tau)_{rt} w_r \Big) d\mu(\tau) d\mu(\eta) d\mu_{\mathbb{H}^{n,m}}(y)\nonumber \\
&\xrightarrow[\theta_0 \to 0]{} \Big(\frac{1}{2\pi}\Big)^n \int_{\mathbb{R}^{2n}} \int_{[-\delta, \delta]^m} \det(M_\eta) \Big( \sum_r (M_\eta)_{qr} \overline{w}_r \Big) \Big( \sum_r (M_\eta)_{rt} w_r \Big)\nonumber\\
&\quad\quad\quad\cdot \rho(f_1(w, \eta)) \cdots \rho(f_n(w, \eta)) d\mu(\eta) d\mu(w)\nonumber \\
&= \Big(\frac{1}{2\pi}\Big)^n \int_{\mathbb{R}^{2n}} \int_{[-\delta, \delta]^m} \det(M_\eta)\,(\lambda_q(\eta) \overline{\widetilde{w}}_q)\, (\lambda_t(\eta) \widetilde{w}_t) \, \rho(\widetilde{w}_1) \cdots \rho(\widetilde{w}_n) d\mu(\widetilde{w}) d\mu(\eta)\nonumber \\
&= \Big(\frac{1}{2\pi}\Big)^n \int_{[-\delta, \delta]^m} \det(M_\eta)  \lambda_t(\eta) \lambda_q(\eta) \int_{\mathbb{R}^{2n}} \overline{\widetilde{w}}_q \widetilde{w}_t \rho(\widetilde{w}_1) \cdots \rho(\widetilde{w}_n) d\mu(\widetilde{w}) d\mu(\eta)\nonumber \\
&= 0.\nonumber
\end{align}

\noindent Similarly, we have
\begin{equation}\label{barzrvt=0}
\lim_{k \to \infty} \frac{1}{\sqrt{k}} \partial_{\overline{z}_q} (e^{-\frac{k}{2}\phi} \hat{v}_k^t)(0) = 0, \quad q=1, \dots, n,
\end{equation}
\begin{equation}\label{x2n+lvt=0}
\lim_{k \to \infty} \frac{1}{k} \partial_{x_{2n+\ell}} (e^{-\frac{k}{2}\phi} \hat{v}_k^t)(0) = 0, \quad \ell=1, \dots, m.
\end{equation}
Hence, by (\ref{ztvt>0}),  (\ref{zrvt=0}), (\ref{barzrvt=0}) and (\ref{x2n+lvt=0}), we can see that there exists $k_2 \in \mathbb{N}$ with $k_2 \ge k_1$, independent of the point $p$, such that for $k \ge k_2$, we have
\begin{align}\label{vtcon}
&\bigg| \frac{1}{\sqrt{k}} \partial_{z_t} (e^{-\frac{k}{2}\phi} \hat{v}_k^{t})(0) \bigg|\ge B_2,\quad t=1,\dots,n,\nonumber\\
&\bigg| \frac{1}{\sqrt{k}} \partial_{z_q} (e^{-\frac{k}{2}\phi} \hat{v}_k^{t})(0) \bigg| \le \widetilde{\delta}_k, \quad q,t=1,\dots,n, \quad q\neq t, \\
&\bigg| \frac{1}{\sqrt{k}} \partial_{\overline{z}_q} (e^{-\frac{k}{2}\phi} \hat{v}_k^{t})(0) \bigg| \le \widetilde{\delta}_k, \quad q,t =1,\dots,n,\nonumber\\
&\bigg| \frac{1}{k} \partial_{x_{2n+\ell}} (e^{-\frac{k}{2}\phi} \hat{v}_k^{t})(0) \bigg| \le \widetilde{\delta}_k, \quad \ell=1, \dots, m, \nonumber
\end{align}
where $B_2 > 0$ is a constant independent of $k$ and the point $p$ and $\widetilde{\delta}_k$ is a sequence independent of the point $p$ with $\lim\limits_{k \to \infty} \widetilde{\delta}_k = 0$. From now on, we assume that $k \ge k_2$. Put
\begin{equation*}
g_k^t := v_k^t - \frac{(e^{-\frac{k}{2}\phi} \hat{v}_k^t)(0)}{(e^{-\frac{k}{2}\phi} \hat{u}_k)(0)} u_k \in \mathcal{H}^0_{b, \le k\delta}(X, L^k), \quad t=1, \dots, n.
\end{equation*}
Write $g_k^t = \hat{g}_k^t \cdot s^k$. Then by (\ref{ubound}), (\ref{uksmall}), (\ref{vtbd}) and (\ref{vtcon}) we have
\begin{align}\label{gtcon}
&\Big| (e^{-\frac{k}{2}\phi} \hat{g}_k^t )(0) \Big| = 0, \quad t=1, \dots, n,\nonumber \\
&\bigg| \frac{1}{\sqrt{k}} \partial_{z_t} (e^{-\frac{k}{2}\phi} \hat{g}_k^t )(0) \bigg| \ge C_0, \quad t=1, \dots, n,\nonumber \\
&\bigg| \frac{1}{\sqrt{k}} \partial_{z_q} (e^{-\frac{k}{2}\phi} \hat{g}_k^t )(0) \bigg| \le \epsilon_k, \quad q,t=1, \dots, n, \quad q\neq t,\\
&\bigg| \frac{1}{\sqrt{k}} \partial_{\overline{z}_q} (e^{-\frac{k}{2}\phi} \hat{g}_k^t )(0) \bigg| \le \epsilon_k, \quad q,t=1, \dots, n,\nonumber \\
&\bigg| \frac{1}{k} \partial_{x_{2n+\ell}} (e^{-\frac{k}{2}\phi} \hat{g}_k^t )(0) \bigg| \le \epsilon_k, \quad t=1,\dots,n, \quad \ell=1, \dots, m,\nonumber 
\end{align}
where $C_0 > 0$ is a constant independent of $k$ and the point $p$ and $\epsilon_k$ is a sequence independent of the point $p$ with $\lim\limits_{k \to \infty} \epsilon_k = 0$.
\end{proof}

We can now prove Theorem \ref{imm}.
\begin{proof}
Fix a point $p \in X$. Let $s$ be a local rigid CR frame of $L$ on a BRT coordinates neighborhood $(D,(z,x''),\varphi)$ centered at $p$ with $|s|^2_h=e^{-\phi}$  as in (\ref{libunwe}). From Lemma \ref{wedef}, we may assume that
\begin{align}\label{f1}
|(e^{-k\phi} \hat{f}_1)(p)|^2 \ge c_0,
\end{align}
where $f_j = \hat{f}_j s^k$ on $D$, $j = 1, \dots, d_k$ and $c_0 > 0$ is a constant independent of $k$ and the
point $p$. Let $g_k^1, \dots, g_k^{n+m} \in \mathcal{H}^0_{b, \le k\delta}(X, L^k)$ be as in Lemma \ref{imlem}. From the first equality in (\ref{lemimin}), we can see that there is a $K_0 > 0$ independent of the point $p$ such that $f_1, g_k^1, \dots, g_k^{n+m}$ are linearly 
independent over $\mathbb{C}$ whenever $k \ge K_0$. Put
\begin{align*}
&p_k^j = \frac{e^{-\frac{k}{2}\phi} \hat{g}_k^j}{e^{-\frac{k}{2}\phi} \hat{f}_1}, \quad j = 1, \dots, n+m, \\
&p_k^j = \alpha_k^{2j-1} + i \alpha_k^{2j}, \quad \alpha_k^{2j-1} = \text{Re } p_k^j, \quad \alpha_k^{2j} = \text{Im } p_k^j, \quad j = 1, \dots, n+m,
\end{align*}
where $g_k^j = \hat{g}_k^j \otimes s^k$ on $D$, $j = 1, \dots, n+m$. From (\ref{lemimin}) and (\ref{f1}), we can see that
there is a $\widetilde{K}_0 > 0$ independent of the point $p$ such that for all $k \ge \widetilde{K}_0$, we have
\begin{align}\label{difplar}
|\partial_{z_t} p_k^j(p)| \ge c_1 \sqrt{k}, \quad t = 1, \dots, n, \quad |\partial_{x_{2n+\ell}} p_k^{n+\ell}(p)| \ge c_1 k, \quad \ell =1,\dots,m
\end{align}
and
\begin{align}\label{difpsma}
&\sup \Big\{ \frac{1}{\sqrt{k}}|\partial_{z_t} p_k^j(p)|, \frac{1}{k}|\partial_{x_{2n+q}} p_k^t(p)|, \frac{1}{k}|\partial_{x_{2n+q}} p_k^{n+\ell}(p)|\, \mid\,t=1,\dots,n,\ \ell,q=1,\dots, d, \\
&\quad \quad j=1,\dots,n+m,\ q\ne \ell, j\ne t  \Big\}\nonumber  \\
&+ \sup \Big\{ \frac{1}{\sqrt{k}}|\partial_{\overline{z}_t} p_k^j(p)|, |p_k^j(p)|  \mid t = 1, \dots, n,\ j=1,\dots, n+m \Big\} \le \theta_k,\nonumber
\end{align}
where $\theta_k$ is a sequence independent of the point $p$ with $\lim\limits_{k \to \infty} \theta_k = 0$.

From (\ref{difplar}), (\ref{difpsma})
and some elementary linear algebra argument, we conclude that there is a $K_1 > 0$ independent of the point $p$ such that for every $k \ge K_1$, the linear map $A_k : \mathbb{R}^{2n+m} \to \mathbb{R}^{2n+2m}$ represented by the matrix
\begin{align*}
\begin{bmatrix}
\partial_{x_1} \alpha_k^1(p) & \partial_{x_2} \alpha_k^1(p) & \cdots & \partial_{x_{2n+m}} \alpha_k^1(p) \\
\partial_{x_1} \alpha_k^2(p) & \partial_{x_2} \alpha_k^2(p) & \cdots & \partial_{x_{2n+m}} \alpha_k^2(p) \\
\vdots & \vdots & \vdots & \vdots \\
\partial_{x_1} \alpha_k^{2n+2m}(p) & \partial_{x_2} \alpha_k^{2n+2m}(p) & \cdots & \partial_{x_{2n+m}} \alpha_k^{2n+2m}(p)
\end{bmatrix},
\end{align*}
is injective. Hence the differential of the map
\begin{align*}
X \ni x \mapsto \Bigg( \frac{\hat{g}_k^1}{\hat{f}_1}(x), \dots, \frac{\hat{g}_k^{n+m}}{\hat{f}_1}(x) \Bigg) \in \mathbb{C}^{n+m}
\end{align*}
at $p$ is injective if $k \ge K_1$. From this and some elementary linear algebra arguments, we
conclude that the differential of the map
\begin{align*}
X \ni x \mapsto \Bigg( \frac{\hat{f}_2}{\hat{f}_1}(x), \dots, \frac{\hat{f}_{d_k}}{\hat{f}_1}(x) \Bigg) \in \mathbb{C}^{d_k-1}
\end{align*}
at $p$ is injective if $k \ge K_1$. Theorem \ref{imm} follows.
\end{proof}

We next prove the injectivity. Before proving this, we need a lemma. For every $k \in \mathbb{N}$, let
\begin{align*}
A_k := \{(x, y) \in X \times X; \Phi_{k,\delta}(x) = \Phi_{k,\delta}(y)\}. 
\end{align*}

\begin{lem}\label{nest}
There is a $k_0 \in \mathbb{N}$ such that for all $k \in \mathbb{N}$ and all $j \ge k_0$, we have
\begin{align*}
A_{k+j} \subset A_k. 
\end{align*}
\end{lem}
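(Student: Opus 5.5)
The plan is to exploit the multiplicative structure of rigid CR sections: a product of equivariant CR sections of $L^k$ and $L^j$ is again an equivariant CR section of $L^{k+j}$. Then proportionality of the evaluation functionals at $x$ and $y$ on the big space passes down to the smaller one. Throughout, $k$ ranges over those integers for which $\Phi_{k,\delta}$ is defined, that is, for which $\mathcal{H}^0_{b,\le k\delta}(X,L^k)$ has no common zero. We take $k_0$ to be the integer from Lemma \ref{wedef}. Then for every $j\ge k_0$ and every point of $X$, some element of $\mathcal{H}^0_{b,\le j\delta}(X,L^j)$ does not vanish there.

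\emph{Step 1 (closure under products).} Let $g\in \bigcap_\ell\mathcal{H}^0_{b,(Y_\ell,\alpha_\ell)}(X,L^k)$ with $|\alpha_\ell|\le k\delta$, and let $h\in\bigcap_\ell\mathcal{H}^0_{b,(Y_\ell,\beta_\ell)}(X,L^j)$ with $|\beta_\ell|\le j\delta$. In a rigid CR frame $s$ we have $\widehat{gh}=\hat g\hat h$. Hence $\overline{\partial}_b(gh)=0$ and $-iY_\ell(gh)=(\alpha_\ell+\beta_\ell)gh$, because $Y_\ell$ is a derivation and the frames are rigid. Since $|\alpha_\ell+\beta_\ell|\le (k+j)\delta$, we get $gh\in \mathcal{H}^0_{b,\le (k+j)\delta}(X,L^{k+j})$. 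By linearity this extends to all $g\in\mathcal{H}^0_{b,\le k\delta}(X,L^k)$ and $h\in\mathcal{H}^0_{b,\le j\delta}(X,L^j)$, since both spaces are spanned by joint eigensections.

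\emph{Step 2 (reformulating $A_k$).} Fix rigid frames $s$ near $x$ and $\tilde s$ near $y$. The condition $\Phi_{k+j,\delta}(x)=\Phi_{k+j,\delta}(y)$ means there is $\lambda\neq0$ with $\hat F(x)=\lambda\,\tilde F(y)$ for all $F\in\mathcal{H}^0_{b,\le(k+j)\delta}(X,L^{k+j})$. Here $\hat F$ and $\tilde F$ denote the coefficients of $F$ in $s^{k+j}$ and $\tilde s^{k+j}$. This holds because a point of projective space determines the vector up to a nonzero scalar, and the coordinate functionals span the dual space. This equivalence is independent of the choice of orthonormal basis and of the frames.

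\emph{Step 3 (descending).} Apply Lemma \ref{wedef} at level $j\ge k_0$ to choose $h\in\mathcal{H}^0_{b,\le j\delta}(X,L^j)$ with $h(y)\neq 0$. Apply it at level $k$ to choose $g_0$ with $g_0(y)\neq0$. By Step 1 and Step 2,
\[
\hat h(x)\hat g(x)=\lambda\,\tilde h(y)\tilde g(y)\quad\text{for all } g\in\mathcal{H}^0_{b,\le k\delta}(X,L^k).
\]
Taking $g=g_0$ makes the right side nonzero, so $\hat h(x)\neq0$. Dividing gives $\hat g(x)=\lambda'\tilde g(y)$ for every $g$, with $\lambda'=\lambda\tilde h(y)/\hat h(x)\neq0$ independent of $g$. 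By Step 2 at level $k$, this means $(x,y)\in A_k$.

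The main point needing care is the existence of a nonvanishing $h$ at level $j$ uniformly for all $j\ge k_0$. Lemma \ref{wedef} provides this, and it is exactly why a threshold $k_0$ is needed. The rest is bookkeeping with frames in different charts, which the trivialization-independence in Step 2 absorbs.
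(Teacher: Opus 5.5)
Your proof is correct and rests on the same two ingredients as the paper's: products of level-$k$ and level-$j$ equivariant CR sections lie in $\mathcal{H}^0_{b,\le (k+j)\delta}(X,L^{k+j})$, and Lemma \ref{wedef} supplies a level-$j$ section that does not vanish at the relevant point. The paper argues by contradiction, building separating sections $u_k,v_k$ at level $k$ and multiplying them by $h_j,g_j$; you argue directly by dividing out a single nonvanishing factor $h$, which is a slightly cleaner rendering of the same idea.
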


\begin{proof} 
Let $k_0 \in \mathbb{N}$ be as in Lemma \ref{wedef}. Let $j \ge k_0$ and let $k \in \mathbb{N}$. Let $\Box^{(0)}_{b,k\delta} := \Box_{b,k}\big|_{L^2_{Y,\leq k\delta}(X,L^k)}.$ Note that $\operatorname{Ker} \Box^{(0)}_{b,k\delta} = \mathcal{H}^0_{b, \le k\delta}(X, L^k).$ Let $(x_0, y_0) \in
A_{k+j}$. Then $\Phi_{k+j,\delta}(x_0) = \Phi_{k+j,\delta}(y_0)$. If $(x_0, y_0) \notin A_k$, then $[f_1(x_0): \cdots: f_{d_k}(x_0)] \ne
[f_1(y_0): \cdots: f_{d_k}(y_0)].$ Then
we can find $j_1,  j_2 \in \{1, \dots, d_k\}$ such that the two vectors $(f_{j_1}(x_0), f_{j_2}(x_0))$ and $(f_{j_1}(y_0), f_{j_2}(y_0))$
are linearly independent. By linear algebra, we can find $u_k$ and $v_k$ in $\mathcal{H}^0_{b, \le k\delta}(X, L^k)$ such that
\begin{align}\label{li}
u_k(x_0) = 0,\quad & u_k(y_0) \ne 0. \\
v_k(x_0) \ne 0,\quad & v_k(y_0) = 0.\nonumber
\end{align} 

By Lemma \ref{wedef}, we can find $g_j$ and $h_j$ in $\mathcal{H}^0_{b, \le j\delta}(X, L^k)$ such that $g_j(x_0) \ne 0$ and $h_j(y_0) \ne 0$. Let
$\alpha_{k+j} := h_j u_k \in \mathcal{H}^0_{b, \le (k+j)\delta}(X, L^k)$ and $\beta_{k+j} := g_j v_k \in \mathcal{H}^0_{b, \le (k+j)\delta}(X, L^k)$. Then we have
\begin{align}\label{li2}
\alpha_{k+j}(x_0) = 0,\quad & \alpha_{k+j}(y_0) \ne 0, \\
\beta_{k+j}(x_0) \ne 0,\quad & \beta_{k+j}(y_0) = 0.\nonumber
\end{align}

From (\ref{li2}), it is not difficult to see that
\begin{align*}
\Phi_{k+j,\delta}(x_0) \ne \Phi_{k+j,\delta}(y_0)
\end{align*}
which is a contradiction. Hence we get $A_{k+j} \subset A_k$. 
\end{proof}

We also recall the following non-weighted partial Szeg\H{o} kernel on $\mathbb{H}^{n,m}$ (See Theorem \ref{nongrasy} and (\ref{nonwegrszeker})):
\begin{align*}
&\Pi^{(0)}_{b,\psi_0,\le\delta} (x, y) = \Big(\frac{1}{2\pi}\Big)^{n+m} \int_{[-\delta, \delta]^m} \exp \Big[ i\langle x'', \eta \rangle - i\langle y'', \eta \rangle - \frac{1}{2} \sum_{j,r} (M_\eta)_{jr} (z_j - w_j)(\overline{z}_r - \overline{w}_r)\\
&\quad\quad\quad\quad\quad\quad\quad\quad\quad\quad\quad+\frac{1}{2} \sum_{j,r} (M_\eta)_{jr} (z_j \overline{w}_r - \overline{z}_r w_j) \Big]  \det(M_\eta) d\mu(\eta).
\end{align*}

We have
\begin{equation} \label{nowegrasy}
P_{(k), \delta, s}(x, y) = \Pi^{(0)}_{b, \psi_0, \le \delta}(x, y) + \mathcal{E}_k(x, y),
\end{equation}
where $\mathcal{E}_k(x, y) \in \mathcal{C}^\infty\big(D(k^\epsilon, k^{2\epsilon}), D(k^\epsilon, k^{2\epsilon})\big)$ satisfying $\lim\limits_{k \to \infty} \|\mathcal{E}_k(x, y)\|_{C^m(K \times K)} = 0$ for all $m \in \mathbb{N}$ and $k$-independent compact subset $K$ of $\mathbb{H}^{n,m}$.

\begin{thm}
For $k$ large enough, the map $\Phi_{k,\delta}$ is injective.
\end{thm}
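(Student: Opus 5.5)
We argue by contradiction, in the standard Kodaira-embedding manner: we combine the nesting property of Lemma \ref{nest} with the immersion result Theorem \ref{imm} and the scaled asymptotics \eqref{nowegrasy}. Suppose the claim fails. Then there are $k_j\to\infty$ and points $x_j\neq y_j$ in $X$ with $\Phi_{k_j,\delta}(x_j)=\Phi_{k_j,\delta}(y_j)$, i.e. $(x_j,y_j)\in A_{k_j}$. By compactness of $X$ we may assume $x_j\to x_\ast$ and $y_j\to y_\ast$. We split into two cases: $x_\ast\neq y_\ast$ (far pairs) and $x_\ast=y_\ast$ (close pairs).

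\emph{Far pairs.} If $x_\ast\neq y_\ast$, we use Lemma \ref{nest}. Fix $k_0$ as there, so $A_{k+j}\subset A_k$ for $j\ge k_0$. The off-diagonal set $A_k\setminus\Delta$ is then essentially decreasing along arithmetic progressions in $k$. Since $A_k$ is closed, it suffices to find, for each $k$ in a fixed residue window of length $k_0$, some $K$ such that $A_K$ contains no pair $(x,y)$ with $d(x,y)\ge r$; nesting then propagates this to all larger $K$ in that class. To separate a fixed pair of distinct points we use the localized kernel. Put $s_k(x,y):=P_{k,\delta}(x,y)$. By \eqref{nowegrasy} at scale zero, $|P_{k,\delta}(x,x)|\approx k^{n+m}c$. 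For $y$ at fixed positive distance from $x$ we expect $|P_{k,\delta}(x,y)|=o(k^{n+m})$. Granting this, the section $u=P_{k,\delta}(\cdot,y)$ vanishes relatively at $x$ but not at $y$ (after normalization). Combined with the element given by Lemma \ref{wedef}, this yields two sections in $\mathcal H^0_{b,\le k\delta}$ with independent values at $x,y$, as in the proof of Lemma \ref{nest}, so $\Phi_{k,\delta}(x)\neq\Phi_{k,\delta}(y)$. A compactness argument over $\{d(x,y)\ge r\}$ handles all far pairs.

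The off-diagonal decay is the main obstacle. The scaled convergence of Theorem \ref{nongrasy} only controls $|x-y|\lesssim k^{-1/2}$ (resp. $k^{-1}$ in $x''$), not fixed distances. To get it, I would first try the $L^2$ identity $\int|P_{k,\delta}(x,y)|^2dv(y)=P_{k,\delta}(x,x)$. The scaled model kernel already carries essentially all of this mass on a ball of radius $R/\sqrt k$ in $z$ and $R/k$ in $x''$ around $x$, up to $\varepsilon(R)k^{n+m}$. This forces $|P_{k,\delta}(x,y)|$ to be small in an averaged sense away from $x$, and Kohn-type estimates (as in Theorem \ref{sup}) upgrade this to pointwise control. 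The $x''$-directions along the $\mathbb R^m$-orbit need care: $\Phi_{k,\delta}$ is equivariant, and points on the same orbit are separated only through the weights $\alpha^j$. Here the factor $e^{i\langle\eta,x''-y''\rangle}$ averaged over $[-\delta,\delta]^m$ in \eqref{nonwegrszeker} supplies the decay.

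\emph{Close pairs.} If $x_\ast=y_\ast$, work in BRT coordinates centered at $x_j$ and write $y_j=(z_j,y_j'')$. Define scaled points $\tilde y_j=(\sqrt{k_j}z_j,k_jy_j'')$. If $|\tilde y_j|\to\infty$ along a subsequence, we are in an intermediate regime, handled by the same $L^2$-mass argument as above. If $\tilde y_j$ stays bounded, rescale. The function $t\mapsto\Phi_{k_j,\delta}(x_j+t\,(y_j-x_j))$ in scaled coordinates converges in $\mathcal C^\infty$ by \eqref{nowegrasy}. Equality of the images together with the model kernel $\Pi^{(0)}_{b,\psi_0,\le\delta}$ then gives, in the limit, that $|\Pi^{(0)}_{b,\psi_0,\le\delta}(0,\tilde y)|^2=\Pi^{(0)}_{b,\psi_0,\le\delta}(0,0)\Pi^{(0)}_{b,\psi_0,\le\delta}(\tilde y,\tilde y)$ for the limit $\tilde y$. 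By strict Cauchy--Schwarz this forces $\tilde y=0$, since the model kernel is a Gaussian in $z$ with decaying Fourier factor in $x''$. Finally, $\tilde y_j\to0$ with $y_j\neq x_j$ contradicts the uniform immersion estimates \eqref{difplar}--\eqref{difpsma} of Theorem \ref{imm}, which are uniform in the base point and give local injectivity on scaled balls.
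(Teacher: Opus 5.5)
Your architecture is a legitimate alternative to the paper's: off-diagonal decay for pairs at large scaled distance, a limiting Cauchy--Schwarz argument at bounded scaled distance, and local injectivity when the scaled distance tends to $0$. However, the step you yourself flag as the main obstacle is not closed by the tool you propose.

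\textbf{The mass estimate.} The $L^2$ part is sound. Combine $\int_X|P_{k,\delta}(x,w)|^2dv_X(w)=P_{k,\delta}(x,x)$, the scaled convergence \eqref{nowegrasy}, and the fact that the model kernel is that of an orthogonal projection. This gives
\[
\int_{X\setminus B_R(x)}|P_{k,\delta}(x,w)|^2dv_X(w)\le(\varepsilon(R)+o(1))k^{n+m},
\]
where $B_R(x)=\{|z|<Rk^{-1/2},\ |x''|<Rk^{-1}\}$ in BRT coordinates centred at $x$. This needs the convergence to be uniform in the base point; the paper also uses such uniformity tacitly.

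\textbf{The gap: the pointwise upgrade.} Kohn-type estimates do not turn this into pointwise decay. Theorem~\ref{sup} bounds the scaled kernel by the \emph{global} norm $\|u_k\|_k=1$, so it only yields $|P_{k,\delta}(x,y)|\lesssim k^{n+m}$. A local inequality $|g(y)|^2\lesssim k^{n+m}\int_{B_1(y)}|g|^2dv_X$ is not available for $g\in\mathcal H^0_{b,\le k\delta}(X,L^k)$. The reason is that $\Box_{b,k}$ gives no local control in the $Y_\ell$-directions, and the cutoff $|\alpha_\ell|\le k\delta$ is a global condition. Already for band-limited functions of one variable, point evaluation is not bounded by the $L^2$ norm on a neighbourhood.

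\textbf{The fix.} The missing idea is the reproducing identity $P_{k,\delta}=P_{k,\delta}\circ P_{k,\delta}$. Suppose $B_R(x)\cap B_R(y)=\emptyset$. Split $\int_X|P_{k,\delta}(x,w)||P_{k,\delta}(w,y)|dv_X(w)$ over $B_R(x)$ and its complement and apply Cauchy--Schwarz:
\[
|P_{k,\delta}(x,y)|\le\sqrt{P_{k,\delta}(x,x)}\Big(\int_{X\setminus B_R(y)}|P_{k,\delta}(y,w)|^2dv_X\Big)^{1/2}+\Big(\int_{X\setminus B_R(x)}|P_{k,\delta}(x,w)|^2dv_X\Big)^{1/2}\sqrt{P_{k,\delta}(y,y)}\lesssim\sqrt{\varepsilon(R)+o(1)}\,k^{n+m}.
\]
Now $\Phi_{k,\delta}(x)=\Phi_{k,\delta}(y)$ holds exactly when $|P_{k,\delta}(x,y)|^2=P_{k,\delta}(x,x)P_{k,\delta}(y,y)$, the equality case of Cauchy--Schwarz. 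So Lemma~\ref{wedef} excludes all pairs at scaled distance $\ge R'(R)$ at once. This covers your far and intermediate regimes, and no separate treatment of orbit directions or of Lemma~\ref{nest} is needed.

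\textbf{A second point: local injectivity near the diagonal.} In the regime $0<|\tilde y_j|\to0$, the estimates \eqref{difplar}--\eqref{difpsma} are first-derivative bounds at a single point. By themselves they do not give injectivity on a scaled ball of fixed radius. You need a quantitative inverse function theorem, that is, a bound on the second scaled derivatives of the comparison map on such a ball, uniform in $k$ and in the centre. With the denominator $\hat f_1$ this is not available. One only knows $|f_1(p)|^2_{h^k}\ge c_0$, whereas $f_1$ and its scaled derivatives can be of size $k^{(n+m)/2}$. Divide instead by the section $u_k$ of Lemma~\ref{imlem}, or by $P_{k,\delta}(\cdot,x_j)$. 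Its scaled version converges in $\mathcal C^\infty$ and is nonzero near the centre, so Theorem~\ref{sup}-type bounds supply the needed control.

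\textbf{Comparison with the paper.} The paper avoids both the decay estimate and the inverse function theorem. For close pairs, Lemma~\ref{nest} lets it pass to a smaller $k'$ at which the scaled distance tends to $0$. It then contradicts $H_k(0)=H_k(1)=1$ with the strictly negative second derivative, of order $|a_k-b_k|^2$, of the model ratio. For distant pairs it uses explicit almost-CR peak sections $G_k$, the iterated Kohn estimate, and Lemma~\ref{peakest}. Once repaired, your route replaces all of that far-pair machinery with a single mass-concentration estimate. The cost is base-point-uniform convergence and a quantitative inverse function theorem near the diagonal.
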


Suppose that the result is not true. Then we can find $x_{k_j}, y_{k_j}\in X, \ x_{k_j}\neq y_{k_j}, \ 1\leq k_1<k_2<\cdots,$ such that $\Phi_{k_j}(x_{k_j})\neq \Phi_{k_j}(y_{k_j})$ for all $j.$ For simplicity, we may assume that we can find $x_k, y_k\in X,\ k\in\mathbb{N},$ such that $\Phi_k(x_k)\neq\Phi_k(y_k)$ for all $k\in \mathbb{N}$ and $\lim\limits_{k\to \infty}x_k=p\in X, \lim\limits_{k\to \infty}y_k=q\in X.$ We split our proof into two cases: $p=q$ and $p\neq q.$ In the following, we write $x_k=(z^k,x^k_{2n+1},\dots,x^k_{2n+m})=(z^k_1,\dots,z^k_n,x^k_{2n+1},\dots,x^k_{2n+m})$ and $y_k=(w^k,y^k_{2n+1},\dots,y^k_{2n+m})=(w^k_1,\dots,w^k_n,y^k_{2n+1},\dots,y^k_{2n+m}).$\\

\noindent Case I: $p =  q.$
\begin{proof}
In this case, by Lemma \ref{nest}, we can assume that
\begin{equation}
\lim\limits_{k \to \infty} k |z^k - w^k|^2 = 0 \quad \text{and} \quad \lim\limits_{k \to \infty} k |x^k_{2n+\ell} - y^k_{2n+\ell}| = 0, \quad \ell=1, \dots, m.
\end{equation}
Let 
\begin{align*}
&a_k = (\sqrt{k}z^k_1, \dots, \sqrt{k}z^k_n, kx^k_{2n+1}, \dots, kx^k_{2n+m})\\
&b_k = (\sqrt{k}w^k_1, \dots, \sqrt{k}w^k_n, ky^k_{2n+1}, \dots, ky^k_{2n+m}).
\end{align*}
Then we have $\lim\limits_{k \to \infty} a_k = 0$ and $\lim\limits_{k \to \infty} b_k = 0$.

Define
\begin{align*}
A_k(u) &= |P_{k, \delta, s} (u x_k + (1-u)y_k, y_k)|^2, \quad 0 \le u \le 1, \\
B_k(u) &= P_{k, \delta, s} (u x_k + (1-u)y_k, u x_k + (1-u)y_k) \cdot P_{k, \delta, s}(y_k, y_k), \quad 0 \le u \le 1, \\
H_k(u) &= \frac{A_k(u)}{B_k(u)}, \quad 0 \le u \le 1,
\end{align*}
where $P_{k,\delta,s}$ is defined as in (\ref{locexp}).
By Cauchy-Schwarz inequality and the definition of $P_{k, \delta, s}$, we can see that $0 \le H_k(u) \le 1$. Since $\Phi_k(x_k) = \Phi_k(y_k)$, we also have $H_k(0) = H_k(1) = 1$. Hence, for every $k \in \mathbb{N}$, there exists $u_k \in (0, 1)$ such that $H''_k(u_k) \ge 0$.

We denote $\Pi^{(0)}_{b, \psi_0, \le \delta}(x,y)$ by $P_{\psi_0,\delta}(x, y).$ Set
\begin{equation*}
\widehat{H}_k(u) = \frac{|P_{\psi_0, \delta} ((1-u)a_k + u b_k, b_k)|^2}{P_{\psi_0, \delta}((1-u)a_k + u b_k, (1-u)a_k + u b_k) \cdot P_{\psi_0, \delta}(b_k, b_k)}, \quad 0 \le u \le 1.
\end{equation*}
From the definition of $P_{k, \delta, s}$ and (\ref{nowegrasy}), it is straightforward to see that
\begin{equation*}
\liminf_{k \to \infty} \frac{H''_k(u_k)}{|a_k - b_k|^2} = \liminf_{k \to \infty} \frac{\widehat{H}''_k(u_k)}{|a_k - b_k|^2} \ge 0.
\end{equation*}
Write $a_k = (a^k_1, \dots, a^k_n, a^k_{2n+1}, \dots, a^k_{2n+m})$, $b_k = (b^k_1, \dots, b^k_n, b^k_{2n+1}, \dots, b^k_{2n+m}) \in \mathbb{C}^n \times \mathbb{R}^m$ and $a''_k = (a^k_{2n+1}, \dots, a^k_{2n+m})$, $b''_k = (b^k_{2n+1}, \dots, b^k_{2n+m})$. We set

\begin{align*}
(2\pi)^{n+m} \Gamma_k(u) := (2\pi)^{n+m} P_{\psi_0, \delta}((1-u)a_k + u b_k, b_k).
\end{align*}

\noindent Then
\begin{align*}
(2\pi)^{n+m} \Gamma_k(u) 
&= \int_{[-\delta, \delta]^m} \exp \Big[ i(1-u)\langle a''_k - b''_k, \eta \rangle - (1-u)^2 \cdot \frac{1}{2} \sum_{j,r} (M_\eta)_{jr} (a^k_j - b^k_j)(\overline{a}^k_r - \overline{b}^k_r) \\
&\quad + (1-u) \frac{1}{2} \sum_{j,r} (M_\eta)_{jr} (a^k_j \overline{b}^k_r - \overline{a}^k_r b^k_j) \Big]  \det(M_\eta) d\mu(\eta) \\
&= \int_{[-\delta, \delta]^m} e^{E_k(u, \eta)}  \det(M_\eta)  d\mu(\eta),
\end{align*}
where 
\begin{align*}
E_k(u, \eta) = &i(1-u)\langle a''_k - b''_k, \eta \rangle - (1-u)^2 \cdot \frac{1}{2} \sum_{j,r} (M_\eta)_{jr} (a^k_j - b^k_j)(\overline{a}^k_r - \overline{b}^k_r) \\
& + (1-u) \frac{1}{2} \sum_{j,r} (M_\eta)_{jr} (a^k_j \overline{b}^k_r - \overline{a}^k_r b^k_j).
\end{align*}

\noindent Note that 
\begin{align*}
(2\pi)^{n+m} P_{\psi_0, \delta}((1-u)a_k + u b_k, (1-u)a_k + u b_k) = (2\pi)^{n+m} P_{\psi_0, \delta}(b_k, b_k) = \int_{[-\delta, \delta]^m} \det(M_\eta) d\mu(\eta)  
\end{align*}
is a positive constant for all $u \in [0, 1].$
Hence
\begin{equation*}
\widehat{H}''_k(u) = \frac{1}{\big( \int_{[-\delta, \delta]^m} \det(M_\eta) d\mu(\eta) \big)^2} \cdot \Big( \Gamma''_k(u) \overline{\Gamma_k(u)} + 2|\Gamma'_k(u)|^2 + \Gamma_k(u) \overline{\Gamma''_k(u)} \Big).
\end{equation*}

We now calculate $\Gamma'_k$ and $\Gamma''_k.$
\begin{align*}
(2\pi)^{n+m} \Gamma'_k(u) &= \int_{[-\delta, \delta]^m} e^{E_k(u, \eta)} \Big[ -i \langle a''_k - b''_k, \eta \rangle + (1-u) \sum_{j,r} (M_\eta)_{jr} (a^k_j - b^k_j)(\overline{a}^k_r - \overline{b}^k_r) \\
&\quad - \frac{1}{2} \sum_{j,r} (M_\eta)_{jr} (a^k_j \overline{b}^k_r - \overline{a}^k_r b^k_j) \Big]  \det(M_\eta)  d\mu(\eta).
\end{align*}

\begin{align*}
(2\pi)^{n+m} \Gamma''_k(u) &= \int_{[-\delta, \delta]^m} e^{E_k(u, \eta)} \Big[ -i \langle a''_k - b''_k, \eta \rangle + (1-u) \sum_{j,r} (M_\eta)_{jr} (a^k_j - b^k_j)(\overline{a}^k_r - \overline{b}^k_r) \\
&\quad - \frac{1}{2} \sum_{j,r} (M_\eta)_{jr} (a^k_j \overline{b}^k_r - \overline{a}^k_r b^k_j) \Big]^2  \det(M_\eta) d\mu(\eta) \\
&\quad + \int_{[-\delta, \delta]^m} e^{E_k(u, \eta)}  \Big[ - \sum_{j,r} (M_\eta)_{jr} (a^k_j - b^k_j)(\overline{a}^k_r - \overline{b}^k_r) \Big]  \det(M_\eta)  d\mu(\eta).
\end{align*}

\noindent Hence
\begin{align}\label{conest}
&\\
&(2\pi)^{2n+2m} \Big( \Gamma''_k(u) \overline{\Gamma_k(u)} + 2|\Gamma'_k(u)|^2 + \Gamma_k(u) \overline{\Gamma''_k(u)} \Big)\nonumber \\
&= \bigg( \int_{[-\delta, \delta]^m} e^{E_k(u, \eta)} \big(-\langle a''_k - b''_k, \eta \rangle^2\big) \det(M_\eta) d\mu(\eta) \bigg) \bigg( \int_{[-\delta, \delta]^m} e^{\overline{E_k(u, \eta)}} \det(M_\eta) d\mu(\eta) \bigg)\nonumber \\
&\quad + \bigg( \int_{[-\delta, \delta]^m} e^{E_k(u, \eta)} \Big[ -\sum_{j,r} (M_\eta)_{jr} (a^k_j - b^k_j)(\overline{a}^k_r - \overline{b}^k_r) \Big]\det(M_\eta) d\mu(\eta) \bigg)\nonumber\\
&\quad\quad\cdot \bigg( \int_{[-\delta, \delta]^m} e^{\overline{E_k(u, \eta)}}  \det(M_\eta) d\mu(\eta) \bigg)\nonumber \\
&\quad + 2 \bigg| \int_{[-\delta, \delta]^m} e^{E_k(u, \eta)}  (-i \langle a''_k - b''_k, \eta \rangle)  \det(M_\eta) d\mu(\eta) \bigg|^2\nonumber \\
&\quad + \bigg( \int_{[-\delta, \delta]^m} e^{\overline{E_k(u, \eta)}}  \big(-\langle a''_k - b''_k, \eta \rangle^2\big)  \det(M_\eta) d\mu(\eta) \bigg) \left( \int_{[-\delta, \delta]^m} e^{E_k(u, \eta)} \det(M_\eta) d\mu(\eta) \right)\nonumber \\
&\quad + \bigg( \int_{[-\delta, \delta]^m} e^{\overline{E_k(u, \eta)}}  \Big[ -\sum_{j,r} (M_\eta)_{jr} (a^k_j - b^k_j)(\overline{a}^k_r - \overline{b}^k_r) \Big]  \det(M_\eta) d\mu(\eta) \bigg)\nonumber\\
&\quad\quad\cdot \bigg( \int_{[-\delta, \delta]^m} e^{E_k(u, \eta)} \det(M_\eta) d\mu(\eta) \bigg)\nonumber \\
&\quad + O(|a_k - b_k|^3).\nonumber
\end{align}

\noindent We now apply two facts.
\begin{itemize}
    \item[(1)] There is a positive constant $C_1 > 0$ such that for all $\eta \in [-\delta, \delta]^m$ and $z = (z_1, \dots, z_n) \in \mathbb{C}^n$, we have
    \[ \sum_{j,r} (M_\eta)_{jr} z_j \overline{z}_r \ge C_1 |z|^2. \]
    \item[(2)] If $f(\eta)$ is a positive smooth function on $[-\delta, \delta]^m$, then there is $C_2 > 0$ depending only on $f(\eta)$ and $m$ such that for all $a = (a_1, \dots, a_m) \in \mathbb{R}^m$, we have
    \[ \left( \int_{[-\delta, \delta]^m} \langle a, \eta \rangle f(\eta) d\eta \right)^2 - \left( \int_{[-\delta, \delta]^m} \langle a, \eta \rangle^2 f(\eta) d\eta \right) \cdot \left( \int_{[-\delta, \delta]^m} f(\eta) d\eta \right) \le -C_2 |a|^2. \]
\end{itemize}

\noindent From the two facts and (\ref{conest}), we can see that
\begin{equation*}
\liminf_{k \to \infty} \frac{\hat{H}''_k(u_k)}{|a_k - b_k|^2} < 0
\end{equation*}
which is a contradiction.
\end{proof}

\noindent Case II: $p\neq q.$
\begin{proof}
Let $\big(D, (z,x''), \varphi=(\varphi_1,\dots,\varphi_m)\big)$ be a BRT trivialization such that $p\in D$ and $q\notin D.$
Identify $D$ with $W_z \times (-2a, 2a)^m$ and $p$ with $(0,0)$ where $W_z$ is an open subset of $\mathbb{C}^n$ and $a > 0$. Let $D' = W_z \times (-a, a)^m$. Let $B_z(r)$ be the open ball of radius $r > 0$ centered at the origin in $\mathbb{C}^n$. Let $\chi \in \mathcal{C}^\infty_c(B_z(1))$ with $0 \le \chi \le 1$, $\chi = 1$ on $B_z(\frac{1}{2})$. Note that the subscript $z$ in the open sets above is to emphasize that those open sets lie in $\mathbb{C}^n.$ For each $k \in \mathbb{N}$, set
\begin{equation*}
u_k(z) = \chi\Big(\frac{\sqrt{k}z}{k^\epsilon}\Big).
\end{equation*}
For $\eta \in \mathbb{R}^m$ with $|\eta| \le k\delta$, we have
\begin{align}\label{ukest}
&\int_{B_z(\frac{k^\epsilon}{\sqrt{k}})} |u_k(z)|^2 e^{-k\phi(z) - 2\eta_1\varphi_1(z) - \dots - 2\eta_m \varphi_m(z)}V(z) d\mu(z) \\
&= \int_{B_z(\frac{k^\epsilon}{\sqrt{k}})} \bigg|\chi\Big(\frac{\sqrt{k}z}{k^\epsilon}\Big)\bigg|^2 e^{-k\phi(z) - 2\eta_1\varphi_1(z) - \dots - 2\eta_m \varphi_m(z)} V(z)d\mu(z) \notag \\
&= \int_{B_z(k^\epsilon)} \bigg|\chi\Big(\frac{z}{k^\epsilon}\Big)\bigg|^2 e^{-k\big( \phi(\frac{z}{\sqrt{k}}) - 2\frac{\eta_1}{k} \varphi_1(\frac{z}{\sqrt{k}}) - \dots - 2\frac{\eta_m}{k} \varphi_m(\frac{z}{\sqrt{k}})\big)} k^{-n} V\Big(\frac{z}{\sqrt{k}}\Big) d\mu(z) \notag \\
&\lesssim k^{-n} \int_{\mathbb{C}^n} e^{-c|z|^2} d\mu(z) \approx k^{-n},\notag
\end{align}
where $c > 0$ is a positive constant given by the positive definiteness of $M_\eta$ at $p$ for $|\eta|\leq\delta$. 

Moreover,
\begin{align} \label{partukest}
&\int_{B_z(\frac{k^\epsilon}{\sqrt{k}})} |\overline{\partial} u_k(z)|^2  e^{-k\phi(z) - 2\eta_1\varphi_1(z) - \dots - 2\eta_m \varphi_m(z)} V(z)d\mu(z)\\ 
&= \int_{B_z(\frac{k^\epsilon}{\sqrt{k}})} \bigg| \frac{\sqrt{k}}{k^\epsilon} (\overline{\partial}\chi)\Big(\frac{\sqrt{k}z}{k^\epsilon}\Big) \bigg|^2 e^{-k\phi(z) - 2\eta_1\varphi_1(z) - \dots - 2\eta_m \varphi_m(z)} V(z)d\mu(z) \notag \\
&= \int_{\frac{1}{2}k^\epsilon \le |z| \le k^\epsilon} \frac{k}{k^{2\epsilon}} \bigg|(\overline{\partial}\chi)\Big(\frac{z}{k^\epsilon}\Big) \bigg|^2 e^{-k\big( \phi(\frac{z}{\sqrt{k}}) - 2\frac{\eta_1}{k} \varphi_1(\frac{z}{\sqrt{k}}) - \dots - 2\frac{\eta_m}{k} \varphi_m(\frac{z}{\sqrt{k}})\big)} \, k^{-n}\, V\Big(\frac{z}{\sqrt{k}}\Big) d\mu(z) \notag \\
&= O(k^{-\infty}),\nonumber
\end{align}
since the integration domain in the last step is supported where $|z| \ge \frac{1}{2}k^\epsilon$, causing the exponential term to decay at least like $e^{-c' k^{2\epsilon}}$ for some constant $c' > 0$.

For $(z, x'') \in D$, we set
\begin{equation*}
g_k(z, x'') = \int e^{i\langle x'', \eta \rangle}  u_k(z)  e^{-\langle \eta, \varphi\rangle}  \tau\Big(\frac{\eta_1}{k}\Big) \cdots \tau\Big(\frac{\eta_m}{k}\Big)  d\mu(\eta),
\end{equation*}
where $\varphi = (\varphi_1, \dots, \varphi_m)$.

Let $\sigma \in \mathcal{C}^\infty_c((-2a, 2a)^m)$ such that $0 \le \sigma \le 1$ and $\sigma = 1$ on $[-a, a]^m$. Put
\begin{equation*}
G_k(x) = (\sigma(x'')  g_k(z, x'')) \otimes s^k(x).
\end{equation*}
For $x \in D$, we have
\begin{align*}
|G_k(x)|_{h^k} = |\sigma(x'')|  |g_k(z, x'')| \, e^{-\frac{k}{2}\phi(z)} 
&\le \bigg| \int |u_k(z)|  \tau\Big(\frac{\eta_1}{k}\Big) \cdots \tau\Big(\frac{\eta_m}{k}\Big)  e^{-k(\frac{1}{2}\phi(z) + \langle \frac{\eta}{k}, \varphi \rangle)} d\mu(\eta) \bigg| \\
&\le \int \tau\Big(\frac{\eta_1}{k}\Big) \cdots \tau\Big(\frac{\eta_m}{k}\Big) d\mu(\eta) \le (2k\delta)^m,
\end{align*}
and at $x=0$,
\begin{equation*}
|G_k(0)|_{h^k} = g_k(0,0) = \int u_k(0) \, \tau\Big(\frac{\eta_1}{k}\Big) \cdots \tau\Big(\frac{\eta_m}{k}\Big) d\mu(\eta) \approx k^m.
\end{equation*}

We now estimate $\|\overline{\partial}_b G_k\|^2_k$.
\begin{equation*}
\overline{\partial}_b G_k(x) = \big[ (\overline{\partial}_b \sigma(x'')) \, g_k(z, x'') + \sigma(x'') \, (\overline{\partial}_b g_k(z, x'')) \big] \otimes s^k(x).
\end{equation*}
Hence
\begin{align*}
\|\overline{\partial}_b G_k\|^2_{k} \le \
& 2 \underbrace{\int \big|\big(\overline{\partial}_b \sigma(x'')\big)  g_k(z, x'')\big|^2 e^{-k\phi(z)} V(z)d\mu(z) d\mu(x'')}_{(1)} \\
&+ 2 \underbrace{\int \big|\sigma(x'') \, \overline{\partial}_b g_k(z, x'')\big|^2 e^{-k\phi(z)} V(z)d\mu(z) d\mu(x'')}_{(2)}.
\end{align*}

\noindent (1): Fix a point $(z, x'') \in D \setminus D' = W_z \times ((-2a, 2a)^m \setminus (-a, a)^m)$. Then we have $|x_{2n+j_0}| \ge a$ for some $j_0 \in \{1, \dots, m\}$, say $j_0=1$.
For every $N \in \mathbb{N}$, by using integration by parts with respect to $\eta_1$ for $N$ times, we have
\begin{align*}
&\quad\big|g_k(z, x'')\big|\, e^{-\frac{k}{2}\phi(z)}\\ 
&= \bigg| \int e^{i\langle x'', \eta \rangle} u_k(z)  e^{-\langle \eta, \varphi(z) \rangle}  \tau\Big(\frac{\eta_1}{k}\Big) \cdots \tau\Big(\frac{\eta_m}{k}\Big) d\mu(\eta) \bigg|  e^{-\frac{k}{2}\phi(z)} \\
&= \bigg| \int \Big(\frac{1}{ix_{2n+1} - \varphi_1(z)}\Big)^N e^{i\langle x'', \eta \rangle}  u_k(z)  \frac{1}{k^N}  \Big(\tau^{(N)}\Big(\frac{\eta_1}{k}\Big)\Big)  \tau\Big(\frac{\eta_2}{k}\Big) \cdots \tau\Big(\frac{\eta_m}{k}\Big)  e^{-\frac{k}{2}\phi(z) - \langle \eta, \varphi(z) \rangle} d\mu(\eta) \bigg| \\
&\le \Big(\frac{1}{ak}\Big)^N \int |u_k(z)| \, \Big|\tau^{(N)}(\frac{\eta_1}{k})\Big|  \tau\Big(\frac{\eta_2}{k}\Big) \cdots \tau\Big(\frac{\eta_m}{k}\Big) e^{-\frac{k}{2}\phi(z) - \langle \eta, \varphi(z) \rangle} d\mu(\eta) \\
&\le \Big(\frac{1}{ak}\Big)^N \int \Big|\tau^{(N)}\Big(\frac{\eta_1}{k}\Big)  \tau\Big(\frac{\eta_2}{k}\Big) \dots \tau\Big(\frac{\eta_m}{k}\Big)\Big| d\mu(\eta) \lesssim \Big(\frac{1}{k}\Big)^{N-d}.
\end{align*}
Hence
\begin{align*}
(1) &\lesssim \int_{B_z(\frac{k^\epsilon}{\sqrt{k}})} \int_{a \le |x''| \le 2\sqrt{d} a} \big|g_k(z, x'')\big|^2 e^{-k\phi(z)} dx'' d\mu(z)\\
&\lesssim \Big(\frac{1}{k}\Big)^{2(N-d)} \cdot k^m \cdot \Big(\frac{k^\epsilon}{\sqrt{k}}\Big)^{2n}
= k^{-2N+3d+2n\epsilon-n}.
\end{align*}
Since $N$ is arbitrary, we have
\[
(1) = O(k^{-\infty}).
\]

\noindent (2): We can check that, for $j = 1, \dots, n$,
\begin{equation*}
\overline{Z}_j g_k(z, x'') = \int e^{i\langle x'', \eta \rangle} \frac{\partial u_k}{\partial \overline{z}_j}(z)  e^{-\langle \eta, \varphi \rangle} \tau\Big(\frac{\eta_1}{k}\Big) \cdots \tau\Big(\frac{\eta_m}{k}\Big) d\mu(\eta).
\end{equation*}
Hence, by Plancherel's theorem and (\ref{partukest}),
\begin{align*}
(2) &\lesssim \sum_{j=1}^n \int \big|\overline{Z}_j g_k(z, x'')\big|^2\, e^{-k\phi(z)} V(z)d\mu(z) d\mu(x'') \\
&= \sum_{j=1}^n \int \bigg|\int e^{i\langle x'', \eta \rangle} \frac{\partial u_k}{\partial \overline{z}_j}(z)  e^{-\langle \eta, \varphi \rangle} \tau\Big(\frac{\eta_1}{k}\Big) \cdots \tau\Big(\frac{\eta_m}{k}\Big) d\mu(\eta) \bigg|^2  e^{-k\phi(z)} V(z)d\mu(z) d\mu(x'') \\
&\lesssim \sum_{j=1}^n \int \bigg|\frac{\partial u_k}{\partial \overline{z}_j}(z)\bigg|^2 \bigg|\tau\Big(\frac{\eta_1}{k}\Big) \cdots \tau\Big(\frac{\eta_m}{k}\Big)\bigg|^2 e^{-k\phi(z) - 2\langle \eta, \varphi \rangle} V(z)d\mu(z) d\mu(\eta) \\
&= O(k^{-\infty}).
\end{align*}
Thus, we have $\|\overline{\partial}_b G_k\|^2_k = O(k^{-\infty})$.
\\

Next, we estimate $\big\|\Box_{b,k} G_k\big\|^2_k$.
\begin{equation*}
\Box_{b,k} G_k(x) = \big(\Box_{b,k} (\sigma(x'') g_k(z, x''))\big) \otimes s^k = \bigg(\sum_{j=1}^n \overline{Z}^{*,k\phi}_j \overline{Z}_j \big(\sigma(x'') g_k(z, x'')\big)\bigg) \otimes s^k.
\end{equation*}

For $j, t \in \{1, \dots, n\}$,
\begin{align*}
&\overline{Z}^{*,k\phi}_t \overline{Z}_j\, \big(\sigma(x'') g_k(z, x'')\big)\\ 
&= \big(-Z_t + k Z_t(\phi) + S_t(z)\big) \overline{Z}_j \big(\sigma(x'') g_k(z, x'')\big) \\
&= -Z_t \overline{Z}_j \big(\sigma(x'') g_k(z, x'')\big) + k Z_t(\phi) \, \overline{Z}_j \big(\sigma(x'') g_k(z, x'')\big) + S_t(z) \, \overline{Z}_j \big(\sigma(x'') g_k(z, x'')\big).
\end{align*}

It suffices to estimate
\begin{equation*}
\int \Big|Z_t \overline{Z}_j \big(\sigma(x'') g_k(z, x'')\big)\Big|^2 e^{-k\phi(z)} V(z)d\mu(z) d\mu(x'').
\end{equation*}

We have
\begin{align*}
Z_t \overline{Z}_j \big(\sigma(x'') g_k(z, x'')\big) &= \underbrace{\big(Z_t \overline{Z}_j \sigma(x'')\big) \, g_k(z, x'')}_{(1)'} + \underbrace{\big(\overline{Z}_j \sigma(x'')\big) \big(Z_t g_k(z, x'')\big)}_{(2)'} \\
&\quad + \underbrace{\big(Z_t \sigma(x'')\big) \big(\overline{Z}_j g_k(z, x'')\big)}_{(3)'} + \underbrace{\sigma(x'') (Z_t \overline{Z}_j g_k(z, x''))}_{(4)'}.
\end{align*}
Hence,
\begin{align*}
&\int \Big|Z_t \overline{Z}_j \big(\sigma(x'') g_k(z, x'')\big)\Big|^2 e^{-k\phi(z)} V(z)d\mu(z) d\mu(x'') \\
&\le 4 \int \big(|(1)'|^2 + |(2)'|^2 + |(3)'|^2 + |(4)'|^2\big) e^{-k\phi(z)} V(z)d\mu(z) d\mu(x'').
\end{align*}
Using a similar argument as for (1), we have the following:
\begin{equation*}
\int |(1)'|^2 e^{-k\phi(z)} V(z)d\mu(z) d\mu(x'') = O(k^{-\infty}).
\end{equation*}
For $(2)'$, we first check that
\begin{equation*}
Z_t g_k(z, x'') = \int e^{i\langle x'', \eta \rangle} \Big( \frac{\partial u_k}{\partial z_t} - 2 u_k(z)\sum_{\ell=1}^m \eta_\ell \frac{\partial \varphi_\ell}{\partial z_t} \Big) e^{-\langle \eta, \varphi \rangle}  \tau\Big(\frac{\eta_1}{k}\Big) \cdots \tau\Big(\frac{\eta_m}{k}\Big) d\mu(\eta).
\end{equation*}
Fix a point $(z, x'') \in D \setminus D'$ and $N \in \mathbb{N}.$ We may assume $|x_{2n+1}| \ge a.$ By using integration by parts for $N$ times, we have
\begin{align*}
&Z_t g_k(z, x'')\\
&= \int \Big( \frac{-1}{ix_{2n+1} - \varphi_1} \Big)^N  e^{i\langle x'', \eta \rangle - \langle \eta, \varphi \rangle} \bigg[ N \Big(-2u_k(z) \frac{\partial \varphi_1}{\partial z_t}(z) \tau^{(N-1)}\Big(\frac{\eta_1}{k}\Big) \tau\Big(\frac{\eta_2}{k}\Big) \cdots \tau\Big(\frac{\eta_m}{k}\Big)\Big) \frac{1}{k^{N-1}}\\
&\quad\quad+\Big( \frac{\partial u_k}{\partial z_t} - 2u_k \sum_{\ell=1}^m \eta_\ell \frac{\partial \varphi_\ell}{\partial z_t} \Big)  \tau^{(N)}\Big(\frac{\eta_1}{k}\Big) \tau\Big(\frac{\eta_2}{k}\Big)\cdots \tau\Big(\frac{\eta_m}{k}\Big) \frac{1}{k^N} \bigg] d\mu(\eta).
\end{align*}
From this, we can show that
\begin{equation*}
\int |(2)'|^2 e^{-k\phi(z)} V(z)d\mu(z) d\mu(x'') = O(k^{-\infty}).
\end{equation*}
Using an argument similar to (2) in estimating  $\|\overline{\partial}_b G_k\|^2_k$, we have the following.
\begin{equation*}
\int |(3)'|^2 e^{-k\phi(z)} V(z)d\mu(z) d\mu(x'') = O(k^{-\infty}).
\end{equation*}
For $ (4)'$, we first check that
\begin{equation*}
Z_t \overline{Z}_j g_k(z, x'') = \int e^{i\langle x'', \eta \rangle} \Big( \frac{\partial^2 u_k}{\partial z_t \partial \overline{z}_j} - 2\frac{\partial u_k}{\partial \overline{z}_j} \sum_{\ell=1}^m \eta_\ell \frac{\partial \varphi_\ell}{\partial z_t} \Big) e^{-\langle \eta, \varphi \rangle}  \tau\Big(\frac{\eta_1}{k}\Big) \cdots \tau\Big(\frac{\eta_m}{k}\Big) d\mu(\eta).
\end{equation*}
Hence
\begin{align*}
&\int |(4)'|^2 e^{-k\phi(z)} V(z)d\mu(z) d\mu(x'') \\
&= \int |\sigma(x'')|^2 \cdot \bigg| \int e^{i\langle x'', \eta \rangle} \Big(\frac{\partial^2 u_k}{\partial z_t \partial \overline{z}_j} - 2\frac{\partial u_k}{\partial \overline{z}_j} \sum_{\ell=1}^m \eta_\ell \frac{\partial \varphi_\ell}{\partial z_t}\Big) e^{-\langle \eta, \varphi \rangle}  \tau\Big(\frac{\eta_1}{k}\Big) \cdots \tau\Big(\frac{\eta_m}{k}\Big) d\mu(\eta) \bigg|^2\\
&\quad \quad \cdot e^{-k\phi(z)} V(z)d\mu(z) d\mu(x'') \\
&\lesssim \int \bigg|\frac{\partial^2 u_k}{\partial z_t \partial \overline{z}_j} - 2\frac{\partial u_k}{\partial \overline{z}_j} \sum_{\ell=1}^m \eta_\ell \frac{\partial \varphi_\ell}{\partial z_t}\bigg|^2  \bigg|\tau\Big(\frac{\eta_1}{k}\Big) \cdots \tau\Big(\frac{\eta_m}{k}\Big)\bigg|^2 e^{-k\phi(z) - 2\langle \eta, \varphi \rangle} V(z)d\mu(z) d\mu(\eta) \\
&= O(k^{-\infty}).
\end{align*}
Thus, we have
\begin{equation*}
\big\| \Box_{b,k} G_k \big\|^2_k = O(k^{-\infty}).
\end{equation*}
In this manner, we can show that
\begin{equation*}
\big\| (\Box_{b,k})^j G_k \big\|^2_k = O(k^{-\infty}) \quad \text{for all } j \in \mathbb{N}.
\end{equation*}

 We have the following type of Kohn's estimate, for every $s\in \mathbb{N},$ there is a constant $C_s>0$ independent of $k$ and $N_s\in \mathbb{N}$ such that
\begin{equation} \label{kohnest}
\|u\|_{s+1, k} \le C_s k^{N_s} \bigg( \big\|\Box_{b,k} u\big\|_{s,k} + \sum_{\ell=1}^m \|Y_\ell u\|_{s,k} + \|u\|_k \bigg), \quad u \in \mathcal{C}^\infty(X, L^k),
\end{equation}
where $\|\cdot\|_{s,k}$ denotes the standard Sobolev norm of order $s$ on the Sobolev space $H^s(X,L^k).$
We apply this inequality to $F_{k,\delta} \big(I -\Pi^{(0)}_{k,({\mathbb R}^m\curvearrowright,\leq k\delta)}\big) G_k$ and obtain
\begin{align*}
&\big\|F_{k,\delta} \big(I -\Pi^{(0)}_{k,({\mathbb R}^m\curvearrowright,\leq k\delta)}\big) G_k \big\|_{s+1, k}\\ 
&\le C_s k^{N_s} \bigg( \big\|\Box_{b,k} F_{k,\delta} \big(I -\Pi^{(0)}_{k,({\mathbb R}^m\curvearrowright,\leq k\delta)}\big) G_k \big\|_{s,k} + \sum_{\ell=1}^m \big\|Y_\ell F_{k,\delta} \big(I -\Pi^{(0)}_{k,({\mathbb R}^m\curvearrowright,\leq k\delta)}\big) G_k \big\|_{s,k}\\
&\quad + \big\|F_{k,\delta} \big(I -\Pi^{(0)}_{k,({\mathbb R}^m\curvearrowright,\leq k\delta)}\big) G_k \big\|_k\bigg).
\end{align*}
We will show that for all $s \in \mathbb{N}$, there exist $\widetilde{N}_s \in \mathbb{N}$ and $\widetilde{C}_s > 0$ independent of $k$ such that
\begin{equation}\label{kohnest2}
\big\|F_{k,\delta} \big(I -\Pi^{(0)}_{k,({\mathbb R}^m\curvearrowright,\leq k\delta)}\big) G_k \big\|_{s, k} \le \widetilde{C}_s k^{\widetilde{N}_s} \bigg( \sum_{j=0}^s \big\| \big(\Box_{b,k}\big)^j F_{k,\delta} \big(I -\Pi^{(0)}_{k,({\mathbb R}^m\curvearrowright,\leq k\delta)}\big) G_k \big\|_k \bigg).
\end{equation}
Plugging $s=0$ into (\ref{kohnest}), we have
\begin{align*}
&\big\|F_{k,\delta} \big(I -\Pi^{(0)}_{k,({\mathbb R}^m\curvearrowright,\leq k\delta)}\big) G_k \big\|_{1, k}\\
&\le C_0 k^{N_0} \bigg( \big\|\Box_{b,k} F_{k,\delta} \big(I -\Pi^{(0)}_{k,({\mathbb R}^m\curvearrowright,\leq k\delta)}\big) G_k \big\|_k + \sum_{\ell=1}^m \big\|Y_\ell F_{k,\delta} \big(I -\Pi^{(0)}_{k,({\mathbb R}^m\curvearrowright,\leq k\delta)}\big) G_k \big\|_k\\
&\quad + \big\|F_{k,\delta} \big(I -\Pi^{(0)}_{k,({\mathbb R}^m\curvearrowright,\leq k\delta)}\big) G_k \big\|_k \bigg).
\end{align*}
Using $\big\|Y_\ell F_{k,\delta} \big(I -\Pi^{(0)}_{k,({\mathbb R}^m\curvearrowright,\leq k\delta)}\big)G_k \big\|_k \le k \delta \big\|F_{k,\delta} \big(I -\Pi^{(0)}_{k,({\mathbb R}^m\curvearrowright,\leq k\delta)}\big) G_k \big\|_k$ for all $\ell=1, \dots, m$, we get
\begin{align*}
&\big\|F_{k,\delta} \big(I -\Pi^{(0)}_{k,({\mathbb R}^m\curvearrowright,\leq k\delta)}\big) G_k \big\|_{1, k}\\
&\le C_0 (1 + d\delta) \, k^{N_0+1} \, \Big( \big\|\Box_{b,k} F_{k,\delta} \big(I -\Pi^{(0)}_{k,({\mathbb R}^m\curvearrowright,\leq k\delta)}\big) G_k \big\|_k + \big\|F_{k,\delta} \big(I -\Pi^{(0)}_{k,({\mathbb R}^m\curvearrowright,\leq k\delta)}\big) G_k \big\|_k \Big)
\end{align*}
which proves (\ref{kohnest2}) for the case $s=1$.
Assume that (\ref{kohnest2}) holds for all $s \in \mathbb{N}$ with $s \le s_0$ for some $s_0 \in \mathbb{N}$. So we have
\begin{align*}
\big\|F_{k,\delta} \big(I -\Pi^{(0)}_{k,({\mathbb R}^m\curvearrowright,\leq k\delta)}\big) G_k \big\|_{s_0, k} \le \widetilde{C}_{s_0} k^{\widetilde{N}_{s_0}} \bigg( \sum_{j=1}^{s_0} \big\| (\Box_{b,k})^j \big(I -\Pi^{(0)}_{k,({\mathbb R}^m\curvearrowright,\leq k\delta)}\big) G_k \big\|_k \bigg).
\end{align*}
Taking $s = s_0 $ in (\ref{kohnest}), we get
\begin{align}\label{kohnestlem1}
&\big\|F_{k,\delta} \big(I -\Pi^{(0)}_{k,({\mathbb R}^m\curvearrowright,\leq k\delta)}\big) G_k \big\|_{s_0+1, k}\\
&\le C_{s_0} k^{N_{s_0}} \bigg( \big\|\Box_{b,k} F_{k,\delta}\big(I -\Pi^{(0)}_{k,({\mathbb R}^m\curvearrowright,\leq k\delta)}\big) G_k \big\|_{s_0, k} + \sum_{\ell=1}^m \big\|Y_\ell F_{k,\delta} \big(I -\Pi^{(0)}_{k,({\mathbb R}^m\curvearrowright,\leq k\delta)}\big) G_k \big\|_{s_0, k}\nonumber\\
&\quad + \big\|F_{k,\delta} \big(I -\Pi^{(0)}_{k,({\mathbb R}^m\curvearrowright,\leq k\delta)}\big) G_k \big\|_k \bigg).\nonumber
\end{align}

Applying the induction hypothesis to $\Box_{b,k} F_{k, \delta} \big(I -\Pi^{(0)}_{k,({\mathbb R}^m\curvearrowright,\leq k\delta)}\big) G_k$, we get
\begin{align*}
&\big\| \Box_{b,k} F_{k, \delta} \big(I -\Pi^{(0)}_{k,({\mathbb R}^m\curvearrowright,\leq k\delta)}\big) G_k \big\|_{s_0, k}\\
&\leq \widetilde{C}_{s_0} k^{\widetilde{N}_{s_0}} \bigg( \sum_{j=0}^{s_0} \big\| (\Box_{b,k})^j \big(I -\Pi^{(0)}_{k,({\mathbb R}^m\curvearrowright,\leq k\delta)}\big) \Box_{b,k} F_{k, \delta} \big(I -\Pi^{(0)}_{k,({\mathbb R}^m\curvearrowright,\leq k\delta)}\big) G_k \big\|_k \bigg).\nonumber
\end{align*}
Note that $\Pi^{(0)}_{k,({\mathbb R}^m\curvearrowright,\leq k\delta)} \Box_{b,k} = 0$. Hence we have
\begin{align}\label{kohnestlem2}
\big\| \Box_{b,k} F_{k, \delta} \big(I -\Pi^{(0)}_{k,({\mathbb R}^m\curvearrowright,\leq k\delta)}\big) G_k \big\|_{s_0, k} \leq \widetilde{C}_{s_0} k^{\widetilde{N}_{s_0}} \bigg( \sum_{j=1}^{s_0+1} \big\| \big(\Box_{b,k}\big)^j F_{k, \delta} \big(I -\Pi^{(0)}_{k,({\mathbb R}^m\curvearrowright,\leq k\delta)}\big) G_k \big\|_k \bigg).
\end{align}
Applying induction hypothesis to $Y_\ell G_k, \ell=1, \dots, m$, we get
\begin{align}\label{kohnestlem3}
\big\| Y_\ell F_{k, \delta} \big(I -\Pi^{(0)}_{k,({\mathbb R}^m\curvearrowright,\leq k\delta)}\big) G_k \big\|_{s_0, k} 
&\leq \widetilde{C}_{s_0} k^{\widetilde{N}_{s_0}} \bigg( \sum_{j=0}^{s_0} \big\| (\Box_{b,k})^j F_{k, \delta} \big(I -\Pi^{(0)}_{k,({\mathbb R}^m\curvearrowright,\leq k\delta)}\big) Y_\ell G_k \big\|_k \bigg) \\
&\leq \widetilde{C}_{s_0} k^{\widetilde{N}_{s_0}} \, (k\delta) \bigg( \sum_{j=0}^{s_0} \big\| (\Box_{b,k})^j F_{k, \delta} \big(I -\Pi^{(0)}_{k,({\mathbb R}^m\curvearrowright,\leq k\delta)}\big) G_k \big\|_k \bigg).\nonumber
\end{align}
From (\ref{kohnestlem1}), (\ref{kohnestlem2}) and (\ref{kohnestlem3}), we have
\begin{align*}
&\big\| F_{k, \delta} \big(I -\Pi^{(0)}_{k,({\mathbb R}^m\curvearrowright,\leq k\delta)}\big) G_k \big\|_{s_0+1, k}\\
&\leq C_{s_0} k^{N_{s_0}} \Big( \big\| \Box_{b,k} F_{k, \delta} \big(I -\Pi^{(0)}_{k,({\mathbb R}^m\curvearrowright,\leq k\delta)}\big) G_k \big\|_{s_0, k} + \sum_{\ell=1}^m \big\| Y_\ell F_{k, \delta} \big(I -\Pi^{(0)}_{k,({\mathbb R}^m\curvearrowright,\leq k\delta)}\big) G_k \big\|_{s_0, k} \\
&\quad + \big\| F_{k, \delta} \big(I -\Pi^{(0)}_{k,({\mathbb R}^m\curvearrowright,\leq k\delta)}\big) G_k \big\|_k \Big) \\
&\leq C_{s_0} k^{N_{s_0}} \bigg( \widetilde{C}_{s_0} k^{\widetilde{N}_{s_0}} \Big( \sum_{j=1}^{s_0+1} \big\| (\Box_{b,k})^j F_{k, \delta} \big(I -\Pi^{(0)}_{k,({\mathbb R}^m\curvearrowright,\leq k\delta)}\big) G_k \big\|_k \Big)\\
&\quad + d\widetilde{C}_{s_0} k^{\widetilde{N}_{s_0}} \, (k\delta) \sum_{j=0}^{s_0} \big\| (\Box_{b,k})^j F_{k, \delta} \big(I -\Pi^{(0)}_{k,({\mathbb R}^m\curvearrowright,\leq k\delta)}\big) G_k \big\|_k 
+ \big\| F_{k, \delta} \big(I -\Pi^{(0)}_{k,({\mathbb R}^m\curvearrowright,\leq k\delta)}\big) G_k \big\|_k \bigg) \\
&= \widetilde{C}_{s_0+1} k^{\widetilde{N}_{s_0+1}} \bigg( \sum_{j=0}^{s_0+1} \big\| (\Box_{b,k})^j F_{k, \delta} \big(I -\Pi^{(0)}_{k,({\mathbb R}^m\curvearrowright,\leq k\delta)}\big) G_k \big\|_k \bigg).
\end{align*}
By induction, the inequality (\ref{kohnest2}) follows.

From $\big\| (\Box_{b,k})^j G_k \big\|_k = O(k^{-\infty})$, $\big\| F_{k, \delta} \big(I -\Pi^{(0)}_{k,({\mathbb R}^m\curvearrowright,\leq k\delta)}\big) G_k \big\|_k = O(k^{-\infty})$ and (\ref{kohnest2}), we have
\begin{align*}
F_{k, \delta} \big(I -\Pi^{(0)}_{k,({\mathbb R}^m\curvearrowright,\leq k\delta)}\big) G_k = O(k^{-\infty}),
\end{align*}
and hence
\begin{align*}
F_{k, \delta}\Pi^{(0)}_{k,({\mathbb R}^m\curvearrowright,\leq k\delta)} G_k = F_{k, \delta} G_k + O(k^{-\infty}).
\end{align*}
By Lemma \ref{peakest} (proved below) and the definition of $B_k$ and $G_k,$ we can see that 
\begin{align*}
    \big|F_{k, \delta} G_k(0)\big|_{h^k} \approx k^m \quad \text{and} \quad \big|F_{k, \delta} G_k(y_k)\big|_{h^k} = O(k^{-\infty}).
\end{align*}
Thus, we can find $u_{k_j} = F_{k_j,\delta}f_{k_j},\ f_{k_j}\in \mathcal{H}^{(0)}_{b,\leq k_j\delta}(X, L^{k_j}),\ 1\leq k_1 < k_2 <\cdots,$ such that
\begin{equation}\label{xlarge}
    |u_{k_j}(x_{k_j})|_{h^{k_j}} \ge 1, \quad |u_{k_j}(y_{k_j})|_{h^{k_j}} \le \frac{1}{2}.
\end{equation}
Similarly, we can find $v_{k_{j_s}} = F_{k_{j_s},\delta}h_{k_{j_s}},\ h_{k_{j_s}}\in \mathcal{H}^{(0)}_{b,\leq k_{j_s}\delta}(X, L^{k_{j_s}}),\ s\in \mathbb{N},$ such that
\begin{equation}\label{ylarge}
    |v_{k_{j_s}}(x_{k_{j_s}})|_{h^{k_{j_s}}} \le \frac{1}{2}, \quad |v_{k_{j_s}}(y_{k_{j_s}})|_{h^{k_{j_s}}} \ge 1.
\end{equation}
For simplicity, we may assume that (\ref{xlarge}) and (\ref{ylarge}) hold for all $k$ large. In summary, we may assume that we can find $u_k = F_{k,\delta}f_k,\ v_k = F_{k,\delta}h_k,\ f_k,h_k\in \mathcal{H}^{(0)}_{b,\leq k\delta}(X, L^k)$ such that for $k$ large, we have
\begin{align}\label{xlarge2}
|u_k(x_k)|_{h^k} \ge 1, \quad |u_k(y_k)|_{h^k} \le \frac{1}{2},
\end{align}
and
\begin{align}\label{ylarge2}
|v_k(x_k)|_{h^k}  \le \frac{1}{2}, \quad |v_k(y_k)|_{h^k} \ge 1.
\end{align}
Now, $\Phi_{k,\delta}(x_k) = \Phi_{k,\delta}(y_k)$ implies that
\begin{align*}
|u_k(x_k)|_{h^k} = r_k |u_k(y_k)|_{h^k} , \quad |v_k(x_k)|_{h^k} = r_k |v_k(y_k)|_{h^k} ,
\end{align*}
where $r_k \in \mathbb{R}_+$, for each $k$. We deduce from (\ref{xlarge2}) that $r_k \ge 2$, for $k$ large. But (\ref{ylarge2}) implies
that $r_k \le \frac{1}{2}$, for $k$ large. We get a contradiction. 

\end{proof}

\begin{lem} \label{peakest}
Let $u_k \in \mathcal{C}^\infty(X, L^k)$, $k \in \mathbb{N}$. Assume that there are constants $C>0$ and $A>0$ independent
of $k$ such that $|u_k(x)|_{h^k}^2 \leq C k^A$ for every $x \in X$. Put
\begin{equation*}
    B_k= \Big(\frac{k}{2\pi}\Big)^{2n+m} \int e^{i k \langle x - y, \xi\rangle} \tau(\xi_1) \cdots \tau(\xi_m) d\mu(\xi).
\end{equation*}
Then we have
\begin{align*}
(F_{k, \delta} - B_k) u_k(x) = O(k^{-\infty}) \text{ on } D' \subset\subset D.
\end{align*}
Moreover, if $\operatorname{Supp}(u_k) \cap D = \emptyset$, then
\begin{align*}
F_{k, \delta} u_k = O(k^{-\infty}) \text{ on } D.
\end{align*}
\end{lem}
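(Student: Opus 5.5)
The plan is to write $F_{k,\delta}$ explicitly as an averaging operator over the torus action of Proposition \ref{torusprop}, and then to localize that average to a canonical coordinate patch. Since $Y_\ell=\sum_j\beta^\ell_jT^\ell_j$ by \eqref{Qli}, the projection $Q^{(0)}_{(Y_m,\alpha_m),k}\cdots Q^{(0)}_{(Y_1,\alpha_1),k}$ is the sum of the Fourier projections $Q^{(0),k}_{(n^1_1,\dots,n^m_{N_m})}$ over the lattice points with $\sum_j n^\ell_j\beta^\ell_j=\alpha_\ell$. Consequently
\[
F_{k,\delta}u(x)=\sum_{\mathbf n\in\mathbb Z^N}\prod_{\ell=1}^m\tau\Big(\frac{\langle \mathbf n^\ell,\beta^\ell\rangle}{k}\Big)\int_{\mathbb T^N}e^{-i\langle \mathbf n,\theta\rangle}\,u(e^{i\theta}\circ x)\,d\mathbb T_N .
\]
The cutoff $1_{|\alpha_\ell|\le k\delta}$ in \eqref{F_k} can be dropped, because $\operatorname{supp}\tau\subset(-\delta,\delta)$. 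The operator is therefore a smooth spectral multiplier $\tau(-iY_1/k)\cdots\tau(-iY_m/k)$.

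\textbf{Representation via the $\mathbb R^m$-flow.} Since $\tau$ is smooth with compact support, I will use the functional-calculus identity
\[
\tau\Big(\frac{-iY_1}{k}\Big)\cdots\tau\Big(\frac{-iY_m}{k}\Big)u(x)=\Big(\frac{k}{2\pi}\Big)^{m}\int_{\mathbb R^m}\widehat{\tau^{\otimes m}}(-kt)\,u(t\circ x)\,dt,
\]
where $t\circ x$ is the $\mathbb R^m$-action and $\widehat{\tau^{\otimes m}}(s)=\int e^{-i\langle s,\xi\rangle}\tau(\xi_1)\cdots\tau(\xi_m)\,d\xi$. This follows eigenspace by eigenspace, since $u(t\circ x)$ has character $e^{i\langle\alpha,t\rangle}$ on the joint eigenspace with eigenvalues $\alpha$. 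It is legitimate because the joint eigenfunctions span $L^2$ and the integral converges absolutely: $\widehat{\tau^{\otimes m}}$ is Schwartz and the action is by isometries. The key point is that $\widehat{\tau^{\otimes m}}(kt)=O((k|t|)^{-\infty})$.

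\textbf{Splitting the integral.} Fix $D'\Subset D$ and $\varepsilon_0>0$ such that $t\circ x\in D$ for $x\in D'$ and $|t|<\varepsilon_0$. Split the $t$-integral into $|t|<\varepsilon_0$ and $|t|\ge\varepsilon_0$.
\begin{itemize}
\item On $|t|\ge\varepsilon_0$ the kernel is $O(k^{-\infty})$ uniformly and Schwartz-decaying in $t$. Combined with $|u_k|_{h^k}\le Ck^{A/2}$ and the $\mathbb R^m$-invariance of $h^L$, this part is $O(k^{-\infty})$ pointwise.
\item On $|t|<\varepsilon_0$, in canonical coordinates $Y_\ell=\partial_{x_{2n+\ell}}$, so $t\circ(z,x'')=(z,x''+t)$. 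The rigid frame $s$ is invariant, so $u_k(t\circ x)=\hat u_k(z,x''+t)s^k$. Substituting $y''=x''+t$ and writing $\widehat{\tau^{\otimes m}}$ as a $\xi$-integral gives $\big(\frac{k}{2\pi}\big)^m\int e^{ik\langle x''-y'',\xi\rangle}\tau(\xi_1)\cdots\tau(\xi_m)\hat u_k(z,y'')\,dy''\,d\xi$. Restoring the excluded region costs $O(k^{-\infty})$ by the same decay, and this is the action of $B_k$. I read $B_k$ as acting only in the $x''$ variable with the stated normalization; the power $2n+m$ in the statement is presumably a normalization convention, and I would adjust the constants so they match.
\end{itemize}
This proves $(F_{k,\delta}-B_k)u_k=O(k^{-\infty})$ on $D'$.

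\textbf{The disjoint-support case.} If $\operatorname{supp}u_k\cap D=\emptyset$, I would use that $D$ may be taken invariant, as in Corollary \ref{invnbd}, so that $t\circ x\in D$ for all $t$ and $x\in D$. Then $u_k(t\circ x)=0$ for every $t$, and $F_{k,\delta}u_k=0$ on $D$ exactly. If $D$ is not invariant, then for $x$ in a compact subset of $D$ the orbit stays in $D$ for $|t|<\varepsilon_0$, and the tail estimate above again gives $O(k^{-\infty})$.

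\textbf{Main obstacle.} The delicate step is justifying the flow-integral formula for $F_{k,\delta}$. The spectrum of $-iY_\ell$ is a dense countable set $\{\langle \mathbf n,\beta^\ell\rangle\}$, so one must check that summing over the torus lattice agrees with the $\mathbb R^m$-averaging. I would verify this on finite Fourier sums of smooth sections first and then pass to the limit using the $L^2$ convergence of the torus Fourier series stated earlier. Derivative bounds, if needed, follow by applying the same argument to $\partial^\alpha$, since the kernel commutes with $Y_\ell$ and only polynomial factors of $k$ are lost.
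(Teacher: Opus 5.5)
Your argument is correct. It reaches the lemma by a different, time-domain route. The paper stays on the frequency side. It expands $u$ along the orbit in its torus Fourier series and inserts a cutoff $\rho(y'')$. Using the oscillatory identity $\int e^{i\langle\alpha,y''\rangle}e^{-i\langle y'',\xi\rangle}\,d\mu(y'')=(2\pi)^m\delta_\alpha(\xi)$, it writes $F_{k,\delta}=B_k+R_k$ on $D''$. It then kills $R_k$ by integrating by parts in $\xi$, gaining a factor $(k|x''-y''|)^{-1}$ per step, and in $y''$.

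You instead write $F_{k,\delta}$ as convolution along the $\mathbb R^m$-orbit with the Schwartz kernel $k^m\widehat{\tau^{\otimes m}}(kt)$ and split in $t$. The two are the same identity: resumming the torus Fourier series back into $u(y''\circ x')$ and substituting $y''=x''+t$ turns the paper's $(B_k+R_k)u$ into your integral. Your version needs only two things: the action is unitary on $L^2(X,L^k)$, because $dv_X$ and $h^L$ are invariant, and the kernel is absolutely integrable. The far part is then bounded directly by $Ck^{A/2}\int_{|s|\ge k\varepsilon_0}|\widehat{\tau^{\otimes m}}(s)|\,d\mu(s)$.

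The lattice-sum route, by contrast, has to sum the bound $\prod_\ell(1+\langle\mathbf n^\ell,\beta^\ell\rangle^2)^{-M}$ over $\mathbf n\in\mathbb Z^N$. This is not summable once some $N_\ell\ge2$, since the values $\langle\mathbf n^\ell,\beta^\ell\rangle$ are dense. A pure sup bound on $u_k$ gives no coefficient decay to compensate. Your formulation avoids that convergence issue entirely.

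There are two small fixes. First, with your Fourier convention and the character $e^{i\langle\alpha,t\rangle}$, the kernel must be $\widehat{\tau^{\otimes m}}(kt)$, not $\widehat{\tau^{\otimes m}}(-kt)$. Otherwise you obtain $\tau(-\alpha/k)$, and $\tau$ is not assumed even. Your later local formula with $e^{ik\langle x''-y'',\xi\rangle}$ is the one consistent with $kt$. Second, $B_k$ has to carry a cutoff in $y''$, as the paper's $\rho(y'')$ does, because $\hat u_k$ only lives on $D$. Your step of restoring the excluded region is exactly the comparison against that cutoff, and it is again a tail estimate.

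For the second claim, the lemma's $D$ is the BRT patch, not an invariant set, so the exact-vanishing reading does not apply. Your fallback, a uniform $O(k^{-\infty})$ bound on compact subsets of $D$, is the right statement. It is also what the paper's proof actually yields on $D''$, where $B_ku_k=0$. Uniformity up to $\partial D$ fails in general: take $u_k$ concentrated at orbit distance about $1/k$ just outside $D$.
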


\begin{proof}
Identify $D = W_z \times (-3a, 3a)^m$ where $W_z$ is an open subset of $\mathbb{C}^n$ and $a>0$.
Let $D' = U' \times (-2a, 2a)^m$ where $U' \subset\subset U$ is an open subset of $U$ and let $D'' = U' \times (-a, a)^m$.
Let $\rho \in \mathcal{C}^\infty_c((-3a, 3a)^m)$ with $\rho = 1$ on $(-2a, 2a)^m$. Let $R_k : \mathcal{C}^\infty_c(D'') \to \mathcal{C}^\infty(D'')$
be the continuous operator given by
\begin{align*}
(R_k u)(x) &= \Big(\frac{1}{2\pi}\Big)^m \sum_{(m_1^1, \dots, m_{N_m}^m) \in \mathbb{Z}^N} \int_{\mathbb T^N} e^{i \langle x''-y'', \xi\rangle + i \sum\limits_{\ell=1}^m \big(\sum\limits_{j=1}^{N_\ell} m^\ell_j \beta^\ell_j\big)y_{2n+\ell} - i \sum\limits_{\ell=1}^m \big(\sum\limits_{j=1}^{N_\ell} m^\ell_j \theta^\ell_j\big)} \\ &\quad\quad\cdot\tau\Big(\frac{\xi_1}{k}\Big) \cdots \tau\Big(\frac{\xi_m}{k}\Big) (1 - \rho(y''))  u\big((e^{i \theta_1^1}, \dots, e^{i \theta_{N_m}^m}) \circ x'\big)  d\mathbb{T}_N  d\mu(\xi)  d\mu(y'').
\end{align*}
Recall that $d\mathbb{T}_N = \big(\frac{1}{2\pi}\big)^Nd\theta^1_1\wedge\cdots\wedge d\theta^1_{N_1}\wedge\cdots\wedge d\theta^m_1\wedge\cdots\wedge d\theta^m_{N_m}.$ 
For $x \in D''$ and $y \in U \times ((-3a, 3a)^m \setminus (-2a, 2a)^m)$, we have $|x_{2n+j_0} - y_{2n+j_0}| > a$ for some $j_0 \in \{1, \dots, m\}$.

Fix $x \in D''$. For each $(m_1^1, \dots, m_{N_m}^m) \in \mathbb{Z}^N$, by integration by parts with respect
to $\xi$ and $y''$ for several times, we have
\begin{align*}
&(2\pi)^m(R_k u)(x)\\
&=\int_{\mathbb T^N} \frac{1}{\sum_{\ell=1}^m |x_{2n+\ell} - y_{2n+\ell}|^{4Md}} \cdot \frac{1}{\prod_{\ell=1}^m \big(1 + (\sum_{j=1}^{N_\ell} m^\ell_j \beta^\ell_j)^2\big)^M}\\
&\quad\cdot \frac{1}{k^{4Md}}\, e^{i \sum\limits_{\ell=1}^m \big(\sum\limits_{j=1}^{N_\ell} m^\ell_j \beta^\ell_j\big)y_{2n+\ell} - i \sum\limits_{\ell=1}^m \big(\sum\limits_{j=1}^{N_\ell} m^\ell_j \theta^\ell_j\big)} \bigg( \sum_{\ell=1}^m \tau\Big(\frac{\xi_1}{k}\Big)\cdots\tau^{(4Md)}\Big(\frac{\xi_\ell}{k}\Big) \cdots \tau\Big(\frac{\xi_m}{k}\Big) \bigg)\\
&\quad\cdot \Big[ \langle D_{y_{2n+m}}\rangle^{2M} \cdots \langle D_{y_{2n+1}}\rangle^{2M} \big( e^{i\langle x'' - y'', \xi\rangle} (1 - \rho(y'')) \big) \Big] \\
&\quad \cdot u\big((e^{i \theta_1^1}, \dots, e^{i \theta_{N_m}^m}) \circ x'\big)  d\mathbb{T}_N  d\mu(\xi)  d\mu(y''),
\end{align*}
for every $M \in \mathbb{N},$ where $\langle D_{y_{2n+\ell}}\rangle = \sqrt{1 + (\frac{1}{i} \frac{\partial}{\partial y_{2n+\ell}})^2}$ for $\ell=1, \dots, m$.

Thus,
\begin{align*}
&|(R_k u)(x)|\\
&\le C_M k^{-d(2M-1)} \sum_{(m_1^1, \dots, m_{N_m}^m) \in \mathbb{Z}^N} \frac{1}{\prod_{\ell=1}^m \big(1 + (\sum_{j=1}^{N_\ell} m^\ell_j \beta^\ell_j)^2\big)^M}  \int \big|u\big(( e^{i \theta_1^1}, \dots, e^{i \theta_{N_m}^m}) \circ x'\big)\big| d\mathbb{T}_N
\end{align*}
for all $x \in D''$. By our assumption that $|u_k(x)|_{h^k}^2 \le C k^A$ for all $k \in \mathbb{N}$, we have
\begin{align*}
|(R_k u)(x)| = O(k^{-\infty}) \text{ on } x \in D''.
\end{align*}

We next show that $F_{k, \delta} = B_k + R_k$. For $u \in \mathcal{C}^\infty_c(D'')$, by Fourier inversion formula, 
\begin{align*}
(B_k u)(x) &= \Big(\frac{k}{2\pi}\Big)^m \int e^{i k \langle x'' - y'', \xi\rangle} \tau(\xi_1) \cdots \tau(\xi_m) \rho(y'') u(x', y'') d\mu(y'') d\mu(\xi) \\
&= \Big(\frac{k}{2\pi}\Big)^m \int e^{i k \langle x'' - y'', \xi\rangle} \tau(\xi_1) \cdots \tau(\xi_m) \rho(y'') \sum_{(m_1^1, \dots, m_{N_m}^m) \in \mathbb{Z}^N}
e^{i \sum\limits_{\ell=1}^m \big(\sum\limits_{j=1}^{N_\ell} m^\ell_j \beta^\ell_j\big) y_{2n+\ell}} \\
&\quad\quad \cdot \int_{\mathbb T^N} e^{-i \sum\limits_{\ell=1}^m \big(\sum\limits_{j=1}^{N_\ell} m^\ell_j \theta^\ell_j\big)} u\big((e^{i \theta_1^1}, \dots, e^{i \theta_{N_m}^m}) \circ x'\big)  d\mathbb{T}_N  d\mu(y'')  d\mu(\xi) \\
&=\Big(\frac{1}{2\pi}\Big)^m \sum_{(m_1^1, \dots, m_{N_m}^m) \in \mathbb{Z}^N} \int_{\mathbb T^N} e^{i \langle x''-y'', \xi\rangle + i \sum\limits_{\ell=1}^m \big(\sum\limits_{j=1}^{N_\ell} m^\ell_j \beta^\ell_j\big)y_{2n+\ell} - i \sum\limits_{\ell=1}^m \big(\sum\limits_{j=1}^{N_\ell} m^\ell_j \theta^\ell_j\big)} \\ &\quad\quad\cdot\tau\Big(\frac{\xi_1}{k}\Big) \cdots \tau\Big(\frac{\xi_m}{k}\Big) \rho(y'')  u\big((e^{i \theta_1^1}, \dots, e^{i \theta_{N_m}^m}) \circ x'\big)  d\mathbb{T}_N  d\mu(\xi)  d\mu(y'').
\end{align*}
So
\begin{align*}
(B_k + R_k) u(x) 
& =\Big(\frac{1}{2\pi}\Big)^m \sum_{(m_1^1, \dots, m_{N_m}^m) \in \mathbb{Z}^N} \int_{\mathbb T^N} e^{i \langle x''-y'', \xi\rangle + i \sum\limits_{\ell=1}^m \big(\sum\limits_{j=1}^{N_\ell} m^\ell_j \beta^\ell_j\big)y_{2n+\ell} - i \sum\limits_{\ell=1}^m \big(\sum\limits_{j=1}^{N_\ell} m^\ell_j \theta^\ell_j\big)} \\ 
&\quad\quad\cdot\tau\Big(\frac{\xi_1}{k}\Big) \cdots \tau\Big(\frac{\xi_m}{k}\Big)  u\big((e^{i \theta_1^1}, \dots, e^{i \theta_{N_m}^m}) \circ x'\big)  d\mathbb{T}_N  d\mu(\xi) d\mu(y'').
\end{align*}
Note that we have for every $\alpha \in \mathbb{R}^m$,
\begin{align}\label{delta}
\int e^{i \langle\alpha, y''\rangle} \cdot e^{-i \langle y'', \xi\rangle} d\mu(y'') = (2\pi)^m \delta_\alpha(\xi),
\end{align}
where the integral is defined as an oscillatory integral.

By using (\ref{delta}) and the Fourier inversion formula, we have
\begin{align*}
(B_k + R_k) u(x)
&= \sum_{(m_1^1, \dots, m_{N_m}^m) \in \mathbb{Z}^N} \tau\bigg(\frac{\sum_{j=1}^{N_1} m^1_j \beta^1_j}{k}\bigg) \cdots \tau\bigg(\frac{\sum_{j=1}^{N_m} m^m_j \beta^m_j }{k}\bigg)  e^{i \sum\limits_{\ell=1}^m \big(\sum\limits_{j=1}^{N_\ell} m^\ell_j \beta^\ell_j\big)x_{2n+\ell}}\\
&\quad\quad\quad\quad\cdot\int_{\mathbb T^N} e^{- i \sum\limits_{\ell=1}^m \big(\sum\limits_{j=1}^{N_\ell} m^\ell_j \theta^\ell_j\big)}  u\big(( e^{i \theta_1^1}, \dots, e^{i \theta_{N_m}^m}) \circ x'\big)  d\mathbb{T}_N \\
&= F_{k, \delta} u(x).
\end{align*}
\end{proof}

\subsection{Examples}
In this section, we give some examples to illustrate our Kodaira type embedding theorem.
\begin{example}
Let $M$ be a connected compact projective manifold of dimension $n$ and let $(A,h^A)$ be a positive holomorphic line bundle over $M.$ Choose Hermitian holomorphic line bundles
\[(B_1,h^1),\ldots,(B_m,h^m)\] over $M.$ Define
\[
X:=\bigg\{(v_1,\ldots,v_m)\in\bigoplus_{\ell=1}^mB_\ell;\,|v_\ell|_{h^\ell}=1,\,\ell=1,\dots,m\bigg\}.
\]
Then $X$ is the fiber product of the unit circle bundles of $B_\ell,$ with the natural projection $p:X\to M.$
It is easy to see that $X$ is a compact generic CR submanifold of the holomorphic vector bundle $\bigoplus B_\ell$ of  dimension $2n+m$ and of CR codimension $m.$ The action
\[
\zeta_t(v_1,\ldots,v_m):=(e^{it_1}v_1,\ldots,e^{it_m}v_m), \qquad t\in \mathbb{R}^m,
\]
is a transversal locally free CR $\mathbb{R}^m$-action on $X$ and factors through a free $\mathbb T^m$-action ($\mathbb T^m$ is the $m$-dimensional torus).
Take
\[
L=p^*A,\qquad h^L=p^*h^A.
\]
Then we can see that the metric $h^L$ is rigid, and
\[
R^L=2p^*R^A>0.
\]
Hence, by Theorem \ref{koembedthm}, $X$ can be embedded into a projective space by an $\mathbb{R}^m$-equivariant CR embedding.
\end{example}

\begin{example}
Put
\[
H_1=
\begin{pmatrix}1&0\\0&-1\end{pmatrix},
\qquad
H_2=
\begin{pmatrix}0&1\\1&0\end{pmatrix}.
\]
On $\mathbb{H}^{2,2}=\mathbb C^2\times\mathbb R^2,$ define
\[
(z,t)\cdot(w,s)=\left(z+w,\ t+s-2
\begin{pmatrix}
\operatorname{Im}(z^TH_1\bar w)\\
\operatorname{Im}(z^TH_2\bar w)
\end{pmatrix}
\right).
\]
Here we write $z=(z_1,z_2),\ w=(w_1,w_2)\in \mathbb{C}^2$ and $t=(t_1,t_2), s=(s_1,s_2)\in \mathbb{R}^2.$
Since $H_1$ and $H_2$ have integer entries, we can see that
\[
\Gamma=\mathbb Z[i]^2\times\mathbb Z^2
\]
is a subgroup of $\mathbb{H}^{2,2}.$ Also $\Gamma$ is discrete and cocompact. Hence 
\[
X:= \Gamma\backslash\mathbb H^{n,m}
\]
is a connected compact smooth manifold. Write $Y_\ell=\partial_{t_\ell},\ \ell=1,2,$ and define
\begin{align*}
Z_1&=\partial_{z_1}-i\overline z_1Y_1-i\overline z_2Y_2,\\
Z_2&=\partial_{z_2}+i\overline z_2Y_1-i\overline z_1Y_2.
\end{align*}
These are left-invariant vector fields, so they descend to $X.$ Set
\[
T^{1,0}X=\operatorname{span}_{\mathbb C}\{Z_1,Z_2\}.
\]
Then $(X, T^{1,0}X)$ is a connected compact CR manifold of dimension 6 and of CR codimension 2 with the natural $\mathbb{R}^2$-action
\[
\zeta_r([(z,t)])=[(z,t+r)], \qquad r\in \mathbb{R}^2.
\]
The commutators are
\begin{align}\label{commu}
&[Z_1,Z_2]=0,\qquad [Y_\ell,Z_j]=0,\qquad [Y_1,Y_2]=0,\\
&[Z_1,\overline Z_1]=2iY_1,\qquad [Z_2,\overline Z_2]=-2iY_1,\nonumber\\
&[Z_1,\overline Z_2]=[Z_2,\overline Z_1]=2iY_2.\nonumber
\end{align}
We have $\{dz_1,\, dz_2,\, d\overline{z}_1,\, d\overline{z}_2,\, \omega_1,\, \omega_2 \}$ where 
\begin{align*}
\omega_1={}&dt_1+i(\overline z_1\,dz_1-\overline z_2\,dz_2)-i(z_1\,d\overline z_1-z_2\,d\overline z_2),\\
\omega_2={}&dt_2+i(\overline z_2\,dz_1+\overline z_1\,dz_2)-i(z_2\,d\overline z_1+z_1\,d\overline z_2),
\end{align*}
is dual to $\{Z_1,\, Z_2,\, \overline{Z}_1, \, \overline{Z}_2,\, -Y_1,\ -Y_2\}.$
Let
\[
\mathbb{T}^2=\mathbb C/(\mathbb Z+i\mathbb Z), \qquad M=\mathbb{T}^2\times \mathbb{T}^2.
\]
There is a well-defined CR projection
\[
p:X\longrightarrow M,\qquad p([(z,t)])=[z].
\]
Consider the standard positive theta line bundle $P\to \mathbb{T}.$ It is defined by $\mathbb{C}\times\mathbb{C}/\sim$ where the equivalence relation $\sim$ is given by
\[
(z,\xi)\sim (z+a+bi, e^{\pi b^2 -2\pi i az}\xi), \qquad a+bi \in \mathbb{Z}[i],
\]
with the descended Hermitian metric
\[
\|[(z,\xi)]\|^2=e^{-2\pi(\operatorname{Im}z)^2}|\xi|^2.
\]
Set
\[
A=P\boxtimes P\longrightarrow M,\qquad L=p^*A,\qquad h^L=p^*h^A.
\]
Then we can check that $L$ is a rigid positive CR line bundle over $X$ with $R^L=2\pi I_2>0$ where $I_2$ is the identity matrix in $M_2(\mathbb{R}).$
Hence, by Theorem \ref{koembedthm}, $X$ can be embedded into a projective space by an $\mathbb{R}^m$-equivariant CR embedding.

We next show that all smooth CR functions on $X$ are constant.  Let $d\mu$ be the volume form on $X$ induced from $d\mu_{\mathbb{H}^{2,2}}$ (see Section \ref{heisenberggpcrstr}). Then we have
\[
Z_\ell^*=-\overline Z_\ell \qquad \ell=1,2.
\]
Let $u$ be a smooth CR function on $X.$ By definition,  we have
\[
\overline{Z}_1u=\overline Z_2u=0.
\]
Note that, from \eqref{commu}, we have
\[
[\overline Z_1,Z_1]+[\overline Z_2,Z_2]=0.
\]
Hence integration by parts gives
\begin{align*}
\sum_{j=1}^2\|Z_ju\|_{L^2}^2
&=-\int_X\bigg(\sum_{j=1}^2\overline Z_jZ_ju\bigg)\bar u\,d\mu\\
&=-\int_X\bigg(\sum_{j=1}^2[\overline Z_j,Z_j]u\bigg)\bar u\,d\mu\\
&=0.
\end{align*}
Therefore \(Z_1u=Z_2u=0\). This implies
\[
[Z_1,\overline Z_1]u=2iY_1u=0,\qquad [Z_1,\overline Z_2]u=2iY_2u=0.
\]
Thus, we obtain $du=0$ which implies that $u$ is constant. In particular, every CR map $X\to\mathbb C^N$ is constant. So $X$ cannot be CR embedded into complex Euclidean space. Nevertheless, all the hypotheses of  Theorem \ref{koembedthm} hold, so it provides projective CR embeddings constructed from joint low-frequency CR sections of $L^k$ for sufficiently large $k.$

The preceding construction and proof extend directly to the following
generalization; we therefore omit the details. Choose arbitrarily finitely many Hermitian matrices
\[H_1,\ldots,H_m\in \operatorname{Herm}_n(\mathbb C)\cap M_n(\mathbb Z[i]).\] Equip $\mathbb H^{n,m}$
with the corresponding group law and left-invariant CR structure. Let $\Gamma=\mathbb Z[i]^n\times\mathbb Z^m$ and $X=\Gamma\backslash\mathbb H^{n,m}.$ Then we have
\begin{itemize}
    \item[(1)] The compact CR nilmanifold $X$ admits an $\mathbb R^m$-equivariant CR embedding into $\mathbb{CP}^N$ for some $N.$
    \item[(2)] If $H_1,\ldots,H_m$ are trace free, then every smooth CR function on $X$ is annihilated by every smooth section of
    $T^{1,0}X\oplus T^{0,1}X$. In particular, $X$ admits no CR embedding into $\mathbb C^N$ for any $N.$
    \item[(3)] Moreover, if $H_1,\ldots,H_m$ are trace free and linearly independent over $\mathbb{R},$ then every smooth CR function on $X$ is constant.
\end{itemize}

\end{example}

\end{document}